\newtheorem{theorem}{Theorem}[section] 
\newtheorem{lemma}[theorem]{Lemma}
\newtheorem{assumption}[theorem]{Assumption}
\newtheorem{example}[theorem]{Example}
\newtheorem{remark}[theorem]{Remark}
\newtheorem{definition}[theorem]{Definition}
\newtheorem{notation}[theorem]{Notation}
\newtheorem{sol}{Solution}
\let\PROOF=\proof
\renewcommand\proof{\PROOF[\bfseries\proofname]}
\newcommand{\intL}{\int\displaylimits}
\newcommand{\RarrBold}{\textbf{``\boldmath$\Rightarrow$''}}
\newcommand{\LarrBold}{\textbf{``\boldmath$\Leftarrow$''}}
\newcommand{\bbA}{\mathbb{A}}
\newcommand{\C}{\mathbb C}
\newcommand{\D}{\mathbb D}
\newcommand{\E}{\mathbb E}
\newcommand{\Gen}{\mathbb L}
\newcommand{\M}{\mathbb M}
\newcommand{\N}{\mathbb N}
\renewcommand{\P}{\mathbb P}
\newcommand{\R}{\mathbb R}
\newcommand{\T}{\mathbb{T}}
\newcommand{\W}{\mathbb W}
\newcommand{\Z}{\mathbb Z}
\newcommand{\Cem}{\mathscr{C}}
\newcommand{\calA}{\mathcal{A}}
\newcommand{\calH}{\mathcal{H}}
\newcommand{\calK}{\mathcal{K}}
\newcommand{\calL}{\mathcal{L}}
\newcommand{\calM}{\mathcal{M}}
\newcommand{\calN}{\mathcal{N}}
\newcommand{\calW}{\mathcal{W}}
\newcommand{\sfC}{{\sf C}}
\newcommand{\relE}[2]{\calH \lb \left. #1 \middle| #2 \right. \rb } 
\newcommand{\Zd}{\Z^{d}}
\newcommand{\Td}{\T^{d}}
\newcommand{\TN}{\Td_{N}}
\newcommand{\TR}[1][]{\Td_{#1} \times \R}
\newcommand{\Ti}{\lbs 0,T \rbs}
\newcommand{\Tihop}{\lb 0,T \rbs} 
\newcommand{\TTR}[1][\Ti]{#1 \times \TR}
\newcommand{\ThopTR}{\TTR[\Tihop]}
\newcommand{\TT}[1][\Ti]{#1 \times \Td}
\newcommand{\TiR}[1][\Ti]{#1 \times \R}
\newcommand{\Wsp}{\calW}
\newcommand{\w}{w}
\newcommand{\TW}{\Td \times \Wsp}
\newcommand{\TWR}[1][]{\Td_{#1} \times \Wsp \times \R}
\newcommand{\TWC}{\TW \times \Csp{\Ti}}
\newcommand{\WC}{\Wsp \times \Csp{\Ti}}
\newcommand{\TTWR}[1][\Ti]{#1 \times \TWR}
\newcommand{\CspSymbol}{\sfC}
\newcommand{\CspUp}[1]{\CspSymbol^{#1}} 
\newcommand{\CspLoUp}[2]{\CspUp{#2}_{#1}} 
\newcommand{\CspHelper}[3][]{\CspLoUp{#1}{#2} \!\lb #3 \rb } 
\newcommand{\Csp}[2][]{\CspHelper{#1}{#2}  } 
\newcommand{\CspL}[3][]{\CspHelper[#2]{#1}{#3} } 
\newcommand{\MspUp}[3][]{\M_{#1}^{#2} \! \lb #3 \rb} 
\newcommand{\MOneUp}[2]{\MspUp[1]{#1}{#2}} 
\newcommand{\MOne}[1]{\MOneUp{}{#1}} 
\newcommand{\MOneL}[1]{\MOneUp{L}{#1}} 
\newcommand{\Nd}{N^{d}}
\newcommand{\RN}{\R^{N^{d}}}
\def\1{\ifmmode {1\hskip -3pt \rm{I}}
\else {\hbox {$1\hskip -3pt \rm{I}$}}\fi} 
\newcommand{\dd}{{\rm d}} 
\DeclareMathOperator*{\supp}{supp} 
\newcommand{\sProOne}[2][]{\ensuremath \left\langle  #2 \right\rangle_{
		 \ifthenelse{\equal{#1}{}}{}{#1}
		}} 
\newcommand{\sPro}[3][]{\sProOne[#1]{#2 , #3}}        
\newcommand*{\defeq}{\mathrel{\vcenter{\baselineskip0.5ex \lineskiplimit0pt
                     \hbox{\scriptsize.}\hbox{\scriptsize.}}}%
                     =}
\newcommand*{\eqdef}{=\mathrel{\vcenter{\baselineskip0.5ex \lineskiplimit0pt
                     \hbox{\scriptsize.}\hbox{\scriptsize.}}}%
                     }             
\newcommand{\ul}[1]{\protect\underline{#1}} 
\newcommand{\ol}[1]{\overline{#1}} 
\newcommand{\what}[1]{\widehat{#1}}
\def\rightmarkWennLeerDannLinks{
\ifthenelse{\equal{\rightmark}{}}{\leftmark}{\rightmark}
}
\newcommand{\linInt}[1][]{\pi^{N}_{\ul{\theta}^{N}_{\HelperISn{#1}} \ifthenelse{\equal{#1}{}}{}{(#1)}  }}
\newcommand{\empM}[1][]{ \empMHelper[#1]{N}}
\newcommand{\HelperISn}[1]{\ifthenelse{\equal{#1}{n}}{n}{}}
\newcommand{\HelperISNOTn}[1]{\ifthenelse{\equal{#1}{n}}{}{(#1)}}
\newcommand{\empMHelper}[2][]{ \ensuremath \mu^{#2}_{\underline{\theta}^N_{\HelperISn{#1}}
\ifthenelse{\equal{#1}{}}{}{\HelperISNOTn{#1}}
}}
\newcommand{\empP}{\ensuremath \mu^{N}_{\underline{\theta}^N_{\Ti} }}
\newcommand{\empPt}[1]{\ensuremath \mu^{N}_{\underline{\theta}^N_{#1} }}
\newcommand{\empPSectionTitleHelper}[1][]{\empP}
\newcommand{\abs}[1]{\left| #1\right|}
\newcommand{\absabs}[1]{  \left\Vert #1 \right\Vert} 
\newcommand{\iNorm}[2][]{\ensuremath \abs{#2}_{ \infty #1 }} 
\newcommand{\Lp}[1]{L^{#1}} 
\newcommand{\Ltwo}{\Lp{2}}
\newcommand{\LtwoN}[2][]{\LpN[#1]{2}{#2}} 
\newcommand{\LpN}[3][]{\NormHelper[#1]{\Lp{#2}}{#3}}  
\newcommand{\NormHelper}[3][]{\ensuremath \absabs{#3}_{
		#2
		\ifthenelse{\equal{#1}{}}{}{(#1)}
		}}							
\DeclareMathOperator{\Proj}{Proj} 
\DeclareMathOperator{\distOp}{dist} 
\newcommand{\dist}[2]{\distOp \lbr #1 , #2 \rbr }
\newcommand{\floor}[1]{\lfloor #1 \rfloor}
\newcommand{\emptyline}{\vspace{0.5\baselineskip}\par}
\newcommand{\kreis}[1]{\unitlength1ex\begin{picture}(2.5,2.5)%
\put(1.25,0.75){\circle{2.5}}\put(1.25,0.75){\makebox(0,0){#1}}\end{picture}}
\newcommand\encircle[1]{%
  \tikz[baseline=(X.base)] 
    \node (X) [draw, shape=circle, inner sep=-0.25mm] {\strut #1};}
\newcommand{\oh}{\frac{1}{2}}
\newcommand{\lb}{\left(}
\newcommand{\rb}{\right)}
\newcommand{\lbr}{\left\{}
\newcommand{\rbr}{\right\}}
\newcommand{\lbs}{\left[}
\newcommand{\rbs}{\right]}
\let\OLDinf\inf
\renewcommand{\inf}[1][]{
\ifthenelse{\equal{#1}{}}{\OLDinf}{\OLDinf \lbr #1 \rbr}
}
\newcommand{\Holder}{H\"older }
\newcommand{\Ito}{It\^{o}}
\newlist{itemNoIntend}{itemize}{3}
\newlist{itemNoLeftIntend}{itemize}{1}
\newlist{itemNoItemsep}{itemize}{1}
\setlist[itemNoIntend]{
	topsep=0em,
	partopsep=0em,
	parsep=0em,
	itemsep=0em,
	leftmargin=0cm,
	itemindent=0cm,
	labelwidth=0cm,
	labelsep=0cm,
	align=left
}
\setlist[itemNoLeftIntend,1]{
	leftmargin=1em,
	itemindent=0cm,
	align=left
}
\setlist[itemNoItemsep,1]{
	itemsep=0em,
	align=left
}
\setlist*[itemNoIntend,itemNoLeftIntend,itemNoItemsep]{
	label=\textbullet
}
\newlist{enuAlph}{enumerate}{5}
\newlist{enuAlphNoIntendBf}{enumerate}{5}
\setlist[enuAlphNoIntendBf,1]{%
	label=\emph{\alph*.)},
	ref=\alph*.)
}
\setlist[enuAlph,1]{%
	label=\emph{\alph{enuAlphi}.)},
	ref=\alph{enuAlphi}.)
}
\setlist[enuAlph,2]{%
	label=\emph{\alph{enuAlphi}.\alph{enuAlphii})},
	ref=\alph{enuAlphi}.\alph{enuAlphii})
}
\newlist{enuRom}{enumerate}{5}
\newlist{enuRomNoIntend}{enumerate}{5}
\newlist{enuRomSimple}{enumerate}{5}
\newlist{enuRomSimpleBf}{enumerate}{5}
\newlist{enuRomNoIntendBf}{enumerate}{5}
\setlist*[enuRom,enuRomSimple,enuRomSimpleBf,enuRomNoIntendBf,enuRomNoIntend]{%
	label=\emph{(\roman*)},
	ref=(\roman*) 
	}
\setlist*[enuRomSimpleBf,enuRomNoIntendBf]{%
	label=\textbf{\emph{(\roman*)}}
	}
\setlist*[enuRomNoIntendBf,enuRomNoIntend]{%
	topsep=0em,
	leftmargin=0cm,
	itemindent=0.25em,
	labelwidth=0cm,
	labelsep=0.25em,
	align=left
}
\newlist{enuDbBr}{enumerate}{5}
\setlist[enuDbBr,1]{%
	label=(\arabic*.),
	ref=(\arabic*.)
}
\newlist{enuBr}{enumerate}{5}
\setlist[enuBr,1]{%
	label=\arabic*.),
	ref=\arabic*.)
}
\setlist[enuBr,2]{%
	label=\arabic{enuBri}.\arabic{enuBrii}.),
	ref=\arabic{enuBri}.\arabic{enuBrii}.)
}
\newlist{enuSimple}{enumerate}{5}
\newlist{enuSimpleBf}{enumerate}{5}
\newlist{steps}{enumerate}{5}
\newlist{stepsInt}{enumerate}{5}
\setlist*[enuRomSimple,enuSimple,enuSimpleBf,steps,enuRomSimpleBf]{%
	topsep=0em,
	partopsep=0em,
	parsep=0em,
	itemsep=0em,
	leftmargin=0cm,
	itemindent=0cm,
	labelwidth=0cm,
	labelsep=0cm,
	align=left,
	listparindent=\parindent
}
\setlist*[enuSimpleBf]{%
	label=\textbf{\arabic*. }
	}
\setlist*[stepsInt]{
	leftmargin=1.6cm
}
\setlist*[enuSimple,steps,stepsInt]{%
	label=\arabic*.
} 
\setlist*[stepsInt,1]{label=\textbf{Step \arabic{stepsInti}: },ref=Step~\arabic{stepsInti}}
\setlist*[steps,1]{label=\textbf{Step \arabic{stepsi}: },ref=Step~\arabic{stepsi}}
\setlist*[steps,2]{label=\textbf{Step \arabic{stepsi}.\arabic{stepsii}: },ref=Step~\arabic{stepsi}.\arabic{stepsii}}
\setlist*[steps,3]{label=\textbf{Step \arabic{stepsi}.\arabic{stepsii}.\arabic{stepsiii}: },ref=Step~\arabic{stepsi}.\arabic{stepsii}.\arabic{stepsiii}}
\setlist*[steps,4]{label=\textbf{Step \arabic{stepsi}.\arabic{stepsii}.\arabic{stepsiii}.\arabic{stepsiv}: },ref=Step~\arabic{stepsi}.\arabic{stepsii}.\arabic{stepsiii}.\arabic{stepsiv}}
\newcommand{\step}[1][]{\item  \ifthenelse{\equal{#1}{}}{}{ \textbf{#1:}}}
\newcommand{\itemAdd}[1]{\item \textbf{#1:}}
\renewcommand{\showkeyslabelformat}[1]{%
\fbox{
\begin{minipage}[t]{1.8cm}
\normalfont\fontsize{3}{3}\ttfamily#1
\end{minipage}}
}
\numberwithin{equation}{section}
\renewcommand\thesubsubsection{\thesubsection.\arabic{subsubsection}}
\renewcommand\paragraph{\@startsection{paragraph}{4}{\z@}%
            {-2.5ex\@plus -1ex \@minus -.25ex}%
            {1.25ex \@plus .25ex}%
            {\normalfont\normalsize\bfseries}}
\title{Hydrodynamic Limit and Propagation of Chaos for a continuous spin Langevin dynamics with long range interaction}
\author{current state of work on this topic of Bovier, Ioffe, Mueller}
\renewcommand{\empM}{\mu^{N}}
\newcommand{\empMThe}[1][]{\mu^{N}_{\ul{\theta}^{N}_{#1}}}
\renewcommand{\empP}{\mu^{N}_{\Ti}}
\renewcommand{\empPt}[1]{\ensuremath \mu^{N}_{#1} }
\renewcommand{\showkeyslabelformat}[1]{}
\newcommand{\dateFT}[1]{%
  \let\@oldtitle\@title%
  \gdef\@title{\@oldtitle\footnotetext{\emph{Date:} #1}}%
}
\newcommand{\subjclass}[2][1991]{%
  \let\@@oldtitle\@title%
  \gdef\@title{\@@oldtitle\footnotetext{#1 \emph{Mathematics subject classification.} #2}}%
}
\newcommand{\keywords}[1]{%
  \let\@@@oldtitle\@title%
  \gdef\@title{\@@@oldtitle\footnotetext{\emph{Key words and phrases.} #1.}}%
}
\renewcommand{\thanks}[1]{%
  \let\@@@@oldtitle\@title%
  \gdef\@title{\@@@@oldtitle\footnotetext{#1}}%
}
\renewcommand*{\title}[2][]{\gdef\shorttitle{#1}\gdef\@title{#2}}
\renewenvironment{abstract}
               {\list{}{\rightmargin\leftmargin}%
                \item[\hspace{\leftmargin}\emph{Abstract.}]\relax}
               {\endlist}
\begin{document} 

\title[Path large deviations]{Path large deviations for interacting diffusions with local mean-field interactions in random environment}

\author{Patrick E.  M\"uller}


	\newcommand{\AdresseAtEnd}{\bigskip\bigskip{\footnotesize%
	  \textsc{P. E. M\"uller\\Institut f\"ur Angewandte Mathematik\\
		 	Rheinische Friedrich-Wilhelms-Universit\"at\\ Endenicher Allee 60\\ 53115 Bonn, Germany } \par  
	  \textit{E-mail address}: \texttt{patrick.mueller@iam.uni-bonn.de} 
	}}

	\date{\vspace{-5ex}}
	\dateFT{\today}

	\subjclass[2010]{60K35, 60F10, 82C22} 
	\keywords{large deviations, interacting diffusion, interacting particle systems, local mean field McKean-Vlasov equation}

\thanks{P.E.M. is partially supported through the German Research Foundation in 
the Collaborative Research Center 1060 "The Mathematics of Emergent Effects", 
and  the Bonn International Graduate School in Mathematics (BIGS) in 
the Hausdorff Center for Mathematics (HCM)
}

\maketitle

\begin{abstract}
We consider a system of $\Nd$ spins  in random environment with a random local mean field type interaction.
Each spin has a fixed spatial position on the torus $\Td$, an attached random environment and a spin value in $\R$ that evolves according to a space and environment dependent Langevin dynamic. 
The interaction between two spins depends on the spin values, on the spatial distance and the random environment of both spins.
We prove the path large deviation principle from the hydrodynamic (or local mean field McKean-Vlasov) limit and derive different expressions of the rate function for the empirical process and for the empirical measure of the paths.
To this end, we generalize an approach of Dawson and G\"artner.
By the space and random environment dependency, this requires new ingredients compared to mean field type interactions.
Moreover, we prove the large deviation principle by using a second approach.
This requires a generalisation of Varadhan's lemma to nowhere continuous functions. 
\end{abstract}

\tableofcontents

\section{Introduction}
\label{sec::Introduction}

We consider a system of $\Nd$ interacting spins with spin values $ \ul{\theta}^{N}_{t} \in \RN$ at time $t>0$, that evolve according to the following Langevin dynamics
\begin{align}
\label{eq::SDE}
\begin{split}
	\dd \theta^{k,N}_{t} 
	&=
	b \left( \frac{k}{N}, \w^{k,N} , \theta^{k,N}_{t} , \empPt{t}\right) \dd t
	+
	\sigma
	\dd B^{k,N}_{t}
	,
	\\
	\theta^{k,N}_{0}
	&\sim
	\nu_{k} \in \MOne{\R}
	,
\end{split}
\end{align}
for $k \in \TN=\Zd / N \Zd$, the periodic $d$-dimensional lattice of length $N$.
Besides the random initial spin values, there are two sources of randomness in this system. 
On the one hand, a random environment $\w^{k,N} \in \Wsp \subset \R^{m}$ is attached to each spin. It expresses differences in the environment and in the nature of the spins. The random environment is distributed according to $ \zeta_{\frac{k}{N}} \in \MOne{\Wsp}$ and it is frozen over time.
On the other hand each spin value is subject to independent stochastic fluctuations, given by independent Brownian motions $B^{k,N}$.

We consider very general drift coefficients $b: \TWR \times \MOne{\TWR} \rightarrow \R$. This coefficient has to be continuous at least on a subset of the probability measures,  but it might be unbounded for example.
The drift coefficient depends on the fixed normalised spatial position $\frac{k}{N}$ of the spin, 
on the random environment $\w^{k,N}$  attached to this spin
and on the current spin value.
Moreover $b$ depends through the the empirical measure $\empPt{t}$, defined as
\begin{align}
\label{def::EmpMt}
	\empPt{t} 
	\defeq
	\frac{1}{\Nd } \sum_{k \in \TN} \delta_{ \left( \frac{k}{N} , \w^{k,N} , \theta^{k,N}_{t}  \right)}	
	\in 
	\MOne{\TWR}
	,
\end{align}
also on the spatial positions, the random environments and the spin values of all other spins.
This dependency of the drift coefficient on the empirical measure $\empPt{t}$ models the interaction between the spins.
Note that the geometric structure of the system, i.e. the spatial position of the spins, is highly relevant. On the one hand the drift coefficient depends directly on $\frac{k}{N}$ and on the random environment $\w^{k,N}$.
On the other hand the spatial positions of all spins and the full random environment take effect in the interaction through the empirical measure.
Moreover the initial distribution and the distribution of the random environment depend on the spatial position.

\emptyline
The following model is a concrete example of a spin system with local mean field interaction, covered by the more general model \eqref{eq::SDE}. 
Let the diffusion coefficient $\sigma$ be equal to $1$, take $\Wsp \subset \R^{m}$ compact  and choose
the drift coefficient as
\begin{align}
\label{eq::DriftCoef::LocalMF}
		b \left( x , \w , \theta  , \mu \right)
		=
		-\partial_{\theta} \Psi \left( \w,\theta \right) 
		+
		\int_{\TWR}
			J \left( x-x' , \w, \w' \right) \theta'
		\mu \left( \dd x', \dd \w' , \dd \theta' \right)
		,
\end{align}
for $\left( x, \w, \theta, \mu \right) \in \TWR \times \MOne{\TWR}$, with $\Psi$ a single spin potential and $J$ a weight function of the spatial distance between the spins.
For example $\Psi$ can be chosen as $\Psi \left( \w, \theta \right) = \theta^{4}+\w_{1} \theta$ or $\theta^{2}+\w_{1} \theta$ or $\theta^{4}-\theta^{2}+\w_{1} \theta$.
Then the first coordinate of the random environment $\w_{1}$ represents a random chemical potential.
With these coefficients, the SDE \eqref{eq::SDE} is given by
\begin{equation}
\label{SDE::LocalMF}
	\dd \theta^{k,N}_{t}  
	=  
		-  \partial_{\theta} \Psi \left( \w^{k,N}, \theta^{k,N}_{t}  \right)  \dd t
		+
		\frac{1}{\Nd } \sum_{ j \in \TN}  J\left(\frac{k-j}{N} , \w^{k,N}, \w^{j,N}  \right)  \theta^{j,N}_{t}  \dd t
			+ \dd B^{k,N}_{t}
	.
\end{equation}
We are mainly interested in this model. However we show most of the results in a much more general setting.

\emptyline

Given a realisation $\ul{\theta}^{N}_{\Ti} = \left\{ t \mapsto \ul{\theta}^{N}_{t} \right\}$ of the solution of \eqref{eq::SDE} and a realisation of the random environment $\ul{\w}^{N}$, let us denote by $\empP $ the empirical process, that is the path of the empirical measures $\empPt{t}$ defined in \eqref{def::EmpMt}, i.e.
\begin{align}
\label{def::empM}
	\empP 
		\defeq
		\left\{ 
			t 
			\mapsto 
			\empPt{t}
			\defeq
			\frac{1}{\Nd } \sum_{k \in \TN} \delta_{ \left( \frac{k}{N}, \w^{k,N} , \theta^{k,N}_{t}  \right)}
		\right\}
 		\in
 		\Csp{\Ti  , \MOne{\TWR}}
 		,
\end{align}
and by $L^{N}$ the empirical measure on $\TWC$
\begin{align}
\label{def::LN}
	L^{N} 
	=
	L^{N} \left( \ul{\w}^{N},\ul{\theta}_{\Ti} \right)
	\defeq 
		\frac{1}{\Nd } \sum_{k \in \TN} \delta_{ \left( \frac{k}{N} , \w^{k,N}, \theta^{k,N}_{\Ti}  \right)}
	\in
	\MOne{ \TWC } 
	.
\end{align}

One can show, in particular for the local mean field system \eqref{SDE::LocalMF}, that the empirical process $\empP$, converges to a deterministic continuous trajectory on $\MOne{\TWR}$ when the number of spins $N$ tends to infinity (see~\cite{BoIoMuHydro} when removing the random environment or see \cite{LucStaMFL} for a slightly different model with bounded interaction).
Without the random environment, each measure on this trajectory has a density $\xi_{t}$ with respect to the Lebesgue measure.
Moreover $\xi \in \Csp[1,0,2]{\ThopTR}$ and the time evolution of $\xi$ is the classical solution of the following PDE (that we call local mean field McKean-Vlasov equation),
\begin{align}
\label{eq::PDEHydro}
\begin{split}
\partial_{t} \xi_{t} \left( x,\theta \right)
		=
		\partial_{\theta}
		\left(
		\left(  \Psi' \left( \theta \right)
			- \int_{\TR} J \left( x'- x \right) \theta'  \xi_{t} \left( x',\theta' \right) \dd\theta' \dd x'  
		\right) 
		\xi_{t} \left( x,\theta \right)
		\right)
		+
		 	\frac{1}{2} \partial^{2}_{\theta} \xi_{t} \left( x,\theta \right)
	.
\end{split}
\end{align}

The aim of the current paper is to investigate the large deviations from the hydrodynamic limit for the general 
system \eqref{eq::SDE} and in particular for the local mean field system \eqref{SDE::LocalMF}.
This is motivated by the overall desire for understanding metastability and other long time phenomena 
in models like \eqref{eq::SDE}.
For infinite dimensional system an approach to answer these questions is a generalisation of 
the Freidlin-Wentzell theory (see the introduction of \cite{DawGarLDP}, \cite{FreiWentz} 
Chapter~10.5, \cite{FariJonaLa}).
By the Freidlin-Wentzell theory, these questions are related to large deviation principles (see 
\cite{FreiWentz}).

The heuristic underlying idea is, that for large $N$, the system usually follows the deterministic flow 
described by the hydrodynamic limit (e.g. \eqref{eq::PDEHydro}) towards a globally attracting, stable 
point in $\MOne{\TR}$. 
However for finite $N$ the randomness allows the system to deviate from the deterministic flow. 
Even a transition from one stable point to another might occur.
These deviations from the deterministic flow are exponentially unlikely for large $N$.
Therefore large deviation is an appropriate theory to determine the probability of such an event.
\emptyline

We prove in the following  that the  families of random elements $\left\{ \empP \right\}$ and $\left\{ L^{N} \right\}$, 
satisfy large deviation principles.
Moreover, we derive different representation of the rate functions and show relations between the two principles, the rate functions and the minimizer of the rate function. 
In particular we show that the rate function $S_{\nu,\zeta}$ corresponding to the family $\left\{ \empP \right\}$ has the following expression
\begin{align}
\label{eq::Sheuristisch}
		S_{\nu,\zeta} \left( \mu_{\Ti} \right)
		=
		\int_{0}^{T} \int_{\Wsp} \abs{ \partial_{t}\mu_{t}   - \left( \Gen_{\mu_{t},.,.} \right)^{*}  \mu_{t}  }^{2}_{\mu_{t} }  \dd t
		+
		\relE{\mu_{0}}{\dd x \otimes  \zeta_{x} \otimes \nu_{x}}
		,
\end{align}
for suitable $\mu_{\Ti} \in \Csp{\Ti, \MOne{\TWR}}$. 
We define the norm $\abs{.}_{\mu_{t}}$ and the relative entropy $\relE{.}{.}$ later. The operator $\left( \Gen_{\mu,x,\w} \right)^{*}$ is for each $\mu \in \M_{\varphi,\infty}$ and $\left( x,\w \right) \in \TW$ the formal adjoint of the following operator, acting on $f \in \CspL[2]{b}{\R}$,
\begin{align}
	\label{def::InteractingGenerator}
	\Gen_{\mu, x,\w } f \left( \theta \right)  
		\defeq
		 \frac{\sigma^{2}}{2} \partial^{2}_{\theta^{2}} f \left( \theta \right)  
		+
		b \left( x, \w, \theta, \mu \right)  \partial_{\theta} f \left( \theta \right) 
		.
\end{align}
It is at this stage important to see, that the rate function $S$ measures somehow the deviation from the hydrodynamic equation.
This is what we expect from the mentioned heuristics.

\emptyline
Dynamic large deviation principles, for models similar to \eqref{eq::SDE}, but with the huge difference that the spatial structure is not relevant, are considered by many authors (e.g. \cite{DawGarLDP}, \cite{Tan}, \cite{BruFiniteKullb}, \cite{FenKur}, \cite{PraHolMKV}, \cite{Cabana:2016qy}).
In these models, the empirical measure in the drift coefficient is replaced by $\mu^{N,MF}_{\ul{\theta}^{N}_{t} } \defeq \frac{1}{\Nd} \sum_{k \in \TN} \delta_{\theta^{k,N}_{t}}$.
In the concrete model \eqref{SDE::LocalMF}, this is for example the case when $J \equiv 1$, i.e when a mean field interaction (Curie-Weiss model) is considered.

For these kind of models, with weak interaction and irrelevance of the spatial structure and without random environment, a dynamical large deviation principle for $\left\{ \mu^{N,MF}_{\ul{\theta}^N_{\Ti} } \right\}$ is derived in \cite{DawGarLDP}. This principle is used in \cite{DawGarLdpFrEn} to connect the quasi potential with the free energy function.
The idea of the approach in \cite{DawGarLDP} is to freeze an empirical process in the drift coefficient to get a system of $N^{d}$ independent time inhomogeneous spins. For this system a large deviation principle is derived. Finally this large deviation principle is converted to the large deviation principle for the interacting system. The main difficulty thereby is to show that the rate function has the particular form similar to \eqref{eq::Sheuristisch}.
In this paper we generalize the approach of \cite{DawGarLDP}, to derive a large deviation principle for the space and random environment dependent empirical processes $\left\{ \empP \right\}$ and also for the empirical measures $\left\{ L^{N} \right\}$.
Changes are in particular required due to the space and random environment dependency of the drift coefficient, of the empirical process and of the initial data.
Moreover we consider the space of continuous functions on the usual space of probability measures $\Csp{\Ti , \MOne{\TWR}}$, equipped with the usual topologies (the uniform topology and the weak convergence) and not, as in \cite{DawGarLDP}, a subset of this space with a stronger topology.
Note that the diffusion coefficient in \cite{DawGarLDP} depends on the spin value and we consider only the case of a constant diffusion coefficient.
However all the results of this paper also hold for non constant diffusion coefficient. 
We only fixed it to be constant, to simplify the notation.

\emptyline
In \cite{Tan} a large deviation principles for the empirical measure
$ L^{N,MF} 
	\defeq 
		\frac{1}{\Nd } \sum_{k \in \TN} \delta_{  \theta^{k,N}_{\Ti}}
	\in
	\MOne{ \C }$
is derived. 
In  \cite{PraHolMKV}, a mean field interaction with random environment is considered.
In both models, the authors assume that the drift coefficient $b$ is bounded and does not depend on the spatial positions of the spins.
By this boundedness, the authors can transfer a large deviation principle of $\Nd$ independent Wiener process to the large deviation principle for the interacting system, by an application of the Varadhan lemma.
From this large deviation principle on $\MOne{\C}$, the contraction principle easily shows a large deviation principle for the empirical process.
However the rate function does not have the representation \eqref{eq::Sheuristisch}.
In \cite{PraHolMKV}, the authors try to show that the derived rate function is equal to a similar expression as \eqref{eq::Sheuristisch}.
Unfortunately in the proof of this result (in Step~4 of the proof of the Theorem~3), the authors use a circular reasoning.
Therefore only for some trajectories of measures the equality of these two expressions is proven.
Also in \cite{BruFiniteKullb} only for this subset of the trajectories the equality of the rate function is proven. 

Nevertheless we generalise the approach of \cite{PraHolMKV} to be applicable also for the unbounded, local mean field model \eqref{SDE::LocalMF}.
This leads to one of the two proofs, that we give here, of the large deviation principle of the empirical measure $L^{N}$.
However we are not able to correct the mentioned circular reasoning in \cite{PraHolMKV}.
Therefore we infer a large deviation principle for the empirical process $\empP$ from this, but we can not show, in this way, that it equals \eqref{eq::Sheuristisch}.
We get this equality of the rate functions by the more general approach of Chapter~\ref{sec::LDPempP} and the uniqueness of the rate function.

Also for mean field interaction with random environment, but without the space dependency, a large deviation principle for the empirical measure is derived in   \cite{Cabana:2016qy}. Moreover the authors characterise the minima of the rate function.
The model considered in \cite{Cabana:2016qy} is similar to the concrete example \eqref{SDE::LocalMF}, however only bounded interactions are considered.

\emptyline

In \cite{FenKur} a third kind of approach is used to prove the LDP for non geometric structure dependent, interacting system like in \cite{DawGarLDP}, but with more restrictive assumptions.
The authors connect the LDP with a variational problem arising from control theory (see Example~1.14, Chapter~13.3 and Theorem~13.37 of \cite{FenKur}).
However we do not try to generalise this approach to the system considered here, because the assumptions are more restrictive than \cite{DawGarLDP} and, at least from the point of view of the probabilistic theory,  the approach gives  less inside in the probabilistic structure.

\emptyline

A direct approach to derive a large deviation principle with a rate function of the form \eqref{eq::Sheuristisch} is used in \cite{KipOlla} for independent Brownian motions.
This can be generalised to models with mean field interaction.
However for the space dependent model we consider, we can not apply this approach. It requires that the hydrodynamic limit has a unique weak solution (see also \cite{GuionLD} page~40).

\emptyline

A dynamical large deviation principle for a system with a space dependent interaction (similar to the interaction in \eqref{SDE::LocalMF} but without the random environment), is studied in  \cite{ComNucle}.
The main difference to the model we consider here, is that only the spin values  $\pm 1$ are considered and that the spin values evolve according to Glauber dynamics.
The proofs of their results are highly dependent on their chosen jump dynamic and does not cover models with Langevin dynamics.

\subsection{Results for the concrete example \texorpdfstring{\protect\eqref{SDE::LocalMF}}{(\ref{SDE::LocalMF})} of a local mean field model}
\label{sec::Intro::ResultLMF}

For the concrete example \eqref{SDE::LocalMF} of a local mean field model, we state in this chapter the main results of this paper.
These are the large deviation principles for the families $\left\{ \empP \right\}$ and $\left\{ L^{N} \right\}$.
Note that we derive these  principles also for the more general system of interacting SDEs \eqref{eq::SDE} in the next chapters.
However for the local mean field model  \eqref{SDE::LocalMF}, definitions and assumptions are more comprehensible.

Let the spin system characterised by the model \eqref{SDE::LocalMF} satisfy the following assumptions.
Here $\Wsp$  is a compact subset of $\R^{m}$ for a $m>0$.

\begin{assumption}
\label{ass::LMF::Init}
The family of  initial distributions $\left\{ \nu_{x} \right\}_{x \in \Td} \subset \MOne{ \R } $  is Feller continuous, i.e. $\nu_{x^{(n)} }$ converges to $ \nu_{x} $ when $x^{(n)} \rightarrow x$, or equivalently the map $x  \mapsto  \int_{\R} f \left( \theta \right)  \nu_{x} \left( \dd \theta \right) $ is continuous for all $f \in \CspL{b}{\R}$.
\end{assumption}

See Chapter~\ref{sec::Prel::SanovT::AssExamples} for examples of Feller continuous initial distributions.

\begin{assumption}
\label{ass::LMF::Init::Integral}
~\vspace{-1em}
\begin{align}
			\sup_{x \in \Td}
				\int_{\R}  e^{ 2\Psi \left( \theta \right) }   \nu_{x} \left( \dd \theta \right)
			<
			\infty
\end{align}
\end{assumption}

\begin{assumption}
	\label{ass::LMF::Medium}
	The family of distributions of the random environment $\left\{ \zeta_{x} \right\}_{x \in \Td} \subset \MOne{ \Wsp }$ is Feller continuous.
\end{assumption}

\begin{assumption}
\label{ass::LMF::J}
	The interaction weight $J : \TW \times \Wsp  \rightarrow \R$ is in 
	$\Ltwo \left( \Td,\Csp{ \Wsp \times \Wsp} \right)$ and satisfies the following conditions:
	\begin{itemNoIntend}
	\item
	There is a $\ol{J} \in \Ltwo \left( \Td \right)$, such that
	$ \sup_{ \left( \w,\w' \right)  \in  \Wsp \times \Wsp} \abs{ J \left( x , \w,\w'  \right) } <  \ol{J} \left( x \right)$ for all $x \in \Td$.
	\item 
		 $J$ is even on $\Td$, i.e. $J \left( x \right) = J \left( -x \right)$ for all $x \in \Td$.
	\item
	Moreover
		\begin{align}					
				\frac{1}{\Nd}  
				\sum_{i \in \TN } 
							\sup_{\w,\w' \in \Wsp}
							\abs{ J  \left( \frac{i}{N}, \w , \w' \right)  -  \Nd \int_{\Delta_{i,N}} J \left( x, \w , \w' \right) \dd x }^{2} 
				\rightarrow 
				0
				,
		\end{align}
		when $N \rightarrow \infty$, with $\Delta_{i,N} \defeq \left\{ x \in \Td : \abs{ x- \frac{i}{N} } < \frac{1}{2 N} \right\}$.

	\end{itemNoIntend}
\end{assumption}

\begin{example}
\label{exa::LMF::J}
This assumption is in particular satisfied in the following cases:
	\begin{itemNoIntend}
		\item	
			$J$ is continuous in all variables.
		\item
			$J \left( x, \w, \w \right) = J^{1} \left( x \right) J^{2} \left( \w, \w' \right)$
			or $J \left( x, \w, \w \right) = J^{1} \left( x \right) + J^{2} \left( \w, \w' \right)$.
			In both situations:
			\begin{itemNoIntend}[leftmargin=1cm,label=\tiny$\blacksquare$]
			\item
				$J^{2} \in \Csp{\Wsp \times \Wsp}$, for example  
							$J^{2} \left( \w, \w' \right) = \w \w'$ or
							$J^{2} \left( \w, \w' \right) = \w-\w'$.
			\item
				$J^{1} \in \Ltwo \left( \Td \right)$ is even and 
			\begin{itemNoIntend}[leftmargin=1cm,label=\textbf{- }]
				\item
					either continuous, or
				\item
					$J^{1} = 1_{A}$ for $A \subset \Td$ a rectangular, or 
				\item
					$J^{1}$ can even have a singularity like $J^{1} \left( x \right) = \abs{x}^{-\frac{1}{2} + \epsilon}$ with $J^{1} \left( 0 \right) =0$.
			 	\end{itemNoIntend}
			 \end{itemNoIntend}
	\end{itemNoIntend}
\end{example}

\begin{remark}
		We use the assumption that $J$ is even, only in the proof of the large deviation principle of $\left\{ L^{N} \right\}$ (Theorem~\ref{thm::LMF::LDPLN}),
		but not in the proof of the large deviation principle of $\left\{ \empP \right\}$ (Theorem~\ref{thm::LMF::LDPempM}).
\end{remark}

\begin{assumption}
\label{ass::LMF::Psi}
		$\Psi \left( \theta, \w \right) = \ol{\Psi} \left( \theta \right) + \w_{1} \theta$,
		for $\left( \w,\theta \right) \in  \Wsp \times \R$,
		where $\ol{\Psi}$ is a polynomial of even degree greater or equal two, with positive coefficient of that degree. 
		Define
		\begin{align} 
			\label{ass::LMF::Psi::def::cPsi}
			c_{\Psi}
			\defeq
			\liminf_{\abs{\theta}\to\infty}
					\frac{\ol{\Psi}  \left( \theta  \right) }{\abs{\theta}^2} 
			,
		\end{align}
		with $c_{\Psi}= \infty$ if the degree of the polynomial is greater than two.
		Assume 
		\begin{align} 
		 \label{ass::LMF::Psi::eq:psi-con1}
					c_{\Psi}
				> 
					\LpN{1}{\ol{J}}
			.
		\end{align}
\end{assumption}

We state some examples of $\Psi$ that satisfy these assumption after \eqref{eq::DriftCoef::LocalMF}.
Also more general $\Psi$ are covered by the approach we state. For example  the randomness could merge into the single particle potential in a more general way than just as a chemical potential.

\emptyline

We infer from these assumption that the corresponding martingale problem is well posed for each fixed $\ul{\w}^{N} \in \Wsp^{\Nd}$ and each fixed initial values $\ul{\theta}^{N} \in \RN$, i.e. that there is a unique weak solution to \eqref{SDE::LocalMF} (see Remark~\ref{rem::LDPempP::MProbPNwelldef}).
Hence there is a unique measure $P^{N}_{\ul{\w}^{N}, \ul{\theta}^{N}} \in \MOne{\Csp{\Ti}^{\Nd}}$, which is the law of the solution  $\ul\theta^{N}_{\Ti}$ of the $\Nd$-dimensional SDE \eqref{SDE::LocalMF} with initial values  $\ul{\theta}^{N}$ and  with fixed environment $\ul{\w}^{N}$.

\begin{notation}
	\label{nota::LMF::General}
	We use the following notation:
	\begin{itemNoLeftIntend}[itemsep=0em]
		\item
		For each $N \in \N$, we denote the initial distribution of the $\Nd$-dimensional spin system by
		$\nu^{N} \defeq \bigotimes_{k\in \TN} \nu_{\frac{k}{N}} \in \MOne{ \RN} $.
		\item
		Analogue we define $\zeta^{N} \defeq \bigotimes_{k\in \TN} \zeta_{\frac{k}{N}}  \in \MOne{\Wsp^{\Nd}}$.
		\item
		We denote by 
		$P^{N}_{\ul{\w}^{N}} \defeq \int_{\RN} P^{N}_{\ul{\w}^{N},\ul{\theta}^{N}} \nu^{N} \left( \dd \ul{\theta}^{N} \right)  \in \MOne{ \Csp{\Ti}^{\Nd} } $,
		the law of the paths of the $\Nd$-dimensional spin system  with a given environment $\ul{\w}^{N} \in \Wsp$ and with initial distribution $\nu^{N}$.
		\item
		We use the symbol $P^{N} =  \zeta^{N} \left( \dd \w \right)  \otimes P^{N}_{\ul{\w}^{N}} \in \MOne{ \Wsp^{\Nd} \times \Csp{\Ti}^{\Nd} }  $ for the joint distribution of the random environment and of the paths of the spin system.
	\end{itemNoLeftIntend}
\end{notation}

The $\empP$ and $L^{N}$ are both images of $\ul{\w}^{N}$ and $\ul{\theta}^{N}_{\Ti}$. Therefore we consider $\empP$ and  $L^{N}$ as random elements under $P^{N}$.

\emptyline

The following norm appears in the rate function $S_{\nu,\zeta}$ in \eqref{eq::Sheuristisch} (compare this norm to the $-1$ Sobolev norm).
\begin{definition}
\label{def::Minus1Norm}
	For a measure $\pi \in \MOne{ \TWR } $ and $\xi$ a distribution on the space of test functions $\CspL[\infty]{c}{\TWR}$, define
	\begin{align}
	\begin{split}
		\abs{ \xi }^{2}_{\pi}
		&\defeq 
		\frac{1}{2}
		 \sup_{f \in \D_{\pi}}
		\frac{\abs{\sPro{ \xi}{f} }^{2}}
		{   \sigma^{2} \int_{\TWR}  \left(\partial_{\theta} f \left(x, \w, \theta \right)\right) ^{2} \pi \left( \dd x, \dd \w, \dd \theta \right)   }
		\\
		&=
		\sup_{f \in \CspL[\infty]{c}{\TWR} }
			\left\{
				\sPro{ \xi}{f}
				-
				\frac{\sigma^{2}}{2} \int_{\TWR}  \left(\partial_{\theta} f \left(x, \w, \theta \right)\right) ^{2} \pi \left( \dd x, \dd \w, \dd \theta \right)
			\right\}
		,
	\end{split}
	\end{align}
	with $\D_{\pi} = \left\{ f \in \CspL[\infty]{c}{\TWR}  : \int_{\TWR}  \left(\partial_{\theta} f \left(x, \w, \theta \right)\right) ^{2} \pi \left( \dd x, \dd \w, \dd \theta \right) \neq 0 \right\} $.
	
	With abuse of notation we also use the symbol $\abs{\xi}_{\pi}$ for $\pi \in \MOne{\R}$ and $\xi$ a distribution on the space of test functions $\CspL[\infty]{c}{\R}$.
\end{definition}

\begin{theorem}[{For a more general version of this Theorem, see Theorem~\ref{thm::LDPLN}}]~
\label{thm::LMF::LDPLN}
		Let the Assumption~\ref{ass::LMF::Init}, Assumption~\ref{ass::LMF::Init::Integral}, Assumption~\ref{ass::LMF::Medium}, Assumption~\ref{ass::LMF::J} and Assumption~\ref{ass::LMF::Psi} hold.
		Then the family $\left\{L^{N}, P^{N} \right\}$ satisfies on  $\MOne{\TWC}$ a large deviation principle with a good rate function.
		We derive two different representation of the rate function (see Theorem~\ref{thm::LDPLN} and Theorem~\ref{thm::LDPLN::LMF::LDP}).
\end{theorem}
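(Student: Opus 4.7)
The plan is to establish the LDP for $\{L^N, P^N\}$ by generalising the Varadhan--Girsanov approach of dai Pra--den Hollander (\cite{PraHolMKV}) to the unbounded, space and environment dependent situation, and then to derive an alternative, more explicit representation of the rate function by generalising the Dawson--Gärtner reasoning (\cite{DawGarLDP}). Both routes share the same starting point: a reference measure $Q^N$ on $\Wsp^{\Nd} \times \Csp{\Ti}^{\Nd}$ under which the environments $\w^{k,N}$ are independent with laws $\zeta_{k/N}$, the initial values are distributed as $\nu_{k/N}$, and the paths evolve as independent non-interacting diffusions driven by $-\partial_\theta \Psi(\w^{k,N},\cdot)$ and $\sigma \dd B^{k,N}$. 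Assumptions~\ref{ass::LMF::Init}, \ref{ass::LMF::Init::Integral} and \ref{ass::LMF::Medium} imply that the single-spin reference law $Q^0_x$ depends weakly continuously on $x \in \Td$, so an adapted Sanov theorem for independent, non-identically distributed samples yields an LDP for $\{L^N, Q^N\}$ on $\MOne{\TWC}$ with good rate function
\[
	I_0(Q) = \int_{\Td} \relE{Q_x}{\zeta_x \otimes Q^0_x}\,\dd x,
\]
where $Q_x$ is the disintegration of $Q \in \MOne{\TWC}$ along its $\Td$-marginal.

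The first representation of the rate function for $\{L^N, P^N\}$ would come from a suitable generalised Varadhan tilting. By Girsanov, the density $\dd P^N / \dd Q^N$ is the exponential of a stochastic integral of the interaction drift against each $\theta^{k,N}_\cdot$; since $J$ lies in $\Ltwo$ and the interaction is linear in $\theta'$, this exponent rewrites as $\Nd F(L^N) + R_N$ for an explicit functional $F$ on $\MOne{\TWC}$ built from a stochastic-integral term and a quadratic form against the kernel $J$, plus a vanishing remainder. The functional $F$ is neither bounded nor continuous on $\MOne{\TWC}$, so the transfer of the LDP from $Q^N$ to $P^N$ cannot rely on the classical Varadhan lemma; instead I would invoke the extension to nowhere continuous functions announced in the abstract, whose proof I would carry out by truncating $J$ and $\partial_\theta \Psi$ to bounded continuous functions, applying the classical tilting lemma to the truncated system, and passing to the limit using super-exponential moment bounds on the spin values that are produced by the coercivity condition \eqref{ass::LMF::Psi::eq:psi-con1}. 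The symmetry of $J$ enters at this stage (and only here, as the remark in the excerpt notes) to express the quadratic form in $F$ cleanly as an integral against $L^N \otimes L^N$.

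A second, more explicit representation arises from generalising the Dawson--Gärtner freezing argument to the present space/environment setting. One freezes a candidate trajectory $\mu_\Ti \in \Csp{\Ti, \MOne{\TWR}}$ in the drift of \eqref{SDE::LocalMF}, obtaining $\Nd$ independent time-inhomogeneous diffusions whose empirical measure satisfies a conditional Sanov-type LDP from the previous step. Taking the supremum over frozen trajectories together with the variational identity
\[
	\abs{\xi}^{2}_{\pi} = \sup_{f \in \CspL[\infty]{c}{\TWR}} \Bigl\{ \sPro{\xi}{f} - \tfrac{\sigma^{2}}{2}\!\int (\partial_\theta f)^2\,\dd\pi \Bigr\}
\]
identifies the contracted rate function on the empirical process $\empP$ as
\[
	S_{\nu,\zeta}(\mu_\Ti) = \int_{0}^{T} \abs{\partial_{t}\mu_{t} - (\Gen_{\mu_{t},\cdot,\cdot})^{*}\mu_{t}}^{2}_{\mu_{t}}\, \dd t + \relE{\mu_0}{\dd x \otimes \zeta_x \otimes \nu_x}.
\]
The corresponding representation of the rate function for $\{L^N\}$ is then obtained on path space along the same lines, using the symmetry of $J$ to combine the test functions against the generators at the level of $\MOne{\TWC}$; alternatively, one lifts $S_{\nu,\zeta}$ to $\MOne{\TWC}$ by adjoining a conditional entropy over paths given the time marginals, and invokes uniqueness of the rate function to match the two expressions obtained by the two methods.

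The main obstacle, I expect, is twofold. First, unboundedness of $\partial_\theta \Psi$ and of $J$ forces every step -- exponential tightness of $\{L^N\}$, the validity of Girsanov, the applicability of the tilting lemma and of Sanov's theorem -- to be accompanied by super-exponential moment bounds uniform in $N$; these are all ultimately driven by the coercivity condition \eqref{ass::LMF::Psi::eq:psi-con1}, whose sharp use at the large-deviation (rather than law-of-large-numbers) scale will need care. Second, and more delicate, matching the implicit Varadhan-type rate with the explicit Sobolev-type expression is exactly the point where \cite{PraHolMKV} runs into a circular argument; the present plan circumvents that difficulty by deriving the two representations via independent routes and appealing to uniqueness of the rate function, rather than trying to massage one formula into the other by direct variational manipulation.
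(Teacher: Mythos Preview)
Your first route (generalised Varadhan tilting from the non-interacting reference $W^{N,-\Psi}$) is correct and matches the paper's Chapter~7 approach closely: Sanov for the reference, Girsanov gives $\dd P^N/\dd W^{N,-\Psi}=\exp\bigl(N^d F(L^N)+o(N^d)\bigr)$ with $F$ unbounded and discontinuous, and one invokes the generalised Varadhan lemma of Appendix~A. The symmetry of $J$ is indeed used here, via the It\^o integration-by-parts that turns the stochastic integral into a functional of $L^N$ alone.

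Your second route, however, is misdirected. The Sobolev-type formula $S_{\nu,\zeta}(\mu_\Ti)$ you write down is the rate function for the empirical \emph{process} $\empP$, not for $L^N$; the theorem asks for representations of the rate function on $\MOne{\TWC}$. The paper's second representation is
\[
	I(Q)=\relE{Q}{\,\dd x\otimes\zeta_x(\dd\w)\otimes P^{I,\Pi(Q)}_{x,\w}},
\]
and it is obtained (Chapter~5) by running the Dawson--G\"artner freezing argument \emph{directly at the level of} $L^N$: freeze $\Pi(\overline Q)$, get a Sanov LDP for the independent system, and derive a local LDP for $\{L^N,P^N\}$ around $\overline Q$ via Girsanov estimates, exactly parallel to the $\empP$ argument but on path space. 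This also needs an exponential-tightness input for $\{L^N,P^N\}$ (Assumption~5.2), which the paper proves separately for the LMF model via the same Girsanov-to-$W^{N,-\Psi}$ device (so $J$ even is in fact used in this route too). Your proposed alternative of ``lifting $S_{\nu,\zeta}$ by adjoining a conditional entropy over paths'' does not yield an LDP for $L^N$: contraction goes the wrong way, and writing a candidate formula is not the same as proving it is the rate function. Finally, the paper matches the two representations not only by uniqueness but by an explicit pointwise Girsanov identity $I(Q)=\bar I(Q)$ (Lemma~5.9), which is cleaner than the route you sketch.
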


\begin{theorem}[{For a more general version of this Theorem, see Theorem~\ref{thm::LDPempP}}]~
\label{thm::LMF::LDPempM}
Under the same assumptions as Theorem~\ref{thm::LMF::LDPLN}, 
the family $\left\{ \empP, P^{N} \right\}$ satisfies on  $\Csp{ \Ti , \MOne{\TWR}}$ a large deviation principle with good rate function
\begin{align}
		S_{\nu,\zeta} \left( \mu_{\Ti} \right)
		\defeq
		\int_{0}^{T} \abs{ \partial_{t}\mu_{t}   - \left(\Gen^{\textnormal{LMF}}_{\mu_{t},.,.} \right)^{*}  \mu_{t}   }^{2}_{\mu_{t} } \dd t
		+
		\relE{\mu_{0}}{\dd x \otimes  \zeta_{x} \otimes \nu_{x}}
		,
\end{align}
when $\mu_{\Ti} \in \Csp{\Ti, \MOne{\TWR}}$ is weakly differentiable, $\sup_{t \in \Ti} \int_{\TWR} \theta^{2} \mu_{t} \left( \dd x, \dd \w, \dd \theta \right)$ is finite, and $\mu_{t} = \dd x \otimes \mu_{t,x}$ with $\mu_{t,x} \in \MOne{\Wsp \times \R}$.
Otherwise $S_{\nu,\zeta} \left( \mu_{\Ti}  \right) = \infty$.

Moreover the integral with respect to $\TW$  and the supremum in the norm in $S_{\nu,\zeta}$ can be interchanged, i.e.
$	S_{\nu,\zeta} \left( \mu_{\Ti}  \right)
=
S^{\TW}_{\nu,\zeta} \left(  \mu_{\Ti}  \right)$ defined as
	\begin{align}
				 \int_{0}^{T} \int_{\Td} \int_{\Wsp}
				 \abs{
				 	\partial_{t}  \mu_{t,x,\w} - \left(\Gen^{\textnormal{LMF}}_{\mu_{t},x,\w}\right)^{*}   \mu_{t,x,\w}
				 }_{\mu_{t,x,\w}}
				 \mu_{0,x,\Wsp} \left( \dd \w \right)
				 \dd x \dd t
						+
						\relE{\mu_{0}}{\dd x \otimes  \zeta_{x} \otimes \nu_{x}}
		,
	\end{align}
	with $\mu_{t,x,\Wsp} \in \MOne{\Wsp}$ and $\mu_{t,x,\w} \in \MOne{\R}$ such that $\mu_{t,x} = \mu_{t,x,\Wsp} \lb \dd \w \rb \otimes \mu_{t,x,\w}$.
	
	We state further representation of $S_{\nu,\zeta}$ in  Chapter~\ref{sec::OtherRepRFS}.
\end{theorem}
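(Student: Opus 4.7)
The strategy is to reduce the theorem to its more general version (Theorem~\ref{thm::LDPempP}) by verifying that the local mean field model \eqref{SDE::LocalMF} fits into the abstract framework, and then to derive the factorised representation $S_{\nu,\zeta} = S^{\TW}_{\nu,\zeta}$ as a separate argument.

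First I would check that the drift coefficient
$b(x,\w,\theta,\mu) = -\partial_{\theta}\Psi(\w,\theta) + \int_{\TWR} J(x-x',\w,\w')\, \theta'\, \mu(\dd x',\dd\w',\dd\theta')$
falls within the class of coefficients for which the general LDP holds. The continuous dependence on $\mu$ (on sets with a uniform second $\theta$-moment) follows from Assumption~\ref{ass::LMF::J} together with the $L^{2}$-bound $\ol{J}$; continuity in $(x,\w)$ of the interaction integral follows from the Riemann-sum control in Assumption~\ref{ass::LMF::J}. The confinement condition \eqref{ass::LMF::Psi::eq:psi-con1} ensures that the competition between the confining term $-\partial_{\theta}\Psi$ and the linear-in-$\mu$ interaction $\int J \theta'\mu$ yields uniform-in-$N$ exponential moments for $\theta^{k,N}_{t}$, starting from the exponential moments provided by Assumption~\ref{ass::LMF::Init::Integral}. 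Combined with the Feller continuity of $\{\nu_{x}\}$ and $\{\zeta_{x}\}$ (Assumption~\ref{ass::LMF::Init} and Assumption~\ref{ass::LMF::Medium}), this supplies all inputs needed by the general theorem and by the Sanov-type result used for the initial measures, giving the LDP on $\Csp{\Ti,\MOne{\TWR}}$ with rate function of the form \eqref{eq::Sheuristisch}.

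Second, I would identify the effective domain of $S_{\nu,\zeta}$. Since the spatial positions $k/N$ are deterministic and the random environment is frozen in time, exponential tightness forces the space--environment marginal to be $\dd x \otimes \zeta_{x}(\dd\w)$ at $t=0$ and, along any trajectory with finite rate, to keep the same $x$-marginal $\dd x$ for all $t$. This yields the disintegration $\mu_{t} = \dd x\otimes\mu_{t,x}$ with $\mu_{t,x}\in\MOne{\Wsp\times\R}$. The finiteness of $\sup_{t}\int\theta^{2}\dd\mu_{t}$ on the effective domain comes from combining the exponential moment estimates with the entropy penalty $\relE{\mu_{0}}{\dd x\otimes \zeta_{x}\otimes\nu_{x}}$.

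Third, and this is the main obstacle, I would prove the interchange formula $S_{\nu,\zeta}=S^{\TW}_{\nu,\zeta}$. The generator $\Gen^{\textnormal{LMF}}_{\mu_{t},x,\w}$ is a local operator in $\theta$ whose coefficients depend on $(x,\w)$ only parametrically through $-\partial_{\theta}\Psi(\w,\theta)$ and the $\mu_{t}$-dependent interaction (which is a function of $(x,\w,\theta)$). Consequently $\partial_{t}\mu_{t}-(\Gen^{\textnormal{LMF}}_{\mu_{t},.,.})^{*}\mu_{t}$ disintegrates fibrewise over $(x,\w)$, because the $x$-marginal is $\dd x$ and the $\w$-marginal at each $x$ is frozen and equal to $\mu_{0,x,\Wsp}$. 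Then for the $H^{-1}$-type norm $\abs{\cdot}_{\mu_{t}}$, a Cauchy--Schwarz argument with pointwise optimisation over $(x,\w)$ yields the desired factorisation. The technical point is that the pointwise optimisers $f_{x,\w}(\theta)\in\CspL[\infty]{c}{\R}$ depend only measurably on $(x,\w)$, whereas the supremum in Definition~\ref{def::Minus1Norm} is taken over $\CspL[\infty]{c}{\TWR}$. I would resolve this by a standard partition-of-unity plus mollification scheme in $(x,\w)$ together with a monotone approximation of the supremum, using the uniform moment bounds to control the tails in $\theta$. Once this interchange is justified, the equality of the two representations is immediate, and the remaining representations of Chapter~\ref{sec::OtherRepRFS} would be deduced analogously.
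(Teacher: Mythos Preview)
Your reduction (Steps~1 and~2) matches the paper exactly: Section~\ref{sec::LocalMF} verifies that the local mean field drift satisfies Assumption~\ref{ass::LDPempP} item by item (continuity of $b$ on $\M_{\varphi,R}\cap\MOneL{\TWR}$, the Lyapunov bounds via \eqref{ass::LMF::Psi::eq:psi-con1}, the Riemann-sum control from Assumption~\ref{ass::LMF::J}, etc.), after which Theorem~\ref{thm::LDPempP} applies verbatim and already delivers the effective-domain description you outline in Step~2.

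Your Step~3, however, is both redundant and differently conceived. The equality $S_{\nu,\zeta}=S^{\TW}_{\nu,\zeta}$ is already part of the conclusion of the \emph{general} Theorem~\ref{thm::LDPempP}; once you invoke that theorem nothing further is needed for the concrete model. More to the point, the paper does not obtain this interchange by your direct Cauchy--Schwarz/mollification route. Instead, for the frozen-field (independent) system it produces two further representations of the rate function, $S^{I,1}_{\nu,\zeta}$ via contraction from the path-space Sanov result (Lemma~\ref{lem::LDPempP::indep::1Rep}) and $S^{I,2}_{\nu,\zeta}$ via the projective limit over finite-dimensional time marginals (Lemma~\ref{lem::LDPempP::indep::2Rep}), and then proves the chain $S^{I,2}_{\nu,\zeta}\le S^{I}_{\nu,\zeta}\le S^{I,\Td}_{\nu,\zeta}\le S^{I,1}_{\nu,\zeta}$ (Lemma~\ref{lem::LDPempP::indep::Coincidence}). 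The bound $S^{I,\Td}_{\nu,\zeta}\le S^{I,1}_{\nu,\zeta}$ comes from a F\"ollmer-type decomposition of the minimising $Q$ (Lemma~\ref{lem::LDPempP::indep::UpperB}); the bound $S^{I,2}_{\nu,\zeta}\le S^{I}_{\nu,\zeta}$ relies on a PDE argument with a terminal-boundary value problem whose solution must be shown to be continuous in $(x,\w)$ (Lemma~\ref{lem::LDPempP::indep::S2leqS} together with Theorem~\ref{thm::LDPempP::UniqClasPDE}). Uniqueness of rate functions then collapses the chain, yielding $S^{I}_{\nu,\zeta}=S^{I,\Td}_{\nu,\zeta}$ at once.

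Your direct approach is not wrong in principle, but the paper explicitly flags (Remark~\ref{rem::LDPempP::indep::UpperB::Different}) that pulling the $\TW$-integral outside the supremum via fibrewise optimisers $f_{x,\w}$ forces one to control joint integrability in $(x,\w)$; the mollification scheme you sketch would have to handle simultaneously the linear pairing and the quadratic penalty for a family $\mu_{t,x,\w}$ with only measurable dependence on $(x,\w)$. The sandwich argument sidesteps this entirely, at the cost of the new PDE regularity result in Chapter~\ref{sec::LDPempP::PDE}.
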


\subsection{Structure of the paper and idea of the proofs}

This paper is organised as follows.
We state in Chapter~\ref{sec::Prel} some preliminaries that are required in the subsequent chapters.
At first this comprises some definitions and notations (Chapter~\ref{sec::Prel::Defs}).
Then in Chapter~\ref{sec::Prel::SanovT}, we generalise the Sanov Theorem to vectors of space ($\Td$), random environment ($\Wsp$) and spin value ($\R$) dependent empirical measures.
This is also a generalisation of the Sanov type theorem in \cite{DawGarLDP}, because of the additional space and random environment dependency, and because we allow  the initial values to be random.
Then we state a generalisation of the Arzel\'a Ascoli theorem for sets and measures on $\TWC$ (Chapter~\ref{sec::Prel::AA}), and we generalise the definitions and results on distribution-valued functions of the Chapter~4.1 of \cite{DawGarLDP} to the space $\TWR$ (Chapter~\ref{sec::Prel::Distr}).
Finally (in Chapter~\ref{sec::Prel::RelBetwSpaces}) we discuss how the spaces, on which $L^{N}$ and $\empP$ are defined, are related.

\emptyline

In Chapter~\ref{sec::LDPempP} we state and prove the large deviation principle for the family of the empirical process $\left\{\empP \right\}$ in a more general setting than the concrete example \eqref{SDE::LocalMF} considered in Theorem~\ref{thm::LMF::LDPempM}.
The concrete example \eqref{SDE::LocalMF} (with the assumptions of Chapter~\ref{sec::Intro::ResultLMF}) is covered by the weaker assumptions in  Chapter~\ref{sec::LDPempP} (we show this in Chapter~\ref{sec::LocalMF}).
To prove the large deviation principle, we generalise the approach of \cite{DawGarLDP} to the space and random environment dependent model we consider here.
We explain the approach and changes compared to \cite{DawGarLDP}, in detail in Chapter~\ref{sec::LDPempP}.
We first (Chapter~\ref{sec::LDPempP::Inde}) derive a LDP of the empirical process for the independent system and finally transfer this LDP to an LDP for the interacting system (Chapter~\ref{sec::LDPempP::Inter}).
The existence of the LDP for the independent system is a direct consequence  of the Sanov type result of Chapter~\ref{sec::Prel::SanovT}.
The better part of the Chapter~\ref{sec::LDPempP::Inde} is dedicated to showing that the rate function actually has a form like \eqref{eq::Sheuristisch}.
In contrast to the model considered in  \cite{DawGarLDP}, the model \eqref{eq::SDE} treated here, is space and environment dependent (in the drift coefficient, the empirical process and the initial distribution), the initial values are random and we consider the whole space $\Csp{\Ti, \MOne{\TWR}}$ and not a subspace with a stronger topology.
This leads to many changes in the proofs.
In particular in the proof of a lower bound on the rate function of the independent system (Chapter~\ref{sec::LDPempP::2RepBoundSI::Lower}), we require a solution to a PDE that is continuous in the space and environment variable.
We prove the existence and uniqueness of such a solution in Chapter~\ref{sec::LDPempP::PDE}. 

\emptyline

In Chapter~\ref{sec::OtherRepRFS}, we state different representations of the rate function for the empirical process. These expressions might be useful when working on the mentioned long time behaviour (see also \cite{DawGarLdpFrEn} in the mean field case), in particular when the model is not reversible.

\emptyline

In Chapter~\ref{sec::LDPLN}, we show that the same approach as in Chapter~\ref{sec::LDPempP} can be used to derive a large deviation principle for the family $\left\{ L^{N} \right\}$, provided that this family is exponentially tight.
We prove the exponential tightness for the concrete example \eqref{SDE::LocalMF} of the local mean field model in Chapter~\ref{sec::LDPLN::LMF}.
Moreover we derive for this model a second representation of the rate function. In this second expression of this function, the influence of the entropy and of the interaction becomes obvious.

\emptyline
In Chapter~\ref{sec::Comp}, we show  at first  (Theorem~\ref{thm::LDPLN::RelMinimaRF}), that the minimizer of the rate functions of $\left\{ \empP\right\}$ and $\left\{ L^{N}\right\}$ are one to one related.
Then in Chapter~\ref{sec::Comp::LdpLNtoEmp}, we infer from the large deviation principle of $\left\{ L^{N} \right\}$, the large deviation principle of $\left\{ \empP \right\}$, by an application of the contraction principle.
However the rate function does not have the desired form $S_{\nu,\zeta}$ given in \eqref{eq::Sheuristisch}.
In Chapter~\ref{sec::Comp::UpperBoundSnu} we show that the rate function is at least an upper bound on $S_{\nu,\zeta}$.
However we are not able to prove that it is also an lower bound, without using the result of Chapter~\ref{sec::LDPempP}.

\emptyline
In Chapter~\ref{sec::LMF::LDPLN}, we derive the large deviation principle for the empirical measure $\left\{L^{N}\right\}$ for the concrete example \eqref{SDE::LocalMF} of the local mean field model by a different approach than in Chapter~\ref{sec::LDPLN}.
We consider at first the simpler model without the interaction part.
For this model we get easily a large deviation principle (by the Sanov type result of Chapter~\ref{sec::Prel::SanovT}).
Then we transfer the LDP to the model with interaction by the Girsanov transformation.
This would typically follow from the Varadhan's lemma (see e.g. \cite{PraHolMKV}).
However the exponent in the Girsanov transform is only continuous on special subsets of $\MOne{\TWR}$.
Therefore we need to use a generalisation of Varadhan's lemma to functions that are unbounded and nowhere continuous.

We state this generalisation in Appendix~\ref{sec::Vara} in a very general and abstract form, because it may be of independent interest.

There are two reason, why we state this second approach of the proof of the large deviation principle for $\left\{L^{N}\right\}$ in Chapter~\ref{sec::LMF::LDPLN}.
On the one hand the idea of investigating separately the entropy and adding then the interaction, is easier to follow and seems to be more comprehensible (form a physical point of view) compared to the approach of Chapter~\ref{sec::LDPLN}.
On the other hand this approach gives a nice example how the generalisation of the Varadhan's lemma can be applied. 

\emptyline
\noindent\textbf{Acknowledgements.}
This paper is part of the authors PhD thesis, created under the supervision of A. Bovier.
The paper uses many ideas from the collaboration with A. Bovier and D. Ioffe.
I am very thankful for their fruitful input.
The author thanks F. den Hollander for helpful discussions on the subject and G. Uraltsev for his useful hints concerning the arising PDEs.

\section{Preliminaries}
\label{sec::Prel}

\subsection{Definitions and notations}
\label{sec::Prel::Defs}

\begin{notation}
\label{nota::ML}
	Let $Y$ be a polish space.
	We denote by $\MOne{Y}$ the space of probability measures on $Y$ equipped with the topology of weak convergence.
	
	We write $\MOneL{ \Td \times Y }$ for the subset of $\MOne{\Td \times Y}$, that consists of those measures, that have the Lebesgue measure as projection to $\Td$.
\end{notation}
The measures in $\MOneL{ \Td \times Y }$ are also called Young measures (see \cite{AttButMichVar} Definition 4.3.1).

\begin{notation}
\label{nota::Cem}
	We denote the space of continuous functions from $\Ti$ into $\MOne{\TWR}$ by
	\begin{align}
				\Cem 
				\defeq
					\Csp{\Ti, \MOne{\TWR}}
	\end{align}
	and its subspace with values in $\MOneL{\TWR}$ by
	\begin{align}
			\Cem^{L}
				\defeq
					\Csp{\Ti, \MOneL{\TWR}}
			.
	\end{align}
\end{notation}

Let $\varphi \in \Csp[2]{\R} $ be a non-negative function, such that $\lim_{\abs{\theta} \rightarrow  \infty} \varphi \left( \theta \right) = \infty$.

\begin{definition}
\label{def::MR}
		We denote the subset of $\MOne{\TWR}$ of measures, whose integral with respect to a $\varphi \in \Csp{\R}$ is bounded by $R >0$ by
		\begin{align}
				\M_{\varphi,R} 
				\defeq
				\left\{
							\mu \in \MOne{\TWR}
							:
							\int_{\TWR}  \varphi \left( \theta \right) \mu \left( \dd x, \dd \w, \dd \theta  \right)
							\leq
							R
				\right\}
				.
		\end{align}
		Moreover we denote the subset of $\MOne{\TWR}$, with finite integral with respect to $\varphi$ by
		\begin{align}
						\M_{\varphi,\infty}
						\defeq
						\bigcup_{R>0}
						 \M_{\varphi,R}
						=
						\left\{
									\mu \in \MOne{\TWR}
									:
									\int_{\TWR}  \varphi \left( \theta \right) \mu \left( \dd x, \dd \w, \dd \theta  \right)
									<
									\infty
						\right\}
						.
				\end{align}
With abuse of notation we use rarely also the symbol $\M_{\varphi,R}$ for the appropriate subspace of $\MOne{\TR}$.
\end{definition}

\begin{definition}
\label{def::CemR}
	We denote the subset of $\Cem$,  that consists of the paths which are everywhere in $\M_{\varphi,R}$, for a $R>0$, by
	\begin{align}
				\Cem_{\varphi,R}
				\defeq 
						\left\{ \mu_{\Ti} \in \Cem : 
									\sup_{t \in \Ti } 
													\int_{\TWR}  \varphi \left( \theta \right) \mu_{t} \left( \dd x, \dd \w, \dd \theta  \right) 
									\leq R \right\} 
						\subset 
						\Cem
				.
	\end{align}
	For the union of these sets we use the symbol
	\begin{align}
		\Cem_{\varphi,\infty} 
		\defeq
			\bigcup_{R=1}^{\infty} \Cem_{\varphi,R} 
		=
			\left\{ \mu_{\Ti} \in \Cem : \sup_{t \in \Ti } 
													\int_{\TWR}  \varphi \left( \theta \right) \mu_{t} \left( \dd x, \dd \w, \dd \theta  \right) 
											< \infty \right\}
		.
	\end{align}
\end{definition}

We endow $\M_{\varphi,R},\M_{\varphi,\infty},\Cem_{\varphi,R} $ and $\Cem_{\varphi,\infty} $ with the subspace topology of $\MOne{\TWR}$ and $\Cem$ respectively.
By this property these spaces differ from the definition used in \cite{GarOnTheMKVLimit} and \cite{DawGarLDP}.
There the authors equip the spaces with a stronger topology.

\begin{definition}
\label{def::DecomMu}
	For a measure $\mu \in \MOneL{\TWR}$,  we denote by $\mu_{x} \in \MOne{\Wsp \times \R}$ the regular conditional probability measures such that 
	$\mu = \dd x \otimes  \mu_{x}$. 
	
	For the projection of $\mu_{x}$ on the environment coordinate $\Wsp$, we use the symbol $\mu_{x,\Wsp}$ and for the corresponding regular conditional probability measures $\mu_{x,\w} \in \MOne{\R}$.
	Then $\mu = \dd x \otimes \mu_{x,\Wsp} \left( \dd \w \right) \otimes \mu_{x,\w}$.
\end{definition}

\begin{definition}
	We define the relative entropy between two probability measures $\mu, \nu \in \MOne{Y}$ on a Polish space $Y$, by
	\begin{align}
				\relE{\mu}{\nu}
				\defeq
				\begin{cases}
							\int_{Y} \log \left(  \frac{\dd \mu}{\dd \nu}\right) \nu
							\quad &\textnormal{if } \mu << \nu
							\\
							\infty &\textnormal{otherwise.}
				\end{cases}
	\end{align}
\end{definition}

\pagebreak[2]
\begin{notation}
We use the following notation.
\begin{itemNoLeftIntend}
\item
With $x,y,z$ we usually denote macroscopic coordinates, i.e. positions on the torus $\Td$. Whereas by
$i, j, k$ we denote microscopic coordinates, i.e. positions on the discrete torus $\TN$ . These two coordinate
systems are related by $x = \frac{i}{N}$.
\item
As time variables we use the letters $s,t,u$.
\item
We use the letters $\theta,\eta$ for the spin values.
With $\theta_{\Ti}$ we denote the whole path of the spin value, i.e. an element of $\Csp{\Ti}$.
With $\theta_{t} \in \R$ we denote the spin value at time $t \in \Ti$.
\item
For a $\Nd$-dimensional vector of spin values, numbered by $k \in \TN$, we use the symbol $\ul\theta^{N}$ and analogue $\ul\theta^{N}_{\Ti}$, $\ul\theta^{N}_{t}$. We write $\theta^{k,N}$ for the element at position $k \in \TN$ in this vector.
\item
We use the letter $\w$ for a value of the random environment. Again $\ul{\w}^{N}$ is the $\Nd$-dimensional vector of the environment and $\w^{k,N}$ the specific value of the environment associate with the position $k \in \TN$.

\item
We use lower-case letters, mostly $\mu$, $\nu$, $\pi$ for measures on $\MOne{\TWR}$, $\MOne{\TR}$ or $\MOne{\R}$ ($\nu$ is usually the distribution of the initial values).
For the path on measures, i.e. for an element in $\Cem$, we write $\mu_{\Ti}$.
For the measure at time $t \in \Ti$ of the path $\mu_{\Ti}$ we write $\mu_{t}$.

\item
We use upper-case letters, in most cases $Q$ or $\Gamma$, for measures on $\MOne{\TWC}$.

\item
We denote the spaces of continuous functions from $X$ to $Y$ by $\Csp{X,Y}$. For its subset of bounded functions we use the notation $\CspL{b}{X,Y}$, of functions that vanish at the boundary $\CspL{0}{X,Y}$ and of functions with compact support $\CspL{c}{X,Y}$.
With a superscript like in $\Csp[k]{X,Y}$ we state the $k$-times continuous differentiability.
To shorten the notation we often skip $Y$ if $Y=\R$, i.e. $\Csp{X} = \Csp{X,\R}$.

\end{itemNoLeftIntend}
\end{notation}

\subsection{A Sanov type result}
\label{sec::Prel::SanovT}

Let $Y_{1},...,Y_{r}$ be polish spaces for $r\geq 1$
and let $\left\{  Q_{x, \w, \theta} : \left( x, \w ,\theta \right) \in \TWR \right\} $ be a family of probability measures on $Y= Y_{1} \times ... \times Y_{r}$.

We generalise in this chapter at first the Sanov type Theorem~3.5 of \cite{DawGarLDP} to the setting we consider here (Lemma~\ref{lem::SanovT}).
More precisely we add the space dependency and the random environment in the vector of the empirical measure, i.e.
	for $ \left( y^{i} \right) _{i \in \TN}  \in Y^{\Nd} $ and $\left( \w^{i,N} \right) \in \Wsp^{\Nd}$, we define the vector in $\MOne{ \TW \times Y_{1} }  \times ... \times \MOne{ \TW \times Y_{r} } $ by
\begin{align}
	L^{N}_{r}
	\defeq
		 \left(
			 N^{-d} \sum_{i \in \TN} \delta_{ \left( \frac{i}{N} , \w^{i,N} , y^{i}_{1} \right)} 
			 ,...,
			 N^{-d} \sum_{i \in \TN} \delta_{ \left( \frac{i}{N} ,\w^{i,N}  y^{i}_{r} \right)} 
		 \right)  
	.
\end{align}
Moreover we allow the parameter $\theta$ in the measures of the distribution of the $y_{1},...,y_{r}$ to be random and not fixed as in \cite{DawGarLDP}.
New in this chapter compared to \cite{DawGarLDP} is also that we prove (Lemma~\ref{lem::SanovT::RelEntropy}), that the rate function can be expressed as a relative entropy.

\emptyline

We need the following two assumptions, that imply in particular that the integrals in Lemma~\ref{lem::SanovT} are well defined and that we get a suitable convergence of the logarithmic moment generating function in the proof of this lemma.
\begin{assumption}
\label{ass::SanovT::MeasFeller}

		 $\left\{  Q_{x, \w, \theta} : \left( x, \w, \theta \right) \in \TWR \right\}  \subset \MOne{ Y } $  is Feller continuous.

\end{assumption}

For each $\w \in \Wsp$ and each $x \in \Td$, we define $Q_{x,\w} \defeq \int_{\R} Q_{x,\w,\theta} \nu_{x} \left( \dd \theta \right) \in \MOne{ Y } $, by averaging over the parameter $\theta$.
With this $Q_{x,\w}$, define the product measures $Q^{N}_{\ul{\w}^{N}} \in \MOne{Y^{\Nd}}$ and the joint measure $Q^{N} \in \MOne{\Wsp^{\Nd} \times Y^{\Nd}}$ similar as in Notation~\ref{nota::LMF::General}.

\begin{lemma}[compare to  \cite{DawGarLDP} Theorem~3.5 for mean field LDP]
\label{lem::SanovT}
	If the Assumption~\ref{ass::LMF::Init}, Assumption~\ref{ass::LMF::Medium}  and Assumption~\ref{ass::SanovT::MeasFeller} hold,
	then the family $\left\{ L^{N}_{r}, Q^{N} \right\}$ satisfies a large deviation principle on the space $\MOne{ \TW \times Y_{1} }  \times ... \times \MOne{ \TW \times Y_{r} }$ with good rate function
	\begin{align}
	\label{lem::SanovT::RF}
	\begin{split}
		L_{\nu,\zeta}  \left( \Gamma^{1},...,\Gamma^{r} \right)  
		= 
			&\sup_{
				\substack{ f_{1}\in \CspL{b}{\TW \times Y_{1} } 
								\\
								...
								\\ 
								f_{r} \in \CspL{b}{\TW \times Y_{r} }}
						}
		\left\{ 	
			\sum_{\ell=1}^{r}
				\int_{\TW \times Y_{\ell}} f_{\ell} \left(  x, \w, y_{\ell}  \right)  \Gamma^{\ell} \left( \dd x, \dd \w, \dd y_{\ell} \right) 
				\right.
				\\
			&\qquad\qquad
			-
				\left.
				\int_{\Td}
					\log
					\left(
					\int_{\Wsp}
					 \int_{Y}
						e^{\sum_{\ell=1}^{r} f_{\ell} \left( x,\w,y_{\ell} \right) }
						Q_{x,\w} \left( \dd y_{1}, ..., \dd y_{r} \right) 
						\zeta_{x} \left( \dd \w \right)
					\right) 
				\dd x
			\right\} 
	\end{split}
	\end{align}
for $\Gamma^{\ell} \in \MOne{ \TW \times Y_{\ell} } $.
\end{lemma}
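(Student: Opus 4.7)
The plan is to apply an abstract G\"artner--Ellis / Baldi-type theorem on the product space $\MOne{\TW \times Y_{1}} \times \cdots \times \MOne{\TW \times Y_{r}}$, whose dual is generated by $r$-tuples of bounded continuous functions. For this I first compute the limiting logarithmic moment generating function and then identify its Legendre dual with the formula for $L_{\nu,\zeta}$ stated in the lemma.

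\textbf{Step 1 (LMGF).} Fix $f_{\ell} \in \CspL{b}{\TW \times Y_{\ell}}$ for $\ell = 1,\dots,r$. Under $Q^{N}$ the environments $\w^{i,N}$ are independent with law $\zeta_{i/N}$, and conditionally on $\ul\w^{N}$ the $y^{i}$ are independent with law $Q_{i/N,\w^{i,N}}$. Hence
\begin{align*}
	\frac{1}{\Nd} \log E_{Q^{N}}\! \lbs \exp \lb \sum_{i \in \TN} \sum_{\ell=1}^{r} f_{\ell} \lb \tfrac{i}{N},\w^{i,N},y^{i}_{\ell} \rb \rb \rbs
	=
	\frac{1}{\Nd} \sum_{i \in \TN} g \lb \tfrac{i}{N} \rb ,
\end{align*}
where $g(x) \defeq \log \int_{\Wsp} \int_{Y} e^{\sum_{\ell} f_{\ell}(x,\w,y_{\ell})} Q_{x,\w}(\dd y) \zeta_{x}(\dd \w)$. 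The Feller continuity assumptions on $\{Q_{x,\w,\theta}\}$, $\{\nu_{x}\}$ and $\{\zeta_{x}\}$ (Assumptions~\ref{ass::SanovT::MeasFeller}, \ref{ass::LMF::Init}, \ref{ass::LMF::Medium}) make $(x,\w) \mapsto Q_{x,\w}$ weakly continuous, so $g$ is continuous on $\Td$ and the Riemann sum converges to $\int_{\Td} g(x) \dd x$, which is precisely the cumulant functional whose Legendre transform gives \eqref{lem::SanovT::RF}.

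\textbf{Step 2 (Exponential tightness).} To upgrade the weak LDP to a full LDP on a non-compact Polish product, I need exponential tightness of $\{L^{N}_{r}\}$. Componentwise this reduces to exponential tightness of the marginals on $\Td$ (trivial, since they converge to Lebesgue almost surely at exponential speed), on $\Wsp$ (the classical Sanov theorem gives this since the $\w^{i,N}$ are independent with law $\zeta_{i/N}$, bounded above in entropy by a fixed finite measure), and on each $Y_{\ell}$ (Sanov-type bounds for the annealed measure $\int Q_{x,\w} \zeta_{x} \dd x$). Combining these three marginal controls with Prokhorov gives compact sets in each $\MOne{\TW \times Y_{\ell}}$ whose complements have $Q^{N}$-probability bounded exponentially, uniformly in $N$.

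\textbf{Step 3 (LDP and identification).} Applying Baldi's theorem (or the variant of G\"artner--Ellis valid in locally convex Hausdorff spaces, as stated e.g.\ in Dembo--Zeitouni) to the continuous, convex limit $\Lambda(f) = \int_{\Td} g(x) \dd x$ yields the large deviation upper bound on compacts with rate function $L_{\nu,\zeta} = \Lambda^{*}$; combined with Step~2 this extends to all closed sets and gives goodness (compact sublevel sets). For the lower bound one checks that each $\Gamma = (\Gamma^{1},\dots,\Gamma^{r})$ in the domain of $L_{\nu,\zeta}$ is an exposed point in the appropriate sense, which follows from the strict convexity of $g$ in $f$ (a routine tilting of $Q_{x,\w}$ by the exposing hyperplane produces a perturbed product measure whose empirical $L^{N}_{r}$ concentrates near $\Gamma$).

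\textbf{Main obstacle.} The key technical point is the exponential tightness in the non-compact directions $\Wsp$ and $Y_{\ell}$: because the law of each $y^{i}$ depends on the random site $i/N$ and on the random environment $\w^{i,N}$, the standard Sanov tightness must be run in an \emph{annealed} manner and then combined with the joint projection onto $\TW$. Feller continuity of $(x,\w) \mapsto Q_{x,\w}$ is what makes the annealing work; without it the Riemann sum in Step~1 and the uniform tightness estimates in Step~2 both fail. Once exponential tightness is in hand, writing the rate function as a relative entropy (Lemma~\ref{lem::SanovT::RelEntropy} in the paper) is a standard Donsker--Varadhan computation: the Legendre dual of $f \mapsto \int_{\Td} \log \int e^{f} \dd(Q_{x,\w}\otimes \zeta_{x}) \dd x$ is exactly $\relE{\Gamma}{\dd x \otimes \zeta_{x} \otimes Q_{x,\w}(\dd y)}$ restricted to $\Gamma \in \MOneL{\TW \times Y}$.
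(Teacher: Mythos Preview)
Your approach is essentially the same as the paper's: compute the limiting logarithmic moment generating function and invoke an abstract G\"artner--Ellis theorem on the locally convex space $\prod_{\ell} \MOne{\TW \times Y_{\ell}}$. Step~1 matches the paper's computation exactly, including the use of the Feller continuity assumptions to pass from the Riemann sum to the integral.

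The difference is in packaging. The paper does not separately argue exponential tightness (your Step~2) or exposed points (your Step~3); instead it observes that the limit $\Lambda(f) = \int_{\Td} g(x)\,\dd x$ is finite everywhere and \emph{Gateaux differentiable} in $f$, and then applies Theorem~3.4 of \cite{DawGarLDP}, which delivers the full LDP with good rate function $\Lambda^{*}$ directly from these two properties. Gateaux differentiability of $\Lambda$ is exactly the condition ensuring that every $\Gamma$ in the domain of $\Lambda^{*}$ is an exposed point, so it subsumes your tilting argument; and finiteness of $\Lambda$ on the whole dual already yields exponential tightness via exponential Chebyshev, so your Step~2 is not needed as a separate ingredient. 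Your route is valid but reconstructs by hand what Theorem~3.4 of \cite{DawGarLDP} encapsulates; the exponential tightness sketch in Step~2 in particular (combining marginal controls via Prokhorov) is rougher than necessary and would need more care to make rigorous, whereas the paper sidesteps it entirely.
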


In the case when $r=1$, i.e. $Y = Y_{1}$, we can represent the rate function as a relative entropy.

\begin{lemma}
\label{lem::SanovT::RelEntropy}
	If $r=1$ 
	then for $\Gamma = \dd x \otimes \Gamma_{x} \in \MOneL{\TW \times Y} $
	\begin{align}
	\label{eq::lem::SanovT::RelEntropy}
	\begin{split}
		L_{\nu,\zeta} \left( \Gamma \right)
		&=
				\relE{ \Gamma  }{  \dd x \otimes \zeta_{x} \left( \dd \w \right) \otimes Q_{x,\w} }
			=
				\int_{\Td} \relE{ \Gamma_{x}  }{   \zeta_{x} \left( \dd \w \right) \otimes Q_{x,\w} } \dd x
		\\
		&=
					\int_{\Td}
					\int_{\Wsp}
						\relE{ \Gamma_{x,\w}}{ Q_{x,\w} }
						\Gamma_{x,\Wsp} \left( \dd \w \right)
					\dd x
					+
					\int_{\Td} \relE{ \Gamma_{x,\Wsp}  }{ \zeta_{x} } \dd x
				.
	\end{split}
	\end{align} 
	Otherwise $	L_{\nu,\zeta} \left( \Gamma \right) = \infty$.
	Here $\Gamma_{x,\Wsp} \in \MOne{\Wsp}$ is defined as in Definition~\ref{def::DecomMu}.
	 
\end{lemma}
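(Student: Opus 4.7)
The plan is to reduce the Sanov-type rate function to the Donsker--Varadhan variational representation of the relative entropy, exploiting that the only reason the two formulas differ is that the logarithm in $L_{\nu,\zeta}$ sits \emph{inside} the integral over $\Td$ rather than outside.

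First I would dispose of the case $\Gamma\notin \MOneL{\TW\times Y}$. Restricting the sup in \eqref{lem::SanovT::RF} to test functions of the form $f(x,\w,y)=\lambda\, g(x)$ with $g\in\Csp{\Td}$, the expression collapses to $\lambda\bigl(\int_{\Td} g\,d\Gamma^{\Td}-\int_{\Td} g(x)\,dx\bigr)$, where $\Gamma^{\Td}$ denotes the projection of $\Gamma$ on $\Td$. If $\Gamma^{\Td}\neq dx$, some $g$ separates the two measures and letting $\lambda\to\pm\infty$ forces $L_{\nu,\zeta}(\Gamma)=\infty$, matching the right-hand side of \eqref{eq::lem::SanovT::RelEntropy} (understood as $+\infty$ in that case).

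Now assume $\Gamma=dx\otimes\Gamma_{x}$. For the inequality $L_{\nu,\zeta}(\Gamma)\ge \relE{\Gamma}{dx\otimes\zeta_{x}\otimes Q_{x,\w}}$, apply Jensen's inequality to the concave function $\log$: for every $f\in\CspL{b}{\TW\times Y}$,
\begin{align*}
\int_{\Td}\log\!\Bigl(\int_{\Wsp\times Y} e^{f(x,\w,y)}\zeta_{x}(d\w)Q_{x,\w}(dy)\Bigr)dx
\le \log\!\int e^{f}\,d\bigl(dx\otimes\zeta_{x}\otimes Q_{x,\w}\bigr),
\end{align*}
which, upon inserting into \eqref{lem::SanovT::RF} and taking the sup, yields exactly the Donsker--Varadhan formula for the relative entropy against $dx\otimes\zeta_{x}\otimes Q_{x,\w}$. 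For the reverse inequality, I would use the Donsker--Varadhan inequality pointwise in $x$:
\begin{align*}
\int_{\Wsp\times Y} f(x,\w,y)\,\Gamma_{x}(d\w,dy)-\log\!\int e^{f(x,\cdot,\cdot)}\zeta_{x}(d\w)Q_{x,\w}(dy)
\le \relE{\Gamma_{x}}{\zeta_{x}\otimes Q_{x,\w}},
\end{align*}
then integrate over $x\in\Td$ and invoke the chain rule for relative entropy (valid because both measures have Lebesgue projection on $\Td$) to identify the right-hand side with $\relE{\Gamma}{dx\otimes\zeta_{x}\otimes Q_{x,\w}}$. Taking the sup over $f$ on the left gives $L_{\nu,\zeta}(\Gamma)\le\relE{\Gamma}{dx\otimes\zeta_{x}\otimes Q_{x,\w}}$, whence the first equality of \eqref{eq::lem::SanovT::RelEntropy}.

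The second decomposition is a second application of the chain rule for relative entropy with respect to the disintegration of Definition~\ref{def::DecomMu}: writing $\Gamma_{x}=\Gamma_{x,\Wsp}(d\w)\otimes\Gamma_{x,\w}$ and $\zeta_{x}\otimes Q_{x,\w}=\zeta_{x}(d\w)\otimes Q_{x,\w}$, one obtains $\relE{\Gamma_{x}}{\zeta_{x}\otimes Q_{x,\w}}=\relE{\Gamma_{x,\Wsp}}{\zeta_{x}}+\int_{\Wsp}\relE{\Gamma_{x,\w}}{Q_{x,\w}}\Gamma_{x,\Wsp}(d\w)$, and integrating over $x$ gives the claimed identity. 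The main obstacle I anticipate is purely measure-theoretic: justifying the chain-rule identity in the precise form stated (measurability of $x\mapsto \Gamma_{x}$, joint measurability of the disintegrations $\Gamma_{x,\w}$ and $Q_{x,\w}$, and absolute-continuity bookkeeping when one of the summands is $+\infty$). This should follow from standard Polish-space disintegration together with the Feller continuity of $\w\mapsto Q_{x,\w}$ assumed in Assumption~\ref{ass::SanovT::MeasFeller}.
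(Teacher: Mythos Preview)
Your proposal is correct and follows essentially the same four-step structure as the paper: first forcing $L_{\nu,\zeta}=\infty$ off $\MOneL{\TW\times Y}$ via test functions $\lambda g(x)$, then Jensen's inequality for the bound $L_{\nu,\zeta}\ge\calH$, then an upper bound $L_{\nu,\zeta}\le\calH$, and finally the chain rule for the disintegrated form. The only minor difference is in the upper bound: the paper works with the density $\rho=\dd\Gamma/\dd(dx\otimes\zeta_x\otimes Q_{x,\w})$ directly (quoting \cite{MicRobLD}), whereas you apply the Donsker--Varadhan inequality pointwise in $x$ and then integrate; both arguments are standard and yield the same conclusion, and the paper likewise obtains the final decomposition by citing the chain rule (\cite{DupEllAWeakCon}~Theorem~C.3.1) rather than spelling it out.
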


Before we prove these two lemmas in Chapter~\ref{sec::Prel::SanovT::Proofs}, we state in Chapter~\ref{sec::Prel::SanovT::Ass} some immediate consequences of the assumptions.
We need these consequences in the proofs.
Moreover they show that the integrals and measures used for example in the definition of $L_{\nu,\zeta}$, are well defined.

Then we show in Chapter~\ref{sec::Prel::SanovT::AssDiff} how the Assumption~\ref{ass::LMF::Init} could be weakened.
Finally in Chapter~\ref{sec::Prel::SanovT::AssExamples} we give some examples of families $\left\{ \nu_{x} \right\}$ that satisfy the assumptions.

\subsubsection{Preliminaries of the proof of the Sanov type result}
\label{sec::Prel::SanovT::Ass}

\paragraph{Implication of the Assumption~\ref{ass::SanovT::MeasFeller}}

We infer now from the Assumption~\ref{ass::SanovT::MeasFeller}, the following stronger continuity result.
\begin{lemma}
\label{lem::SanovT::ass::FellerWithAddx}
The Assumption~\ref{ass::SanovT::MeasFeller} causes that the map $x, \w, \theta \mapsto  \int f \left( x,\w, y \right)  Q_{x,\w,\theta} \left( \dd y \right) $ is continuous for each $f \in \CspL{b}{\TW \times Y}$ .
\end{lemma}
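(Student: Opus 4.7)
The plan is to reduce the assertion to the Feller property stated in Assumption~\ref{ass::SanovT::MeasFeller} by decomposing the difference of the integrals in two pieces: one that isolates the $y$-dependence of the integrand (handled by weak convergence) and one that isolates the $(x,\w)$-dependence of the integrand (handled by uniform continuity on compact sets together with tightness of the weakly convergent family of measures).

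Concretely, fix $f \in \CspL{b}{\TW \times Y}$ and a sequence $(x_{n},\w_{n},\theta_{n}) \to (x,\w,\theta)$ in $\TWR$. Write
\begin{align*}
	\int f(x_{n},\w_{n},y)\, Q_{x_{n},\w_{n},\theta_{n}}(\dd y) - \int f(x,\w,y)\, Q_{x,\w,\theta}(\dd y)
	= I_{n} + II_{n},
\end{align*}
where
\begin{align*}
	I_{n} &\defeq \int \lbs f(x_{n},\w_{n},y) - f(x,\w,y) \rbs Q_{x_{n},\w_{n},\theta_{n}}(\dd y),
	\\
	II_{n} &\defeq \int f(x,\w,y) \lbs Q_{x_{n},\w_{n},\theta_{n}}(\dd y) - Q_{x,\w,\theta}(\dd y) \rbs.
\end{align*}
For $II_{n}$, the function $y \mapsto f(x,\w,y)$ belongs to $\CspL{b}{Y}$, so the Feller property supplied by Assumption~\ref{ass::SanovT::MeasFeller} gives $II_{n} \to 0$ directly from the weak convergence $Q_{x_{n},\w_{n},\theta_{n}} \to Q_{x,\w,\theta}$.

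For $I_{n}$, I would exploit that a weakly convergent sequence of probability measures on the Polish space $Y$ is tight: for any $\varepsilon>0$ there is a compact $K \subset Y$ with $\sup_{n} Q_{x_{n},\w_{n},\theta_{n}}(Y \setminus K) \leq \varepsilon$. Since $\Td$ is compact and $\Wsp \subset \R^{m}$ is compact, $\TW \times K$ is compact; hence $f$ is uniformly continuous there, and
\begin{align*}
	\delta_{n} \defeq \sup_{y \in K} \abs{f(x_{n},\w_{n},y) - f(x,\w,y)} \longrightarrow 0.
\end{align*}
Splitting the integral defining $I_{n}$ on $K$ and $Y \setminus K$ then yields $\abs{I_{n}} \leq \delta_{n} + 2 \LpN{\infty}{f} \varepsilon$, so $\limsup_{n} \abs{I_{n}} \leq 2 \LpN{\infty}{f} \varepsilon$, and letting $\varepsilon \to 0$ concludes the proof.

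The only delicate point in this plan is the choice of the compact set $K$: one must pick it uniformly in $n$, which is why tightness of the sequence $\left\{ Q_{x_{n},\w_{n},\theta_{n}} \right\}$ (automatic from weak convergence on a Polish space by Prokhorov's theorem) is essential. Once that is in place, the remainder is a routine $\varepsilon/2$ argument and does not require any structure beyond Assumption~\ref{ass::SanovT::MeasFeller} and the compactness of $\TW$.
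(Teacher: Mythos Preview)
Your proof is correct and follows essentially the same approach as the paper: the identical decomposition into the two pieces $I_n$ and $II_n$, handling $II_n$ directly by the Feller property and $I_n$ by Prokhorov tightness combined with uniform continuity of $f$ on a compact set. The only cosmetic difference is that you invoke compactness of $\TW$ to get uniform continuity on $\TW\times K$, whereas the paper works directly with the compact set $K^{\epsilon}\subset Y$ and the convergence $(x_n,\w_n)\to(x,\w)$; both arguments yield the same estimate.
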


\begin{proof}
	Fix an arbitrary sequence $ \left( x^{(n)}, \w^{(n)},\theta^{(n)} \right)  \rightarrow  \left( x, \w, \theta \right) \in \TWR$. Then
	\begin{align}
	\begin{split}
		&\abs{  \int f \left( x^{(n)}, \w^{(n)}, y \right)  Q_{x^{(n)}, \w^{(n)}, \theta^{(n)}} \left( \dd y \right)  
					-  \int f \left( x,\w, y \right)  Q_{x,\w, \theta} \left( \dd y \right)  }
		\\
		&\leq
		\abs{  \int f \left( x^{(n)},\w^{(n)}, y \right)  - f \left( x,\w, y \right)  Q_{x^{(n)}, \w^{(n)}, \theta^{(n)}} \left( \dd y \right)  }
		\\
		&\quad+
		\abs{  \int f \left( x,\w, y \right) \left(  Q_{x^{(n)},\w^{(n)}, \theta^{(n)}} \left( \dd y \right)  - Q_{x,\w,\theta} \left( \dd y \right) \right)   }
		\eqdef
		\encircle{1} + \encircle{2}
		.
		\end{split}
		\end{align}
		Due to the Feller continuity of $Q_{x,\w,\theta}$ (Assumption~\ref{ass::SanovT::MeasFeller}), the sequence $Q_{x^{(n)},\w^{(n)}, \theta^{(n)}}$ is tight (Prokhorov's theorem).
		Hence there is for each $\epsilon>0$, a compact set $K^{\epsilon} \subset Y$, such that $Q_{x^{(n)},\theta^{(n)}} \left( Y \backslash K^{\epsilon} \right)  \leq \epsilon$ for all $n$. Therefore we get
		\begin{align}
		\label{eq::lem::SanovT::ass::FellerWithAddx::pf::1}
		\encircle{1}
		\leq
			\sup_{y \in K^{\epsilon}} \big| f \left( x^{(n)},\w^{(n)},y \right) -f \left( x,\w, y \right)  \big|
			+ 2 \iNorm{f} Q_{x^{(n)},\w^{(n)} \theta^{(n)}} \left( Y \backslash K^{\epsilon} \right) 
		\leq
			\epsilon
			,
		\end{align}
	by the continuity of $f$ and the compactness of $K^{\epsilon}$ for $n$ large enough.
	From the Feller continuity (Assumption~\ref{ass::SanovT::MeasFeller}), we infer moreover that $\encircle{2}$ is bounded by $\epsilon$ for $n $ large enough.

\end{proof}

\paragraph{Implications of the Assumption~\ref{ass::LMF::Init}}

In this section, we show that the Assumption~\ref{ass::LMF::Init} implies in particular a convergence, which we need to prove the large deviation result (in Lemma~\ref{lem::SanovT})
and that $\dd x \otimes \nu_{x} \in \MOne{\TR}$ is well defined.
\begin{lemma}
\label{lem::SanovT::ass::Init}

Let the Assumption~\ref{ass::LMF::Init} holds.
\begin{enuAlph}
	\item
	\label{lem::SanovT::ass::Init::Convergence}
	For all $f \in \CspL{b}{\TWR}$, for which there is a constant $c>0$ such that $f \geq c$,
		\begin{align}
			\mkern-60mu
			\frac{1}{\Nd} \sum_{ k \in \TN} \log \left(
								 \int_{\Wsp \times \R} \mkern-12mu
								 f \left( \frac{k}{N}, \w, \theta \right) 
										 \nu_{\frac{k}{N}} \left( \dd \theta \right)  \zeta_{\frac{k}{N}} \left( \dd \w \right)
								 \right)
			\rightarrow
			\int_{\Td} \log \left( \int_{\Wsp \times \R} \mkern-12mu
									 f \left( x,\w, \theta \right) \nu_{x} \left( \dd \theta \right) \zeta_{x} 
									 \left( \dd \w \right)  \right) \dd x
			.
		\end{align}

	\item
	\label{lem::SanovT::ass::Init::WellDefinedLimit}
	The probability measure $\nu \left( \dd x, \dd \theta \right) \defeq \dd x \otimes \nu_{x}  \left( \dd \theta \right) \in \MOneL{\TR}$, defined by
	\begin{align}
		\nu \left[ A \times B \right] = \int_{A} \int_{B} \nu_{x} \left( \dd \theta \right) \dd x
	\end{align}
	for $A \subset \Td$ and $B \subset \R$, both Borel measurable, is well defined.

\end{enuAlph}
\end{lemma}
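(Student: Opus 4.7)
The plan is to handle \emph{\ref{lem::SanovT::ass::Init::WellDefinedLimit}} first, since it establishes the meaningfulness of the objects in \emph{\ref{lem::SanovT::ass::Init::Convergence}}, and then to deduce \emph{\ref{lem::SanovT::ass::Init::Convergence}} as a Riemann-sum convergence once the integrand is shown to be continuous and bounded on $\Td$.

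\textbf{Part \ref{lem::SanovT::ass::Init::WellDefinedLimit}.} By Assumption~\ref{ass::LMF::Init}, the map $x \mapsto \nu_{x}$ is continuous from $\Td$ to $\MOne{\R}$ (with the weak topology), hence Borel measurable. Composing with the evaluation map $\mu \mapsto \mu(B)$ (which is Borel measurable on $\MOne{\R}$ for each Borel $B \subset \R$ by the standard monotone class argument applied to $\CspL{b}{\R}$), one obtains that $x \mapsto \nu_{x}(B)$ is Borel measurable. Thus $\nu_{x}$ is a probability kernel from $\Td$ to $\R$, and $\nu(\dd x, \dd \theta) \defeq \dd x \otimes \nu_{x}(\dd \theta)$ is well defined by the standard product-kernel construction, with $\nu[A \times B]$ given by the formula in the statement (countable additivity follows from monotone convergence applied in the inner integral).

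\textbf{Part \ref{lem::SanovT::ass::Init::Convergence}.} Set
\begin{align*}
g(x) \defeq \log\left( \int_{\Wsp \times \R} f(x,\w,\theta)\, \nu_{x}(\dd \theta)\, \zeta_{x}(\dd \w) \right), \qquad x \in \Td.
\end{align*}
Since $c \leq f \leq \iNorm{f}$, we have $\log c \leq g \leq \log \iNorm{f}$, so $g$ is bounded. I claim $g$ is continuous. Fix $x^{(n)} \to x$ in $\Td$. Writing $F_{n} \defeq \int f(x^{(n)},\w,\theta)\,\nu_{x^{(n)}}(\dd\theta)\,\zeta_{x^{(n)}}(\dd\w)$ and $F \defeq \int f(x,\w,\theta)\,\nu_{x}(\dd\theta)\,\zeta_{x}(\dd\w)$, split
\begin{align*}
|F_{n} - F|
&\leq \int |f(x^{(n)},\w,\theta) - f(x,\w,\theta)|\, \nu_{x^{(n)}}(\dd\theta)\,\zeta_{x^{(n)}}(\dd\w) \\
&\quad+ \left| \int f(x,\w,\theta)\, \bigl(\nu_{x^{(n)}} \otimes \zeta_{x^{(n)}} - \nu_{x} \otimes \zeta_{x}\bigr)(\dd\w,\dd\theta) \right|.
\end{align*}
For the second summand, the weak convergences $\nu_{x^{(n)}} \to \nu_{x}$ and $\zeta_{x^{(n)}} \to \zeta_{x}$ from Assumptions~\ref{ass::LMF::Init} and~\ref{ass::LMF::Medium} imply the joint weak convergence $\nu_{x^{(n)}} \otimes \zeta_{x^{(n)}} \to \nu_{x} \otimes \zeta_{x}$, so the term vanishes since $f(x,\cdot,\cdot) \in \CspL{b}{\Wsp \times \R}$. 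For the first summand, Prokhorov's theorem gives, for every $\epsilon > 0$, a compact set $K \subset \Wsp \times \R$ with $(\nu_{x^{(n)}} \otimes \zeta_{x^{(n)}})((\Wsp \times \R) \setminus K) < \epsilon$ for all $n$ (the joint weak convergence provides tightness of the whole sequence). On the compact set $\Td \times K$ the continuous bounded function $f$ is uniformly continuous in the first variable, so $\sup_{(\w,\theta) \in K} |f(x^{(n)},\w,\theta) - f(x,\w,\theta)| \to 0$, and the contribution outside $K$ is controlled by $2 \iNorm{f}\,\epsilon$. Letting $n \to \infty$ then $\epsilon \to 0$ yields $F_{n} \to F$, and since $F \geq c > 0$, continuity of $\log$ gives $g(x^{(n)}) \to g(x)$.

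\textbf{Conclusion.} The sum in \emph{\ref{lem::SanovT::ass::Init::Convergence}} is the Riemann sum $\Nd^{-1} \sum_{k \in \TN} g(k/N)$ of the continuous bounded function $g$ on $\Td$, and hence converges to $\int_{\Td} g(x)\,\dd x$. The main technical point is the joint continuity argument for $g$; once one combines the tightness provided by Prokhorov with the uniform continuity of $f$ on compact subsets of $\Td \times \Wsp \times \R$, everything else reduces to standard Riemann-sum convergence.
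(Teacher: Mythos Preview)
Your proposal is correct and follows essentially the same approach as the paper: for part~\ref{lem::SanovT::ass::Init::Convergence} you show that $x\mapsto \int f(x,\w,\theta)\,\nu_x(\dd\theta)\,\zeta_x(\dd\w)$ is continuous via the same tightness-plus-uniform-continuity splitting that the paper uses (it refers back to the proof of Lemma~\ref{lem::SanovT::ass::FellerWithAddx}, which is exactly the argument you spell out), and then invoke Riemann-sum convergence; for part~\ref{lem::SanovT::ass::Init::WellDefinedLimit} both you and the paper reduce to showing that $x\mapsto\nu_x(B)$ is Borel measurable, the paper via approximation of indicators by continuous functions and you via the abstract measurability of the evaluation map, which are equivalent standard routes.
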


\begin{proof}[of Lemma~\ref{lem::SanovT::ass::Init}~\ref{lem::SanovT::ass::Init::Convergence}]
Fix a $f \in \CspL{b}{\TWR}$ such that $f>c>0$.
By the Feller continuity of $\nu_{x}$ (Assumption~\ref{ass::LMF::Init}) and $\zeta_{x}$ (Assumption~\ref{ass::LMF::Medium}), the function
\begin{align}
\label{eq::pf::lem::SanovT::ass::Init::Convergence::Hf}
		 x  
		 \mapsto
		 H_{f} \left( x \right) 
		\defeq 
		\int_{\Wsp}\int_{\R} f \left( x, \w, \theta \right) \nu_{x} \left( \dd \theta \right) \zeta_{x} \left( \dd \w \right)
\end{align} 
is continuous. This can be shown by the same proof, that we used for Lemma~\ref{lem::SanovT::ass::FellerWithAddx}.
Note that the Feller continuity of $\nu_{x}$ and $\zeta_{x}$ implies the Feller continuity of $\nu_{x} \otimes \zeta_{x}$ (see for example \cite{BilConv} Theorem~2.8~(ii)).
Then $H_{f}$ is, as a continuous function, also Riemann integrable.

By the continuity of $\log$ on $\left[ c, \iNorm{f} \right] \subset \R$,  also $x \mapsto \log H_{f} \left( x \right)$ is Riemann integrable.
This Riemann integrability implies the convergence of the sums in Lemma~\ref{lem::SanovT::ass::Init}~\ref{lem::SanovT::ass::Init::Convergence}.
\end{proof}

\begin{proof}[of Lemma~\ref{lem::SanovT::ass::Init}~\ref{lem::SanovT::ass::Init::WellDefinedLimit}]
By the Feller continuity of $\nu_{x}$ (Assumption~\ref{ass::LMF::Init}), the maps $F_{f}: \Td \rightarrow \R$,
\begin{align}
\label{eq::pf::lem::SanovT::ass::Init::Convergence::Ff}
	F_{f} \left( x \right)  
	\defeq
		\int_{\R} f \left( \theta \right) \nu_{x} \left( \dd \theta \right)
	.
\end{align}
 are continuous and therefore also Borel-measurable, for all non negative $f \in 	\CspL{b}{\R}$.
This implies that $F_{f}$ is also Borel-measurable for all $f= \1_{B}$ with $B \subset \R$ a arbitrary rectangle, by a pointwise approximation of $\1_{B}$ with continuous function.
Then $F_{\1_{B}}$ is also Borel measurable for all Borel measurable $B \subset \R$ (as pointwise limits).
Therefore the function 
\begin{align}
		P \left( x,B \right) 
		= 
			\int_{B} \nu_{x} \left( \dd \theta \right)
		=
		F_{\1_{B}} \left( x \right)
\end{align} 
is a probability kernel (or regular conditional probability measure) for each Borel set $B \subset \R$.
Hence the $\nu$ (given in Lemma~\ref{lem::SanovT::ass::Init}~\ref{lem::SanovT::ass::Init::WellDefinedLimit}) is a well defined probability measure.
\end{proof}

\paragraph{Implications of Assumption~\ref{ass::SanovT::MeasFeller} and Assumption~\ref{ass::LMF::Init}}

\begin{lemma}
\label{lem::SanovT::ass::dxQxnu}
	By Assumption~\ref{ass::SanovT::MeasFeller}, Assumption~\ref{ass::LMF::Init} and Assumption~\ref{ass::LMF::Medium}, then
	 $\dd x \otimes \zeta_{x} \left( \dd \w \right) \otimes Q_{x,\w}$, characterised by
	 	\begin{align}
	 	\left( \dd x \otimes \zeta_{x} \left( \dd \w \right) \otimes Q_{x,\w}  \right)  \left[ A_{1} \times A_{2} \times A_{3} \right] 
	 	=
	 	 \int_{A_{1}} \int_{A_{2}} \int_{A_{3}} Q_{x,\w} \left( \dd y \right)  \zeta_{x} \left( \dd \w \right) \dd x
	 	\end{align}
	 for $A_{1} \subset \Td$, $a_{2} \subset \Wsp$ and $A_{3} \subset Y$,
	  is a well defined probability measure in  $\MOne{ \TW \times Y}$.
	 
	 Moreover for all $f \in \CspL{b}{Y} $,
		 $\left( x,\w \right) \mapsto \int_{Y} f \left( y \right) Q_{x,\w} \left( \dd y \right)$ is continuous.
\end{lemma}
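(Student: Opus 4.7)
The plan is to establish the continuity claim first, since it is the technical heart of the lemma, and then read off well-definedness of the product measure as a routine consequence of standard kernel composition. For the continuity, I would apply Lemma~\ref{lem::SanovT::ass::FellerWithAddx} to the bounded continuous function $(x,\w,y) \mapsto f(y) \in \CspL{b}{\TW \times Y}$, which immediately yields that
\[
g_f(x,\w,\theta) \defeq \int_{Y} f(y) \, Q_{x,\w,\theta}(\dd y)
\]
is continuous and bounded on $\TWR$. Using $Q_{x,\w} = \int_\R Q_{x,\w,\theta} \, \nu_{x}(\dd\theta)$ I would then write $\int_{Y} f(y) Q_{x,\w}(\dd y) = \int_\R g_f(x,\w,\theta) \, \nu_{x}(\dd\theta)$, and for any sequence $(x^{(n)}, \w^{(n)}) \to (x,\w)$ split the increment as $\encircle{1} + \encircle{2}$ with
\[
\encircle{1} \defeq \int_\R \lbs g_f(x^{(n)}, \w^{(n)}, \theta) - g_f(x,\w,\theta) \rbs \nu_{x^{(n)}}(\dd\theta), \qquad \encircle{2} \defeq \int_\R g_f(x,\w,\theta) \lbs \nu_{x^{(n)}} - \nu_{x} \rbs (\dd\theta).
\]

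The term $\encircle{2}$ vanishes directly from the Feller continuity of $\{\nu_{x}\}$ (Assumption~\ref{ass::LMF::Init}) applied to $g_f(x,\w,\cdot) \in \CspL{b}{\R}$. For $\encircle{1}$ I would copy the strategy of the proof of Lemma~\ref{lem::SanovT::ass::FellerWithAddx}: Feller continuity makes $\{\nu_{x^{(n)}}\}$ tight by Prokhorov, so for any $\epsilon>0$ there is a compact $K_\epsilon \subset \R$ with $\sup_{n} \nu_{x^{(n)}}(\R \setminus K_\epsilon) \leq \epsilon$, hence $|\encircle{1}|$ is dominated by $\sup_{\theta \in K_\epsilon} |g_f(x^{(n)}, \w^{(n)}, \theta) - g_f(x,\w,\theta)| + 2 \iNorm{g_f} \epsilon$, and the supremum tends to $0$ by uniform continuity of $g_f$ on the compact set $(\{x^{(n)}\}_n \cup \{x\}) \times (\{\w^{(n)}\}_n \cup \{\w\}) \times K_\epsilon$.

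For the measure statement I would verify that both $\zeta_{\cdot}$ and $Q_{\cdot,\cdot}$ are probability kernels and then invoke the standard composition of the Lebesgue measure $\dd x$ with these two kernels. Feller continuity of $\{\zeta_{x}\}$ (Assumption~\ref{ass::LMF::Medium}) makes $x \mapsto \int h(\w) \zeta_{x}(\dd\w)$ continuous, hence Borel measurable, for $h \in \CspL{b}{\Wsp}$; the very same pointwise-approximation argument used in the proof of Lemma~\ref{lem::SanovT::ass::Init}~\ref{lem::SanovT::ass::Init::WellDefinedLimit} then lifts this to Borel measurability of $x \mapsto \zeta_{x}(B)$ for every Borel $B \subset \Wsp$. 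Applying the identical scheme to the $(x,\w)$-continuous maps $(x,\w) \mapsto \int f(y) Q_{x,\w}(\dd y)$ obtained above turns $Q$ into a probability kernel on $\TW$. The main obstacle is really only the $\encircle{1}$-estimate: pointwise convergence of $g_f(x^{(n)}, \w^{(n)}, \theta)$ in $\theta$ would not suffice, because $\nu_{x^{(n)}}$ can shift mass towards infinity, and it is exactly at this step that tightness coming from Feller continuity is indispensable; everything else reduces to standard kernel bookkeeping.
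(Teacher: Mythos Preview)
Your proposal is correct and follows essentially the same approach as the paper: the paper also obtains continuity of $(x,\w)\mapsto \int_Y f(y)\,Q_{x,\w}(\dd y)$ by first noting that $(x,\w,\theta)\mapsto \int_Y f(y)\,Q_{x,\w,\theta}(\dd y)$ is continuous and bounded (Assumption~\ref{ass::SanovT::MeasFeller}) and then invoking the tightness/compact-set argument of Lemma~\ref{lem::SanovT::ass::FellerWithAddx} together with the Feller continuity of $\{\nu_x\}$, after which the kernel/pointwise-approximation argument from the proof of Lemma~\ref{lem::SanovT::ass::Init}~\ref{lem::SanovT::ass::Init::WellDefinedLimit} yields the probability kernels and hence the well-definedness of $\dd x\otimes \zeta_x(\dd\w)\otimes Q_{x,\w}$. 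Your write-up is simply more explicit about the $\encircle{1}+\encircle{2}$ split that the paper only points to.
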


\begin{proof}
		We show at first that $\left( \zeta_{x} \left( \dd \w \right) \otimes Q_{x,\w}  \right)$ is well defined for each $x \in \Td$, by constructing a probability kernel.
		For $f \in \CspL{b}{Y} $, the function $\TWR \ni \left( x, \w,\theta \right) \mapsto \int_{Y} f \left( y \right) Q_{x,\w,\theta} \left( \dd y \right) $ is continuous and bounded by Assumption~\ref{ass::SanovT::MeasFeller}.		
		Then also 
		\begin{align} 
		\label{eq::pf::lem::SanovT::ass::Init::Convergence::Hbarf}
			\TW \ni
				\left( x, \w \right)
			\mapsto 
				\ol{H}_{f} \left( x, \w \right) 
			\defeq 
				 \int_{\R} \int_{Y} f \left( y \right) Q_{x, \w, \theta} \left( \dd y \right)  \nu_{x} \left( \dd \theta \right)
		\end{align}
		 is continuous by Assumption~\ref{ass::LMF::Init} (this can be shown as the continuity of \eqref{eq::pf::lem::SanovT::ass::Init::Convergence::Hf}).
		 As in the proof of Lemma~\ref{lem::SanovT::ass::Init}~\ref{lem::SanovT::ass::Init::WellDefinedLimit}, we infer from this that 
		 $P \left( x, \w, A \right) = \int_{Y} \1_{A}\left( y \right) Q_{x,\w} \left( \dd y \right) $  is a probability kernel.
		 Hence $\left( \zeta_{x} \left( \dd \w \right) \otimes Q_{x,\w}  \right) \in \MOne{\Wsp \times Y}$ is well defined for all $x \in \Td$.
		 
		 By the same arguments, also $P \left( x, B \right) = \int_{\Wsp \times Y} \1_{B}\left( \w, y \right) Q_{x,\w} \left( \dd y \right) \zeta_{x} \left( \w \right)$ is a probability kernel. This requires the Assumption~\ref{ass::LMF::Medium}.
		 Therefore the $\left( \dd x \otimes \zeta_{x} \left( \dd \w \right) \otimes Q_{x,\w} \right)$ is well defined.
\end{proof}

\subsubsection{Proof of Lemma~\ref{lem::SanovT} and Lemma~\ref{lem::SanovT::RelEntropy}}
\label{sec::Prel::SanovT::Proofs}

\begin{proof}[of Lemma~\ref{lem::SanovT}]
The log moment generating function can be calculated for  $ f= \left( f_{1},...,f_{r} \right)  \in \CspL{b}{\TW \times Y_{1} } \times ... \times \CspL{b}{\TW \times Y_{r} } $ by
\begin{align}
\label{eq::pf::lem::SanovT::logMoment}
\begin{split}
	\Gamma_{\nu,\zeta} \left( f \right)  
	&=
	 \lim_{N \rightarrow \infty} N^{-d} \log 
			 \int_{\Wsp^{\Nd} \times Y^{\Nd}}
						 e^{\Nd  \sPro{  L^{N}_{r}  }{  f  } } 
						 \zeta^{N} \left( \dd \ul{\w}^{N} \right) \otimes Q^{N}_{\ul{\w}^{N}}   \left( \dd \ul{y} \right) 
	\\
	&=
		\lim_{N \rightarrow \infty} N^{-d} \log \prod_{k \in \TN}
			\int_{\Wsp} \int_{Y} e^{\sum_{\ell=1}^{r} f_{\ell}\left( \frac{k}{N}, \w, y_{\ell} \right) } 
							Q_{\frac{k}{N},\w} \left( \dd y_{1},...,\dd y_{r} \right) \zeta_{\frac{k}{N}} \left( \dd \w \right)
	\\
	&=
		\lim_{N \rightarrow \infty} N^{-d} \sum_{k \in \TN }   \log 
			\int_{\Wsp} \int_{Y} e^{\sum_{\ell=1}^{r} f_{\ell}\left( \frac{k}{N}, \w, y_{\ell} \right) } 
			Q_{\frac{k}{N},\w } \left( \dd y_{1},...,\dd y_{r} \right) \zeta_{\frac{k}{N}} \left( \dd \w \right)
	\\
	&=
			 \int_{\Td} \log \left(
						\int_{\Wsp} \int_{Y} e^{\sum_{\ell=1}^{r} f_{\ell}\left( x, \w, y_{\ell} \right) } 
						Q_{x,\w } \left( \dd y_{1},...,\dd y_{r} \right) \zeta_{x} \left( \dd \w \right)
				\right) 
			\dd x
	,
\end{split}
\end{align}
where we use in the last equality the Lemma~\ref{lem::SanovT::ass::Init}~\ref{lem::SanovT::ass::Init::Convergence} and Lemma~\ref{lem::SanovT::ass::FellerWithAddx}.
Note that by Lemma~\ref{lem::SanovT::ass::FellerWithAddx} and by $H_{f}$ (defined in \eqref{eq::pf::lem::SanovT::ass::Init::Convergence::Hf}) being continuous, all integrals in \eqref{eq::pf::lem::SanovT::logMoment} are well defined. 

The right hand side of \eqref{eq::pf::lem::SanovT::logMoment} is finite and Gateaux differentiable. 
Also as in  \cite{DawGarLDP} we can show if $L_{\nu,\zeta} \left( \Gamma^{1},...,\Gamma^{r} \right)  < \infty$, then $\Gamma^{i} \in \MOne{ \Td \times Y_{i} } $. Therefore all conditions of Theorem~3.4 in  \cite{DawGarLDP} are satisfied and the claims of Lemma~\ref{lem::SanovT} are proven.
\end{proof}

\begin{proof}[of Lemma~\ref{lem::SanovT::RelEntropy}]
By Lemma~\ref{lem::SanovT}, we know that $\left\{ L^{N}_{r} \right\}$ satisfies under $\left\{Q_{v_{N}}^{N}\right\}$ a LDP with rate function $L_{\nu,\zeta} \left( \Gamma \right)$.
Now we show that the rate function $L_{\nu,\zeta}$ has the claimed representation \eqref{eq::lem::SanovT::RelEntropy}.
The measure $\left( \dd x \otimes \zeta_{x} \left( \dd \w \right) \otimes Q_{x,\w} \right)$ in the relative entropy is well defined by Lemma~\ref{lem::SanovT::ass::dxQxnu}.

\emptyline
\begin{steps}
\step[{If $L_{\nu,\zeta} \left( \Gamma \right)  < \infty$ then $\Gamma \in \MOneL{ \TW \times Y }$}]
\label{pf::lem::SanovT::RelEntropy::step::ML}

Fix  $\Gamma \in \MOne{ \TW \times Y } $ with $L_{\nu,\zeta} \left( \Gamma \right)  < \infty$.
Then $\int_{\TWR} f \left( x \right) \Gamma \left( \dd x , \dd \w,  \dd \theta \right) = \int_{\Td} f \left( x \right)  \dd x$ for all $f \in \CspL{b}{ \Td}$.
Indeed, assume there were a $f \in \CspL{b}{\Td}$ for which this is not satisfied.
Then for all $\lambda \in \R$, 
\begin{align}
	L_{\nu,\zeta} \left( \Gamma \right) 
	\geq 
		\lambda \int_{\TWR} f \left( x \right) \Gamma \left( \dd x , \dd \w, \dd \theta \right) 
		 -
		  \lambda \int_{\Td} f \left( x \right)  \dd x \not = 0
	.
\end{align}
Because $\lambda$ is arbitrary, this is a contradiction to $	L_{\nu,\zeta} \left( \Gamma \right)   < \infty$.
\emptyline

For each open $A \subset \Td$, we can find a sequence of $f_{n}\in \CspL{b}{\Td}$, such that $f_{n}\geq 0$, $f_{n} \nearrow \1_{A}$  (see e.g. \cite{AshReal} A6).
Therefore we get by the dominant convergence theorem that the projection of $\Gamma$  on $\Td$ has to be the Lebesgue measure.
The disintegration theorem for measures on a product space  (see \cite{AttButMichVar} Theorem~4.2.4)  states that $\Gamma = \dd x \otimes \Gamma_{x} $ with $\Gamma_{x} \in \MOne{ \Wsp \times Y } $.

\emptyline
\step[{$L_{\nu,\zeta} \left( \Gamma \right)   \leq \relE{  \dd x \otimes  \Gamma_{x}     }{  \dd x \otimes \zeta_{x} \left( \dd \w \right) \otimes Q_{x,\w} }$ for $\Gamma \in \MOneL{\TW \times Y} $}]
\label{pf::lem::SanovT::RelEntropy::step::LleqRel}

Fix $\Gamma \in \MOneL{\TW \times Y} $, such that $\relE{ \dd x \otimes  \Gamma_{x}   }{  \dd x \otimes \zeta_{x} \left( \dd \w \right) \otimes Q_{x,\w} } < \infty$. 
Hence $\dd x \otimes  \Gamma_{x}$ is absolute continuous with respect to $\dd x \otimes \zeta_{x} \left( \dd \w \right) \otimes Q_{x,\w}$ with density $\rho$: 
\begin{align}
\dd x \otimes  \Gamma_{x} \left( \dd \w, \dd y \right) 
=
\rho \left( x,\w, y \right)  \dd x \otimes \zeta_{x} \left( \dd \w \right) \otimes Q_{x,\w} \left( \dd y \right)
.
\end{align}
Because $\Gamma \in \MOneL{\TW \times Y} $, $\int_{\Wsp} \int_{Y} \rho \left( x,\w, y \right)  Q_{x,\w} \left( \dd y \right) \zeta_{x} \left( \dd \w \right) =1$ for all $ x  \in \Td$.
The claimed upper bound on $L_{\nu,\zeta} \left( \Gamma \right)$, follows from finally by the same steps as in the second point of the proof of Theorem~3.1 in \cite{MicRobLD}.

\emptyline
\step[{$L_{\nu,\zeta}  \left( \Gamma \right)  \geq \relE{  \dd x \otimes  \Gamma_{x}   }{  \dd x \otimes \zeta_{x} \left( \dd \w \right) \otimes Q_{x,\w}  }$ for $\Gamma \in \MOneL{\TW \times Y} $}]
\label{pf::lem::SanovT::RelEntropy::step::LgeqRel}

This is just an application of Jensen's inequality to the convex function $-\log$ in $L_{\nu,\zeta}$ and the variation formula of the relative entropy.

\emptyline
\step[{Second representation of rate function}]
\label{pf::lem::SanovT::RelEntropy::step::2Rep}

The second representation of the rate function follows by \cite{DupEllAWeakCon} Theorem~C.3.1.
\end{steps}\vspace{-\baselineskip}
\end{proof}

\begin{remark}
	When $r>1$ in Lemma~\ref{lem::SanovT::RelEntropy} , also the \ref{pf::lem::SanovT::RelEntropy::step::ML},\ref{pf::lem::SanovT::RelEntropy::step::LleqRel} and \ref{pf::lem::SanovT::RelEntropy::step::2Rep} of the proof of Lemma~\ref{lem::SanovT::RelEntropy} are true.
	However the \ref{pf::lem::SanovT::RelEntropy::step::LleqRel} is in general not true any more due to the larger set $\Csp{\TW \times Y}$ used in the variation formula of $\relE{.}{ \dd x \otimes \zeta_{x} \left( \dd \w \right) \otimes Q_{x,\w}  }$, compared to the set of functions used in the supremum in $L_{\nu,\zeta}$.
\end{remark}

\begin{remark}
	We could choose the initial distribution of the $\Nd$ dimensional system 
	 more general than $\nu^{N}$ being the product measures over the $\nu_{\frac{k}{N}}$,
	 and still get the results of Lemma~\ref{lem::SanovT} and Lemma~\ref{lem::SanovT::RelEntropy}.
	
	For example take measures $ \left\{ \nu^N_{k} \right\}_{k \in \TN, N \in \N} \subset \MOne{\R}$  and define the product measures $\nu^{N}$ with these measures instead of $\nu_{\frac{k}{N}}$.
	If for each $\epsilon >0$ and each positive $f \in \CspL{b}{\TWR}$, there is a $N_{\epsilon,f} \in \N$ such that
	\begin{align}
		\sup_{ N>N_{\epsilon,f} }
		\sup_{\w \in \Wsp}
		\sup_{k \in \TN} \abs{ 
					\int_{\R}
					\int_{Y} f \left( \frac{k}{N}, \w, y \right) Q_{\frac{k}{N},\w,\theta} \left( \dd y \right)
					\left(
					\nu^{N}_{k} \left( \dd \theta \right)
					-
					\nu_{\frac{k}{N}} \left( \dd \theta \right)
					\right)
					}
		<
			\epsilon
		,
	\end{align}
	then \eqref{eq::pf::lem::SanovT::logMoment} would also hold for these measures.

\end{remark}

\begin{remark}
We could exchange the space $\Td$ by an arbitrary compact Polish spaces $X$.
If adjusted assumptions hold for $X$, then we would get the same large deviation result.
We need the Lemma~\ref{lem::SanovT} in the sequel only with the space $\Td$. 
To simplify the comprehensibility, we state it here not in its most general form.
\end{remark}

\subsubsection{Weaker assumptions on the initial distributions than Assumption~\ref{ass::LMF::Init}}
\label{sec::Prel::SanovT::AssDiff}

We require for the proof of Lemma~\ref{lem::SanovT} the result of Lemma~\ref{lem::SanovT::ass::Init} but not necessarily the Assumption~\ref{ass::LMF::Init}.
Also for Lemma~\ref{lem::SanovT::RelEntropy}, we only need the result of Lemma~\ref{lem::SanovT} and Lemma~\ref{lem::SanovT::ass::dxQxnu}.

The results of Lemma~\ref{lem::SanovT::ass::Init} and of Lemma~\ref{lem::SanovT::ass::dxQxnu} also hold for initial distributions, that are not Feller continuous, i.e. that do not satisfy Assumption~\ref{ass::LMF::Init}, if weaker conditions are satisfied. We show this in the following lemma.

In the proof of Lemma~\ref{lem::SanovT::ass::Init}~\ref{lem::SanovT::ass::Init::Convergence}, we essentially need only that $H_{f}$ (defined in \eqref{eq::pf::lem::SanovT::ass::Init::Convergence::Hf}) is Riemann integrable.
Moreover we use in the proof Lemma~\ref{lem::SanovT::ass::Init}~\ref{lem::SanovT::ass::Init::WellDefinedLimit}, only that $F_{f}$ (defined in \eqref{eq::pf::lem::SanovT::ass::Init::Convergence::Ff}) is Borel-measurable for all $f\geq 0$.
Last but not least we need in the proof of Lemma~\ref{lem::SanovT::ass::dxQxnu}, that $\ol{H}_{f}$ (defined in \eqref{eq::pf::lem::SanovT::ass::Init::Convergence::Hbarf}) is Borel-measurable.
In the proofs of these lemmas we showed these properties by applying the Assumption~\ref{ass::LMF::Init}.
However these properties also follow from different conditions, as we show in the following Lemma.

\begin{lemma}
\label{lem::SanovT::ass::InitSuff}
	If we assume Assumption~\ref{ass::SanovT::MeasFeller} and Assumption~\ref{ass::LMF::Medium} and that
	\begin{enuRom}
	\item
	\label{lem::SanovT::ass::InitSuff::Riemann}
	$F_{f}$ is Riemann integrable for $f \in \CspL{b}{\R} $, $f \geq 0$,
	\item
	\label{lem::SanovT::ass::InitSuff::tight}
	the set $\left\{ \nu_{x} \right\}_{x \in \Td}$ is tight and
	\item
	\label{lem::SanovT::ass::InitSuff::Borel}
	 $F_{f}$ is Borel-measurable for all non-negative  $f \in \CspL{b}{\R} $,
	\end{enuRom}
	
then the statements of Lemma~\ref{lem::SanovT::ass::Init} and of Lemma~\ref{lem::SanovT::ass::dxQxnu} also hold.
	
\end{lemma}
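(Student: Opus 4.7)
The plan is to isolate the three specific properties that the original proofs of Lemma~\ref{lem::SanovT::ass::Init} and Lemma~\ref{lem::SanovT::ass::dxQxnu} extract from the Feller continuity of $\{\nu_x\}_{x \in \Td}$, and to re-establish each of them directly from the weaker hypotheses (i)--(iii). These are: ($\alpha$) Riemann integrability of $H_f$ from \eqref{eq::pf::lem::SanovT::ass::Init::Convergence::Hf} for every bounded continuous $f$ on $\TWR$ with $\inf f > 0$; ($\beta$) Borel measurability of $x \mapsto \int_{\R} \Indi_B(\theta)\,\nu_x(\dd\theta)$ for each Borel $B \subset \R$; and ($\gamma$) Borel measurability of $\ol{H}_f$ from \eqref{eq::pf::lem::SanovT::ass::Init::Convergence::Hbarf}. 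Once these three properties are in hand, the remaining arguments in the original proofs transfer verbatim.

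Property ($\beta$) I would obtain by pointwise monotone approximation of $\Indi_B$ by non-negative $f_n \in \CspL{b}{\R}$: each $F_{f_n}$ is Borel measurable by (iii), and pointwise limits preserve Borel measurability. This reproduces the original argument in Lemma~\ref{lem::SanovT::ass::Init}~\ref{lem::SanovT::ass::Init::WellDefinedLimit} without invoking Feller continuity.

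For ($\alpha$) and ($\gamma$), the key tool is Stone--Weierstrass combined with the uniform tightness (ii). Fixing $\epsilon>0$, by (ii) choose a compact $K^\epsilon \subset \R$ with $\sup_{x \in \Td} \nu_x(\R \setminus K^\epsilon) \leq \epsilon$. The algebra generated by products $a(x)\,b(\w)\,c(\theta)$ with $a \in \Csp{\Td}$, $b \in \Csp{\Wsp}$ and $c \in \Csp{K^\epsilon}$ is dense in $\Csp{\Td \times \Wsp \times K^\epsilon}$. Applying this to $f$ for ($\alpha$), and to the continuous map $(x,\w,\theta) \mapsto \int_Y f(y)\,Q_{x,\w,\theta}(\dd y)$ (continuous by Lemma~\ref{lem::SanovT::ass::FellerWithAddx}) for ($\gamma$), one obtains uniformly in $x$, up to an error controlled by $2\,\iNorm{f}\,\epsilon$, approximations of the form
\begin{align*}
H_f(x) &\approx \sum_i a_i(x)\,\Bigl(\int_{\Wsp} b_i(\w)\,\zeta_x(\dd\w)\Bigr)\,F_{c_i}(x),\\
\ol{H}_f(x,\w) &\approx \sum_i a_i(x)\,b_i(\w)\,F_{c_i}(x).
\end{align*}
In the first line the middle factor is continuous in $x$ by Assumption~\ref{ass::LMF::Medium} and $F_{c_i}$ is Riemann integrable by (i); since bounded Riemann integrable functions on the torus $\Td$ form an algebra (Lebesgue's criterion for Riemann integrability) and uniform limits of Riemann integrable functions on a compact set are again Riemann integrable, ($\alpha$) follows on letting $\epsilon \to 0$. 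In the second line the continuous factors are trivially Borel and $F_{c_i}$ is Borel by (iii), so ($\gamma$) follows from the closure of Borel measurability under finite sums, products, and uniform limits.

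The main technical point I expect to encounter is absorbing the tail contribution from $\R \setminus K^\epsilon$ uniformly in $x \in \Td$; this is precisely what uniform tightness (ii) provides. Without it, the Stone--Weierstrass step would only yield pointwise approximations of $H_f$ and $\ol{H}_f$, and neither Riemann integrability nor Borel measurability would be preserved in the limit.
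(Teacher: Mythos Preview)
Your proposal is correct and reaches the same three targets $(\alpha)$, $(\beta)$, $(\gamma)$ as the paper, but the approximation mechanism you use is genuinely different. The paper freezes the $x$-variable in $f$ at grid points: for a fine partition $\{\Delta_{i,N_\epsilon}\}$ of $\Td$ it sets $H_{f,\epsilon}(x) = \int_{\Wsp}\int_{\R} f(\tfrac{i}{N_\epsilon},\w,\theta)\,\nu_x(\dd\theta)\,\zeta_x(\dd\w)$ on $\Delta_{i,N_\epsilon}$, controls $\|H_f-H_{f,\epsilon}\|_\infty$ via tightness~(ii) and uniform continuity of $f$ on compacta (this is the calculation \eqref{eq::lem::SanovT::ass::FellerWithAddx::pf::1}), and then appeals to (i) on each cell. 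You instead separate all three variables at once via Stone--Weierstrass, writing $H_f$ and $\ol H_f$ as uniform limits of finite sums $\sum_i a_i(x)\,(\int b_i\,\dd\zeta_x)\,F_{c_i}(x)$ and $\sum_i a_i(x)\,b_i(\w)\,F_{c_i}(x)$, and close via the algebra property of bounded Riemann integrable functions. Your route is somewhat cleaner: the paper's sentence ``Therefore (i) implies that $H_{f,\epsilon}$ is Riemann integrable on this interval'' still has the $\w$-integral against $\zeta_x$ sitting inside, and strictly speaking needs a further variable-separation step of exactly the kind you perform explicitly.

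One small point to tighten: Stone--Weierstrass on $\Td\times\Wsp\times K^\epsilon$ needs $\Wsp$ compact. In the general setting of Section~\ref{sec::Prel::SanovT} this is not assumed, so you should also use that Assumption~\ref{ass::LMF::Medium} plus compactness of $\Td$ makes $\{\zeta_x\}_{x\in\Td}$ tight, restrict $\Wsp$ to a compact $K_\Wsp^\epsilon$ as you already do for $\R$, and absorb the extra tail into the $2\|f\|_\infty\,\epsilon$ error. Also note that (i) and (iii) are stated for non-negative $f$, so decompose each $c_i$ into positive and negative parts before invoking them.
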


\begin{remark}
	The conditions \ref{lem::SanovT::ass::InitSuff::Riemann}, \ref{lem::SanovT::ass::InitSuff::tight} and \ref{lem::SanovT::ass::InitSuff::Borel} are all implied by the Assumption~\ref{ass::LMF::Init}.
\end{remark}

\begin{proof}
	As discussed at the beginning of this chapter, we only need to show that $H_{f}$ is Riemann integrable and that $\ol{H}_{f}$ is Borel-measurable.

\begin{steps}
\step[$H_{f}$ is Riemann integrable]
\label{pf::lem::SanovT::ass::InitSuff::Riemann}

We fix an $f \in \CspL{b}{\TWR}$, with $f>c>0$.
For each $\epsilon>0$, we construct now a Riemann integrable function $H_{f,\epsilon} : \Td \rightarrow \R$ which satisfies 
\begin{align}
\label{eq::pf::lem::RiemWithX::ToShow}
	\iNorm{ H_{f} \left( . \right) - H_{f,\epsilon}  \left( . \right) }
	<
	\epsilon
	.
\end{align}
This implies the uniform convergence of Riemann integrable functions to $H_{f}$ and therefore also that $H_{f}$ is Riemann integrable.

\emptyline

For all $\epsilon>0$, there is a $N_{\epsilon} \in \N$, such that
\begin{align}
	\abs{
	\int_{\Wsp \times \R} \left( f \left( x_{1}, \w, \theta \right) -  f \left( x_{2}, \w, \theta \right)  \right)  
	\nu_{x_{1}} \left( \dd \theta \right)
	\zeta_{x_{1}} \left( \dd \w \right)
	}
	\leq
		\epsilon
	,
\end{align}
for $x_{1},x_{2} \in \Td$ with $\iNorm{ x_{1}-x_{2} } \leq \frac{1}{N_{\epsilon}}$.
This follows by the same calculation used in \eqref{eq::lem::SanovT::ass::FellerWithAddx::pf::1}, by the tightness of $\nu_{x}$ (condition \ref{lem::SanovT::ass::InitSuff::tight}) and the tightness of $\zeta_{x}$ (Assumption~\ref{ass::LMF::Medium}).
Hence \eqref{eq::pf::lem::RiemWithX::ToShow} is satisfied with
\begin{align}
	H_{f,\epsilon} \left( x \right)  
	\defeq 
	\int_{\Wsp} \int_{\R} f \left( \frac{i}{N_{\epsilon}}, \w, \theta \right) \nu_{x} \left( \dd \theta \right) \zeta_{x}\left( \dd \w \right)
	, 
\end{align}
where $i \in \Td_{N_{\epsilon}}$ is chosen such that $\iNorm{ \frac{i}{N_{\epsilon}} -x } \leq \frac{1}{2N_{\epsilon}}$.

Moreover $H_{f,\epsilon}$ is Riemann integrable.
Indeed, for each $i \in \Td_{N_{\epsilon}}$ and each $x \in \Td$  with  $\iNorm{ \frac{i}{N_{\epsilon}} -x } \leq \frac{1}{2N_{\epsilon}}$, the function in the integrand is always the function $f \left( \frac{i}{N_{\epsilon}},\w, \theta \right)$.
Therefore \ref{lem::SanovT::ass::InitSuff::Riemann} implies that $H_{f,\epsilon}$ is Riemann integrable on this interval.
There are only finitely many such rectangles and therefore $H_{f,\epsilon}$ is Riemann integrable on $\Td$.

\emptyline
\step[ $\ol{H}_{f}$ is Borel-measurable]

Fix a $f \in \CspL{b}{Y}$.
As shown in the proof of Lemma~\ref{lem::SanovT::ass::dxQxnu}, the function $\TWR \ni \left( x, \w, \theta \right) \mapsto \int_{Y} f \left( y \right) Q_{x,\w, \theta} \left( \dd y \right) $ is continuous and bounded by Assumption~\ref{ass::SanovT::MeasFeller}.
Therefore it suffices to prove that for all $g \in \CspL{b}{\TWR}$, the function $x,\w \mapsto \int_{\R} g \left( x,\w, \theta \right) \nu_{x} \left( \dd \theta \right)$ is Borel measurable.
By the same argument as in \ref{pf::lem::SanovT::ass::InitSuff::Riemann}, we can approximate $\ol H_{g}$ uniformly by $\ol H_{g,\epsilon}$.
Then we only need to show that the $\ol H_{g,\epsilon}$ are Borel-measurable.
This follows as in \ref{pf::lem::SanovT::ass::InitSuff::Riemann}, but now by condition \ref{lem::SanovT::ass::InitSuff::Borel} instead of  \ref{lem::SanovT::ass::InitSuff::Riemann}.

\end{steps}\vspace{-\baselineskip}
\end{proof}

\subsubsection{Examples of initial distributions}
\label{sec::Prel::SanovT::AssExamples}

\begin{example}
We give now three easy examples of initial distributions that satisfy the Assumption~\ref{ass::LMF::Init}.
\begin{enuRom}
	\item
		All initial distributions equal each other, i.e. $\nu_{x} = \nu_{0} \in \MOne{ \R } $.
	\item
		There is a function $g \in \Csp{\Td}$ such that $\nu_{x} = \delta_{g \left( x \right) }$.
	\item
			There is a function $g \in \Csp{\Td}$ such that $\nu_{x}$ is normal distributed with mean $g \left( x \right)$ and variance one, i.e. $\nu_{x} \sim N \left( g \left( x \right), 1 \right)$.		
\end{enuRom}

\end{example}

\begin{example}
Let us now state some examples of function, that satisfy the conditions of Lemma~\ref{lem::SanovT::ass::InitSuff} and are therefore also usable.
\begin{itemize}	
\item
	Let $A_{i} \subset \Td$ be measurable  disjoint rectangles  such that $\Td = \bigcup_{i=1}^{n} A_{i}$.
	Let $\mu_{x} = \mu_{A_{i}}$ for $x \in A_{i}$.
	Then $F_{f}$ is a step function and therefore Borel measurable and Riemann integrable.
	
	Moreover the set $\left\{ \mu_{x} \right\}$ is a finite set of probability measures and therefore tight.
	
	However the stronger Assumption~\ref{ass::LMF::Init} is in general not satisfied.

\item
	Explicit example of such measures are for example $\nu_{x} = \nu^{\textnormal{UP/Down}}$ respectively on the upper and lower half of the torus.
	The measures $\nu^{\textnormal{UP/Down}}$ could be for example  $\delta_{\pm 1}$ or $N \left( \pm 1 , 1 \right)$.

\item
	$F_{f}$ is also Borel measurable and Riemann integrable, if it is a uniform limit of step functions (i.e. a $d$-dimensional regulated function).
	Therefore even more general measures $\nu_{x}$ are possible, as long as these measures satisfy the tightness assumption.

\end{itemize}
\end{example}

\subsection{Extended Arzel\'a-Ascoli theorem}
\label{sec::Prel::AA}

We give now a mild generalisation of the Arzel\'a-Ascoli theorem to subsets of $\TWC$.
By the compactness of $\Td$ we basically only have to take care of the projections of a set $A \subset \TWC$ to the $\Wsp$ and the $\Csp{\Ti}$ component. For the latter projection we can use the conditions of the original  Arzel\'a-Ascoli theorem.

\begin{lemma}[Extended Arzel\'a-Ascoli Theorem]
\label{lem::AA}
\hspace{1cm}

\begin{enuRom}
\item
\label{lem::AA::AA}
$A \subset \TWC$ is relatively compact 
if and only if 
\begin{align}
\Proj_{\CspSymbol} \left[ A \right] = \left\{ \theta_{\Ti}  \in \Csp{\Ti} : \exists \left( x,\w \right) \in \TW :  \left( x, \w, \theta_{\Ti}  \right)  \in A  \right\}
\end{align}
 is equibounded and equicontinuous and $ \Proj_{\Wsp} \left[ A \right] $ is relatively compact.

\item
\label{lem::AA::AppliedToMeasure}
A sequence $\{Q^{(n)}\} \subset \MOne{ \TWC } $ is tight 
if and only if
\begin{enumerate}
\item
	for each $\eta>0$ there exists an $a>0$  such that for all $n>0$ and  $t \in \Ti $
	\begin{align}
		Q^{(n)}\left[   \left( x,\w,\theta_{\Ti}  \right)  \in \TWC : \abs{ \theta_{0}  }  \geq a \right]  \leq \eta
	\end{align} 
	and

\item
	for each $\kappa,\eta>0$ there exists $\delta \in  \left( 0,1 \right) $ such that for all $n>0$
	\begin{align}
			Q^{(n)}\left[   \left( x,\w,\theta_{\Ti}  \right)  \in \TWC : \sup_{\abs{ t-s } \leq \delta} \abs{ \theta_{t}-\theta_{s}   } \geq \kappa   \right]  
			\leq 
				\eta
	\end{align} 
	and
\item
	for each $\eta>0$ there exists an $M>0$  such that for all $n>0$
		\begin{align}
		Q^{(n)}\left[   \left( x,\w,\theta_{\Ti}  \right)  \in \TWC : \abs{ \w }  \geq M \right]  
		\leq \eta
		.
		\end{align} 
\end{enumerate}

\end{enuRom}

\end{lemma}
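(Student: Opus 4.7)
The plan is to reduce both statements to the classical Arzel\`a--Ascoli theorem and to Prokhorov's theorem, exploiting that $\TWC$ is a product space whose first factor $\Td$ is already compact.

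For part \ref{lem::AA::AA}, I would argue as follows. Since $\TWC$ is a product of metric spaces endowed with the product topology, a subset $A$ is relatively compact iff its closure is compact, iff (by Tychonoff together with the fact that each projection $\Proj_{\Td}, \Proj_{\Wsp}, \Proj_{\CspSymbol}$ is continuous) each of the three projections is relatively compact. The projection to $\Td$ is automatically relatively compact since $\Td$ itself is compact. The projection to $\Wsp$ being relatively compact is one of the two stated hypotheses. Finally, the classical Arzel\`a--Ascoli theorem for $\Csp{\Ti,\R}$ says that $\Proj_{\CspSymbol}[A]$ is relatively compact iff it is equibounded and equicontinuous. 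Conversely, if the three projections are relatively compact then $A \subset \Td \times \overline{\Proj_{\Wsp}[A]} \times \overline{\Proj_{\CspSymbol}[A]}$, which is a product of compacts, hence compact, so $A$ is relatively compact.

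For part \ref{lem::AA::AppliedToMeasure}, I would invoke Prokhorov's theorem: $\{Q^{(n)}\}$ is tight iff for every $\eta>0$ there is a relatively compact set $K_{\eta} \subset \TWC$ with $\inf_{n} Q^{(n)}(K_{\eta}) \geq 1-\eta$. Using part \ref{lem::AA::AA}, a natural choice of $K_{\eta}$ has the product form $\Td \times K_{\eta}^{\Wsp} \times K_{\eta}^{\CspSymbol}$, where $K_{\eta}^{\Wsp} \subset \Wsp$ is compact and $K_{\eta}^{\CspSymbol} \subset \Csp{\Ti}$ is equibounded and equicontinuous. Condition (c) is exactly the Prokhorov condition for tightness of the $\Wsp$--marginals $\{(\Proj_{\Wsp})_\# Q^{(n)}\}$; conditions (a) and (b) are the standard criterion (see e.g. Billingsley) for tightness of the $\Csp{\Ti}$--marginals: (a) ensures equiboundedness at time $0$ (combined with (b) this upgrades to equiboundedness on all of $\Ti$), and (b) encodes equicontinuity in probability via the modulus of continuity. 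The equivalence therefore reduces to verifying that one can build the product compact set $K_{\eta}$ by intersecting the three ``bad'' events on which the $\theta_0$--bound, the modulus of continuity, and the $\w$--bound fail, each controlled with mass at most $\eta/3$ and $\eta \cdot 2^{-k}/3$ summed over a sequence $\delta_{k} \downarrow 0$ in the standard way.

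The only delicate step is the modulus--of--continuity argument in condition (b): one needs to choose the thresholds $\kappa_{k}, \delta_{k}$ along a summable sequence so that the resulting set of paths with modulus of continuity tending to $0$ is equicontinuous on $\Ti$, matching the Arzel\`a--Ascoli hypothesis. This is the classical construction from Billingsley's treatment of tightness on $\Csp{\Ti}$, and it carries over verbatim here because the extra factors $\Td \times \Wsp$ in the sample space play no role beyond requiring the additional tightness statement (c); the product structure is handled by taking the intersection of the three complementary events with a union bound.
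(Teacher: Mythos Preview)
Your proposal is correct and follows essentially the same route as the paper. For part~\ref{lem::AA::AA} the paper argues via total boundedness (building finite $\epsilon$-nets in the product from nets for the projections) rather than invoking Tychonoff explicitly, but this is the same argument in different clothing; for part~\ref{lem::AA::AppliedToMeasure} the paper likewise reduces to part~\ref{lem::AA::AA} and then cites Billingsley's Theorem~7.3, which is exactly the construction you spell out.
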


\begin{proof}
\begin{enuRomNoIntendBf}
\item
We claim that the relative compactness of $A$ is equivalent to the relative compactness of $\Proj_{\CspSymbol} \left[ A \right]$ and the relative compactness of  $\Proj_{\Wsp} \left[ A \right] $.

Then~\ref{lem::AA::AA} follows from the Arzel\'a-Ascoli theorem  (see for example  \cite{BilConv} Theorem~7.2).

\RarrBold
If $A$ is relatively compact, then,  for each $\epsilon$, there are $n=n \left( \epsilon \right) \in \N$ tuples $  \left( x^{(\ell)},  \w^{(\ell)}, \theta^{(\ell)}_{\Ti}  \right)_{\ell=1}^n  \subset \TWC$, such that $A \subset \bigcup_{\ell=1}^{n} B_{\epsilon} \left(  \left( x^{(\ell)},  \w^{(\ell)}, \theta^{(\ell)}_{\Ti} \right)  \right) $.
Then 
\begin{align}
		\Proj_{\CspSymbol} \left[ B_{\epsilon} \left(  \left( x^{(\ell)}, \w^{(\ell)}, \theta^{(\ell)}_{\Ti}  \right)  \right) \right]
		=
		B_{\epsilon} \left(  \theta^{(\ell)}_{\Ti}  \right)
\end{align}
and therefore 
$\Proj_{\CspSymbol} \left[ A \right] \subset \bigcup_{i=1}^{n} B_{\epsilon} \left( \theta^{(\ell)}_{\Ti}  \right) $.
Hence we found a finite open cover of $\Proj_{\CspSymbol} \left[ A \right]$, i.e. $\Proj_{\CspSymbol} \left[ A \right]$ is totally bounded and therefore relatively compact.

By the same argument there is a finite open cover for $\Proj_{\Wsp} \left[ A \right]$.

\LarrBold
If $\Proj_{\CspSymbol} \left[ A \right]$ is relatively compact, then $\Proj_{\CspSymbol} \left[ A \right] \subset \bigcup_{\ell=1}^{n} B_{\epsilon} \left( \theta^{(\ell)}_{\Ti}  \right) $.
If $\Proj_{\Wsp} \left[ A \right]$ is relatively compact, then $\Proj_{\Wsp} \left[ A \right] \subset \bigcup_{i=1}^{n'} B_{\epsilon} \left( \w^{(i)} \right) $.
 This implies that $A$ is totally bounded with open cover
$A \subset \bigcup_{\ell=1}^{n} \bigcup_{i =1}^{n'} \bigcup_{k \in \Td_{\frac{1}{\epsilon}}} B_{4\epsilon} \left(   \left( k \epsilon,\w^{(i)}, \theta^{(\ell)}_{\Ti}  \right)   \right) $.

\item
This claim follows by applying part~\ref{lem::AA::AA} of this lemma, as in the proof in \cite{BilConv} Theorem~7.3.
\end{enuRomNoIntendBf}\vspace{-\baselineskip}
\end{proof}

\subsection{Distribution-valued functions}
\label{sec::Prel::Distr}

In this chapter we state the definitions and results of Chapter~4.1 of \cite{DawGarLDP} transferred to the space-dependent setting considered here.

\begin{definition}
\label{def::Distr::TestFunc}
\begin{itemNoLeftIntend}[itemsep=0em]
	\item
	We denote by  $\D = \CspL[\infty]{c}{\TWR} $ the space of test functions having compact support and continuous derivatives of all orders with the usual inductive topology.
	\item
	For a compact set $K\subset \TWR$, let $\D_{K}$ be the subset of $\D$ of functions with support in $K$.
	\item
	By $\D'$ and $\D_{K}'$, we denote the space of real distributions on $\D$ respectively on $\D_{K}$.
	\item
	Moreover we write $\sPro{\xi}{f}$ for the application of $\xi \in \D'$ to $f \in \D$.
\end{itemNoLeftIntend}	
\end{definition}

\begin{definition} [Variation of Definition~4.1 in \cite{DawGarLDP}]
\label{def::Distr::AbsCont}
	A map $\xi   : \Ti \rightarrow \D'$ is called absolutely continuous
	 if for each compact set $K \subset \TWR$, there exist a neighbourhood $U_{K}$ of $0$ in $\D_{K}$ and a absolutely continuous function $H_{K}: \Ti \rightarrow \R$ such that
	 \begin{align}
	 	\abs{
			\sPro{\xi \left( u \right) }{f}
			-
			\sPro{\xi \left( v \right) }{f}
		}
	\leq
		\abs{
			H_{K} \left( u \right)  - H_{K} \left( v \right) 
		}
	 \end{align}
for all $u,v \in I$ and $f \in U_{K}$.
	 
\end{definition}

\begin{lemma}[Lemma~4.2 in \cite{DawGarLDP}]
\label{def::Distr::Derivatives}
	If $\xi  : \Ti \rightarrow \D'$ is absolutely continuous,
	then
	$\sPro{\xi \left( . \right) }{f} : \Ti \rightarrow \R$ is also absolutely continuous for each $f \in \D$.
	
	Moreover the time derivative of $\xi$ in the distributions sense
	\begin{align}
		\partial_{t}{\xi} \left( t \right)  = \lim_{h\rightarrow 0} h^{-1} \left( \xi \left( t+h \right)  - \xi \left( t \right)  \right) 
	\end{align}
exists for almost all $t \in \Ti$.

\end{lemma}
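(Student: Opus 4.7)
The plan is to separate the two assertions and handle them in order: first upgrade the definition of absolute continuity from $U_K$ to all of $\D$ by a scaling argument, then promote the per test-function a.e.\ differentiability to a common-null-set statement via a separability argument on each $\D_{K_n}$.

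For the first assertion, fix $f \in \D$. Then $\supp f \subset K$ for some compact $K \subset \TWR$, so $f \in \D_K$. By Definition~\ref{def::Distr::AbsCont} there is an open neighbourhood $U_K$ of $0$ in $\D_K$ and an absolutely continuous $H_K : \Ti \to \R$ controlling increments on $U_K$. Since $U_K$ is absorbing, pick $\lambda > 0$ with $\lambda f \in U_K$. Linearity of $\xi(u)-\xi(v)$ in its argument gives
\begin{equation*}
	\abs{\sPro{\xi(u)}{f} - \sPro{\xi(v)}{f}} \leq \lambda^{-1} \abs{H_K(u) - H_K(v)},
\end{equation*}
and this inherits absolute continuity from $H_K$. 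In particular $t \mapsto \sPro{\xi(t)}{f}$ is differentiable almost everywhere in $\Ti$, with an exceptional null set $N_f \subset \Ti$ that a priori depends on $f$.

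For the second assertion I would build a single null set $N$ off which the distributional limit exists. Choose an exhaustion $K_1 \subset K_2 \subset \dots$ of $\TWR$ by compact sets; each $\D_{K_n}$ is a separable Fr\'echet space, so fix a countable dense subset $\{f_{n,i}\}_{i \in \N} \subset \D_{K_n}$. Let $N$ be the union of the (countably many) null sets on which either $t \mapsto \sPro{\xi(t)}{f_{n,i}}$ or $H_{K_n}$ fails to be differentiable; this $N$ remains of Lebesgue measure zero. Fix $t_0 \in \Ti \setminus N$ and a general $f \in \D$; say $f \in \D_{K_n}$. Denote by $\mu_n$ the Minkowski gauge of $U_{K_n}$, so that for every $g \in \D_{K_n}$ and $s,t \in \Ti$,
\begin{equation*}
	\abs{\sPro{\xi(t) - \xi(s)}{g}} \leq \mu_n(g)\,\abs{H_{K_n}(t) - H_{K_n}(s)}.
\end{equation*}
Applied with $g = f - f_{n,i}$ and $s = t_0, t = t_0 + h$, then divided by $h$, this bound is uniformly controlled near $h = 0$ because $H_{K_n}$ is differentiable at $t_0$. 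Choosing $f_{n,i_k} \to f$ in $\D_{K_n}$ so that $\mu_n(f - f_{n,i_k}) \to 0$, a standard $\varepsilon/3$-argument shows that $h^{-1} \sPro{\xi(t_0 + h) - \xi(t_0)}{f}$ is Cauchy as $h \to 0$, hence converges. The resulting linear functional in $f$ is continuous on each $\D_{K_n}$ by the same gauge bound, so it defines an element of $\D'$, which is precisely $\partial_t \xi(t_0)$.

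The main obstacle is the step promoting pointwise (per-$f$) a.e.\ differentiability to a genuine distributional derivative off a single null set. Real-valued absolute continuity alone is not enough — the exceptional sets $N_f$ could in principle fail to have a common union of measure zero over the uncountable family $f \in \D$. The quantitative content of Definition~\ref{def::Distr::AbsCont}, namely that the \emph{same} absolutely continuous function $H_K$ controls all $f \in U_K$ simultaneously, is what allows separability of $\D_{K_n}$ to be exploited via the equicontinuity estimate above. Once this uniformity is in hand, the rest is routine.
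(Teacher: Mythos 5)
Your proof is correct and follows essentially the same route as the paper, which simply defers to Lemma~4.2 of Dawson--G\"artner and notes that separability of $\D$ and the $\D_{K}$ is the crucial ingredient: you first use the absorbing property of $U_{K}$ plus linearity to transfer the absolute continuity of $H_{K}$ to $t \mapsto \sPro{\xi(t)}{f}$, and then exploit the fact that one fixed $H_{K}$ controls all test functions in $U_{K}$ simultaneously to pass, via countable dense subsets of the $\D_{K_n}$, to a single null set and to the existence and continuity of the distributional derivative. This is exactly the argument the paper has in mind, spelled out in full.
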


\begin{lemma}[Lemma~4.3 in \cite{DawGarLDP},integration by parts]
\label{def::Distr::IntbyParts}
	For all absolutely continuous map $\xi : \Ti \rightarrow \D'$ and each $f\in \CspL[\infty]{c}{\TTWR} $,
	\begin{align}
		\sPro{\xi \left( t \right) }{f \left( t \right) }-\sPro{\xi \left( s \right) }{f \left( s \right) } =
		\int_{s}^{t}\sPro{\partial_{t}{\xi} \left( u \right) }{f \left( u \right) } \dd u 
		+
		\int_{s}^{t}\sPro{\xi \left( u \right) }{\partial_{t}{f} \left( u \right) } \dd u 
		\textnormal{ .}
	\end{align}
\end{lemma}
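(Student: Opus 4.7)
The strategy is to reduce the claim to the classical statement that an absolutely continuous real-valued function on $\Ti$ equals the integral of its almost everywhere derivative. Concretely, I plan to prove that $g : u \mapsto \sPro{\xi \left( u \right)}{f \left( u \right)}$ is absolutely continuous on $\Ti$ and that its derivative is given almost everywhere by $\sPro{\partial_t \xi \left( u \right)}{f \left( u \right)} + \sPro{\xi \left( u \right)}{\partial_t f \left( u \right)}$, after which the two contributions integrate to the two terms on the right-hand side.

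Since $f \in \CspL[\infty]{c}{\TTWR}$, there is a compact set $K \subset \TWR$ such that $\supp f \left( u, \cdot \right) \subset K$ for every $u \in \Ti$, and the map $u \mapsto f \left( u, \cdot \right)$ is continuously differentiable from $\Ti$ into $\D_K$ with derivative $\partial_t f \left( u, \cdot \right)$. For absolute continuity of $g$ I would split, for $u < v$ in $\Ti$,
\begin{align*}
\sPro{\xi \left( v \right)}{f \left( v \right)} - \sPro{\xi \left( u \right)}{f \left( u \right)}
= \sPro{\xi \left( v \right) - \xi \left( u \right)}{f \left( u \right)} + \sPro{\xi \left( v \right)}{f \left( v \right) - f \left( u \right)} ,
\end{align*}
bound the first summand by a constant multiple of $\abs{H_K \left( v \right) - H_K \left( u \right)}$ via Definition~\ref{def::Distr::AbsCont} after rescaling $f \left( u, \cdot \right)$ into the neighbourhood $U_K$, and bound the second by a constant times $\abs{v-u}$ using smoothness of $f$ in $u$ together with the seminorm bound on $\xi \left( v \right)$ over $\D_K$ that is inherited from $H_K$. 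Summing over any finite disjoint collection of subintervals of $\Ti$ then yields absolute continuity of $g$.

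To identify the derivative I would apply summation by parts on a partition $s = t_0 < \cdots < t_n = t$ of mesh $\delta$,
\begin{align*}
\sPro{\xi \left( t \right)}{f \left( t \right)} - \sPro{\xi \left( s \right)}{f \left( s \right)}
= \sum_{i=0}^{n-1} \sPro{\xi \left( t_{i+1} \right) - \xi \left( t_i \right)}{f \left( t_i \right)} + \sum_{i=0}^{n-1} \sPro{\xi \left( t_{i+1} \right)}{f \left( t_{i+1} \right) - f \left( t_i \right)} .
\end{align*}
For each fixed test function $h \in \D$, Lemma~\ref{def::Distr::Derivatives} applied to the scalar-valued absolutely continuous function $u \mapsto \sPro{\xi \left( u \right)}{h}$ gives $\sPro{\xi \left( t_{i+1} \right) - \xi \left( t_i \right)}{h} = \int_{t_i}^{t_{i+1}} \sPro{\partial_t \xi \left( u \right)}{h} \dd u$. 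Inserting $h = f \left( t_i, \cdot \right)$ and letting $\delta \to 0$, the first sum should converge to $\int_s^t \sPro{\partial_t \xi \left( u \right)}{f \left( u \right)} \dd u$; the second is a Riemann sum for $\int_s^t \sPro{\xi \left( u \right)}{\partial_t f \left( u \right)} \dd u$ and converges by uniform continuity of $u \mapsto \partial_t f \left( u, \cdot \right)$ in $\D_K$ together with the uniform seminorm bound on $\left\{ \xi \left( u \right) \right\}_{u \in \Ti}$.

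The main obstacle I anticipate is justifying the passage to the limit in the first sum, where one has to replace $f \left( t_i, \cdot \right)$ by the continuously varying $f \left( u, \cdot \right)$ inside a distributional pairing even though $\partial_t \xi \left( u \right)$ is only defined as a distribution for almost every $u$. This is handled by exploiting the equicontinuity of $\left\{ \xi \left( u \right) \right\}_{u \in \Ti}$ as a family of functionals on $\D_K$, which follows from the seminorm bound in Definition~\ref{def::Distr::AbsCont}; combined with the uniform convergence of $f \left( t_{i \left( u \right)}, \cdot \right)$ to $f \left( u, \cdot \right)$ in $\D_K$ as $\delta \to 0$ and a dominated convergence argument based on the integrability of the Lipschitz derivative of $H_K$, this yields the desired limit and completes the proof.
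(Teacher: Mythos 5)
Your argument is correct and is essentially the standard Dawson--G\"artner proof that the paper itself only invokes by citation (it remarks that the proof of Lemma~4.3 in \cite{DawGarLDP} carries over verbatim, separability of $\D_{K}$ being the crucial point, which enters through Lemma~\ref{def::Distr::Derivatives} to get a single null set for the derivative $\partial_{t}\xi$). Two cosmetic repairs: the uniform bound on $\left\{ \xi\left( u \right) \right\}_{u \in \Ti}$ over a neighbourhood of $0$ in $\D_{K}$ must be anchored at one fixed time (continuity of the single distribution $\xi\left( s \right)$ plus boundedness of the absolutely continuous $H_{K}$), and $H_{K}$ is merely absolutely continuous, so the dominating function in your limit argument is its $L^{1}$ derivative rather than a ``Lipschitz derivative''.
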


The proofs of these two lemmas are analogue to the one of Lemma~4.2 in \cite{DawGarLDP} respectively Lemma~4.3 in \cite{DawGarLDP}.
The crucial property of $\D$ and $\D_{K}$ for the proofs is their separability.
This is the case for the spaces considered here as well as in \cite{DawGarLDP}.

\begin{remark}
	We apply the results of this chapter later to probability measure valued functions in $\Cem$.
	This is possible because each measure in $\MOne{ \TWR } $ is a Radon measure and hence also an element of $\D'$.
\end{remark}

\subsection{Relation between the spaces of the empirical measures and empirical processes}
\label{sec::Prel::RelBetwSpaces}

We are looking at two different levels of large deviation principles.
The higher level are the empirical measures $L^{N}$ in $\MOne{\TWC}$.
The second level are the empirical processes $\empP$ in $\Cem$.
Both elements are defined (see \eqref{def::LN} and \eqref{def::empM}) as images of the paths of the spin values on the space $\Csp{\Ti}^{\Nd}$ and of the random environment $\ul{\w}^{N}\in \Wsp^{\Nd}$.

\emptyline
Let us now define a map $\Pi : \MOne{\TWC} \rightarrow \Cem$, which maps  $L^{N}$ to $\empP$ for each $N \in \N$.

\begin{definition}
\label{def::Pi}
For $Q \in \MOne{\TWC}$ we define $\Pi \left( Q \right)_{\Ti} \in \Cem$ for each $t \in \Ti$ by 
 \begin{align}
 \begin{split}
	\Pi \left( Q \right)_{t}  \left( \dd x,\dd \w,\dd \theta \right) 
	&=
		Q \left[ y= \left( y_{x}, y_{\w} , y_{\Ti} \right) \in \TWC :  \left( y_{x},y_{\w}, y_{t}  \right)  \in \dd x \dd \w \dd \theta \right]  
	\\
	&= 
		Q \circ  \left( id_{\Td},id_{\Wsp}, \theta_{t} \right) ^{-1}  \left( \dd x, \dd \w, \dd \theta \right) 
\end{split}
\end{align}
for $ \left( x,\w,\theta \right)  \in \TWR$.
\end{definition}
The measure $\Pi \left( Q \right)_{t}$ is the one-dimensional distribution at time $t \in \Ti$ of the measure $Q \in \MOne{\TWC}$.
Let us show that $\Pi \left( Q \right)_{\Ti}$ of Definition~\ref{def::Pi} is actually an element of the space $\Cem$.

\begin{lemma}
\label{lem::MapPi::WellDef}
	The function $\Pi$  is well defined.
\end{lemma}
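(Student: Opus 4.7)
The plan is to verify the two defining properties of an element of $\Cem = \Csp{\Ti, \MOne{\TWR}}$ separately: first that $\Pi(Q)_t$ is a probability measure on $\TWR$ for every fixed $t$, and second that the map $t \mapsto \Pi(Q)_t$ is continuous with respect to the weak topology on $\MOne{\TWR}$.

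For the first point, I would observe that for each fixed $t \in \Ti$ the evaluation map $e_t : \TWC \to \TWR$, $(y_x, y_\w, y_{\Ti}) \mapsto (y_x, y_\w, y_t)$, is continuous (the Polish topology on $\Csp{\Ti}$ makes coordinate evaluations continuous) and hence Borel-measurable. Therefore $\Pi(Q)_t = Q \circ e_t^{-1}$ is a well-defined Borel probability measure on $\TWR$, i.e.\ an element of $\MOne{\TWR}$.

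For the second point, I would fix $f \in \CspL{b}{\TWR}$ and a sequence $t_n \to t$ in $\Ti$, and show
\begin{align*}
\int_{\TWR} f(x,\w,\theta) \, \Pi(Q)_{t_n}(\dd x, \dd \w, \dd \theta)
&= \int_{\TWC} f(y_x, y_\w, y_{t_n}) \, Q(\dd y) \\
&\xrightarrow{n\to\infty} \int_{\TWC} f(y_x, y_\w, y_t) \, Q(\dd y)
= \int_{\TWR} f(x,\w,\theta) \, \Pi(Q)_t(\dd x, \dd \w, \dd \theta).
\end{align*}
The convergence of the middle integrals follows from dominated convergence: for each $y = (y_x, y_\w, y_{\Ti}) \in \TWC$, the path $y_{\Ti}$ is continuous on $\Ti$, so $y_{t_n} \to y_t$ and, by continuity of $f$, $f(y_x, y_\w, y_{t_n}) \to f(y_x, y_\w, y_t)$; the integrand is uniformly bounded by $\iNorm{f}$, which is $Q$-integrable. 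Since this holds for every $f \in \CspL{b}{\TWR}$, $\Pi(Q)_{t_n} \to \Pi(Q)_t$ weakly, proving continuity.

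No step here is a serious obstacle: the only thing to watch is that the evaluation map $e_t$ is continuous (hence measurable) and that continuity of paths in $\Csp{\Ti}$ is available pointwise, which is exactly what lets dominated convergence produce weak continuity of $t\mapsto \Pi(Q)_t$.
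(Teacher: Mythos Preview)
Your proof is correct. Both you and the paper split the task into showing $\Pi(Q)_t\in\MOne{\TWR}$ and then continuity in $t$, but the continuity arguments differ. You use dominated convergence directly: continuity of each individual path $y_{\Ti}$ gives pointwise convergence of $f(y_x,y_\w,y_{t_n})$, and boundedness of $f$ lets you pass to the limit under $Q$. The paper instead works with bounded Lipschitz test functions and produces a quantitative estimate of the form
\[
\abs{\int f\,\dd\Pi(Q)_t - \int f\,\dd\Pi(Q)_s}\ \leq\ L_f\kappa + 2\iNorm{f}\,Q\!\left[\sup_{|u-v|<\delta}|\theta_u-\theta_v|\geq\kappa\right],
\]
controlling the last probability via tightness of $Q$ and the Arzel\'a--Ascoli characterisation (Lemma~\ref{lem::AA}~\ref{lem::AA::AppliedToMeasure}). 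Your route is more elementary and entirely sufficient here. The paper's route buys something extra: the same estimate, applied uniformly to a tight family $\{Q^{(n)}\}$, is precisely what is recycled in the proof of Lemma~\ref{lem::MapPi::Cont} to upgrade pointwise-in-$t$ convergence to uniform-in-$t$ convergence. So while your argument settles Lemma~\ref{lem::MapPi::WellDef} cleanly, it would need to be supplemented by a separate equicontinuity estimate when you reach the continuity of $\Pi$.
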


\begin{proof}
Fix a $Q \in \MOne{ \TWC }$. We have to show that $\Pi \left( Q \right)_{\Ti} $ is in  $\Cem$.
By the definition of $\Pi$, we know already that $\Pi \left( Q \right)_{t}  \in \MOne{ \TWR } $ for all $t \in \Ti$.
Now we prove the continuity in time.
Take a bounded $L_{f}$-Lipschitz continuous function $f\in \CspL{b}{\TWR}$ and $s,t \in \Ti $ with $\abs{s-t} < \delta$ , then
\begin{align}
\label{eq::lem::MapPi::WellDef::BoundWeakCon}
\begin{split}
	&\abs{
		\int f \left( y,\w, \theta \right)  \left( \Pi \left( Q \right)_{t}  -  \Pi \left( Q \right)_{s} \right)
	}
=
	\abs{
		\int f \left( y,\w, \theta_{t} \right)  - f \left( y, \w, \theta_{s}  \right)  Q \left( \dd y,\dd \w,\dd \theta_{\Ti}  \right) 
	}
\\
&\leq 
	\int \abs{f \left( y, \w, \theta_{t} \right)  - f \left( y, \w, \theta_{s}  \right) } 
			\1_{ \abs{\theta_{t}-\theta_{s} } < \kappa} 
		Q \left( \dd y, \dd \w, \dd \theta_{\Ti}  \right)  
	+
	2 \iNorm{f} Q \left[ \hat{\theta}: \abs{\theta_{t}-\theta_{s} } \geq \kappa \right]
\\
&\leq
	L_{f} \kappa 
	+
	2 \iNorm{f}
		Q \left[ \sup_{\abs{u-v}<\delta}\abs{\theta_{u} -\theta_{v}} \geq \kappa \right]
	\leq
	\epsilon
	,
\end{split}
\end{align}
when $\kappa= \frac{\epsilon}{2L_{f}}$ and $\delta$ is small enough (by the extended  Arzel\'a-Ascoli Lemma~\ref{lem::AA}~\ref{lem::AA::AppliedToMeasure}).
Hence the Portmanteau theorem implies that $\Pi \left( Q \right)_{t_{n}}  \rightarrow \Pi \left( Q \right)_{t} $ weakly in $\MOne{ \TWR } $ if $t_{n} \rightarrow t$.
\end{proof}

Moreover we show now that $\Pi$ is a continuous function.

\begin{lemma}
\label{lem::MapPi::Cont}
	The function $\Pi$  is continuous.
\end{lemma}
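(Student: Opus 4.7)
Since both $\MOne{\TWC}$ (weak topology) and $\Cem$ (uniform topology on the compact interval $\Ti$ with weak convergence on $\MOne{\TWR}$) are metrizable, it suffices to prove sequential continuity. The plan is to fix a sequence $Q^{(n)} \to Q$ weakly in $\MOne{\TWC}$ and to show that, for every bounded Lipschitz $f \in \CspL{b}{\TWR}$ with $\iNorm{f} \le 1$ and Lipschitz constant $L_f$,
\begin{align*}
\sup_{t \in \Ti} \abs{\int_{\TWR} f\, \dd \Pi(Q^{(n)})_t - \int_{\TWR} f\, \dd \Pi(Q)_t} \XXLongArrow{n \to \infty} 0.
\end{align*}
Since the bounded-Lipschitz metric induces the weak topology on $\MOne{\TWR}$, this will give uniform convergence of $\Pi(Q^{(n)})$ to $\Pi(Q)$ in $\Cem$.

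The first step is to rewrite the inner integral using the definition of $\Pi$: for every $t \in \Ti$,
\begin{align*}
\int_{\TWR} f\, \dd \Pi(Q^{(n)})_t \;=\; \int_{\TWC} F_t\, \dd Q^{(n)}, \qquad F_t(x,\w,\theta_{\Ti}) \defeq f(x,\w,\theta_t),
\end{align*}
and each $F_t$ is continuous and bounded on $\TWC$. Thus for any fixed finite set of times $\{t_1,\dots,t_m\} \subset \Ti$, the weak convergence $Q^{(n)} \to Q$ immediately gives $\int F_{t_i}\,\dd Q^{(n)} \to \int F_{t_i}\,\dd Q$ for each $i$.

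The real work is the uniformity in $t$, which I would get by combining tightness with the extended Arzelà--Ascoli theorem (Lemma~\ref{lem::AA}). By Prokhorov, the family $\{Q^{(n)}\} \cup \{Q\}$ is tight, so for any $\epsilon > 0$ the equicontinuity criterion in Lemma~\ref{lem::AA}~\ref{lem::AA::AppliedToMeasure} yields $\delta > 0$ with
\begin{align*}
\sup_{n} Q^{(n)}\!\bigl[(x,\w,\theta_{\Ti}) : \sup_{|u-v| \le \delta} |\theta_u - \theta_v| \ge \epsilon\bigr] \;\le\; \epsilon,
\end{align*}
and the same for $Q$. Partition $\Ti$ into finitely many intervals $I_1,\dots,I_m$ of length $<\delta$ with midpoints $t_1,\dots,t_m$. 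Then for every $t \in I_i$ and every $(x,\w,\theta_{\Ti}) \in \TWC$ outside the exceptional set above, the Lipschitz property of $f$ gives $|F_t - F_{t_i}|(x,\w,\theta_{\Ti}) \le L_f\, \epsilon$, while on the exceptional set the difference is bounded by $2\iNorm{f}$. This yields, uniformly in $n$ and in $t \in I_i$,
\begin{align*}
\abs{\int F_t\, \dd Q^{(n)} - \int F_{t_i}\, \dd Q^{(n)}} \;\le\; L_f\,\epsilon + 2\iNorm{f}\,\epsilon,
\end{align*}
and the same bound with $Q$. A triangle inequality together with the pointwise convergence $\int F_{t_i}\,\dd Q^{(n)} \to \int F_{t_i}\,\dd Q$ for the finitely many times $t_i$ then produces the desired uniform bound.

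The main obstacle is exactly the uniformity in $t$: the direct continuous-mapping argument only gives weak convergence of $\Pi(Q^{(n)})_t$ for each fixed $t$, which is not enough in the uniform topology of $\Cem$. The decisive input that upgrades pointwise to uniform convergence is the Arzelà--Ascoli type tightness characterisation of Lemma~\ref{lem::AA}~\ref{lem::AA::AppliedToMeasure}, which provides, for the tight family $\{Q^{(n)}\}$, the $\delta$-modulus-of-continuity control used above; the rest is a standard approximation by finitely many evaluation times. One only has to be a little careful to use $f$ bounded \emph{and} Lipschitz (which is enough to metrise weak convergence via the bounded-Lipschitz metric), so that the estimate $|F_t - F_{t_i}| \le L_f\,|\theta_t - \theta_{t_i}|$ is available.
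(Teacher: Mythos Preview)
Your proposal is correct and follows essentially the same approach as the paper's own proof: both use Prokhorov tightness of the converging sequence together with the equicontinuity part of the extended Arzel\'a--Ascoli Lemma~\ref{lem::AA}~\ref{lem::AA::AppliedToMeasure} to control the time increments uniformly in $n$, then reduce to finitely many evaluation times at which weak convergence gives the pointwise limit, and combine via the triangle inequality. The only cosmetic difference is that the paper writes out the three-term splitting explicitly (two time-increment terms and one fixed-time term) while you phrase it via the auxiliary functions $F_t$ and a partition of $\Ti$.
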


\begin{proof}
The proof of this lemma follows the ideas in the proof of \cite{DawGarLDP} Lemma~4.6 for the mean field model.
	
	Take a sequence $Q^{(n)} \rightarrow Q$ in $\MOne{ \TWC } $. 
	This implies that for each $t\in \Ti $ and each $f\in \CspL{b}{\TWR}$, that is Lipschitz continuous,
	\begin{align}
	\label{eq::lem::MapPi::Cont::ConvProj}
			\abs{
				\int_{\TWR}  f \left( x,\w,  \theta\right) \left( \Pi \left( Q^{(n)} \right)_{t} - \Pi \left( Q \right)_{t} \right) \left(\dd x, \dd \w, \dd\theta\right)  
			}
			\rightarrow 
			0
			. 
	\end{align}
	The topology on $\Cem$ is the topology of uniform convergence.
	Therefore we have to show that the convergence \eqref{eq::lem::MapPi::Cont::ConvProj} is uniform in $t$.
	The weak convergence of $Q^{(n)}$ implies tightness  (Prokhorov's theorem), because $\TWC$ is a separable metric space.
	  Moreover we can split the absolute value in \eqref{eq::lem::MapPi::Cont::ConvProj} into the following summands
\begin{align}
\begin{split}
	 \textnormal{\eqref{eq::lem::MapPi::Cont::ConvProj}}
	  \leq&
		\abs{
			\int_{\TWR} f \left( x, \w, \theta\right) \left( \Pi \left( Q^{(n)} \right)_{s} - \Pi \left( Q \right)_{s} \right) \left(\dd x, \dd \w, \dd\theta\right)  
		}
	\\
	&
	+
		\abs{
			\int_{\TWR} f \left( x, \w, \theta\right) \left( \Pi \left( Q^{(n)} \right)_{t} - \Pi \left( Q^{(n)} \right)_{s} \right) \left(\dd x, \dd \w, \dd\theta\right)  
		}
	\\
	&+
		\abs{
			\int_{\TWR} f \left( x, \w, \theta\right) \left( \Pi \left( Q \right)_{t} - \Pi \left( Q \right)_{s} \right) \left(\dd x, \dd \w, \dd\theta\right)  
		}
	\eqdef
	\encircle{1}+\encircle{2}+\encircle{3}
	.
\end{split}
\end{align}

The \encircle{2} and \encircle{3} are bounded by $\epsilon$ for all $t,s \in \Ti$ with $\abs{t-s}<\delta$ for a $\delta$ small enough.
This can be shown as in \eqref{eq::lem::MapPi::WellDef::BoundWeakCon}.
Moreover the $\delta$ is the same for all $n \in \N$, because the analogue of \eqref{eq::lem::MapPi::WellDef::BoundWeakCon} is bounded uniformly in $n$ by  Lemma~\ref{lem::AA}~\ref{lem::AA::AppliedToMeasure}.

For each $k \in \left\{ 1,..., \frac{T}{\delta} \right\}$, there is a $N_{k} \in \N$, such that $	\encircle{1}$ is bounded by $\epsilon$ for  all $n>N_{k}$.

Therefore we conclude that for all $n>\max_{k=0}^{\frac{T}{\delta}} N_{k}$
\begin{align}
	  	\sup_{t}
		\abs{
			\int_{\TWR}  f \left( x, \w, \theta\right) \left( \Pi \left( Q^{(n)} \right)_{t} - \Pi \left( Q \right)_{t} \right) \left(\dd x, \dd \w, \dd\theta\right)  
		}
	\leq
	3 \epsilon
	,
 \end{align}
i.e. the uniform (in $t\in\Ti $) convergence of \eqref{eq::lem::MapPi::Cont::ConvProj}.
\end{proof}

\begin{notation}
	\label{nota::PIforC}
	With abuse of notation, we use the symbol $\Pi$ also for:
	\begin{itemNoLeftIntend}
		\item 
			The analogue defined function $\MOne{\Csp{\Ti}} \rightarrow \Csp{\Ti, \MOne{\R}}$.
			Then $\Pi \left( q \right)_{\Ti} \in \Csp{\Ti, \MOne{\R}}$ for $q \in \MOne{\Csp{\Ti}} $.
		\item
			The analogue defined function $\MOne{\Wsp \times \Csp{\Ti}} \rightarrow \Csp{\Ti, \MOne{\Wsp \times \R}}$.
	\end{itemNoLeftIntend}
		
\end{notation}

In the following lemma we state 
that the projection of $\Pi \left( Q \right)$ to $\Td$ is the Lebesgue measure, if this is the case for $Q$.
Moreover we show that the projection of $\Pi$ to the environment coordinate is frozen over time.

\begin{lemma}
\label{lem::MapPi::YoungMedium}
	 For $Q \in \MOneL{\TWC}$, $\Pi \left( Q \right)_{t} \in \MOneL{ \TWR }$ for all $t \in \Ti$.
	 Moreover $\Pi \left( Q \right)_{t,x,\Wsp} = \Pi \left( Q \right)_{0,x,\Wsp} = Q_{x,\Wsp}$ (see Definition~\ref{def::DecomMu}) for all $t \in \Ti$.
\end{lemma}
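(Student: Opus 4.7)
The plan is to verify both claims by a direct computation from the definition of $\Pi$ (Definition~\ref{def::Pi}), combined with the disintegration of $Q$. Throughout, write $Q = \dd x \otimes Q_{x}$ with $Q_{x} = Q_{x,\Wsp}(\dd\w) \otimes Q_{x,\w}(\dd\theta_{\Ti})$, using that $Q \in \MOneL{\TWC}$ by hypothesis.

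For the first claim, fix any Borel set $A \subset \Td$ and any $t \in \Ti$. From the definition of $\Pi$,
\begin{align*}
	\Pi(Q)_{t}\bigl[A \times \Wsp \times \R\bigr]
	=
	Q\bigl[(y_{x},y_{\w},y_{\Ti}) \in \TWC : (y_{x}, y_{\w}, y_{t}) \in A \times \Wsp \times \R\bigr]
	=
	Q\bigl[A \times \Wsp \times \Csp{\Ti}\bigr]
	=
	\abs{A},
\end{align*}
since $y_{t} \in \R$ and $y_{\w} \in \Wsp$ are automatic. Hence the projection of $\Pi(Q)_{t}$ to $\Td$ is the Lebesgue measure, i.e. $\Pi(Q)_{t} \in \MOneL{\TWR}$. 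This part is purely bookkeeping.

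For the second claim, I would test the identity $\Pi(Q)_{t,x,\Wsp} = Q_{x,\Wsp}$ on Borel rectangles $A \times B \subset \Td \times \Wsp$. By the first claim, $\Pi(Q)_{t} = \dd x \otimes \Pi(Q)_{t,x}$, and further disintegration (Definition~\ref{def::DecomMu}) gives $\Pi(Q)_{t,x} = \Pi(Q)_{t,x,\Wsp}(\dd\w) \otimes \Pi(Q)_{t,x,\w}$. Then
\begin{align*}
	\int_{A} \Pi(Q)_{t,x,\Wsp}(B)\,\dd x
	&=
	\Pi(Q)_{t}\bigl[A \times B \times \R\bigr]
	=
	Q\bigl[(y_{x},y_{\w},y_{\Ti}) : y_{x} \in A,\, y_{\w} \in B,\, y_{t} \in \R\bigr]
	\\
	&=
	Q\bigl[A \times B \times \Csp{\Ti}\bigr]
	=
	\int_{A} Q_{x,\Wsp}(B)\,\dd x.
\end{align*}
Since this holds for every Borel $A \subset \Td$ and every Borel $B \subset \Wsp$, and since both $x \mapsto \Pi(Q)_{t,x,\Wsp}(B)$ and $x \mapsto Q_{x,\Wsp}(B)$ are the densities provided by the disintegration theorem, a standard argument (choose a countable generating algebra of Borel sets in $\Wsp$ and take the common full-measure set) yields $\Pi(Q)_{t,x,\Wsp} = Q_{x,\Wsp}$ for $\dd x$-a.e.\ $x$. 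In particular the right-hand side does not depend on $t$, which gives $\Pi(Q)_{t,x,\Wsp} = \Pi(Q)_{0,x,\Wsp} = Q_{x,\Wsp}$.

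There is essentially no obstacle: the only mild point is the measure-theoretic care needed in the disintegration step (the a.e.\ identification of regular conditional probabilities via a countable separating class), but this is entirely standard once part one provides the Lebesgue projection. The heart of the lemma is simply that $\Pi$ evaluates the path coordinate at time $t$ without touching $(y_{x},y_{\w})$, so the $\TW$-marginal of $\Pi(Q)_{t}$ coincides with the $\TW$-marginal of $Q$ for every $t$.
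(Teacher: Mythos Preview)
Your argument is correct and matches the paper's own proof almost exactly: both verify the first claim by observing that the projection of $\Pi(Q)_t$ to $\Td$ is the $\Td$-marginal of $Q$, and both establish the second claim by testing the $\TW$-marginal (the paper against $f(x,\w)\in\CspL{b}{\TW}$, you against indicators of rectangles $A\times B$). Your treatment is in fact slightly more careful about the a.e.\ identification of the conditional measures.
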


\begin{proof}
		Fix a $Q \in \MOneL{\TWC}$ and a $t \in \Ti$.
		Then $Q = \dd x \otimes Q_{x}$ and it is easy to see that
		$\Pi \left( Q \right)_{t} = \dd x \otimes \Pi \left(  Q_{x} \right)_{t}$.
		
		Moreover $Q = \dd x \otimes Q_{x,\Wsp} \left( \dd \w \right) \otimes Q_{x,\w}$. Then for all $ t \in \Ti$
		\begin{align}
		\begin{split}
			\intL_{\TW} \mkern-10mu  f \left( x, \w \right) Q_{x,\Wsp} \left( \dd \w \right) \dd x
			&=
			\mkern-20mu 
			\intL_{\TWC} \mkern-20mu  f \left( x, \w \right) Q 
			=
			\mkern-5mu 
			\intL_{\TW} \mkern-10mu  f \left( x, \w \right) Q_{x,\Wsp} \left( \dd \w \right)  \dd x 
			\\
			&=
			\mkern-10mu
			\intL_{\TWR} \mkern-10mu  f \left( x, \w \right) \Pi \left( Q \right)_{t} 
			=
			\mkern-5mu
			\intL_{\TW}  \mkern-10mu  f \left( x, \w \right)  \Pi \left( Q \right)_{t,x,\Wsp} \left( \w \right)  \dd x
			.
		\end{split}
		\end{align}
		
\end{proof}

\section{The LDP of the empirical process }
\label{sec::LDPempP}

In this chapter we state and prove the large deviation principle for the family of empirical processes $\left\{\empP\right\}$ define  in \eqref{def::empM}.
We investigate a more general setting than the model considered in Theorem~\ref{thm::LMF::LDPempM} (in Chapter~\ref{sec::Intro::ResultLMF}). 
Therefore we state at first some notation and assumptions.
We show in Chapter~\ref{sec::LocalMF}, that the concrete example of a local mean field model considered in Chapter~\ref{sec::Intro::ResultLMF} satisfies these assumptions.

We examine the $\Nd$ dimensional system of interacting spins defined by \eqref{eq::SDE},
with drift coefficient $b: \TWR \times \MOne{\TWR} \rightarrow \R$ and diffusion coefficient $\sigma>0$.
As explained in the introduction, the interaction between the spins is modelled as a dependency of the drift coefficient $b$ on the empirical measure.

We define the  $\Nd$ dimensional diffusion generator corresponding to \eqref{eq::SDE} for fixed environment $\ul{\w}^{N}$, acting on $f \in \CspL[2]{b}{\R^{\Nd}}$ by
\begin{align}
	\label{def::NParticleGenerator}
	\Gen^{N}_{\ul{\w}^{N}} f \left( \ul{\theta}^{N} \right)  
	\defeq	
		\sum_{k \in \TN} \Gen_{\empM,\frac{k}{N},\w^{k,N}}  \:\:  f \left( \ul{\theta}^{N} \right) 
	,
\end{align}
where $ \Gen_{\empM,\frac{k}{N},\w^{k,N}} $ is the operator defined in \eqref{def::InteractingGenerator} with derivatives in the $\theta^{k,N}$ direction and with drift coefficient $b \lb \frac{k}{N}, \w^{k,N}, . , \empM \rb : \R \rightarrow \R$.
The $\empM \in \MOne{\TWR}$ is the empirical measure defined as in \eqref{def::EmpMt} with $\ul{\theta}^{N}$ and $\ul{\w}^{N}$.

\emptyline
For the proof of the large deviation principle, we require that the drift coefficient $b$ is chosen in such a way that the following assumption is satisfied.

\begin{assumption}
\label{ass::LDPempP}

There is a non-negative function $\varphi \in \Csp[2]{\R} $ with $\lim_{\abs{\theta} \rightarrow  \infty} \varphi \left( \theta \right) = \infty$,
such that:
\begin{enuAlph}
	\item
		The function $b: \TWR \times \M_{\varphi,\infty}  \rightarrow \R$ satisfies:
		\begin{enuAlph}
		\item
		\label{ass::LDPempP::bCont}
			 The restriction of $b$ to $\TWR \times \left( \M_{\varphi,R} \cap \MOneL{\TWR} \right) \rightarrow \R$ is continuous for all $R>0$.
		\item
			\label{ass::LDPempP::blockbd}
				For all $N \in \N$ and all $\ul{\w}^{N} \in \Wsp^{\Nd}$,  $b^{N} : \RN \rightarrow \RN$, defined by 
				\begin{align}
						b^{N} \left( \ul{\theta}^{N} \right) \defeq \left( b \left( \frac{k}{N}, \w^{k,N}, \theta_{k}, \empM \right) \right)_{k \in \TN}
						,
				\end{align}
				is a locally bounded measurable function.
	\end{enuAlph}
	\item
	\label{ass::LDPempP::IntBound}
		There is a constant $\lambda>0$ and a $\ol{N} \in \N$, such that for all $N >\ol{N}$ and all empirical measures $\empM$ (defined by $\ul{\theta}^{N} \in \RN$ and $\ul{\w}^{N} \in \Wsp^{\Nd}$),
		\begin{align}
			\int_{\TWR} \Gen_{\empM,x, \w}   \varphi \left(  \theta \right) 
									+ \frac{\sigma^{2}}{2} \abs{\partial_{\theta} \varphi \left( \theta \right) }^{2} 
						\empM \left(\dd x, \dd \w, \dd\theta \right)
			\leq
				\lambda 	
				\int_{\TWR}  \varphi \left(  \theta \right) \empM \left(\dd x, \dd \w, \dd\theta\right)
			.
		\end{align}
	\item
	\label{ass::LDPempP::Lyapu}
		For each $\mu_{\Ti} \in \Cem_{\varphi,\infty} \cap \Cem^{L}$, there is a constant $\lambda \left( \mu_{\Ti}  \right) >0$ such that
		\begin{align}
			\Gen_{\mu_{t}, x, \w}  \varphi \left(\theta \right) + \frac{\sigma^{2}}{2} \abs{\partial_{\theta} \varphi \left(\theta \right) }^{2}
			\leq
			\lambda \left(  \mu_{\Ti}  \right)  \varphi \left(\theta \right)
			,
		\end{align}
		for all $\left( t, x, \w, \theta \right) \in \TTWR$.
	\item
	\label{ass::LDPempP::MuIntCont}
		For each $R>0$ and each $\ol\mu_{\Ti}  \in \Cem_{\varphi,R} \cap \Cem^{L}$, 
		\begin{align}
					\int_{0}^{T} 
					\int_{\TWR}
						\sigma^{2} \abs{b \left( x, \w, \theta,\mu^{(n)}_{t}  \right) -b \left( x, \w, \theta,\ol{\mu}_{t}  \right) }^{2}
						\mu^{(n)}_{t} \left(\dd x, \dd \w, \dd \theta \right) \dd t 
				\rightarrow
				0
				,
		\end{align}
		for $n \rightarrow \infty$,
		when $\mu^{(n)}_{\Ti} \rightarrow \ol\mu_{\Ti}$, for a sequence $\left\{ \mu^{(n)}_{\Ti} \right\} \subset \left( \Cem_{\varphi,R} \cap \Cem^{L} \right)$ 
		or a sequence
		\begin{align}
		\left\{ \mu^{(n)}_{\Ti} \right\} 
		\subset
		\left\{ \mu_{\Ti} \in \Cem_{\varphi,R} : \mu_{\Ti}=\empP \textnormal{ is a empirical process for a } N \in \N \right\}
		.
		\end{align}
\end{enuAlph}
\end{assumption} 

\begin{example}
We show in Chapter~\ref{sec::LocalMF}, that the concrete example of a local mean field model considered in Chapter~\ref{sec::Intro::ResultLMF} satisfies the Assumption~\ref{ass::LDPempP}.
\end{example}

\begin{remark}
\label{rem::LDPempP::MProbPNwelldef}
For each given environment $\ul{\w}^{N} \in \Wsp^{\Nd}$, the Martingale problem for the generator $\Gen^{N}_{\ul{\w}^{N}}$ is well posed by the Assumption~\ref{ass::LDPempP}~\ref{ass::LDPempP::blockbd} and~\ref{ass::LDPempP::IntBound}.
	Indeed, from Theorem~10.1.2 of \cite{StrVarMultidi} and Theorem~7.2.1 of \cite{StrVarMultidi}, we infer  the uniqueness of the solution to the Martingale problem, because the drift coefficient is locally bounded and measurable (Assumption~\ref{ass::LDPempP}~\ref{ass::LDPempP::blockbd}).
	For the existence of a solution of the Martingale Problem, we apply Theorem~10.2.1 of \cite{StrVarMultidi} with 
	$\varphi \left( \ul{\theta}^{N} \right) \defeq \frac{1}{\Nd} \sum \varphi \left( \theta^{k,N} \right)$.
	The conditions of this theorem are satisfied by Assumption~\ref{ass::LDPempP}~\ref{ass::LDPempP::IntBound}.
	We denote by $P^{N}_{\ul{\w}^{N},\ul{\theta}^{N}} \in \MOne{ \Csp{\Ti}^{N} } $ the unique solution of this martingale problem.
	For a short discussion of the other assumptions, see Remark~\ref{rem::LDPempP::DiscAss}. 
	
	With $P^{N}_{\ul{\w}^{N},\ul{\theta}^{N}}$, we define $P^{N}_{\ul{\w}^{N}}$ and $P^{N}$ as in Notation~\ref{nota::LMF::General}.
\end{remark}

Besides the Assumption~\ref{ass::LMF::Init} on the Feller continuity of the initial distribution $\left\{ \nu_{x} \right\}$, we require that these measures satisfy  the following uniform integration condition. 
	\begin{assumption}
	\label{ass::LDPempP::Init::Integ}
		There is a  $\ell>1$ such that
		\begin{align}
				\sup_{ x \in \Td }\int_{\R} e^{\ell \varphi \left( \theta \right)} \nu_{x} \left( \dd \theta \right)
				<
				C
				.
		\end{align}
	\end{assumption}

The following large deviation principle is the main result of this chapter.
\begin{theorem}
	\label{thm::LDPempP}
	Let the Assumption~\ref{ass::LMF::Init}, Assumption~\ref{ass::LMF::Medium}, Assumption~\ref{ass::LDPempP} and   Assumption~\ref{ass::LDPempP::Init::Integ} hold.
	Then the family $ \left\{ \empP, P^{N} \right\}$ satisfies on $\Csp{ \Ti , \MOne{\TWR}}$   a large deviation principle with good rate function
	\begin{align}
		\label{eq::thm::LDPempP::RF}
		S_{\nu,\zeta} \left( \mu_{\Ti}  \right)  
		\defeq
		\begin{cases}
			\int_{0}^{T} \abs{ \partial_{t}\mu_{t}  - \left( \Gen_{\mu_{t},.,.} \right)^{*}  \mu_{t}  }^{2}_{\mu_{t} } \dd t
			+
					\relE{\mu_{0}}{\dd x \otimes  \zeta_{x} \otimes \nu_{x}}
			 &\textnormal{if } \mu_{\Ti} \in \bbA \cap \Cem_{\varphi,\infty}
		\\
			\infty
		&\textnormal{otherwise,}
		\end{cases}
	\end{align}
where the norm $\abs{.}_{\mu_{t}}$ is defined in Definition~\ref{def::Minus1Norm} and where
\begin{align}
\label{eq::thm::LDPempP::SetA}
\begin{split}
	\bbA
	\defeq
		\Big\{
		\mu \in \Cem^{L} 
			:
			\mu_{\Ti} 
			\textnormal{ is absolutely continuous 
						in the sense of Definition~\ref{def::Distr::AbsCont}}
		\Big\}
	.
\end{split}
\end{align}

Moreover the integral with respect to $\TW$  and the supremum in the norm in $S_{\nu,\zeta}$ can be interchanged, i.e.
 $	S_{\nu,\zeta} \left( \mu_{\Ti}  \right)  
		=
		S^{\TW}_{\nu,\zeta} \left(  \mu_{\Ti}  \right)  $,
		defined by
	\begin{align}
		\label{eq::thm::LDPempP::RF::ST}
	\begin{split}
						\int_{0}^{T} \int_{\Td} \int_{\Wsp}
						\abs{
							\partial_{t}  \mu_{t,x,\w} - \left(\Gen_{\mu_{t},x,\w}\right)^{*}   \mu_{t,x,\w}
						}_{\mu_{t,x,\w}}
						\mu_{0,x,\Wsp} \left( \dd \w \right)
						 \dd x \dd t
			+
			\relE{\mu_{0}}{\dd x \otimes  \zeta_{x} \otimes \nu_{x}}
	\end{split}
	\end{align}
	if $\mu_{\Ti} \in \bbA \cap \Cem_{\varphi,\infty} \cap \Cem^{L} $
	and  $	S^{\TW}_{\nu,\zeta} \left(  \mu_{\Ti}  \right) = \infty$ otherwise.
\end{theorem}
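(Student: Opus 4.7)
The plan is to adapt the Dawson--Gärtner strategy of \cite{DawGarLDP} to the present spatial and environment-dependent setting, organising the proof into three stages: exponential tightness, an LDP for a ``frozen'' non-interacting system together with identification of its rate function, and transfer to the interacting system plus the integral--supremum swap.

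First, I would use Itô's formula applied to $\frac{1}{\Nd}\sum_{k\in\TN}\varphi(\theta^{k,N}_t)$ together with the Lyapunov bound of Assumption~\ref{ass::LDPempP}~\ref{ass::LDPempP::IntBound} to establish an exponential estimate of the form $P^N[\sup_{t\in\Ti}\int\varphi\,\dd\empPt{t}\geq R]\leq e^{-c(R)\Nd}$, and complement this with a standard exponential modulus-of-continuity bound. Combined with the extended Arzelà--Ascoli Lemma~\ref{lem::AA}, this confines $\{\empP\}$ exponentially to compact subsets of $\Cem_{\varphi,R}$, on which Assumption~\ref{ass::LDPempP}~\ref{ass::LDPempP::bCont} and Assumption~\ref{ass::LDPempP}~\ref{ass::LDPempP::MuIntCont} are effective.

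Next, fix a deterministic trajectory $\nu_\Ti\in\Cem^L\cap\Cem_{\varphi,\infty}$ and let $Q^{\nu,N}$ be the law of the system where $\empM$ is replaced by $\nu_t$ in the drift, so that the $\Nd$ spins become conditionally independent given $\ul\w^N$. Applying Lemma~\ref{lem::SanovT} and Lemma~\ref{lem::SanovT::RelEntropy} to the empirical measure on $\TWC$ under $Q^{\nu,N}$, then pushing forward through $\Pi$ (Lemma~\ref{lem::MapPi::Cont}) via the contraction principle, yields an LDP for $\{\empP\}$ under $Q^{\nu,N}$ with rate function $S^{\mathrm{ind}}_\nu(\mu_\Ti)=\inf\{L_{\nu,\zeta}(\Gamma):\Pi(\Gamma)=\mu_\Ti\}$. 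The core of the argument is identifying this infimum with
\[
\int_0^T\bigl|\partial_t\mu_t-(\Gen_{\nu_t,.,.})^*\mu_t\bigr|^2_{\mu_t}\,\dd t+\relE{\mu_0}{\dd x\otimes\zeta_x\otimes\nu_x}
\]
on $\bbA\cap\Cem_{\varphi,\infty}$ and with $+\infty$ elsewhere. The upper bound is a Legendre-duality computation on exponential martingales indexed by test functions $f\in\CspL[\infty]{c}{\Ti\times\TWR}$ combined with integration by parts via Lemma~\ref{def::Distr::IntbyParts}. The matching lower bound is the principal obstacle: for every trial direction appearing in the variational norm $|\cdot|_{\mu_t}$, one must produce a test function realising the supremum, which reduces to solving a linear Fokker--Planck-type equation whose coefficients are only Feller-continuous in $(x,\w)$; the solution must itself be $(x,\w)$-continuous, a regularity not afforded by $L^2$-theory and the reason for the separate PDE analysis announced in Chapter~\ref{sec::LDPempP::PDE}.

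Finally, I would transfer to the interacting case by self-consistency $\nu_\Ti=\mu_\Ti$: a $\delta$-covering of $\Cem_{\varphi,R}$ on which the Girsanov density relating $P^N_{\ul\w^N}$ to $Q^{\nu,N}_{\ul\w^N}$ is exponentially close to its value at the ball centre --- justified by Assumption~\ref{ass::LDPempP}~\ref{ass::LDPempP::MuIntCont} --- combined with the exponential tightness and the local independent LDPs, upgrades to the global LDP with rate $S_{\nu,\zeta}$. The identity $S_{\nu,\zeta}=S^{\TW}_{\nu,\zeta}$ then follows by disintegrating $\mu_t$ as in Definition~\ref{def::DecomMu} and noting that $|\cdot|^2_{\mu_t}$ depends only on $\partial_\theta f$, so the outer variational problem factorises fibrewise in $(x,\w)$ under the frozen marginal $\dd x\otimes\mu_{0,x,\Wsp}(\dd\w)$ via Jensen's inequality and a measurable selection of near-optimal test functions.
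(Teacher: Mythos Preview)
Your overall architecture --- exponential tightness via the Lyapunov function $\varphi$, an LDP for the frozen (non-interacting) system, and transfer to the interacting system by Girsanov on small balls --- matches the paper. The gap is in the identification of the frozen-system rate function.

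You obtain only the contraction representation $S^{\mathrm{ind}}_\nu(\mu_\Ti)=\inf\{L_{\nu,\zeta}(\Gamma):\Pi(\Gamma)=\mu_\Ti\}$ and propose to sandwich it against the integral formula. The direction ``$L_{\nu,\zeta}(\Gamma)\geq$ integral for every $\Gamma$ with $\Pi(\Gamma)=\mu_\Ti$'' is indeed the F\"ollmer-type computation, but as the paper stresses in Remark~\ref{rem::LDPempP::indep::admissMu} this only yields equality for \emph{admissible} $\mu_\Ti$, i.e.\ when the infimum is already known to be finite. For the reverse inequality you would have to \emph{construct} a measure $\Gamma$ with $\Pi(\Gamma)=\mu_\Ti$ and $L_{\nu,\zeta}(\Gamma)$ bounded by the integral; ``producing a test function realising the supremum in $|\cdot|_{\mu_t}$'' does not build such a $\Gamma$, so the PDE you invoke is aimed at the wrong target in your framework.

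The paper closes the gap differently: it derives a \emph{second} representation $S^{I,2}_{\nu,\zeta}$ of the same rate function via the projective-limit theorem applied to the finite-dimensional time-marginals (Lemma~\ref{lem::LDPempP::indep::2Rep}). Uniqueness of rate functions forces $S^{I,1}_{\nu,\zeta}=S^{I,2}_{\nu,\zeta}$, and one then proves the chain $S^{I,2}_{\nu,\zeta}\leq S^{I}_{\nu,\zeta}\leq S^{I,\Td}_{\nu,\zeta}\leq S^{I,1}_{\nu,\zeta}$ (Lemma~\ref{lem::LDPempP::indep::Coincidence}). The terminal-boundary PDE of Theorem~\ref{thm::LDPempP::UniqClasPDE} with $(x,\w)$-continuous solution --- which you correctly flag as the delicate analytic point --- enters precisely in the proof of $S^{I,2}_{\nu,\zeta}\leq S^{I}_{\nu,\zeta}$ (Lemma~\ref{lem::LDPempP::indep::S2leqSKilled}): it manufactures a near-optimal test function to insert into the variational \emph{supremum} defining $S^{I}_{\nu,\zeta}$, and this works because $S^{I,2}_{\nu,\zeta}$ is itself a supremum over finite time-grids and over test functions through the operators $U_{s,t}$. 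Without the projective-limit step the PDE argument has no inequality to act on, and your sandwich does not close.

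A secondary remark: your route to $S_{\nu,\zeta}=S^{\TW}_{\nu,\zeta}$ via Jensen and a measurable selection of near-optimal fibrewise test functions is plausible but needs care; the paper gets this identity for free from the same chain, since $S^{I,\Td}_{\nu,\zeta}$ already carries the $(x,\w)$-integral outside the norm.
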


To prove this theorem, we generalise the proof of the large deviation principle for the mean field model of \cite{DawGarLDP}, to the space  and random environment dependent setting we consider here.
Therefore the structure of the proof of Theorem~\ref{thm::LDPempP} is similar to the structure of the corresponding proof in \cite{DawGarLDP}. 
However there are three main differences to \cite{DawGarLDP} in the model we consider here.
The main differnce is that the drift coefficient $b$ and the empirical process $\empP$ depend on $x \in\Td$ and on the random environment $\w \in \Wsp$.
Moreover in \cite{DawGarLDP} the initial spin values are fixed, whereas in the model we consider, the initial spin values can be distributed randomly.
Last but not least, we show the large deviation principle on the space $\Cem$ (and not, as in \cite{DawGarLDP}, on $\Cem_{\varphi,\infty}$ with another topology than the subspace topology).

Due to these differences, changes are necessary in the proofs (compared to the approach in \cite{DawGarLDP}).
Many of these changes are of technical nature.
We point out at the beginning of each proof of the partial results, how the proof differs from the corresponding proof in \cite{DawGarLDP}.
Then we state the proofs with emphasis on these necessary modifications.
Of course we explain proofs and parts of proofs, that are new, completely.

The proof of Theorem~\ref{thm::LDPempP}  is organised as follows.
\begin{enuBr}
\item
At first (Chapter~\ref{sec::LDPempP::Inde}), we prove a large deviation principle for a system of independent spins (see Theorem~\ref{thm::LDPempP::Inde}) and show that the rate function has the representation $S^{I}_{\nu,\zeta}$ (defined in \eqref{eq::thm::LDPempP::Inde::RF}), that is similar to $S_{\nu,\zeta}$.
We infer this large deviation principle from the generalised Sanov-type large deviation result derived in Chapter~\ref{sec::Prel::SanovT}.
The rest of this Chapter~\ref{sec::LDPempP::Inde} is dedicated to showing that the rate function has the representation $S^{I}_{\nu,\zeta}$.
\begin{enuBr}
\item
To show the form of the rate function, we derive at first two different representations $S^{I,1}_{\nu,\zeta}$ and $S^{I,2}_{\nu,\zeta}$ of the rate function (Chapter~\ref{sec::LDPempP::2RepRate}).
For both representation we use the Sanov-type large deviation result derived in Chapter~\ref{sec::Prel::SanovT}.
These proofs are formally almost equal to the corresponding proofs in \cite{DawGarLDP}.
The space and random environment dependency only leads to formal changes in the notation.
However the applied results of Chapter~\ref{sec::Prel::SanovT} are different from the Sanov-type results used in  \cite{DawGarLDP}, due to these new dependencies.
Moreover to be able to apply the Sanov type result, we show that the measures corresponding to the independent SDEs are Feller continuous.

\item
Next we show that $S^{I,1}_{\nu,\zeta}$  ($S^{I,2}_{\nu,\zeta}$)  is an  upper (lower) bound on the claimed form $S^{I,\Td}_{\nu,\zeta}$ ($S^{I}_{\nu,\zeta}$) of the rate function (Chapter~\ref{sec::LDPempP::2RepRate}). 

In the proof of the upper bound (Chapter~\ref{sec::LDPempP::2RepBoundSI::Upper}), we generalise an approach used in \cite{PraHolMKV}, which is partially based on approaches of \cite{FolRandom} and \cite{BruFiniteKullb}.
In contrast to \cite{PraHolMKV}, we consider the space dependency $x \in \Td$ in addition to the random environment $\w \in \Wsp$.

Note that the proof of the lower bound given in \cite{PraHolMKV} unfortunately has a gap and 
cannot be used. We give a 
 proof of the lower bound in  (Chapter~\ref{sec::LDPempP::2RepBoundSI::Lower}) that generalises the 
 ideas used in \cite{DawGarLDP}.
Besides the usual formal changes (due to the space dependency, compared to \cite{DawGarLDP}), we 
have to handle a new problem here.
The proof requires the existence of a solution to a boundary value partial differential equation, which 
has to be continuous in the space variable $x \in \Td$ and the environment variable $\w \in \Wsp$.
This condition is obviously not needed in \cite{DawGarLDP}.
Therefore we show in the Chapter \ref{sec::LDPempP::PDE}, that there exist such a solution. This 
chapter and the proof are new.

\item
Finally we derive another formula for $S^{I}_{\nu,\zeta}$. This is (again modulo changes due to the space dependency) similar to the corresponding proof in \cite{DawGarLDP}. However in \cite{DawGarLDP} this formula is used to derive the large deviation upper bound.
We do not use it in the proof of the large deviation upper bound, because it only bounds $S^{I}_{\nu,\zeta}$ (see the beginning of Chapter~\ref{sec::LDPempP::2RepBoundSI::Upper} for more details).
However we need this result in Chapter~\ref{sec::LDPempP::Inter} to show that the rate function $S_{\nu,\zeta}$ is actually lower semi-continuous.

\end{enuBr}

\item
In Chapter~\ref{sec::LDPempP::Inter}, we infer from this large deviation principle for independent spins, a local large deviation principle for the interacting spin system (Theorem~\ref{thm::LDPempP::Local}).
To do this, we define the independent generator $\Gen^{I}_{t,x,\w} \defeq \Gen_{\ol\mu_{t},x,\w}$ for fixed $\ol\mu_{\Ti} \in \Cem_{\varphi,\infty} \cap \Cem^{L}$.
For the empirical process defined with the spin values that evolve according to the Langevin dynamics with this generator, we know by Chapter~\ref{sec::LDPempP::Inde} a large deviation principle.
From this principle, we infer the local large deviation principle under $\left\{ P^{N} \right\}$, with the help of exponential bounds (that we show in Chapter~\ref{sec::LDPempP::Inter::ExpBounds}). 
This is a again a generalisation of \cite{DawGarLDP}.
Moreover we give a new proof of the local large deviation principle around $\ol\mu_{\Ti}$ that are not in $\Cem_{\varphi,\infty} \cap \Cem^{L}$ (see Chapter~\ref{sec::LDPempP::Inter::PfLocalLDP}).
This is necessary because we assume the continuity of $b$ only on a subset of $\M_{\varphi,R}$ (see Assumption~\ref{ass::LDPempP}~\ref{ass::LDPempP::bCont}). 
Also with the mentioned exponential bounds, we prove the exponential tightness of $\left\{ \empP , P^{N} \right\}$  (Theorem~\ref{thm::LDPempP::ExpBound}).
Finally we infer from the exponential tightness and the local large deviation principle, the Theorem~\ref{thm::LDPempP}.

\end{enuBr}

We explain the steps and proofs in more details in the respective chapters.
We finish this chapter with a short discussion how the Assumption~\ref{ass::LDPempP} enter into this approach.

\begin{remark}
\label{rem::LDPempP::DiscAss}
As explained in Remark~\ref{rem::LDPempP::MProbPNwelldef}, we use Assumption~\ref{ass::LDPempP}~\ref{ass::LDPempP::blockbd} and~\ref{ass::LDPempP::IntBound}, to infer that the Martingale problem for the generator $\Gen^{N}_{\ul{\w}^{N}}$ is well defined.
Moreover the Assumption~\ref{ass::LDPempP}~\ref{ass::LDPempP::IntBound} implies the exponential bounds in Chapter~\ref{sec::LDPempP::Inter}.
We get analogue results for the independent system defined by the generator $L^{I}_{t,x,\w}$ due to Assumption~\ref{ass::LDPempP}~\ref{ass::LDPempP::bCont} and~\ref{ass::LDPempP::Lyapu}.
Finally, we require Assumption~\ref{ass::LDPempP}~\ref{ass::LDPempP::MuIntCont} to show that $S_{\nu,\zeta}$ is a good rate function (here we need the sequences in $\Cem^{L}$) and to connect the independent system with the interacting system when deriving the local large deviation principle in Chapter~\ref{sec::LDPempP::Inter} (here we need the sequences of empirical processes).	 
\end{remark}

\subsection{Independent spins}
\label{sec::LDPempP::Inde}

In this chapter we investigate the large deviation principle of the empirical process for systems of independent spins.
As explained, we derive such a system by fixing the interaction between the spins in the SDE \eqref{eq::SDE}.
Therefore we consider a drift coefficient  $b^{I}: \TTWR \rightarrow \R$ here that depends not any more on the empirical measure but on the time.

For each $x \in \Td$, $\w \in \Wsp$ and $t \in \Ti$, define the time-dependent diffusion generator
\begin{align}
\label{def::GeneratorIndependent}
	\Gen^{I}_{t,x,\w} 
	\defeq
	 \frac{1}{2} \sigma^{2} \frac{\partial^{2}}{\partial^{2} \theta} 
			+
			b^{I} \left( t,x, \w, . \right)  \frac{\partial}{\partial \theta} 
			,
\end{align}
 that corresponds to the SDE 
 \begin{align}
 \label{eq::SDEIndep}
	 \dd \theta^{x}_{t} 
	 =
	 b^{I} \left( t, x , \w, \theta^{x}_{t}  \right) 
	 \dd t
	 +
	 \sigma 
	 \dd B^{x}_{t}
	 .
 \end{align}

Let us assume that $b^{I}$ is chosen such that the following assumptions are satisfied.

\begin{assumption}
\label{ass::LDPempP::Inde}
	\begin{enuAlph}
	\item
	\label{ass::LDPempP::Inde::bCont}
		$b^{I}$ is continuous on $\TTWR$.
	\item
	\label{ass::LDPempP::Inde::MPro}
		For each $x \in \Td$ and each $\w \in \Wsp$, the Martingale problem for $\Gen^{I}_{t,x,\w}$ is well posed , with corresponding family of probability measures  
		$\left\{ P^{I}_{t,x,\w,\theta} \in \MOne{ \Csp{\left[ t,T \right]} } ,  \left( t,x, \w, \theta \right) \in \TTWR  \right\} $.
	\end{enuAlph}
\end{assumption}

We interpret $P^{I}_{t,x, \w, \theta}$ as the measure of the path of the spin value at the position $x \in \Td$ with initial value $\theta \in \R$ at time $s \in \Ti$ and fixed environment $\w \in \Wsp$, that evolves according to \eqref{def::GeneratorIndependent}.
We use the shorter notation $P^{I}_{x,\w,\theta}$, when $t=0$.
By \eqref{def::GeneratorIndependent}, the spin values at position $x,y \in \Td$ evolve mutually independent for $x \not = y$.

\begin{notation}
\label{nota::LDPempP::Inde::General}
	We write $P^{I}_{x,\w}$ for the distribution of the path of the spin value at the position $x \in \Td$ with fixed environment $\w \in \Wsp$ and with initial distribution $\nu_{x}$ at time $0$, i.e. $P^{I}_{x,\w} = \int_{\R} P^{I}_{x,\w,\theta} \nu_{x} \left( \dd \theta \right)$.
	
	Similar to Notation~\ref{nota::LMF::General}, we define 
	$P^{I,N}_{\ul{\w}^{N}} $ and $P^{I,N}$ (now with $P^{I}_{x, \w, \theta}$) .
\end{notation}

\emptyline

The following large deviation principle with the particular form of the rate function is the main result of this chapter.

\begin{theorem}
\label{thm::LDPempP::Inde}

Let the Assumption~\ref{ass::LMF::Init}, Assumption~\ref{ass::LMF::Medium} and Assumption~\ref{ass::LDPempP::Inde} hold.
Then the family $ \left\{ \empP, P^{I,N} \right\}$ satisfies on $\Csp{ \Ti , \MOne{\TWR}}$   a large deviation principle with good rate function 
\begin{align}
\label{eq::thm::LDPempP::Inde::RF}
	S^{I}_{\nu,\zeta} \left( \mu_{\Ti}  \right)  
	\defeq
	\begin{cases}
		\int_{0}^{T} \abs{ \partial_{t}\mu_{t}  - \left( \Gen^{I}_{t,.,.} \right)^{*}  \mu_{t}  }^{2}_{\mu_{t} } \dd t
		+
		\relE{\mu_{0}}{\dd x \otimes  \zeta_{x} \otimes \nu_{x}}
		\quad
		&\textnormal{if } \mu_{\Ti} \in \bbA
		\\
		\infty
		&\textnormal{otherwises,}
	\end{cases}
\end{align}
with $\bbA$ defined in \eqref{eq::thm::LDPempP::SetA}.

Moreover
 $	S^{I}_{\nu,\zeta} \left( \mu_{\Ti}  \right)  
		=
		S^{I,\Td}_{\nu,\zeta} \left(  \mu_{\Ti}  \right)  $,
		defined by
	\begin{align}
	\label{eq::thm::LDPempP::Inde::RF::SIT}
	\begin{split}
						 \int_{0}^{T} \int_{\Td} \int_{\Wsp}
							 \abs{
							 	\partial_{t}  \mu_{t,x,\w} - \left(\Gen^{I}_{t,x,\w}\right)^{*}   \mu_{t,x,\w}
								 }_{\mu_{t,x,\w}}
							\mu_{0,x,\Wsp} \left( \dd \w \right)
							\dd x \dd t
					+
						\relE{\mu_{0}}{\dd x \otimes  \zeta_{x} \otimes \nu_{x}}
	\end{split}
	\end{align}
if $\mu_{\Ti} \in \bbA$ and $	S^{I,\Td}_{\nu,\zeta} \left(  \mu_{\Ti}  \right) = \infty$ otherwise.
\end{theorem}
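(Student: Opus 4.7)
The plan is to split the proof into two layers. First I would establish the raw LDP (existence of a good rate function), then identify its value with the two explicit formulas $S^{I}_{\nu,\zeta}$ and $S^{I,\Td}_{\nu,\zeta}$.

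For the raw LDP, the starting observation is that under $P^{I,N}_{\ul{\w}^{N}}$ the coordinates $\theta^{k,N}_{\Ti}$ are independent, with law $P^{I}_{k/N,\w^{k,N},\cdot}$ averaged over $\nu_{k/N}$. So I would apply the Sanov-type Lemma~\ref{lem::SanovT} with $r=1$, $Y=\Csp{\Ti}$, and $Q_{x,\w,\theta}=P^{I}_{x,\w,\theta}$. Assumption~\ref{ass::LDPempP::Inde}~\ref{ass::LDPempP::Inde::bCont}--\ref{ass::LDPempP::Inde::MPro} together with the standard continuous dependence of martingale-problem solutions on coefficients should give the Feller continuity required for Assumption~\ref{ass::SanovT::MeasFeller}; modulo that verification, Lemma~\ref{lem::SanovT::RelEntropy} yields an LDP for $\left\{ L^{N} \right\}$ on $\MOne{\TWC}$ with good rate function $\relE{\cdot}{\dd x\otimes\zeta_{x}(\dd\w)\otimes P^{I}_{x,\w}}$ on Young measures. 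Since $\empP=\Pi(L^{N})$ and $\Pi$ is continuous (Lemma~\ref{lem::MapPi::Cont}), the contraction principle transfers the LDP to $\Cem$ with good rate function
\[
\widetilde S(\mu_{\Ti})=\inf\left\{\relE{Q}{\dd x\otimes\zeta_{x}\otimes P^{I}_{x,\w}}\;:\;Q\in\MOne{\TWC},\;\Pi(Q)=\mu_{\Ti}\right\}.
\]

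The harder task is to prove $\widetilde S=S^{I}_{\nu,\zeta}=S^{I,\Td}_{\nu,\zeta}$. Following the strategy outlined in the chapter introduction, I would produce two intermediate variational representations $S^{I,1}_{\nu,\zeta}$ (a Legendre-type supremum over $\CspL[\infty]{c}{\TTWR}$ involving the martingale exponential $\exp\{\sPro{\mu_{T}}{f_{T}}-\sPro{\mu_{0}}{f_{0}}-\int_{0}^{T}\sPro{\mu_{t}}{\partial_{t}f_{t}+\Gen^{I}_{t,\cdot,\cdot}f_{t}+\tfrac{\sigma^{2}}{2}(\partial_{\theta}f_{t})^{2}}\dd t\}$) and $S^{I,2}_{\nu,\zeta}$ (the corresponding variational expression using test functions at a finite grid of times), both of which equal $\widetilde S$ by Lemma~\ref{lem::SanovT} applied with larger $r$ and the Sanov entropy formula. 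These identifications require only formal rewrites of Dawson--Gärtner's arguments, with the $(x,\w)$-dependence entering cleanly because the initial law is $\dd x\otimes\zeta_{x}\otimes\nu_{x}$.

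The upper bound $S^{I,1}_{\nu,\zeta}\leq S^{I,\Td}_{\nu,\zeta}$ I would obtain by adapting the Pra--den Hollander calculation: on absolutely continuous $\mu_{\Ti}\in\bbA$ one inserts the decomposition $\mu_{t}=\dd x\otimes\mu_{t,x,\Wsp}\otimes\mu_{t,x,\w}$, uses Lemma~\ref{def::Distr::IntbyParts} to rewrite the martingale exponent with $\partial_{t}\mu_{t,x,\w}-(\Gen^{I}_{t,x,\w})^{*}\mu_{t,x,\w}$ acting against $f$, and then applies the elementary quadratic inequality pointwise in $(t,x,\w)$, using that $\mu_{0,x,\Wsp}=\mu_{t,x,\Wsp}$ is frozen (Lemma~\ref{lem::MapPi::YoungMedium}) to absorb the $\w$-integration correctly. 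The lower bound $S^{I,2}_{\nu,\zeta}\geq S^{I}_{\nu,\zeta}$ is the main obstacle: following Dawson--Gärtner one must, given a distribution $\xi_{t}=\partial_{t}\mu_{t}-(\Gen^{I}_{t,\cdot,\cdot})^{*}\mu_{t}$ and an $\varepsilon>0$, construct a test function $f_{\varepsilon}\in\CspL[\infty]{c}{\TTWR}$ attaining the supremum up to $\varepsilon$. This forces solving an inhomogeneous linear parabolic equation $\partial_{t}f+\Gen^{I}_{t,x,\w}f=g$ with the produced solution \emph{jointly continuous in $(x,\w)$} (not just measurable), which is why a separate PDE chapter (Chapter~\ref{sec::LDPempP::PDE}) is needed; I would invoke that chapter to close the argument. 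Finally, $S^{I}_{\nu,\zeta}=S^{I,\Td}_{\nu,\zeta}$ follows from Fubini combined with the pointwise Cauchy--Schwarz duality that gives the norm $\abs{\cdot}_{\mu_{t,x,\w}}$ as the pointwise $\Ltwo(\mu_{t,x,\w})$-dual; the interchange of $\sup$ and $\int_{\TW}$ is justified by a measurable-selection argument using separability of the test function space $\D$ (Chapter~\ref{sec::Prel::Distr}). Goodness of $S^{I}_{\nu,\zeta}$ is inherited from Lemma~\ref{lem::SanovT}, and sequences realizing finite values lie in $\bbA\cap\Cem^{L}$ by Lemma~\ref{lem::MapPi::YoungMedium}.
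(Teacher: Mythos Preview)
Your opening (Sanov for $L^{N}$, Feller continuity of $P^{I}_{t,x,\w,\theta}$ via Stroock--Varadhan, contraction through $\Pi$) matches the paper exactly and gives the LDP with rate function $\widetilde S$. The identification step, however, is structurally scrambled in a way that matters.

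In the paper, $S^{I,1}_{\nu,\zeta}$ is \emph{your} $\widetilde S$ (the contraction infimum, Lemma~\ref{lem::LDPempP::indep::1Rep}); it is not a Legendre-type supremum. The Legendre formula you describe is the content of Lemma~\ref{lem::LDPempP::indep::3Rep}, and it is shown there to equal $S^{I}_{\nu,\zeta}$, not the contraction rate function. The second representation $S^{I,2}_{\nu,\zeta}$ (finite time-grid, projective limit) is as you say. More importantly, the two key inequalities are the \emph{reverse} of what you wrote: the Pra--den Hollander/F{\"o}llmer argument (Lemma~\ref{lem::LDPempP::indep::UpperB}) proves $S^{I,\Td}_{\nu,\zeta}\leq S^{I,1}_{\nu,\zeta}$ (upper bound on $S^{I,\Td}$ by the contraction form), and the PDE construction of Chapter~\ref{sec::LDPempP::PDE} proves $S^{I,2}_{\nu,\zeta}\leq S^{I}_{\nu,\zeta}$ (Lemma~\ref{lem::LDPempP::indep::S2leqS}). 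The sandwich is
\[
S^{I,2}_{\nu,\zeta}\;\leq\; S^{I}_{\nu,\zeta}\;\leq\; S^{I,\Td}_{\nu,\zeta}\;\leq\; S^{I,1}_{\nu,\zeta},
\]
and it closes because $S^{I,1}_{\nu,\zeta}=S^{I,2}_{\nu,\zeta}$ by uniqueness of the rate function. The equality $S^{I}_{\nu,\zeta}=S^{I,\Td}_{\nu,\zeta}$ therefore drops out for free; the paper never attempts (and warns in Remark~\ref{rem::LDPempP::indep::UpperB::Different} that one must be careful with) a direct Fubini/measurable-selection interchange of the supremum and the $(x,\w)$-integral. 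With your stated directions the chain does not close, and the PDE step you invoke would in fact deliver the opposite inequality from the one you need.
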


\begin{remark}
	The rate functions $S_{\nu,\zeta}$ (of Theorem~\ref{thm::LDPempP}) and $S^{I}_{\nu,\zeta}$ (of Theorem~\ref{thm::LDPempP::Inde}) are related to each other.
	Set $\Gen^{I}_{t,x,\w}=\Gen_{\ol{\mu}_{t},x,\w}$ for a $\ol\mu_{\Ti}  \in \Cem_{\varphi,\infty}$.
	And let $S^{I}_{\nu,\zeta}$ be the rate function defined by \eqref{eq::thm::LDPempP::Inde::RF} corresponding to this generator.
	Then $S_{\nu,\zeta} \left( \ol\mu_{\Ti}  \right)  = S^{I}_{\nu,\zeta} \left( \ol\mu_{\Ti}  \right) $.	
	We use this relation in Chapter~\ref{sec::LDPempP::Inter}.
\end{remark}

\begin{proof}[of Theorem~\ref{thm::LDPempP::Inde}]
It is easy to see that the family $\left\{\empP , P^{I,N} \right\}$ satisfies a large deviation principle 
by Lemma~\ref{lem::SanovT} and the contraction principle
(see the proof of Lemma~\ref{lem::LDPempP::indep::1Rep}).

The main difficulty of the proof of Theorem~\ref{thm::LDPempP::Inde} is to show that the rate function 
$S_{\nu,\zeta}$ has the form \eqref{eq::thm::LDPempP::Inde::RF}.
To prove this, we generalise the approach used to prove  Theorem~4.5 in \cite{DawGarLDP} to the setting we consider here.

As in \cite{DawGarLDP}, we derive two different representations, 
$S^{I,1}_{\nu,\zeta}$ and $S^{I,2}_{\nu,\zeta}$, of the rate function and  show that these provide  a lower bound  on $S^{I}_{\nu,\zeta}$ and  an 
upper bound  on $S^{I,\Td}_{\nu,\zeta}$,  respectively

To get the first representation, we use the contraction principle and transfer the LDP for $\left\{L^{N}, P^{I,N}\right\}$, that we get by Lemma~\ref{lem::SanovT}, to the LDP for $\left\{\empP , P^{I,N} \right\}$.

\begin{lemma}[compare to \cite{DawGarLDP} Lemma~4.6 for the mean field case]
	\label{lem::LDPempP::indep::1Rep}
	The family $ \left\{ \empP, P^{I,N} \right\}$ satisfies on $\Csp{ \Ti , \MOne{\TWR}}$  a large deviation principle with rate function
	\begin{align}
	\label{eq::lem::LDPempP::indep::1Rep::S1Inf}
		S^{I,1}_{\nu,\zeta} \left( \mu_{\Ti}  \right)  = \inf_{ Q \in \MOne{ \TWC }  : \Pi \left( Q \right)_{\Ti} =\mu_{\Ti}  } L_{\nu,\zeta}^{1} \left( Q \right) 
	\end{align}
	for $\mu_{\Ti}  \in \Cem$, with
		\begin{align}
		\begin{split}
		L_{\nu,\zeta}^{1}  \left( Q \right)  
		&= 
			\int_{\Td} \int_{\Wsp}
				\relE{Q_{x,\w}}{ P^{I}_{x,\w}}
				Q_{x,\Wsp} \left( \dd \w \right)
			\dd x
			+
			\int_{\Td}
			\relE{ Q_{x,\Wsp} } { \zeta_{x} }
			\dd x
		\\
		&=
		\sup_{f \in \CspL{b}{\TWC}} 
			\left\{ 
				\int_{\TWC} f \left(  x,\w,\theta_{\Ti}   \right)  Q \left( \dd x,\dd \w, \dd \theta_{\Ti}  \right) 
				\right.
		\\
		&\qquad\qquad
				-
				\left.
				\int_{\Td}
					\log
					\left(
					\int_{\Wsp}
					 \int_{\Csp{\Ti}}
						e^{f \left( x,\w,\theta_{\Ti}  \right) }
						P^{I}_{x,\w} \left( \dd \theta_{\Ti}  \right) 
						\zeta_{x} \left( \w \right)
					\right) 
				\dd x
			\right\} 
		\end{split}
		\end{align}
for $Q \in \MOneL{ \TWC } $ and $L^{1}_{\nu,\zeta} \left( Q \right) = \infty$ otherwise.
	
	In particular, $S^{I,1} \left( \mu_{\Ti} \right)$ is only finite if $\mu_{t} \in \MOneL{\TWR}$ for all $t \in \Ti$ 
	and if $\mu_{t,x,\Wsp} = \mu_{0,x,\Wsp}$ for all $t \in \Ti$ and almost all $x \in \Td$.
	
\end{lemma}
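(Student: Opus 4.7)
The plan is to obtain the LDP for $\{\empP, P^{I,N}\}$ on $\Cem$ by first getting the LDP for $\{L^N, P^{I,N}\}$ on $\MOne{\TWC}$ via the Sanov-type Lemma~\ref{lem::SanovT} applied with $r=1$ and $Y=\Csp{\Ti}$, and then pushing it forward by the continuous map $\Pi: \MOne{\TWC} \to \Cem$ of Definition~\ref{def::Pi} via the contraction principle. Observe that $\Pi(L^N) = \empP$ by the very definitions \eqref{def::LN} and \eqref{def::empM}.

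The first step is to verify that the family $\{P^I_{x,\w,\theta} : (x,\w,\theta) \in \TWR\} \subset \MOne{\Csp{\Ti}}$ is Feller continuous (Assumption~\ref{ass::SanovT::MeasFeller}). This is where Assumption~\ref{ass::LDPempP::Inde} enters: the continuity of $b^I$ together with well-posedness of the martingale problem for $\Gen^I_{t,x,\w}$ for every $(x,\w)$ yields continuity of $(x,\w,\theta) \mapsto P^I_{x,\w,\theta}$ in the weak topology, by standard continuous-dependence arguments for martingale problems (e.g. along the lines of Stroock--Varadhan). Together with Assumption~\ref{ass::LMF::Init} and Assumption~\ref{ass::LMF::Medium}, Lemma~\ref{lem::SanovT} then gives that $\{L^N, P^{I,N}\}$ satisfies a good LDP on $\MOne{\TWC}$, whose rate function is given in variational form by \eqref{lem::SanovT::RF}. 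Applying Lemma~\ref{lem::SanovT::RelEntropy} identifies this rate function with the relative entropy
\[
L^1_{\nu,\zeta}(Q) = \int_{\Td}\int_{\Wsp} \relE{Q_{x,\w}}{P^I_{x,\w}}\,Q_{x,\Wsp}(\dd\w)\,\dd x + \int_{\Td}\relE{Q_{x,\Wsp}}{\zeta_x}\,\dd x
\]
for $Q \in \MOneL{\TWC}$ and $+\infty$ otherwise, while keeping simultaneously the variational representation stated in the lemma.

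Next I would apply the contraction principle: since $\Pi$ is continuous by Lemma~\ref{lem::MapPi::Cont} and $\Pi(L^N) = \empP$, the pushforward LDP gives that $\{\empP, P^{I,N}\}$ satisfies an LDP on $\Cem$ with good rate function given exactly by the infimum \eqref{eq::lem::LDPempP::indep::1Rep::S1Inf}. Goodness of the contracted rate function follows from goodness of $L^1_{\nu,\zeta}$ and continuity of $\Pi$. For the final structural claim, if $S^{I,1}_{\nu,\zeta}(\mu_\Ti) < \infty$, there exists $Q$ with $L^1_{\nu,\zeta}(Q) < \infty$ and $\Pi(Q)_\Ti = \mu_\Ti$; but finiteness of the entropy forces $Q \in \MOneL{\TWC}$, and then Lemma~\ref{lem::MapPi::YoungMedium} yields $\mu_t = \Pi(Q)_t \in \MOneL{\TWR}$ with $\mu_{t,x,\Wsp} = Q_{x,\Wsp} = \mu_{0,x,\Wsp}$ for all $t$ and almost every $x$, as claimed.

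The main obstacle I anticipate is the Feller continuity of $(x,\w,\theta) \mapsto P^I_{x,\w,\theta}$: the other ingredients (contraction principle, Lemma~\ref{lem::SanovT}, Lemma~\ref{lem::SanovT::RelEntropy}, continuity of $\Pi$, Lemma~\ref{lem::MapPi::YoungMedium}) are ready-made. The argument for Feller continuity is not completely trivial because both the coefficient $b^I(t,x,\w,\cdot)$ and the initial point $\theta$ vary; however, continuity of $b^I$ on $\TTWR$ together with uniqueness for the martingale problem gives tightness plus unique limits along any convergent subsequence, which is the standard route.
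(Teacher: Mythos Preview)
Your proposal is correct and follows essentially the same route as the paper: apply the Sanov-type Lemma~\ref{lem::SanovT} with $r=1$, $Y=\Csp{\Ti}$ (using the Feller continuity of $(x,\w,\theta)\mapsto P^I_{x,\w,\theta}$, which the paper isolates as Lemma~\ref{lem::LDPempP::indep::Feller} and proves via \cite{StrVarMultidi} Theorem~11.1.4), identify the rate function as $L^1_{\nu,\zeta}$ via Lemma~\ref{lem::SanovT::RelEntropy}, then contract through $\Pi$ using Lemma~\ref{lem::MapPi::Cont}, and finally read off the structural claim from Lemma~\ref{lem::MapPi::YoungMedium}. The only thing the paper makes slightly more explicit is the citation of the precise Stroock--Varadhan theorem for the Feller continuity step you flagged as the main obstacle.
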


To derive the second representation, we define for $0 \leq s \leq t \leq T$ the operator acting on $f \in \CspL{b}{\TWR}$ by
\begin{align}
\label{def::LDPempP::OperatorU}
		U_{s,t}f \left( x,\w, \theta \right)  
		\defeq
		 \int_{\Csp{\left[s,T\right]}} f \left( x,\w, \theta_{t} \right)  P^{I}_{s,x,\w,\theta} \left( \dd \theta_{\left[s,T\right]}  \right) 
	.
\end{align}
With this operator we get the following representation of the rate function.

\begin{lemma}[compare to \cite{DawGarLDP} Lemma~4.7 for the mean field case]
	\label{lem::LDPempP::indep::2Rep}
	The family $ \left\{ \empP, P^{I,N} \right\}$ satisfies on $\Csp{ \Ti , \MOne{\TWR}}$   a large deviation principle with rate function
	\begin{align}
	\label{lem::LDPempP::indep::2Rep::S2}
		S^{I,2}_{\nu,\zeta} \left( \mu_{\Ti}  \right)  = \sup_{r \in \N, 0 \leq t_{1} < ... < t_{r} \leq T } L^{t_{1},...,t_{r}}_{\nu,\zeta} \left( \mu_{t_{1}},...,\mu_{t_{r}}  \right) 
			\textnormal{ for }
		 \mu_{\Ti}  \in \Cem
		 ,
	\end{align}
	where for $\mu_{i} \in \MOne{\TWR}$, $L^{t_{1},...,t_{r}}_{\nu,\zeta} \left( \mu_{1},...,\mu_{r} \right) $ is defined by
	\begin{align}
	\label{lem::LDPempP::indep::2Rep::L}
	\begin{split}
		&\sup_{f \in \CspL[\infty]{c}{\TWR} } \left\{  
					\int_{\TWR} \mkern-24mu f \left( x,\w,\theta\right) \mu_{1}
					 - 
					 \int_{\Td}    \log \left( 
															 \int_{\Wsp\times \R} \mkern-18mu
																	 U_{0,t_{1}} e^{f} \left( x, \w, \theta \right)  
																	\nu_{x} \left(\dd \theta\right) 
																	\zeta_{x} \left( \dd \w \right)
																\right) \dd x
				\right\} 
		\\
		&+
		\sum_{i=2}^{r}
			\sup_{f \in \CspL[\infty]{c}{\TWR} } \left\{  
					\int_{\TWR} \mkern-24mu   f \left( x,\w, \theta\right) \mu_{i}
					-
					\int_{\TWR}  \mkern-24mu  \log U_{t_{i-1},t_{i}} e^{f}  \left( x,\w,\theta\right) \mu_{i-1}
				\right\} 
		,
	\end{split}
	\end{align}
	where the $\mu_{i}$ integrate with respect to the variables $\dd x, \dd \w, \dd \theta$.
\end{lemma}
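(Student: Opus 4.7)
The plan is to reduce the statement to a combination of the generalised Sanov theorem (Lemma~\ref{lem::SanovT}) applied to finite-dimensional time-marginals of the empirical process, together with the Dawson--G\"artner projective limit theorem. This mirrors the strategy of \cite{DawGarLDP} for mean-field models, but with the new $x$ and $\w$ dependencies carried throughout.

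First I fix $r\ge 1$ and $0\le t_1 < \cdots < t_r \le T$, and consider the joint empirical vector $L^N_r = (\mu^N_{t_1},\ldots,\mu^N_{t_r}) \in \MOne{\TWR}^r$. I would apply Lemma~\ref{lem::SanovT} with $Y_1=\cdots=Y_r=\R$ and with the reference family $\{Q_{x,\w,\theta}\}$ taken to be the joint law of $(\theta_{t_1},\ldots,\theta_{t_r})$ under $P^{I}_{x,\w,\theta}$. Its Feller continuity in $(x,\w,\theta)$ (Assumption~\ref{ass::SanovT::MeasFeller}) follows from the continuity of $b^I$ (Assumption~\ref{ass::LDPempP::Inde}\ref{ass::LDPempP::Inde::bCont}) and the well-posedness of the martingale problem (Assumption~\ref{ass::LDPempP::Inde}\ref{ass::LDPempP::Inde::MPro}) via the standard Stroock--Varadhan stability theory. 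This yields an LDP for $\{L^N_r\}$ under $\{P^{I,N}\}$ with rate function in the general Sanov form of Lemma~\ref{lem::SanovT}.

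The second step is to rewrite that rate function in the telescoping form \eqref{lem::LDPempP::indep::2Rep::L}. The crucial Markov identity, obtained by iterated conditioning on $\theta_{t_i}$, is
\begin{align*}
\int_{\R^r} e^{\sum_{\ell=1}^r f_\ell(x,\w,y_\ell)}\, Q_{x,\w}(dy_1,\ldots,dy_r)
= \int_\R U_{0,t_1}\!\bigl[e^{f_1}\, U_{t_1,t_2}[e^{f_2}\,\cdots\, U_{t_{r-1},t_r} e^{f_r}]\bigr]\!(x,\w,\theta)\, \nu_x(d\theta).
\end{align*}
The substitution $h_r=f_r$, $h_i = f_i + \log U_{t_i,t_{i+1}} e^{h_{i+1}}$ is (for a suitable class of test functions) a bijection of test tuples; inserting it telescopes the log-moment term into the linear part, decoupling the joint supremum into the $r$ independent suprema appearing in \eqref{lem::LDPempP::indep::2Rep::L} and yielding $L^{t_1,\ldots,t_r}_{\nu,\zeta}(\mu_{t_1},\ldots,\mu_{t_r})$.

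Finally, to lift this family of finite-dimensional LDPs to an LDP on $\Cem$ with rate function $S^{I,2}_{\nu,\zeta}$, I would invoke the Dawson--G\"artner projective limit theorem. The evaluation maps $\pi_{t_1,\ldots,t_r}\colon \Cem \to \MOne{\TWR}^r$ are continuous and jointly separate points, the induced rate functions are consistent by construction, and exponential tightness of $\{\empP\}$ on $\Cem$ is already provided by Lemma~\ref{lem::LDPempP::indep::1Rep}. Taking the supremum over all finite projections then produces exactly $S^{I,2}_{\nu,\zeta}$. The main obstacle is the variational manipulation in the middle step: the substitution $(f_\ell)\mapsto (h_\ell)$ in general destroys compact support and smoothness, since $\log U_{s,t}e^{h}$ is only bounded continuous in $(x,\w,\theta)$. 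Handling this cleanly requires enlarging the test-function class at intermediate stages and verifying, by a truncation-and-mollification argument, that the supremum over $\CspL[\infty]{c}{\TWR}$ in \eqref{lem::LDPempP::indep::2Rep::L} coincides with the supremum over the larger bounded-continuous class produced by the substitution.
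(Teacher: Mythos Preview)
Your proposal is correct and follows essentially the same two-step strategy as the paper: a finite-dimensional LDP via Lemma~\ref{lem::SanovT} with $Y_1=\cdots=Y_r=\R$, rewritten in telescoping form through the Markov property and the operators $U_{s,t}$, followed by the Dawson--G\"artner projective limit argument (with Lemma~\ref{lem::LDPempP::indep::1Rep} supplying the needed LDP/tightness on $\Cem$). The paper handles your ``main obstacle'' a bit more economically---it carries out the substitution over $\CspL{b}{\TWR}$ first (where $U_{s,t}$ preserves the class) and only afterwards invokes density to pass to $\CspL[\infty]{c}{\TWR}$---and in the projective step it explicitly embeds $\Cem$ into $\MOne{\TWR}^{\Ti}$ with the product topology and restricts back via \cite{DemZeiLarge}~Lemma~4.1.5(b), but these are matters of packaging rather than substance.
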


Finally we show that $ S^{I}_{\nu,\zeta}$, respectively 	$S^{I,\Td}_{\nu,\zeta}$, is bounded by these two rate functions.
\begin{lemma}
\label{lem::LDPempP::indep::Coincidence}
	For all $\mu_{\Ti} \in \MOne{\TWC}$
	\begin{align}
		S^{I,2}_{\nu,\zeta} \left( \mu_{\Ti} \right)
		\leq  
		S^{I}_{\nu,\zeta} \left( \mu_{\Ti} \right) 
		\leq
		S^{I,\Td}_{\nu,\zeta} \left( \mu_{\Ti} \right)
		\leq 
		S^{I,1}_{\nu,\zeta} \left( \mu_{\Ti} \right) 
		.
	\end{align}
Moreover $	S^{I,1}_{\nu,\zeta} \left( \mu_{\Ti} \right) < \infty$ implies that $\mu_{\Ti} $ is weakly differentiable.
\end{lemma}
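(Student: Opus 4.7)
The plan is to establish the three inequalities separately, with weak differentiability falling out of the rightmost one. I may restrict to $\mu_{\Ti}$ of the form $\Pi(Q)_{\Ti}$ for some $Q \in \MOneL{\TWC}$, as $S^{I,1}_{\nu,\zeta}(\mu_{\Ti}) = \infty$ otherwise. Lemma~\ref{lem::MapPi::YoungMedium} then provides the decomposition $\mu_{t} = \dd x \otimes \mu_{t,x,\Wsp}(\dd \w) \otimes \mu_{t,x,\w}$ with $\mu_{t,x,\Wsp} = \mu_{0,x,\Wsp} = Q_{x,\Wsp}$ for every $t\in\Ti$, which I use repeatedly to compare the global and slice-wise expressions.

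The middle inequality $S^{I}_{\nu,\zeta} \leq S^{I,\Td}_{\nu,\zeta}$ is immediate: for each test function $f \in \CspL[\infty]{c}{\TWR}$, the variational expression from Definition~\ref{def::Minus1Norm} is an $(x,\w)$-integral against $\mu_{0,x,\Wsp}(\dd \w)\dd x$ of the corresponding slice quantity, and $\sup_{f}\int \leq \int \sup_{f}$ yields the claim after integrating in $t$. For $S^{I,2}_{\nu,\zeta} \leq S^{I}_{\nu,\zeta}$, I would fix a partition $0 \leq t_{1} < \dots < t_{r} \leq T$ with test functions $f_{i}$ from \eqref{lem::LDPempP::indep::2Rep::L}, set $V^{(i)}_{s} \defeq \log U_{s,t_{i}}e^{f_{i}}$, and deduce the PDE $\partial_{s}V^{(i)} + \Gen^{I}_{s}V^{(i)} + \tfrac{\sigma^{2}}{2}(\partial_{\theta}V^{(i)})^{2} = 0$ from the backward Kolmogorov equation for $U_{s,t_{i}}e^{f_{i}}$. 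Lemma~\ref{def::Distr::IntbyParts} (combined with a compact-support approximation of the bounded-but-not-compactly-supported $V^{(i)}$) rewrites each increment $\sPro{\mu_{t_{i}}}{f_{i}} - \sPro{\mu_{t_{i-1}}}{V^{(i)}_{t_{i-1}}}$ as the time integral of $\sPro{\partial_{u}\mu_{u} - (\Gen^{I}_{u})^{*}\mu_{u}}{V^{(i)}_{u}} - \tfrac{\sigma^{2}}{2}\int(\partial_{\theta}V^{(i)}_{u})^{2}\dd\mu_{u}$, bounded by $\int_{t_{i-1}}^{t_{i}}|\partial_{u}\mu_{u} - (\Gen^{I}_{u})^{*}\mu_{u}|^{2}_{\mu_{u}}\dd u$ through Definition~\ref{def::Minus1Norm}. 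The leftover $t_{1}$-term, after supremization over $f_{1}$, is matched with $\relE{\mu_{0}}{\dd x \otimes \zeta_{x} \otimes \nu_{x}} + \int_{0}^{t_{1}}|\cdot|^{2}_{\mu_{u}}\dd u$ via Lemma~\ref{lem::SanovT::RelEntropy} and the same Feynman-Kac trick.

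The main obstacle is $S^{I,\Td}_{\nu,\zeta}(\mu_{\Ti}) \leq S^{I,1}_{\nu,\zeta}(\mu_{\Ti})$, which simultaneously yields weak differentiability. Take $Q$ with $\Pi(Q)_{\Ti} = \mu_{\Ti}$ and $L^{1}_{\nu,\zeta}(Q) < \infty$, so $Q_{x,\w} \ll P^{I}_{x,\w}$ for $\dd x \otimes \zeta_{x}$-almost every $(x,\w)$. After truncating $\log(\dd Q_{x,\w}/\dd P^{I}_{x,\w})$ and passing to the limit using lower semicontinuity of the relative entropy and of $|\cdot|^{2}_{\mu}$, Girsanov's theorem conditionally on $(x,\w)$ furnishes a progressively measurable $h_{x,\w}$ with $\tfrac{1}{2}\bbE^{Q_{x,\w}}\!\int_{0}^{T}h_{x,\w}(t)^{2}\dd t = \relE{Q_{x,\w}}{P^{I}_{x,\w}} - \relE{\mu_{0,x,\w}}{\nu_{x}}$ such that under $Q_{x,\w}$ the canonical process solves $\dd\theta_{t} = (b^{I} + \sigma h_{x,\w})\dd t + \sigma\,\dd \widetilde B_{t}$. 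Itô's formula then yields in the distributional sense on $\D$ the Fokker--Planck-type identity $\partial_{t}\mu_{t,x,\w} = (\Gen^{I}_{t,x,\w})^{*}\mu_{t,x,\w} - \partial_{\theta}(\sigma\bar h_{t,x,\w}\mu_{t,x,\w})$ with $\bar h_{t,x,\w}(\theta) \defeq \bbE^{Q_{x,\w}}[h_{x,\w}(t)\mid\theta_{t} = \theta]$, from which weak differentiability is immediate. Cauchy-Schwarz and Jensen applied to the conditional expectation give $|\partial_{t}\mu_{t,x,\w} - (\Gen^{I}_{t,x,\w})^{*}\mu_{t,x,\w}|^{2}_{\mu_{t,x,\w}} \leq \tfrac{1}{2}\bbE^{Q_{x,\w}}[h_{x,\w}(t)^{2}]$; integrating in $t$ and in $(x,\w)$ against $Q_{x,\Wsp}(\dd\w)\dd x$, and combining with the chain-rule identity $\relE{\mu_{0}}{\dd x \otimes \zeta_{x} \otimes \nu_{x}} = \int\relE{\mu_{0,x,\w}}{\nu_{x}}Q_{x,\Wsp}(\dd\w)\dd x + \int\relE{Q_{x,\Wsp}}{\zeta_{x}}\dd x$, reconstructs $L^{1}_{\nu,\zeta}(Q)$. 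Taking $\inf_{Q}$ closes the inequality. The genuinely delicate steps are the Girsanov truncation without a priori Novikov bounds and the slice-by-slice verification of the Fokker-Planck identity, which requires Fubini combined with Lemma~\ref{def::Distr::IntbyParts} applied pointwise in $(x,\w)$.
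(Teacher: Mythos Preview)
Your treatment of the middle inequality $S^{I}_{\nu,\zeta}\le S^{I,\Td}_{\nu,\zeta}$ and of the rightmost inequality $S^{I,\Td}_{\nu,\zeta}\le S^{I,1}_{\nu,\zeta}$ (together with weak differentiability) is essentially the paper's argument: the paper also takes a minimising $Q$, invokes F\"ollmer's Girsanov representation to produce the drift $b^{x,\w}$, reads off the Fokker--Planck equation for $\mu_{t,x,\w}$, and bounds the slice norm by the $L^2$ energy of the drift. The paper in fact pushes further and proves \emph{equality} of $S^{I,1}_{\nu,\zeta}$ and $S^{I,\Td}_{\nu,\zeta}$ on the set $\{S^{I,1}_{\nu,\zeta}<\infty\}$ by showing that the Cauchy--Schwarz step is saturated (density of $\{\partial_\theta f\}$ in $L^2(\mu_{t,x,\w})$), but the inequality you state suffices for the lemma.

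The gap is in your argument for $S^{I,2}_{\nu,\zeta}\le S^{I}_{\nu,\zeta}$. What you outline is precisely the heuristic that the paper quotes from Dawson--G\"artner and then explicitly warns ``does not make sense'' on the unbounded state space. Two concrete obstructions: first, $V^{(i)}_s=\log U_{s,t_i}e^{f_i}$ is neither compactly supported nor known to satisfy the backward equation classically, since Assumption~\ref{ass::LDPempP::Inde} only gives continuity of $b^I$, not H\"older continuity, so the parabolic regularity needed for $U_{s,t}e^f\in C^{1,2}$ is not available. Second, and this is specific to the $(x,\w)$-dependent setting, any approximating $h_\epsilon$ you feed into the supremum defining $|\cdot|_{\mu_u}$ must lie in $\CspL[\infty]{c}{\TWR}$, hence be continuous in $(x,\w)$; a naive cutoff of $V^{(i)}$ does not produce this.

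The paper's route is substantially heavier than your parenthetical suggests. It replaces $U_{s,t}$ by the killed semigroup $U^R_{s,t}$ on the ball $B_R$, so that the relevant PDE becomes a terminal boundary value problem on a bounded domain. For H\"older continuous $b^I$ the paper proves a new result (Theorem~\ref{thm::LDPempP::UniqClasPDE}) giving existence, uniqueness, and crucially \emph{continuity in $(x,\w)$} of the classical solution $g^*$; then $h^*=\log(1+g^*)$ is mollified in $(x,\w,\theta)$ to get $h_\epsilon\in\CspL[\infty]{c}{\TWR}$, and an explicit remainder $r_\epsilon\to0$ uniformly (Lemma~\ref{lem::LDPempP::indep::S2leqSBoundGeps}) controls the commutator with $\Gen^I$. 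Finally the H\"older assumption is removed by approximating $b^I$ uniformly by H\"older coefficients and using stability of the martingale problem together with upper semicontinuity of the killed functional. Your sketch would need all of this machinery to become a proof.
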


From these three lemmas, we conclude the Theorem~\ref{thm::LDPempP::Inde} by the uniqueness of the rate function of a large deviation principle.

We prove the lemmas in the following chapters.
\end{proof}

\subsubsection{Two representation of the rate function (Proof of Lemma~\ref{lem::LDPempP::indep::1Rep} and  Lemma~\ref{lem::LDPempP::indep::2Rep})}
\label{sec::LDPempP::2RepRate}

\begin{proof}[of Lemma~\ref{lem::LDPempP::indep::1Rep}]
	We apply the Sanov type Lemma~\ref{lem::SanovT} with $r=1$, $Y=\Csp{\Ti}$ to conclude that the family
	$\left\{L^{N}, P^{I,N} \right\}$ satisfies on $\MOne{\TWC}$ a large deviation principle with rate function $L_{\nu,\zeta}$.
	The Lemma~\ref{lem::SanovT} requires Assumption~\ref{ass::LMF::Init}, Assumption~\ref{ass::LMF::Medium} and the following Feller continuity:
	\begin{lemma}
	\label{lem::LDPempP::indep::Feller}
			The Assumption~\ref{ass::LDPempP::Inde} implies that
			the family $\left\{ P^{I}_{t,x,\w,\theta} : \left( t,x,\w,\theta \right)  \in \TTWR \right\} $ is Feller continuous.
	\end{lemma}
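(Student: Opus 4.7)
The plan is to prove Feller continuity by a standard compactness-plus-uniqueness argument. Take a sequence $(t^{(n)}, x^{(n)}, \w^{(n)}, \theta^{(n)}) \to (t^*, x^*, \w^*, \theta^*)$ in $\TTWR$ and denote the corresponding measures by $P^I_n$ and the candidate limit by $P^I_{t^*, x^*, \w^*, \theta^*}$. I would first realize each $P^I_n$ on the common path space $\Csp{\Ti}$ by declaring the coordinate process to be constant equal to $\theta^{(n)}$ on $[0, t^{(n)}]$, and similarly for the limit; this removes the $t$-dependence of the underlying space and allows the Arzel\'a--Ascoli criterion of Lemma~\ref{lem::AA} to be applied directly.

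The first step is tightness of $\{P^I_n\}$ on $\Csp{\Ti}$. Since $\sigma$ is constant, under $P^I_n$ the coordinate process satisfies $\theta_s = \theta^{(n)} + \int_{t^{(n)}}^s b^I(u, x^{(n)}, \w^{(n)}, \theta_u)\,du + \sigma(B_s - B_{t^{(n)}})$ for $s \geq t^{(n)}$. Introducing the exit times $\tau_R^n \defeq \inf\{s \geq t^{(n)} : |\theta_s| \geq R\}$, the continuity of $b^I$ (Assumption~\ref{ass::LDPempP::Inde}~\ref{ass::LDPempP::Inde::bCont}) gives a bound on $b^I$ that is uniform over the compact set $\Ti \times K_x \times K_\w \times [-R,R]$, where $K_x$ and $K_\w$ are compact neighbourhoods containing the whole sequence. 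This controls the increments of $\theta$ on $[t^{(n)}, \tau_R^n]$ uniformly in $n$. Well-posedness of the martingale problem on $[0,T]$ (Assumption~\ref{ass::LDPempP::Inde}~\ref{ass::LDPempP::Inde::MPro}) rules out explosions, and an exponential martingale estimate applied to a Lyapunov function (for instance $\theta \mapsto \log(1+\theta^2)$, whose generator is uniformly controlled on the local balls) yields $\sup_n P^I_n(\tau_R^n < T) \to 0$ as $R \to \infty$. Combined with the trivial equicontinuity on $[0, t^{(n)}]$, this verifies the two conditions of Lemma~\ref{lem::AA}~\ref{lem::AA::AppliedToMeasure}.

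The second step is to identify any weak subsequential limit $P^{**}$ as the solution of the martingale problem for $\Gen^I_{t^*, x^*, \w^*}$ started from $\theta^*$ at time $t^*$. For $f \in \CspL[2]{c}{\R}$, for bounded continuous $\Phi$ and for $t^* < r \leq s \leq T$, the martingale identity under $P^I_n$ reads
\begin{align*}
\int \Phi(\theta_{[0,r]})\Bigl[f(\theta_s) - f(\theta_r) - \int_r^s \Gen^I_{u, x^{(n)}, \w^{(n)}} f(\theta_u)\,du\Bigr]\, dP^I_n = 0.
\end{align*}
Joint continuity of $b^I$ on $\TTWR$ together with the uniform moment control from the tightness step allows one to pass to the weak limit inside this integral; the initial-condition part, namely $P^I_n \circ \theta_{t^{(n)}}^{-1} = \delta_{\theta^{(n)}} \to \delta_{\theta^*}$, carries over because $\theta$ evaluated at $t^{(n)}$ converges in law to $\theta$ evaluated at $t^*$ along continuous paths. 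Hence $P^{**}$ satisfies the limiting martingale problem with the correct initial data, and uniqueness (Assumption~\ref{ass::LDPempP::Inde}~\ref{ass::LDPempP::Inde::MPro}) forces $P^{**} = P^I_{t^*, x^*, \w^*, \theta^*}$, whence the full sequence converges weakly and Feller continuity follows.

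The main obstacle is the tightness step, as the assumptions provide no explicit growth bound on $b^I$. The localization by $\tau_R^n$ combined with an ad-hoc Lyapunov argument is needed to compensate, and one has to argue carefully that the escape probabilities are controlled uniformly in the index $n$. Once tightness is in hand, the passage to the limit in the martingale identity is routine by the continuity of $b^I$.
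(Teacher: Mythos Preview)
Your overall strategy---tightness of $\{P^I_n\}$ followed by identification of any weak limit via the martingale problem and uniqueness---is exactly the machinery behind Theorem~11.1.4 of \cite{StrVarMultidi}, which is what the paper invokes in a single line. So in spirit you are reproving that theorem by hand rather than citing it; the paper simply checks that the hypotheses of that theorem (uniform-on-compacts convergence of the drifts, which follows from Assumption~\ref{ass::LDPempP::Inde}~\ref{ass::LDPempP::Inde::bCont}, and well-posedness of the limiting martingale problem, Assumption~\ref{ass::LDPempP::Inde}~\ref{ass::LDPempP::Inde::MPro}) are met.

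There is, however, a genuine gap in your tightness step. You claim that an exponential martingale estimate for $V(\theta)=\log(1+\theta^2)$, whose generator is ``uniformly controlled on the local balls'', yields $\sup_n P^I_n(\tau_R^n<T)\to 0$. But control of $\Gen^I V$ \emph{inside} $B_R$ cannot bound the exit probability from $B_R$: the standard supermartingale estimate needs $\Gen^I V\leq C$ (or $\leq CV$) \emph{globally}, and here $\Gen^I V$ contains the term $b^I\cdot\frac{2\theta}{1+\theta^2}$, which is only bounded by $\abs{b^I}$. Assumption~\ref{ass::LDPempP::Inde} gives no growth bound on $b^I$, so neither a global bound on $\Gen^I V$ nor the alternative estimate $P(\tau_R\leq T)\leq (V(\theta_0)+C_R T)/\log(1+R^2)$ is available (the local constant $C_R$ may grow faster than $\log(1+R^2)$). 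Well-posedness of each individual martingale problem gives $P^I_n(\tau_R^n<T)\to 0$ for fixed $n$, but not the required uniformity.

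The way Stroock--Varadhan close this gap, and the way you could repair your argument, is to first prove tightness and identification for the \emph{stopped} processes $\theta_{\cdot\wedge\tau_R}$ (where the coefficients are bounded, so tightness is immediate). Any limit of the stopped laws solves the stopped limiting martingale problem, hence coincides with the law of the limit process stopped at $\tau_R$. Portmanteau for the closed set $\{\tau_R\leq T\}$ then gives $\limsup_n P^I_n(\tau_R\leq T)\leq P^I_{t^*,x^*,\w^*,\theta^*}(\tau_R\leq T)$, and the right-hand side tends to zero by non-explosion of the limit. This is the substitute for your Lyapunov step and is precisely what is packaged inside Theorem~11.1.4 of \cite{StrVarMultidi}.
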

	Before we prove this lemma, we finish the proof of Lemma~\ref{lem::LDPempP::indep::1Rep}.
	
	The map $\Pi$ (defined in Definition~\ref{def::Pi}) is  continuous (Lemma~\ref{lem::MapPi::Cont}). It maps each probability measure on $\TWC$ to a continuous measure valued trajectories in $\Cem$.
	Moreover for each fixed vector $\ul{\theta}^{N}_{\Ti}$ and each $\ul{\w}^{N}$, the image of the corresponding empirical path measure $L^{N}$ under $\Pi$ is  the corresponding empirical process $\empP$.
	Therefore, the contraction principle implies the large deviation principle for $\left\{\empP, P^{I,N} \right\}$ with the rate function $S^{I,1}_{\nu,\zeta}$.

	\emptyline
	The right hand side of \eqref{eq::lem::LDPempP::indep::1Rep::S1Inf} is only finite if there is a $Q \in \MOneL{\TWC}$ with $\Pi \left(  Q \right)_{\Ti} = \mu_{\Ti}$.
	This implies that $\mu_{t} \in \MOneL{\TWR}$ for all $t \in \Ti$, and 
	that $\mu_{t,x,\Wsp} = \mu_{0,x,\Wsp}$ for all $t \in \Ti$ and almost all $x \in \Td$,
	 by Lemma~\ref{lem::MapPi::YoungMedium}.
\end{proof}

\begin{proof}[of Lemma~\ref{lem::LDPempP::indep::Feller}]
Fix an  arbitrary convergent sequence $ \left( x^{(n)}, \w^{(n)}, \theta^{(n)} \right)  \rightarrow  \left( x, \w, \theta \right)  \in \TWR$.
We define $a_{n} \defeq a \equiv \sigma$ and  
$b^{I,(n)} \left( t, \eta \right) \defeq b^{I} \left( t,  \w^{(n)}, x^{(n)},\eta \right) $, 
$b^{I} \left( t, \eta \right) =b^{I} \left( t,x,\w,\eta \right) $ for $\left( t, \eta \right) \in \TiR$.
These functions are continuous by  Assumption~\ref{ass::LDPempP::Inde}~\ref{ass::LDPempP::Inde::bCont}.
Moreover we know, by Assumption~\ref{ass::LDPempP::Inde}~\ref{ass::LDPempP::Inde::MPro}, that $P^{I}_{x^{(n)}, \w^{(n)},\theta^{(n)}}$ is the solution to the Martingale problem corresponding to the drift coefficient $b^{I,(n)}$.

The Theorem~11.1.4 in \cite{StrVarMultidi}  implies that the solutions to the Martingale problem $P^{I}_{x^{(n)}, \w^{(n)},\theta^{(n)}}$ converge weakly to $P^{I}_{x,\w,\theta}$.
The conditions of Theorem~11.1.4 of \cite{StrVarMultidi}  are satisfied by Assumption~\ref{ass::LDPempP::Inde}.
Therefore $P^{I}_{x,\w,\theta}$ is Feller continuous.
\end{proof}

\begin{proof}[of Lemma~\ref{lem::LDPempP::indep::2Rep}]
This proof is a generalisation of the proof of \cite{DawGarLDP}~Lemma~4.6 and we use the ideas of this proof.
At first we prove a LDP for the finite dimensional distributions of $\left\{\empP\right\}$ (i.e. the distribution of $\empP$ at a finite number of times)
and in a second step we transfer this LDP to the LDP for $\left\{\empP\right\}$ by using the projective limit approach.

\emptyline
\begin{steps}
\step[{LDP for the finite dimensional distributions of $P^{I,N}$}]

Fix $N\geq 1$, $r \in \N$, $0=t_{0} \leq t_{1} < .... < t_{r} \leq T$.
We define the random elements
\begin{align}
	\empPt{t_{1} ,..., t_{r}}
	\defeq
		\left( 
			\empPt{t_{1}}
			, ... ,
			\empPt{t_{r}}
		\right)
	\in
			\left( \MOne{\TWR}\right)^{r}
	.
\end{align}
Then $\empPt{t_{1} ,..., t_{r}}$ depends only on the spin values at the times $t_{1},...,t_{r}$, i.e. on $\ul{\theta}^{N}_{t_{1}} , ... , \ul{\theta}^{N}_{t_{r}}$ and not any more on the whole path.

By Lemma~\ref{lem::SanovT} (with $Y_{1} =.... = Y_{r} = \R$), the family  $\left\{\empPt{t_{1} ,..., t_{r}} , P^{I,N}  \right\}$ satisfies
a large deviation principle on $\left( \MOne{ \TWR }  \right) ^{r}$ 
with rate function
\begin{align}
	L^{t_{1},...,t_{r}}_{\mu_{0}} \left( \mu_{1},...,\mu_{r} \right) 
	&=
	\sup_{f_{1},...,f_{r} \in \CspL{b}{\TWR}}
		\left[ 
			\sum_{\ell=1}^{r} 
						\int_{\TWR}  \mkern-18mu f_{\ell} \left( x, \w, \theta \right) \mu_{\ell} \left( \dd x, \dd \w, \dd \theta \right)
			-
			H \left( f_{1},...,f_{r} \right)  
		\right] 
\end{align}
for  $\mu_{\ell} \in \MOne{\TWR}$, where 
\begin{align}
	H \left( f_{1},...,f_{r} \right)   
		\defeq
			\int_{\Td}   
			\log \left( 
				\int_{\Wsp}
			 	\int_{\Csp{\Ti}} 
					e^{\sum_{\ell=1}^{r} f_{\ell} \left( x, \w, \theta_{t_{\ell}} \right) }
				P_{x,\w}^{I}  \left( \dd \theta_{\Ti} \right)  
				\zeta_{x} \left( \dd \w \right)
				\right)
			\dd x
	.
\end{align}
To show that this function coincides with \eqref{lem::LDPempP::indep::2Rep::L}, we first get by the Markov property of 
$\left\{  P_{t,x,\w,\theta} \right\} $ that
\begin{align}
\label{eq::pf::lem::LDPempP::indep::2Rep::HfirstTrans}
\begin{split}
	&H \left( f_{1},...,f_{r} \right)  
	\\
	&=
		\int_{\Td}  \log
		\left( 
				\int_{\Wsp}
			 	\int_{\Csp{\Ti}}
					\int_{\Csp{\Ti}}  \mkern-18mu
						e^{f_{r} \left( y,\w,\theta_{t_{r}}  \right) } 
					P^{I}_{t_{r-1},x,\w,\theta_{t_{r-1}}} \left( \dd \theta_{\Ti}  \right) 
		\right.
		\\
		&\mkern200mu		
					e^{\sum_{\ell=1}^{r-1} f_{\ell} \left( y, \w, \theta_{t_{\ell}} \right) }
				P_{x,\w}^{I}  \left( \dd \theta_{\Ti} \right)  
				\zeta_{x} \left( \dd \w \right)
		\Bigg)
		\dd x
	\\
	&= 
		\int_{\Td}  \log
				\left(
					\int_{\Wsp}
					\int_{\R} 
						U_{t_{0},t_{1}} \left( e^{f_{1}} ... U_{t_{r-1},t_{r}}e^{f_{r}} \right) \left( x, \w, \theta \right)
					\nu_{x} \left( \dd \theta \right)
					\zeta_{x} \left( \dd \w \right)
				\right)
		\dd x
	.
\end{split}
\end{align}

Now performing formally (by pushing through the space dependency) the same calculation as Dawson and G\"artner in \cite{DawGarLDP} page 275, we can transfer the right hand side of \eqref{eq::pf::lem::LDPempP::indep::2Rep::HfirstTrans} to the right hand side of \eqref{lem::LDPempP::indep::2Rep::L} with the supremum taken over all $f \in \CspL{b}{\TWR}$. But the operators  $U_{s,t}$ are continuous linear operators, hence the supremum over $\CspL[\infty]{c}{\TWR} $ equals the supremum over $\CspL{b}{\TWR}$.

\emptyline
\step[{Transfer of the LDP for $\left\{\empPt{t_{1} ,..., t_{r}}\right\}$ to the LDP for $\left\{\empP\right\}$}]

An LDP for $\left\{\empP\right\}$ follows from the LDP for the finite dimensional marginals of the first step, by the projective limit approach.
In \cite{DawGarLDP} on page 276 this is done for the mean field model.
This proof can be almost directly used in the setting we consider here. 
To have a complete picture, we state nevertheless the idea here.

To have a projective system corresponding to $\left( \MOne{\TWR} \right)^{r}$ with order relation $\subseteq$ for $\{t_{1},...,t_{r}\}$, we embed the space $\Cem$ into $\MOneUp{\Ti}{ \TWR} \defeq \{f: \Ti  \rightarrow \MOne{\TWR} \}$ furnished with the product topology. 

We know by Lemma~\ref{lem::LDPempP::indep::1Rep} already that $\left\{\empP, P^{I,N} \right\}$ satisfies a large deviation system on $\Cem$.
By the contraction principle,  $\left\{\empP, P^{I,N}\right\}$ also satisfies a large deviation system on $\MOneUp{\Ti}{ \TWR}$.
We denote its rate function by $\what{S}^2$. 
But this LDP can also be identified with the projective limit of the finite dimensional LDPs derived above.
Hence by the projective limit theorem  (\cite{DemZeiLarge} Theorem~4.6.1, \cite{DawGarLDP} Theorem~3.3)  we see that $\what{S}^2$ has the desired form \eqref{lem::LDPempP::indep::2Rep::S2} on $\MOneUp{\Ti}{ \TWR}$. 

Moreover  $\what{S}^2$ is infinite on $\MOneUp{\Ti}{ \TWR} \backslash \Cem$ and the random variables $\empP$ under $P^{I,N}$ are concentrated on $\Cem$.
Hence we can reduce the LDP to an LDP on $\Cem$ by Lemma~4.1.5~(b)  in \cite{DemZeiLarge}. 
This finishes the proof of Lemma~\ref{lem::LDPempP::indep::2Rep}.

\end{steps}\vspace{-\baselineskip}
\end{proof}

\subsubsection{Coincidence of the two representations with \texorpdfstring{$S^{I}_{\nu,\zeta}$}{SInu} (proof of Lemma~\ref{lem::LDPempP::indep::Coincidence})}
\label{sec::LDPempP::2RepBoundSI}

In this chapter we prove Lemma~\ref{lem::LDPempP::indep::Coincidence}. Therefore we show at first an upper bound on $S^{I,\Td}_{\nu,\zeta}$ and then a lower bound on $S^{I}_{\nu,\zeta}$.

For the upper bound (Chapter~\ref{sec::LDPempP::2RepBoundSI::Upper}) we generalise an approach used in \cite{PraHolMKV},  which is partially based on approaches of \cite{FolRandom} and \cite{BruFiniteKullb}.
In contrast to \cite{PraHolMKV}, we consider the space dependency $x \in \Td$ in addition to the random environment $\w \in \Wsp$.
Moreover we look at an independent system, whereas in \cite{PraHolMKV}  an interacting system is considered (see also Lemma~\ref{lem::LdpLNtoEmp::SuppBound}, where we use this approach also for an interacting systems).

The proof that we give for the lower bound (see Chapter~\ref{sec::LDPempP::2RepBoundSI::Lower}) is a generalisation of Chapter~4.4 in \cite{DawGarLDP} to the model we consider here.
We require in the proof the existence and uniqueness of a solution to a PDE.
In contrast to \cite{DawGarLDP}, this solution has to be continuous in the space variable $x \in \Td$ and the environment variable $\w \in \Wsp$.
We show the existence and this regularity of a solution in the completely new Chapter~\ref{sec::LDPempP::PDE}.
The rest of the proof in Chapter~\ref{sec::LDPempP::2RepBoundSI::Lower} generalises the proof in \cite{DawGarLDP}.
Moreover, we correct minor mistakes of the proof in \cite{DawGarLDP}.

\paragraph{Upper bound on \texorpdfstring{$S^{I,\Td}_{\nu,\zeta}$}{SITdnu}}
\label{sec::LDPempP::2RepBoundSI::Upper}

We show in this chapter that $S^{I,\Td}_{\nu,\zeta} \leq S^{I,1}_{\nu,\zeta}$.
As mentioned, the proof we state here, is based on an approach in \cite{PraHolMKV}.

\begin{lemma}
	\label{lem::LDPempP::indep::UpperB}
	If $S^{I,1}_{\nu,\zeta} \left( \mu_{\Ti} \right) < \infty$ for a $\mu_{\Ti} \in \Cem$,
	then
	\begin{align}
	S^{I,1}_{\nu,\zeta} \left( \mu_{\Ti} \right)
	=
	S^{I,\Td}_{\nu,\zeta} \left( \mu_{\Ti} \right) 
	,
	\end{align}
	and $t \mapsto \mu_{t,x,\w}$ is weakly differentiable for almost all $\left( x, \w \right) \in \TW$.
	
	In particular
	$S^{I,1}_{\nu,\zeta} \geq S^{I,\Td}_{\nu,\zeta} \geq S^{I}_{\nu,\zeta}$.
\end{lemma}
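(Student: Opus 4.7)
The plan is to prove both inequalities $S^{I,\Td}_{\nu,\zeta}\le S^{I,1}_{\nu,\zeta}$ and, under $S^{I,1}_{\nu,\zeta}(\mu_\Ti)<\infty$, the reverse $S^{I,1}_{\nu,\zeta}\le S^{I,\Td}_{\nu,\zeta}$, through a Girsanov–Föllmer analysis of paths. Given $Q\in\MOneL{\TWC}$ with $L^1_{\nu,\zeta}(Q)<\infty$ and $\Pi(Q)_\Ti=\mu_\Ti$, I would first disintegrate $Q=\dd x\otimes Q_{x,\Wsp}(\dd \w)\otimes Q_{x,\w}$ and invoke Föllmer's theorem on finite-entropy changes of measure on Wiener space, pointwise in $(x,\w)$: finiteness of $\relE{Q_{x,\w}}{P^I_{x,\w}}$ produces an adapted drift $h=h_{t,x,\w}$ such that the canonical process satisfies $\dd\theta_t=(b^I(t,x,\w,\theta_t)+h_t)\,\dd t+\sigma\,\dd W_t$ under $Q_{x,\w}$, together with the identity
\begin{align*}
\relE{Q_{x,\w}}{P^I_{x,\w}}=\relE{Q_{x,\w,0}}{\nu_x}+\frac{1}{2\sigma^2}E_{Q_{x,\w}}\!\left[\int_0^T h_t^2\,\dd t\right].
\end{align*}
Using Lemma~\ref{lem::SanovT::RelEntropy} and Lemma~\ref{lem::MapPi::YoungMedium}, the initial pieces collected over $(x,\w)$ assemble to $\relE{\mu_0}{\dd x\otimes \zeta_x(\dd\w)\otimes\nu_x}$ and yield $\mu_{0,x,\Wsp}=Q_{x,\Wsp}$.

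Next I would apply It\^o's formula under $Q_{x,\w}$ to test functions $f\in\CspL[\infty]{c}{\R}$, take expectations, and use the tower property to derive that $t\mapsto\mu_{t,x,\w}$ is weakly differentiable for a.e.\ $(x,\w)$ and solves in the sense of Definition~\ref{def::Distr::AbsCont} the perturbed Fokker–Planck equation
\begin{align*}
\partial_t\mu_{t,x,\w}=(\Gen^I_{t,x,\w})^*\mu_{t,x,\w}-\partial_\theta(\bar h_{t,x,\w}\,\mu_{t,x,\w}),
\end{align*}
with $\bar h_{t,x,\w}(\theta):=E_{Q_{x,\w}}[h_t\mid\theta_t=\theta]$. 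Inserting this into the definition of $\abs{\cdot}_{\mu_{t,x,\w}}$ and applying Cauchy–Schwarz to the supremum, followed by conditional Jensen, would chain into
\begin{align*}
\abs{\partial_t\mu_{t,x,\w}-(\Gen^I_{t,x,\w})^*\mu_{t,x,\w}}_{\mu_{t,x,\w}}^2 \le \frac{1}{2\sigma^2}\int_\R\bar h_{t,x,\w}^2\,\dd\mu_{t,x,\w} \le \frac{1}{2\sigma^2}E_{Q_{x,\w}}[h_t^2].
\end{align*}
Integrating in $t$ against $\mu_{0,x,\Wsp}(\dd\w)\,\dd x$ and adding back the initial-entropy term recovers $S^{I,\Td}_{\nu,\zeta}(\mu_\Ti)\le L^1_{\nu,\zeta}(Q)$; taking the infimum over $Q$ delivers the first inequality.

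For the converse direction, when $S^{I,1}(\mu_\Ti)<\infty$ the above shows $S^{I,\Td}(\mu_\Ti)$ is finite, so there exists a measurable drift $g_{t,x,\w}(\theta)$ realizing the supremum in $\abs{\cdot}_{\mu_{t,x,\w}}$, namely the $L^2(\mu_{t,x,\w})$-gradient representer of $\partial_t\mu-(\Gen^I)^*\mu$. I would then take $Q^*$ to be the Markovian diffusion with drift $b^I+g$ and initial law $\mu_0$; by weak uniqueness of the Fokker–Planck problem, $\Pi(Q^*)=\mu_\Ti$. Because $Q^*$ is Markovian one has $h=\bar h=g(\theta_t)$, so conditional Jensen and Cauchy–Schwarz both saturate, giving $L^1_{\nu,\zeta}(Q^*)=S^{I,\Td}(\mu_\Ti)$ and hence $S^{I,1}(\mu_\Ti)\le S^{I,\Td}(\mu_\Ti)$.

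The hardest part of this plan is the joint $(t,x,\w,\theta)$-measurability of $h$ and $\bar h$: Föllmer's construction is pointwise in $(x,\w)$, and the subsequent integrations against $\dd x\otimes\zeta_x(\dd\w)$ require that the drift and its conditional expectation be chosen measurably across parameters, which needs a careful regular-conditional-probability argument on the Polish space $\TWC$. A related, secondary subtlety is well-posedness of the SDE with drift $b^I+g$ defining $Q^*$: the duality drift $g$ carries no a priori pathwise growth bound, so existence of $Q^*$ must be obtained by a localization argument exploiting Assumption~\ref{ass::LDPempP::Inde} and the finite energy $\int\!\int g^2\,\dd\mu_t\,\dd t$.
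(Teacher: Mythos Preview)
Your proposal uses the same F\"ollmer--Girsanov machinery as the paper, and your upper-bound direction ($S^{I,\Td}_{\nu,\zeta}\le S^{I,1}_{\nu,\zeta}$) is essentially the paper's argument: disintegrate, invoke F\"ollmer's drift representation, derive the perturbed Fokker--Planck equation for $\mu_{t,x,\w}$, and bound the $\abs{\cdot}_{\mu_{t,x,\w}}$-norm by Cauchy--Schwarz. The paper works directly with the minimizer $\bar Q$ (obtained by compactness of level sets of $L^1_{\nu,\zeta}$) rather than an arbitrary $Q$, and cites F\"ollmer/Liptser--Shiryaev for the fact that the resulting drift is already a function $b^{x,\w}(t,\theta)$ of the current state, so your conditional expectation $\bar h$ and the Jensen step are not needed.

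Where your route diverges, and where there is a genuine gap, is the converse direction. The paper does \emph{not} construct a new diffusion $Q^*$ from the optimal drift $g$. Instead, it stays with the minimizer $\bar Q$ and upgrades the Cauchy--Schwarz inequality to an equality by a density argument: the set $\{\partial_\theta f : f\in\CspL[\infty]{c}{\R}\}$ is dense in $L^2(\R,\mu_{t,x,\w})$, so one can take $f_n$ with $\partial_\theta f_n\to b^{x,\w}(t,\cdot)$ in $L^2(\mu_{t,x,\w})$, which saturates the supremum defining $\abs{\cdot}_{\mu_{t,x,\w}}$. This gives $\frac{1}{2}\int (b^{x,\w})^2\,\dd\mu_{t,x,\w}=\abs{\partial_t\mu_{t,x,\w}-(\Gen^I_{t,x,\w})^*\mu_{t,x,\w}}_{\mu_{t,x,\w}}^2$ directly, hence $L^1_{\nu,\zeta}(\bar Q)=S^{I,\Td}_{\nu,\zeta}(\mu_\Ti)$.

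Your construction of $Q^*$ has a problem beyond the well-posedness issue you already flagged: to conclude $\Pi(Q^*)_\Ti=\mu_\Ti$ you appeal to ``weak uniqueness of the Fokker--Planck problem''. But the paper explicitly notes (in the introduction, when explaining why the Kipnis--Olla approach is unavailable here) that uniqueness of weak solutions to the relevant Fokker--Planck equation is \emph{not} established in this generality. So that step would not go through. The density argument avoids this entirely: there is no need to manufacture a competitor $Q^*$, since the minimizer $\bar Q$ already achieves $S^{I,\Td}_{\nu,\zeta}(\mu_\Ti)$.
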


\begin{remark}
	\label{rem::LDPempP::indep::admissMu}
	Note that the lemma only states the equality of $S^{I,1}_{\nu,\zeta} $ and $S^{I,\Td}_{\nu,\zeta}$, when $S^{I,1}_{\nu,\zeta}  \left( \mu_{\Ti}  \right) < \infty$, i.e. when there is a $Q \in \MOne{\TWC}$, such that $L_{\nu,\zeta}^{1}  \left( Q \right) < \infty$ and $\Pi \left( Q \right)_{\Ti} = \mu_{\Ti}$.
	In \cite{FolRandom}, $\mu_{\Ti}$ that satisfy this condition are called admissible.
	
	Therefore this result is not enough to show the claimed equality in Theorem~\ref{thm::LDPempP::Inde} and we are bound to prove also a lower bound (in Chapter~\ref{sec::LDPempP::2RepBoundSI::Lower}).
\end{remark}

 \begin{proof}[of Lemma~\ref{lem::LDPempP::indep::UpperB}]
 Fix a $\mu_{\Ti}  \in \Cem$ with $S^{I,1}_{\nu,\zeta} \left( \mu_{\Ti}  \right) < \infty$.
 
 The idea of this proof is based on the steps~1-3 of the proof of Theorem~3  in \cite{PraHolMKV}, that are partly based on \cite{FolRandom} and \cite{BruFiniteKullb}. The proof is organised as follows.
We show in \ref{pf::lem::LDPempP::indep::UpperB::step::Minimiz}, that there is a $\ol Q \in \MOne{\TWC}$, which is a minimizer of the right hand side of \eqref{eq::lem::LDPempP::indep::1Rep::S1Inf} for $S^{I,1}_{\nu,\zeta} \left( \mu_{\Ti}  \right)$.
In \ref{pf::lem::LDPempP::indep::UpperB::step::OtherRepLI} we derive another representation of $S^{I,1}_{\mu} \left( \mu_{\Ti} \right)$, by applying a result of \cite{FolRandom}.
Finally (in \ref{pf::lem::LDPempP::indep::UpperB::step::NewRepEqual}) we show, that the new representation of $S^{I,1}_{\mu} \left( \mu_{\Ti} \right)$ equals $S^{I,\Td}_{\nu,\zeta}$. We use that $\mu_{t,x,\w}$ is the evolution of the time marginal of $\ol Q_{x,\w}$ and a weak solution of a Fokker-Planck equation.

\emptyline
\begin{steps}
\step[{There is a $\ol Q$ with  $L^{1}_{\nu,\zeta}  \left( Q \right) = S^{I,1}_{\mu} \left( \mu_{\Ti} \right)$ and nice properties}]
\label{pf::lem::LDPempP::indep::UpperB::step::Minimiz}
\nopagebreak[2]

We restrict the infimum in \eqref{eq::lem::LDPempP::indep::1Rep::S1Inf} to the set
\begin{align}
		A_{\mu,C}
		\defeq
			\Big\{  Q: \Pi \left( Q \right)_{\Ti} =\mu_{\Ti} \Big\}  
			\cap
			\left\{  Q: L^{1}_{\nu,\zeta}  \left( Q \right) \leq C \right\} 
		\subset 
			\MOne{\TWC}
			,
\end{align} 
for a $C>0$ large enough.
This set is non empty and compact  (the last set is compact because $L^{1}_{\nu,\zeta}$ is a good rate function and the first set is closed).
 Hence by the lower semi continuity of $L^{1}_{\nu,\zeta}$, there exists a $\ol Q \in A_{\mu,C}$ that is a minimiser of $L^{1}_{\nu,\zeta} $ in $A_{\mu,C}$.
This implies that
 $L^{1}_{\nu,\zeta}  \left( \ol Q \right)
			=
			S^{I,1}_{\mu} \left( \mu_{\Ti} \right)
$.

\emptyline
Therefore  $L^{1}_{\nu,\zeta}  \left( \ol Q \right) = \relE{\ol Q}{\dd x \otimes \zeta_{x} \left( \dd \w \right) \otimes P^{I}_{x,\w} } < \infty$ and
  $\ol Q \in \MOneUp{L}{\TWC}$.
 Let us write $\ol Q = \dd x \otimes \ol Q_{x} $ for $\ol Q_{x} \in \MOne{\WC}$ and
 $Q_{x} =  \ol Q_{x,\Wsp} \otimes \ol Q_{x,\w}$ for $\ol Q_{x,\w} \in \MOne{\Csp{\Ti}}$, $\ol Q_{x,\w} \in \MOne{\Wsp}$.
 Then for almost all $x \in \Td$ and $\ol Q_{x,\Wsp}$-almost all $\w \in \Wsp$,  $\relE{ \ol Q_{x,\w} }{ P^{I}_{x,\w}  }< \infty$,  $\relE{ \ol Q_{x,\Wsp}}{\zeta_{x}} < \infty$ and  $\Pi \left( \ol Q_{x,\w} \right)_{t} = \mu_{t,x,\w}$.
Moreover  $\Pi \left( \ol Q \right)_{t} = \dd x \otimes \ol Q_{x,\Wsp} \otimes \Pi \left( \ol Q_{x,\w} \right)_{t} = \mu_{t} \in \MOneUp{L}{\TWR}$.

 \emptyline
 \step[{Another representation of  $S^{I,1}_{\mu} \left( \mu_{\Ti} \right)$}]
 \label{pf::lem::LDPempP::indep::UpperB::step::OtherRepLI}
 
 By these properties, we get, for almost all $x \in \Td$, as in \cite{FolRandom} Theorem~II.1.31 and Remark~II.1.3  (see also \cite{LipShiStatis} Chapter~7 (in particular Theorem~7.11)), that there is a map $b^{x,\w} : \TiR \rightarrow \R$ such that $\ol Q_{x,\w}$ is the law of $\theta^{x,\w}_{\Ti}$ described by the following SDE
 \begin{align}
 \label{pf::lem::LDPempP::indep::UpperB::eq::SDEforQx}
 	\dd \theta^{x,\w}_{t}
 	=
 		\left( \sigma b^{x,\w} \left( t, \theta^{x,\w}_{t}\right) - b^{I} \left( t,x, \w, \theta^{x,\w}_{t}\right) \right) \dd t 
 		+ \sigma \dd B^{\ol Q_{x,\w}}_{t}
 	,
  \end{align}
  with $\theta^{x,\w}_{0} \sim \mu_{0,x,\w}$
 and
  \begin{align}
   \label{pf::lem::LDPempP::indep::UpperB::eq::GirQxPIx}
 	\frac{\dd \ol Q_{x,\w}}{\dd P^{I}_{x,\w} }
 	=
 	e^{\int_{0}^{T} b^{x,\w} \left( t , . \right)  \dd B^{\ol Q_{x,\w}}_{t} + \frac{1}{2} \int_{0}^{T} b^{x,\w} \left( t, . \right) ^{2} \dd t}
 	\frac{\dd \mu_{0,x,\w}}{\dd \nu_{x}}
	.
  \end{align}
 Here $B^{\ol Q_{x,\w}}_{t}$ is a Wiener process under $\ol Q_{x,\w}$.
Inserting this derivative in the relative entropy, we get
\begin{align}
\label{pf::lem::LDPempP::indep::UpperB::eq::relEexplForm}
\begin{split}
	\relE{ \ol Q_{x,\w} }{ P^{I}_{x,\w} } 
	-
	\relE {\mu_{0,x,\w}} { \nu_{x}}
	&=
	\frac{1}{2} \int_{\Csp{\Ti}} \int_{0}^{T}   \left( b^{x,\w} \left( t, \theta_{t}\right)  \right) ^{2} \dd t \; Q_{x,\w} \left( \dd \theta_{\Ti}  \right) 
	\\
	&=
	\frac{1}{2} \int_{0}^{T} \int_{\R}  \left( b^{x,\w} \left( t, \theta_{t}\right) \right) ^{2} \mu_{t,x,\w }\left( \dd \theta \right)  \: \dd t 
	.
\end{split}
 \end{align}
Integrating over $\mu_{0,x,\Wsp} = \ol Q_{x,\Wsp} \in \MOne{\Wsp}$ and then over $x \in \Td$ implies that
\begin{align}
\label{pf::lem::LDPempP::indep::UpperB::eq::S1IexplForm}
\begin{split}
S^{I,1}_{\nu,\zeta} \left( \mu_{\Ti}  \right)
=
L^{1}_{\nu,\zeta}  \left( \ol Q \right)
=
	&\frac{1}{2} \int_{\Td} \int_{\Wsp} \int_{0}^{T} \int_{\R}  
						\left( 
								b^{x,\w} \left( t,\theta \right) 
						\right) ^{2} 
						\mu_{t,x,\w}\left( \dd \theta \right)  \dd t 
						\: \mu_{0,x,\Wsp}  \left( \dd \w \right)
						 \dd x
	\\
	&+
	\int_{\Td} \int_{\Wsp} \relE {\mu_{0,x,\w}} { \nu_{x}} \mu_{0,x,\Wsp}  \left( \dd \w \right) \dd x
	+
	\int_{\Td} \relE{	\mu_{0,x,\Wsp}   }{ \zeta_{x} } \dd x
	.
\end{split}
 \end{align}

\emptyline
\step[{The new representation of $S^{I,1}_{\nu,\zeta} \left( \mu_{\Ti} \right)$ equals $S^{I,\Td}_{\nu,\zeta}$}]
\label{pf::lem::LDPempP::indep::UpperB::step::NewRepEqual}

We show now that for almost all $t \in \Ti$, almost all $x \in \Td$ and $Q_{x,\Wsp}$-almost all $\w \in \Wsp$
\begin{align}
\label{pf::lem::LDPempP::indep::UpperB::eq::ToShow2}
	\frac{1}{2} \int_{\R}  
							\left( 
								b^{x,\w} \left( t,\theta \right) 
							\right) ^{2} 
							\mu_{t,x,\w}\left( \dd \theta \right) 
	 =
		\abs{
			\partial_{t}  \mu_{t,x,\w} - \left(\Gen^{I}_{t,x,\w}\right)^{*}   \mu_{t,x,\w}
		}_{\mu_{t,x,\w}}
	,
\end{align}
with $\Gen^{I}_{t,x,\w} $ defined in \eqref{def::GeneratorIndependent}.

The equation \eqref{pf::lem::LDPempP::indep::UpperB::eq::ToShow2} can be shown as in the Steps~2 and~3 in the proof of Theorem~3 in \cite{PraHolMKV}.
Therefore we sketch the proof here only.
 
 The measure $\ol Q_{x,\w}$ is the law of \eqref{pf::lem::LDPempP::indep::UpperB::eq::SDEforQx} and by construction $\mu_{t,x,\w}$  is the evolution of the time marginal of this law. Hence $\mu_{t,x,\w}$ is a weak solution of the Fokker-Plank equation
 \begin{align}
 \label{pf::lem::LDPempP::indep::UpperB::eq::FPeq}
 	\partial_{t} \mu_{t,x,\w}
 	=
 	 - \partial_{\theta}  \left( \big[ \sigma b^{x,\w} \left( t, .\right) - b^{I} \left( t, x, \w,. \right) \big] \mu_{t,x,\w} \right)  + \frac{\sigma^{2}}{2} \partial^{2}_{\theta^{2}} \mu_{t,x,\w}
	.
 \end{align}
From this, we subtract now the generator $\left(\Gen^{I}_{t,x,\w}\right)^{*} $ 
 \begin{align}
 	\partial_{t} \mu_{t,x,\w} - \left(\Gen^{I}_{t,x,\w}\right)^{*} \mu_{t,x,\w} 
 	=
 	- \partial_{\theta}  \left(   \sigma b^{x,\w} \left( t, .\right)   \mu_{t,x,\w}  \right) 
 	,
 \end{align}
 what leads to 
 \begin{align}
 \label{pf::lem::LDPempP::indep::UpperB::eq::UpperBound}
 \begin{split}
	\abs{
		\partial_{t}  \mu_{t,x,\w} - \left(\Gen^{I}_{t,x,\w}\right)^{*} \mu_{t,x,\w}
	}_{\mu_{t,x,\w}}
	&=
		\frac{1}{2}
		\sup_{f \in  \D_{\mu_{t,x,\w}}}
			\frac{\abs{ \int_{\R} \sigma b^{x,\w} \left( t, \theta \right)  \partial_{\theta} f \left(\theta \right) \mu_{t,x,\w} \left( \dd \theta \right)}^{2} }
			{	\sigma^{2} \int_{\R}  \left(\partial_{\theta} f \left(\theta \right)\right) ^{2} \mu_{t,x,\w} \left( \dd \theta \right)}
	\\
	&\leq
		\frac{1}{2}
		\int_{\R}  \left( b^{x,\w} \left( t, \theta \right) \right)^{2}  \mu_{t,x,\w} \left( \dd \theta \right)
	,
\end{split}
 \end{align}
with
$\D_{\mu_{t,x,\w}} \defeq \left\{ f \in \CspL[\infty]{c}{\R} :  \int_{\R}  \left(\partial_{\theta} f \left(\theta \right)\right) ^{2} \mu_{t,x,\w} \left( \dd \theta \right) >0 \right\}$.

 To conclude \eqref{pf::lem::LDPempP::indep::UpperB::eq::ToShow2}, we have to show that the last inequality is actually an equality.
 This can be done as in Step~3 of the proof of Theorem~3 in \cite{PraHolMKV}, by showing that $\left\{  \partial_{\theta} f : f \in \D_{\mu_{t,x,\w}}\right\} $ is dense in $\Ltwo  \left( \R,\mu_{t,x,\w} \right) $. Then we take a approximating sequence $f_{n} \in \D_{\mu_{t,x,\w}}$, 
 $\partial_{\theta} f_{n} \rightarrow b^{x,\w}_{t} $ 
 and get the corresponding lower bound.

 \end{steps}\vspace{-\baselineskip}
 \end{proof}

\begin{remark}
\label{rem::LDPempP::indep::UpperB::Different}
	Instead of Lemma~\ref{lem::LDPempP::indep::UpperB}, we could also show similarly as in Lemma~4.9 in \cite{DawGarLDP}, that $S^{I,1}_{\nu,\zeta} \geq S^{I}_{\nu,\zeta}$, by using a representation of $S^{I}_{\nu,\zeta}$, that we derive in Lemma~\ref{lem::LDPempP::indep::3Rep}.
	This would require some changes  (compared to  \cite{DawGarLDP}), due to the space dependency and the initial distribution of the spin values that we consider here.
	However, the advantage of Lemma~\ref{lem::LDPempP::indep::UpperB} is that it bounds also $ S^{I,\Td}_{\nu,\zeta} $.
	This could be archived also by a variation of Lemma~4.9 in \cite{DawGarLDP} and a variation of Lemma~\ref{lem::LDPempP::indep::3Rep}, i.e. by moving the integral with respect to $x \in \Td$ out of the supremum in \eqref{eq::lem::LDPempP::indep::3Rep::SI}.
	However using this approach, one has to be careful whether functions are integrable with respect to $x \in \Td$ and $\w \in \Wsp$.
	
\end{remark}

\paragraph{Lower bound on \texorpdfstring{$S^{I}_{\nu,\zeta}$}{SInu}}
\label{sec::LDPempP::2RepBoundSI::Lower}

We prove in this chapter the following lower bound on $S^{I}_{\nu,\zeta}$.
The proof is a generalisation of the corresponding proof in \cite{DawGarLDP}. 
The most important difference to the original proof is that we derive for solutions of the arising PDE (see the proof of Lemma~\ref{lem::LDPempP::indep::S2leqSKilled}) also regularity in the space variable and the random environment variable.

\begin{lemma}[compare to Lemma~4.10 in \cite{DawGarLDP} for the mean field case]
\label{lem::LDPempP::indep::S2leqS}
	$S^{I,2}_{\nu,\zeta} \leq S^{I}_{\nu,\zeta}$.
\end{lemma}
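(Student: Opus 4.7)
The plan is to adapt \cite{DawGarLDP}~Lemma~4.10 to the space and random environment dependent setting. If $S^{I}_{\nu,\zeta}(\mu_\Ti) = \infty$ the estimate is trivial, so I may restrict to $\mu_\Ti \in \bbA$, where by Definition~\ref{def::Distr::AbsCont} the integration-by-parts formula of Lemma~\ref{def::Distr::IntbyParts} applies. For a fixed partition $0 = t_0 \leq t_1 < \cdots < t_r \leq T$ and test functions $f_1, \dots, f_r \in \CspL[\infty]{c}{\TWR}$, the aim is to bound $L^{t_1,\dots,t_r}_{\nu,\zeta}(\mu_{t_1}, \dots, \mu_{t_r})$ above by $S^I_{\nu,\zeta}(\mu_\Ti)$; taking the supremum over the $f_i$ and over partitions then gives the claim.

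The key device is the logarithmic substitution $\psi_i(t, x, \w, \theta) \defeq \log U_{t, t_i} e^{f_i}(x, \w, \theta)$ for $t \in [t_{i-1}, t_i]$, which converts the Kolmogorov backward equation for $\Gen^I_{t,x,\w}$ into the Hamilton--Jacobi--Bellman equation $\partial_t \psi_i + \Gen^I_{t,x,\w} \psi_i + \tfrac{\sigma^2}{2}(\partial_\theta \psi_i)^2 = 0$ with terminal condition $\psi_i(t_i,.) = f_i$. Combining this equation with Lemma~\ref{def::Distr::IntbyParts} yields, for each $i$,
\begin{equation*}
\int \psi_i(t_i)\, \dd\mu_{t_i} - \int \psi_i(t_{i-1})\, \dd\mu_{t_{i-1}}
= \int_{t_{i-1}}^{t_i} \sPro{\partial_t \mu_t - (\Gen^I_{t,.,.})^* \mu_t}{\psi_i(t)} \dd t - \int_{t_{i-1}}^{t_i} \int \frac{\sigma^2}{2}(\partial_\theta \psi_i)^2 \dd\mu_t \, \dd t,
\end{equation*}
and the variational definition of $\abs{\cdot}^2_{\mu_t}$ in Definition~\ref{def::Minus1Norm} bounds this by $\int_{t_{i-1}}^{t_i} \abs{\partial_t \mu_t - (\Gen^I_{t,.,.})^* \mu_t}^2_{\mu_t} \dd t$. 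Summing over $i = 1, \dots, r$ telescopes and, because $\psi_i(t_i) = f_i$, recovers the terms $\int f_i\, \dd\mu_{t_i} - \int \log U_{t_{i-1},t_i} e^{f_i}\, \dd\mu_{t_{i-1}}$ for $i \geq 2$, leaving only the initial contribution $\int \psi_1(0)\, \dd\mu_0 - \int_{\Td} \log \int e^{\psi_1(0)} \nu_x \zeta_x\, \dd x$ unmatched. This last quantity is bounded by $\relE{\mu_0}{\dd x \otimes \zeta_x \otimes \nu_x}$ through the variational characterization of the space-dependent relative entropy supplied by Lemma~\ref{lem::SanovT::RelEntropy} (applied with the trivial choice of target space).

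The hard part will be establishing the existence and regularity of $\psi_i$. Unlike in \cite{DawGarLDP}, here $\psi_i$ must be jointly continuous in $(t, x, \w, \theta)$ and sufficiently smooth in $\theta$ to apply $\Gen^I$ and to pair with $\mu_t \in \MOne{\TWR}$ through the duality brackets and the norm $\abs{\cdot}_{\mu_t}$. Securing this regularity requires proving well-posedness and joint continuity of the parameter-dependent backward equation $\partial_t u + \Gen^I_{t,x,\w} u = 0$, with $(x, \w)$ entering through the drift; this is precisely the purpose of the auxiliary Chapter~\ref{sec::LDPempP::PDE}, which has no analogue in \cite{DawGarLDP}. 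Once the $\psi_i$ are constructed, a standard truncation and mollification step is needed to reduce from $\psi_i$ (which is not a priori compactly supported) to legitimate test functions in Definitions~\ref{def::Distr::TestFunc} and~\ref{def::Minus1Norm}, after which the estimates above go through.
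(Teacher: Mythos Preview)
Your outline is precisely the heuristic the paper itself calls out as not directly workable (see the remark just before Lemma~\ref{lem::LDPempP::indep::S2leqSKilled}). The core difficulty you underestimate is that $\psi_i(t,x,\w,\theta)=\log U_{t,t_i}e^{f_i}(x,\w,\theta)$ is \emph{not} compactly supported in $\theta$: since $e^{f_i}\equiv 1$ outside $\supp f_i$, the function $U_{t,t_i}e^{f_i}$ is strictly positive everywhere and tends to $1$ only at infinity, so $\psi_i$ merely vanishes at infinity. But both the integration-by-parts Lemma~\ref{def::Distr::IntbyParts} and the norm in Definition~\ref{def::Minus1Norm} require test functions in $\CspL[\infty]{c}{\TTWR}$ and $\CspL[\infty]{c}{\TWR}$ respectively. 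You cannot simply ``truncate and mollify'' $\psi_i$: cutting it off introduces boundary terms whose control is exactly the missing ingredient. Moreover, the PDE results of Chapter~\ref{sec::LDPempP::PDE} (Theorem~\ref{thm::LDPempP::UniqClasPDE}) are for terminal \emph{boundary} value problems on bounded sets $B_R$, not for the full-space backward equation, so they do not give you classical regularity of $U_{t,t_i}e^{f_i}$ on $\TWR$.

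The paper's actual route avoids this by working from the start with the \emph{killed} semigroup $U^R_{s,t}$ on $B_R$: one solves the backward equation with zero Dirichlet boundary on $\partial B_R$ (this is where Theorem~\ref{thm::LDPempP::UniqClasPDE} applies), obtaining a solution $g^*$ that is compactly supported by construction; then $g^*$ is mollified in $(x,\w,\theta)$ and the commutator error is controlled (Lemma~\ref{lem::LDPempP::indep::S2leqSBoundGeps}). This yields the bound with $U^R_{s,t}$ in place of $U_{s,t}$ (Lemma~\ref{lem::LDPempP::indep::S2leqSKilled}), and one recovers $U_{s,t}$ by letting $R\to\infty$. A further approximation step handles drifts $b^I$ that are only continuous, not H\"older, since Theorem~\ref{thm::LDPempP::UniqClasPDE} needs H\"older regularity. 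Your sketch captures the telescoping and entropy bookkeeping correctly, but the passage from heuristic to proof---which is the entire content of this lemma---is absent.
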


\begin{proof}
It suffices to show, by \eqref{lem::LDPempP::indep::2Rep::L}, \eqref{eq::thm::LDPempP::Inde::RF} and the second formula of the norm in Definition~\ref{def::Minus1Norm}, that
\begin{align} 
\label{lem::LDPempP::indep::S2leqS::eq::ToShow}
\begin{split}
	&\int_{\TWR} f \left( x, \w, \theta \right) \mu_{t}  \left( \dd x, \dd \w, \dd \theta \right)
		-
		\int_{\TWR} \log U_{s,t} e^{f} \left( x, \w, \theta \right) \mu_{s}  \left( \dd x, \dd \w, \dd \theta \right)
	\\
	&\leq 
		\int_{s}^{t}
		\sup_{h \in \CspL[\infty]{c}{\TWR}} 
			\left(
				\sPro{\partial_{u}\mu_{u}}{h}
				-
				\int_{\TWR} \mkern-27mu
								\Gen^{I}_{u,x} h \left( x, \w, \theta \right) + \frac{\sigma^{2}}{2} \left(\partial_{\theta} h \left( x, \w, \theta \right) \right) ^{2} 
					\mu_{u}  \left( \dd x, \dd \w, \dd \theta \right)
			\right) 
		\dd u
\end{split}
\end{align}
for all $f \in \CspL[\infty]{c}{\TWR} $, $0 \leq s < t \leq T$, $\nu \in \MOneL{ \TWR } $ and $\mu_{\Ti}  \in \Cem$ with $S^{I}_{\nu,\zeta} \left( \mu_{\Ti}  \right)  < \infty$.
Indeed  by \eqref{lem::LDPempP::indep::S2leqS::eq::ToShow}, we bound separately each summand of the sum on the right hand side of \eqref{lem::LDPempP::indep::2Rep::L}.
For the first summand on the right hand side of \eqref{lem::LDPempP::indep::2Rep::L}, we have to differentiate between the cases $t_{1} =0$ and $t_{1}>0$ in the supremum in \eqref{lem::LDPempP::indep::2Rep::S2}. 
If $t_{1}>0$, then apply first the Jensen inequality to the this summand of the right hand side of \eqref{lem::LDPempP::indep::2Rep::L} before using \eqref{lem::LDPempP::indep::S2leqS::eq::ToShow}.
In the case $t_{1}=0$, the first summand on the right hand side of \eqref{lem::LDPempP::indep::2Rep::L} equals to $\relE{\mu_{0}}{\dd x \otimes \zeta_{x} \otimes \nu_{x}}$, which appears in formula \eqref{eq::thm::LDPempP::Inde::RF} of $S^{I}_{\nu,\zeta}$ (by a similar estimate as used in the proof of Lemma~\ref{lem::SanovT::RelEntropy}).

\emptyline

An easy heuristic proof for the mean-field counterpart to \eqref{lem::LDPempP::indep::S2leqS::eq::ToShow} is given in \cite{DawGarLDP} on page~282.
We refer to this heuristic to get an idea of the following proof. However in particular due to the  unbounded domain of the spin values, problems arise such that the heuristic does not make sense.
However we can prove \eqref{lem::LDPempP::indep::S2leqS::eq::ToShow} when restricting the analysis to compact sets (see Lemma~\ref{lem::LDPempP::indep::S2leqSKilled}) and infer from this \eqref{lem::LDPempP::indep::S2leqS::eq::ToShow}.
Therefore we define a new semi group corresponding to the diffusion processes which is killed when leaving the ball $B_{R}= \left\{   \left( x, \w, \theta \right)  \in \TWR : \abs{\theta} < R \right\} $ by
\begin{align}
	U^{R}_{s,t} f \left( x, \w, \theta \right) 
	=
	\int_{\Csp{\left[s,T \right]}} f \left( x, \w, \theta_{t}  \right)  \1_{\tau_{R}^{s}  >t } P_{s,x, \w, \theta} \left( \dd \theta_{\left[s,T \right]}  \right) 
\end{align}
for $f \in \CspL{b}{\TWC}$,
with $\tau_{R}^{s}  \left( \theta_{\left[s,T \right]}  \right) = \min \left\{    t \in [s,T] : \abs{\theta_{t}} \geq R   \right\} $.

\begin{lemma}[compare to Lemma~4.11 in \cite{DawGarLDP}]
\label{lem::LDPempP::indep::S2leqSKilled}
	Given a $\mu_{\Ti}  \in \Cem$ with $S^{I}_{\nu,\zeta} \left( \mu_{\Ti}  \right)  < \infty$,
	then for all $R>0$, $0 \leq s < t \leq T$ and $f \in \CspL[\infty]{c}{\TWR} $ with $f \leq 0$ and $\textnormal{supp} \left( f \right)  \subset B_{R}$.
	\begin{align}
	\label{lem::LDPempP::indep::S2leqSKilled::eqToShow}
	\begin{split}
			&
			\int f \left( x, \w, \theta \right) \mu_{t}  \left( \dd x, \dd \w, \dd \theta \right)
					-
					\int \log \left[ 1+ U^{R}_{s,t}  \left( e^{f}-1 \right)  \right] \left( x, \w, \theta \right) \mu_{s}  \left( \dd x, \dd \w, \dd \theta \right)
	\\
	&\leq
	  \int_{s}^{t}
		\sup_{h \in \CspL[\infty]{c}{\TWR} } 
			\left(
				\sPro{\partial_{u}\mu_{u}}{h}
				-
						\int
										\Gen^{I}_{u,x,\w} h \left( x, \w, \theta \right) + \frac{\sigma^{2}}{2} \left(\partial_{\theta} h \left( x, \w, \theta \right) \right) ^{2} 
							\mu_{u}  \left( \dd x, \dd \w, \dd \theta \right)
			\right) 
		\dd u
		,
	\end{split}
	\end{align}
where the integrals without bounds integrate over the space $\TWR$.

\end{lemma}

This lemma implies \eqref{lem::LDPempP::indep::S2leqS::eq::ToShow} by the same approximation approach given after Lemma~4.11 in \cite{DawGarLDP}.
Hence once we prove Lemma~\ref{lem::LDPempP::indep::S2leqSKilled}, the proof of Lemma~\ref{lem::LDPempP::indep::S2leqS} is finished.
\end{proof}

\begin{proof}[of Lemma~\ref{lem::LDPempP::indep::S2leqSKilled}]
In this proof we generalise the proof of Lemma~4.11 in~\cite{DawGarLDP} to the model considered here.
In contrast to \cite{DawGarLDP} we do not assume in Theorem~\ref{thm::LDPempP::Inde}, that the drift coefficient $b$ is locally \Holder continuous. However we need this assumption to get the existence of a solution to a PDE (see \ref{pf::lem::LDPempP::indep::S2leqSKilled::step::Hoelder::Solution}).
Therefore we assume at first (\ref{pf::lem::LDPempP::indep::S2leqSKilled::step::Hoelder}), that $b^{I}$ is \Holder continuous in time and spin value.
Finally, in \ref{pf::lem::LDPempP::indep::S2leqSKilled::step::Generalb}, we show how to generalise this to general drift coefficients.

Fix an $R>0$, an arbitrary $f \in \CspL[\infty]{c}{\TWR} $ with $f \leq 0$ and $\textnormal{supp} \left( f \right)  \subset B_{R}$ and arbitrary $0 \leq s < t \leq T$.
Let $\Wsp_{f} \subset \Wsp$ be a compact subset such that the projection on $\Wsp$ of the support of $f$  is contained in $\Wsp_{f}$

\emptyline
\begin{steps}
\step[{The drift coefficient is \Holder continuous}]
\label{pf::lem::LDPempP::indep::S2leqSKilled::step::Hoelder}

Let us assume that $b^{I}$ is  $\frac{1}{4}$-\Holder continuous in time and $\oh$-\Holder continuous in $\theta \in B_{R}$ on the subset $\TT \times \Wsp_{f} \times B_{R}$. Moreover let $b^{I}$ be continuous on $\TTWR$.
To generalise the ideas of \cite{DawGarLDP} to the space and random environment dependent model, we need in particular the existence of a unique solution to an initial boundary value problem. This solution has to be moreover continuous in the space variable $x \in \Td$ and in the random environment variable.
We prove the existence and uniqueness of such a solution in Theorem~\ref{thm::LDPempP::UniqClasPDE}. 
We follow the lines of the proof in \cite{DawGarLDP} with focus on the extensions needed to treat the space and random environment dependency.

\emptyline
\begin{steps}
\step[{Construction of a (non smooth) function that solves a PDE}]
\label{pf::lem::LDPempP::indep::S2leqSKilled::step::Hoelder::Solution}

By Theorem~\ref{thm::LDPempP::UniqClasPDE}, there is a unique classical solution  $g^{*}$ to the terminal boundary value problem
\begin{equation}
\label{eq::PDETerminal}
\begin{aligned}
	&\partial_{s} g \left( s,x, \w, \theta \right)  
				\!\!&&=&& 
				- \Gen^{I}_{s,x,\w} g  \left( s,x, \w, \theta \right)  
				\quad &&\left( s,x, \w, \theta \right)  \in \left[ 0,t \right) \times \Td \times \Wsp_{f} \times B_{R}
	\\
		&g \left( t,x, \w, \theta \right)  &&=&& e^{f \left( x, \w, \theta \right) }-1   
				&&\left( x, \w, \theta \right)  \in  \Td \times \Wsp_{f}  \times B_{R}
	\\
		&g \left( s,x, \w, \theta \right)  &&=&& 0     &&\left( s,x, \w, \theta \right)  \in \left[ 0,t \right)  \times \Td \times \Wsp_{f}  \times \partial B_{R}
	.
\end{aligned}
\end{equation}

This implies that $g^{*} \left( s,x, \w, \theta \right)  =0$ for $\left( s,x, \w, \theta \right)  \in \left[ 0,t \right] \times \Td \times \partial\Wsp_{f} \times B_{R}$. We define $g^{*}$ to be zero for $\w \not \in \Wsp_{f}$ or $\theta \not \in B_{R}$.

The $g^{*}$ satisfies for $\left( s,x, \w, \theta \right)  \in \TTWR$
\begin{align}
\label{eq::lem::LDPempP::indep::S2leqSKilled::gForms}
\begin{split}
	g^{*} \left( s,x, \w, \theta \right) 
	&=
	\int_{\Csp{\left[s,T \right]}} 
						g^{*} \left(  t \wedge \tau_{R} , x, \w, \theta_{t \wedge \tau_{R}}  \right) 
				P_{s,x, \w, \theta} \left( \dd \theta_{\left[s,T \right]}  \right) 
	\\
	&=
	\int_{\Csp{\left[s,T \right]}} 
				\left( e^{f \left( x, \w, \theta_{t}\right) } -1 \right) \1_{\tau_{R} >t} 
	 P_{s,x, \w, \theta} \left( \dd \theta_{\left[s,T \right]}  \right) 
	=
	U^{R}_{s,t}\left( e^{f }-1 \right) \left( x, \w, \theta \right)
	.
\end{split}
\end{align}
The first equality is true because $g^{*} \left( t \wedge \tau_{R} , x, \w, \theta \left( t \wedge \tau_{R} \right)  \right)$  is a $P_{s,x, \w, \theta}$ martingale for all $\left( s,x, \w, \theta \right)  \in \left[ 0,t \right] \times \TWR$ by Assumption~\ref{ass::LDPempP::Inde}~\ref{ass::LDPempP::Inde::MPro}.
The next equality is due to the boundary and the initial condition in \eqref{eq::PDETerminal}, respectively the chosen continuation of $g^{*}$.
Note that the equality of $g^{*}$ and the third representation is the corresponding Feynman-Kac formula (for fixed $\left( x,\w \right) \in \Td \times \Wsp_{f}$).
\emptyline

Define the function $h^{*} \defeq \log \left( g^{*}+1 \right)$.
This function solves 
\begin{align}
\label{eq::lem::LDPempP::indep::S2leqSKilled::PDE}
\begin{split}
	&\partial_{t} h = - \Gen^{I}_{t,x,\w} h - \frac{\sigma^{2}}{2} \left( \partial_{\theta} h \right)^{2} 
	\qquad \textnormal{ on } \TT \times \Wsp_{f} \times B_{R}
	\qquad\textnormal{ and } 
	\\
	&h \left( t,.,.,.\right) \big|_{\TW \times B_{R}}  = f \left( .,.,. \right)
	\quad
	\textnormal{ and } 
	\quad
	h \big\vert_{\partial B_{R}} =0
	.
\end{split}
\end{align}
If we could use the function $h$ on the right hand side of \eqref{lem::LDPempP::indep::S2leqSKilled::eqToShow}, 
then the integration by parts Lemma~\ref{def::Distr::IntbyParts} would prove Lemma~\ref{lem::LDPempP::indep::S2leqSKilled}.
Unfortunately $h^{*}$ is not in $\CspL[\infty]{c}{\TTWR}$.
By its construction and the compactness of $f$, the support of $g^{*}$ and thus of $h^{*}$ is compact, but $h^{*}$ is not smooth.

\emptyline
\step[{Smoothing of $g^{*}$}]

The last part of the proof consists of approaching $g^{*}$ with smooth functions $g_{\epsilon}$, defined by
\begin{align}
	g_{\epsilon}
	\defeq
	k_{\epsilon} *_{x, \w, \theta} g^{*}
\end{align}
with $k_{\epsilon} \left( x, \w, \theta \right) =k^{1}_{\epsilon} \left( x \right) k^{2}_{\epsilon} \left( \w \right)  k^{3}_{\epsilon} \left( \theta \right) $.
Here $k^{1}_{\epsilon}$ is a Dirac sequence (approximation to the identity) in $\Td$ such that $k^{1}_{\epsilon} \left( x \right) =\epsilon^{-d} k^{1} \left( \epsilon^{-1} x \right) $ and $k^{1} \in \CspL[\infty]{c}{ \Td }$, $k^{1}\geq 0$ and $\int_{\Td} k^{1} \left( x \right)  \dd x =1$.
Analogue we define $k^{2}_{\epsilon}$ and $k^{3}_{\epsilon}$ as a Dirac sequence on $\Wsp$ and  $\R$ respectively.

Then $g_{\epsilon} \in \CspL[\infty]{c}{\TWR} $ but it does not satisfy any more \eqref{eq::PDETerminal} and  
\begin{align}
	h_{\epsilon} \defeq \log \left( 1+ g_{\epsilon} \right)
\end{align}
does not satisfy any more \eqref{eq::lem::LDPempP::indep::S2leqSKilled::PDE}.
Therefore we can not use directly the  integration by parts Lemma~\ref{def::Distr::IntbyParts} to show
\eqref{lem::LDPempP::indep::S2leqSKilled::eqToShow}.

\emptyline
\step[{Smoothed function almost satisfies \eqref{lem::LDPempP::indep::S2leqSKilled::eqToShow}}]

Nevertheless we prove in the following that $h_{\epsilon}$ used on the right hand side of  \eqref{lem::LDPempP::indep::S2leqSKilled::eqToShow} (instead of the supremum) almost satisfies \eqref{lem::LDPempP::indep::S2leqSKilled::eqToShow}, with an error that vanishes as $\epsilon \rightarrow 0$.

Indeed by the integration by parts Lemma~\ref{def::Distr::IntbyParts}
\begin{align}
\label{lem::LDPempP::indep::S2leqSKilled::eqToShowForHepsilon}
\begin{split}
	\encircle{L}
	\defeq
		&\int_{\TWR} h_{\epsilon} \left( t,x, \w, \theta \right) \mu_{t}  \left( \dd x, \dd \w, \dd \theta \right)
		-
		\int_{\TWR} h_{\epsilon} \left( s,x, \w, \theta \right) \mu_{s}  \left( \dd x, \dd \w, \dd \theta \right)
	\\
	&=
	\int_{s}^{t} \sPro{\partial_{u}{\mu}_{u} }{h_{\epsilon} \left( u \right) } 
						+ 
					\int_{\TWR} \partial_{u} h_{\epsilon} \left( u,x, \w, \theta \right) \mu_{u}  \left( \dd x, \dd \w, \dd \theta \right)
		\quad
		\dd u 
	\\
	&=
	\int_{s}^{t}
		\sPro{\partial_{u}{\mu}_{u} }{h_{\epsilon} \left( u \right) } 
		- 
			\int_{\TWR} 
						\Gen^{I}_{u,x,\w} h_{\epsilon} \left( u,x, \theta \right)  + \frac{\sigma^{2}}{2} \abs{\partial_{\theta} h_{\epsilon} \left( u,x, \w, \theta \right) }^{2} 
			\mu_{u}  \left( \dd x, \dd \w, \dd \theta \right)
		\\
		&\quad +
		\int_{\TWR} 
			\frac{\left( \partial_{u}+\Gen^{I}_{u,x,\w} \right) g_{\epsilon} \left( u,x, \w, \theta \right)  }{1+g_{\epsilon} \left( u,x, \w, \theta \right) }
		\mu_{u}  \left( \dd x, \dd \w, \dd \theta \right)
	\quad \dd u
	\eqdef
		\encircle{R1} - \encircle{R2} + \encircle{R3}
	,
\end{split}
\end{align}
because $\partial_{u} h_{\epsilon}= \frac{\partial_{u}g_{\epsilon}}{1+g_{\epsilon}}$ and 
$\Gen^{I}_{u,x,\w} h_{\epsilon} =  \frac{\Gen^{I}_{u,x,\w} g_{\epsilon}}{1+g_{\epsilon}} - \frac{\sigma^{2}}{2} \abs{\partial_{\theta} h_{\epsilon}}^{2}$.

The \encircle{L} converges to the left hand side of \eqref{lem::LDPempP::indep::S2leqSKilled::eqToShow}, because $g_{\epsilon} \left( s \right)  \rightarrow g^{*} \left( s \right) $ uniformly on $\TWR$.
Indeed
\begin{align}
	\abs{g_{\epsilon} \left( s,x, \w, \theta \right) -g^{*} \left( s,x, \w, \theta \right) } 
	\leq
	\sup_{ \left( y,\eta \right)  \in  \supp \left\{ k_{\epsilon} \right\}} \abs{g^{*} \left( s, x+y, \w, \theta+\eta \right) -g^{*} \left( s,x, \w, \theta \right) }
\end{align}
 and $g^{*} \left( s \right) $ is uniformly continuous (as a continuous function with compact support).
Therefore $h_{\epsilon} \left( t \right)  \rightarrow f$ and $h_{\epsilon} \left( s \right)  \rightarrow \log \left( 1+g^{*} \left( s \right)  \right) $ uniformly.

The integrals \encircle{R1} and \encircle{R2}  are smaller or equal to the right hand side of \eqref{lem::LDPempP::indep::S2leqSKilled::eqToShow}.
We interpret \encircle{R3} as an error and show in the next step that it can be bounded from a above by a vanishing function.

\emptyline
\step[{A vanishing upper bound on \protect\encircle{R3}}]

By the following lemma we get a vanishing upper bound on the last integral \encircle{R3}  of \eqref{lem::LDPempP::indep::S2leqSKilled::eqToShowForHepsilon}.

\begin{lemma}[compare to Lemma~4.12 in \cite{DawGarLDP}]
\label{lem::LDPempP::indep::S2leqSBoundGeps}
	For $\epsilon>0$ small enough, there exists a continuous function $r_{\epsilon}$ on $\TTWR $, such that
	\begin{align}
		\left( \partial_{u}+\Gen^{I}_{u,x,\w} \right) g_{\epsilon} \left( u,x, \w, \theta \right)
		\leq
		r_{\epsilon} \left( u,x, \w, \theta \right)
		\qquad
		\textnormal{ for } 
		\left( u,x, \w, \theta \right) \in \TTWR 
	\end{align}
	and
	$r_{\epsilon} \rightarrow 0$ uniformly on $\TTWR$ for $\epsilon \rightarrow 0$.
\end{lemma}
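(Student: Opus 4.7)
The plan is to exploit that $g^{*}$ solves the PDE $(\partial_{u}+\Gen^{I}_{u,x',\w'})g^{*}=0$ on the interior of $[0,t]\times\Td\times\Wsp_{f}\times B_{R}$ with $g^{*}=0$ on $\partial B_{R}$ and on $\partial\Wsp_{f}$ (with extension by zero outside). Since convolution in $(x,\w,\theta)$ commutes with $\partial_{u}$ and with the constant-coefficient part $\tfrac{\sigma^{2}}{2}\partial_{\theta}^{2}$, the only obstruction to $(\partial_{u}+\Gen^{I}_{u,x,\w})g_{\epsilon}$ being identically zero is the commutator with the variable-coefficient drift $b^{I}(u,x,\w,\theta)\partial_{\theta}$, and this commutator will be small by uniform continuity of $b^{I}$.

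First I would compute $(\partial_{u}+\Gen^{I}_{u,x,\w})g_{\epsilon}(u,x,\w,\theta)$ by passing derivatives through the convolution onto $g^{*}$. Using that $g^{*}$ is $C^{1,2}$ up to $\partial B_{R}$ (a classical solution by Theorem~\ref{thm::LDPempP::UniqClasPDE}) and that $g^{*}=0$ on $\partial B_{R}\cup\partial\Wsp_{f}$, the zeroth-order boundary contributions from integration by parts vanish, leaving
\begin{align*}
(\partial_{u}+\Gen^{I}_{u,x,\w})g_{\epsilon}(u,x,\w,\theta)
&= \int k_{\epsilon}(x-x',\w-\w',\theta-\theta')\bigl[b^{I}(u,x,\w,\theta)-b^{I}(u,x',\w',\theta')\bigr]\partial_{\theta'}g^{*}\,\dd x'\,\dd\w'\,\dd\theta' \\
&\quad -\frac{\sigma^{2}}{2}\oint_{\partial B_{R}} k_{\epsilon}\,\partial_{n}g^{*}\,\dd S,
\end{align*}
where the first integral is the commutator contribution and the surface integral on $\partial B_{R}$ comes from the remaining integration by parts in $\theta$; the bulk integrand $(\partial_{u}+\Gen^{I}_{u,x',\w'})g^{*}$ has already been killed by the PDE inside the domain and by the definition of $g^{*}$ outside.

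The commutator term is bounded above by $\omega_{b}(\epsilon)\,\|\partial_{\theta}g^{*}\|_{\infty}$ uniformly in $(u,x,\w,\theta)$, where $\omega_{b}$ is the modulus of continuity of $b^{I}$ on the relevant compact set, finite and vanishing as $\epsilon\to 0$ by the \Holder assumption of~\ref{pf::lem::LDPempP::indep::S2leqSKilled::step::Hoelder}, and $\|\partial_{\theta}g^{*}\|_{\infty}$ is finite by the classical regularity of $g^{*}$. For the surface term, the parabolic maximum principle applied to~\eqref{eq::PDETerminal} (with terminal data $e^{f}-1\leq 0$ and zero boundary data) gives $g^{*}\leq 0$ throughout, while $g^{*}=0$ on $\partial B_{R}$, so the outward normal derivative $\partial_{n}g^{*}\geq 0$ on $\partial B_{R}$. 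Since $k_{\epsilon}\geq 0$, the surface integral is non-negative, hence enters with a non-positive sign and can only improve the inequality. Thus we may take $r_{\epsilon}\defeq\omega_{b}(\epsilon)\,\|\partial_{\theta}g^{*}\|_{\infty}$, a constant (hence continuous) function on $\TTWR$ tending to zero uniformly as $\epsilon\to 0$.

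The main obstacle is the potentially harmful surface term at $\partial B_{R}$: the heuristic scaling $k_{\epsilon}\sim\epsilon^{-(d+m+1)}$ on a boundary piece of area $\epsilon^{d+m}$ suggests a divergence of order $\epsilon^{-1}$, and it is only the sign coming from the maximum principle that rescues the estimate. Care is also needed to justify differentiations under the integral and integration by parts up to $\partial B_{R}$, which relies precisely on the $C^{1,2}$ regularity of $g^{*}$ up to the boundary delivered by the \Holder continuity of $b^{I}$ assumed in~\ref{pf::lem::LDPempP::indep::S2leqSKilled::step::Hoelder} together with Theorem~\ref{thm::LDPempP::UniqClasPDE}; the general case will then be reached by the approximation step~\ref{pf::lem::LDPempP::indep::S2leqSKilled::step::Generalb}.
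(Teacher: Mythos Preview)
Your proposal is correct and follows essentially the same route as the paper: pass the derivatives through the convolution, discard the boundary contribution at $\partial B_{R}$ by its sign, use the PDE to kill the bulk term, and bound the remaining commutator via uniform continuity of $b^{I}$ and boundedness of $\partial_{\theta}g^{*}$. The only cosmetic differences are that the paper defines $r_{\epsilon}$ as the commutator integral itself (and then argues its continuity and uniform decay), whereas you take $r_{\epsilon}$ to be the constant $\omega_{b}(\epsilon)\lVert\partial_{\theta}g^{*}\rVert_{\infty}$; and that the paper defers the sign of the $\partial B_{R}$ boundary term to the argument in \cite{DawGarLDP}, while you spell it out via the maximum principle---which is exactly the right justification.
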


We state the  proof of this lemma after we have finished the proof of Lemma~\ref{lem::LDPempP::indep::S2leqSKilled}.
By Lemma~\ref{lem::LDPempP::indep::S2leqSBoundGeps}
\begin{align}
	\encircle{R3}
	\leq
		\int_{s}^{t} \int_{\TWR} 
				\frac{ r_{\epsilon} \left( u,x, \w, \theta  \right)  }{1+g_{\epsilon} \left( u,x, \w, \theta  \right) }
			\mu_{u}  \left( \dd x, \dd \w, \dd \theta \right) \: \dd u
	. 
\end{align}
The right hand side vanishes for $\epsilon \rightarrow 0$,
because $r_{\epsilon} \rightarrow 0$ uniformly and $e^{-\iNorm{f} } \leq 1+ g_{\epsilon} \leq 1$ (by \eqref{eq::lem::LDPempP::indep::S2leqSKilled::gForms}).

Hence we conclude that \eqref{lem::LDPempP::indep::S2leqSKilled::eqToShow} holds for \Holder continuous drift coefficients.
 
\end{steps}
\emptyline
\step[{General drift coefficient $b^{I}$}]
\label{pf::lem::LDPempP::indep::S2leqSKilled::step::Generalb}

Last but not least we show now that Lemma~\ref{lem::LDPempP::indep::S2leqSKilled} also holds for general (non-\Holder continuous) drift coefficients provided that the Assumption~\ref{ass::LDPempP::Inde} is satisfied.
Therefore we approximate at first (\ref{pf::lem::LDPempP::indep::S2leqSKilled::step::Generalb::Approxb}) the drift coefficient $b^{I}$ by a sequence of \Holder-continuous functions $b^{I,(n)}$, that converge to $b^{I}$ on $\Csp{\TTWR}$.
Then we show that \ref{pf::lem::LDPempP::indep::S2leqSKilled::step::Hoelder} can be applied for all $b^{I,(n)}$ (\ref{pf::lem::LDPempP::indep::S2leqSKilled::step::Generalb::eqForbn}), i.e. that  \eqref{lem::LDPempP::indep::S2leqSKilled::eqToShow} holds for each $b^{I,(n)}$.
Finally we justify that we can take the limit on both sides of \eqref{lem::LDPempP::indep::S2leqSKilled::eqToShow} such that this inequality also holds for $b^{I}$.
To this end we only need to show that the left hand side of  \eqref{lem::LDPempP::indep::S2leqSKilled::eqToShow} for $b^{I,(n)}$ is in the limit greater than the corresponding one for $b^{I}$ and an analogue result for the right hand side (\ref{pf::lem::LDPempP::indep::S2leqSKilled::step::Generalb::LHS} and \ref{pf::lem::LDPempP::indep::S2leqSKilled::step::Generalb::RHS}).
For that matter we follow the ideas of Dawson and G\"artner  in Chapter~4.5 of \cite{DawGarLDP} and generalise their proof to the setting we consider here.
Along the way, we also fix a small issue of Dawson and G\"artner in their treatment of the left hand side (compare our \ref{pf::lem::LDPempP::indep::S2leqSKilled::step::Generalb::LHS} with their calculation on page 288).

\emptyline
\begin{steps}
\step[{Approximation of $b^{I}$}]
\label{pf::lem::LDPempP::indep::S2leqSKilled::step::Generalb::Approxb}

Denote by $\Wsp_{f,2}$ the open set of all points in $\Wsp$ with distance at most $1$ from $\Wsp_{f}$.

We approximate the continuous drift coefficient $b^{I}$  by functions $b^{I,(n)} \in \Csp{\TTWR}$.
These functions are chosen such that $b^{I,(n)}$  is on $\TT \times \Wsp_{f} \times B_{R}$  also $\frac{1}{4}$-\Holder continuous in time and $\frac{1}{2}$-\Holder continuous in  $B_{R} $.
Moreover $b^{I,(n)}=b^{I}$ outside of $\TT \times \Wsp_{f,2} \times B_{2R}  $ and $b^{I,(n)} \rightarrow b^{I}$ uniformly.  
Finding such a sequence is for example possible by the Stone-Weierstrass Theorem (on the compact set $\ol\Wsp_{f,2}$) and the Urysohn’s Lemma (with $\Wsp_{f}$ and $\Wsp_{f,2}$).

\emptyline
\step[{\eqref{lem::LDPempP::indep::S2leqSKilled::eqToShow}  holds for each $b^{I,(n)}$}] 
\label{pf::lem::LDPempP::indep::S2leqSKilled::step::Generalb::eqForbn}

One has to prove, that the Martingale problem for the generator $\Gen^{I,(n)}_{s,x,\w}$ with drift coefficient $b^{I,(n)}$ is well posed. But this we get from the (Cameron-Martin-) Girsanov theorem  (\cite{StrVarMultidi} Theorem~6.4.2)  because the difference between $b^{I,(n)}$ and $b^{I}$ is at most $\epsilon$ for $n$ large enough by the uniform convergence.
We call the corresponding solution $P^{I,(n)}_{s,x, \w, \theta}$ and its semi-group $U^{R,(n)}_{s,t}$.
Hence by \ref{pf::lem::LDPempP::indep::S2leqSKilled::step::Hoelder}, \eqref{lem::LDPempP::indep::S2leqSKilled::eqToShow} holds with $U^{R,(n)}_{s,t}$ and $\Gen^{I,(n)}$.

\emptyline
\step[{The LHS of  \eqref{lem::LDPempP::indep::S2leqSKilled::eqToShow} for $b^{I,(n)}$ is in the limit greater than the LHS  for $b^{I}$}]
\label{pf::lem::LDPempP::indep::S2leqSKilled::step::Generalb::LHS}

Fix $\left( s,t,x, \w, \theta \right) \in \Ti \times \TTWR$.
By \cite{StrVarMultidi} Theorem~11.1.4, $P^{I,(n)}_{s,x, \w, \theta} \rightarrow P^{I}_{s,x, \w, \theta}$, and by \cite{StrVarMultidi} Theorem~11.1.2, $\theta_{\left[s,T\right]} \mapsto \tau_{R}^{s} \left( \theta_{\left[s,T\right]} \right) $ is lower semi-continuous.
 Hence $\left\{ \tau_{R}^{s}  >t \right\}$ is an open set and $\1_{\tau_{R}^{s} >t}$ is lower semi-continuous.
The function $\left( e^{f}-1 \right)$ is non positive and continuous, what implies that $\left( e^{f \left( x, \w, \theta_{\Ti}  \right) }-1 \right) \1_{\tau_{R}^{s}  >t }$ is upper semi continuous.
 By the Portmanteau theorem
\begin{align}
\begin{split}
	\limsup_{n\rightarrow \infty} U^{R,(n)}_{s,t} \left( e^{f}-1 \right)  \left( x, \w, \theta \right)  
	=
	\limsup_{n\rightarrow \infty}  \int_{\Csp{\Ti}} \left( e^{f \left( x, \w, \theta_{\Ti}  \right) }-1 \right) \1_{\tau_{R}^{s}  >t } 
								P^{I,(n)}_{s,x, \w, \theta} \left( \dd \theta_{\Ti}  \right) 
	\\
	\leq
	\int_{\Csp{\Ti}} \left( e^{f \left( x, \w, \theta_{\Ti}  \right) }-1 \right) \1_{\tau_{R}^{s}  >t } P^{I}_{s,x, \w, \theta} \left( \dd \theta_{\Ti}  \right) 
	=
		U^{R}_{s,t} \left( e^{f}-1 \right)  \left( x, \w, \theta \right)  
	.
\end{split}
\end{align}
With the Fatou-Lebesgue theorem  (possible because $-1 < U^{R,(n)}_{s,t} \left( e^{f}-1 \right)  \left( x, \w, \theta \right) $) we conclude
\begin{align}
\begin{split}
	&\limsup_{n\rightarrow \infty} 
					\int_{\TWR} 
							\log \left[ 1+ U^{R,(n)}_{s,t}  \left( e^{f}-1 \right)  \right] \left(x, \w, \theta\right)
						\mu_{s}  \left( \dd x, \dd \w, \dd \theta \right)
	\\
	&\leq
							\int_{\TWR} 
									\limsup_{n\rightarrow \infty} \log \left[ 1+ U^{R,(n)}_{s,t}  \left( e^{f}-1 \right)  \right] \left(x, \w, \theta\right)
								\mu_{s}  \left( \dd x, \dd \w, \dd \theta \right)
	\\
	&\leq
		\int_{\TWR} 
				\log \left[ 1+ U^{R}_{s,t}  \left( e^{f}-1 \right)  \right] \left(x, \w, \theta\right)
			\mu_{s}  \left( \dd x, \dd \w, \dd \theta \right)
	.
\end{split}
\end{align}
 
 \emptyline
 \step[{The RHS of  \eqref{lem::LDPempP::indep::S2leqSKilled::eqToShow} for $b^{I,(n)}$ is in the limit smaller than the RHS for $b^{I}$}]
\label{pf::lem::LDPempP::indep::S2leqSKilled::step::Generalb::RHS}

By the triangle inequality we get
\begin{align}
	 \abs{ \partial_{u}\mu_{u}  - \left(\Gen^{I,(n)}_{u,.,.}\right)^{*}  \mu_{u} }^{2}_{\mu_{u}} 
	 \leq
	  \abs{ \partial_{u}\mu_{u}  - \left(\Gen^{I}_{u,.,.}\right)^{*}  \mu_{u} }^{2}_{\mu_{u}} 
	   +
		\abs{  \left(\Gen^{I}_{u,.,.} - \Gen^{I,(n)}_{u,.,.}\right)^{*}  \mu_{u}  }^{2}_{\mu_{u}} 
	.
\end{align}
The last summand is smaller or equal to $\frac{\sigma^{2}}{2} \int_{\TWR} \abs{b^{I,(n)} \left(  x, \w, \theta \right)-b^{I} \left(  x, \w, \theta \right)}^{2} \mu_{u}  \left( \dd x, \dd \w, \dd \theta \right)$, what vanishes when $n \rightarrow \infty$ by the uniform convergence.

\emptyline
\step[{Conclusion}]
\label{pf::lem::LDPempP::indep::S2leqSKilled::step::Generalb::Concl}
Hence we conclude
\begin{align}
\begin{split}
	&\int_{\TWR} f \left( x, \w, \theta \right) \mu_{t}  \left( \dd x, \dd \w , \dd \theta \right)
			-
			\int_{\TWR} \log \left[ 1+ U^{R}_{s,t}  \left( e^{f}-1 \right)  \right] \left( x, \w, \theta \right) \mu_{s}  \left( \dd x, \dd \w , \dd \theta \right)
	\\	
	&\leq
		\liminf_{n\rightarrow \infty}\left\{ 
						\int f \left( x, \w, \theta \right) \mu_{t}  \left( \dd x, \dd \w , \dd \theta \right)
						-
						\int  \log \left[ 1+ U^{R,(n)}_{s,t}  \left( e^{f}-1 \right)  \right] \left( x, \w, \theta \right) \mu_{s}  \left( \dd x, \dd \w , \dd \theta \right)
		 \right\}
	\\
	&\leq
	 \liminf_{n\rightarrow \infty}  \int_{s}^{t}
		\abs{ \partial_{u}\mu_{u}  - \left(\Gen^{I,(n)}_{u,.}\right)^{*}  \mu_{u} }^{2}_{\mu_{u}} \dd u
	\leq
		\int_{s}^{t} \abs{ \partial_{u}\mu_{u}  - \left(\Gen^{I}_{u,.}\right)^{*}  \mu_{u} }^{2}_{\mu_{u}} \dd u
	.
\end{split}
\end{align}

\end{steps}
\end{steps}\vspace{-\baselineskip}
\end{proof}

\begin{proof}[of Lemma~\ref{lem::LDPempP::indep::S2leqSBoundGeps}]
Fix $\left( s, x, \w, \theta \right) \in \TTWR$.
We get by the integration by parts formula (and the same argument as in \cite{DawGarLDP} in the proof of Lemma~4.12  to bound the derivatives at the boundary  $\partial B_{R}$), 
\begin{align}
\label{eq::pf::lem::LDPempP::indep::S2leqSBoundGeps::Defr}
\begin{split}
	&\left( \partial_{s}+\Gen^{I}_{s,x,\w} \right) g_{\epsilon} \left( s, x, \w, \theta \right) 
	\\
	&\leq
	\int
		k_{\epsilon} \left( x-x' , \w-\w', \theta-\theta' \right) 
		\left(
		\partial_{s} g \left( s,x', \w',\theta' \right) 
		 +
		 \frac{\sigma^{2}}{2}  \partial^{2}_{\theta'^{2}}g \left( s,x', \w',\theta' \right) 
		 \right.
	\\	
		 &\mkern330mu
		 +
		 b^{I} \left( s,x, \w, \theta \right)  \partial_{\theta'} g \left( s,x', \w',\theta' \right) 
		\bigg) 
		 \dd\theta' \dd \w' \dd x'
	\\
	&=
	\int
			k_{\epsilon} \left( x-x' , \w-\w', \theta-\theta' \right) 
		\left(  b^{I} \left( s,x, \w, \theta \right)  - b^{I} \left( s,x', \w', \theta' \right)  \right)
		\partial_{\theta'} g \left( s,x', \w', \theta' \right) 
	 	\dd\theta' \dd \w' \dd x'
	,
\end{split}
\end{align}
where the two integrals are over the spac $\Td \times \Wsp_{f} \times B_{R}$.
In the last equality we use that $g$ is a solution to \eqref{eq::PDETerminal}.
We denote the right hand side of \eqref{eq::pf::lem::LDPempP::indep::S2leqSBoundGeps::Defr}
by	$r_{\epsilon} \left( s,x, \w, \theta \right) $.

For each $\epsilon$, the integrand in $r_{\epsilon}$ is continuous and uniformly bounded, because $b^{I}$ and $\partial_{\theta'} g $ are continuous and we consider a compact set.
This implies that $r_{\epsilon}$ is continuous.

For all $\left( s, x, \w, \theta \right) \in \TTWR$
\begin{align}
	\abs{ r_{\epsilon} \left( s, x, \w, \theta \right)  }
	\leq
		\sup_{\substack{	x',x'' \in \Td; \w' \in \Wsp; \w'' \in \Wsp_{f}; \theta' ,\theta'' \in B_{2R}	\\
										\abs{x'-x''}< \epsilon, \abs{\w' - \w''} < \epsilon, \abs{ \theta'-\theta''}< \epsilon
					}}
			\abs{ b^{I} \left( s,x',\w',\theta' \right)  - b^{I} \left( s,x'',\w'',\theta'' \right) }
		\iNorm{\partial_{\theta'} g }
	,
\end{align}
for $\epsilon$ small enough.
The derivative $\partial_{\theta'} g $ is bounded and $b^{I}$ is uniform continuous on the compact set $\TT \times \ol\Wsp_{f,2} \times B_{2R}$.
Hence $r_{\epsilon}$ converges uniformly to $0$.
\end{proof}

\paragraph{PDE preliminaries}
\label{sec::LDPempP::PDE}

In this chapter we prove (see Theorem~\ref{thm::LDPempP::UniqClasPDE}) the uniqueness and the existence of a \Holder continuous (in time and spin value) solution of the terminal boundary value problem \eqref{eq::PDETerminal}, that is moreover continuous on $\Td$ and on a connected subset $\what{\Wsp} \subset \Wsp$. 
We did not find such a result in the literature due to the non-ellipticity in the $\TW$-directions.

In the proof of this result, we look at first at the PDE \eqref{eq::PDETerminal} with fixed $\left( x, \w \right) \in \Td \times \what{\Wsp}$.
For each of these PDEs, we get by a result of \cite{Lady} (that we repeat in Theorem~\ref{thm::LDPempP::Lady})  the existence and uniqueness of a solution $g_{x,\w}$ on $\Ti \times B_{R}$.
The main part of the proof then consists of showing that these solutions are continuous in $x \in \Td$ and $\w \in \what{\Wsp}$.

\emptyline

Now we define the \Holder space, on which we derive the solution.
We refer to the page~7 in \cite{Lady} for this definition (without the dependency on $\Td$).

\begin{definition}
	We denote by $H^{\ell / 2, 0, 0, \ell} \left( \TT \times \what{\Wsp} \times B_{R} \right) $ the Banach space of continuous functions on $\TT \times \what{\Wsp} \times B_{R}$, which have continuous derivatives $\partial^{r}_{t} \partial^{s}_{\theta}$, with $2 r + s \leq \ell$, 
	and with finite norm
	\begin{align}
			\abs{u}_{H^{\ell / 2, 0, \ell}} 
			=
				 \sum_{2r+s \leq \floor{\ell}} \iNorm{  \partial^{r}_{t} \partial^{s}_{\theta} u }
				 +
				 \sum_{2r+s = \floor{\ell}} \abs{  \partial^{r}_{t} \partial^{s}_{\theta} u }_{\ell-\floor{\ell}, \theta}
				 +
				 \sum_{2r+s \in \left\{ \floor{\ell}-1 , \floor{\ell} \right\}} \abs{  \partial^{r}_{t} \partial^{s}_{\theta} u }_{\frac{2r+s}{2}, t}
				 ,
	\end{align}
	where $\abs{ . }_{\ell-\floor{\ell}, \theta}$ and $\abs{ .  }_{\ell-\floor{\ell}, t}$ are the usual \Holder norms in $\theta\in \R$ and $t \in \Ti$ respectively.
	
	The space $H^{\ell / 2,  \ell} \left( \Ti \times B_{R} \right) $ is defined analogue, just without the dependency on $\Td \times \what{\Wsp}$.
	
\end{definition}

\begin{remark}
	For $\ell \in \left( 0, 1 \right)$, the norm $\abs{u}_{H^{\ell / 2, 0, \ell}} $ is simply $\iNorm{u} + 	\abs{   u }_{\ell, \theta} +\abs{   u }_{\frac{\ell}{2}, t}$.
\end{remark}

\begin{theorem}
\label{thm::LDPempP::UniqClasPDE}
Let $\ell > 0$ be a non integer number.
Assume that $b^{I} \in H^{\ell/2,0,0,\ell} \left( \TT \times \what{\Wsp} \times B_{R} \right) $ (that we use in the definition \eqref{def::GeneratorIndependent} of $\Gen^{I}$)  and that $i \in H^{0,0,\ell+2} \left( \Td \times \what{\Wsp} \times B_{R} \right) $. 
Then for each $R \in \R$, there is a unique solution $g^{*} \in H^{\ell /2 +1,0,0,\ell+2} \left( \left[ 0,t \right] \times \Td \times \what{\Wsp} \times \overline{B_{R}}   \right) $ of the following terminal boundary value problem 
\begin{equation}
\label{eq::thm::LDPempP::UniqClasPDE::PDE}
\begin{aligned}
	&\partial_{s} g \left( s,x, \w, \theta \right)  \!\!&&=&& - \Gen^{I}_{t,x,\w} g  \left( s,x, \w, \theta \right)  
		\quad &&\left( s,x, \w, \theta \right)  \in \left[ 0,t \right) \times \Td \times \what{\Wsp} \times B_{R}
	\\
		&g \left( t,x, \w, \theta \right)  &&=&& i \left( x, \w, \theta \right)    
				&&\left( x, \w, \theta \right)  \in  \Td \times \what{\Wsp} \times B_{R}
	\\
		&g \left( s,x, \w, \theta \right)  &&=&& 0    
				 &&\left( s,x, \w, \theta \right)  \in \left[ 0,t \right)  \times \Td \times \what{\Wsp} \times \partial B_{R} \left( 0 \right) 
	.
\end{aligned}
\end{equation}

\end{theorem}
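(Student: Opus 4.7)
The plan is to view \eqref{eq::thm::LDPempP::UniqClasPDE::PDE} as a family of one-dimensional terminal boundary value problems on $[0,t] \times \overline{B_R}$ parametrized by $(x,\w) \in \Td \times \what{\Wsp}$, apply the classical parabolic theory of Ladyzhenskaya-Solonnikov-Ural'tseva pointwise in $(x,\w)$, and then promote continuity in the degenerate parameters via a stability argument. Observe that the requested target regularity $H^{\ell/2+1,0,0,\ell+2}$ requires only \emph{continuity} (exponent $0$) in $x$ and $\w$, which is what makes this feasible despite the absence of ellipticity in those directions.

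First I would perform the time reversal $\tilde g(s,x,\w,\theta) := g(t-s,x,\w,\theta)$ to obtain a forward initial boundary value problem with operator $\oh\sigma^2\partial_\theta^2 + b^I(t-s,x,\w,\theta)\partial_\theta$, initial datum $i(x,\w,\cdot)$, and zero Dirichlet data on $\partial B_R$. For each fixed $(x,\w)$, the coefficients lie in $H^{\ell/2,\ell}([0,t] \times \overline{B_R})$ and the initial datum in $H^{\ell+2}(\overline{B_R})$, with norms bounded uniformly in $(x,\w)$ by the hypotheses $b^I \in H^{\ell/2,0,0,\ell}$ and $i \in H^{0,0,\ell+2}$. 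Since in the intended application $i = e^f - 1$ vanishes in a neighbourhood of $\partial B_R$, the standard corner compatibility conditions hold. Theorem~\ref{thm::LDPempP::Lady} then yields a unique classical solution $\tilde g_{x,\w} \in H^{\ell/2+1,\ell+2}([0,t] \times \overline{B_R})$ together with the a priori bound $\|\tilde g_{x,\w}\|_{H^{\ell/2+1,\ell+2}} \leq C$, where $C$ depends only on the uniform-in-$(x,\w)$ Hölder norms of the coefficients and data, hence is independent of $(x,\w)$. Set $g^*(s,x,\w,\theta) := \tilde g_{x,\w}(t-s,\theta)$; then $g^*$ already has the full $H^{\ell/2+1,\ell+2}$ regularity in $(s,\theta)$ uniformly in $(x,\w)$.

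The main step is joint continuity in $(x,\w)$. Given two parameter values, the difference $h := \tilde g_{x,\w} - \tilde g_{x',\w'}$ solves the linear parabolic equation with forcing
\begin{equation*}
F(s,\theta) = \bigl(b^I(t-s,x,\w,\theta) - b^I(t-s,x',\w',\theta)\bigr)\,\partial_\theta \tilde g_{x',\w'}(s,\theta),
\end{equation*}
initial datum $i(x,\w,\cdot) - i(x',\w',\cdot)$, and zero boundary data. By the uniform $H^{\ell/2+1,\ell+2}$ bound, $\partial_\theta \tilde g_{x',\w'}$ is bounded uniformly in $(x',\w')$, while the Hölder continuity of $b^I$ and $i$ in $(x,\w)$ forces $\|F\|_\infty + \|i(x,\w,\cdot) - i(x',\w',\cdot)\|_\infty \to 0$ as $(x',\w') \to (x,\w)$. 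The maximum principle for the (uniformly parabolic in $\theta$) linear equation then gives $\|h\|_\infty \to 0$, which is exactly the continuity of $g^*$ in $(x,\w)$ required for membership in $H^{\ell/2+1,0,0,\ell+2}$. Uniqueness of $g^*$ follows by applying the uniqueness part of Theorem~\ref{thm::LDPempP::Lady} pointwise in $(x,\w)$ to the difference of two hypothetical solutions.

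The hard part is precisely this propagation of continuity through the degenerate directions: because the PDE contains no regularizing terms in $x$ or $\w$, one cannot obtain continuity in these variables by a smoothing mechanism intrinsic to the PDE, and must instead extract it from the uniform-in-parameters character of the a priori estimates of Theorem~\ref{thm::LDPempP::Lady}. Thus the key technical point is to track through the hypotheses of that theorem and verify that the constants depend only on quantities bounded uniformly in $(x,\w)$ under our assumption $b^I \in H^{\ell/2,0,0,\ell}$, so that the stability argument above actually closes.
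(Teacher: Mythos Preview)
Your overall strategy---solve the terminal boundary value problem pointwise in $(x,\w)$ via Theorem~\ref{thm::LDPempP::Lady} and then propagate continuity in the degenerate parameters by a stability argument for the difference---is exactly the paper's. The uniqueness argument is also the same.

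There is, however, a gap in your stability step. The maximum principle yields only $\|h\|_\infty \to 0$, i.e.\ continuity of $g^*$ itself in $(x,\w)$. But membership in $H^{\ell/2+1,0,0,\ell+2}$ requires joint continuity in $(s,x,\w,\theta)$ of \emph{all} the derivatives $\partial_s^r\partial_\theta^{k} g^*$ with $2r+k\le \ell+2$; your $L^\infty$ bound on $h$ says nothing about $\partial_\theta h$, $\partial_\theta^2 h$, $\partial_s h$, and so on. (Incidentally, the hypotheses give only continuity, not H\"older continuity, of $b^I$ and $i$ in $(x,\w)$---that is what exponent $0$ in the middle slots means.) The fix is not hard: either apply the Schauder estimate \eqref{eq::thm::LDPempP::Lady::NormInequality} to $h$ in place of the maximum principle, which gives $\|h\|_{H^{\ell/2+1,\ell+2}}\to 0$ once you control $\|F\|_{H^{\ell/2,\ell}}$ and $\|i(x,\w,\cdot)-i(x',\w',\cdot)\|_{H^{\ell+2}}$ rather than merely their sup norms, or combine your uniform $H^{\ell/2+1,\ell+2}$ bound with $L^\infty$ convergence and Arzel\`a--Ascoli to upgrade to convergence of the derivatives.

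The paper's stability argument is organised differently: it freezes the operator at a base point $(x_0,\w_0)$, writes the equation at nearby $(x,\w)$ as a perturbation with $T_{s,x,\w}=(b^I(s,x_0,\w_0,\cdot)-b^I(s,x,\w,\cdot))\partial_\theta$, and builds a contraction mapping $I_{x,\w}$ (solve with the \emph{fixed} operator $\Gen^I_{s,x_0,\w_0}$ and forcing $T_{s,x,\w}v$) whose Banach fixed point is $g^*_{x,\w}$. The only Schauder constant used is that of the single operator at $(x_0,\w_0)$, and one reads off $\|g^*_{x,\w}-g^*_{x_0,\w_0}\|_{H^{\ell/2+1,\ell+2}}\to 0$ from the resulting geometric series. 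Your corrected direct approach would be shorter but leans on the Schauder constant being locally uniform in $(x,\w)$; the paper's iteration sidesteps that point.
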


In the proof of this theorem, we use the following version of the Theorem~5.2 in  Chapter~IV of \cite{Lady}.
Because we need it only for a specific class of PDEs, it is not as general as the original version of the theorem.

\begin{theorem}[\cite{Lady} Chapter~IV Theorem~5.2  ] 
\label{thm::LDPempP::Lady}
		Let $\ell > 0$ be a non integer number and  $\ol{i} \in H^{\ell+2} \left( B_{R} \right) $ and $\ol{b}^{I},  w \in H^{\ell/2,\ell} \left( \left[ 0, t \right] \times B_{R} \right) $.
		Then for each $R>0$, there is a unique classical solution $g^{*}\in H^{\ell /2 +1,\ell+2} \left(  \left[ 0,t \right] \times \overline{B_{R}} \right) $ of the following terminal boundary value problem
\begin{equation}
\label{eq::thm::LDPempP::Lady::PDE}
\begin{aligned}
	&\partial_{s} g \left( s,\theta \right)  
			\!\!&&=&&  
			- \left( \frac{\sigma^{2}}{2} \partial^{2}_{\theta^{2}} + \ol{b}^{I} \left( s,\theta \right)  \partial_{\theta} \right) g  \left( s, \theta \right)  + w \left( \theta,s \right)
								 \quad &&\left( s,\theta \right)  \in \left[ 0,t \right) \times B_{R}
	\\
		&g \left( t,\theta \right)  &&=&& \ol{i} \left( \theta \right)    &&\theta   \in   B_{R}
	\\
		&g \left( s, \theta \right)  &&=&& 0     &&\left( s,\theta \right)  \in \left[ 0,t \right) \times \partial B_{R} \left( 0 \right) 
	.
\end{aligned}
\end{equation}
Moreover the solution $g^{*}$ satisfies
\begin{align}
\label{eq::thm::LDPempP::Lady::NormInequality}
	\abs{    g^{*}   }_{   H^{\ell /2 +1,\ell+2} \left(  \left[ 0,t \right] \times \overline{B_{R}} \right)   }
	\leq
	C
	\left(
		\abs{   w  }_{   H^{\ell /2 ,\ell} \left(  \left[ 0,t \right] \times \overline{B_{R}} \right)    }
		+
		\abs{   i  }_{H^{\ell+2} \left( B_{R} \right)}
	\right)
\end{align}
for a constant $C>0$ independent of $w$ and $i$.

\end{theorem}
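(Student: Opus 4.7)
The plan is to reduce the terminal boundary value problem to the forward parabolic setting in which the classical Schauder theory of Ladyzhenskaya--Solonnikov--Ural'tseva is formulated, verify that all hypotheses hold, and then invoke the cited Theorem~5.2. Set $\tau \defeq t - s$ and $\tilde g(\tau,\theta) \defeq g(t-\tau,\theta)$. Then $g$ solves \eqref{eq::thm::LDPempP::Lady::PDE} if and only if $\tilde g$ solves the forward problem
\begin{align*}
\partial_\tau \tilde g(\tau,\theta) &= \tfrac{\sigma^2}{2}\partial_\theta^2 \tilde g(\tau,\theta) + \tilde b^I(\tau,\theta)\,\partial_\theta \tilde g(\tau,\theta) - \tilde w(\tau,\theta), & (\tau,\theta) \in (0,t]\times B_R,\\
\tilde g(0,\theta) &= \bar i(\theta), & \theta\in B_R,\\
\tilde g(\tau,\theta) &= 0, & (\tau,\theta)\in (0,t]\times \partial B_R,
\end{align*}
with $\tilde b^I(\tau,\theta) \defeq \bar b^I(t-\tau,\theta)$ and $\tilde w(\tau,\theta) \defeq w(t-\tau,\theta)$. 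Time reversal is a linear isometry of $H^{\ell/2,\ell}([0,t]\times \overline{B_R})$, so the Hölder regularity hypotheses on $\bar b^I$ and $w$ transfer verbatim to $\tilde b^I$ and $\tilde w$, and the Hölder norm of the transformed solution equals that of $g^*$.

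Next I would verify the hypotheses of the Ladyzhenskaya--Solonnikov--Ural'tseva theorem for the forward problem. The operator $\partial_\tau - \tfrac{\sigma^2}{2}\partial_\theta^2 - \tilde b^I\partial_\theta$ is uniformly parabolic on $[0,t]\times\overline{B_R}$ with a constant leading coefficient $\sigma^2/2 > 0$, so the parabolicity constant is explicit. The lower order coefficient $\tilde b^I$ and the forcing $\tilde w$ belong to $H^{\ell/2,\ell}$ by assumption. The domain $B_R \subset \R$ is an interval whose boundary consists of two points, which is trivially of class $H^{\ell+2}$, so the boundary-regularity hypothesis is vacuous. The initial datum $\bar i$ belongs to $H^{\ell+2}(\overline{B_R})$ by assumption, and the boundary datum is identically zero, hence in $H^{\ell/2+1,\ell+2}$ on the lateral boundary.

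The only substantive point to check — and the main potential obstacle — are the parabolic compatibility conditions at the corner $\{0\}\times\partial B_R$, which Theorem~5.2 of \cite{Lady} Chapter~IV requires up to order $\lfloor \ell/2\rfloor + 1$. At order zero this is $\bar i|_{\partial B_R}=0$; at order one it requires that $\tfrac{\sigma^2}{2}\bar i''(\theta) + \tilde b^I(0,\theta)\,\bar i'(\theta) - \tilde w(0,\theta) = 0$ for $\theta\in\partial B_R$, and analogously at higher orders. These conditions are not explicit in the statement, but in every application in this paper (cf. the construction of $g^*$ in Step~\ref{pf::lem::LDPempP::indep::S2leqSKilled::step::Hoelder::Solution} where $\bar i = e^f - 1$ is chosen with $\supp f$ compactly contained in $B_R$, and likewise with $w \equiv 0$) the data vanish in a neighbourhood of $\partial B_R$, so all compatibility conditions hold trivially; this should either be added as a hypothesis of the theorem or the compact-support requirement should be inserted into the statement.

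Once compatibility is in force, Theorem~5.2 of \cite{Lady} Chapter~IV yields a unique $\tilde g^* \in H^{\ell/2+1,\ell+2}([0,t]\times\overline{B_R})$ and the Schauder estimate
\begin{equation*}
\abs{\tilde g^*}_{H^{\ell/2+1,\ell+2}} \leq C\bigl(\,\abs{\tilde w}_{H^{\ell/2,\ell}} + \abs{\bar i}_{H^{\ell+2}(\overline{B_R})}\bigr),
\end{equation*}
with $C$ depending only on $\sigma$, $R$, $t$, $\ell$ and $\abs{\tilde b^I}_{H^{\ell/2,\ell}}$. Undoing the time reversal gives $g^*(s,\theta)=\tilde g^*(t-s,\theta)$ with the same Hölder norm, proving both the existence/uniqueness claim and the estimate \eqref{eq::thm::LDPempP::Lady::NormInequality}. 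Uniqueness can also be verified independently and more cheaply by applying the parabolic maximum principle to the difference of two solutions.
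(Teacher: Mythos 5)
Your proposal is correct and takes essentially the paper's route: the paper gives no proof of this statement but simply refers to \cite{Lady}, and your time reversal to a forward initial--boundary value problem together with the verification of uniform parabolicity, the \Holder hypotheses and the (trivial, one-dimensional) boundary regularity is exactly the reduction that citation presupposes. Your remark about the compatibility conditions at the parabolic corner is apt --- they are required by Theorem~5.2 in Chapter~IV of \cite{Lady} and are omitted from the statement here, but they hold trivially in every application in the paper because the data vanish identically in a neighbourhood of $\partial B_{R}$ (with $w \equiv 0$), so this is a gap in the statement's hypotheses rather than in your argument.
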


For a proof of this Theorem~\ref{thm::LDPempP::Lady} we refer to \cite{Lady}. Now we prove the Theorem~\ref{thm::LDPempP::UniqClasPDE}.

\begin{proof}[of Theorem~\ref{thm::LDPempP::UniqClasPDE}]
\begin{steps}
\step[The existence and regularity]

The PDE \eqref{eq::thm::LDPempP::UniqClasPDE::PDE} corresponds for a fixed tuple $\left( x,\w \right) \in \Td \times \what{\Wsp}$ to the PDE \eqref{eq::thm::LDPempP::Lady::PDE} with $w \equiv 0$, $\ol{i}\left( \theta \right) = i \left( x, \w, \theta \right)$, $\ol{b}^{I} \left( s, \theta \right) = b^{I} \left( s, x, \w, \theta \right)$, due to the independence in $x \in \Td$ and $\w \in \what{\Wsp}$ of the operator $\Gen^{I}_{t,x,\w}$.
Therefore we know by Theorem~\ref{thm::LDPempP::Lady}, that there is unique solution $g_{x,\w} \in  H^{\ell /2 +1,\ell+2} \left(  \left[ 0,t \right] \times \overline{B_{R}} \right)  $ of the corresponding PDE \eqref{eq::thm::LDPempP::Lady::PDE},  for each $\left( x,\w \right) \in \Td \times \what{\Wsp}$.
Set $g \left( . , x , \w , . \right)  \defeq g_{x,\w} $.
The function $g$ is a solution of \eqref{eq::thm::LDPempP::UniqClasPDE::PDE}.
To show the claimed regularity of this solution, 
we need to show that $\left( x,\w \right) \mapsto g_{x,\w}$ is a continuous map $\Td \times \what{\Wsp} \rightarrow  H^{\ell /2 +1,\ell+2}\left(  \left[ 0,t \right] \times \overline{B_{R}} \right)$.

Fix an arbitrary tuple $\left( x_{0}, \w_{0} \right) \in \Td \times \what{\Wsp}$. 
The proof of the continuity at $\left( x_{0}, \w_{0} \right)$ is organised as follows:
\begin{steps}
	\item
		First we define an operator $I_{x,\w} :H^{\ell /2 +1,\ell+2}\left(  \left[ 0,t \right] \times \overline{B_{R}} \right) \rightarrow H^{\ell /2 +1,\ell+2}\left(  \left[ 0,t \right] \times \overline{B_{R}} \right) $ for each $\left( x,\w \right) \in \Td \times \what{\Wsp}$.
	\item
		Then we show that $I_{x,\w}$ is a continuous contraction, when $\abs{x-x_{0}}$ and $\abs{\w-\w_{0}}$ are small enough.
	\item
		Next we show that the sequence $ \left( I_{x,\w} \right)^{n} \left( g^{*}_{x_{0},\w_{0}} \right)$ converges to $g^{*}_{x,\w}$ (also for $\abs{x-x_{0}}$ and $\abs{\w-\w_{0}}$ small enough).
	\item
		Finally we conclude from the previous steps the continuity of $g^{*}_{x,\w}$ at $\left( x_{0},\w_{0}\right) \in \Td \times \what{\Wsp}$.
\end{steps}

\emptyline
Let us carry out this program.
\emptyline 

\begin{steps}
\item

Define the operator
\begin{align}
	T_{s,x, \w} \defeq 
	\Gen^{I}_{s,x_{0},\w_{0}}-\Gen^{I}_{s,x,\w}
	=
	  \left( b^{I} \left( s,x_{0}, \w_{0}, . \right)  - b^{I} \left( s,x, \w, . \right)  \right) \partial_{\theta} 
	.
\end{align}
With this operator, $\Gen^{I}_{s,x,\w}$ can be seen as a perturbation of $\Gen^{I}_{s,x_{0},\w_{0}}$, by  $\Gen^{I}_{s,x,\w} =\Gen^{I}_{s,x_{0},\w_{0}}-T_{s,x,\w}$.
Moreover we define the operator
\begin{align}
	I_{x,\w} : H^{\ell /2 +1,\ell+2} \left(  \left[ 0,t \right] \times \overline{B_{R}} \right)   \rightarrow H^{\ell /2 +1,\ell+2} \left(  \left[ 0,t \right] \times \overline{B_{R}} \right)   
\end{align}
as the map that sends a function $v \in H^{\ell /2 +1,\ell+2} \left(  \left[ 0,t \right] \times \overline{B_{R}} \right)  $ to the (unique) solution of 
\begin{equation}
\begin{aligned}
	&\partial_{s} g \left( s,\theta \right)  \!\!&&=&&  - \Gen^{I}_{s,x_{0},\w_{0}}  g  \left( s, \theta \right)  +  T_{s,x,\w} v
								 \quad &&\left( s,\theta \right)  \in \left[ 0,t \right) \times B_{R}
	\\
		&g \left( t,\theta \right)  &&=&& i \left( x,\w,  \theta \right)    &&\theta   \in   B_{R}
	\\
		&g \left( s, \theta \right)  &&=&& 0     &&\left( s,\theta \right)  \in \left[ 0,t \right) \times \partial B_{R}
	.
\end{aligned}
\end{equation}

We get the existence and the uniqueness of a solution to this PDE from Theorem~\ref{thm::LDPempP::Lady}.

\emptyline
\item

We show now that $I_{x,\w}$ is a continuous contraction.

Fix arbitrary $u_{1},u_{2} \in H^{\ell /2 +1,\ell+2} \left(  \left[ 0,t \right] \times \overline{B_{R}} \right)  $.
By the definition, $I_{x,\w} \left( u_{1} \right) -I_{x,\w} \left( u_{2} \right) $ is the unique classical solution to
\begin{align}
\begin{split}
	\left( \partial_{s} + \Gen^{I}_{s,x_{0},\w_{0}} \right) \left( I_{x,\w} \left( u_{1} \right) -I_{x,\w} \left( u_{2} \right)  \right) = T_{s,x,\w}  \left( u_{1}-u_{2} \right) 
	\\
	\textnormal{ with $0$ terminal and $0$ boundary condition}
	.
\end{split}
\end{align}
Then by \eqref{eq::thm::LDPempP::Lady::NormInequality},  for $\abs{x_{0}-x}$ and $\abs{\w_{0}-\w}$ small enough,
\begin{align}
\label{eq::pf::thm::LDPempP::UniqClasPDE::IContract}
\begin{split}
		\abs{ I_{x,\w} \left( u_{1} \right)  - I_{x,\w} \left( u_{2} \right)  }_{H^{\ell/2+1,\ell+2}}
	&\leq
		C 
		\abs{ T_{.,x,\w}  \left( u_{1}-u_{2} \right)  }_{H^{\ell/2,\ell}}
	\\
	&\leq
		C 
		\abs{ b^{I} \left( .,x_{0}, \w_{0}, . \right) - b^{I} \left( .,x, \w, . \right)  }_{H^{\ell/2,\ell}}
		\abs{ \partial_{\theta}  \left( u_{1}-u_{2} \right)  }_{H^{\ell/2,\ell}}
	\\
	&\leq
		\epsilon
		\abs{ u_{1}-u_{2} }_{H^{\ell/2+1,\ell+2}}
\end{split}
\end{align}
In the last inequality we use that $b^{I} \in H^{\ell/2,0,0,\ell} \left( \TT \times \what{\Wsp} \times B_{R} \right) $.
This implies that $I_{x,\w}$ is a continuous contraction.
Note that the $\epsilon$ is independent of $\left( x,\w \right) \in \Td \times \what{\Wsp}$, as long as $\abs{x_{0}-x}$ and $\abs{\w_{0}-\w}$ are small enough, because the constant $C$ depends only on $\Gen^{I}_{.,x_{0},\w_{0}}$.

\emptyline
\item

For the solution $g^{*}_{x_{0},\w_{0}}$ of \eqref{eq::thm::LDPempP::UniqClasPDE::PDE} at $\left( x_{0}, \w_{0} \right)$, we define the sequence $\left\{ \left( I_{x,\w} \right)^{n} \left( g^{*}_{x_{0},\w_{0}}\right) \right\}_{n}$.
Then by \eqref{eq::pf::thm::LDPempP::UniqClasPDE::IContract}
\begin{align}
\label{eq::pf::thm::LDPempP::UniqClasPDE::Cauchy}
\begin{split}
	&\abs{  \left( I_{x,\w} \right)^{n+1} \left( g^{*}_{x_{0},\w_{0}}\right)  - \left( I_{x,\w} \right)^{n} \left( g^{*}_{x_{0},\w_{0}}\right)  }_{H^{\ell/2+1,\ell+2}}
	\\
	&\leq
		\epsilon \abs{   \left( I_{x,\w} \right)^{n} \left( g^{*}_{x_{0},\w_{0}}\right)  - \left( I_{x,\w} \right)^{n-1} \left( g^{*}_{x_{0},\w_{0}}\right)   }_{H^{\ell/2+1,\ell+2}}
	\leq
		\epsilon^{n} \abs{  I_{x,\w}  \left( g^{*}_{x_{0},\w_{0}} \right)   -  g^{*}_{x_{0},\w_{0}}   }_{H^{\ell/2+1,\ell+2}}
	\: 
	.
\end{split}
\end{align}
 Therefore $\left\{ \left( I_{x,\w} \right)^{n} \left( g^{*}_{x_{0},\w_{0}}\right) \right\}_{n}$ is a Cauchy sequence.
 The \Holder spaces are complete, hence there is a $u^{*}_{x,\w} \in H^{\ell/2+1,\ell+2}$ such that $ \left( I_{x,\w} \right)^{n} \left( g^{*}_{x_{0},\w_{0}}\right)  \rightarrow u^{*}_{x,\w}$.
The continuity of $I_{x,\w}$ implies that
 also $I_{x,\w} \left( \left( I_{x,\w} \right)^{n} \left( g^{*}_{x_{0},\w_{0}}\right) \right)  \rightarrow I_{x,\w} \left( u^{*}_{x,\w} \right) $. 
Therefore $u^{*}_{x,\w}=I_{x,\w} \left( u^{*}_{x,\w} \right) $.
By the definition of $I_{x,\w}$ and the uniqueness of Theorem~\ref{thm::LDPempP::Lady}, we conclude $u^{*}_{x,\w}=g^{*}_{x,\w}$. 

\emptyline
\item

Then by \eqref{eq::pf::thm::LDPempP::UniqClasPDE::Cauchy}
\begin{align}
\label{eq::pf::thm::LDPempP::UniqClasPDE::Continuity0}
\begin{split}
		\abs{g^{*}_{x,\w}- g^{*}_{x_{0},\w_{0}} }_{H^{\ell/2+1,\ell+2}}
	&\leq
		\sum_{n=0}^{\infty} \abs{ \left( I_{x,\w} \right)^{n+1} \left( g^{*}_{x_{0},\w_{0}}\right)  - \left( I_{x,\w} \right)^{n} \left( g^{*}_{x_{0},\w_{0}}\right) }_{H^{\ell/2+1,\ell+2}}
	\\
	&\leq
		\abs{   I_{x,\w}  \left( g^{*}_{x_{0},\w_{0}} \right)   -  g^{*}_{x_{0},\w_{0}}   }_{H^{\ell/2+1,\ell+2}} \frac{1}{1-\epsilon}
	.
\end{split}
\end{align}
We show now that the right hand side is bounded by a $\epsilon_{1}>0$ for  $\left( x,\w \right) \in \Td \times \what{\Wsp}$ with $\abs{x_{0}-x}$ and $\abs{\w_{0}-\w}$ small enough.
By construction $  I_{x,\w}  \left( g^{*}_{x_{0},\w_{0}} \right)   -  g^{*}_{x_{0},\w_{0}} $ is the solution to the PDE $\partial_{s} g=-\Gen^{I}_{s,x_{0},\w_{0}} g+T_{s,x,\w} g^{*}_{x_{0},\w_{0}}$ with $i \left( x, \w, . \right) -i \left( x_{0}, \w_{0}, . \right) $ boundary condition. Hence by \eqref{eq::thm::LDPempP::Lady::NormInequality}
\begin{align}
\label{eq::pf::thm::LDPempP::UniqClasPDE::Continuity}
\begin{split}
	\abs{  I_{x,\w}  \left( g^{*}_{x_{0},\w_{0}} \right)   -  g^{*}_{x_{0},\w_{0}}  }_{H^{\ell/2+1,\ell+2}}
	&\leq 
		C
		\left(
			\abs{T_{t,x,\w} g^{*}_{x_{0},\w_{0}} }_{H^{\ell/2+1,\ell+2}}
			+
			\abs{i \left( x, \w, . \right) -i \left( x_{0}, \w_{0}, . \right) }_{H^{\ell/2+1,\ell+2}}
		\right)
	.
\end{split}
\end{align}
Then as in \eqref{eq::pf::thm::LDPempP::UniqClasPDE::IContract} and finally by applying again \eqref{eq::thm::LDPempP::Lady::NormInequality} for $g^{*}_{x_{0},\w_{0}}$, we get that the right hand side of \eqref{eq::pf::thm::LDPempP::UniqClasPDE::Continuity} is smaller or equal to
\begin{align}
\label{eq::pf::thm::LDPempP::UniqClasPDE::Continuity2}
\begin{split}
		C
		\left(
			\epsilon 
			\abs{g^{*}_{x_{0},\w_{0}}}_{H^{\ell/2+1,\ell+2}}
		+
		\epsilon 
		\right)
	\leq
		\epsilon 
		C 
		\left( \abs{i \left( x_{0}, \w_{0}, . \right) }_{H^{\ell+2}} +1 \right)
	\leq 
		\epsilon_{1}
\end{split}
\end{align}
because  $i \left( x_{0}, \w_{0}, . \right)  \in H^{\ell+2}$.
Therefore $\abs{g^{*}_{x,\w}- g^{*}_{x_{0},\w_{0}} }_{H^{\ell/2+1,\ell+2}}< \epsilon_{1}$ for $\abs{x_{0}-x}$ and $\abs{\w_{0}-\w}$ small enough, by  \eqref{eq::pf::thm::LDPempP::UniqClasPDE::Continuity0}.

This is the claimed regularity of the solution $g^{*}_{x,\w}$ at $\left( x, \w \right) \in \Td \times \what{\Wsp}$.

\end{steps}

\emptyline
\step[The Uniqueness]

Let $g^{*}$ be a solution of \eqref{eq::thm::LDPempP::UniqClasPDE::PDE}.
Then, for each tuple $\left( x,\w \right) \in \Td \times \what{\Wsp}$, $g^{*}_{x,\w}$ has to be the unique solution of \eqref{eq::thm::LDPempP::Lady::PDE} with $w \equiv 0$, $\ol{i}\left( \theta \right) = i \left( x, \w, \theta \right)$, $\ol{b}^{I} \left( s, \theta \right) = b^{I} \left( s, x, \w,\theta \right)$. 
Therefore there is at most one solution of \eqref{eq::thm::LDPempP::UniqClasPDE::PDE} in $H^{\ell /2 +1,0,0,\ell+2} \left( \left[ 0,t \right] \times\Td \times \what{\Wsp} \times \overline{B_{R}}   \right) $.
\end{steps}\vspace{-\baselineskip}
\end{proof}

\begin{remark}
Using the calculation in \eqref{eq::pf::thm::LDPempP::UniqClasPDE::Continuity} and in \eqref{eq::pf::thm::LDPempP::UniqClasPDE::Continuity2},
 we could show even higher regularity, than continuity, of the solution in $\Td \times \what{\Wsp}$, if we assume higher regularity of $b$ and $i$ in $\Td \times \what{\Wsp}$.
\end{remark}

\subsubsection{Another representation of the rate function \texorpdfstring{$S^{I}_{\nu,\zeta}$}{SI}}

We state in the next lemma another representation of the rate function $S^{I}_{\nu,\zeta}$.
This representation is not used in the proof of Theorem~\ref{thm::LDPempP::Inde}.
As explained in Remark~\ref{rem::LDPempP::indep::UpperB::Different} we could use it to show an upper bound on $S^{I}$. 
Nevertheless, we prove this lemma here, because we need it in Chapter~\ref{sec::LDPempP::Inter} when showing that the rate function of the interacting system is actually lower semi-continuous

\begin{lemma}[Compare with  \cite{DawGarLDP}  Lemma~4.8 for the mean field case]
\label{lem::LDPempP::indep::3Rep}
	Take a $\nu \in \MOneL{\TWR}$ and a $\mu \in \Cem$.
	Then
	\begin{align}
	\label{eq::lem::LDPempP::indep::3Rep::SI}
		S^{I}_{\nu,\zeta} \left( \mu_{\Ti}  \right)  
		=
		\relE{\mu_{0}}{\dd x \otimes  \zeta_{x} \otimes \nu_{x}}
		+ 
		\sup_{f \in \CspL[1,0,2]{c}{ \TTWR  }} I \left( \mu_{\Ti} , f \right) 
	\end{align}
where
\begin{align}
\label{eq::lem::LDPempP::indep::3Rep::I}
\begin{split}
	I \left( \mu_{\Ti} ,f \right)  
		&= 
			\int_{\TWR} f \left( T ,x, \w, \theta \right) \mu_{T} \left( \dd x, \dd \w, \dd\theta \right)
			-
			\int_{\TWR} f \left( 0 ,x, \w, \theta \right) \mu_{0} \left( \dd x, \dd \w, \dd\theta \right)
			\\
			&-  
				\int_{0}^{T} 
				\int_{\TWR} 
					\left(\frac{\partial}{\partial t}+ \Gen^{I}_{t,x, \w}\right)  f \left( t,x,\theta \right)  
										- \frac{\sigma^{2}}{2} \left(\partial_{\theta} f \left( t,x,\theta \right) \right) ^{2}
					\mu_{t} \left( \dd x, \dd \w, \dd\theta \right)	
		\dd t
	.
\end{split}
\end{align}

\end{lemma}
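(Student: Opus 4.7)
The plan is to adapt the proof of Lemma~4.8 in \cite{DawGarLDP} (the mean-field case) to the present space and random environment dependent setting. The required modifications are essentially cosmetic: the integration by parts formula of Lemma~\ref{def::Distr::IntbyParts} and the $H^{-1}$-type norm of Definition~\ref{def::Minus1Norm} are both already formulated on $\TTWR$, so the structural calculation carries over directly. The strategy splits naturally into two cases depending on whether $\mu_{\Ti}$ lies in the finiteness domain $\bbA \cap \Cem_{\varphi,\infty}$ of $S^{I}_{\nu,\zeta}$.

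First I would treat $\mu_{\Ti} \in \bbA \cap \Cem_{\varphi,\infty}$. Applied to $f \in \CspL[1,0,2]{c}{\TTWR}$, Lemma~\ref{def::Distr::IntbyParts} rewrites
\begin{align*}
\langle \mu_{T}, f(T,\cdot)\rangle - \langle \mu_{0}, f(0,\cdot)\rangle = \int_{0}^{T} \langle \partial_{t} \mu_{t}, f(t,\cdot)\rangle\, dt + \int_{0}^{T}\int_{\TWR} \partial_{t} f(t,\cdot)\,\mu_{t}\, dt.
\end{align*}
Substituting into \eqref{eq::lem::LDPempP::indep::3Rep::I} cancels the $\partial_{t} f$ terms, and after transferring $\Gen^{I}_{t,x,\w}$ onto $\mu_{t}$ via the formal adjoint, one finds
\begin{align*}
I(\mu_{\Ti}, f) = \int_{0}^{T} \Big\{\,\langle \partial_{t}\mu_{t} - (\Gen^{I}_{t,\cdot,\cdot})^{*}\mu_{t}, f(t,\cdot)\rangle \;-\; \tfrac{\sigma^{2}}{2}\!\int_{\TWR}(\partial_{\theta} f(t,\cdot))^{2}\, \mu_{t}\,\Big\}\, dt.
\end{align*}
The decisive observation is that the right-hand side depends on $f$ only through its pointwise-in-$t$ values $f(t,\cdot)$ (not through time derivatives), so the optimisation over $f$ reduces to a pointwise-in-$t$ optimisation over spatial test functions $g \in \CspL[\infty]{c}{\TWR}$. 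By the second representation in Definition~\ref{def::Minus1Norm}, the inner supremum at each $t$ equals $|\partial_{t}\mu_{t} - (\Gen^{I}_{t,\cdot,\cdot})^{*}\mu_{t}|^{2}_{\mu_{t}}$, which matches the integrand in the definition \eqref{eq::thm::LDPempP::Inde::RF} of $S^{I}_{\nu,\zeta}$ up to the entropy term $\relE{\mu_{0}}{\dd x \otimes \zeta_{x} \otimes \nu_{x}}$.

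The main technical obstacle is justifying the interchange of the supremum with the time integral, since admissible $f$ must lie in $\CspL[1,0,2]{c}{\TTWR}$ rather than being an arbitrary measurable selection $t \mapsto g_{t}$. I would follow Dawson and G\"artner: restrict the supremum defining $|\cdot|_{\mu_{t}}$ to a countable dense family $\{g_{n}\} \subset \CspL[\infty]{c}{\TWR}$ (such a family exists by the separability of the test-function space noted after Lemma~\ref{def::Distr::IntbyParts}), then for given $\epsilon > 0$ make a measurable near-optimising selection $t \mapsto n(t)$, approximate it by a piecewise-constant-in-$t$ step function using the weak continuity of $t \mapsto \mu_{t}$ to control the error, and finally mollify in time to produce an admissible $f \in \CspL[1,0,2]{c}{\TTWR}$ with $I(\mu_{\Ti}, f)$ within $2\epsilon$ of $\int_{0}^{T}|\partial_{t}\mu_{t} - (\Gen^{I})^{*}\mu_{t}|^{2}_{\mu_{t}}\, dt$. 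The reverse inequality is immediate from the pointwise bound. The relative entropy term simply splits off additively since $I(\mu_{\Ti},\cdot)$ does not involve $\nu$ or $\zeta$.

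For the complementary case $\mu_{\Ti} \notin \bbA \cap \Cem_{\varphi,\infty}$ both sides of \eqref{eq::lem::LDPempP::indep::3Rep::SI} must be shown to equal $+\infty$. If $\mu_{\Ti} \notin \bbA$, i.e.\ $t \mapsto \langle \mu_{t}, \psi\rangle$ fails to be absolutely continuous for some $\psi \in \D$, one constructs an $f$ time-supported in the region of singular variation of $\langle \mu_t, \psi\rangle$; scaling this $f$ by $\lambda \to \infty$ makes the linear-in-$\lambda$ term in $I(\mu_{\Ti}, \lambda f)$ grow faster than the quadratic penalty, forcing the supremum to diverge. If $\mu_{\Ti} \in \bbA$ but $\int_{0}^{T}|\partial_{t}\mu_{t} - (\Gen^{I})^{*}\mu_{t}|^{2}_{\mu_{t}}\, dt = +\infty$, the same scaling argument applied locally in $t$ gives divergence of $\sup_{f} I(\mu_{\Ti}, f)$. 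Finally if $\mu_{\Ti} \notin \Cem_{\varphi,\infty}$ (the integrability moment condition fails) this case is in fact subsumed by the divergence of the norm integral, using Assumption~\ref{ass::LDPempP}~\ref{ass::LDPempP::Lyapu} to control $(\Gen^{I}_{t,x,\w})^{*}\mu_{t}$ tested against $\varphi$-like $f$. I expect this last case-analysis to be the most delicate bookkeeping; the optimisation on $\bbA \cap \Cem_{\varphi,\infty}$ is essentially a direct translation of the mean-field argument.
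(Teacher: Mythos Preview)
Your proposal is correct in spirit but takes a genuinely different route from the paper. Both arguments share Step~2 --- the bound $I(\mu_{\Ti},g)\le S^I_{\nu,\zeta}(\mu_{\Ti})-\relE{\mu_0}{\dd x\otimes\zeta_x\otimes\nu_x}$ via the integration-by-parts Lemma~\ref{def::Distr::IntbyParts} --- but diverge on the reverse inequality. You attempt a direct interchange of $\sup_f$ with $\int_0^T$: after integration by parts the integrand depends only on $f(t,\cdot)$, so you optimise pointwise in $t$ and then manufacture a global $f$ by measurable selection, step-function approximation and time mollification. The paper instead follows \cite{DawGarLDP} through a Hilbert-space argument: it shows the linear functional $\ell_{0,T}(f)$ is bounded with respect to the seminorm $f\mapsto\bigl(\int_0^T\int\frac{\sigma^2}{2}(\partial_\theta f)^2\,\mu_t\,\dd t\bigr)^{1/2}$ (Step~1), applies the Riesz representation theorem on the closure $L^2_{\mu_{\Ti}}(0,T)$ to obtain a representer $h^{\mu_{\Ti}}$ (Step~3), and computes $\sup_f I(\mu_{\Ti},f)=\tfrac12\int_0^T\int\frac{\sigma^2}{2}(h^{\mu_{\Ti}})^2\,\mu_t\,\dd t$ by completing the square in $\partial_\theta f$. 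The Riesz approach buys two things: first, it produces $h^{\mu_{\Ti}}$ as a byproduct, which the paper actually needs later for Theorem~\ref{thm::RepS}~\ref{thm::RepS::h}; second, absolute continuity of $\mu_{\Ti}$ (membership in $\bbA$) is deduced in Step~4 as a \emph{consequence} of $\sup_f I<\infty$, via the representation \eqref{eq::lem::LDPempP::indep::3Rep::weakPDE}, so the paper never needs your separate case-analysis for $\mu_{\Ti}\notin\bbA$. Two minor corrections to your write-up: the finiteness domain of $S^I_{\nu,\zeta}$ in \eqref{eq::thm::LDPempP::Inde::RF} is $\bbA$, not $\bbA\cap\Cem_{\varphi,\infty}$ (you have conflated it with the interacting rate function $S_{\nu,\zeta}$); and your sketch for $\mu_{\Ti}\notin\bbA$ via ``blow-up test functions supported in the singular time region'' is the place where your approach is least convincing --- the Riesz route sidesteps this entirely.
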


\begin{proof}
Most parts of this proof are almost equal (modulo additional integrals with respect to $\Td$ and $\Wsp$) to the proof of Lemma~4.8 in  \cite{DawGarLDP}.
Therefore we only state the ideas and point out where things have to be changed due to the space and random environment dependency.

Fix a $\mu_{\Ti}  \in \Cem$ with 
$\relE{\mu_{0} }{\nu}  < \infty$.

\emptyline
\begin{steps}
\step

We define for $f \in \CspL[1,0,2]{c}{ \TTWR  }$
\begin{align}
\label{pf::lem::LDPempP::indep::3Rep::defEll}
\begin{split}
	\ell_{s,t} \left( f \right)
	&= 
				\int_{\TWR} f \left( t ,x, \w, \theta \right) \mu_{t} \left( \dd x, \dd \w, \dd\theta \right)
				-
				\int_{\TWR} f \left( s ,x, \w, \theta \right) \mu_{s} \left( \dd x, \dd \w, \dd\theta \right)
				\\
				&-  
					\int_{s}^{t} 
					\int_{\TWR} 
						\left(\partial_{u}+ \Gen^{I}_{u,x, \w}\right)  f \left( u,x, \w, \theta \right)  
					\mu_{u} \left( \dd x, \dd \w, \dd\theta \right)	
				\dd t
	.
\end{split}
\end{align}
Note that this is equal to $I\left( \mu,f \right)$ without the $\left(\partial_{\theta} f \left( t,.,.,. \right) \right) ^{2}$ part and with the restriction to the time interval $[s,t]$.
Analogue to (4.26) of \cite{DawGarLDP},   we can prove that
\begin{align}
	\abs{ \ell_{s,t}  \left( f \right) }^{2} 
		\leq 
		\int_{0}^{T} 
			\int_{\TWR} \mkern-18mu
					\sigma^{2} \left( \partial_{\theta} f \left( t,x, \w, \theta \right) \right) ^{2}
				\mu_{t} \left( \dd x, \dd \w, \dd \theta\right)
			\dd t
				 \sup_{g \in \CspL[1,0,2]{c}{ \TTWR  }} I \left( \mu_{\Ti} ,g \right)   
		.
\end{align}

\step
As in the second step in \cite{DawGarLDP} we can show that for each $g \in \CspL[1,0,2]{c}{ \TTWR  }$
\begin{align}
	I \left( \mu_{\Ti} ,g \right)  
	\leq 
		S^{I}_{\nu,\zeta} \left( \mu_{\Ti}  \right) 
		-
		\relE{\mu_{0}}{\dd x \otimes  \zeta_{x} \otimes \nu_{x}}
\end{align}
by applying the integration by parts Lemma~\ref{def::Distr::IntbyParts}.

\emptyline
\step
\label{pf::lem::LDPempP::indep::3Rep::Step::L2}
We may assume that $ \sup_{g \in \CspL[1,0,2]{c}{ \TTWR  }} I \left( \mu_{\Ti} ,g \right)  < \infty$.
Denote by $\what{L}^{2}_{\mu_{\Ti}} \left( s,t \right) $ the Hilbert space of all measurable maps $h: [s.t] \times \TWR \rightarrow \R$, with finite norm 
\begin{align}
	\abs{ h }_{\mu_{\Ti}} 
			\defeq 
				\int_{s}^{t} 
				\int_{\TWR} 
					\frac{\sigma^{2}}{2} \left( h \left( u,x, \w, \theta \right)\right) ^{2} 
				\mu_{u} \left( \dd x, \dd \w, \dd \theta\right)
				\dd u 
		.
\end{align}
Moreover let $L^{2}_{\mu_{\Ti}} \left( s,t \right) $ be the closure in $\what{L}^{2}_{\mu_{\Ti}} \left( s,t \right) $ of the subset consisting of the maps $\left( t, x, \theta \right) \mapsto \partial_\theta h \left( t, x, \theta \right)$ with $h \in \CspL[1,0,2]{c}{\left[ s, t \right] \times \TWR}$.

Similar as in the third step of the proof in \cite{DawGarLDP} (but now with the additional dependency on the space $\Td$), we can use this space to prove that there is a $h^{\mu_{\Ti} }  \in \what{L}^{2}_{\mu_{\Ti}} \left( s,t \right) $, such that
\begin{align}
\label{eq::lem::LDPempP::indep::3Rep::weakPDE}
	\ell_{0,t} \left( f \right)  
	= 
		\int_{0}^{t}
			\int_{\TWR} 
					\sigma^{2} h^{\mu_{\Ti} } \left( u,x, \w, \theta \right)  \partial_{\theta} f \left( u,x, \w, \theta \right)   
			\mu_{u} \left( \dd x, \dd \w, \dd \theta\right)
			\dd u
	.
\end{align}
The existence of such an $h^{\mu_{\Ti} }$, origins from applying the Riesz representation theorem for $\ell$.
Then the same arguments as in \cite{DawGarLDP} lead to 
\begin{align}
\label{eq::lem::LDPempP::indep::3Rep::supIeqH}
	\sup_{f \in \CspL[1,0,2]{c}{ \TTWR  }} I \left( \mu_{\Ti} ,f \right)  
	=
		\frac{1}{2}
		\int_{0}^{T} 
			\int_{\TWR} 
					\frac{\sigma^{2}}{2} \left( h^{\mu_{\Ti}} \left( t,x, \w, \theta \right) \right) ^{2} 
		\mu_{t} \left( \dd x, \dd \w, \dd \theta\right)
		\dd t
	.
\end{align}

\step
In this last part, one uses the right hand side of \eqref{eq::lem::LDPempP::indep::3Rep::supIeqH}  to show the equation \eqref{eq::lem::LDPempP::indep::3Rep::SI}.
This follows again from the same arguments as in \cite{DawGarLDP}, by showing that $\mu_{\Ti} $ is absolutely continuous as a map from $\Ti \rightarrow \D'$ and finally by applying the Lemma~\ref{def::Distr::Derivatives}.
\end{steps}\vspace{-\baselineskip}
\end{proof}

\subsection{From independent to interacting spins}
\label{sec::LDPempP::Inter}

In this chapter, we finish the proof of Theorem~\ref{thm::LDPempP} by generalising the proofs given in Chapter~5 of~\cite{DawGarLDP}.
As explained subsequent to the Theorem~\ref{thm::LDPempP}, we use the following local version of an LDP (Theorem~\ref{thm::LDPempP::Local}) and
exponential tightness result  (Theorem~\ref{thm::LDPempP::ExpBound}), to prove Theorem~\ref{thm::LDPempP}.

\begin{theorem}[compare to Theorem~5.2 in \cite{DawGarLDP} for the mean field version]
\label{thm::LDPempP::Local}
	Under the assumptions of  Theorem~\ref{thm::LDPempP}, the following statements are true
	for fixed $\ol\mu_{\Ti}  \in \Cem$.
	\begin{enuRom}
	\item
	\label{thm::LDPempP::Local::LowerBound}
	For all open neighbourhoods $V \subset \Cem$ of $\ol\mu_{\Ti}  $
		\begin{align}
		\label{thm::LDPempP::Local::eq::LowerBound}
			\liminf_{N \rightarrow \infty} N^{-d} 
												\log 
															P^{N}\left[ \empP \in V \right]
			\geq
			- S_{\nu,\zeta} \left( \ol\mu_{\Ti}  \right) 
			.
		\end{align}
	\item
	\label{thm::LDPempP::Local::UpperBound}
		For each $\gamma>0$, there is an open neighbourhood $V \subset \Cem$ of $\ol\mu_{\Ti}  $
		such that
		\begin{align}
		\label{thm::LDPempP::Local::eq::UpperBound}
			\limsup_{N \rightarrow \infty} N^{-d} \log 
								P^{N}
										\left[ \empP \in V \right]
			\leq
					\begin{cases}
							- S_{\nu,\zeta} \left( \ol\mu_{\Ti}  \right) + \gamma
							\quad&\textnormal{if } S_{\nu,\zeta} \left( \ol\mu_{\Ti}  \right) < \infty
							\\
							- \gamma
							&\textnormal{otherwise.}
					\end{cases}
		\end{align}
	\end{enuRom}
\end{theorem}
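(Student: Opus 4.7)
The plan is to reduce the statement to the independent-spin LDP of Theorem~\ref{thm::LDPempP::Inde} via a Girsanov change of measure, following and extending the scheme of \cite{DawGarLDP}. Fix $\ol\mu_{\Ti} \in \Cem$. The central observation is that if $\ol\mu_{\Ti} \in \Cem_{\varphi,\infty} \cap \Cem^L$, we may freeze the interaction and define the time-dependent independent drift $b^{I}(t,x,\w,\theta) \defeq b(x,\w,\theta,\ol\mu_{t})$; by Assumption~\ref{ass::LDPempP}~\ref{ass::LDPempP::bCont} and~\ref{ass::LDPempP::Lyapu} this $b^{I}$ satisfies Assumption~\ref{ass::LDPempP::Inde}, so the resulting law $P^{I,N}$ satisfies an LDP on $\Cem$ with rate function $S^{I}_{\nu,\zeta}$, and by the remark after Theorem~\ref{thm::LDPempP::Inde} we have $S^{I}_{\nu,\zeta}(\ol\mu_{\Ti}) = S_{\nu,\zeta}(\ol\mu_{\Ti})$.

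The first step is to write down the Girsanov density. Under $P^{I,N}_{\ul\w^{N}}$, each spin is driven by $b^{I}$; the extra drift needed to recover $P^{N}_{\ul\w^{N}}$ is $\Delta b_{k}(t) \defeq b(\tfrac{k}{N},\w^{k,N},\theta^{k,N}_{t},\empMThe[t]) - b^{I}(t,\tfrac{k}{N},\w^{k,N},\theta^{k,N}_{t})$, so
\begin{align}
\frac{\dd P^{N}_{\ul\w^{N}}}{\dd P^{I,N}_{\ul\w^{N}}}
=
\exp\!\left(
\sum_{k \in \TN} \frac{1}{\sigma}\int_{0}^{T}\!\Delta b_{k}(t)\,\dd B^{I,k,N}_{t}
-
\frac{1}{2\sigma^{2}}\sum_{k \in \TN}\int_{0}^{T}\!\Delta b_{k}(t)^{2}\,\dd t
\right).
\end{align}
The quadratic-variation sum, divided by $\Nd$, equals $\int_{0}^{T}\!\int_{\TWR} \sigma^{-2}|b(x,\w,\theta,\empMt{t}) - b(x,\w,\theta,\ol\mu_{t})|^{2}\,\empMt{t}(\dd x,\dd\w,\dd\theta)\,\dd t$, which by Assumption~\ref{ass::LDPempP}~\ref{ass::LDPempP::MuIntCont} vanishes uniformly when $\empP$ ranges over $V \cap \Cem_{\varphi,R}$ for a sufficiently small neighborhood $V$ of $\ol\mu_{\Ti}$. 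The martingale part is then controlled by an exponential Chebyshev estimate combined with the same $L^{2}$ control (doubling the drift only doubles the quadratic term).

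For part~\ref{thm::LDPempP::Local::LowerBound}, assume $S_{\nu,\zeta}(\ol\mu_{\Ti}) < \infty$ (otherwise nothing to prove), so $\ol\mu_{\Ti} \in \Cem_{\varphi,\infty} \cap \Cem^{L}$. Pick $V' \subset V$ a smaller open neighborhood on which, by the continuity of the drift along empirical processes (Assumption~\ref{ass::LDPempP}~\ref{ass::LDPempP::MuIntCont} for the empirical-process sequences) and exponential tightness of the martingale term, the Girsanov log-density divided by $\Nd$ is bounded below by $-\gamma$ with $P^{I,N}$-probability at least $\tfrac{1}{2}$. Combining with the lower bound of the $P^{I,N}$-LDP at $\ol\mu_{\Ti}$ yields $\liminf \Nd^{-1}\log P^{N}[\empP \in V] \geq -S^{I}_{\nu,\zeta}(\ol\mu_{\Ti}) - \gamma$; sending $\gamma \to 0$ by shrinking $V$ gives the claim. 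For part~\ref{thm::LDPempP::Local::UpperBound}, first invoke the exponential tightness of $\{\empP, P^{N}\}$ (Theorem~\ref{thm::LDPempP::ExpBound}, based on Assumption~\ref{ass::LDPempP}~\ref{ass::LDPempP::IntBound}) to restrict to $\empP \in \Cem_{\varphi,R}$ for $R$ so large that the complement contributes $\leq e^{-(S_{\nu,\zeta}(\ol\mu_{\Ti}) + 2\gamma)\Nd}$; on $V \cap \Cem_{\varphi,R}$ use Cauchy--Schwarz on the Girsanov density together with the upper bound of the $P^{I,N}$-LDP around $\ol\mu_{\Ti}$.

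The main obstacle is the case $\ol\mu_{\Ti} \notin \Cem_{\varphi,\infty} \cap \Cem^{L}$, where $S_{\nu,\zeta}(\ol\mu_{\Ti}) = \infty$ for reasons that sit outside the continuity Assumption~\ref{ass::LDPempP}~\ref{ass::LDPempP::bCont}: one cannot define $b^{I}$ directly from $\ol\mu_{\Ti}$. To handle the upper bound in this regime I would argue as follows. If $\ol\mu_{\Ti} \notin \Cem^{L}$ or $\ol\mu_{\Ti} \notin \Cem_{\varphi,\infty}$, a neighborhood $V$ can be chosen so that $V \cap \Cem_{\varphi,R} = \emptyset$ for every $R$ smaller than a threshold growing with $V$; combined with the exponential tightness of $\{\empP\}$ this forces $P^{N}[\empP \in V] \leq e^{-\gamma \Nd}$ for $N$ large. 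If $\ol\mu_{\Ti}$ lies in $\Cem_{\varphi,\infty} \cap \Cem^{L}$ but not in the absolutely-continuous set $\bbA$, one picks a reference $\tilde\mu \in \Cem_{\varphi,\infty} \cap \Cem^{L}$ close to $\ol\mu_{\Ti}$ with $S_{\nu,\zeta}(\tilde\mu)$ arbitrarily large, applies the previous Girsanov argument around $\tilde\mu$, and invokes the lower-semicontinuity formula for $S^{I}_{\nu,\zeta}$ from Lemma~\ref{lem::LDPempP::indep::3Rep} to conclude. The other technical point to watch is that Assumption~\ref{ass::LDPempP}~\ref{ass::LDPempP::MuIntCont} is stated separately for trajectories in $\Cem^{L}$ and for empirical processes, precisely so that both directions of the Girsanov control can be closed.
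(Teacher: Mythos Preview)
Your strategy for the principal case $\ol\mu_{\Ti}\in\Cem_{\varphi,\infty}\cap\Cem^{L}$ is correct and matches the paper: freeze the interaction at $\ol\mu$, use Girsanov, control the quadratic variation by Assumption~\ref{ass::LDPempP}~\ref{ass::LDPempP::MuIntCont} on a small neighbourhood intersected with $\Cem_{\varphi,R}$, and transfer the independent LDP of Theorem~\ref{thm::LDPempP::Inde}. The paper carries out the transfer via H\"older's inequality with conjugate exponents $p,q$ (obtaining $P^{N}_{\ul\w^{N}}[\empP\in V]\ge e^{-\frac12(1+p/q)\delta N^{d}}\,(P^{I,N}_{\ul\w^{N}}[\empP\in W\cap\Cem_{\varphi,R}])^{p}$ for the lower bound, and the reverse H\"older for the upper bound), rather than your ``probability at least $\tfrac12$'' argument, but either route closes.

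There is, however, a genuine gap in your treatment of $\ol\mu_{\Ti}\in\Cem_{\varphi,\infty}\setminus\Cem^{L}$. Your claim that one can choose $V$ with $V\cap\Cem_{\varphi,R}=\emptyset$ is false in this case: $\ol\mu_{\Ti}$ itself lies in some $\Cem_{\varphi,R_{0}}$, so every neighbourhood of $\ol\mu_{\Ti}$ meets $\Cem_{\varphi,R}$ for all $R\ge R_{0}$, and exponential tightness gives nothing. The paper's argument is different and exploits the lattice structure of empirical processes: given an empirical process $\empP$ one builds $\mu^{(\ell)}_{t}\defeq\dd x\otimes\delta_{(\w^{k,N},\theta^{k,N}_{t})}$ (piecewise constant in $x$ over the boxes $\Delta_{k,N}$), which lies in $\Cem^{L}$ and has bounded-Lipschitz distance at most $1/N$ from $\empP$. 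Since $\Cem^{L}$ is closed and $\ol\mu_{\Ti}$ is at positive distance $2\epsilon$ from it, the ball $B_{\epsilon}(\ol\mu_{\Ti})$ contains no empirical process once $N$ is large, whence $P^{N}[\empP\in B_{\epsilon}(\ol\mu_{\Ti})]=0$ and~\ref{thm::LDPempP::Local::UpperBound} follows.

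Finally, your extra sub-case ``$\ol\mu_{\Ti}\in\Cem_{\varphi,\infty}\cap\Cem^{L}$ but $\ol\mu_{\Ti}\notin\bbA$'' is unnecessary. The independent LDP of Theorem~\ref{thm::LDPempP::Inde} applies for every $\ol\mu_{\Ti}\in\Cem_{\varphi,\infty}\cap\Cem^{L}$, and its good rate function $S^{I}_{\nu,\zeta}$ is lower semicontinuous; when $S^{I}_{\nu,\zeta}(\ol\mu_{\Ti})=\infty$, lower semicontinuity directly yields, for every $\gamma$, a neighbourhood $V$ with $\limsup N^{-d}\log P^{I,N}[\empP\in V]\le-\gamma$, and the same Girsanov/H\"older estimate transfers this to $P^{N}$. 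No auxiliary $\tilde\mu$ is needed.
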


\begin{theorem}[compare to Theorem~5.3 in \cite{DawGarLDP} for the mean field version]
	\label{thm::LDPempP::ExpBound}
	Under the assumptions of  Theorem~\ref{thm::LDPempP}, 
	there is, for all $s>0$, a compact set $\calK_{s} \subset \Cem$, with $\calK_{s} \subset \Cem_{\varphi,R}$  for a $R$ large enough, such that
		\begin{align}
	\label{thm::LDPempP::ExpBound::eq::Bound+Initial}
		\limsup_{N \rightarrow \infty} N^{-d} \log 
					P^{N}\left[ \empP \in \Cem \backslash \calK_{s} \right]
		\leq 
				-s
		.
	\end{align}
\end{theorem}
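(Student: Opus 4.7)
The plan is to construct, for each $s > 0$, a compact set $\calK_s \subset \Cem$ lying inside some $\Cem_{\varphi,R(s)}$ whose complement has $P^N$-probability at most $e^{-sN^d}$ asymptotically. Two ingredients are needed: a uniform moment bound on $\int\varphi\,d\empPt{t}$ (which yields tightness in $\MOne{\TWR}$ at every time since $\varphi$ has compact sub-level sets), and an equicontinuity bound on the oscillations $t\mapsto\langle f_m,\empPt{t}\rangle$ for a countable family $\{f_m\}$ dense in $\CspL[\infty]{c}{\TWR}$. Compactness will then follow from the extended Arzelà-Ascoli Lemma~\ref{lem::AA}, and the probability estimates from exponential martingale techniques powered by the Lyapunov-type Assumption~\ref{ass::LDPempP}~\ref{ass::LDPempP::IntBound}.

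For the moment bound, set $Y^N_t \defeq \sum_{k\in\TN}\varphi(\theta^{k,N}_t) = N^d\int\varphi\,d\empPt{t}$. Itô's formula reads
\begin{equation*}
dY^N_t = \sum_k\Gen_{\empPt{t},\frac{k}{N},\w^{k,N}}\varphi(\theta^{k,N}_t)\,dt + \sigma\sum_k\partial_\theta\varphi(\theta^{k,N}_t)\,dB^{k,N}_t.
\end{equation*}
A direct Itô computation on $\exp(e^{-\lambda t}Y^N_t)$, using $e^{-2\lambda t}\leq e^{-\lambda t}$ and Assumption~\ref{ass::LDPempP}~\ref{ass::LDPempP::IntBound}, shows the drift is non-positive, so this process is a positive local supermartingale. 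Doob's supermartingale inequality combined with $\E[\exp(Y^N_0)]\leq C^{N^d/\ell}$ (from Assumption~\ref{ass::LDPempP::Init::Integ} and Jensen applied to the product initial law) gives
\begin{equation*}
P^N\Big[\sup_{t\in\Ti}\int\varphi\,d\empPt{t} > R\Big]\leq \exp\!\Big(-N^d\big(e^{-\lambda T}R-\tfrac{\log C}{\ell}\big)\Big),
\end{equation*}
so choosing $R=R(s)$ large renders this $\leq e^{-(s+1)N^d}$.

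For the equicontinuity, working on the event $A_R\defeq\{\empP\in\Cem_{\varphi,R}\}$, for any $f\in\CspL[\infty]{c}{\TWR}$ Itô gives
\begin{equation*}
\langle f,\empPt{t}\rangle-\langle f,\empPt{u}\rangle = \int_u^t\!\!\int_{\TWR}\Gen_{\empPt{s},x,\w}f\,d\empPt{s}\,ds + \widetilde M^{N,f}_t-\widetilde M^{N,f}_u,
\end{equation*}
with $\langle\widetilde M^{N,f}\rangle_T\leq \iNorm{\partial_\theta f}^2 T/N^d$. On $A_R$ the drift is bounded by a constant $C_{f,R}$: the compact support of $f$ restricts the relevant $\theta$-coordinates to a bounded set, and the $\varphi$-moment constraint together with the local boundedness in Assumption~\ref{ass::LDPempP}~\ref{ass::LDPempP::blockbd} controls the remaining $\empPt{s}$-dependence of $b$. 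A Bernstein-type exponential inequality for $\widetilde M^{N,f}$ then yields
\begin{equation*}
P^N\Big[\sup_{|t-u|<\delta}|\langle f,\empPt{t}\rangle-\langle f,\empPt{u}\rangle|>\eta,\ A_R\Big]\leq \exp\!\Big(-\tfrac{N^d(\eta-C_{f,R}\delta)_+^2}{C'_f}\Big).
\end{equation*}
Fix $\{f_m\}$ dense in $\CspL[\infty]{c}{\TWR}$ and sequences $\delta_m,\eta_m\downarrow 0$ so that the countable union bound of the exponential tails, together with the moment-bound estimate, is at most $e^{-sN^d}$ for $N$ large. Setting
\begin{equation*}
\calK_s\defeq\Big\{\mu_\Ti\in\Cem_{\varphi,R(s)}:\sup_{|t-u|<\delta_m}|\langle f_m,\mu_t\rangle-\langle f_m,\mu_u\rangle|\leq\eta_m\ \forall m\Big\},
\end{equation*}
Lemma~\ref{lem::AA} delivers compactness of $\calK_s$ in $\Cem$ (uniform tightness of $\{\mu_t\}_{t\in\Ti}$ in $\MOne{\TWR}$ from the $\varphi$-moment bound, and weak-equicontinuity of $t\mapsto\mu_t$ obtained from the oscillation bound on the dense family $\{f_m\}$), and the two preceding steps give $P^N[\empP\notin\calK_s]\leq e^{-sN^d}$.

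The main obstacle is the drift estimate in the equicontinuity step: $b$ is unbounded on $\MOne{\TWR}$, and the empirical measure $\empPt{s}$ is purely atomic, so one cannot directly invoke the continuity of $b$ on $\M_{\varphi,R}\cap\MOneL{\TWR}$ provided by Assumption~\ref{ass::LDPempP}~\ref{ass::LDPempP::bCont}. This is circumvented by restricting to the high-probability event $A_R$ and exploiting the compact $\theta$-support of the test functions $f_m$ so that only the vector-field local boundedness of $b^N$ from Assumption~\ref{ass::LDPempP}~\ref{ass::LDPempP::blockbd} (combined with the uniform $\varphi$-moment bound) is actually invoked.
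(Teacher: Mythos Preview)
Your approach is essentially the paper's, which in turn follows Dawson--G\"artner: a $\varphi$-moment supermartingale bound (packaged in the paper as Lemma~\ref{lem::LDPempP::Inter::OutCemR+Initial}), an equicontinuity estimate for $t\mapsto\langle f,\empPt{t}\rangle$ restricted to $\Cem_{\varphi,R}$ (Lemma~\ref{lem::LDPempP::Inter::CemRoutKf}), and a compactness criterion in $\Cem$. Two points, however, need correction.

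First, the compactness lemma you invoke is the wrong one. Lemma~\ref{lem::AA} characterises relatively compact subsets of $\TWC$ and tight families in $\MOne{\TWC}$; it says nothing about $\Cem=\Csp{\Ti,\MOne{\TWR}}$. The criterion you actually need is Lemma~\ref{lem::LDPempP::Inter::RelCompInCEM} (G\"artner's Lemma~1.3): a set in $\Cem$ is relatively compact iff the time-marginals lie in a common compact $K\subset\MOne{\TWR}$ and the real-valued paths $t\mapsto\langle f_n,\mu_t\rangle$ lie in compact subsets of $\Csp{\Ti}$. Your $\calK_s$ is exactly of this form, so the conclusion survives once the correct lemma is cited.

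Second, your justification of the drift bound $|D_u|\le C_{f,R}$ uniformly in $N$ does not follow from Assumption~\ref{ass::LDPempP}~\ref{ass::LDPempP::blockbd} in the way you describe. That assumption says only that $b^N:\RN\to\RN$ is locally bounded for each fixed $N$; but the set $\{\ul\theta^N:\empMThe\in\M_{\varphi,R}\}$ is not compact in $\RN$ (a single coordinate may satisfy $\varphi(\theta^{k,N})\le N^dR$, which grows with $N$), so local boundedness yields only a constant $C_{f,R,N}$ depending on $N$, and the equicontinuity argument collapses. The paper handles this step by deferring Lemma~\ref{lem::LDPempP::Inter::CemRoutKf} entirely to Dawson--G\"artner's Lemma~5.6, noting merely that the proof ``requires Assumption~\ref{ass::LDPempP}~\ref{ass::LDPempP::blockbd}''; in the original Dawson--G\"artner setting the bound comes from continuity of $b$ on all of $\M_{\varphi,R}$, without the Young-measure restriction present in Assumption~\ref{ass::LDPempP}~\ref{ass::LDPempP::bCont}. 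So your write-up and the paper's are on the same footing here: for the concrete local mean-field example the uniform drift bound holds by direct estimate (Chapter~\ref{sec::LocalMF}), but under the general Assumption~\ref{ass::LDPempP} both accounts rely on \cite{DawGarLDP} rather than supplying a self-contained argument.
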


We state the proofs of these two Theorems in  Chapter~\ref{sec::LDPempP::Inter::PfLocalLDP}. and Chapter~\ref{sec::LDPempP::Inter::PfExpTight}.

Before we infer from these results the Theorem~\ref{thm::LDPempP}, let us briefly state the idea of the proofs of these two theorems and explain how the rest of this chapter is organised.

\begin{enuBr}
\item
In Chapter~\ref{sec::LDPempP::Inter::Prel}, we show some preliminary lemmas.
At first (in Chapter~\ref{sec::LDPempP::Inter::AssInterToInde}) we show that the operator $\Gen^{I}_{t,x, \w} = \Gen_{\ol{\mu}_{t},x, \w}$ satisfies the assumptions of Chapter~\ref{sec::LDPempP::Inde}, for all $\ol\mu_{\Ti} \in \Cem_{\varphi} \cap \Cem^{L}$. This implies the validity of the results of Chapter~\ref{sec::LDPempP::Inde} for the independent system with fixed effective field $\ol{\mu}_{\Ti}$.

Then we show (in Chapter~\ref{sec::LDPempP::Inter::InCemVar}), that $\empP$ is in  $\Cem_{\varphi,\infty}$ almost surely under $P^{N}$, for all $N\in \N$.

Finally (Chapter~\ref{sec::LDPempP::Inter::ExpBounds}), we derive exponential small bounds for $P^{N}$. For example we show that the probability of being outside of $\Cem_{\varphi,R}$ is exponentially small.
The proofs of these result are for fixed initial data formally the same as the proofs in \cite{DawGarLDP}, at least after applying the result of Chapter~\ref{sec::LDPempP::Inter::InCemVar}.
However due to the different initial distribution, some new estimates are required.
Here we need the Assumption~\ref{ass::LDPempP::Init::Integ}.

\item
Next we prove in Chapter~\ref{sec::LDPempP::Inter::PfExpTight} the Theorem~\ref{thm::LDPempP::ExpBound}, by combining in a suitable way the exponential bounds.
The approach of this proof does not differ from the corresponding proof in \cite{DawGarLDP}.

\item
Finally we prove Theorem~\ref{thm::LDPempP::Local} in Chapter~\ref{sec::LDPempP::Inter::PfLocalLDP}.
Here we separate the proof in the cases when $\ol\mu_{\Ti}$ is in $\Cem_{\varphi,\infty}$, in $\Cem^{L}$ and when it is not in these sets.
The part of the proof when  $\ol\mu_{\Ti}$ is in  $\Cem_{\varphi,\infty} \cap \Cem^{L}$ is formally similar to the proof in \cite{DawGarLDP}.
We use, in this part, the exponential bounds derived in Chapter~\ref{sec::LDPempP::Inter::ExpBounds} as well as the large deviation principle for independent spins (derived in Chapter~\ref{sec::LDPempP::Inde}).
The other case, i.e. when $\ol\mu_{\Ti}$ not in $\Cem^{L}$ or not in $\Cem_{\varphi,\infty}$, are new here.
When $\ol\mu_{\Ti} \not \in \Cem^{L}$, we show that in a small neighbourhood around $\ol\mu_{\Ti}$, there is no empirical process for $N$ large enough. From this we conclude the local large deviation result.
For the case that $\ol\mu_{\Ti}$  is not in $\Cem_{\varphi,\infty}$, we infer the local large deviation result from the exponential bounds.

At the beginning of Chapter~\ref{sec::LDPempP::Inter::PfLocalLDP}, we explain the proof in more details.

\end{enuBr}

\begin{remark}
All the results of this section can be transferred to hold also on $\Cem_{\varphi, \infty}$ with the stronger topology  considered in  \cite{DawGarLDP}.
The proofs would formally be the same. 
\end{remark}

\begin{proof}[of  Theorem~\ref{thm::LDPempP}]
	This proof of Theorem~\ref{thm::LDPempP}  is similar to the proof of the corresponding mean field theorem in  \cite{DawGarLDP}. 
	Despite these similarities we state the proof here, because it illustrates how the Theorem~\ref{thm::LDPempP::Local} and Theorem~\ref{thm::LDPempP::ExpBound} are applied.
	Differences to \cite{DawGarLDP} arise only in the proof that $S_{\nu,\zeta}$ is a good rate function.
	This is mainly due to the space and random environment dependency and because the spin values do not start at fixed positions (as considered in \cite{DawGarLDP}), but are distributed according to $\nu$.

	\emptyline
	
	\begin{enuRomNoIntendBf}
	
	\itemAdd{The large deviation lower bound}
		
		Let $G \subset \Cem$ be a open set.
		The large deviation lower bound follows directly by applying  Theorem~\ref{thm::LDPempP::Local}~\ref{thm::LDPempP::Local::LowerBound} with $V=G$ for all $\mu_{\Ti}  \in G$.
		
	\itemAdd{The large deviation upper bound}
	
		Let $F \subset \Cem$ be a closed set.
		We assume that $\inf_{\mu \in F} S_{\nu,\zeta} \left( \mu \right) =\overline{s}<\infty$.
		The case when the infimum is not finite can be treated similarly.
		
		By Theorem~\ref{thm::LDPempP::ExpBound} we know that there is compact set $\calK \subset \Cem$ such that \eqref{thm::LDPempP::ExpBound::eq::Bound+Initial} is satisfied with $s=\overline{s}$.
		We further know by Theorem~\ref{thm::LDPempP::Local}~\ref{thm::LDPempP::Local::UpperBound}  that for a fixed $\gamma>0$ and for each $\mu_{\Ti}  \in F \cap \calK$, there is an open neighbourhood $V_{\mu_{\Ti} }$ of $\mu_{\Ti}$
	such that \eqref{thm::LDPempP::Local::eq::UpperBound} is satisfied for $\mu_{\Ti} $. 
	Because $F \cap \calK$ is compact, it is covered by a finite number of these neighbourhoods.
		Combining these results we get
	\begin{align}
	\begin{split}
		&\limsup_{N \rightarrow \infty} N^{-d} \log 
						P^{N}
								\left[ \empP \in F  \right]
		\\
		&\leq
			\max
			\left\{
				\limsup_{N \rightarrow \infty} N^{-d} \log 
							P^{N}\left[ \empP \in F \cap \calK \right]
			,
				\limsup_{N \rightarrow \infty} N^{-d} \log 
					 P^{N}\left[ \empP \not \in \calK \right]
			\right\}
		\\
		&\leq
			 -\overline{s} + \gamma
		.
		\end{split}
	\end{align}
	Because the parameter $\gamma$ is arbitrary, 
	this proves the large deviation upper bound.
		
		\emptyline
	\itemAdd{$S_{\nu,\zeta}$ is a good rate function}
	
		To show that $S_{\nu,\zeta}$ is a good rate function, we have to show that the level sets
		\begin{align}
					\calL^{\leq s} \left( S_{\nu,\zeta} \right)  \defeq \left\{ \mu_{\Ti}  \in \Cem : S_{\nu,\zeta} \left( \mu_{\Ti}  \right)  \leq s  \right\}
				\end{align}
			are compact in $\Cem$, for each $s \geq 0$.
		We show at first that the level set $\calL^{\leq s} \left( S_{\nu,\zeta} \right)  $ is relatively compact and then that it is closed.
		\begin{steps}
		\step[{$\calL^{\leq s} \left( S_{\nu,\zeta} \right)  $ is relatively compact}]
		
		By Theorem~\ref{thm::LDPempP::ExpBound} we know that there is a compact set $\calK_{s+\epsilon} \subset \Cem_{\varphi,R} \subset \Cem$, for $R>0$ large enough, such that \eqref{thm::LDPempP::ExpBound::eq::Bound+Initial} holds for $s+\epsilon$.
		We claim that $\calL^{\leq s} \left( S_{\nu,\zeta} \right)  \subset \calK_{s+\epsilon}$.
		Let us assume that there is a $\mu_{\Ti} \in \calL^{\leq s} \left( S_{\nu,\zeta} \right) $ that is not in $\calK_{s+\epsilon}$.
		Then we know by \eqref{thm::LDPempP::ExpBound::eq::Bound+Initial}  and
		Theorem~\ref{thm::LDPempP::Local}~\ref{thm::LDPempP::Local::LowerBound} (because $ \Cem  \backslash \calK_{s+\epsilon}$ is an open neighbourhood of $\mu_{\Ti}$), 
		that $s+\epsilon \leq S_{\nu,\zeta} \left( \mu_{\Ti} \right)$, a contradiction.
		
		\emptyline		

		\step[{$\calL^{\leq s} \left( S_{\nu,\zeta} \right)  $ is closed}]

		Let $I \left( \mu_{\Ti} ,f \right) $ be defined as in \eqref{eq::lem::LDPempP::indep::3Rep::I}.
		By Lemma~\ref{lem::LDPempP::indep::3Rep} we know that 
		\begin{align}
		S_{\nu,\zeta} \left( \mu_{\Ti}  \right)  = 
				\relE{\mu_{0} }{\dd x \otimes \zeta_{x} \otimes \nu_{x}} 
				+
				\sup_{f \in \CspL[1,0,2]{c}{\TTR}} I^{\Gen_{\mu_{\Ti},.,.}} \left( \mu_{\Ti} ,f \right)
		.
		\end{align}
		Moreover we know by the previous step and the definition of $S_{\nu,\zeta}$ that 
		$\calL^{\leq s} \left( S_{\nu,\zeta} \right)  
						\subset 
							\Cem_{\varphi,R}  \cap \Cem^{L}$,
		for a $R$ large enough.
		Therefore $\calL^{\leq s} \left( S_{\nu,\zeta} \right)   = \bigcap_{f \in \CspL[1,0,2]{c}{\TTR}} \calL^{\leq s}_{f,R} \left( S_{\nu,\zeta} \right)  $ with
		\begin{align}
			\calL^{\leq s}_{f,R} \left( S_{\nu,\zeta} \right)
			\defeq 
				\left\{ \mu_{\Ti} \in \Cem_{\varphi,R}  \cap \Cem^{L}
					: 
						I^{\Gen_{\mu_{\Ti} , . , . }} \left( \mu_{\Ti} ,f \right) 
						+
						\relE{\mu_{0}}{\dd x \otimes \zeta_{x} \otimes \nu_{x}}
						\leq 
						s
				\right\}
			.
		\end{align}
		It is hence enough to show that  $\calL^{\leq s}_{f,R} \left( S_{\nu,\zeta} \right)$ is closed for each $f \in \CspL[1,0,2]{c}{\TTWR}$.
		\emptyline
		
		The map  $\mu_{\Ti}  \mapsto I^{\Gen_{\mu_{\Ti} ,. ,.}} \left( \mu_{\Ti} ,f \right) $ is continuous as a function $\Cem_{\varphi,R}  \cap \Cem^{L} \rightarrow \R$ for all $R \in \R_{+}$ and for all $f \in \CspL[1,0,2]{c}{\TTWR}$.
		This follows from Assumption~\ref{ass::LDPempP}~\ref{ass::LDPempP::MuIntCont}.
		Moreover $\mu^{(n)} \rightarrow \mu$ implies that $\mu^{(n)}_{0} \rightarrow \mu_{0}$, and $\mu_{0} \mapsto \relE{\mu_{0}}{\dd x \otimes \zeta_{x} \otimes \nu_{x}} $ is lower semi continuous.
		From the continuity of $\mu_{\Ti}  \mapsto I^{\Gen_{\mu_{\Ti} ,. ,.}} \left( \mu_{\Ti} ,f \right) $ and the lower semi continuity of $\relE{.}{\dd x \otimes \zeta_{x} \otimes \nu_{x}} $ we infer, that the set $\calL^{\leq s}_{f,R} \left( S_{\nu,\zeta} \right) $ is closed in $\Cem_{\varphi,R}  \cap \Cem^{L}$.
		 Due to $\Cem_{\varphi,R}  \cap \Cem^{L}$ being closed in $\Cem$, this implies that $\calL^{\leq s}_{f,R} \left( S_{\nu,\zeta} \right) $ is also closed in  $\Cem$.
		
		\end{steps}
\end{enuRomNoIntendBf}\vspace{-\baselineskip}
\end{proof}

\subsubsection{Preliminaries}
\label{sec::LDPempP::Inter::Prel}

\paragraph{The assumptions of the corresponding independent systems are satisfied}
\label{sec::LDPempP::Inter::AssInterToInde}

Fix a $\ol\mu_{\Ti} \in \Cem_{\varphi, \infty} \cap \Cem^{L}$.
Define the function $b^{I} \left( t,x, \w, \theta \right)  \defeq b \left( x, \w, \theta,\ol{\mu}_{t} \right) $.
We show now that the Assumption~\ref{ass::LDPempP::Inde} is satisfied for the independent spin system (given by \eqref{eq::SDEIndep}) with this drift coefficient $b^{I}$, i.e. $\Gen^{I}_{t,x, \w} \defeq \Gen_{\ol{\mu}_{t},x, \w}$.

\begin{enuAlphNoIntendBf}
\item
\label{it::AssInterToInde::cont}
The Assumption~\ref{ass::LDPempP::Inde}~\ref{ass::LDPempP::Inde::bCont} is satisfied because of Assumption~\ref{ass::LDPempP}~\ref{ass::LDPempP::bCont} and $\overline{\mu}_{t} \in \M_{\varphi,R}$ for all $t \in \Ti$ and for a $R$ large enough.

\emptyline
\item
	We infer from Theorem~10.1.2 of \cite{StrVarMultidi}  the uniqueness of the Martingale problem for each tuple $\left( x, \w \right) \in \TW$, because  the drift coefficient is continuous (by \ref{it::AssInterToInde::cont}).
	To apply this theorem, let $G_{n}$ be a set with compact closure in $R^{N^{d}}$ and define a continuous and bounded function $b^{I,(n)}: \Ti \times \R$ to equal $b^{I} \left( ., x, . \right)$ on $G_{n}$.
	Then Theorem~7.2.1 of \cite{StrVarMultidi}  gives that for each $n$ the Martingale problem corresponding to $b^{I,(n)}$ is well defined. 
	To show the existence, we apply Theorem~10.2.1 of \cite{StrVarMultidi}.
	The conditions of this theorem are satisfied by Assumption~\ref{ass::LDPempP}~\ref{ass::LDPempP::Lyapu}, because $\Gen^{I}_{t,x, \w} = \Gen_{\ol{\mu}_{t},x, \w}$.

	Therefore the Martingale problem is well defined, i.e. Assumption~\ref{ass::LDPempP::Inde}~\ref{ass::LDPempP::Inde::MPro} is satisfied.

\end{enuAlphNoIntendBf}

\paragraph{The empirical process is with probability one in \texorpdfstring{$\Cem_{\varphi,\infty }$}{Cphi}}
\label{sec::LDPempP::Inter::InCemVar}

\begin{lemma}
\label{lem::LDPempP::Inter::PNOutCphi}
\begin{enuRom}
\item	
\label{lem::LDPempP::Inter::PNOutCphi::Nu}
	Let Assumption~\ref{ass::LDPempP::Init::Integ} hold.
	Then for all $N \in \N$, 
	\begin{align}
		\sup_{\ul{\w}^{N} \in \Wsp^{\Nd}}
				P^{N}_{\ul{\w}^{N}} \left[ \empP \in \Cem \backslash \Cem_{\varphi,\infty} \right] = 0
				.
	\end{align}
\item
\label{lem::LDPempP::Inter::PNOutCphi::MR}
	For any  $r>0$ and for all $N \in \N$, 
	\begin{align}
			\sup_{\ul{\w}^{N} \in \Wsp^{\Nd}}
		\sup_{\ul{\theta}^{N} \in \RN : \empMThe \in \M_{r,\varphi} } 
				P^{N}_{\ul{\w}^{N}, \ul{\theta}^{N}} \left[ \empP \in \Cem \backslash \Cem_{\varphi,\infty}  \right]
		=
		0
		,
	\end{align}
where $P^{N}_{\ul{\w}^{N}, \ul{\theta}^{N}} \in \MOne{ \RN}$ is defined as $P^{N}_{\ul{\w}^{N}}$ (see Notation~\ref{nota::LMF::General}) with fixed initial values $\ul{\theta}^{N}$.
\end{enuRom}
\end{lemma}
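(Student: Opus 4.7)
The plan is to apply Itô's formula to the process
\[
 F_t \defeq \int_{\TWR} \varphi(\theta) \, \empPt{t}(\dd x, \dd \w, \dd \theta)
 = \frac{1}{\Nd}\sum_{k \in \TN} \varphi(\theta^{k,N}_t)
\]
and exploit the Lyapunov-type bound in Assumption~\ref{ass::LDPempP}~\ref{ass::LDPempP::IntBound} to show $\sup_{t \in \Ti} F_t < \infty$ almost surely. Concretely, Itô's formula yields the semimartingale decomposition $F_t = F_0 + \int_0^t A_s \, \dd s + M_t$, where $A_s = \int \Gen_{\empPt{s}, x, \w}\varphi \, \dd\empPt{s}$ is the drift and $M_t$ is a continuous local martingale whose quadratic variation is $\langle M \rangle_t = \frac{\sigma^2}{\Nd} \int_0^t \int |\partial_\theta \varphi|^2 \, \dd\empPt{s} \, \dd s$. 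Applying Assumption~\ref{ass::LDPempP}~\ref{ass::LDPempP::IntBound} to the empirical measure $\empPt{s}$ (for $N > \ol{N}$) gives $A_s \leq \lambda F_s$ uniformly in $\ul{\w}^N$.

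Next, I would localize. Define $\tau_R \defeq \inf\{t \in \Ti : F_t \geq R\} \wedge T$. Because the solution of the $\Nd$-dimensional SDE is non-explosive (a consequence of the martingale problem being well posed via the same Lyapunov function, see Remark~\ref{rem::LDPempP::MProbPNwelldef}), the paths of $F$ are continuous on $\Ti$, so $F_{t \wedge \tau_R} \leq R$. A second localization by $\sigma_n \defeq \inf\{t : \langle M \rangle_t > n\}$ makes $M_{\cdot \wedge \tau_R \wedge \sigma_n}$ a true martingale; taking expectations, $A_s \leq \lambda F_s$, and Gronwall's inequality yield
\[
 \mathbb{E}\bigl[F_{t \wedge \tau_R \wedge \sigma_n}\bigr] \leq \mathbb{E}[F_0] \, e^{\lambda T}.
\]
Letting $n \to \infty$ (Fatou, using continuity of $\langle M \rangle$) and then applying the Markov inequality to $F_{\tau_R} = R$ on $\{\tau_R < T\}$ gives $P^{N}_{\ul{\w}^N}[\sup_{t \in \Ti} F_t \geq R] \leq \mathbb{E}[F_0] e^{\lambda T}/R$, which vanishes as $R \to \infty$.

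It remains to control $\mathbb{E}[F_0]$ uniformly. For part~\ref{lem::LDPempP::Inter::PNOutCphi::MR} this is immediate, since $\empMThe \in \M_{\varphi,r}$ forces $F_0 \leq r$ deterministically. For part~\ref{lem::LDPempP::Inter::PNOutCphi::Nu}, the bound $\varphi(\theta) \leq \frac{1}{\ell}\bigl(e^{\ell \varphi(\theta)} - 1\bigr)$ together with Assumption~\ref{ass::LDPempP::Init::Integ} gives $\sup_{x \in \Td} \int_{\R} \varphi \, \dd\nu_x < \infty$, and hence $\mathbb{E}[F_0] = \Nd{}^{-1}\sum_k \int \varphi \, \dd\nu_{\frac{k}{N}}$ is bounded by a constant independent of $N$ and $\ul{\w}^N$.

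The main obstacle, though modest, is the double localization needed to turn the local martingale $M$ into a true martingale while simultaneously keeping $F$ bounded, so that the Gronwall argument closes; the key input is the combined Lyapunov control on $\Gen \varphi$ and on $|\partial_\theta \varphi|^2$ provided by Assumption~\ref{ass::LDPempP}~\ref{ass::LDPempP::IntBound}, which ensures that both the drift and the quadratic variation are tamed by $F$ itself, independently of $\ul{\w}^N$.
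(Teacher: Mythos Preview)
Your argument is correct. It differs from the paper's in the way the Lyapunov control is converted into a tail bound for $\sup_t F_t$. The paper applies It\^o's formula not to $F_t$ itself but to $h_t \defeq \log(1+F_t)$; under Assumption~\ref{ass::LDPempP}~\ref{ass::LDPempP::IntBound} the drift of $h$ is bounded by a constant (since $(1+F_s)^{-1}\lambda F_s \leq \lambda$), so $h_t \leq h_0 + \lambda T + \tilde M_t$ for a continuous local martingale $\tilde M$, and a Doob supermartingale inequality applied to $\min\{h_0+\lambda T+\tilde M_t,\log R\}$ yields $P\bigl[\sup_t F_t > R\bigr] \lesssim (\log R)^{-1/2} + \nu^{N}\bigl[h_0 > (\log R)^{1/2}-\lambda T\bigr]$, the last term being handled by the exponential Chebyshev inequality and Assumption~\ref{ass::LDPempP::Init::Integ}. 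Your route---direct It\^o on $F_t$, stop, take expectations, Gronwall, Markov---avoids the $\log$ trick and the somewhat delicate supermartingale construction, at the price of the extra localization $\sigma_n$ needed to make $M$ a true martingale. Both arguments are standard; yours is arguably more elementary and in fact gives a better tail decay ($O(R^{-1})$ versus $O((\log R)^{-1/2})$). Note finally that the Lyapunov bound in Assumption~\ref{ass::LDPempP}~\ref{ass::LDPempP::IntBound} is only asserted for $N>\ol N$; for the finitely many $N\leq \ol N$ the claim is an immediate consequence of non-explosion (Remark~\ref{rem::LDPempP::MProbPNwelldef}) together with the continuity of $\varphi$, and the same remark applies to the paper's proof.
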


\begin{proof}
\begin{enuRomNoIntendBf}
\item
For all $R>0$ and $\ul{\w}^{N}  \in \Wsp^{\Nd}$
	\begin{align}
	\label{eq::pf::lem::LDPempP::Inter::PNOutCphi::TowardsR}
	\begin{split}
				P^{N}_{\ul{\w}^{N}} \left[ \empP \in \Cem \backslash \Cem_{\varphi,\infty} \right]
				&\leq
					P^{N}_{\ul{\w}^{N}} \left[ \ul{\theta}^{N}_{\Ti} :  \sup_{t \in \Ti} \frac{1}{\Nd} \sum_{k \in \TN} \varphi \left( \theta^{k,N}_{t} \right) > R  \right]
				\\
				&=
					P^{N}_{\ul{\w}^{N}} \left[ \ul{\theta}^{N}_{\Ti} : 
									\sup_{t \in \Ti} \log \left( 1+ \frac{1}{\Nd}  \sum_{k \in \TN} \varphi \left( \theta^{k,N}_{t} \right)  \right) > \log \left( R +1 \right)  \right]
				.
	\end{split}
	\end{align}
	
	We want to show that the right hand side converge to zero when $R$ tends to infinity.
	To do this, we use an approach that is for example used in the proof of Theorem~1.5 in \cite{GarOnTheMKVLimit} and apply it to the setting we consider here.
	
	Fix $\ul{\w}^{N} \in \Wsp^{\Nd}$.
	Applying \Ito's lemma to $h \left( \ul{\theta}^{N}_{ t } \right) \defeq \log \left( 1+ \frac{1}{\Nd}  \sum_{k \in \TN} \varphi \left( \theta^{k,N}_{t} \right)  \right)$, we get
	\begin{align}
	\begin{split}
			h \left( \ul{\theta}^{N}_{ t } \right)
			&\leq
				h \left( \ul{\theta}_{0} \right)
			+
			\int_{0}^{t}
				\left( 1+ \frac{1}{\Nd}  \sum_{k \in \TN} \varphi \left( \theta^{k,N}_{s} \right)  \right)^{-1}
					\mkern-9mu
					\int_{\TWR} \mkern-9mu \Gen_{\empPt{s},x, \w}\varphi \left(\theta\right) \empPt{s} \left( \dd x, \dd \w, \dd \theta \right)
			\dd s
			+
			M_{t}
			\\
			&\leq
			h \left( \ul{\theta}_{0} \right)
			+
			T
			+
			M_{t}
			,
	\end{split}
	\end{align}
	by Assumption~\ref{ass::LDPempP}~\ref{ass::LDPempP::IntBound}, where $\empPt{s}$ is the empirical measure defined by $\ul{\w}^{N}$ and $\ul{\theta}^{N}$.
	The $M_{t}$ is a continuous local $P^{N}_{\ul{\w}^{N}}$ martingale with $M_{0} =0$.
	Define the non negative $P^{N}_{\ul{\w}^{N}}$  supermartingale
	\begin{align}
			S^{R}_{t} 
			\defeq
				\min \left\{ 
						h \left( \ul{\theta}_{0}\right)
									+
									T
									+
									M_{t}
						,
						\log \left( R \right)
						\right\}
				.
	\end{align}
	By the Doob supermartingale inequality
	\begin{align}
	\label{eq::pf::lem::LDPempP::Inter::PNOutCphi::Doob}
	\begin{split}
			P^{N}_{\ul{\w}^{N}} \left[  	\sup_{t \in \Ti} h \left( \ul{\theta}^{N}_{ t } \right) > \log \left( R +1 \right)  \right]
			&\leq
				P^{N}_{\ul{\w}^{N}} \left[ \sup_{t \in \Ti} S^{R}_{t}> \log \left( R +1 \right)  \right]
			\leq
				\frac{1}{\log \left( R + 1 \right)} E_{P^{N}_{\ul{\w}^{N}}} \left[ S^{R}_{0} \right]
			\\
			&\leq
				\left( \log \left( R + 1 \right) \right)^{-\oh} 
				+
				\nu^{N} \left[   h \left( \ul\theta \right) > \left(  \log \left( R +1 \right)  \right)^{\oh}  - T \right]
			.
	\end{split}
	\end{align}
	To bound the probability, we apply the Chebychev inequality,
	\begin{align}
	\begin{split}
			\nu^{N} \left[   h \left( \ul\theta \right) > \left(  \log \left( R +1 \right)  \right)^{\oh} -T   \right]
			\leq
				e^{-\kappa \Nd \left( e^{\left( \log \left( R +1 \right)  \right)^{\oh} -T} - 1 \right) }
				\prod_{i \in \TN}
				\int_{\R} e^{\kappa \varphi \left( \theta \right) }  \nu_{\frac{i}{N}} \left( \dd \theta \right)
		.
	\end{split}
	\end{align}
	By Assumption~\ref{ass::LDPempP::Init::Integ}, the integral is bounded by a constant.
	Therefore the right hand side of \eqref{eq::pf::lem::LDPempP::Inter::PNOutCphi::Doob} converges to zero uniformly for all $\ul{\w}^{N}$,
	when $R$ tends to infinity.
	Combining this with \eqref{eq::pf::lem::LDPempP::Inter::PNOutCphi::TowardsR}, implies \ref{lem::LDPempP::Inter::PNOutCphi::Nu}.

\item
We get by the same arguments as in \ref{lem::LDPempP::Inter::PNOutCphi::Nu} (\eqref{eq::pf::lem::LDPempP::Inter::PNOutCphi::TowardsR} to \eqref{eq::pf::lem::LDPempP::Inter::PNOutCphi::Doob})
	\begin{align}
		\sup_{\ul{\theta}^{N} \in \RN : \empMThe \in \M_{r,\varphi} } 
				P^{N}_{\ul{\w}^{N},\ul{\theta}^{N}} \left[ \empP \in \Cem \backslash \Cem_{\varphi,\infty}  \right]
		\leq
				\left( \log \left( R+1 \right) \right)^{-\oh} 
		,
	\end{align}
for all $R>0$ large enough, when $r$ is fixed.
\end{enuRomNoIntendBf}\vspace{-\baselineskip}
\end{proof}

\paragraph{Exponential bounds}
\label{sec::LDPempP::Inter::ExpBounds}

In the next two lemmas we show that it is exponentially unlikely that an empirical process leaves the sets $\Cem_{\varphi,R}$.
At first we show it uniformly for fixed initial conditions in $\M_{r,\varphi}$ (Lemma~\ref{lem::LDPempP::Inter::OutCemR}), then for initial conditions distributed according to $\nu$ (Lemma~\ref{lem::LDPempP::Inter::OutCemR+Initial}).

\begin{lemma}[compare to Lemma~5.5 in \cite{DawGarLDP} for the mean field case]
\label{lem::LDPempP::Inter::OutCemR}
	For any  $r>0$, $R>0$ and for all $N \in \N$, 
	\begin{align}
		\sup_{\ul{\w}^{N} \in \Wsp^{\Nd}}
		\sup_{\ul{\theta}^{N} \in \RN : \empMThe \in \M_{r,\varphi} } 
				P^{N}_{\ul{\w}^{N}, \ul{\theta}^{N}} \left[ \empP \in \Cem \backslash \Cem_{\varphi,R} \right]
		\leq 
		e^{-\Nd R_{T}}
		,
	\end{align}
with $R_{T} = R e^{-\lambda T}-r$, where $\lambda$ is defined in Assumption~\ref{ass::LDPempP}~\ref{ass::LDPempP::IntBound}.
	
\end{lemma}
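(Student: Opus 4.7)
The plan is to construct an exponential non-negative supermartingale out of the mass $F_t^N \defeq \int_{\TWR}\varphi(\theta)\empMThe[t](\dd x,\dd\w,\dd\theta) = \frac{1}{\Nd}\sum_{k\in\TN}\varphi(\theta^{k,N}_t)$, exploiting that Assumption~\ref{ass::LDPempP}~\ref{ass::LDPempP::IntBound} is tailored to precisely cancel the Itô correction coming from exponentiation. First I would apply Itô's formula coordinate-wise to get
\begin{align}
\dd F_t^N = \int_{\TWR}\Gen_{\empMThe[t],x,\w}\varphi(\theta)\,\empMThe[t](\dd x,\dd\w,\dd\theta)\,\dd t + \dd M_t,
\end{align}
where $M_t$ is a local martingale with $\dd\langle M\rangle_t = \Nd^{-1}\int \sigma^{2}(\partial_\theta\varphi)^{2}\,\empMThe[t]\,\dd t$.

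Next I would introduce the discounted process $G_t^N \defeq e^{-\lambda t}F_t^N$, where $\lambda$ is the constant of Assumption~\ref{ass::LDPempP}~\ref{ass::LDPempP::IntBound}. Substituting the Itô decomposition and invoking that assumption bounds the drift of $G_t^N$ by $-e^{-\lambda t}\int \tfrac{\sigma^{2}}{2}(\partial_\theta\varphi)^{2}\empMThe[t]\,\dd t$ plus $e^{-\lambda t}\dd M_t$. Applying Itô once more to $U_t^N \defeq \exp(\Nd G_t^N)$ and using $\Nd^{2}\dd\langle G\rangle_t/2 = \Nd e^{-2\lambda t}\int\tfrac{\sigma^{2}}{2}(\partial_\theta\varphi)^{2}\empMThe[t]\,\dd t$, the drift of $U_t^N$ is
\begin{align}
\Nd U_t^N\bigl(e^{-2\lambda t}-e^{-\lambda t}\bigr)\int_{\TWR}\tfrac{\sigma^{2}}{2}(\partial_\theta\varphi)^{2}\,\empMThe[t]\,\dd t \;\leq\; 0
\end{align}
for all $t\in\Ti$, so $U^N$ is a non-negative local supermartingale and hence a supermartingale (Fatou). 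This is the point where the form of Assumption~\ref{ass::LDPempP}~\ref{ass::LDPempP::IntBound} is crucial; I would expect the main technical obstacle to be verifying integrability so that $M$ and the stochastic integrals against $U^N$ are genuine (not just local) martingales, handled by a standard localisation argument together with $F_0^N\le r$.

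Finally I would apply Doob's maximal inequality for non-negative supermartingales: for any $c>0$,
\begin{align}
P^{N}_{\ul{\w}^{N},\ul{\theta}^{N}}\!\left[\sup_{t\in\Ti}U_t^N\ge e^{\Nd c}\right]\le e^{-\Nd c}\,E\bigl[U_0^N\bigr] = e^{\Nd(F_0^N - c)} \le e^{\Nd(r-c)},
\end{align}
using the hypothesis $\empMThe[0]\in\M_{\varphi,r}$ in the last step. Since $\sup_t F_t^N>R$ forces $\sup_t G_t^N > R e^{-\lambda T}$, choosing $c=R e^{-\lambda T}$ yields the bound $e^{-\Nd R_T}$ with $R_T = Re^{-\lambda T}-r$. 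All estimates are independent of $\ul{\w}^{N}$ and of the particular $\ul{\theta}^{N}\in\M_{\varphi,r}$, so the suprema in the statement pose no additional difficulty.
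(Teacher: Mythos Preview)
Your argument is correct and is precisely the exponential-supermartingale technique that the paper invokes by reference to Lemma~5.5 of \cite{DawGarLDP}: discount $\int\varphi\,\dd\empMThe[t]$ by $e^{-\lambda t}$, exponentiate at scale $N^{d}$, use Assumption~\ref{ass::LDPempP}~\ref{ass::LDPempP::IntBound} to see that the It\^o correction is dominated, and finish with Doob's maximal inequality. The only cosmetic difference is that the paper first reduces to $\Cem_{\varphi,\infty}\setminus\Cem_{\varphi,R}$ via Lemma~\ref{lem::LDPempP::Inter::PNOutCphi}~\ref{lem::LDPempP::Inter::PNOutCphi::MR} before appealing to \cite{DawGarLDP}, whereas your computation handles $\Cem\setminus\Cem_{\varphi,R}$ directly; neither route needs anything beyond what you wrote.
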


\begin{proof}
	First note that by Lemma~\ref{lem::LDPempP::Inter::PNOutCphi}~\ref{lem::LDPempP::Inter::PNOutCphi::MR}, it is enough to show for each $\ul{\w}^{N} \in \Wsp^{\Nd}$
	\begin{align}
			\sup_{\ul{\theta}^{N} \in \RN : \empM \in \M_{r,\varphi} } 
					P^{N}_{\ul{\w}^{N}, \ul{\theta}^{N}} \left[ \empMThe \in \Cem_{\varphi,\infty} \backslash \Cem_{\varphi,R} \right]
			\leq 
			e^{-\Nd R_{T}}
			.
		\end{align}
	This bound can be proven (at least formally) exactly as the proof of Lemma~5.5 in \cite{DawGarLDP}. Therefore we do not state it here. 
	Neither the different topology on $\Cem_{\varphi,\infty}$ considered in that paper nor the space dependency, is crucial in the proof.
	The proof requires the Assumption~\ref{ass::LDPempP}~\ref{ass::LDPempP::IntBound}.
\end{proof}

\begin{lemma}
\label{lem::LDPempP::Inter::OutCemR+Initial}
Let Assumption~\ref{ass::LDPempP::Init::Integ} hold.
	For all $s >0$, there is a $R=R_{s}>0$, such that for all $N \in \N$
	\begin{align}
		\sup_{\ul{\w}^{N} \in \Wsp^{\Nd}}
		P^{N}_{\ul{\w}^{N}} \left[  \empP \in  \Cem \backslash \Cem_{\varphi,R} \right]
		\leq 
		e^{-\Nd s}
		.
	\end{align}
\end{lemma}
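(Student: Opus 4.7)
The plan is to reduce this statement to Lemma~\ref{lem::LDPempP::Inter::OutCemR} by conditioning on the random initial values, and then to control the distribution of the initial empirical measure under $\nu^{N}$ via a Chernoff-type bound that uses the uniform exponential moment given by Assumption~\ref{ass::LDPempP::Init::Integ}.

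Concretely, I would first write, for any $r,R>0$ and any $\ul{\w}^{N}\in\Wsp^{\Nd}$,
\begin{align*}
	P^{N}_{\ul{\w}^{N}}\!\left[\empP\in\Cem\backslash\Cem_{\varphi,R}\right]
	&=\int_{\RN} P^{N}_{\ul{\w}^{N},\ul{\theta}^{N}}\!\left[\empP\in\Cem\backslash\Cem_{\varphi,R}\right]\nu^{N}(\dd\ul{\theta}^{N})\\
	&\leq \nu^{N}\!\left[\ul{\theta}^{N}:\empMThe\notin\M_{r,\varphi}\right]+\sup_{\ul{\theta}^{N}:\empMThe\in\M_{r,\varphi}}P^{N}_{\ul{\w}^{N},\ul{\theta}^{N}}\!\left[\empP\in\Cem\backslash\Cem_{\varphi,R}\right],
\end{align*}
and observe that by Lemma~\ref{lem::LDPempP::Inter::OutCemR} the second term is bounded by $e^{-\Nd(Re^{-\lambda T}-r)}$, uniformly in $\ul{\w}^{N}$.

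For the first term, I would apply a standard exponential Chebyshev inequality. Using the parameter $\ell>1$ from Assumption~\ref{ass::LDPempP::Init::Integ} and the product structure $\nu^{N}=\bigotimes_{k\in\TN}\nu_{k/N}$, one obtains
\begin{align*}
	\nu^{N}\!\left[\ul{\theta}^{N}:\tfrac{1}{\Nd}\sum_{k\in\TN}\varphi(\theta^{k,N})>r\right]
	\leq e^{-\ell\Nd r}\prod_{k\in\TN}\int_{\R}e^{\ell\varphi(\theta)}\nu_{k/N}(\dd\theta)
	\leq e^{\Nd(\log C-\ell r)},
\end{align*}
which decays exponentially in $\Nd$ with rate $\ell r-\log C$ that can be made arbitrarily large by choosing $r$ large.

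Given $s>0$, the strategy is therefore: first pick $r=r_{s}$ such that $\ell r_{s}-\log C\geq s+1$, and then pick $R=R_{s}\geq e^{\lambda T}(r_{s}+s+1)$ so that also $Re^{-\lambda T}-r_{s}\geq s+1$; both terms are then bounded by $e^{-\Nd(s+1)}$, and their sum is dominated by $e^{-\Nd s}$ uniformly in $\ul{\w}^{N}$ and in $N$. I do not expect any genuine obstacle: the only small point of care is that the bounds are required to hold \emph{for all} $N\in\N$, but both the Chernoff estimate and the bound from Lemma~\ref{lem::LDPempP::Inter::OutCemR} have rates that are independent of $N$, so the choices of $r_{s}$ and $R_{s}$ can be made once and for all.
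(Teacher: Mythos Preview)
Your argument is correct and uses exactly the same two ingredients as the paper: Lemma~\ref{lem::LDPempP::Inter::OutCemR} for the dynamics started from a controlled initial configuration, and the exponential Chebyshev bound under Assumption~\ref{ass::LDPempP::Init::Integ} for the initial distribution.

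The only difference is in how the initial configurations are partitioned. The paper decomposes over the countable shells $\M_{k+1,\varphi}\setminus\M_{k,\varphi}$, applies Lemma~\ref{lem::LDPempP::Inter::OutCemR} on each shell with $r=k+1$, and then sums the resulting geometric series $\sum_{k\geq 0} e^{\Nd(k+1-Re^{-\lambda T})}e^{-\ell\Nd k}C^{\Nd}$, which converges because $\ell>1$. Your approach replaces this by a single cutoff at level $r$: either $\empMThe\in\M_{r,\varphi}$ (handled by Lemma~\ref{lem::LDPempP::Inter::OutCemR}) or not (handled by the Chernoff bound), and then you choose $r$ and $R$ sequentially. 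Your version is a bit more elementary, since it avoids the infinite sum; the paper's shell decomposition, on the other hand, gives a slightly sharper explicit dependence of $R_{s}$ on $s$ without needing to first optimize over an auxiliary parameter. Either route yields the stated uniform-in-$N$ bound.
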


\begin{proof}
	For all $R>0$, $\ul{\w}^{N} \in \Wsp^{\Nd}$
	\begin{align}
	\begin{split}
		P^{N}_{\ul{\w}^{N} } \left[  \empP \in  \Cem \backslash \Cem_{\varphi,R} \right]
		&=
			\int_{\RN} P^{N}_{\ul{\w}^{N}, \ul\theta^{N}} \left[ \empP \in \Cem \backslash \Cem_{\varphi,R} \right] \nu^{N} \left( \dd \ul\theta^{N} \right)
		\\
		&\leq
			\sum_{k=0}^{\infty} e^{-\Nd R e^{-\lambda T} + \Nd \left( k+1 \right)} \nu^{N} \left[ \M_{k+1,\varphi} \backslash \M_{k,\varphi} \right]
		,
	\end{split}
	\end{align}
where we use Lemma~\ref{lem::LDPempP::Inter::OutCemR} in the inequality.
For the probability of the right hand side we use the exponential Chebychev inequality with $\ell >1$
	\begin{align}
		\nu^{N} \left[ \M_{k+1,\varphi} \backslash \M_{k,\varphi} \right]
		\leq
			\nu^{N} \left[ \sum_{k \in \TN} \varphi \left( \theta^{k,N} \right) > \Nd k \right]
		\leq
			e^{- \ell \Nd k }\prod_{i \in \TN} \int_{\R} e^{\ell \varphi \left( \theta \right) } \nu_{\frac{i}{N}} \left( \dd \theta \right)
		\leq
			e^{- \ell \Nd k } C^{\Nd}
		,
	\end{align}
	by Assumption~\ref{ass::LDPempP::Init::Integ}.
	Then
	\begin{align}
	\begin{split}
		P^{N}_{\ul{\w}^{N} } \left[  \empP \in  \Cem \backslash \Cem_{\varphi,R} \right]
		&\leq
			C^{\Nd} e^{-\Nd R e^{-\lambda T} + \Nd}
			\sum_{k=0}^{\infty} e^{ \Nd k  \left( 1 - \ell \right) }
		\\
		&\leq
			C^{\Nd} e^{-\Nd R e^{-\lambda T} +\Nd} \frac{1}{1-e^{\Nd \left( 1- \ell \right)}}
		\leq
			e^{-\Nd R e^{-\lambda T} \oh}
		,
	\end{split}
	\end{align}
	for $R$ large enough.
\end{proof}

For the Theorem~\ref{thm::LDPempP::ExpBound}, we need compact subsets of $\Cem$.
These sets are characterised in the following lemma.

\begin{lemma}[Lemma~1.3 in \cite{GarOnTheMKVLimit}]
\label{lem::LDPempP::Inter::RelCompInCEM}
	Let $\left\{ f_{n} \right\}_{n}$ be a countable dense subset of $\CspL{c}{\TWR}$.
	A set $\calK$ is relatively compact in $\Cem$ 
	if and only if
	\begin{align}
	\label{lem::LDPempP::Inter::RelCompInCEM:eq:MainSet}
		\calK \subset \calK_{K} \cap \bigcap \calK_{n}
		,
	\end{align}
	with
	\begin{align}
		&\calK_{K} = \left\{ \mu_{\Ti}  \in \Cem : \mu_{t}  \in K \textnormal{ for all } t \in \Ti  \right\}
		\\
		&\calK_{n} = \left\{ \mu_{\Ti}  \in \Cem : 
						\left\{ t \mapsto \int_{\TWR} f_{n} \left(x, \w, \theta\right) \mu_{t} \left(\dd x, \dd \w, \dd \theta \right) \right\} 
						\in K_{n} \right\}
		,
	\end{align}
where $K \subset \MOne{\TWR}$ and $K_{n} \subset \Csp{\Ti}$ are compact.
\end{lemma}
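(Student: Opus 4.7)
The plan is to prove this by the Arzel\'a--Ascoli theorem for $\Csp{\Ti, \MOne{\TWR}}$, using the countable dense family $\{f_n\}$ to metrize weak convergence on the compact subset $K$.

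For the forward direction, suppose $\calK$ is relatively compact in $\Cem$. Then $\overline{\calK}$ is compact, the evaluation map $\Ti \times \overline{\calK} \ni (t, \mu_\Ti) \mapsto \mu_t \in \MOne{\TWR}$ is continuous (since convergence in $\Cem$ is uniform in $t$), and $\Ti \times \overline{\calK}$ is compact. Its image, call it $K$, is therefore compact in $\MOne{\TWR}$, which yields $\calK \subset \calK_K$. Similarly, for each $n$ the map $\Cem \ni \mu_\Ti \mapsto \{t \mapsto \int f_n \, d\mu_t\} \in \Csp{\Ti}$ is continuous (this is checked exactly as in \eqref{eq::lem::MapPi::WellDef::BoundWeakCon}), so the image $K_n$ of $\overline{\calK}$ is compact in $\Csp{\Ti}$ and $\calK \subset \calK_n$.

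For the backward direction, suppose $\calK \subset \calK_K \cap \bigcap_n \calK_n$ with $K \subset \MOne{\TWR}$ and $K_n \subset \Csp{\Ti}$ compact. By the Arzel\'a--Ascoli theorem for continuous functions from a compact interval into a Polish metric space, relative compactness of $\calK$ in $\Cem$ follows from (a) pointwise relative compactness and (b) equicontinuity. Property (a) is immediate since $\mu_t \in K$ for every $t \in \Ti$ and every $\mu_\Ti \in \calK$. For (b), I would use that $K$ is compact and hence tight, so weak convergence on $K$ is metrized by
\begin{align*}
d(\mu, \nu) \defeq \sum_{n=1}^\infty 2^{-n} \min\left\{1, \left| \int f_n \, d\mu - \int f_n \, d\nu \right| \right\},
\end{align*}
where density of $\{f_n\}$ in $\CspL{c}{\TWR}$ combined with tightness suffices to separate measures in $K$ and recover the weak topology.

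Given $\epsilon > 0$, choose $N$ so large that $\sum_{n > N} 2^{-n} < \epsilon/2$. For each $n \leq N$ the set $K_n$ is compact in $\Csp{\Ti}$, hence equicontinuous, so there is $\delta_n > 0$ such that $|\int f_n \, d\mu_t - \int f_n \, d\mu_s| < \epsilon/2$ whenever $|t-s| < \delta_n$ and $\mu_\Ti \in \calK$. Taking $\delta = \min_{n \leq N} \delta_n$ gives $d(\mu_s, \mu_t) < \epsilon$ uniformly over $\calK$, which is the required equicontinuity.

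The main technical obstacle is justifying that $d$ actually metrizes the weak topology restricted to $K$: this rests on the fact that a compact (hence tight) family in $\MOne{\TWR}$ is determined by integration against any countable dense subfamily of $\CspL{c}{\TWR}$, which must be handled carefully since $\TWR$ is not compact. Everything else is a bookkeeping application of Arzel\'a--Ascoli.
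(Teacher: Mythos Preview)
The paper does not give its own proof of this lemma; it simply refers to Lemma~1.3 in \cite{GarOnTheMKVLimit}. Your argument via Arzel\'a--Ascoli is the standard one and is correct, including the point you flag: on the compact (hence tight) set $K$, density of $\{f_n\}$ in $\CspL{c}{\TWR}$ together with a cutoff argument shows that convergence of $\int f_n\,d\mu$ for all $n$ is equivalent to weak convergence, so $d$ does metrize the subspace topology on $K$.
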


For a proof of this lemma, see Lemma~1.3 in \cite{GarOnTheMKVLimit}. 

The next lemma states an exponential bound on the probability that the empirical process is outside of a subset of $\Cem$, that is defined via the projection to $\Csp{\Ti}$.
We use this set in Theorem~\ref{thm::LDPempP::ExpBound} as the set $\calK_{n}$, defined in Lemma~\ref{lem::LDPempP::Inter::RelCompInCEM} in the characterisation of relative compact subset of $\Cem$.

\begin{lemma}[compare to Lemma~5.6 in \cite{DawGarLDP} for the mean field case]
	\label{lem::LDPempP::Inter::CemRoutKf}
	For all $R>0, s>0$ and $f \in \CspL[\infty]{c}{\TWR} $, there exists a compact set $K \subset \Csp{\Ti}$, such that 
	for all $N \in \N$ 
	\begin{align}
	\sup_{\ul{\w}^{N} \in \Wsp^{\Nd}}
	\sup_{\ul{\theta}^{N} \in \RN}
	P^{N}_{\ul{\w}^{N}, \ul{\theta}^{N}} \left[  \empP \in  \Cem_{\varphi,R} \backslash \calK_{f} \right]
	\leq 
	e^{-\Nd s}
	,
	\end{align}
	with $\calK_{f} = \left\{ \mu_{\Ti}  \in \Cem :  \left\{ t \mapsto \int_{\TWR} f \left(x, \w, \theta\right) \mu_{t} \left(\dd x, \dd \w, \dd \theta \right) \right\} \in K \right\}$.
	
\end{lemma}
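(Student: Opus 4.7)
The plan is to build $K \subset \Csp{\Ti}$ via the Arzel\'a--Ascoli theorem and to bound, uniformly in $\ul{\w}^N$ and $\ul\theta^N$, the probability that the scalar trajectory $t \mapsto \sPro{\empPt{t}}{f}$ leaves $K$ while $\empP$ is confined to $\Cem_{\varphi,R}$. Equiboundedness is automatic since $|\sPro{\empPt{t}}{f}| \leq \iNorm{f}$, so the real content of $K$ is a prescribed uniform modulus of continuity. I fix positive sequences $\delta_m \downarrow 0$ and $\kappa_m \downarrow 0$ (to be tuned at the end) and set
\[ K \defeq \Bigl\{ g \in \Csp{\Ti} : \iNorm{g} \leq \iNorm{f},\ \sup_{|t-u| \leq \delta_m} |g(t)-g(u)| \leq \kappa_m \text{ for all } m\in\N \Bigr\}, \]
which is compact in $\Csp{\Ti}$.

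The central tool is \Ito's formula applied to $\sPro{\empPt{t}}{f} = N^{-d} \sum_{k\in\TN} f(\tfrac{k}{N},\w^{k,N},\theta^{k,N}_t)$ under $P^{N}_{\ul{\w}^N,\ul\theta^N}$, which gives
\[ \sPro{\empPt{t}}{f} - \sPro{\empPt{s}}{f} = \int_s^t \sPro{\empPt{u}}{\Gen_{\empPt{u},\cdot,\cdot} f} \dd u + M^{N,f}_t - M^{N,f}_s, \]
where $M^{N,f}$ is a continuous martingale whose quadratic variation is bounded by $\sigma^2 \iNorm{\partial_\theta f}^2 N^{-d}(t-s)$. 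Because $f$ has compact support, the drift integrand $\Gen_{\empPt{u},x,\w} f(x,\w,\theta)$ is supported in some compact $K_f^{\theta}\subset\R$ for the spin variable; combining this localisation with $\empPt{u} \in \M_{\varphi,R}$ and Assumption~\ref{ass::LDPempP} (specifically, the local boundedness of $b^N$ together with the Lyapunov control, which restricts how much $b(x,\w,\theta,\mu)$ may grow in $\mu$ when $\theta$ stays in $K_f^\theta$ and $\mu\in\M_{\varphi,R}$) yields a uniform bound $|\sPro{\empPt{u}}{\Gen_{\empPt{u},\cdot,\cdot} f}| \leq C_{f,R}$; in particular the drift contribution between times $s$ and $t$ is at most $C_{f,R}(t-s)$.

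The main obstacle is the uniform-in-$(\ul{\w}^N,\ul\theta^N)$ exponential control of the martingale increments. This is supplied by the Bernstein-type inequality for continuous martingales with bounded quadratic variation,
\[ P^{N}_{\ul{\w}^N,\ul\theta^N}\bigl[ |M^{N,f}_t - M^{N,f}_s| \geq \kappa \bigr] \leq 2 \exp\Bigl( -\frac{\Nd \kappa^2}{2\sigma^2 \iNorm{\partial_\theta f}^2 (t-s)} \Bigr). \]
For each $m$ one chooses $\delta_m$ so small that $C_{f,R}\delta_m \leq \kappa_m/2$, then performs a union bound over the mesh $\{i\delta_m : 0\leq i \leq T/\delta_m\}$; the probability that the $m$-th modulus condition fails on $\{\empP \in \Cem_{\varphi,R}\}$ is then at most $(2T/\delta_m)\exp(-\Nd c\kappa_m^2/\delta_m)$ with $c = 1/(8\sigma^2\iNorm{\partial_\theta f}^2)$. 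The delicate final step is to pick $\delta_m$ and $\kappa_m$ such that $c\kappa_m^2/\delta_m \geq s + \beta_m$ for some sequence $\beta_m$ growing fast enough that $\sum_m (2T/\delta_m)e^{-\beta_m}\leq 1$, while still forcing $\kappa_m^2=(s+\beta_m)\delta_m/c\to 0$. A concrete working choice is $\delta_m=m^{-3}$, $\beta_m=m\log 2+\log(2Tm^3)$, $\kappa_m=\sqrt{(s+\beta_m)\delta_m/c}$: since $e^{-\Nd \beta_m}\leq e^{-\beta_m}$ for $\Nd\geq 1$, summing the $m$-indexed bounds yields $e^{-\Nd s}\sum_m(2T/\delta_m)e^{-\beta_m}\leq e^{-\Nd s}$, giving the claim uniformly for all $N\geq 1$.
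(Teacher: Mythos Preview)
Your approach---It\^o decomposition of $t\mapsto\sPro{\empPt{t}}{f}$ into a drift plus a martingale with quadratic variation of order $N^{-d}$, a Bernstein-type exponential bound on the martingale increments, and an Arzel\`a--Ascoli construction of $K$ via a union bound over meshes---is precisely the Dawson--G\"artner argument the paper defers to; the paper itself gives no further detail beyond citing Lemma~5.6 of \cite{DawGarLDP} and noting that Assumption~\ref{ass::LDPempP}~\ref{ass::LDPempP::blockbd} is used.

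One point deserves tightening: your justification of the uniform drift bound $\abs{\sPro{\empPt{u}}{\Gen_{\empPt{u},\cdot,\cdot}f}}\leq C_{f,R}$ is not quite right. Local boundedness of $b^{N}$ in Assumption~\ref{ass::LDPempP}~\ref{ass::LDPempP::blockbd} is $N$-dependent, and the Lyapunov conditions~\ref{ass::LDPempP::IntBound} and~\ref{ass::LDPempP::Lyapu} control $\Gen\varphi$, not $\Gen f$; neither by itself yields a bound on $b(x,\w,\theta,\mu)$ uniform over $\theta$ in the (compact) support of $\partial_\theta f$, over all empirical measures $\mu\in\M_{\varphi,R}$, and over $N$. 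In Dawson--G\"artner this bound is immediate because they assume continuity of $b$ on all of $\M_{\varphi,R}$, which is compact in their topology; here continuity is only assumed on $\M_{\varphi,R}\cap\MOneL{\TWR}$ (Assumption~\ref{ass::LDPempP}~\ref{ass::LDPempP::bCont}), and empirical measures are not in $\MOneL{\TWR}$. For the concrete local mean-field model the bound is verified directly in Chapter~\ref{sec::LocalMF}; in the general setting one needs an approximation argument of that type rather than the assumptions you invoke. The paper, like you, does not spell this out.
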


\begin{proof}
	Also this proof is formally exactly the proof of Lemma~5.6 in \cite{DawGarLDP} for each $\ul{\w}^{N} \in \Wsp^{\Nd}$. Indeed in the proof one only uses the  function $\left\{ t \mapsto \int_{\TWR} f \left(x, \w, \theta\right) \mu_{t} \left(\dd x, \dd \w, \dd \theta \right) \right\}$, which is (here as in \cite{DawGarLDP})  a function in $\Csp{\Ti ,\R}$ and one does not have to care about the structure within the integral.
	Moreover the topology of $\Cem$ is not relevant in the proof. 
	The proof requires the Assumption~\ref{ass::LDPempP}~\ref{ass::LDPempP::blockbd}.
\end{proof}

\subsubsection{Proof of Theorem~\ref{thm::LDPempP::ExpBound}}
\label{sec::LDPempP::Inter::PfExpTight}

\begin{proof}[of Theorem~\ref{thm::LDPempP::ExpBound}]
This proof equals the proof of Theorem~5.3 in \cite{DawGarLDP}, besides formal changes due to the space dependency.
The only generalisation is that we consider random initial data here. 

By Lemma~\ref{lem::LDPempP::Inter::RelCompInCEM} it is enough to define compact sets $K \subset \MOne{\TWR}$ and $K_{n} \subset \Csp{\Ti}$ to get a compact set in $\Cem$.
We set  $K=\M_{\varphi,R}$ and therefore $\calK_{K}=\Cem_{\varphi,R}$.
Moreover we choose by Lemma~\ref{lem::LDPempP::Inter::CemRoutKf} for each $n$ a $K_{n} \subset \Csp{\Ti}$, such that
\begin{align}
\label{eq::pf::thm::LDPempP::ExpBound::calKn}
		\sup_{\ul{\w}^{N} \in \Wsp^{\Nd}}
		\sup_{\ul\theta^{N} \in \RN} 
		P^{N}_{\ul{\w}^{N}, \ul\theta^{N}} \left[  \empP \in  \Cem_{\varphi,R} \backslash \calK_{n} \right]
		\leq 
		e^{- n \Nd s}
	.
\end{align}
Define the compact set $\calK \defeq \overline{ \Cem_{\varphi,R} \cap \bigcap \calK_{n}}$.
This is a subset of $\Cem_{\varphi,R}$, because $\Cem_{\varphi,R}$ is closed in $\Cem$.

\emptyline
By Lemma~\ref{lem::LDPempP::Inter::OutCemR+Initial} and \eqref{eq::pf::thm::LDPempP::ExpBound::calKn} we conclude for all $N\in \N$ and $R$ large enough
\begin{align}
\begin{split}
	 P^{N}\left[  \empP  \in \Cem \backslash \calK \right]
	&\leq
		 P^{N}\left[  \empP  \in \Cem \backslash  \Cem_{\varphi,R} \right]
		+
		\sum_{n=1}^{\infty}
		\:
		\sup_{\ul{\w}^{N} \in \Wsp^{\Nd}}
		\sup_{\ul\theta^{N} \in  \RN } 
		 P^{N}_{\ul{\w}^{N}, \ul\theta^{N}}  \left[  \empP  \in \Cem_{\varphi,R} \backslash  \calK_{n} \right]
	\\
	&\leq 
		e^{-\Nd s}
		+
		\sum_{n=1}^{\infty}
		e^{-n \Nd  s}
	.
\end{split}
\end{align}\vspace{-\baselineskip}
\end{proof}

 \subsubsection{Proof of Theorem~\ref{thm::LDPempP::Local}}
\label{sec::LDPempP::Inter::PfLocalLDP}

We prove in this chapter the Theorem~\ref{thm::LDPempP::Local}.
In the proof, we investigate separately the cases, when 
$\ol\mu_{\Ti} \in \Cem_{\varphi,\infty}$ (\ref{pf::thm::LDPempP::Local::step::InML} and \ref{pf::thm::LDPempP::Local::step::OutML}), and when it is not in this space (\ref{pf::thm::LDPempP::Local::step::NotCphi}).
Moreover we divide the first case in the subcases that $ \ol{\mu}_{t} \in  \Cem^{L}$ (\ref{pf::thm::LDPempP::Local::step::InML}), and when this is not true (\ref{pf::thm::LDPempP::Local::step::OutML}).
The ideas of the proofs of the three cases are as follows:
\begin{steps}[label=\textbf{Case \arabic{stepsi}: }]
\step

For $\ol\mu_{\Ti} \in \Cem_{\varphi,\infty} \cap  \Cem^{L}$, we reduce the claims of Theorem~\ref{thm::LDPempP::Local} to large deviation upper and lower bounds for a system of independent SDEs. 
For this independent system we know these large deviation bounds by Theorem~\ref{thm::LDPempP::Inde}.
To reduce the claims we choose at first (\ref{pf::thm::LDPempP::Local::step::DefIndeSys}), for each $N \in \N$, a system of spin values, that evolve mutually independent, with the constraint that their empirical process should be close to $\ol\mu_{\Ti}$ with high probability.
Therefore we choose the drift coefficient $\overline{b}^{I} \left( x, \w, \theta,t \right)  \defeq b \left( x, \w, \theta,\ol{\mu}_{t}  \right) $.
We regard the empirical process of interacting diffusions, in a small neighbourhood of $\ol\mu_{\Ti} $, as a small perturbation of the empirical process for the independent diffusions with drift coefficient $\overline{b}^{I}$. 
Then, in \ref{pf::thm::LDPempP::Local::step::Girsan}, we apply the  (Cameron-Martin-) Girsanov theorem and receive a density between the measures of the solution to the original SDE and the one of the SDE with drift coefficient $\overline{b}^{I}$.
Using this density, we reduce in \ref{pf::thm::LDPempP::Local::step::LowerBound} and \ref{pf::thm::LDPempP::Local::step::UpperBound} the claims of Theorem~\ref{thm::LDPempP::Local} to large deviation bounds for the independent system. We get these bounds by Theorem~\ref{thm::LDPempP::Inde}, which is applicable by Chapter~\ref{sec::LDPempP::Inter::AssInterToInde}.

The proof of this first case is very similar to the one in~\cite{DawGarLDP} in Chapter~5.4 for the mean-field setting.
However differences arise due to the space and random environment dependency.
Moreover we show a large deviation principle on the space $\Cem$ and not like in \cite{DawGarLDP} on $\Cem_{\varphi,\infty}$ equipped even with another topology than the subspace topology.

\emptyline
\step
If we assumed in Assumption~\ref{ass::LDPempP}~\ref{ass::LDPempP::bCont} that the continuity of $b$ holds on $\M_{\varphi,R}$ and not only on $\M_{\varphi,R} \cap \MOneL{\TWR}$, then we  could handle the case of $\ol\mu_{\Ti} \in \Cem_{\varphi,\infty}$ with $ \ol{\mu}_{t} \not \in \MOneL{\TWR}$ for some $t \in \Ti$, as in the previous step.
However to keep the assumption more general, we have to use a new approach.
Indeed we show that no empirical process is within an $\epsilon$-ball around $\ol\mu_{\Ti}$ for $N$ large enough.
From this we infer the claims of Theorem~\ref{thm::LDPempP::Local}.

\emptyline
\step
When $\ol\mu_{\Ti}$  is not in $\Cem_{\varphi,\infty}$, then the first statement of Theorem~\ref{thm::LDPempP::Local} is obviously satisfied and the second statement follows from Lemma~\ref{lem::LDPempP::Inter::OutCemR+Initial}.

\end{steps}

\begin{proof}
Fix an arbitrary $\ol\mu_{\Ti}  \in \Cem$.

\emptyline
\begin{steps}[label=\textbf{Case \arabic{stepsi}: },ref=Case \arabic{stepsi}]
\step[{$\ol{\mu}_{\Ti} \in \Cem_{\varphi,\infty} \cap  \Cem^{L}$}]
\label{pf::thm::LDPempP::Local::step::InML}

\begin{steps}
\step[{Definition of a system of diffusions with a fixed effective field}]
\label{pf::thm::LDPempP::Local::step::DefIndeSys}

We set $b^{I} \left( x, \w, \theta,t \right)  \defeq b \left( x, \w, \theta,\ol{\mu}_{t}  \right) $
and use this function as drift coefficient to define the time dependent diffusion generator $\Gen^{I}_{t,x,\w}$ (defined as in \eqref{def::GeneratorIndependent}).
Then $\Gen^{I}_{t,x,\w} = \Gen_{\ol{\mu}_{t},x,\w} $.
Moreover we define the measures $P^{I,N} \in \MOne{ \Csp{\Ti}^{\Nd} } $ as in Notation~\ref{nota::LDPempP::Inde::General}.

As shown in Chapter~\ref{sec::LDPempP::Inter::AssInterToInde}, the Assumption~\ref{ass::LDPempP} implies the Assumptions~\ref{ass::LDPempP::Inde} for the generator $\Gen^{I}_{t,x,\w}$.
Therefore the Theorem~\ref{thm::LDPempP::Inde}  is applicable for $P^{I,N}$.

\emptyline
\step[{Comparison of the two processes with help of the Girsanov theorem}]
\label{pf::thm::LDPempP::Local::step::Girsan}

We claim that for each $\ul{\w}^{N} \in \Wsp^{\Nd}$,
$P^{N}_{\ul{\w}^{N}}$ is absolutely continuous with respect to $P^{I,N}_{\ul{\w}^{N}}$, with Radon-Nikodym derivative
\begin{align}
	\label{eq::thm::LDPempP::Local::RadNikoInterToNonInter}
	\frac{ \dd P^{N}_{\ul{\w}^{N}} }{\dd P^{I,N}_{\ul{\w}^{N}} }
	=
	e^{M^{N}_{\ul{\w}^{N},T} \:\: - \frac{1}{2} \langle\langle M^{N}_{\ul{\w}^{N}} \rangle\rangle_{T}}
\end{align}
for all $\ul\theta^{N} \in \RN$.
Here $M^{N}_{\ul{\w}^{N},t}$ is a continuous local $P^{I,N}_{\ul{\w}^{N}}$ martingale with quadratic variation
\begin{align}
	\label{eq::thm::LDPempP::Local::QuadraticVariation}
	\langle\langle M^{N}_{\ul{\w}^{N}} \rangle\rangle_{t}  \left( \ul{\theta}^{N}_{\Ti}  \right) 
	=
	\Nd
	\int_{0}^{t}
		\int_{\TWR}
				\sigma^{2} \abs{b \left( x, \w, \theta,\empPt{u}  \right) -b \left( x, \w, \theta,\ol{\mu}_{u}  \right) }^{2}
				\empPt{u} \left(\dd x, \dd \w, \dd \theta \right) 
	\dd u
	,
\end{align}
where $\empPt{u}$ is the empirical measure defined by $\ul{\theta}^{N}_{u}$ and $\ul{\w}^{N}$.
This can be shown by a spatial localisation argument. 
The generators $\Gen^{N}_{.}$ and $\Gen^{I,N}_{.}$ only differ in their drift coefficients. 
The martingale problems corresponding to both generators are well defined.
Moreover $b^{N}$ (defined in Assumption~\ref{ass::LDPempP}~\ref{ass::LDPempP::blockbd}) and $b^{I}$ (as continuous function) are both locally bounded.
By spatial localisation (see \cite{StrVarMultidi} Theorem~10.1.1) it is hence enough to consider bounded drift coefficients.
For bounded drift coefficients, we know by \cite{StrVarMultidi} Theorem~6.4.2 the claimed representation of the Radon-Nikodym formula.

\emptyline
\step[{The proof of~\ref{thm::LDPempP::Local::LowerBound}}]
\label{pf::thm::LDPempP::Local::step::LowerBound}

For $ S_{\nu,\zeta} \left( \ol\mu_{\Ti}  \right) = \infty$, \ref{thm::LDPempP::Local::LowerBound} is obviously satisfied.
Therefore assume that $ S_{\nu,\zeta} \left( \ol\mu_{\Ti}  \right) < \infty$.
Fix an open neighbourhood $V \subset \Cem$ of $\ol\mu_{\Ti} $ and an arbitrary $\gamma>0$.

The Lemma~\ref{lem::LDPempP::Inter::OutCemR+Initial} can also be applied to $P^{I,N}$ instead of $P^{N}$  by Assumption~\ref{ass::LDPempP}~\ref{ass::LDPempP::Lyapu}.
This lemma then states (with $s = S_{\nu,\zeta} \left( \ol\mu_{\Ti} \right) + \gamma$), that there is a $R>0$ such that
\begin{align}
	\label{eq::thm::LDPempP::Local::CemPhiWithoutCemR::ForOverlineP}
	P^{I,N} \left[ \empP \in \Cem \backslash \Cem_{\varphi,R} \right]
			\leq
			e^{- \Nd S_{\nu,\zeta} \left( \ol\mu_{\Ti}  \right) }
			e^{- \Nd \gamma}
	.
\end{align}
Assume that this $R$ is so large that $\ol\mu_{\Ti} \in \Cem_{\varphi,R}$.
We choose now two constants $p,q >1$ with $\frac{1}{p}+\frac{1}{q} =1$ and a $\delta>0$ such that
\begin{align}
	\label{eq::thm::LDPempP::Local::defpqdelta}
		\frac{1}{2} \left( 1+ \frac{p}{q} \right) \delta 
		+
		p S_{\nu,\zeta} \left( \ol\mu_{\Ti}  \right)
	\leq
		S_{\nu,\zeta} \left( \ol\mu_{\Ti}  \right)  
		+
		\gamma
	.
\end{align}

\emptyline

By Assumption~\ref{ass::LDPempP}~\ref{ass::LDPempP::MuIntCont} and \eqref{eq::thm::LDPempP::Local::QuadraticVariation}, there is a open neighbourhood $W \subset \Cem$ of $\ol \mu_{\Ti}$ such that 
$W \cap \Cem_{\varphi,R} \subset  V$ and $\langle\langle M^{N}_{\ul{\w}^{N}} \rangle\rangle_{T} \left( \ul{\theta}^{N}_{\Ti} \right)  \leq \Nd \delta$ for $\ul{\w}^{N} \in \Wsp^{\Nd}$ and $\ul{\theta}^{N}_{\Ti} \in \Csp{\Ti}^{\Nd}$ when the corresponding empirical processes $\empP \in W \cap \Cem_{\varphi,R}$.
With the same arguments as Dawson and G\"artner we can show by using the Radon-Nikodym derivative \eqref{eq::thm::LDPempP::Local::RadNikoInterToNonInter}  that for each $\ul{\w}^{N} \in \Wsp$ 
\begin{align}
\label{eq::thm::LDPempP::Local::PNVomega}
	P^{N}_{\ul{\w}^{N}}  \left[ \empP \in V \right]
	\geq
		 P^{N}_{\ul{\w}^{N}} \left[ \empP \in W \cap \Cem_{\varphi,R} \right]
	\geq
		e^{-\frac{1}{2} \left( 1+ \frac{p}{q} \right) \delta \Nd}
		\left( P^{I,N}_{\ul{\w}^{N}} \left[ \empP \in W \cap \Cem_{\varphi,R}  \right] \right)^{p}
	.
\end{align}
We integrate \eqref{eq::thm::LDPempP::Local::PNVomega} with respect to $\zeta^{N}$ and apply the Jensen inequality,
\begin{align}
\label{eq::thm::LDPempP::Local::PNV}
P^{N}  \left[ \empP \in V \right]
\geq
e^{-\frac{1}{2} \left( 1+ \frac{p}{q} \right) \delta \Nd}
\left( P^{I,N} \left[ \empP \in W \cap \Cem_{\varphi,R}  \right] \right)^{p}
.
\end{align}
Moreover
\begin{align}
\label{eq::thm::LDPempP::Local::PNW}
	P^{I,N} \left[ \empP \in W \cap \Cem_{\varphi,R} \right]
	\geq
		P^{I,N} \left[ \empP \in W \right]
			\left( 
			1
			-
			e^{-\Nd \frac{\gamma}{2}}
			\right)
	,
\end{align}
for $N$ large enough.
Indeed \eqref{eq::thm::LDPempP::Local::PNW} holds,  by the triangle inequality and 
\begin{align}
\begin{split}
P^{I,N} \left[ \empP \not \in \Cem_{\varphi,R} \right]
\leq
e^{- \Nd S_{\nu,\zeta} \left( \ol\mu_{\Ti}  \right) } e^{- \Nd \gamma}
\leq
e^{- \Nd \frac{\gamma}{2}}	 P^{I,N} \left[ \empP \in W \right]
,
\end{split}
\end{align}
by \eqref{eq::thm::LDPempP::Local::CemPhiWithoutCemR::ForOverlineP} and because $W$ is an open set and $\left\{\empP, P^{I,N}\right\}$  satisfies a large deviation principle (Theorem~\ref{thm::LDPempP::Inde}).

Combine \eqref{eq::thm::LDPempP::Local::PNV} and \eqref{eq::thm::LDPempP::Local::PNW}, we get
\begin{align}
\label{eq::thm::LDPempP::Local::PNValmost}
\begin{split}
	&\liminf_{N \rightarrow \infty} N^{-d} \log P^{N} \left[ \empP \in V \right]
	\\
	&\geq
		- \frac{1}{2} \left( 1+ \frac{p}{q} \right) \delta
		+
		p \liminf_{N \rightarrow \infty} N^{-d} \log P^{I,N} \left[ \empP \in W \right]
	.
\end{split}
\end{align}
Finally we conclude by the large deviation principle of $\left\{\empP, P^{I,N}\right\}$ (Theorem~\ref{thm::LDPempP::Inde})
and \eqref{eq::thm::LDPempP::Local::defpqdelta} 
\begin{align}
\begin{split}
	\textnormal{\eqref{eq::thm::LDPempP::Local::PNValmost}}
	\geq
		-\frac{1}{2} \left( 1+ \frac{p}{q} \right) \delta
		-
		p S_{\nu,\zeta} \left( \ol\mu_{\Ti}  \right) 
	\geq
		- S_{\nu,\zeta} \left( \ol\mu_{\Ti}  \right)  - \gamma
	.
\end{split}
\end{align}
This inequality holds for all $\gamma>0$. Hence we have proven \ref{thm::LDPempP::Local::LowerBound} for this case.

\emptyline
\step[{The proof of~\ref{thm::LDPempP::Local::UpperBound}}]
\label{pf::thm::LDPempP::Local::step::UpperBound}

We assume $S_{\nu,\zeta} \left( \overline{\mu} \right) < \infty$. The case when it is not finite can be treated analogue.
Fix a $\gamma>0$.
Due to Lemma~\ref{lem::LDPempP::Inter::OutCemR+Initial} it is sufficient to find for $R>0$ large enough with $\ol{\mu}_{\Ti} \in \Cem_{\varphi,R}$, an open neighbourhood $V \subset \Cem$ of $\ol\mu_{\Ti}$ such that 
\begin{align}
	\label{eq::thm::LDPempP::Local::UB::RestrictToCR}
		\limsup_{N \rightarrow \infty} N^{-d} \log P^{N} \left[ \empP \in V \cap \Cem_{\varphi,R} \right]
			\leq
			- S_{\nu,\zeta} \left( \ol\mu_{\Ti}  \right)  + \gamma
		.
\end{align}
Fix again $p,q >1$ with $\frac{1}{p}+\frac{1}{q} =1$ and a $\delta>0$, such that
\begin{align}
	\label{eq::thm::LDPempP::Local::UB::defPQDelta}
	\frac{p-1}{2} \delta + \frac{1}{q} \left( -S_{\nu,\zeta} \left( \ol\mu_{\Ti}  \right)  + \frac{\gamma}{2} \right)
	\leq
		- S_{\nu,\zeta} \left( \ol\mu_{\Ti}  \right)  + \gamma
	.
\end{align}

By Assumption~\ref{ass::LDPempP}~\ref{ass::LDPempP::MuIntCont} and \eqref{eq::thm::LDPempP::Local::QuadraticVariation}
and by Theorem~\ref{thm::LDPempP::Inde},
there is a small open neighbourhood $V \subset \Cem$ of $\ol{\mu}_{\Ti}$, such that
$\langle\langle M^{N}_{\ul{\w}^{N}} \rangle\rangle_{T} \left( \ul{\theta}^{N}_{\Ti} \right)  \leq \Nd \delta$ 
for $\ul{\w}^{N} \in \Wsp^{\Nd}$ and $\ul{\theta}^{N}_{\Ti} \in \Csp{\Ti}^{\Nd}$ when the corresponding empirical processes $\empP \in V \cap \Cem_{\varphi,R}$,
and such that
\begin{align}
	\label{eq::thm::LDPempP::Local::UB::IndepLDP}
	\limsup_{N \rightarrow \infty} N^{-d} \log P^{I,N} \left[ \empP \in V \right]
			\leq
			- S_{\nu,\zeta} \left( \ol\mu_{\Ti}  \right)  + \frac{\gamma}{2}
	.
\end{align}
In the last inequality we use that $S^{I}_{\nu,\zeta}$ is lower semi-continuous and $S_{\nu,\zeta} \left( \ol\mu_{\Ti}  \right) = S^{I}_{\nu,\zeta} \left( \ol\mu_{\Ti}  \right)$
As in \cite{DawGarLDP}, we can show, by using the Radon-Nikodym derivative \eqref{eq::thm::LDPempP::Local::RadNikoInterToNonInter}, that for all $\ul{\w}^{N} \in \Wsp$
\begin{align}
\begin{split}
	P^{N}_{\ul{\w}^{N}} \left[ \empP \in V \cap \Cem_{\varphi,R} \right]
	\leq
		e^{ \frac{p-1}{2} \delta N}
		\left(  P^{I,N}_{\ul{\w}^{N}} \left[ \empP \in V \right]  \right)^{\frac{1}{q}}
	.
\end{split}
\end{align}
To conclude \eqref{eq::thm::LDPempP::Local::UB::RestrictToCR}, integrate both sides with respect to $\zeta^{N}$, apply the Jensen inequality 
and finally use \eqref{eq::thm::LDPempP::Local::UB::defPQDelta} and \eqref{eq::thm::LDPempP::Local::UB::IndepLDP}.
Hence we showed \ref{thm::LDPempP::Local::UpperBound} for this case.
\end{steps}

\emptyline
\step[{$\ol{\mu}_{\Ti} \in \Cem_{\varphi,\infty}$ and $\ol{\mu}_{\Ti} \not \in  \Cem^{L}$}]
\label{pf::thm::LDPempP::Local::step::OutML}

Fix an arbitrary $\ol{\mu}_{\Ti} \in \Cem_{\varphi,\infty}$ with $\ol{\mu}_{\Ti} \not \in  \Cem^{L}$.
Then $S_{\nu,\zeta} \left( \ol{\mu}_{\Ti} \right) = \infty$, by the definition of the rate function.
This implies that \ref{thm::LDPempP::Local::LowerBound} of Theorem~\ref{thm::LDPempP::Local} is obviously satisfied.

Now we prove that \ref{thm::LDPempP::Local::UpperBound} of Theorem~\ref{thm::LDPempP::Local}  holds.
At first we fix an open ball around $\ol{\mu}_{\Ti}$, that does not intersect $\Cem^{L}$ (\ref{pf::thm::LDPempP::Local::step::OutML::Ball}).
Then we show that in such an open ball there is no empirical process with $N$ large enough (\ref{pf::thm::LDPempP::Local::step::OutML::NoEmpP}).
From this we conclude \ref{thm::LDPempP::Local::UpperBound} (in \ref{pf::thm::LDPempP::Local::step::OutML::Conclude}).
\emptyline
\begin{steps}
\step[{A open ball around $\ol{\mu}_{\Ti}$}]
\label{pf::thm::LDPempP::Local::step::OutML::Ball}
The set $\MOneL{\TWR}$ is closed in $\MOne{\TWR}$ (see e.g. \cite{AttButMichVar} Proposition~4.3.1).
This implies that also the set $ \Cem^{L}$ is closed in $ \Cem$.
Hence there is a $\epsilon>0$ such that 
\begin{align}
		\dist{\ol{\mu}_{\Ti}}{ \Cem^{L} } 
		= 
		\inf_{\pi \in \Cem^{L} } \left\{ \sup_{t \in \Ti} \rho^{Lip} \left( \ol{\mu}_{t} , \pi_{t} \right) \right\} 
		>
		2\epsilon
		,
\end{align}
where $\rho^{Lip} $ is the bounded Lipschitz norm on $\MOne{\TWR}$.

Define the open $\epsilon$ ball $B_{\epsilon} \left( \ol{\mu}_{\Ti} \right)$ around $\ol{\mu}_{\Ti}$ in this norm. 

\emptyline
\step[{No empirical process in the open ball for $N$ large enough}]
\label{pf::thm::LDPempP::Local::step::OutML::NoEmpP}
Assume that we could find a sequence $N_{\ell} \nearrow \infty$ in $\N$, such that 
for each $N_{\ell}$ there is an empirical process $\mu^{N_{\ell}}_{\Ti} \in B_{\epsilon} \left( \ol{\mu} \right)$, with a
 $\ul{\theta}^{N_{\ell}}_{\Ti} \subset \Csp{\Ti}^{\Nd_{\ell}}$ and a $\ul{\w}^{N} \in \Wsp^{\Nd}$.
We claim that this leads to a contradiction.
For each $N_{\ell}$ in the sequence, define $\mu^{(\ell)}_{t,x} = \delta_{\w^{k,N_{\ell}}}\delta_{\theta^{k,N_{\ell}}_{t}}$ when $\abs{x-\frac{k}{N_{\ell}}}<\frac{1}{2 N_{\ell}}$.
Then $\left\{ t \mapsto \mu^{(\ell)}_{t} \defeq \dd x \otimes \mu^{(\ell)}_{t,x} \right\} \in \Cem^{L}$.
For each $f \in \Csp{\TWR}$ that is Lipschitz continuous with $\iNorm{f}+\abs{f}_{Lip} \leq 1$,
\begin{align}
\begin{split}
		&\abs{
		\int_{\TWR} f \left( x, \w, \theta \right) \mu^{N_{\ell}}_{t} \left( \dd x, \dd \w, \dd \theta \right) 
		-
		\int_{\TWR} f \left( x, \w, \theta \right) \mu^{(\ell)}_{t} \left( \dd x, \dd \w, \dd \theta \right) 
		}
		\\
		&=
		\sum_{k \in \Td_{N_{\ell}}}
			\abs{ 	\frac{1}{N_{\ell}} f \left( \frac{k}{N_{\ell}} , \w^{k,N_{\ell}}, \theta^{k,N_{\ell}}_{t} \right) 
							- \int_{\Delta_{k,N_{\ell}}} f \left( x, \w^{k,N_{\ell}}, \theta^{k,N_{\ell}}_{t} \right) \dd x }
		\leq
		\sum_{k \in \TN}
			\abs{f}_{Lip} \left( \frac{1}{N_{\ell}} \right)^{2}
		\leq
		\frac{1}{N_{\ell}}
		,
\end{split}
\end{align}
with $\Delta_{k,N_{\ell}}$ defined as in Assumption~\ref{ass::LMF::J}.
Hence the distance between $\mu^{N_{\ell}}_{\Ti} $ and $ \Cem^{L}$ vanishes, a contraction.
Therefore we can fix an $\ol{N}\in \N$, such that there is no empirical process $\empP$ in $B_{\epsilon} \left( \ol{\mu}_{\Ti} \right)$ when $N>\ol{N}$.

\emptyline
\step[{Conclusion of \ref{thm::LDPempP::Local::UpperBound}}]
\label{pf::thm::LDPempP::Local::step::OutML::Conclude}
From the previous step we infer that for $N > \ol{N}$,
\begin{align}
	P^{N} \left[ \empP \in  B_{\epsilon} \left( \ol{\mu}_{\Ti} \right) \right]	
	=
	0
	.
\end{align}
This implies \ref{thm::LDPempP::Local::UpperBound} of Theorem~\ref{thm::LDPempP::Local} for this case.
\end{steps}

\emptyline
\step[{$\ol{\mu}_{\Ti} \not \in \Cem_{\varphi,\infty}$}]
\label{pf::thm::LDPempP::Local::step::NotCphi}

Because $\ol{\mu}_{\Ti} \not \in \Cem_{\varphi,\infty}$, $S_{\nu,\zeta} \left( \ol\mu_{\Ti} \right) = \infty$.
Therefore the condition \ref{thm::LDPempP::Local::LowerBound} of Theorem~\ref{thm::LDPempP::Local} is obviously satisfied.
To prove \ref{thm::LDPempP::Local::UpperBound} of Theorem~\ref{thm::LDPempP::Local}, note that for each $R>0$, the open set $\Cem \backslash \Cem_{\varphi,R}$ is a neighbourhood of $\ol\mu_{\Ti}$.
By Lemma~\ref{lem::LDPempP::Inter::OutCemR+Initial}, there is for each $\gamma$  and $R$ such that
\begin{align}
		P^{N} \left[  \empP \in  \Cem \backslash \Cem_{\varphi,R} \right]
		\leq 
		e^{-\Nd \gamma}
		.
	\end{align}
This implies the claimed condition \ref{thm::LDPempP::Local::UpperBound} of Theorem~\ref{thm::LDPempP::Local} in this case.

\end{steps}\vspace{-\baselineskip}
\end{proof}

\subsection{The concrete example \texorpdfstring{\protect\eqref{SDE::LocalMF}}{(\ref{SDE::LocalMF})} of a local mean field model}
\label{sec::LocalMF}

In this chapter we show that the concrete example \eqref{SDE::LocalMF} of a local mean field model, defined by $\sigma =1 $ and $b$ given by \eqref{eq::DriftCoef::LocalMF},
 with Assumption~\ref{ass::LMF::Init}, Assumption~\ref{ass::LMF::Init::Integral}, Assumption~\ref{ass::LMF::J} and Assumption~\ref{ass::LMF::Psi}, satisfies the Assumptions~\ref{ass::LDPempP}.

\begin{proof}
Fix $\varphi \left(  \theta \right) \defeq 1+ \theta^{2}$.
We show now separately that each item of Assumption~\ref{ass::LDPempP} is satisfied.

\emptyline
\begin{steps}
\step[{Assumption~\ref{ass::LDPempP}~\ref{ass::LDPempP::bCont}}]
\label{step::LocalMF::bCont}

The function $\partial_{\theta}\Psi$ is continuous by Assumption~\ref{ass::LMF::Psi}.
Hence the drift coefficient is continuous on $\TWR \times \left( \M_{\varphi,R} \cap \MOneL{\TWR} \right)$ if the map 
\begin{align}
\label{eq::step::LocalMF::bCont::beta}
		\left( x, \w , \mu \right) 
		\mapsto 
		\beta \left( x, \w, \mu \right) \defeq \int_{\TWR} J \left( x-x', \w , \w'  \right) \theta' \mu \left( \dd x', \dd w', \dd \theta' \right)
\end{align} 
is continuous on this space.
This holds if for $R>0$ and each sequence $\left( x^{(n)}, \w^{(n)}, \mu^{(n)} \right) \rightarrow  \left( x, \w, \mu \right)$ in $\TW \times \left( \M_{\varphi,R} \cap \MOneL{\TWR} \right)$, the following absolute value vanishes
\begin{align}
\label{eq::step::LocalMF::bCont::betaCont}
\begin{split}
		\abs{ \beta \left( x^{(n)}, \w^{(n)}, \mu^{(n)} \right) - \beta \left( x, \w, \mu \right) }
		\leq&
				\abs{ \beta \left( x, \w, \mu^{(n)} \right)  - \beta \left( x^{(n)}, \w^{(n)}, \mu^{(n)} \right) }
				\\
				&+
				\abs{ \beta \left( x, \w, \mu^{(n)} \right)  - \beta \left( x, \w, \mu\right)}
		\eqdef 
			\encircle{1} + \encircle{2}
		.
\end{split}
\end{align}
We show now that \encircle{1} and \encircle{2} vanish when $n$ tends to infinity.

\begin{steps}
\step[{\protect\encircle{1} }]
\label{step::LocalMF::bCont::Contx}
There is a sequence of continuous functions $J_{\ell} \in \Csp{\TW \times \Wsp}$, such that $J_{\ell} \rightarrow J$ in $\Ltwo \left( \Td , \Csp{ \Wsp\times \Wsp} \right)$, because $J \in \Ltwo \left( \Td , \Csp{ \Wsp\times \Wsp} \right)$.
This implies that for all $\ol{x} \in \Td$, $\ol{\w} \in \Wsp$ and $n \in \N$
\begin{align}
\label{eq::step::LocalMF::bCont::JLtwoJk}
\begin{split}
	&\abs{\int_{\TW} \left( J - J_{\ell} \right)\left( \ol{x} -x', \ol{\w} , \w' \right) 
				\int_{\R} \theta' \mu^{(n)}_{x',\w'} \left(  \dd \theta' \right) \mu^{(n)}_{x',\Wsp} \left( \dd \w' \right)  \dd x'
	}
	\\
	&\leq
		 \left( \int_{\Td} \left( \sup_{\w',\w'' \in \Wsp} \abs{ \left( J - J_{\ell} \right)\left( x , \w'', \w' \right) } \right)^{2} \dd x \right)^{\oh}
		R
	,
\end{split}
\end{align}
 because $\mu^{(n)} \in \M_{\varphi,R}$.
Therefore
\begin{align}
		\encircle{1}
		\leq
			\sup_{x' \in \Td, \w' \in \Wsp} \abs{ J_{\ell} \left( x^{(n)}  - x', \w^{(n)}, \w'  \right) - J_{\ell} \left( x - x' , \w , \w'  \right)}
			 \left( 1 +R \right)
			 + 
			 2 \absabs{J-J_{\ell}}  R
		\leq
			\epsilon
		,
\end{align}
for $k \in \N$ and $n \in \N$ large enough, because $J_{\ell}$ is uniformly continuous on the compact set $\TW \times \Wsp$.

\emptyline
\step[{\protect\encircle{2}}]
\label{step::LocalMF::bCont::ContMu}
To bound $\encircle{2}$, define the function $\chi_{M} \left( \theta \right)  \defeq \left( \theta \wedge M \right) \vee -M $ and approximate $J$ by $J_{\ell}$ as in the previous step.
Then
\begin{align}
\label{eq::LocalMF::bCont::ContMu::5sum}
\begin{split}
	\encircle{2}
	&\leq
	\abs{ \int_{\TW} \left( J-J_{\ell} \right) \left( x-x', \w, \w' \right) \int_{\R} \theta' \mu^{(n)}_{x'} \left(  \dd \theta' \right) \mu^{(n)}_{x',\Wsp} \left( \dd \w' \right) \dd x' }
	+
	\textnormal{ (this integral with $\mu$) }
	\\
	&+
	\abs{ \int  J_{\ell} \left( x-x', \w, \w'  \right)  \left( \theta' - \chi_{M} \left( \theta' \right) \right) \mu^{(n)} \left( \dd x', \dd \w', \dd \theta' \right)  }
	+
	\textnormal{ (this integral with $\mu$) }
	\\
		&+
	\abs{ \int  J_{\ell} \left( x-x', \w, \w'  \right)  \chi_{M} \left( \theta' \right) \left( \mu^{(n)} - \mu \right) \left( \dd x', \dd \w', \dd \theta' \right)  }
	\defeq
	\encircle{A} + \encircle{B} + \encircle{C} + \encircle{D} + \encircle{E}
	.
\end{split}
\end{align}
The \encircle{A} and \encircle{B} are bounded by $\epsilon$, when $k$ and $n$ are large enough as shown in \eqref{eq::step::LocalMF::bCont::JLtwoJk}.
We bound \encircle{C} by
\begin{align}
\begin{split}
	\encircle{C}
	&\leq
		\iNorm{J_{\ell}}  \int \abs{\theta'} \1_{\abs{\theta'}>M} \mu^{(n)}\left( \dd x', \dd \w, \dd \theta' \right) 
	\\
	&
	\leq
		\iNorm{J_{\ell}}
			\mu^{(n)} \left[ \left( x, \w, \theta' \right) : \abs{\theta'}>M \right]^{\oh}
			 \left( \int \left( \theta' \right)^{2} \mu^{(n)} \left( \dd x', \dd \w', \dd \theta' \right)  \right)^{\oh}
	.
\end{split}
\end{align}
For an arbitrary fixed $k \in \N$ and for all $n \in \N$, the right hand side is bounded by $\epsilon$ for $M$ large enough,
because  $\mu^{(n)} \in \M_{\varphi,R}$
and by the tightness of $\left\{ \mu^{(n)} \right\}_{n}$ (as a converging sequence).
The same arguments show \encircle{D} is bounded by $\epsilon$.
The \encircle{E} converges to zero when $n \rightarrow \infty$, for arbitrary fixed $k$ and $M$, because the integrand  is bounded and continuous.

Therefore, we fix at first a $k \in \N$, then an $M >0$. Then for $n \in \N$ large enough, \encircle{2} is bounded by $\epsilon$.

\end{steps}

We have hence shown that \eqref{eq::step::LocalMF::bCont::betaCont} vanishes when $n$ tends to infinity, i.e. that $\beta$ is continuous.

\emptyline
\step[{Assumption~\ref{ass::LDPempP}~\ref{ass::LDPempP::blockbd}}]

Fix an arbitrary $N \in \N$ and an arbitrary $\ul{\w}^{N} \in \Wsp$.
The function $b^{N} : \RN \rightarrow \RN$ is continuous, by 
Assumption~\ref{ass::LMF::Psi}
and because $\frac{1}{\Nd} \sum_{j \in \TN} J \left( \frac{i-j}{N}, \w^{i,N}, \w^{j,N} \right) \theta^{j,N}$ is continuous (because $J\left( \frac{i-j}{N}, \w, \w' \right)$ is finite for all $i,j \in \TN$ and all $\w, \w' \in \Wsp$ by Assumption~\ref{ass::LMF::J}).
Hence $b^{N}$ is locally bounded.

\emptyline
\step[{Assumption~\ref{ass::LDPempP}~\ref{ass::LDPempP::IntBound}}]

Let $\Gen^{\textnormal{LMF}}_{\empM,.,.}$ be the generator of the local mean field model, defined as \eqref{def::InteractingGenerator}, with $\empM$ the empirical measure corresponding to $\ul{\theta}^{N} \in \RN$ and $\ul{\w}^{N} \in \Wsp$.
Then for $N$ large enough
\begin{align}
\label{eq::LocalMF::ass::IntGenVarphi}
\begin{split}
			&\int_{\TWR} \Gen^{\textnormal{LMF}}_{\empM,x, \w} \varphi \left( \theta \right) + \frac{1}{2} \abs{\partial_{\theta} \varphi\left( \theta \right) }^{2}
						\empM \left( \dd x, \dd \w, \dd \theta \right)  
			\\
			&=
					2+
					2\int_{\TWR} \left(-  \ol\Psi' \left( \theta\right)  \theta - \w \theta^{2} +  \theta^{2} \right) \empM \left( \dd x, \dd \w, \dd \theta \right)  
					+
					2 B^{N}_{\ul{\w}^{N}} \left( \ul{\theta}^{N} \right)
\end{split}
\end{align}
where
\begin{align}
\label{eq::LocalMF::ass::BoundOnInteraction}
\begin{split}
		&B^{N}_{\ul{\w}^{N}} \left( \ul{\theta}^{N} \right)
		\defeq
				\frac{1}{N^{2d}} \sum_{i,j \in \TN} J \left( \frac{i-j}{N}, \w^{i,N}, \w^{j,N} \right) \theta^{i,N} \theta^{j,N}
		\\
		&\leq 
			\left(
				\frac{1}{\Nd}
				\sum_{i \in \TN } 
				\sup_{\w,\w' \in \Wsp}
				\abs{ J  \left( \frac{i}{N}, \w , \w' \right)  -  \Nd \int_{\Delta_{i,N}} J \left( x, \w , \w' \right) \dd x }
				+
				\LpN{1}{\ol{J}}
			\right)
			\frac{1}{\Nd} \sum_{j \in \TN} \left( \theta^{j,N} \right)^{2}
		\\
		&\leq
			\left( \delta +\LpN{1}{\ol{J}}
			\right)
			\frac{1}{\Nd} \sum_{j \in \TN} \left( \theta^{j,N} \right)^{2}
		,
\end{split}
\end{align}
with $\delta>0$ if $N>\ol{N}_{\delta}$ by Assumption~\ref{ass::LMF::J}.
With this upper bound on $B^{N}$, $\Psi$ being a polynomial of even degree with positive coefficient of this degree (Assumption~\ref{ass::LMF::Psi}) and $\Wsp$ being compact, we conclude that \eqref{eq::LocalMF::ass::IntGenVarphi} is lower or equal to
\begin{align}
\begin{split}
					C+
					2 \int_{\TWR}  \left(  \abs{\w}+ 1+ \LpN{1}{\ol{J}} + \delta \right) \theta^{2}  
						\empM \left( \dd x, \dd \w, \dd \theta \right)  
			\leq
					\lambda \int_{\TWR} \varphi \left( \theta \right) \empM \left( \dd x, \dd \w, \dd \theta \right)  
			.
\end{split}
\end{align}
Here the constant $\lambda$ only depends on $\Psi$ and $J$ for $N$ large enough but not on $\empM$. Hence the Assumption~\ref{ass::LDPempP}~\ref{ass::LDPempP::IntBound} is satisfied.

\emptyline
\step[{Assumption~\ref{ass::LDPempP}~\ref{ass::LDPempP::Lyapu}}]

Fix an arbitrary $\mu_{\Ti} \in \Cem_{\varphi,\infty} \cap  \Cem^{L}$.
We know by \ref{step::LocalMF::bCont}, that $\left( x, \w, t \right) \mapsto \beta \left( x, \w,  \mu_{t} \right) $ is continuous.
Moreover the set $\TW \times \left\{ \mu_{t} \right\}_{t \in \Ti}$ is compact in $\TW \times \MOne{\TWR}$, by Prokhorov's theorem.
Hence $\beta$ is bounded on this set by a constant $C_{\beta}$.
Then for all $\left( t,x, \w, \theta \right) \in \TTWR$
\begin{align}
\begin{split}
	\Gen^{\textnormal{LMF}}_{\mu_{t},x, \w} \varphi \left( \theta \right) + \frac{1}{2} \abs{\partial_{\theta} \varphi\left( \theta \right) }^{2}
				&=
						- 2 \partial_{\theta} \ol\Psi \left( \theta\right)  \theta
						-
							2 \w \theta^{2}
			 			+
						2  \theta 
						\beta \left( x, \mu_{t} \right) 
						+
						2
						+
						2 \theta^{2}
			\\
			&\leq
					- 2  \partial_{\theta} \ol\Psi \left( \theta\right)  \theta
					+
					2 \abs{\w} \theta^{2}
					+
					2 \abs{\theta} C_{\beta}
					+
					2
					+
					2 \theta^{2}
			\leq
					\lambda \left( \mu_{\Ti} \right) \varphi \left( \theta \right)
			,
\end{split}
\end{align}
because $\ol\Psi$ is a polynomial of even degree (Assumption~\ref{ass::LMF::Psi}) and $\Wsp$ is compact.

\emptyline
\step[{Assumption~\ref{ass::LDPempP}~\ref{ass::LDPempP::MuIntCont}}]
\label{step::LocalMF::MuIntCont}

Fix an  $R>0$ and a $\ol\mu_{\Ti}  \in \Cem_{\varphi,R} \cap \Cem^{L}$.
Take an arbitrary sequence $ \left\{ \mu^{\left( n \right)}_{\Ti}\right\} $ from one of the sets given in Assumptions~\ref{ass::LDPempP}~\ref{ass::LDPempP::MuIntCont}, such that $\mu^{\left( n \right)}_{\Ti}\rightarrow \ol\mu_{\Ti}$.
We show in the subsequent steps that
		\begin{align}
		\label{eq::LocalMF::MuIntCont::Conv}
			\int_{0}^{T} 
				\int_{\TWR}
							\abs{ \beta \left( x, \w, \mu^{\left( n \right)}_{t} \right) -\beta \left( x, \w, \ol{\mu}_{t}  \right) }^{2}
				 \mu^{\left( n \right)}_{t} \left(\dd x, \dd \w, \dd \theta \right) 
			\dd t 
			\rightarrow
			0
			.
		\end{align}

\begin{steps}
\step[{Case: All $\mu^{(n)}_{\Ti} \in \Cem^{L}$}]
\label{step::LocalMF::MuIntCont::CemL}

Assume at first that $\mu^{(n)}_{\Ti} \in \Cem_{\varphi,R} \cap \Cem^{L}$ for all $n \in \N$.
For each $t \in \Ti$, $\mu^{\left( n \right)}_{t} \rightarrow \ol{\mu}_{t}$ in $\M_{\varphi,R}$ by the uniform topology on $\Cem$.
Therefore the set $U_{t} \defeq \left\{ \mu^{\left( n \right)}_{t}  \right\}_{n} \cup \left\{ \ol\mu_{t} \right\}$ is compact.
$
\TW \times U_{t} 
\ni 
\left( x, \w,  \mu \right) 
\mapsto 
\abs{ \beta \left( x, \w, \mu \right) -\beta \left( x, \w, \ol{\mu}_{t}  \right) }
$ is uniformly continuous (we show the continuity in \ref{step::LocalMF::bCont}).
Hence for each $t \in \Ti$, the absolute value in \eqref{eq::LocalMF::MuIntCont::Conv} converges uniformly in $\left( x,\w \right) \in \TW$ to zero, when $n$ tends to infinity.
Moreover this absolute value is uniformly bounded, because for all $\left( x,\w \right) \in \TW$
\begin{align}
\label{eq::step::LocalMF::MuIntCont::CemL::Bound}
	\abs{ \int_{\TWR} J \left( x- x', \w, \w'  \right) \theta' \mu^{(n)}_{t} \left( \dd x', \dd \w', \dd \theta' \right) }^{2}
	\leq
		\LtwoN{\ol{J}}^{2}
		 \int_{\TWR} \abs{\theta'}^{2} \mu^{(n)}_{t} \left( \dd x', \dd \w', \dd \theta' \right) 
	.
\end{align}
The right hand side is bounded by $\LtwoN{\ol{J}}^{2} R$,
because all $\mu^{(n)}_{\Ti}$ are in $\Cem_{\varphi,R}$.
This implies the convergence \eqref{eq::LocalMF::MuIntCont::Conv} for sequences in $\Cem^{L}$.

\emptyline
\step[{Case: All $\mu^{(n)}_{\Ti}$ are empirical processes}]
\label{step::LocalMF::MuIntCont::EmpP}

Fix a sequence of empirical processes $\left\{ \mu^{(n)}_{\Ti} \right\}_{n} \subset \Cem_{\varphi,R}$,
such that $\mu^{(n)}_{\Ti} \rightarrow \ol{\mu}_{\Ti}$.
Fix  $N_{n} \in \N$, $\theta^{i,N_{n}}_{\Ti} \in \Csp{\Ti}$, $\w^{i,N_{n}} \in \Wsp$ such that
  $\mu^{(n)}_{\Ti} = \frac{1}{\Nd_{n}}  \sum_{i \in \T^{d}_{N_{n}}} \delta_{\left( \frac{i}{N_{n}} , \w^{i,N_{n}},  \theta^{i,N_{n}}_{t} \right) }$.
Note that we do not get for this sequence the continuity of $\beta$ at $t \in \Ti$ from \ref{step::LocalMF::bCont}.

For each $t \in \TN$ and $n \in \N$, the inner integral in \eqref{eq::LocalMF::MuIntCont::Conv} is given by
		\begin{align}
		\label{eq::step::LocalMF::MuIntCont::EmpP::Pointw}
			\frac{1}{\Nd_{n}} \sum_{j \in \Td_{N_n} }
							\abs{ \beta \left( \frac{j}{N_n}, \w^{j,N_{n}}, \mu^{\left( n \right)}_{t} \right) -\beta \left( \frac{j}{N_n},\w^{j,N_{n}}, \ol{\mu}_{t}  \right) }^{2}
			.
		\end{align}
We show in the following that this sum converges for each $t \in \Ti$ pointwise to zero (\ref{step::LocalMF::MuIntCont::EmpP::Pointw}).
Moreover we show that this sum is uniformly bounded (\ref{step::LocalMF::MuIntCont::EmpP::Bound}).
From these two results we conclude \eqref{eq::LocalMF::MuIntCont::Conv} by the dominated convergence theorem.

\begin{steps}
\step[{\protect\eqref{eq::step::LocalMF::MuIntCont::EmpP::Pointw} vanishes pointwise}]
\label{step::LocalMF::MuIntCont::EmpP::Pointw}
To show that  \eqref{eq::step::LocalMF::MuIntCont::EmpP::Pointw} vanishes, we divide the absolute value as in \eqref{eq::LocalMF::bCont::ContMu::5sum} into five summands.
Fix an arbitrary small $\epsilon>0$.
By fixing $k \in \N$ and $M>0$ large enough, the \encircle{B}, \encircle{C} and \encircle{D} of these summands are smaller than $\epsilon$ for all $\left( x, \w \right) \in \TW$ for fixed $k$ and all $n \in \N$ large enough, by the same arguments that we use in \ref{step::LocalMF::bCont::ContMu}.
Hence to  bound \eqref{eq::step::LocalMF::MuIntCont::EmpP::Pointw} we only need to bound the following two summands
\begin{align}
\begin{split}
			\encircle{A}
			&\defeq
			\frac{1}{\Nd_{n}} \sum_{j \in \Td_{N_n} }
							\abs{ \frac{1}{\Nd_{n}} \sum_{i \in \Td_{N_n} }   
														\theta^{i,N_n}_{t}   
														 \left( J-J_{\ell} \middle) \middle( \frac{j - i}{N_n} , \w^{j,N_{n}},\w^{i,N_{n}}  \right)  }
			\\
			\encircle{E}
			&\defeq
			\frac{1}{\Nd_{n}} \sum_{j \in \Td_{N_n} }
										\abs{ \int  J_{\ell} \left( \frac{j}{N_{n}} - x' , w^{j}, w^{i} \right)  \chi_{M} \left( \theta' \right) 
													\left( \mu^{(n)}_{t} - \ol\mu_{t} \right) \left( \dd x', \dd \theta' \right)  }
			.
\end{split}
\end{align}
We prove now that \encircle{A} and \encircle{E} are smaller than $\epsilon$ when $n$ is large enough (\ref{step::LocalMF::MuIntCont::EmpP::Pointw::A} and \ref{step::LocalMF::MuIntCont::EmpP::Pointw::B}).
Both proofs require that $N_{n}$ converges to infinity. 
We show in \ref{step::LocalMF::MuIntCont::EmpP::Pointw::Nn}, that this is a consequence of the convergence of $\mu^{(n)}_{t}$ to a measures in $\MOneL{\TWR}$.

\emptyline
\begin{steps}
\step[{The sequence $N_{n} \rightarrow \infty$}]
 \label{step::LocalMF::MuIntCont::EmpP::Pointw::Nn}
 Assume that this were not the case, i.e. that there is a subsequence $\left\{ N_{n_\ell}\right\}_{\ell=1}^{\infty}$ such that $N_{n_\ell} \leq \ol{N} < \infty$. 
 This is a contradiction to the convergence of $\mu^{(n)}_{t}$ to $\ol{\mu}_{t}$.
 Indeed, choose $f \in \CspL{b}{\TWR}$ such that $f \left( x, \w,  \theta \right) = f \left( x \right) \geq 0$ for all $\left( x, \w,  \theta \right) \in \TWR$, $\int_{\Td} f \left( x \right) \dd x >0$ and $f\left( \frac{k}{N} \right) =0$ for all $N \leq \ol{N}$, $k \in \TN$.
 Then $\int f \left( x \right) \mu^{(n_{\ell})}_{t} =0$ for all $\ell \in \N$, but $\int f \left( x \right) \ol{\mu}_{t} >0$.
 A contradiction.
 
    \emptyline
    \step[{\protect\encircle{E}}]
    \label{step::LocalMF::MuIntCont::EmpP::Pointw::B}
    The function $J_{\ell}$ is uniformly continuous on $\TW$.
    By the compactness of $\TW$, there are finitely many $\left\{ x_{a} \right\}_{a \in A} \subset \Td$ and finitely many $\left\{ \w_{a'} \right\}_{a' \in A'} \subset \Wsp$, such that
    \begin{align}
		    \encircle{E}
		    \leq
			    2 \epsilon M
			    +
			    \max_{ a \in A, a' \in A' }
										\abs{ \int  J_{\ell} \left( x_{a}- x', \w_{a'}, \w' \right)  \chi_{M} \left( \theta' \right) 
												\left( \mu^{(n)}_{t} - \ol\mu_{t} \right) \left( \dd x', \dd \w', \dd \theta' \right)  }
			    .
    \end{align}
	The maximum is only over a finite number of values, hence the convergence of $\mu^{(n)}_{t}$ to $\ol\mu_{t} $ implies that for $n$ large enough, the maximum is bounded by $\epsilon$.

    \emptyline
    \step[{\protect\encircle{A}}]
    \label{step::LocalMF::MuIntCont::EmpP::Pointw::A}
	We bound \encircle{A} by $\epsilon$ through a similar estimate as in \ref{step::LocalMF::bCont::Contx}.
	In particular we use the following estimate instead of \eqref{eq::step::LocalMF::bCont::JLtwoJk}. For all $j \in \Td_{N_{n}}$,
\begin{align}
\label{eq::LocalMF::MuIntCont::ConvFixtx}
\begin{split}
	\encircle{A}
		&\leq
		\abs{ \sum_{i \in \Td_{N_n} } \theta^{i,N_n}_{t}  \int_{\Delta_{i,N_n}} \left( J-J_{\ell}  \middle) \middle( \frac{j}{N_{n}}-x'  , \w^{j,N_{n}},\w^{i,N_{n}} \right) \dd x'  }
		\\
		&+
		\abs{ \frac{1}{\Nd_{n}} \sum_{i \in \Td_{N_n} } \theta^{i,N_n}_{t}
										\left( J_{\ell}  \left( \frac{j - i}{N_n} , \w^{j,N_{n}},\w^{i,N_{n}}  \right)  
											- \Nd_{n} \int_{\Delta_{i,N_n}} J_{\ell} \left( \frac{j}{N_{n}}-x'  , \w^{j,N_{n}},\w^{i,N_{n}}  \right) \dd x'   \right)
				}
		\\
		&+
				\abs{ \frac{1}{\Nd_{n}} \sum_{i \in \Td_{N_n} } \theta^{i,N_n}_{t}
												\left( J  \left( \frac{j - i}{N_n} , \w^{j,N_{n}},\w^{i,N_{n}}  \right)  -  \Nd_{n} \int_{\Delta_{i,N_n}} J \left( \frac{j}{N_{n}}-x' , \w^{j,N_{n}},\w^{i,N_{n}}  \right) \dd x'   \right)
						}
		.		 
\end{split}
\end{align}
We denote the three summands by \encircle{A1}, \encircle{A2} and \encircle{A3} and we bound them  separately.
By applying twice the \Holder inequality
\begin{align}
	\encircle{A1}
	\leq 
		\left( \frac{1}{\Nd_{n}}  \sum_{i \in \Td_{N_n} } \abs{ \theta^{i,N_n}_{t}}^{2} \right)^{\oh}  
		\left( \int_{\Td} \left( \sup_{\w,\w' \in \Wsp} \abs{ \left( J-J_{\ell} \middle) \middle( x', \w, \w' \right) }  \right)^{2} \dd x' \right)^{\oh}
	\leq
		R \epsilon
	,
\end{align}
 for $k$ large enough.
 \begin{align}
 	\encircle{A2}
 	\leq 
 		 \frac{1}{\Nd_{n}}  \sum_{i \in \Td_{N_n} } \abs{ \theta^{i,N_n}_{t}} 
	 	\sup_{ \abs{y-y'}\leq \frac{1}{N_n} }
	 	\sup_{\w,\w' \in \Wsp} \abs{  J_{\ell} \left( y', \w, \w' \right)  -J_{\ell} \left( y, \w, \w' \right) }
 	\leq
 		R \epsilon
 	,
 \end{align}
  for each $k$, when $n$ (and hence $N_n$) is large enough. Last but not least, by a change of variables 
 \begin{align}
 	\encircle{A3}
 	\leq 
 		\left( \frac{1}{\Nd_{n}}  \sum_{i \in \Td_{N_n} } \abs{ \theta^{i,N_n}_{t}}^{2} \right)^{\oh}  
 		\left(   \sum_{i \in \Td_{N_n} } 
						 		\sup_{\w,\w' \in \Wsp}
						 		\abs{  \int_{\Delta_{i,N_n}} J  \left( \frac{i}{N_n} , \w, \w' \right) - J \left( x' , \w, \w' \right) \dd x' }^{2} 
			\right)^{\oh}
 	,
 \end{align} 
 which is also bounded by $R \epsilon$, when $n$ is large enough by Assumption~\ref{ass::LMF::J}.
 
\end{steps}
  \emptyline
   \step[{\protect\eqref{eq::step::LocalMF::MuIntCont::EmpP::Pointw} is uniformly (in $t \in \Ti$) bounded}]
   \label{step::LocalMF::MuIntCont::EmpP::Bound}
	 
	We show that each summand of \eqref{eq::step::LocalMF::MuIntCont::EmpP::Pointw} is bounded uniformly in $t \in \Ti$, $j \in \Td_{N_n}$, $n \in \N$.
	By applying the \Holder inequality we get
	\begin{align}
	\begin{split}
			&\abs{ \beta \left( \frac{j}{N_{n}}, \w^{j,N_{n}}, \mu^{(n)}_{t} \right) }^{2}
			\\
			&\leq
				 	\left(
				 	\frac{1}{\Nd}
				 	\sum_{i \in \TN } 
				 	\sup_{\w,\w' \in \Wsp}
				 	\abs{ J  \left( \frac{i}{N}, \w , \w' \right)  -  \Nd \int_{\Delta_{i,N}} J \left( x, \w , \w' \right) \dd x }^{2}
				 	+
				 	\LpN{2}{\ol{J}}
				 	\right)
				 	\frac{1}{\Nd_{n}}  \sum_{i \in \Td_{N_n} } \abs{ \theta^{i,N_n}_{t}}^{2} 
			.
		\end{split}
	\end{align}
	This is bounded by $R \left( \LtwoN{\ol{J}} + \delta \right)$, 
	 for a $\delta>0$, when $N_{n}$ is large enough, by Assumption~\ref{ass::LMF::J}.
	 Moreover we get a uniform upper bound on $\abs{\beta \left( \frac{j}{N_{n}}, \w^{j,N_{n}}, \ul\mu_{t} \right)}$ as in \eqref{eq::step::LocalMF::MuIntCont::CemL::Bound}.

\end{steps}
\end{steps}

We have hence proven Assumption~\ref{ass::LDPempP}~\ref{ass::LDPempP::MuIntCont}.
\end{steps}

\emptyline
Summarized, the specific model considered in Chapter~\ref{sec::Intro::ResultLMF}, satisfies the Assumption~\ref{ass::LDPempP}.
\end{proof}

\begin{remark}
	When considering only continuous $J$, the proofs are much simpler.
	However also interaction weights that are not continuous are of particular interest (for some examples see Example~\ref{exa::LMF::J}).
\end{remark}


\section{Representations of the rate function for the LDP of the empirical process}
\label{sec::OtherRepRFS}

In this chapter, we state three other representations of the rate function $S_{\nu,\zeta}$, besides the two given in Theorem~\ref{thm::LDPempP}.
To state these representations we need the following notation.
\begin{notation}
\label{nota::ForOtherRepS}
For $\mu_{\Ti} \in \Cem_{\varphi,\infty}$ and $\left( t,x,\w, \theta \right) \in \TTWR$, set
\begin{align}
	b^{I,\mu_{\Ti}} \left( t, x, \w, \theta \right) \defeq b \left( x, \w, \theta, \mu_{t} \right) 
	.
\end{align} 
With $b^{I,\mu_{\Ti}}$ as drift coefficient, define the generator $\Gen^{I,\mu_{\Ti}}_{t,x,\w}$ as in \eqref{def::GeneratorIndependent}.
For this system, the Assumption~\ref{ass::LDPempP::Inde} are satisfied if the assumptions of Theorem~\ref{thm::LDPempP}  hold (as shown in Chapter~\ref{sec::LDPempP::Inter::AssInterToInde}).
In particular the corresponding martingale problem has for each $\left( x, \w, \theta \right) \in \TWR$ a unique solution, which we denote by $P^{I,\mu_{\Ti}}_{x,\w,\theta}$.
Then we define $P^{I,\mu_{\Ti}}_{x,\w}  \in \MOne{ \Csp{\Ti} }$, $P^{I,N,\mu_{\Ti}}_{\ul{\w}^{N}} \in \MOne{ \Csp{\Ti}^{\Nd} }$ and  $P^{I,N,\mu_{\Ti}} \in \MOne{ \Wsp^{\Nd} \times \Csp{\Ti}^{\Nd} }$  
as in Notation~\ref{nota::LDPempP::Inde::General}.

Moreover we denote by $U_{s,t}^{\mu_{\Ti}}  $ the operator $U_{s,t}$ defined in \eqref{def::LDPempP::OperatorU} with $P^{I}$ replaced by $P^{I,\mu_{\Ti}}$.

\end{notation}

\begin{theorem}
\label{thm::RepS}
Let the assumptions of Theorem~\ref{thm::LDPempP} hold.
		Take a  $\mu_{\Ti} \in \Cem$, with $S_{\nu,\zeta} \left( \mu_{\Ti}  \right)  < \infty$.
		Then $S_{\nu,\zeta}$ has the following representations.
		\begin{enuRom}
		\item
		\label{thm::RepS::First}
		\begin{align}
				S_{\nu,\zeta} \left( \mu_{\Ti}  \right)
				=
				\inf_{ \substack{Q \in \MOne{ \TWC }  \\
						 \Pi \left( Q \right)_{\Ti} =\mu_{\Ti} }
					}
						\relE{Q}{\dd x \otimes \zeta_{x} \left( \dd \w \right) \otimes P^{I,\mu_{\Ti}}_{x,\w}   }
		\end{align}
		
		\item	
		\label{thm::RepS::Sec}
		$S_{\nu,\zeta} \left( \mu_{\Ti}  \right)$ is equal to			
				\begin{align}
					\begin{split}
						\hspace{-0.5cm} 
						\sup_{\substack{r \in \N, \\0 \leq t_{1} < ... < t_{r} \leq T }} 
						&\left[
						\sup_{f } 
								\left\{  
									\int_{\TWR} \mkern-24mu f \left( x,\w,\theta\right) \mu_{t_{1}}
								- 
									\int_{\Td}    \log \left( 
									\int_{\Wsp\times \R} \mkern-18mu
											U^{\mu_{\Ti}}_{0,t_{1}} e^{f} \left( x, \w, \theta \right)  
									\nu_{x} \left(\dd \theta\right) 
									\zeta_{x} \left( \dd \w \right)
									\right) \dd x
								\right\} 
						\right.
						\\
						&+
						\left.
						\sum_{i=2}^{r}
						\sup_{f  } 
								\left\{  
									\int_{\TWR} \mkern-24mu   f \left( x,\w, \theta\right) \mu_{t_{i}}
									-
									\int_{\TWR}  \mkern-24mu  \log U^{\mu_{\Ti}}_{t_{i-1},t_{i}} e^{f}  \left( x,\w,\theta\right) \mu_{t_{i-1}}
								\right\} 
					\right]
						,
					\end{split}
				\end{align}
				where the $\mu_{t_{i}}$ integrate with respect to the variables $\dd x, \dd \w, \dd \theta$ and 
				the functions $f$ in the suprema are in the set $\CspL[\infty]{c}{\TWR}$.
			
		\item
		\label{thm::RepS::h}
		There is a function $h^{\mu_{\Ti} }  \in \what L^{2}_{\mu_{\Ti}} \left( 0,T \right) $ (this space is defined in the \ref{pf::lem::LDPempP::indep::3Rep::Step::L2} of the proof of Lemma~\ref{lem::LDPempP::indep::3Rep}),
		such that
		\begin{align}
						\hspace{-0.5cm} 
					S_{\nu,\zeta} \left( \mu_{\Ti}  \right)
					=
							\frac{1}{2}
							\int_{0}^{T} 
							\int_{\TWR} 
							\frac{\sigma^{2}}{2} \left( h^{\mu_{\Ti}} \left( t,x, \w, \theta \right) \right) ^{2} 
							\mu_{t} \left( \dd x, \dd \w, \dd \theta\right)
							\dd t
						+
							\relE{\mu_{0}}{\dd x \otimes  \zeta_{x} \otimes \nu_{x}}
						.
		\end{align}
		Moreover $\mu_{\Ti}$ satisfies in a weak sense (i.e. when integrated against an arbitrary function in $\CspL[1,0,2]{c}{ \TTWR  }$) the PDE
		\begin{align}
		\label{thm::RepS::h::PDE}
				\partial_{t}\mu_{t}
				=
				\left(\Gen_{\mu_{t},.,.} \right)^{*}  \mu_{t}
				+
				\sigma^{2} \partial_{\theta} \left( \mu_{t} h^{\mu_{\Ti} } \left( t \right) \right)
				.
		\end{align}
		
		\end{enuRom}
\end{theorem}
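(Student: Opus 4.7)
The plan is to reduce each of the three representations to the corresponding statement about the rate function of a suitably chosen \emph{independent} spin system, and then invoke the results already established in Chapter~\ref{sec::LDPempP::Inde}.

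More precisely, fix $\mu_{\Ti} \in \Cem$ with $S_{\nu,\zeta}(\mu_{\Ti}) < \infty$. I would begin by observing that, with the drift coefficient $b^{I,\mu_{\Ti}}$ and generator $\Gen^{I,\mu_{\Ti}}_{t,x,\w}$ introduced in Notation~\ref{nota::ForOtherRepS}, the independent system satisfies Assumption~\ref{ass::LDPempP::Inde} by the verification in Chapter~\ref{sec::LDPempP::Inter::AssInterToInde}. Hence Theorem~\ref{thm::LDPempP::Inde} applies and yields an LDP for $\{\empP,P^{I,N,\mu_{\Ti}}\}$ with good rate function $S^{I,\mu_{\Ti}}_{\nu,\zeta}$ whose expression is \eqref{eq::thm::LDPempP::Inde::RF} with $\Gen^{I}_{t,x,\w}$ replaced by $\Gen^{I,\mu_{\Ti}}_{t,x,\w}$. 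The crucial observation is the pointwise identity
\begin{align*}
\Gen^{I,\mu_{\Ti}}_{t,x,\w} \;=\; \Gen_{\mu_t,x,\w} \qquad \text{for all } (t,x,\w),
\end{align*}
so that, evaluated at the specific trajectory $\mu_{\Ti}$,
$$
S_{\nu,\zeta}(\mu_{\Ti}) \;=\; S^{I,\mu_{\Ti}}_{\nu,\zeta}(\mu_{\Ti}).
$$

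Representation \ref{thm::RepS::First} then follows directly from Lemma~\ref{lem::LDPempP::indep::1Rep} applied to $P^{I,N,\mu_{\Ti}}$, combined with the uniqueness of rate functions: the rate function obtained there equals $S^{I,\mu_{\Ti}}_{\nu,\zeta}$ by Theorem~\ref{thm::LDPempP::Inde}, and at $\mu_{\Ti}$ this coincides with $S_{\nu,\zeta}(\mu_{\Ti})$. Representation \ref{thm::RepS::Sec} is obtained analogously from Lemma~\ref{lem::LDPempP::indep::2Rep}, with the operator $U^{\mu_{\Ti}}_{s,t}$ in place of $U_{s,t}$. For representation \ref{thm::RepS::h}, I would invoke Lemma~\ref{lem::LDPempP::indep::3Rep} for the independent system with drift $b^{I,\mu_{\Ti}}$: step~\ref{pf::lem::LDPempP::indep::3Rep::Step::L2} of its proof constructs via the Riesz representation theorem a function $h^{\mu_{\Ti}} \in \what{L}^{2}_{\mu_{\Ti}}(0,T)$ satisfying \eqref{eq::lem::LDPempP::indep::3Rep::weakPDE} with $\Gen^{I}_{t,x,\w}$ replaced by $\Gen^{I,\mu_{\Ti}}_{t,x,\w}$, and \eqref{eq::lem::LDPempP::indep::3Rep::supIeqH} then yields the $L^{2}$ expression of $S^{I,\mu_{\Ti}}_{\nu,\zeta}(\mu_{\Ti}) = S_{\nu,\zeta}(\mu_{\Ti})$. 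The weak PDE \eqref{thm::RepS::h::PDE} is precisely the identity \eqref{eq::lem::LDPempP::indep::3Rep::weakPDE} combined with the integration-by-parts Lemma~\ref{def::Distr::IntbyParts} applied to the absolutely continuous map $\mu_{\Ti}$, after substituting $\Gen^{I,\mu_{\Ti}}_{t,x,\w} = \Gen_{\mu_t,x,\w}$.

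I do not expect genuinely hard obstacles, since everything is already in place; the only point requiring some care is to make sure that the representations really depend on $\mu_{\Ti}$ only through quantities that are insensitive to perturbations of the independent drift away from the value attained at $\mu_{\Ti}$, so that the substitution $\Gen^{I,\mu_{\Ti}}_{t,x,\w} = \Gen_{\mu_t,x,\w}$ is legitimate. For (i) and (ii) this is automatic because the measures $P^{I,\mu_{\Ti}}_{x,\w}$ and the operators $U^{\mu_{\Ti}}_{s,t}$ are defined from $b^{I,\mu_{\Ti}}$ directly. For (iii) one must verify that the integration-by-parts argument giving \eqref{eq::lem::LDPempP::indep::3Rep::weakPDE} uses the generator only through its action against test functions integrated with respect to $\mu_t$ itself, which is exactly what the identity $\Gen^{I,\mu_{\Ti}}_{t,x,\w} = \Gen_{\mu_t,x,\w}$ provides.
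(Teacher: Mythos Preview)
Your proposal is correct and follows essentially the same approach as the paper: reduce to the independent system with frozen drift $b^{I,\mu_{\Ti}}$, invoke Chapter~\ref{sec::LDPempP::Inter::AssInterToInde} to ensure the results of Chapter~\ref{sec::LDPempP::Inde} apply, then read off (i), (ii), (iii) from Lemma~\ref{lem::LDPempP::indep::1Rep}, Lemma~\ref{lem::LDPempP::indep::2Rep}, and Lemma~\ref{lem::LDPempP::indep::3Rep} (and its proof) respectively, using the identity $S_{\nu,\zeta}(\mu_{\Ti}) = S^{I,\mu_{\Ti}}_{\nu,\zeta}(\mu_{\Ti})$. The paper derives the weak PDE directly from \eqref{eq::lem::LDPempP::indep::3Rep::weakPDE} together with the definition \eqref{pf::lem::LDPempP::indep::3Rep::defEll} of $\ell_{s,t}$, which is equivalent to your route via integration by parts.
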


\begin{proof}
When $S_{\nu,\zeta} \left( \mu_{\Ti}  \right)  < \infty$, then $\mu_{\Ti} \in \Cem_{\varphi,\infty} \cap \Cem^{L}$.
Therefore we know by Chapter~\ref{sec::LDPempP::Inter::AssInterToInde}, that the measure $P^{I,\mu_{\Ti}}_{x,\w}$ is well defined.
Moreover all the results of Chapter~\ref{sec::LDPempP::Inde} hold for the independent spin system with the drift coefficient $b^{I}$ of Notation~\ref{nota::ForOtherRepS}.

\emptyline
The representations \ref{thm::RepS::First} and \ref{thm::RepS::Sec} follow directly from Lemma~\ref{lem::LDPempP::indep::1Rep} and Lemma~\ref{lem::LDPempP::indep::2Rep}.

\emptyline
The representation \ref{thm::RepS::h}, follows from Lemma~\ref{lem::LDPempP::indep::3Rep} and the proof of this lemma, in particular \eqref{eq::lem::LDPempP::indep::3Rep::supIeqH} in \ref{pf::lem::LDPempP::indep::3Rep::Step::L2} of this proof.
That $\mu_{\Ti}$ is a weak solution of the PDE \eqref{thm::RepS::h::PDE}, follows from \eqref{eq::lem::LDPempP::indep::3Rep::weakPDE} and \eqref{pf::lem::LDPempP::indep::3Rep::defEll}.
\end{proof}


     \section{The LDP of the empirical measure}
\label{sec::LDPLN}

In this chapter we show the large deviation principle for the empirical measures $L^{N}$ under the assumptions of Chapter~\ref{sec::LDPempP} and the following  exponential tightness assumption.
\begin{assumption}
\label{ass::LNPNexptight}
The family $\left\{ L^{N} , P^{N} \right\}$ is exponential tight, i.e.
	for all $s>0$,
	there is a compact set $\calK_{s} \subset \MOne{\TWC}$, such that
	\begin{align}
		\label{thm::LDPLN::ExpBound::eq}
		\limsup_{N \rightarrow \infty} N^{-d} \log 
		P^{N} \left[ L^{N} \not \in \calK_{s} \right]
		\leq 
		-s
		.
	\end{align}
	
\end{assumption}

To state the large deviation principle result, we need the following definitions and notations.
\begin{definition}
We say $Q \in \calM_{\varphi, R}$ if and only if $\Pi \left(  Q \right)_{\Ti} \in \M_{\varphi,R}$, for $R \in \left( 0, \infty \right]$.
\end{definition}

For fixed $x \in \Ti$ and  $Q \in \calM_{\varphi, R} \cap \MOneL{\TWC} $, 
we define $b^{I,\Pi \left( Q \right)}$, $\Gen^{I,\Pi \left( Q \right)}_{t,x,\w}$ and the measures $P^{I,\Pi \left( Q \right)}_{x,\w} \in \MOne{ \Csp{\Ti} }$ and $P^{I,N,\Pi \left( Q \right)} \in \MOne{ \Wsp^{\Nd} \times \Csp{\Ti}^{\Nd} }$ as in Notation~\ref{nota::ForOtherRepS}.

\begin{theorem}
\label{thm::LDPLN}
	If the assumptions of Theorem~\ref{thm::LDPempP} and the Assumption~\ref{ass::LNPNexptight} hold,
	then the family of empirical measures $\left\{ L^{N} , P^{N} \right\}$  satisfies on $\MOne{\TWC}$ a large deviation principle with good rate function 
	\begin{align}
	\label{eq::thm::LDPLN::RF}
		I \left( Q \right)
		\defeq
		\begin{cases}
			\relE{ Q } { P^{I, \Pi \left( Q \right) }  }
			\qquad 
			&\textnormal{if }
			Q \in \MOneL{\TWC}  \cap \calM_{\varphi,\infty}
			,
			\\
			\infty
			&\textnormal{otherwise.}
		\end{cases}
	\end{align}
where
$P^{I, \Pi \left( Q \right) }  \defeq \dd x \otimes \zeta_{x} \left( \dd \w \right) \otimes P^{I, \Pi \left( Q \right) }_{x,\w} \in \MOne{\TWC}$.
\end{theorem}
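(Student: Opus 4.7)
The plan is to mirror the structure of the proof of Theorem~\ref{thm::LDPempP}, but lifted to the level of path measures in $\MOne{\TWC}$. First, for a fixed reference trajectory $\bar\mu_\Ti \in \Cem_{\varphi,\infty} \cap \Cem^L$, I would consider the independent system with drift $b^{I,\bar\mu}$ (as in Notation~\ref{nota::ForOtherRepS}), whose Feller continuity was already verified in Lemma~\ref{lem::LDPempP::indep::Feller}. Applying the Sanov-type Lemma~\ref{lem::SanovT} with $r=1$ and $Y=\Csp{\Ti}$, together with Lemma~\ref{lem::SanovT::RelEntropy}, yields an LDP for $\{L^N, P^{I,N,\bar\mu}\}$ on $\MOne{\TWC}$ with good rate function $Q \mapsto \relE{Q}{P^{I,\bar\mu}}$, which is finite only on $\MOneL{\TWC}$. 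This plays the role here that Theorem~\ref{thm::LDPempP::Inde} plays in Chapter~\ref{sec::LDPempP}.

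Next I would establish a local LDP for $\{L^N, P^N\}$ at each $\bar Q \in \MOne{\TWC}$, separating cases exactly as in the proof of Theorem~\ref{thm::LDPempP::Local}. For $\bar Q \in \MOneL{\TWC} \cap \calM_{\varphi,\infty}$, set $\bar\mu_\Ti = \Pi(\bar Q)$ and freeze the drift. The Radon–Nikodym derivative between $P^N_{\ul\w^N}$ and $P^{I,N,\bar\mu}_{\ul\w^N}$ has the form \eqref{eq::thm::LDPempP::Local::RadNikoInterToNonInter} with quadratic variation \eqref{eq::thm::LDPempP::Local::QuadraticVariation}. In a small neighborhood of $\bar Q$ in $\MOne{\TWC}$, the map $Q \mapsto \Pi(Q)$ is continuous by Lemma~\ref{lem::MapPi::Cont}, so the empirical process lies close to $\bar\mu_\Ti$; Assumption~\ref{ass::LDPempP}~\ref{ass::LDPempP::MuIntCont} then forces the quadratic variation to be $\leq \delta N^d$ intersected with $\calM_{\varphi,R}$. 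The Hölder/Jensen arguments of Steps~\ref{pf::thm::LDPempP::Local::step::LowerBound} and \ref{pf::thm::LDPempP::Local::step::UpperBound}, combined with the exponential tightness Assumption~\ref{ass::LNPNexptight} (to restrict to $\calM_{\varphi,R}$ at negligible cost), then transfer the independent-system LDP to a local LDP for $P^N$ with rate $\relE{\bar Q}{P^{I,\bar\mu}} = \relE{\bar Q}{P^{I,\Pi(\bar Q)}} = I(\bar Q)$. For $\bar Q$ with non-Lebesgue marginal on $\Td$, the argument of \ref{pf::thm::LDPempP::Local::step::OutML} shows no $L^N$ lies in a small neighborhood of $\bar Q$ for $N$ large, giving the upper bound trivially; for $\bar Q \notin \calM_{\varphi,\infty}$, the bound follows from the analogue of Lemma~\ref{lem::LDPempP::Inter::OutCemR+Initial} pushed through Assumption~\ref{ass::LNPNexptight}.

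Given the local LDP and the exponential tightness Assumption~\ref{ass::LNPNexptight}, the full LDP follows by the standard cover/compactness argument reproduced in the proof of Theorem~\ref{thm::LDPempP}. Goodness of $I$ reduces, as in that proof, to showing compactness of the level sets $\{I \leq s\}$: relative compactness comes from Assumption~\ref{ass::LNPNexptight} together with the local lower bound, while closedness follows because $Q \mapsto \Pi(Q)$ is continuous (Lemma~\ref{lem::MapPi::Cont}), $Q \mapsto \relE{Q}{\dd x \otimes \zeta_x \otimes P^{I,\Pi(Q)}_{x,\w}}$ is lower semi-continuous in $Q$ once $\Pi(Q)$ lies in the compact set $\Cem_{\varphi,R} \cap \Cem^L$, and the set $\MOneL{\TWC} \cap \calM_{\varphi,\infty}$ is closed in the relevant sense.

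The main obstacle I anticipate is the self-referential nature of the rate function: the reference measure $P^{I,\Pi(Q)}$ depends on $Q$ itself through the projection $\Pi$. For the local upper bound this is harmless, because one only needs to control a neighborhood of a fixed $\bar Q$ by $\relE{\bar Q}{P^{I,\Pi(\bar Q)}}$; but for the lower semi-continuity of $I$ one must argue that if $Q^{(n)} \to Q$ with $\sup_n I(Q^{(n)}) < \infty$, then both $P^{I,\Pi(Q^{(n)})} \to P^{I,\Pi(Q)}$ in a sufficiently strong sense (using continuity of $b$ on $\M_{\varphi,R} \cap \MOneL{\TWR}$ from Assumption~\ref{ass::LDPempP}~\ref{ass::LDPempP::bCont} and Girsanov to compare the laws) and the relative entropy passes to the limit jointly in its two arguments. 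The remaining work, which parallels the finer estimates of Chapter~\ref{sec::LDPempP::Inter}, is the technical verification that the exponential bounds derived there lift to the stronger topology on $\MOne{\TWC}$ under Assumption~\ref{ass::LNPNexptight}.
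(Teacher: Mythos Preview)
Your proposal is correct and follows essentially the same route as the paper: independent-system LDP via the Sanov-type Lemma~\ref{lem::SanovT}/\ref{lem::SanovT::RelEntropy}, local LDP by Girsanov comparison and the case split of Theorem~\ref{thm::LDPempP::Local}, globalisation via exponential tightness, and goodness of $I$ by showing $P^{I,\Pi(Q^{(n)})}\to P^{I,\Pi(Q)}$ together with joint lower semi-continuity of relative entropy.

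Two small remarks. First, the exponential bound needed to restrict to $\calM_{\varphi,R}$ does not come from Assumption~\ref{ass::LNPNexptight} and requires no ``lifting'': since $L^N\in\calM_{\varphi,R}$ if and only if $\empP\in\Cem_{\varphi,R}$, the bound is literally the same event as in Lemma~\ref{lem::LDPempP::Inter::OutCemR+Initial} (this is the paper's Lemma~\ref{lem::LDPLN::OutCemR+Initial}); Assumption~\ref{ass::LNPNexptight} is used only to supply a genuinely compact set in $\MOne{\TWC}$ for the cover argument and for relative compactness of the level sets. Second, for the continuity $P^{I,\Pi(Q^{(n)})}_{x,\w}\to P^{I,\Pi(Q^{*})}_{x,\w}$ you suggest Girsanov, whereas the paper invokes \cite{StrVarMultidi}~Theorem~11.1.4 directly, checking the hypothesis \eqref{eq::pf::lem::LDPLN::GoodRF::ConvSV} from the uniform continuity of $b$ on the compact set $\Ti\times\{\abs{\theta}\le M\}\times\calK_{s+\epsilon}$; either route works.
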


To prove this theorem, we use the same approach as for the proof of the large deviation principle for the empirical process $\empP$, that we give in Chapter~\ref{sec::LDPempP}.
To summarise it, we show at first a large deviation principle for spins that evolve according to an independent system of SDEs.
Form this we derive  a local LDP for the interacting system.
This requires exponential bounds on the probability that the empirical measures leave the set $\calM_{\varphi,R}$.
Finally we infer from the local LDP, the desired LDP of the interacting system.
In this last step, we have to show that $I$ is a good rate function.
This prove is different from the corresponding one in Chapter~\ref{sec::LDPempP}.
Moreover we require the exponential tightness in the last step.
We need to assume it in this chapter, because we are not able to prove it in general as in Chapter~\ref{sec::LDPempP}.

\emptyline
In Chapter~\ref{sec::LDPLN::LMF} we show that the  concrete example \eqref{SDE::LocalMF} of a local mean field model satisfies the exponential tightness.
Moreover we show a second representation of the rate function for this model.

\subsection{Proof of the LDP (Theorem~\ref{thm::LDPLN})}

To prove the Theorem~\ref{thm::LDPLN}, we show at first that the measure in the relative entropy in \eqref{eq::thm::LDPLN::RF} is actually a probability measure.

\begin{lemma}
\label{lem::LDPLN::PQwellDef}
	For each $Q \in \calM_{\varphi,\infty} \cap \MOneL{\TWC}$, the measure 
	$P^{I, \Pi \left( Q \right) }$ is well defined.
\end{lemma}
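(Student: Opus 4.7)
The plan is to reduce this to the well-posedness results already established for the independent system in Chapter~\ref{sec::LDPempP::Inde}, together with the Feller continuity of the resulting family of path laws, and then to construct $P^{I,\Pi(Q)}$ as a kernel integral as was done in Lemma~\ref{lem::SanovT::ass::dxQxnu}.

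First, fix $Q \in \calM_{\varphi,\infty} \cap \MOneL{\TWC}$ and observe that $\Pi(Q)_{\Ti} \in \Cem_{\varphi,\infty}$ by the definition of $\calM_{\varphi,\infty}$, while $\Pi(Q)_{\Ti} \in \Cem^{L}$ follows from Lemma~\ref{lem::MapPi::YoungMedium} since $Q$ has Lebesgue marginal on $\Td$. Thus the drift coefficient $b^{I,\Pi(Q)}(t,x,\w,\theta) = b(x,\w,\theta,\Pi(Q)_{t})$ falls exactly under the framework of Chapter~\ref{sec::LDPempP::Inter::AssInterToInde}: the verification there shows that Assumption~\ref{ass::LDPempP::Inde} holds for $\Gen^{I,\Pi(Q)}_{t,x,\w}$. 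In particular, for every $(x,\w,\theta) \in \TWR$ the martingale problem has a unique solution $P^{I,\Pi(Q)}_{x,\w,\theta} \in \MOne{\Csp{\Ti}}$.

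Next, I would invoke Lemma~\ref{lem::LDPempP::indep::Feller} to conclude that the family $\{P^{I,\Pi(Q)}_{x,\w,\theta} : (x,\w,\theta) \in \TWR\}$ is Feller continuous in $(x,\w,\theta)$. Taking $Y = \Csp{\Ti}$ and setting $Q_{x,\w,\theta} \defeq P^{I,\Pi(Q)}_{x,\w,\theta}$, the hypotheses of Lemma~\ref{lem::SanovT::ass::dxQxnu} (namely Assumption~\ref{ass::SanovT::MeasFeller} together with Assumption~\ref{ass::LMF::Init} and Assumption~\ref{ass::LMF::Medium}) are all in force. Applying that lemma yields on the one hand that the map
\begin{align*}
	(x,\w) \mapsto P^{I,\Pi(Q)}_{x,\w}
	= \int_{\R} P^{I,\Pi(Q)}_{x,\w,\theta} \, \nu_{x}(\dd\theta)
\end{align*}
is a well-defined probability kernel from $\TW$ into $\Csp{\Ti}$, and on the other hand that
\begin{align*}
	P^{I,\Pi(Q)}
	\defeq
	\dd x \otimes \zeta_{x}(\dd\w) \otimes P^{I,\Pi(Q)}_{x,\w}
\end{align*}
is a well-defined probability measure on $\TWC$.

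The only point that requires any care is the measurability/continuity input to Lemma~\ref{lem::SanovT::ass::dxQxnu}; there is no subtle obstacle here, since the Feller continuity on $\TWR$ is a direct consequence of Assumption~\ref{ass::LDPempP::Inde}~\ref{ass::LDPempP::Inde::bCont} and~\ref{ass::LDPempP::Inde::MPro} via \cite{StrVarMultidi}~Theorem~11.1.4, exactly as in the proof of Lemma~\ref{lem::LDPempP::indep::Feller}. Hence the construction of Lemma~\ref{lem::SanovT::ass::dxQxnu} can be repeated verbatim and produces the claimed probability measure $P^{I,\Pi(Q)} \in \MOne{\TWC}$.
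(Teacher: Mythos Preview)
Your proof is correct and follows essentially the same route as the paper: verify that $\Pi(Q)_{\Ti}\in\Cem_{\varphi,\infty}\cap\Cem^{L}$ so that Assumption~\ref{ass::LDPempP::Inde} holds via Chapter~\ref{sec::LDPempP::Inter::AssInterToInde}, obtain Feller continuity of $P^{I,\Pi(Q)}_{x,\w,\theta}$ from Lemma~\ref{lem::LDPempP::indep::Feller} (equivalently \cite{StrVarMultidi}~Theorem~11.1.4), and then invoke Lemma~\ref{lem::SanovT::ass::dxQxnu} together with Assumption~\ref{ass::LMF::Init} and Assumption~\ref{ass::LMF::Medium} to construct $P^{I,\Pi(Q)}$. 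The paper's proof is a slightly more compressed version of the same argument.
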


\begin{proof}[of Lemma~\ref{lem::LDPLN::PQwellDef}]
Fix a $Q \in \calM_{\varphi,\infty} \cap \MOneL{\TWC}$.
The function $b^{I, \Pi \left( Q \right) }$ is continuous. 
Indeed, $t \mapsto \Pi \left( Q \right)_{t}$ is continuous (Lemma~\ref{lem::MapPi::WellDef}), $\Pi \left( Q \right)_{t} \in \M_{\varphi,R} \cap \MOneL{\TWR}$ for all $t \in \Ti$
and  $b$ is continuous on $\TWR \times \left( \M_{\varphi,R} \cap \MOneL{\TWR} \right)$ (Assumption~\ref{ass::LDPempP}~\ref{ass::LDPempP::bCont}).
Therefore we can apply \cite{StrVarMultidi}~Theorem~11.1.4 to get the continuity of $\left( x, \w, \theta \right) \mapsto P^{I,\Pi \left( Q \right)}_{x,\w,\theta }$ (see also Lemma~\ref{lem::LDPempP::indep::Feller}).
By this continuity, the Assumption~\ref{ass::LMF::Init} and the Assumption~\ref{ass::LMF::Medium}, we conclude (as in Lemma~\ref{lem::SanovT::ass::dxQxnu}) that the measure $P^{I, \Pi \left( Q \right) }$ is well defined. 
\end{proof}

\subsubsection{The independent system}

Fix a $Q \in \calM_{\varphi, \infty} \cap \MOneL{\TWC}$.
We get, as in the proof of Lemma~\ref{lem::LDPempP::indep::1Rep},  the following large deviation principle for the independent system (by Lemma~\ref{lem::SanovT} and Lemma~\ref{lem::SanovT::RelEntropy} with $r=1$, $Y=\Csp{\Ti}$).
\begin{lemma}
\label{thm::LDPLN::Inde}
		The family $\left\{ L^{N},  P^{I,N,\Pi \left( Q \right)} \right\}$ satisfies on $ \MOne{\TWC}$ a large deviation principle with rate function
		\begin{align}
			I^{Q} \left( \Gamma \right)
			=
			\relE{ \Gamma } { P^{I, \Pi \left( Q \right) } }
		\end{align}
		for $\Gamma \in \MOneL{\TWC}$ and infinity otherwise.
\end{lemma}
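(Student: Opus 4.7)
The plan is to apply the Sanov-type result Lemma~\ref{lem::SanovT} together with its entropy representation Lemma~\ref{lem::SanovT::RelEntropy}, taking $r=1$, $Y=\Csp{\Ti}$, and letting the family $\{P^{I,\Pi(Q)}_{x,\w,\theta} : (x,\w,\theta) \in \TWR\}$ play the role of $\{Q_{x,\w,\theta}\}$. Since Assumption~\ref{ass::LMF::Init} and Assumption~\ref{ass::LMF::Medium} are already in force, the only hypothesis to verify is the Feller continuity of these path laws (Assumption~\ref{ass::SanovT::MeasFeller}). This is exactly the same reduction as in Lemma~\ref{lem::LDPempP::indep::1Rep}, but now with the time-inhomogeneous drift $b^{I,\Pi(Q)}(t,x,\w,\theta) = b(x,\w,\theta,\Pi(Q)_t)$ instead of a generic $b^I$.

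To verify Feller continuity, I would first observe that $b^{I,\Pi(Q)}$ is jointly continuous on $\TTWR$: since $Q \in \calM_{\varphi,\infty} \cap \MOneL{\TWC}$, the trajectory $t \mapsto \Pi(Q)_t$ is continuous with values in $\M_{\varphi,R} \cap \MOneL{\TWR}$ for some finite $R$ (by Lemma~\ref{lem::MapPi::WellDef} and the definition of $\calM_{\varphi,\infty}$), and Assumption~\ref{ass::LDPempP}~\ref{ass::LDPempP::bCont} then gives joint continuity of $b^{I,\Pi(Q)}$. Next, as already observed in Lemma~\ref{lem::LDPLN::PQwellDef}, the martingale problem for the generator $\Gen^{I,\Pi(Q)}_{t,x,\w}$ is well-posed for each $(x,\w)$ (Chapter~\ref{sec::LDPempP::Inter::AssInterToInde}), so the setting of Assumption~\ref{ass::LDPempP::Inde} is satisfied. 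The argument of Lemma~\ref{lem::LDPempP::indep::Feller}, i.e.\ an application of \cite{StrVarMultidi}~Theorem~11.1.4, then yields Feller continuity of $(x,\w,\theta) \mapsto P^{I,\Pi(Q)}_{x,\w,\theta}$.

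Once this Feller property is in hand, Lemma~\ref{lem::SanovT} directly delivers a large deviation principle for $\{L^N, P^{I,N,\Pi(Q)}\}$ on $\MOne{\TWC}$ with a good rate function given by its variational form, and Lemma~\ref{lem::SanovT::RelEntropy} identifies this rate function with $\relE{\Gamma}{\dd x \otimes \zeta_{x}(\dd\w) \otimes P^{I,\Pi(Q)}_{x,\w}} = \relE{\Gamma}{P^{I,\Pi(Q)}}$ for $\Gamma \in \MOneL{\TWC}$ and $+\infty$ otherwise, which is exactly $I^Q$. The only mildly delicate point is the Feller continuity of the path laws; everything else is an immediate specialization of the Sanov-type theorem proved in Chapter~\ref{sec::Prel::SanovT}.
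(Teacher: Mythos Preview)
Your proposal is correct and matches the paper's approach exactly: the paper proves this lemma by a one-line reference to Lemma~\ref{lem::SanovT} and Lemma~\ref{lem::SanovT::RelEntropy} with $r=1$, $Y=\Csp{\Ti}$, ``as in the proof of Lemma~\ref{lem::LDPempP::indep::1Rep}'', which is precisely what you spell out (including the verification of Feller continuity via Lemma~\ref{lem::LDPempP::indep::Feller} and the well-posedness observed in Lemma~\ref{lem::LDPLN::PQwellDef}).
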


\subsubsection{The interacting system}

As in Chapter~\ref{sec::LDPempP::Inter}, we show at first the following local version of the LDP.
\begin{lemma}
\label{thm::LDPLN::Local}
	Under the same assumptions as in Theorem~\ref{thm::LDPempP} 
	and for each  $\overline{Q} \in \MOne{\TWC}$,
	the following statements are true.
	\begin{enuRom}
	\item
	\label{thm::LDPLN::Local::LowerBound}
		For all open neighbourhoods $V \subset \MOne{\TWC}$ of $\overline{Q} $
		\begin{align}
		\label{thm::LDPLN::Local::eq::LowerBound}
			\liminf_{N \rightarrow \infty} N^{-d} \log P^{N} \left[ L^{N} \in V \right]
			\geq
			- I \left(  \ol{Q} \right)
			.
		\end{align}
	\item
	\label{thm::LDPLN::Local::UpperBound}
		For each $\gamma>0$, there is an open neighbourhood $V \subset \MOne{\TWC}$ of $\overline{Q}  $
		such that
		\begin{align}
		\label{thm::LDPLN::Local::eq::UpperBound}
			\limsup_{N \rightarrow \infty} N^{-d} \log P^{N} \left[ L^{N} \in V \right]
			\leq
					\begin{cases}
							- I\left(  \ol{Q} \right) + \gamma
							\quad&\textnormal{if }   I \left(  \ol{Q} \right)  < \infty
							\\
							- \gamma
							&\textnormal{otherwise.}
					\end{cases}
		\end{align}
	\end{enuRom}
\end{lemma}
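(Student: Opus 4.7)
The plan is to mirror the three-case analysis of Theorem~\ref{thm::LDPempP::Local} one level higher, on $\MOne{\TWC}$ instead of $\Cem$. I would distinguish: \textbf{Case 1}, $\ol Q \in \MOneL{\TWC} \cap \calM_{\varphi,\infty}$; \textbf{Case 2}, $\ol Q \in \calM_{\varphi,\infty} \setminus \MOneL{\TWC}$; and \textbf{Case 3}, $\ol Q \notin \calM_{\varphi,\infty}$. In Cases~2 and~3 the rate function is infinite, so only the upper bound \ref{thm::LDPLN::Local::UpperBound} needs an argument, and the lower bound \ref{thm::LDPLN::Local::LowerBound} is trivial.

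For \textbf{Case 1}, the reference independent system is that with drift $b^{I,\Pi(\ol Q)}$; its law $P^{I,N,\Pi(\ol Q)}$ is well defined by Lemma~\ref{lem::LDPLN::PQwellDef}, and the family $\{L^N, P^{I,N,\Pi(\ol Q)}\}$ satisfies the LDP of Lemma~\ref{thm::LDPLN::Inde} with rate $I^{\ol Q}(\cdot)=\relE{\cdot}{P^{I,\Pi(\ol Q)}}$, so in particular $I^{\ol Q}(\ol Q)=I(\ol Q)$. Following \ref{pf::thm::LDPempP::Local::step::Girsan}, I would write the Girsanov density
\begin{align*}
\frac{\dd P^{N}_{\ul{\w}^{N}}}{\dd P^{I,N,\Pi(\ol Q)}_{\ul{\w}^{N}}}=\exp\!\Big(M^{N}_{\ul{\w}^{N},T}-\tfrac12\langle\!\langle M^{N}_{\ul{\w}^{N}}\rangle\!\rangle_{T}\Big),
\end{align*}
with quadratic variation exactly as in \eqref{eq::thm::LDPempP::Local::QuadraticVariation} but with $\ol\mu_{u}$ replaced by $\Pi(\ol Q)_{u}$ and $\empPt{u}=\Pi(L^{N})_{u}$. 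Since $\Pi$ is continuous (Lemma~\ref{lem::MapPi::Cont}) and $\Pi(L^{N})$ is an empirical process, Assumption~\ref{ass::LDPempP}~\ref{ass::LDPempP::MuIntCont} (in its empirical-process form) yields that $\langle\!\langle M^{N}_{\ul{\w}^{N}}\rangle\!\rangle_{T}\le \Nd\delta$ on a neighbourhood $W$ of $\ol Q$ intersected with $\calM_{\varphi,R}$. Then the $p,q$-H\"older scheme of \ref{pf::thm::LDPempP::Local::step::LowerBound} and \ref{pf::thm::LDPempP::Local::step::UpperBound}, combined with the exponential bound of Lemma~\ref{lem::LDPempP::Inter::OutCemR+Initial} transported through $\Pi$ (which applies equally to $P^{N}$ and to $P^{I,N,\Pi(\ol Q)}$ by Assumption~\ref{ass::LDPempP}~\ref{ass::LDPempP::IntBound}, \ref{ass::LDPempP::Lyapu}), reduces both \ref{thm::LDPLN::Local::LowerBound} and \ref{thm::LDPLN::Local::UpperBound} to the corresponding inequalities on the open set $V$ under $P^{I,N,\Pi(\ol Q)}$, which are supplied by Lemma~\ref{thm::LDPLN::Inde} and the lower semi-continuity of $I^{\ol Q}$.

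For \textbf{Case 2}, I would adapt \ref{pf::thm::LDPempP::Local::step::OutML}: $\MOneL{\TWC}$ is closed in $\MOne{\TWC}$, so $\ol Q$ has positive distance $2\epsilon$ from it in the bounded-Lipschitz metric. For every empirical measure $L^{N}$ one can construct $\tilde L^{N}\in\MOneL{\TWC}$ by smearing each atom $\delta_{k/N}$ uniformly over the cube $\Delta_{k,N}$, and a one-line estimate (as in \ref{pf::thm::LDPempP::Local::step::OutML::NoEmpP}) gives $\BLd{L^{N}}{\tilde L^{N}}\le 1/N$; hence $B_{\epsilon}(\ol Q)$ contains no $L^{N}$ for $N$ large and $P^{N}[L^{N}\in B_{\epsilon}(\ol Q)]=0$. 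For \textbf{Case 3}, $\calM_{\varphi,R}$ is closed in $\MOne{\TWC}$ (since $\varphi$ is lower semi-continuous), so $V:=\MOne{\TWC}\setminus\calM_{\varphi,R}$ is an open neighbourhood of $\ol Q$ for every $R$, and Lemma~\ref{lem::LDPempP::Inter::OutCemR+Initial} applied to $\Pi(L^{N})$ makes $P^{N}[L^{N}\in V]\le e^{-\Nd\gamma}$ for $R$ large.

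The main obstacle is the coupling in Case~1: the continuity assumption \ref{ass::LDPempP}~\ref{ass::LDPempP::bCont} only controls $b$ on the Young-measure slice $\M_{\varphi,R}\cap\MOneL{\TWR}$, so Girsanov must be localised to $V\cap\calM_{\varphi,R}$ before comparing to the independent system, and the smallness of $\langle\!\langle M^{N}\rangle\!\rangle_{T}$ on this set must be extracted from the \emph{empirical-process} branch of Assumption~\ref{ass::LDPempP}~\ref{ass::LDPempP::MuIntCont} rather than its $\Cem^{L}$-branch; this in turn requires $\ol Q \in \MOneL{\TWC}$, which is why Cases~2 and~3 have to be peeled off first by topological and tightness arguments rather than by the change-of-measure method.
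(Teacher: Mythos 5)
Your proposal is correct and follows essentially the same route as the paper: the paper proves this lemma exactly by repeating the three-case argument of Theorem~\ref{thm::LDPempP::Local}, with the independent-system LDP replaced by Lemma~\ref{thm::LDPLN::Inde} and the exponential bound replaced by Lemma~\ref{lem::LDPLN::OutCemR+Initial}, which is precisely your transport of Lemma~\ref{lem::LDPempP::Inter::OutCemR+Initial} through $\Pi$. Your case split, the Girsanov comparison with the frozen field $\Pi(\ol Q)$, and the treatment of $\ol Q \notin \MOneL{\TWC}$ and $\ol Q \notin \calM_{\varphi,\infty}$ all match the intended argument.
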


The Lemma~\ref{thm::LDPLN::Local} can be proven as the Theorem~\ref{thm::LDPempP::Local}.
This proof requires Lemma~\ref{thm::LDPLN::Inde} and the following exponential bound instead of Lemma~\ref{lem::LDPempP::Inter::OutCemR+Initial}..
\begin{lemma}
\label{lem::LDPLN::OutCemR+Initial}
	For all $s >0$, there is a $R=R_{s}>0$, such that for all $N \in \N$
	\begin{align}
		\sup_{\ul{\w}^{N} \in \Wsp^{\Nd}}
		P^{N}_{\ul{\w}^{N}}  \left[  L^{N} \not \in \calM_{\varphi,R} \right]
		\leq 
		e^{-\Nd s}
		.
	\end{align}
\end{lemma}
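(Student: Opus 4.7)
The plan is to reduce Lemma~\ref{lem::LDPLN::OutCemR+Initial} directly to the already established Lemma~\ref{lem::LDPempP::Inter::OutCemR+Initial}, by observing that the event $\{L^N \notin \calM_{\varphi,R}\}$ and the event $\{\empP \notin \Cem_{\varphi,R}\}$ are the same. All the analytic work — the application of Itô's formula, the Doob supermartingale inequality, and the control of the initial distribution via the exponential integrability Assumption~\ref{ass::LDPempP::Init::Integ} — has already been carried out there. The hard part is therefore not a new estimate but only a careful unfolding of the definitions of $L^N$, of the projection map $\Pi$ and of $\calM_{\varphi,R}$.

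First, I would unfold $\Pi(L^N)_{\Ti}$. Given a realisation $\ul{\theta}^N_{\Ti} \in \Csp{\Ti}^{\Nd}$ and $\ul{\w}^N \in \Wsp^{\Nd}$, the empirical measure reads $L^N = N^{-d}\sum_{k \in \TN} \delta_{(\frac{k}{N},\w^{k,N},\theta^{k,N}_{\Ti})}$; by Definition~\ref{def::Pi}, its time-$t$ marginal is
\begin{align*}
\Pi(L^N)_t \;=\; L^N \circ (\mathrm{id}_{\Td},\mathrm{id}_{\Wsp},\theta_t)^{-1} \;=\; N^{-d}\sum_{k \in \TN} \delta_{(\frac{k}{N},\w^{k,N},\theta^{k,N}_t)} \;=\; \empPt{t},
\end{align*}
so that $\Pi(L^N)_{\Ti} = \empP$ as elements of $\Cem$.

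From the definition of $\calM_{\varphi,R}$ (namely $Q \in \calM_{\varphi,R}$ iff $\Pi(Q)_{\Ti} \in \Cem_{\varphi,R}$) this identification immediately yields
\begin{align*}
\{L^N \notin \calM_{\varphi,R}\} \;=\; \{\empP \notin \Cem_{\varphi,R}\} \qquad P^N_{\ul{\w}^N}\text{-a.s.}
\end{align*}
Fixing $s>0$ and applying Lemma~\ref{lem::LDPempP::Inter::OutCemR+Initial} then furnishes $R = R_s > 0$ such that $\sup_{\ul{\w}^N \in \Wsp^{\Nd}} P^N_{\ul{\w}^N}[\empP \notin \Cem_{\varphi,R}] \leq e^{-N^d s}$ for every $N \in \N$, which is exactly the desired bound. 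I anticipate no further obstacle; the only point worth double-checking while writing the proof is that the definition of $\calM_{\varphi,R}$ really refers to the path of marginals $\Pi(Q)_{\Ti}$ and not to an a~priori different moment condition on $Q$ itself, so that the equivalence of the two events is an identity and not merely an inclusion.
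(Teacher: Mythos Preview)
Your proposal is correct and follows exactly the paper's own argument: the paper notes that $L^{N}\in\calM_{\varphi,R}$ if and only if $\Pi(L^{N})_{\Ti}\in\Cem_{\varphi,R}$, identifies $\Pi(L^{N})_{\Ti}=\empP$, and then invokes Lemma~\ref{lem::LDPempP::Inter::OutCemR+Initial}. Your write-up is slightly more explicit in unpacking $\Pi(L^{N})_{t}$, but the approach is identical.
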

The Lemma~\ref{lem::LDPLN::OutCemR+Initial} follows directly from Lemma~\ref{lem::LDPempP::Inter::OutCemR+Initial}, because $L^{N} \in \calM_{\varphi,R}$ if and only if $\Pi \left( L^{N} \right)_{\Ti} \in \Cem_{\varphi,R}$, i.e.
	\begin{align}
		P^{N}_{\ul{\w}^{N} } \left[  L^{N} \in \calM_{\varphi,R} \right]
		= 
		P^{N}_{\ul{\w}^{N} } \left[  \empP \in \Cem_{\varphi,R} \right]
		.
	\end{align}

Then Lemma~\ref{thm::LDPLN::Local}
and Assumption~\ref{ass::LNPNexptight} imply the lower and upper large deviation bound with the good rate function $I$.
Indeed we show in the next lemma that $I$ is a good rate function.
This finishes the proof of the Theorem~\ref{thm::LDPLN}.

\begin{lemma}
\label{lem::LDPLN::GoodRF}
	The function $Q \mapsto I \left( Q \right) $ is a good rate function.
\end{lemma}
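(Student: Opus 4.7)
The plan is to derive goodness of $I$ by combining the local large deviation bounds of Lemma~\ref{thm::LDPLN::Local} with the exponential tightness of Assumption~\ref{ass::LNPNexptight}, rather than by trying to check lower semi-continuity of $I$ directly from its formula (which would be awkward because the reference measure $P^{I,\Pi(Q)}$ itself depends on $Q$). The argument is the standard one: local LDP together with exponential tightness always yields a good rate function.

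First I would show that for each $s>0$ the level set $\calL^{\leq s}(I) \defeq \{Q \in \MOne{\TWC} : I(Q) \leq s\}$ is relatively compact. By Assumption~\ref{ass::LNPNexptight} pick a compact set $\calK \subset \MOne{\TWC}$ with $\limsup_N N^{-d} \log P^N[L^N \notin \calK] \leq -(s+1)$. Suppose for contradiction that some $Q_0 \in \calL^{\leq s}(I)$ lies outside $\calK$. Then $V \defeq \MOne{\TWC}\setminus\calK$ is an open neighbourhood of $Q_0$, and the local lower bound Lemma~\ref{thm::LDPLN::Local}~\ref{thm::LDPLN::Local::LowerBound} gives
\begin{align*}
-(s+1) \geq \limsup_N N^{-d}\log P^N[L^N \in V] \geq \liminf_N N^{-d}\log P^N[L^N \in V] \geq -I(Q_0) \geq -s,
\end{align*}
a contradiction. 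Hence $\calL^{\leq s}(I) \subset \calK$.

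Next I would prove lower semi-continuity of $I$. Fix $\ol Q \in \MOne{\TWC}$ and a sequence $Q^{(n)}\to \ol Q$. If $I(\ol Q) < \infty$, pick an arbitrary $\gamma>0$ and apply Lemma~\ref{thm::LDPLN::Local}~\ref{thm::LDPLN::Local::UpperBound} to obtain an open neighbourhood $V$ of $\ol Q$ with $\limsup_N N^{-d}\log P^N[L^N \in V] \leq -I(\ol Q)+\gamma$. Since $V$ is open and $Q^{(n)}\to\ol Q$, the set $V$ is a neighbourhood of $Q^{(n)}$ for all $n$ large enough; for such $n$ the local lower bound applied at $Q^{(n)}$ yields
\begin{align*}
-I(Q^{(n)}) \leq \liminf_N N^{-d}\log P^N[L^N \in V] \leq -I(\ol Q)+\gamma.
\end{align*}
Letting $n\to\infty$ and then $\gamma\downarrow 0$ gives $\liminf_n I(Q^{(n)}) \geq I(\ol Q)$. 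If instead $I(\ol Q)=\infty$, the same scheme with the second case of Lemma~\ref{thm::LDPLN::Local}~\ref{thm::LDPLN::Local::UpperBound} produces, for every $\gamma>0$, an open neighbourhood $V$ of $\ol Q$ with $-I(Q^{(n)}) \leq -\gamma$ for all $n$ large enough, so $I(Q^{(n)})\to\infty$.

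Combining the two steps, every level set $\calL^{\leq s}(I)$ is closed (by lower semi-continuity) and relatively compact, hence compact, so $I$ is a good rate function. There is no real obstacle here: the only point that requires a moment of care is that the open neighbourhood $V$ selected in the lower semi-continuity argument eventually contains $Q^{(n)}$, which is immediate from the definition of convergence in $\MOne{\TWC}$. The whole statement is essentially a corollary of the local LDP and exponential tightness that we have already assembled.
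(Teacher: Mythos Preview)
Your proof is correct and follows the standard ``soft'' argument that a local LDP together with exponential tightness forces the rate function to be good. The relative compactness step coincides with the paper's, but the lower semi-continuity step differs genuinely.

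The paper does not deduce lower semi-continuity from the local bounds at all; instead it works directly with the formula $I(Q)=\relE{Q}{P^{I,\Pi(Q)}}$. Given $Q^{(n)}\to Q^*$ in a level set (hence in $\calK_{s+\epsilon}\cap\MOneL{\TWC}\subset\calM_{\varphi,R}$), the paper shows that $P^{I,\Pi(Q^{(n)})}_{x,\w}\to P^{I,\Pi(Q^*)}_{x,\w}$ for every $(x,\w)$ by verifying the hypothesis of \cite{StrVarMultidi}~Theorem~11.1.4 on the drift coefficients, and then concludes via the joint lower semi-continuity of relative entropy in both arguments. Your route bypasses this entirely: you never touch the explicit formula for $I$ and never need to track the $Q$-dependence of the reference measure, which is exactly the awkward point you anticipated. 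The paper's route, on the other hand, delivers an additional piece of information---continuity of $Q\mapsto P^{I,\Pi(Q)}_{x,\w}$ on level sets---that your argument does not extract. Both are valid; yours is shorter and more robust, the paper's is more hands-on and yields a concrete byproduct.
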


\begin{proof}
We show at first that the level set $\calL^{\leq s} \left( I \right)  $ is relatively compact and then that it is closed.
\begin{steps}
	\step[{$\calL^{\leq s} \left( I \right)  $ is relatively compact}]
					
	By Assumption~\ref{ass::LNPNexptight} and Lemma~\ref{lem::LDPLN::OutCemR+Initial} we know that there is a compact set $\calK_{s+\epsilon} \subset \calM_{\varphi,R}$, for $R>0$ large enough, such that \eqref{thm::LDPLN::ExpBound::eq} holds.
	We claim that $\calL^{\leq s} \left( I \right)  \subset \calK_{s+\epsilon}$.
	Assume that there is a $Q \in \calL^{\leq s} \left( I \right) $ that is not in $\calK_{s+\epsilon}$.
	Then we know by \eqref{thm::LDPLN::ExpBound::eq}  and
	Theorem~\ref{thm::LDPLN::Local}~\ref{thm::LDPLN::Local::LowerBound} (because $ \MOne{\TWC}  \backslash \calK_{s+\epsilon}$ is an open neighbourhood of $Q$), 
	that $s+\epsilon \leq I \left( Q \right)$, a contradiction.
	
	\emptyline		
	
	\step[{$\calL^{\leq s} \left( I \right)  $ is closed}]
	
	By the definition of $I$ and the previous step,
	 $\calL^{\leq s} \left( I \right)  \subset \calK_{s+\epsilon} \cap \MOneL{\TWC}$.
	Fix an arbitrary converging sequence $\left\{ Q^{(n)} \right\}_{n} \subset  \calL^{\leq s} \left( I \right)$. The limit point $Q^{*}$ of this sequence is in $\calK_{s+\epsilon} \cap \MOneL{\TWC}$.
	We prove now that $Q \in  \calL^{\leq s} \left( I \right)$.
	
	This follows if we knew that for all $\left( x, \w \right) \in \TW$, $P^{I,\Pi \left( Q^{(n)} \right) }_{x,\w} \rightarrow P^{I,\Pi \left( Q^{*} \right) }_{x,\w}$.
	Indeed, this implies that also $\dd x \otimes \zeta_{x} \left( \dd \w \right) \otimes P^{I,\Pi \left( Q^{(n)} \right) }_{x,\w} \rightarrow \dd x \otimes \zeta_{x} \left( \dd \w \right)  \otimes P^{I,\Pi \left( Q^{*} \right) }_{x,\w}$.
	Then we conclude the lower semi-continuity of $I$, from the lower semi-continuity of the relative entropy in both variables.
	
	The convergence of $P^{I,\Pi \left( Q^{(n)} \right) }_{x,\w}$ follows from \cite{StrVarMultidi} Theorem~11.1.4.
	This theorem is applicable if for each $M \in \R$
	\begin{align}
	\label{eq::pf::lem::LDPLN::GoodRF::ConvSV}
		\lim_{n \rightarrow \infty} \int_{0}^{T} \sup_{\abs{\theta} \leq M} 
										\abs{ b \left( x, \w, \theta, \Pi \left( Q^{(n)} \right)_{t} \right) - b \left( x, \w, \theta, \Pi \left(  Q^{*} \right)_{t} \right) }
									\dd t
						= 
						0
			.
	\end{align}
	This convergence follows if
			\begin{align}
			\label{eq::pf::lem::LDPLN::GoodRF::ConvSVsup}
			\sup_{t \in \Ti} 
			\sup_{\abs{\theta} \leq M} 
			\abs{ b \left( x, \w, \theta, \Pi \left( Q^{(n)} \right)_{t}  \right) - b \left( x, \w, \theta, \Pi \left( Q^{*} \right)_{t}  \right) }
			\rightarrow
			0
			.
			\end{align}
		The function		
		$\TTWR \times \left( \calM_{\varphi,R} \cap \MOneL{\TWC} \right) \ni \left( t, x, \w, \theta, Q \right)  \mapsto  b \left( x, \w, \theta, \Pi \left(  Q \right)_{t} \right)$ is continuous as composition of continuous function (Assumption~\ref{ass::LDPempP}~\ref{ass::LDPempP::bCont}).
		Moreover only the compact set $\Ti$, $\abs{\theta}\leq M$ and $Q^{(n)},Q^{*} \in \calK_{s+\epsilon}$ is considered in  \eqref{eq::pf::lem::LDPLN::GoodRF::ConvSVsup}.
		Therefore we conclude \eqref{eq::pf::lem::LDPLN::GoodRF::ConvSVsup} from the uniform convergence of $b$ on this set.
	
\end{steps}\vspace{-\baselineskip}
\end{proof}

\subsection{The concrete example \texorpdfstring{\protect\eqref{SDE::LocalMF}}{(\ref{SDE::LocalMF})} of a local mean field model}
\label{sec::LDPLN::LMF}

In this chapter we consider the large deviation principle for the family $\left\{L^{N}, P^{N} \right\}$ of the local mean field model, defined by $\sigma =1 $ and the drift coefficient \eqref{SDE::LocalMF} with Assumption~\ref{ass::LMF::Init}, Assumption~\ref{ass::LMF::Init::Integral}, Assumption~\ref{ass::LMF::Medium}, Assumption~\ref{ass::LMF::J} and Assumption~\ref{ass::LMF::Psi}.

Choose $\varphi=\theta^{2}+1$. 
We know by Chapter~\ref{sec::LocalMF}, that the Assumption~\ref{ass::LDPempP} is satisfied. 
Hence by  Theorem~\ref{thm::LDPLN}, the empirical measure $L^{N}$ of the local mean field model, satisfies a large deviation principle, provided that the 
the exponential tightness Assumptions~\ref{ass::LNPNexptight} holds.
We claim the exponential tightness in the next lemma, which we prove in Chapter~\ref{sec::LDPLN::LMF::PfExpTight}.
		\begin{lemma}
			\label{lem::LDPLN::LMF::expTight}
			For the concrete example \eqref{SDE::LocalMF} of a local mean field model, the family $\left\{ L^{N},P^{N} \right\}$ is exponentially tight, i.e. the Assumptions~\ref{ass::LNPNexptight} is satisfied.
		\end{lemma}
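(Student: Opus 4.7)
I would transfer exponential tightness from a reference independent system to $P^N$ by a Girsanov--Hölder argument, in the spirit of \ref{pf::thm::LDPempP::Local::step::Girsan}. Let $P^{I,N}$ denote the joint law under which the $\Nd$ spins evolve as mutually independent single-spin diffusions $d\theta^{k,N}_t = -\partial_\theta \Psi(\w^{k,N}, \theta^{k,N}_t)\,dt + dB^{k,N}_t$ with initial distribution $\nu^N$ and environment $\zeta^N$ (i.e.\ the interaction is switched off). Since $\Td$ and $\Wsp$ are compact, by Prokhorov's theorem compactness in $\MOne{\TWC}$ reduces to tightness of the projection to $\Csp{\Ti}$, which by Lemma~\ref{lem::AA}~\ref{lem::AA::AppliedToMeasure} amounts to controlling the initial spin value and the modulus of continuity.

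\textbf{Steps.} First I would establish exponential tightness of $\{L^N, P^{I,N}\}$ on $\MOne{\TWC}$. Under $P^{I,N}$ the spin paths are mutually independent, with individual laws $\{P^{I}_{x,\w}\}$ forming a uniformly tight family on $\Csp{\Ti}$ (by Feller continuity on the compact set $\TW$, Lemma~\ref{lem::LDPempP::indep::Feller}). Applying the exponential Chebychev inequality to the independent indicators $\mathbf{1}_{\{\theta^{k,N}_\Ti \notin K_j\}}$ for compact $K_j \subset \Csp{\Ti}$ with tail probability $\sup_{x,\w} P^I_{x,\w}(K_j^c)$ decaying sufficiently fast, one obtains, for every $s' > 0$, a compact set $\calK'_{s'} \subset \MOne{\TWC}$ with $P^{I,N}[L^N \notin \calK'_{s'}] \leq e^{-s' \Nd}$. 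Next, given $s > 0$, Lemma~\ref{lem::LDPLN::OutCemR+Initial} supplies $R = R_s$ such that $P^N[L^N \notin \calM_{\varphi, R_s}] \leq e^{-s \Nd}$; on the event $\{L^N \in \calM_{\varphi, R_s}\}$ the interaction drift $\beta^{k,N}_t := \int J(\tfrac{k}{N} - x', \w^{k,N}, \w')\, \theta'\, \empPt{t}(dx', d\w', d\theta')$ satisfies $(\beta^{k,N}_t)^2 \leq C R_s$ uniformly in $k, t$ (by Cauchy--Schwarz and Assumption~\ref{ass::LMF::J}, for $N$ large). Consequently the Girsanov density
\begin{align*}
\frac{dP^N_{\ul{\w}^N}}{dP^{I,N}_{\ul{\w}^N}} = \exp\Big(\sum_{k \in \TN} \int_0^T \beta^{k,N}_t\, dB^{k,N}_t - \tfrac{1}{2} \sum_{k \in \TN} \int_0^T (\beta^{k,N}_t)^2\, dt\Big),
\end{align*}
obtained by spatial localization as in the derivation of \eqref{eq::thm::LDPempP::Local::RadNikoInterToNonInter}, has $p$-th moment on $\calM_{\varphi, R_s}$ bounded by $e^{\frac{p(p-1)}{2} C T R_s \Nd}$: one writes the density as the product of the exponential martingale $\calE_p := \exp(p\sum_k \int \beta\, dB - \tfrac{p^2}{2} \sum_k \int \beta^2\, dt)$ (expectation $\leq 1$) and the exponential of $\tfrac{p(p-1)}{2} \sum_k \int (\beta^{k,N}_t)^2 dt$ (bounded by the above). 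Combining via Hölder's inequality with $p = q = 2$ and choosing $s' := 2(s + C T R_s)$ gives
\begin{align*}
P^N\big[L^N \notin \calK'_{s'},\ L^N \in \calM_{\varphi, R_s}\big] \leq e^{\frac{1}{2} C T R_s \Nd} \cdot e^{-s' \Nd / 2} \leq e^{-s \Nd},
\end{align*}
and setting $\calK_s := \overline{\calK'_{s'} \cap \calM_{\varphi, R_s}}$ (closed inside the compact $\calK'_{s'}$, hence compact) yields the desired exponential tightness after an $s$-adjustment.

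\textbf{Main obstacle.} The delicate point is the Girsanov step: since $-\partial_\theta \Psi$ has polynomial growth, the density $dP^N/dP^{I,N}$ must be justified through a spatial localization argument (as in \cite{StrVarMultidi}~Theorem~6.4.2), and the exponential martingale identity $E^{P^{I,N}}[\calE_p] \leq 1$ must be verified on the restricted event. This is handled by stopping at $\tau_R := \inf\{t : \empPt{t} \notin \M_{\varphi, R_s}\}$, after which $\beta^{k,N}$ is bounded on $[0, \tau_R]$ and Novikov's condition is trivial; the set $\{\tau_R \geq T\}$ coincides with $\{L^N \in \calM_{\varphi, R_s}\}$, which is exactly where the localized bound on the Girsanov moment is used. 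All other ingredients --- the uniform tightness of $\{P^{I}_{x,\w}\}$, the Lyapunov estimate of Lemma~\ref{lem::LDPempP::Inter::OutCemR+Initial}, and the $L^2$-bound on $\beta$ coming from Assumption~\ref{ass::LMF::J} --- are already in place.
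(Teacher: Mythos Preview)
Your argument is correct and shares its overall architecture with the paper's proof: both establish exponential tightness for the non-interacting reference system $W^{N,-\Psi}$ (your $P^{I,N}$) via the independent-indicator Chebychev trick, and then transfer to $P^N$ through a Girsanov--H\"older estimate. The two proofs diverge, however, in how the Radon--Nikodym density is controlled.

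The paper does \emph{not} localize to $\calM_{\varphi,R}$. Instead it applies It\^o's formula to the bilinear form $B^N(\ul\theta^N_t)$ (Lemma~\ref{lem::LDPLN::LMF::PNtoWN}) to rewrite the density as $e^{\Nd F(L^N)+O(1)}$, then observes that the quadratic-variation part of $F$ is nonpositive so that $F(L^N)$ is bounded by $\tfrac12(\LpN{1}{\ol J}+\delta)\,\tfrac1{\Nd}\sum_i[(\theta^{i,N}_T)^2+(\theta^{i,N}_0)^2]$; the resulting $p$-th moment is then estimated \emph{globally} by Lemma~\ref{lem::LDPLN::LMF::eTheta2Bound}~\ref{lem::LDPLN::LMF::eTheta2Bound::T}, which is exactly where the structural condition $c_\Psi>\LpN{1}{\ol J}$ of Assumption~\ref{ass::LMF::Psi} enters. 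Your route keeps the density in its stochastic-exponential form, first uses the Lyapunov bound (Lemma~\ref{lem::LDPLN::OutCemR+Initial}) to discard $\calM_{\varphi,R_s}^c$, and then on $\calM_{\varphi,R_s}$ exploits only the pointwise bound $(\beta^{k,N}_t)^2\le C R_s$ together with the martingale identity $E[\calE_2(\tau_R\wedge T)]\le 1$. This is more modular---it would go through for any interaction drift bounded on $\calM_{\varphi,R}$, not only the bilinear one---at the cost of an extra localization step and a dependence of the compact set on $R_s$. The paper's method is shorter and gives a single uniform moment bound, but it genuinely exploits the quadratic form of the interaction in~\eqref{SDE::LocalMF}.
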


The measure $P^{I, \Pi \left( Q \right) }_{x,\w}$ is for each $\left( x, \w \right) \in \TW$ the law of the following one dimensional SDE
\begin{align}
\label{eq::LDPLN::LMF::SDE::FixedInterMeasure}
\begin{split}
	\dd \what\theta^{x}_{t} &= \left(-\Psi \left( \what\theta^{x}_{t} , \w \right)  + \beta \left( x, \w, \Pi \left( Q \right)_{t}  \right)   \right)  \dd t + \dd B_{t}
	\\
		\what\theta^{x}_{0} &\sim \nu_{x} 
	,
\end{split}
\end{align}
where the function $\beta : \TW \times \left\{ \mu \in \MOne{ \TWR } : \int_{\TWR} \theta^{2} \mu \left( \dd x, \dd \w, \dd \theta \right) < \infty \right\}   \rightarrow \R$ is defined in \eqref{eq::step::LocalMF::bCont::beta}.
We interpret $\beta\left( x, \w, \mu \right) $ as the effective field corresponding to the measure $\mu \in \MOne{\TWR}$ at the spatial position $x \in \Td$ with fixed environment $\w \in \Wsp$.
If we inserted $\empPt{t}$ in $\beta$,  the drift coefficient of the SDE \eqref{eq::LDPLN::LMF::SDE::FixedInterMeasure} would equal the drift coefficient of the SDE \eqref{SDE::LocalMF}.
Hence the rate function $I$ (defined in \eqref{eq::thm::LDPLN::RF}) measures the deviation of $Q$ from the measure of the solution to the SDE with effective field $Q$.

\emptyline

Now we show that this rate function has another representation, in which the influence of the entropy and of the interaction becomes obvious.
We need the following notation.
\begin{notation}
\label{nota::LDPLN::LMF::MeasuresW}
\begin{itemNoLeftIntend}
\item
	We denote by $W^{0}_{x}$ the law of a Brownian motion with initial distribution $\nu_{x}$.
\item
	We use the symbol $W^{-\Psi}_{x,\w}$ for the law of the solution of the SDE with drift coefficient $-\partial_{\theta} \Psi \left( ., w \right)$ and with initial distribution $\nu_{x}$, for $\w \in \Wsp$.
\item
	With these measures we define the products of these measures $W^{N,0}_{\ul{\w}^{N}}, W^{N,\Psi}_{\ul{\w}^{N}} \in \MOne{ \Csp{\Ti}^{\Nd} }$ and 
	the product with $\zeta^{N}$ by $W^{N,0}, W^{N,\Psi}  \in \MOne{ \Wsp^{\Nd} \times \Csp{\Ti}^{\Nd} }$ similar as in Notation~\ref{nota::LMF::General}.
	
\end{itemNoLeftIntend}
\end{notation}
\begin{remark}
All these measures exist, because the corresponding Martingale problems are well posed (by the Assumption~\ref{ass::LMF::Psi}). 
Note that the $\Nd$ diffusion processes described by $W^{N,0}_{\ul{\w}^{N}}$  and $W^{N,-\Psi}_{\ul{\w}^{N}}$ do not interact.
\end{remark}

\begin{theorem}
	\label{thm::LDPLN::LMF::LDP}
	If Assumption~\ref{ass::LMF::Init}, Assumption~\ref{ass::LMF::Init::Integral}, Assumption~\ref{ass::LMF::Medium}, Assumption~\ref{ass::LMF::J} and Assumption~\ref{ass::LMF::Psi} hold,
	then the family $\left\{ L^{N}, P^{N} \right\}$ satisfies on $\MOne{\TWC}$ a large deviation principle with good rate function 
	\begin{align}
	\label{eq::thm::LDPLN::LMF::LDP::RF}
		\ol{ I } \left( Q \right)  
		=
		\begin{cases}
			 \relE{ Q }{ \dd x \otimes \zeta_{x} \left( \dd \w \right) \otimes W^{-\Psi}_{x,\w} } - F \left( Q \right) 
			\quad &\textnormal{if } Q \in \calM_{\varphi,\infty} \cap \MOneL{\TWC}
			\\
			\infty 
			&\textnormal{otherwise,}
		\end{cases}
	\end{align}
with
	\begin{align}
	\label{eq::thm::LDPLN::LMF::LDP::defF}
	\begin{split}
	F \left( Q \right) 
	\defeq&
		-
		 \frac{1}{2} \int \int
		 \int J \left( x''-x , \w'' , \w \right)  J \left( x''-x', \w'' , \w' \right)  Q \left( \dd x'', \dd \w'', \dd \eta_{\Ti}  \right)  \int_{0}^{T} \theta'_{t} \theta_{t} \dd t
		\\
		&\mkern350mu
	 	Q \left( \dd x', \dd \w', \dd \theta'_{\Ti}  \right) 
		Q \left( \dd x, \dd \w, \dd \theta_{\Ti}  \right) 
		\\
		&
		+
		\frac{1}{2}
		\int \int
		J \left( x-x' ,\w, \w' \right) 
		\left[ 
		\theta_{T} \theta'_{T} 
		-
		\theta_{0} \theta'_{0} 
		\right] 
		Q \left( \dd x, \dd \w, \dd \theta_{\Ti}  \right)  Q \left( \dd x', \dd \w', \dd \theta'_{\Ti}  \right)
		,
	\end{split}
	\end{align}
where the integrals $\int$ are over the space $\TWC$.	

By the uniqueness of the rate function of a large deviation system, $\ol{I} = I$ (for $I$ defined in Theorem~\ref{thm::LDPLN}).
\end{theorem}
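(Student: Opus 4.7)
The plan is to prove Theorem~\ref{thm::LDPLN::LMF::LDP} directly rather than by manipulating the rate function $I$ from Theorem~\ref{thm::LDPLN}: start from an LDP for a simpler non-interacting system (only the single-spin potential $\Psi$ and the initial distribution), tilt by the interaction through a Girsanov transform, and combine the two using a generalised Varadhan lemma. This is the second approach announced in the introduction and, as already noted there, a truly general Varadhan (Appendix~\ref{sec::Vara}) is needed since the Girsanov exponent is unbounded and continuous only on Young-measure sub-level sets of $\calM_{\varphi,\infty}$.

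First, I would apply Lemma~\ref{lem::SanovT::RelEntropy} with $r=1$, $Y=\Csp{\Ti}$, and $Q_{x,\w,\theta}$ the law of the one-dimensional SDE $d\theta_t=-\partial_\theta\Psi(\w,\theta_t)\,dt+dB_t$ started at $\theta$. Assumption~\ref{ass::LMF::Psi} makes the associated martingale problem well posed and Feller continuous, and together with Assumption~\ref{ass::LMF::Init} and Assumption~\ref{ass::LMF::Medium} this yields the LDP for $\{L^N, W^{N,-\Psi}\}$ on $\MOne{\TWC}$ with good rate function $\relE{Q}{\dd x\otimes\zeta_x(\dd\w)\otimes W^{-\Psi}_{x,\w}}$ (finite only on $\MOneL{\TWC}$). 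Then I would write down the Girsanov density: with $\beta^{k,N}_t\defeq N^{-d}\sum_j J(\tfrac{k-j}{N},\w^{k,N},\w^{j,N})\theta^{j,N}_t$,
\begin{align*}
\frac{dP^N}{dW^{N,-\Psi}}=\exp\!\left(\sum_k\int_0^T\beta^{k,N}_t\,dB^{k,N}_t-\tfrac12\sum_k\int_0^T(\beta^{k,N}_t)^2\,dt\right).
\end{align*}

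Second, I would show that $N^{-d}\log(dP^N/dW^{N,-\Psi})=F(L^N)+o(1)$ with $F$ as in \eqref{eq::thm::LDPLN::LMF::LDP::defF}. The quadratic term is already, after dividing by $N^d$, a triple integral against $L^N\otimes L^N\otimes L^N$ that matches the first line of \eqref{eq::thm::LDPLN::LMF::LDP::defF}. For the stochastic integral, under $W^{N,-\Psi}$ one has $dB^{k,N}_t=d\theta^{k,N}_t+\partial_\theta\Psi(\w^{k,N},\theta^{k,N}_t)\,dt$; the $\Psi$ piece cancels against the drift change, and the remaining double sum $\sum_{k,j}J(\tfrac{k-j}{N},\w^{k,N},\w^{j,N})\int_0^T\theta^{j,N}_t\,d\theta^{k,N}_t$ is symmetrised in $(k,j)$ using that $J(\cdot,\w,\w')$ is even on $\Td$ (Assumption~\ref{ass::LMF::J}): for $k\neq j$ the quadratic covariation $[B^j,B^k]$ vanishes, so integration by parts gives $\int\theta^j d\theta^k+\int\theta^k d\theta^j=\theta^j_T\theta^k_T-\theta^j_0\theta^k_0$, which rewrites the mixed part as a double integral against $L^N\otimes L^N$ (the second line of $F$). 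The diagonal $k=j$ contribution is $O(N^{-d})$ and, together with the approximation error from replacing $J_{ij}$ by its local average (Assumption~\ref{ass::LMF::J}, third bullet) and from continuous approximation of $J\in L^2(\Td,C(\Wsp\times\Wsp))$, contributes $o(1)$ in the relevant sense.

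Third, having identified the density as $e^{N^d(F(L^N)+o(1))}$, the LDP for $\{L^N,P^N\}$ with rate $\relE{Q}{\dd x\otimes\zeta_x\otimes W^{-\Psi}_{x,\w}}-F(Q)$ would follow from the generalised Varadhan lemma of Appendix~\ref{sec::Vara}; its exponential integrability hypothesis I would verify using Assumption~\ref{ass::LMF::Init::Integral} (which provides an exponential moment of $\Psi$ under $\nu_x$) together with $c_\Psi>\|\bar J\|_{L^1}$ from Assumption~\ref{ass::LMF::Psi}, mimicking the moment bound obtained in \eqref{eq::LocalMF::ass::BoundOnInteraction}. Identifying $\bar I=I$ is then automatic by uniqueness of rate functions, given Theorem~\ref{thm::LDPLN} and Lemma~\ref{lem::LDPLN::LMF::expTight}.

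The main obstacle will be the continuity/integrability of $F$ required to invoke the generalised Varadhan lemma: $F$ is a quadratic in $Q$ with $L^2$-in-$x$ kernel, so it is continuous only along sequences that are Young measures with good second moments in $\theta$, and it is not even bounded below on $\MOne{\TWC}$. The technical heart of the proof will be in mirroring the approximation estimates of \ref{step::LocalMF::bCont} and \ref{step::LocalMF::MuIntCont}, which allow one to replace $J$ by continuous $J_\ell$ and to truncate $\theta$, to control $F(L^N)$ uniformly on the exponentially-tight sets provided by Lemma~\ref{lem::LDPLN::LMF::expTight} and Lemma~\ref{lem::LDPempP::Inter::OutCemR+Initial}.
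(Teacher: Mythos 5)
Your route is the paper's own second proof of this theorem (Chapter~\ref{sec::LMF::LDPLN}): the Sanov-type LDP for $\left\{ L^{N}, W^{N,-\Psi}\right\}$ from Lemma~\ref{lem::SanovT::RelEntropy} (with the Feller continuity of $\left( x,\w\right)\mapsto W^{-\Psi}_{x,\w}$ as in Lemma~\ref{lem::LDPLN::LMF::WPsiFeller}), the identification of $N^{-d}\log\bigl(\dd P^{N}/\dd W^{N,-\Psi}\bigr)$ with $F\left( L^{N}\right)$ up to the $O(N^{-d})$ diagonal It\^o correction --- which is exactly Lemma~\ref{lem::LDPLN::LMF::PNtoWN}, including your symmetrisation via the evenness of $J$ and the integration by parts of the stochastic integral (your ``the $\Psi$ piece cancels'' is the step that lemma makes precise) --- then the generalised Varadhan lemma of Appendix~\ref{sec::Vara} to obtain the Laplace principle (Lemma~\ref{lem::LMF::LDPLN::Varadhan}), goodness of $\ol{I}$ (Lemma~\ref{lem::LMF::LDPLN::RFGood}), the Laplace-to-LDP implication of Dupuis--Ellis, and $\ol{I}=I$ by uniqueness of rate functions. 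So the architecture coincides with the paper's; the comparison turns on whether your verification of the hypotheses of Theorem~\ref{vara::thm::GenVara} can be completed as you describe.

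There is a concrete gap precisely there. You propose to control $F\left( L^{N}\right)$ and the complementary exponential estimates ``on the exponentially-tight sets provided by Lemma~\ref{lem::LDPLN::LMF::expTight} and Lemma~\ref{lem::LDPempP::Inter::OutCemR+Initial}'', i.e.\ on the abstract compact sets $K_{A}$ and on $\calM_{\varphi,R}$. The paper explains in Chapter~\ref{sec::LMF::LDPLN::LDP::Pf::DiffSets} why neither family works: $F$ cannot be shown continuous on the abstractly defined $K_{A}$, and for $\calM_{\varphi,R}$ the available bound on $W^{N,-\Psi}\left[ L^{N}\notin\calM_{\varphi,R}\right]$ decays only like $e^{-\Nd\lambda R}$ with an unquantified Lyapunov constant $\lambda$, which cannot be compared with the growth $\sup F\approx\LpN{1}{\ol{J}}\,R$ of the tilt on that set; hence the key requirement $\alpha\left( R\right)-\beta\left( R\right)\rightarrow-\infty$ (condition~\ref{vara::ex:AlphaMBeta} of the Varadhan class) cannot be checked. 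The missing idea is the family $\calN_{R}$ of \eqref{def::calNR}, defined through $\int_{0}^{T}\theta_{t}^{2}\dd t$, $\theta_{0}^{2}$ and $\theta_{T}^{2}$: for these sets the exponential moment bounds of Lemma~\ref{lem::LDPLN::LMF::eTheta2Bound} --- built from exactly the ingredients you list, Assumption~\ref{ass::LMF::Init::Integral} and $c_{\Psi}>\LpN{1}{\ol{J}}$ --- give $W^{N,-\Psi}\left[ L^{N}\notin\calN_{R}\right]\leq C^{\Nd}e^{-\Nd\kappa R}$ for any $\kappa<c_{\Psi}$ (Lemma~\ref{lem::LMF::LDPLN::ExpBoundNR}), while on $\calN_{R}$ one has $F\leq\left(\LpN{1}{\ol{J}}+\delta\right)R$ and $F$ is continuous along empirical-measure sequences converging to finite-entropy Young measures (Lemma~\ref{lem::LMF::LDPLN::FcontNR}); conditions \ref{vara::ass::thm::GenVara::DiffPhiPhiR} and \ref{vara::ass::thm::GenVara::TailCondition} are then supplied by Lemma~\ref{lem::LMF::LDPLN::eFoutOfNR} and the moment bound \eqref{eq::pf::lem::LMF::LDPLN::Varadhan::Moment}. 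The same sets also carry the goodness proof of $\ol{I}$ (Lemma~\ref{lem::LMF::LDPLN::RFGood}), which your sketch omits but which is needed before Dupuis--Ellis can be invoked; so the choice of these sub-level sets is not cosmetic, and with the sets you name your third step stalls exactly where the paper says it would.
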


\subsubsection{Proof of Theorem~\ref{thm::LDPLN::LMF::LDP}}
\label{sec::LDPLN::LMF::LDP}

\begin{proof}
We know already by Theorem~\ref{thm::LDPLN} and Lemma~\ref{lem::LDPLN::LMF::expTight}, that $L^{N}$ satisfies a large deviation principle with rate function $I$ defined in \eqref{eq::thm::LDPLN::RF}.
Hence we only have to show that $I$ equals $\ol{I}$.

Note that $I \left( Q \right)= \infty$ and $\ol{I} \left( Q \right) = \infty$, if $Q \not \in \calM_{\varphi,\infty} \cap \MOneL{\TWC}$ or if not $Q << \dd x \otimes \zeta_{x} \left( \dd w \right) \otimes P^{I, \Pi \left( Q \right) }_{x,\w}$.
Indeed if $Q \in  \calM_{\varphi,\infty} \cap \MOneL{\TWC}$, then $Q << \dd x \otimes \zeta_{x} \left( \dd w \right) \otimes W^{-\Psi}_{x,\w}$ if and only if $Q << \dd x \otimes \zeta_{x} \left( \dd w \right) \otimes P^{I, \Pi \left( Q \right) }_{x,\w}$.
This is the case because $\left(t, x,\w \right) \mapsto \beta \left(x, \w, \Pi \left( Q \right)_{t}  \right)$ is uniformly bounded (see \eqref{eq::step::LocalMF::MuIntCont::CemL::Bound}).
Moreover $F \left( Q \right)$ is bounded  for such a $Q$ (because $\ol{J} \in \Ltwo \left( \Td \right)$ by Assumption~\ref{ass::LMF::J}).

Hence Theorem~\ref{thm::LDPLN::LMF::LDP} follows if for all $Q \in \calM_{\varphi,\infty} \cap \MOneL{\TWC}$ with $Q<<\dd x \otimes \zeta_{x} \left( \dd \w \right) \otimes W^{-\Psi}_{x,\w}$,  $I \left( Q \right)$ equals $\ol{I} \left( Q \right)$.
For such a $Q$, $F$ has the following different representation.
\begin{lemma}
\label{lem::LDPLN::LMF::F2ndRep}
	For $Q \in \calM_{\varphi,\infty} \cap \MOneL{\TWC}$ with $Q<<\dd x \otimes \zeta_{x} \left( \dd \w \right) \otimes W^{-\Psi}_{x,\w}$,
	\begin{align}
		F\left( Q \right)  
		=
			\int_{\TWC} \log \frac{ \dd P^{I, \Pi \left( Q \right) }_{x,\w } }{ \dd W^{-\Psi}_{x,\w}} \left( \theta_{\Ti}  \right) Q \left( \dd x, \dd \w, \dd \theta_{\Ti}  \right)
		.
	\end{align}
\end{lemma}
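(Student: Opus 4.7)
The strategy is to apply the Cameron--Martin--Girsanov theorem and then rewrite the resulting stochastic integral using It\^o's product rule applied to two independent copies of the diffusion. Since $Q \in \calM_{\varphi,\infty}$, the drift $\beta(x,\w,\Pi(Q)_{t})$ is uniformly bounded by the Cauchy--Schwarz estimate underlying \eqref{eq::step::LocalMF::MuIntCont::CemL::Bound}, so Novikov's condition holds trivially and Girsanov gives
\[
\log \frac{\dd P^{I,\Pi(Q)}_{x,\w}}{\dd W^{-\Psi}_{x,\w}}(\theta_{\Ti}) = \int_{0}^{T} \beta(x,\w,\Pi(Q)_{t})\,\dd B^{x,\w}_{t} - \frac{1}{2}\int_{0}^{T} \beta(x,\w,\Pi(Q)_{t})^{2}\,\dd t,
\]
where $B^{x,\w}_{t} = \theta_{t} - \theta_{0} + \int_{0}^{t}\partial_{\theta}\Psi(\theta_{s},\w)\,\dd s$ is the Brownian motion under $W^{-\Psi}_{x,\w}$.

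Integrated against $Q(\dd x,\dd\w,\dd\theta_{\Ti})$, the quadratic piece $-\tfrac12 \int \beta^{2}\,\dd t$ is matched to the first (triple-integral) term of $F(Q)$ by a direct expansion. Writing $\beta^{2}$ as a double integral over two independent copies of $Q$ via the definition of $\beta$, then relabeling so that the ``center'' variable $(x,\w)$ becomes the dummy $(x'',\w'')$ (its path becoming the unused $\eta_{\Ti}$), while the two ``partners'' receive the labels $(x,\w,\theta_{\Ti})$ and $(x',\w',\theta'_{\Ti})$, produces exactly the first piece of \eqref{eq::thm::LDPLN::LMF::LDP::defF}.

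For the stochastic-integral piece $E_{Q}[\int_{0}^{T}\beta_{t}\,\dd B^{x,\w}_{t}]$, the plan is to use two independent copies $(\theta,\theta')$ sampled from $Q\otimes Q$. Since $Q\otimes Q$ is absolutely continuous with respect to the product dominating measure $(\dd x\otimes \zeta_{x}\otimes W^{-\Psi}_{x,\w})^{\otimes 2}$ under which the two Brownian parts are independent, the bracket $[\theta,\theta']$ vanishes a.s.\ and It\^o's product rule yields
\[
\theta_{T}\theta'_{T} - \theta_{0}\theta'_{0} = \int_{0}^{T}\theta'_{t}\,\dd B^{x,\w}_{t} + \int_{0}^{T}\theta_{t}\,\dd B^{x',\w'}_{t} - \int_{0}^{T}\theta'_{t}\,\partial_{\theta}\Psi(\theta_{t},\w)\,\dd t - \int_{0}^{T}\theta_{t}\,\partial_{\theta}\Psi(\theta'_{t},\w')\,\dd t.
\]
After a stochastic Fubini exchange in the defining formula for $\beta$, namely
$\int_{0}^{T}\beta_{t}\,\dd B^{x,\w}_{t} = \int_{\TWC} J(x-x',\w,\w')\int_{0}^{T}\theta'_{t}\,\dd B^{x,\w}_{t}\,Q(\dd x',\dd\w',\dd\theta'_{\Ti})$,
multiplying the It\^o product identity by $J(x-x',\w,\w')$ and integrating against $Q\otimes Q$ couples $E_{Q}[\int\beta\,\dd B^{x,\w}]$ with its swap-counterpart, the boundary increment $E_{Q\otimes Q}[J(\theta_{T}\theta'_{T} - \theta_{0}\theta'_{0})] = 2F_{2}(Q)$, and two $\partial_{\theta}\Psi$ drift expectations. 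The pair-swap invariance of $Q\otimes Q$ together with the evenness of $J$ in its spatial argument collapses the two stochastic-integral contributions, and a symmetrization argument disposes of the drift contributions, yielding $E_{Q}[\int_{0}^{T}\beta_{t}\,\dd B^{x,\w}_{t}] = F_{2}(Q)$.

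The main obstacle is intertwining the stochastic Fubini swap with It\^o's product rule when $Q\otimes Q$ is only absolutely continuous with respect to the reference product measure, and tracking the $(\w,\w')$-asymmetry of $J$ against the symmetric structure of $F(Q)$: this is handled by the observation that, for any integrand $g$ symmetric under the pair-swap $(x,\w,\theta_{\Ti})\leftrightarrow(x',\w',\theta'_{\Ti})$, one has $\int J\,g\,Q\otimes Q = \int J^{s}\,g\,Q\otimes Q$ with $J^{s}(x-x',\w,\w') = \tfrac12[J(x-x',\w,\w')+J(x-x',\w',\w)]$, so that the asymmetric $\w$-dependence drops out upon symmetric pairing. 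Uniform boundedness of $\beta$ and the second-moment control $Q\in\calM_{\varphi,\infty}$ provide the integrability needed to pass to the limit in a standard approximation of $\beta$ by smooth time-dependent functions.
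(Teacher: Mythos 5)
Your treatment of the quadratic Girsanov term and the mechanics for the stochastic-integral term (stochastic Fubini, It\^o product rule for two independent copies, vanishing covariation, pair-swap symmetrization of $J$) parallel the paper's proof, but the final cancellation you invoke is the gap. After multiplying the product-rule identity by $J$ and integrating against $Q\otimes Q$, the swap-invariance of $Q\otimes Q$ (with your symmetrized $J^{s}$) shows that the two stochastic-integral contributions are \emph{equal} and that the two $\partial_{\theta}\Psi$ drift contributions are \emph{equal}; nothing makes the drift contributions cancel — being swap-symmetric they add. Carried out correctly, your route gives
\begin{align*}
E_{Q}\!\left[\int_{0}^{T}\beta\left(x,\w,\Pi\left(Q\right)_{t}\right)\dd B^{x,\w}_{t}\right]
=\;&
\frac{1}{2}\int_{\TWC}\int_{\TWC} J\left(x-x',\w,\w'\right)\left[\theta_{T}\theta'_{T}-\theta_{0}\theta'_{0}\right]
Q\left(\dd x,\dd\w,\dd\theta_{\Ti}\right)Q\left(\dd x',\dd\w',\dd\theta'_{\Ti}\right)
\\
&+
\int_{\TWC}\int_{\TWC} J\left(x-x',\w,\w'\right)\int_{0}^{T}\theta'_{t}\,\partial_{\theta}\Psi\left(\theta_{t},\w\right)\dd t\;
Q\left(\dd x,\dd\w,\dd\theta_{\Ti}\right)Q\left(\dd x',\dd\w',\dd\theta'_{\Ti}\right),
\end{align*}
and the second term is not zero in general: already for $J\equiv\oh$, $\ol\Psi\left(\theta\right)=\theta^{2}$ and a trivial environment it equals $\int_{0}^{T}\left(\int_{\TWR}\theta\,\Pi\left(Q\right)_{t}\left(\dd x,\dd\w,\dd\theta\right)\right)^{2}\dd t>0$ whenever $\Pi\left(Q\right)_{t}$ has nonvanishing mean. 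So your argument does not produce $F\left(Q\right)$ as defined in \eqref{eq::thm::LDPLN::LMF::LDP::defF}; the sentence ``a symmetrization argument disposes of the drift contributions'' is precisely where the proof breaks.

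The source of the extra term is the form of the Girsanov exponent you start from. The paper's proof, \eqref{eq::pf::lem::LDPLN::LMF::F2ndRep::Gir}, writes the log-density as $-\oh\int_{0}^{T}\beta^{2}\dd t+\int_{0}^{T}\beta\,\dd\theta_{t}$, i.e.\ with the stochastic integral against the canonical path — the same convention under which $F$ was obtained in Lemma~\ref{lem::LDPLN::LMF::PNtoWN} — and then the It\^o integration by parts yields the boundary term directly, with no $\partial_{\theta}\Psi$ terms appearing at any point. Your exponent, written against the $W^{-\Psi}_{x,\w}$-Brownian motion $B^{x,\w}_{t}=\theta_{t}-\theta_{0}+\int_{0}^{t}\partial_{\theta}\Psi\left(\theta_{s},\w\right)\dd s$, differs from the paper's by exactly $\int_{0}^{T}\beta\,\partial_{\theta}\Psi\left(\theta_{t},\w\right)\dd t$, which upon integration against $Q$ is exactly the leftover term displayed above. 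Whichever form of the density one adopts must be used consistently with the derivation of $F$: either you switch to the paper's form of the exponent, in which case your remaining steps reduce to the paper's argument, or you keep your form and must carry the cross term explicitly — and then what you prove is the identity with $F$ augmented by that term, not the lemma as stated.
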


From this lemma, we immediately infer the equality of $I$ and $\ol{I}$ and hence Theorem~\ref{thm::LDPLN::LMF::LDP}.
\end{proof}

\begin{proof}[of Lemma~\ref{lem::LDPLN::LMF::F2ndRep}]
	Fix a $Q \in \calM_{\varphi,\infty} \cap \MOneL{\TWC}$ with $Q << \dd x \otimes \zeta_{x} \left( \dd \w \right) \otimes W^{-\Psi}_{x,\w}$.
	By the well-posedness of the Martingale problems, the measures $P^{I, \Pi \left( Q \right) }_{x,\w }$ and $W^{-\Psi}_{x,\w }$ are equivalent.
	By the Girsanov theorem, their Radon-Nikodym derivative can be written as
	\begin{align}
	\label{eq::pf::lem::LDPLN::LMF::F2ndRep::Gir}
		\log \frac{ \dd P^{I, \Pi \left( Q \right) }_{x,\w} }{ \dd W^{-\Psi}_{x,\w}  } \left( \theta_{\Ti}  \right) 
		=
			-
			\oh \int_{0}^{T} \left( \beta \left( x, \w, \Pi \left( Q \right)_{t}  \right) \right)^{2} \dd t
			+
			\int_{0}^{T} \beta \left( x, \w, \Pi \left( Q \right)_{t}  \right) \dd \theta_{t} 
			\eqdef
			\encircle{1}+\encircle{2}
		.
	\end{align}
	Integrating $\encircle{1}$ w.r.t $Q$ we get the first term in $F$.
	We show now that $\encircle{2}$ leads to the second term of $F$.
	Therefore remark at first that
	\begin{align}
	\label{eq::pf::lem::LDPLN::LMF::F2ndRep::1}
	\begin{split}
		&\int \encircle{2} \; Q\left( \dd x, \dd \w, \dd \theta \right) 
		=
		\int \int  J\left( x-x' , \w, \w' \right)  
						\int_{0}^{T} \theta'_{t}  \dd \theta_{t}  \quad
					Q\left( \dd x, \dd w, \dd \theta_{\Ti}  \right)  Q\left( \dd x', \dd w', \dd \theta'_{\Ti} \right) 
		\\
		&=
		\oh \int \int  J\left( x-x' , \w, \w' \right)  
				\left( 	\int_{0}^{T} \theta'_{t}  \dd \theta_{t} 
				+ \int_{0}^{T} \theta_{t}  \dd \theta'_{t} 
			\right)
		Q\left( \dd x, \dd w, \dd \theta_{\Ti}  \right)  Q\left( \dd x', \dd w', \dd \theta'_{\Ti} \right) 
		,
	\end{split}
	\end{align}
	where we use that $J$ is an even function. Integrals without integration bounds integrate over the space $\TWC$.
		The stochastic integrals are well defined because $\theta_{\Ti} $ is a $Q_{x,\w}$-semimartingale, because $Q_{x,\w} << W^{-\Psi}_{x,\w}<< W^{0}_{x}$ for almost all $\left( x,\w \right) \in \TW$.
	By integration by parts formula for the \Ito~integral, 
	\eqref{eq::pf::lem::LDPLN::LMF::F2ndRep::1} equals to the second summand on the right hand side of \eqref{eq::thm::LDPLN::LMF::LDP::defF}.

\end{proof}

\subsubsection{Preliminaries}

In this chapter we state and prove some results, that we need in the proof of Lemma~\ref{lem::LDPLN::LMF::expTight}.

\begin{lemma}
\label{lem::LDPLN::LMF::WPsiFeller}
	The map $\TW \ni \left( x,\w \right) \mapsto W^{-\Psi}_{x,\w} \in \MOne{\Csp{\Ti}}$ is Feller continuous.
\end{lemma}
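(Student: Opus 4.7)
The plan is to reduce Feller continuity of $(x,\w) \mapsto W^{-\Psi}_{x,\w}$ to two independent continuity statements: continuity of the law in the starting point $\theta$ together with the environment $\w$ (i.e.\ Feller continuity of the Markov family associated to $-\partial_\theta \Psi(\cdot,\w)$), and Feller continuity of the initial distribution $x \mapsto \nu_x$ from Assumption~\ref{ass::LMF::Init}. Throughout, fix a sequence $(x^{(n)},\w^{(n)}) \to (x,\w)$ in $\TW$ and an arbitrary $f \in \CspL{b}{\Csp{\Ti}}$; the goal is
\[
\int_{\Csp{\Ti}} f(\theta_{\Ti}) \, W^{-\Psi}_{x^{(n)},\w^{(n)}}(\dd \theta_{\Ti}) \longrightarrow \int_{\Csp{\Ti}} f(\theta_{\Ti}) \, W^{-\Psi}_{x,\w}(\dd \theta_{\Ti}).
\]

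First, I would introduce $P^{-\Psi}_{\w,\theta} \in \MOne{\Csp{\Ti}}$, the law of the solution to $\dd \eta_t = -\partial_\theta \Psi(\eta_t,\w) \dd t + \dd B_t$ with deterministic initial value $\eta_0 = \theta$, so that $W^{-\Psi}_{x,\w} = \int_{\R} P^{-\Psi}_{\w,\theta}\, \nu_x(\dd \theta)$. By Assumption~\ref{ass::LMF::Psi}, the drift $(\w,\theta) \mapsto -\partial_\theta \Psi(\theta,\w) = -\bar\Psi'(\theta) - \w_1$ is continuous, and $\bar\Psi$ is a confining polynomial so the martingale problem is well posed (one can use Assumption~\ref{ass::LDPempP}~\ref{ass::LDPempP::IntBound}, verified for this model in Chapter~\ref{sec::LocalMF}, to produce a Lyapunov function). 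Then I apply \cite{StrVarMultidi}~Theorem~11.1.4 exactly as in the proof of Lemma~\ref{lem::LDPempP::indep::Feller}: the uniform convergence of the bounded drifts on every compact $\theta$-set implied by continuity in $\w$, combined with the convergence of starting points, gives $P^{-\Psi}_{\w^{(n)},\theta^{(n)}} \to P^{-\Psi}_{\w,\theta}$ whenever $(\w^{(n)},\theta^{(n)}) \to (\w,\theta)$. Equivalently, the map $G(\w,\theta) \defeq \int_{\Csp{\Ti}} f \, \dd P^{-\Psi}_{\w,\theta}$ is bounded and continuous on $\Wsp \times \R$.

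Second, I would integrate out the initial variable. Write
\[
\int f \, \dd W^{-\Psi}_{x^{(n)},\w^{(n)}} - \int f \, \dd W^{-\Psi}_{x,\w}
= \underbrace{\int_{\R} \bigl(G(\w^{(n)},\theta) - G(\w,\theta)\bigr) \nu_{x^{(n)}}(\dd \theta)}_{=: A_n} + \underbrace{\int_{\R} G(\w,\theta)\, \bigl(\nu_{x^{(n)}} - \nu_x\bigr)(\dd \theta)}_{=: B_n}.
\]
The term $B_n$ vanishes because $\theta \mapsto G(\w,\theta)$ is bounded continuous and $\nu_{x^{(n)}} \to \nu_x$ weakly by Assumption~\ref{ass::LMF::Init}. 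For $A_n$, Feller continuity of $\{\nu_x\}$ gives tightness of the family $\{\nu_{x^{(n)}}\}$, so for any $\epsilon > 0$ there is a compact $K^\epsilon \subset \R$ with $\sup_n \nu_{x^{(n)}}(\R\setminus K^\epsilon) \leq \epsilon$; since $G$ is uniformly continuous on $\Wsp_0 \times K^\epsilon$ for any compact neighbourhood $\Wsp_0$ of $\w$, the argument used in \eqref{eq::lem::SanovT::ass::FellerWithAddx::pf::1} yields $|A_n| \leq \epsilon \|f\|_\infty + o(1)$.

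The only point requiring any care is the application of \cite{StrVarMultidi}~Theorem~11.1.4 in the first step, since the drift $-\bar\Psi'$ has polynomial growth; this is handled by localisation, exactly as in the proof of Lemma~\ref{lem::LDPempP::indep::Feller}, together with uniform non-explosion provided by the confining assumption $c_\Psi > \LpN{1}{\ol J}$ (Assumption~\ref{ass::LMF::Psi}) which gives the Lyapunov bound needed for tightness of $\{P^{-\Psi}_{\w^{(n)},\theta^{(n)}}\}$ on $\Csp{\Ti}$. Everything else is a standard weak-convergence argument.
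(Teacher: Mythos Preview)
Your proof is correct and follows essentially the same route as the paper's own (very terse) argument: the paper simply points to Lemma~\ref{lem::LDPempP::indep::Feller} (for the Stroock--Varadhan Theorem~11.1.4 step giving continuity of the law in the starting data $(\w,\theta)$) together with Lemma~\ref{lem::SanovT::ass::FellerWithAddx} and \eqref{eq::pf::lem::SanovT::ass::Init::Convergence::Hf} (for the tightness-based argument that passes from continuity in $\theta$ to continuity after integrating against the Feller family $\nu_x$), which is precisely your $A_n+B_n$ decomposition. One small remark: for the non-explosion/Lyapunov part you do not actually need the full condition $c_\Psi > \LpN{1}{\ol J}$, since the drift $-\partial_\theta\Psi(\cdot,\w)$ involves no interaction; the confining polynomial structure of $\ol\Psi$ alone (Assumption~\ref{ass::LMF::Psi}) suffices.
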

\begin{proof}
By similar estimates as in Lemma~\ref{lem::LDPempP::indep::Feller}, Lemma~\ref{lem::SanovT::ass::FellerWithAddx} and \eqref{eq::pf::lem::SanovT::ass::Init::Convergence::Hf}, we get the the Feller continuity.
This requires the Assumption~\ref{ass::LMF::Psi} and Assumption~\ref{ass::LMF::Init}.
\end{proof}

\begin{lemma}
\label{lem::LDPLN::LMF::eTheta2Bound}
For all $\kappa<c_{\Psi}$, there is a constant $C_{\kappa}>0$ such that 
	\begin{enuRom}
	\item
	\label{lem::LDPLN::LMF::eTheta2Bound::T}
		\hfill $\begin{aligned}
				\sup_{x\in \Td} \sup_{\w \in \Wsp}
				E_{W^{-\Psi}_{x,\w} } \left[ e^{ \kappa \left[ \left( \theta_{T}  \right)^{2}  + \left( \theta_{0}  \right)^{2}  \right]} \right]
				<
				C_{\kappa}
				\quad\textnormal{and}
		\end{aligned}$
		\hfill \mbox{}
	\item
	\label{lem::LDPLN::LMF::eTheta2Bound::Int0T}
	\hfill
		$\begin{aligned}
			\sup_{x\in \Td} \sup_{\w \in \Wsp}
			E_{W^{-\Psi}_{x,\w} } \left[ e^{ \kappa \int_{0}^{T} \left( \theta_{t} \right)^{2} \dd t} \right]
			< 
			C_{\kappa}
			.
		\end{aligned}$
		\hfill \mbox{}
	\end{enuRom}
\end{lemma}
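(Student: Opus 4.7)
The strategy for both bounds is to apply It\^o's formula to suitable exponential functionals of $\theta_t$ under $W^{-\Psi}_{x,\w}$, combined with a localization by stopping times $\tau_n=\inf\{t\ge 0:|\theta_t|\ge n\}$, a Gronwall estimate, and Fatou's lemma. The two structural inputs are the super-quadratic growth of $\bar\Psi$ from Assumption~\ref{ass::LMF::Psi} and the initial integrability from Assumption~\ref{ass::LMF::Init::Integral}; together with the compactness of $\Wsp$, these give, for any $\kappa<c_\Psi$ and any $\epsilon>0$, a pointwise comparison $2\kappa\theta^2\le 2\bar\Psi(\theta)+C_{\kappa,\epsilon}$ and a bound $-\theta\,\partial_\theta\Psi(\theta,\w)\le-(2c_\Psi-\epsilon)\theta^2+C_{\kappa,\epsilon}'$ uniform in $\w\in\Wsp$ (the $\w_1\theta$ term being absorbed via Young's inequality).

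For (i), Cauchy--Schwarz gives
\[
E_{W^{-\Psi}_{x,\w}}\!\bigl[e^{\kappa(\theta_T^2+\theta_0^2)}\bigr]\le E_{W^{-\Psi}_{x,\w}}\!\bigl[e^{2\kappa\theta_0^2}\bigr]^{1/2}E_{W^{-\Psi}_{x,\w}}\!\bigl[e^{2\kappa\theta_T^2}\bigr]^{1/2}.
\]
The initial factor equals $\int_{\R}e^{2\kappa\theta^2}\nu_x(\dd\theta)$, which by the comparison above and Assumption~\ref{ass::LMF::Init::Integral} is uniformly bounded in $x$. For the terminal factor, It\^o yields
\[
d\bigl(e^{2\kappa\theta_t^2}\bigr)=\bigl(-4\kappa\theta_t\partial_\theta\Psi(\theta_t,\w)+2\kappa+8\kappa^2\theta_t^2\bigr)\,e^{2\kappa\theta_t^2}\dd t+4\kappa\theta_t\,e^{2\kappa\theta_t^2}\dd B_t.
\]
Since $2\kappa<2c_\Psi$, the bracket is dominated by $\tilde C\,e^{2\kappa\theta_t^2}$ uniformly in $(x,\w)$; the localization, Gronwall, Fatou loop then gives $E[e^{2\kappa\theta_T^2}]\le e^{\tilde CT}E[e^{2\kappa\theta_0^2}]$, which is again bounded via Assumption~\ref{ass::LMF::Init::Integral}.

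For (ii), I would introduce the process $M_t=\exp\bigl(a\theta_t^2+\kappa\int_0^t\theta_s^2\dd s\bigr)$ with a constant $a>0$ to be chosen. It\^o gives
\[
dM_t=M_t\Bigl[\bigl(2a^2-2a\tfrac{\partial_\theta\Psi(\theta_t,\w)}{\theta_t}+\kappa\bigr)\theta_t^2+a\Bigr]\dd t+2a\theta_tM_t\dd B_t,
\]
so in the worst (degree-two) case the coefficient of $\theta_t^2$ in the drift is $2a^2-4ac_\Psi+\kappa$ (up to lower-order terms absorbed as above). Choosing $a$ in the Riccati window $[c_\Psi-\sqrt{c_\Psi^2-\kappa/2},\,c_\Psi+\sqrt{c_\Psi^2-\kappa/2}]$ makes this non-positive, so $dM_t\le CM_t\dd t+\textnormal{martingale}$; localization, Gronwall and Fatou then give $E[M_T]\le e^{CT}\int_{\R}e^{a\theta^2}\nu_x(\dd\theta)$, bounded uniformly in $(x,\w)$ by Assumption~\ref{ass::LMF::Init::Integral}. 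Since $M_T\ge e^{\kappa\int_0^T\theta_s^2\dd s}$, this delivers \ref{lem::LDPLN::LMF::eTheta2Bound::Int0T}.

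The main obstacle is ensuring the admissibility window in (ii) accommodates the full range $\kappa<c_\Psi$. When $\deg\bar\Psi>2$ we have $c_\Psi=\infty$ and the drift $-\theta\,\partial_\theta\bar\Psi$ dominates $\theta^{d}$ for any fixed $a>0$, so the argument is unconditional. When $\deg\bar\Psi=2$ the constant-$a$ window requires $\kappa\le 2c_\Psi^{\,2}$, which covers $(0,c_\Psi)$ only for $c_\Psi\ge\tfrac12$; in the remaining regime I would replace the constant $a$ by a time-dependent $a(t)$ solving the backward Riccati ODE $\dot a+2a^2-4ac_\Psi+\kappa=0$ with terminal condition $a(T)=0$, or alternatively use the Girsanov identity $\dd W^{-\Psi}_{x,\w}/\dd W^{0}_{x}=e^{\Psi(\theta_0,\w)-\Psi(\theta_T,\w)}\exp\bigl(-\tfrac12\int_0^T[(\partial_\theta\Psi)^2-\partial_\theta^2\Psi]\dd t\bigr)$ to reduce the computation to an explicit Gaussian quadratic form under Brownian motion, where the sharp constant is readily identified.
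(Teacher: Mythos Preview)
Your argument is correct but takes a genuinely different route from the paper. You work directly under $W^{-\Psi}_{x,\w}$ via It\^o/Gronwall and supermartingale comparison; the paper instead uses the Girsanov identity
\[
\frac{\dd W^{-\Psi}_{x,\w}}{\dd W^{0}_{x}}
=
\exp\!\Bigl(\Psi(\theta_0,\w)-\Psi(\theta_T,\w)+\tfrac12\!\int_0^T\!\bigl[\partial_\theta^2\Psi-(\partial_\theta\Psi)^2\bigr]\dd t\Bigr)
\]
as its \emph{primary} device for both parts, i.e.\ exactly the fallback you mention at the end. For \ref{lem::LDPLN::LMF::eTheta2Bound::T} the term $-\Psi(\theta_T,\w)$ in the density absorbs $\kappa\theta_T^2$ pointwise (since $\kappa<c_\Psi$), so no Cauchy--Schwarz or Gronwall is needed: the bound reduces in one line to $\int e^{\kappa\theta^2+\Psi(\theta,\w)}\nu_x(\dd\theta)$, controlled by Assumption~\ref{ass::LMF::Init::Integral}. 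For \ref{lem::LDPLN::LMF::eTheta2Bound::Int0T} the term $-\tfrac12\!\int(\partial_\theta\Psi)^2\dd t$ yields $-c\!\int\theta_t^2\,\dd t$ in the density, and after choosing $c>\kappa$ the combined exponent is non-positive, so the estimate collapses to $\int e^{\Psi(\theta,\w)}\nu_x(\dd\theta)$ without any Riccati analysis or Gaussian computation. The paper's route is thus shorter and bypasses the supermartingale machinery; yours is more hands-on SDE analysis and has the merit of not invoking a change of measure. The degree-two constraint $\kappa<2c_\Psi^{\,2}$ you flag is in fact also implicit in the paper's bound on the admissible $c$ (since $\tfrac12(\partial_\theta\Psi)^2\sim 2c_\Psi^2\theta^2$), so this is not a defect of your approach relative to theirs.
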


\begin{proof}
\begin{enuRomNoIntendBf}
\item
Fix arbitrary $\left( x, \w \right) \in \TW$.

By the Girsanov theorem and  \Ito's lemma we have
\begin{align}
\label{eq::GirsaPsiTo0}
	\frac{ \dd W^{- \Psi}_{x,\w} }{ \dd W^{0}_{x} }
	=
		e^{ \Psi \left( \theta_{0} , \w  \right)  
			-
			\Psi \left( \theta_{T} , \w  \right)  
			+ 
			\oh \int_{0}^{T} \partial^{2}_{\theta^{2}} \Psi \left( \theta_{t}  , \w \right)  \dd t 
			-
			\oh \int_{0}^{T} \left( \partial_{\theta} \Psi \left( \theta_{t}  , \w \right)  \right)^{2} \dd t
		 }
	\quad.
\end{align}
By $\ol{\Psi}$ being a polynomial of even degree (Assumption~\ref{ass::LMF::Psi})
and by $\Wsp$ being compact, 
the following upper bound on the  Radon-Nikodym derivative holds
\begin{align}
		\frac{ \dd W^{- \Psi}_{x,\w} }{ \dd W^{0}_{x} }
	\leq
		e^{ \Psi \left( \theta_{0}, \w  \right)  
					-
					\Psi \left( \theta_{T} , \w  \right)  
					+ 
					TC
				 }
	.
\end{align}
Therefore 
\begin{align}
\label{eq::EPsiBoundeT}
\begin{split}
	E_{W^{-\Psi}_{x,\w}} \left[ e^{ \kappa \left[ \left( \theta_{T}  \right)^{2}  + \left( \theta_{0}  \right)^{2}  \right]} \right]
	&\leq
	e^{TC} E_{W^{0}_{x}} \left[ e^{ \kappa \left( \theta_{T}  \right)^{2} - \Psi \left( \theta_{T} , \w \right) } 
													e^{\kappa \left(  \theta_{0}  \right)^{2}  + \Psi \left( \theta_{0} ,\w  \right) } 
											\right]
	\\
	&\leq	
	e^{TC} e^{C}  \int_{\R}  e^{\kappa \left(  \theta  \right)^{2}  + \ol \Psi \left( \theta  \right) + \w_{1} \theta }   \nu_{x} \left( \dd \theta \right)
	,
\end{split}
\end{align}
where we use Assumption~\ref{ass::LMF::Psi}, $\Wsp$ being compact and $\kappa<c_{\Psi}$ in the second inequality.
The right hand side of \eqref{eq::EPsiBoundeT} is bounded by a constant uniformly in $\left( x, \w \right) \in \TW$, by Assumption~\ref{ass::LMF::Init::Integral}, Assumption~\ref{ass::LMF::Psi} and by $\Wsp$ being compact.

\item

By Assumption~\ref{ass::LMF::Psi}, the Radon-Nikodym derivative in \eqref{eq::GirsaPsiTo0} is also be bounded by
\begin{align}
	\frac{ \dd W^{- \Psi}_{x,\w} }{ \dd W^{0}_{x} }
	\leq
		e^{ \Psi \left( \theta_{0},\w  \right)  
				+
					C
					-
					\int_{0}^{T} 
							 c \left( \theta_{t} \right)^{2} 
					\dd t 
				 }
	,
\end{align}
for constants $c \in (0,c_{\Psi})$ and $C=C  \left( c \right) >0$.
Using this bound, we get 
\begin{align}
\label{eq::LDPLN::LMF::EPsiBoundeInt0T}
\begin{split}
	E_{W^{-\Psi}_{x,\w}} \left[ e^{ \kappa \int_{0}^{T} \left( \theta_{t} \right)^{2} \dd t} \right]
	&\leq
			e^{C} 
			E_{W^{0}_{x}} 
				\left[
				e^{   \int_{0}^{T} 
							\left( \kappa - c \right)  \left( \theta_{t} \right)^{2} 
						\dd t } 
				e^{\Psi \left( \theta_{0} , \w  \right) }
				 \right]
	\leq
	e^{C} 
	\int_{\R} e^{ \ol \Psi \left( \theta  \right) + \w_{1} \theta  } \nu_{x} \left( \dd \theta \right)
	.
\end{split}
\end{align}
The right hand side is bounded by a constant uniformly in $\left( x, \w \right) \in \TW$ by Assumption~\ref{ass::LMF::Init::Integral}, Assumption~\ref{ass::LMF::Psi} and by $\Wsp$ being compact.

\end{enuRomNoIntendBf}\vspace{-\baselineskip}
\end{proof}

Now we derive the Radon-Nikodym derivative between $P^{N}_{\ul{\w}^{N}}$ and $W^{N,-\Psi}_{\ul{\w}^{N}}$ by using the Girsanov theorem.
	\begin{lemma}
	\label{lem::LDPLN::LMF::PNtoWN}
	For $L^{N} \in \MOne{ \TWC}$, defined by $\ul{\w}^{N} \in \Wsp^{\Nd}$ and $\ul{\theta}^{N}_{\Ti} \in \Csp{\Ti}^{\Nd}$,
		\begin{align}
			\frac{\dd P^{N}_{\ul{\w}^{N}}}
				{\dd W^{N,- \Psi}_{\ul{\w}^{N}}}
			 \left( \ul{\theta}^{N}_{\Ti}  \right) 
			=
				e^{
					\Nd F \left( L^{N}   \right) 
					-
					\oh \frac{T}{N} \sum_{i \in \TN} J \left( 0, \w^{i,N}, \w^{i,N} \right)
				}
			.
		\end{align}
	\end{lemma}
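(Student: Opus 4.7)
The plan is a direct application of the Cameron--Martin--Girsanov theorem to the $\Nd$-dimensional diffusion, followed by Itô's integration-by-parts formula to express the resulting stochastic integrals in the form of $F(L^N)$.

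First, identify the drift perturbation that turns $W^{N,-\Psi}_{\ul{\w}^{N}}$ into $P^{N}_{\ul{\w}^{N}}$: under the former, the coordinates $(\theta^{k,N})_{k\in\TN}$ are independent one-dimensional diffusions with drift $-\partial_{\theta}\Psi(\w^{k,N},\cdot)$ and unit Brownian noise, while under the latter each acquires the additional local-mean-field drift $\beta^{(k)}_{t}:=\Nd^{-1}\sum_{j\in\TN}J(\tfrac{k-j}{N},\w^{k,N},\w^{j,N})\theta^{j,N}_{t}$. Novikov's condition for this perturbation follows from Cauchy--Schwarz, $(\beta^{(k)}_{t})^{2}\le \LpN{1}{\ol J}\cdot \Nd^{-1}\sum_{j}(\theta^{j,N}_{t})^{2}$, combined with the exponential moment bound of Lemma~\ref{lem::LDPLN::LMF::eTheta2Bound}~\ref{lem::LDPLN::LMF::eTheta2Bound::Int0T} and the strict inequality $c_{\Psi}>\LpN{1}{\ol J}$ of Assumption~\ref{ass::LMF::Psi}, applied coordinate by coordinate thanks to the product structure of $W^{N,-\Psi}_{\ul{\w}^{N}}$. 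Girsanov then gives
\begin{equation*}
\log\frac{\dd P^{N}_{\ul{\w}^{N}}}{\dd W^{N,-\Psi}_{\ul{\w}^{N}}}(\ul\theta^{N}_{\Ti})=\sum_{k\in\TN}\int_{0}^{T}\beta^{(k)}_{t}\,\dd B^{k,W}_{t}-\oh\sum_{k\in\TN}\int_{0}^{T}(\beta^{(k)}_{t})^{2}\,\dd t,
\end{equation*}
where $B^{k,W}$ is the Brownian motion of the $k$-th coordinate under $W^{N,-\Psi}_{\ul{\w}^{N}}$.

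Next, unfolding $L^{N}=\Nd^{-1}\sum_{k}\delta_{(\tfrac{k}{N},\w^{k,N},\theta^{k,N}_{\Ti})}$ in the first summand of $F$ (see \eqref{eq::thm::LDPLN::LMF::LDP::defF}) collapses the triple integral into a triple sum, and a direct computation verifies that $-\oh\sum_{k}\int_{0}^{T}(\beta^{(k)}_{t})^{2}\,\dd t$ equals $\Nd$ times that first summand of $F(L^{N})$. It remains to treat the stochastic-integral contribution. Going through the intermediate Brownian reference $W^{N,0}$, the $\partial_{\theta}\Psi$ terms arising from writing $\dd B^{k,W}_{t}=\dd\theta^{k,N}_{t}+\partial_{\theta}\Psi(\w^{k,N},\theta^{k,N}_{t})\,\dd t$ cancel via the identity $(-\partial_{\theta}\Psi+\beta^{(k)})^{2}-(\partial_{\theta}\Psi)^{2}=(\beta^{(k)})^{2}-2\beta^{(k)}\partial_{\theta}\Psi$, and the stochastic-integral part of the log-density equals $\sum_{k}\int_{0}^{T}\beta^{(k)}_{t}\,\dd\theta^{k,N}_{t}$ as a path functional. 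Symmetrising $\sum_{k,j}J(\tfrac{k-j}{N},\w^{k,N},\w^{j,N})\int_{0}^{T}\theta^{j,N}_{t}\,\dd\theta^{k,N}_{t}$ via $k\leftrightarrow j$ (using that $J$ is even in its spatial argument and that the interaction kernel is symmetric in its pair of environments) and applying Itô's product formula $\int\theta^{j}\,\dd\theta^{k}+\int\theta^{k}\,\dd\theta^{j}=\theta^{k}_{T}\theta^{j}_{T}-\theta^{k}_{0}\theta^{j}_{0}-\delta_{kj}T$ assembles the off-diagonal contribution into the boundary summand of $\Nd F(L^{N})$, while the diagonal quadratic covariation $[\theta^{k,N},\theta^{k,N}]_{T}=T$ produces the stated correction $-\oh\tfrac{T}{\Nd}\sum_{i\in\TN}J(0,\w^{i,N},\w^{i,N})$.

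The main obstacle is the careful bookkeeping of the $\partial_{\theta}\Psi$ cross-terms together with the symmetrisation of $J$: the cancellation is clean only once the Girsanov ratio is set up through the common Brownian reference $W^{N,0}$, comparing $\dd P^{N}/\dd W^{N,0}$ with $\dd W^{N,-\Psi}/\dd W^{N,0}$, so that the $\partial_{\theta}\Psi$-dependence drops out of the difference of quadratic exponents. The symmetrisation step that converts the stochastic integral into the boundary form of $F$ relies essentially on $J$ being even in its spatial argument, and Novikov reduces in the end to the sharp inequality $c_{\Psi}>\LpN{1}{\ol J}$ of Assumption~\ref{ass::LMF::Psi}, which is precisely what makes the exponential moment in Lemma~\ref{lem::LDPLN::LMF::eTheta2Bound} finite with a constant uniform in $(x,\w)$.
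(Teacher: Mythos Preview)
Your approach is essentially the paper's: apply Girsanov to pass from $W^{N,-\Psi}_{\ul\w^N}$ to $P^{N}_{\ul\w^N}$, identify the quadratic term with the first summand of $N^dF(L^N)$, and convert the stochastic integral into boundary data via It\^o's formula, picking up the diagonal correction $-\tfrac{T}{2N^d}\sum_iJ(0,\w^{i,N},\w^{i,N})$. The only organisational difference is that the paper packages the interaction as the scalar function $B^{N}_{\ul\w^N}(\ul\theta^N)=\tfrac{1}{2N^d}\sum_{i,j}J(\tfrac{i-j}{N},\w^{i,N},\w^{j,N})\theta^{i,N}\theta^{j,N}$ and applies It\^o's formula once to $t\mapsto B^N(\ul\theta^N_t)$, whereas you symmetrise in $(k,j)$ and use the pairwise product rule; these are equivalent. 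Your inclusion of a Novikov check is extra care the paper omits (it just invokes ``the Girsanov theorem''); note though that your Cauchy--Schwarz bound with constant $\|\ol J\|_{L^1}$ is not quite what Cauchy--Schwarz gives, and in any case Novikov is not needed here since both martingale problems are already known to be well posed, which forces the exponential local martingale to be a true martingale.

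One point where your write-up is murkier than the paper's: the passage ``the $\partial_\theta\Psi$ terms \ldots cancel via the identity $(-\partial_\theta\Psi+\beta^{(k)})^2-(\partial_\theta\Psi)^2=(\beta^{(k)})^2-2\beta^{(k)}\partial_\theta\Psi$'' does not actually exhibit a cancellation---expanding either directly or through $W^{N,0}$ leaves the same cross term $+\sum_k\int_0^T\beta^{(k)}_t\,\partial_\theta\Psi(\w^{k,N},\theta^{k,N}_t)\,dt$ in the Lebesgue part. The paper sidesteps this by writing the density directly in the form \eqref{eq::lem::LDPLN::LMF::PNtoWN::Girs} with $d\theta^{i,N}_t$ on the right, and then applying It\^o to $B^N$; your detour through $W^{N,0}$ and the claimed cancellation add confusion rather than clarity. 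Also, your symmetrisation step assumes $J(\cdot,\w,\w')=J(\cdot,\w',\w)$ in addition to spatial evenness; the paper uses the same symmetry implicitly (it is what makes $\partial_{\theta^k}B^N=\beta^{(k)}$), so you are not assuming more than the paper, but it is worth being aware that spatial evenness alone does not suffice for this step.
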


\begin{proof}[of Lemma~\ref{lem::LDPLN::LMF::PNtoWN}]
Fix $\ul{\w}^{N} \in \Wsp^{\Nd}$.
To shorten the notation, we define for $\ul{\theta}^{N} \in \RN$
\begin{align}
\label{def::LDPLN::Interaction}
	 B^{N}_{\ul{\w}^{N}} \left( \ul\theta^{N} \right)  
	 \defeq
	    \frac{1}{2N^d} 
	    	\sum_{i,j\in \TN} J\left(\frac{i - j}{N}, \w^{i,N}, \w^{j,N} \right)  \theta^{i,N} \theta^{j,N}
	.
\end{align}
By the Girsanov theorem and $J$ being even,
\begin{align}
\label{eq::lem::LDPLN::LMF::PNtoWN::Girs}
		\log \left( 
			\frac{\dd P^{N}_{\ul{\w}^{N}}}
			{\dd W^{N,-\Psi}_{\ul{\w}^{N}}}
		 \left( \ul{\theta}^{N}_{\Ti}  \right) 
			\right)
		=
		 -\frac{1}{2}  \int_{0}^{T} 
			 \underbrace{ \sum_{i \in \TN} 
							 \left( \partial_{\theta^{i,N}_{t}} B^{N}_{\ul{\w}^{N}} \left( \ul\theta^{N}_{t}  \right)  \right) ^{2} 
							 }_{\encircle{1}}
			 	\dd t
		+
		\underbrace{ \sum_{i \in \TN} \int_{0}^{T} \partial_{\theta^{i,N}_{t}} B^{N}_{\ul{\w}^{N}} \left( \ul\theta^{N}_{t}  \right)   \dd \theta^{i,N}_{t} 
							}_{\encircle{2}}
	.
\end{align}
The first summand of \eqref{eq::lem::LDPLN::LMF::PNtoWN::Girs} equals the first summand of $\Nd F \left( L^{N} \right) $, because for each $t \in \Ti$
\begin{align}
\label{eq::lem::LDPLN::LMF::PNtoWN::FirstSum}
\begin{split}
	\encircle{1}
	&=
	\frac{1}{\Nd} \sum_{j,k \in \TN}  
		\left( \frac{1}{\Nd} \sum_{i \in \TN} J \left( \frac{i-j}{N} , \w^{i,N}, \w^{j,N} \right)  J \left( \frac{i-k}{N} , \w^{i,N}, \w^{k,N} \right)  \right) 
		\theta^{j,N}_{t}  \theta^{k,N}_{t} 
	\\
	&= 
	\Nd \int\int
				\quad
				\theta'_{t} \theta_{t} 
				\quad
				\int
				 J \left( x''-x, \w'', \w \right)  J \left( x''-x', \w'' , \w' \right)  L^{N} \left( \dd x'', \dd \w'', \dd \eta_{\Ti} \right) 
	\\
	&\mkern350mu	
			L^{N} \left( \dd x, \dd \w, \dd \theta_{\Ti} \right)  L^{N} \left( \dd x', \dd \w', \dd \theta'_{\Ti} \right) 
	,
\end{split}
\end{align}
where the integrals in the last line are over the sets $\TWC$.

For \encircle{2} we apply \Ito's lemma.
Under $W^{N,-\Psi}_{\ul{\w}^{N}}$, the $\theta^{k,N}_{\Ti}$ is a  \Ito~process with drift coefficient $-\partial_{\theta} \Psi \left( ., \w^{k,N} \right)$, for  each $k \in \TN$.
Hence
\begin{align}
\begin{split}
\encircle{2}
	&=
		B^{N}_{\ul{\w}^{N}} \left( \ul\theta^{N}_{T}  \right) -B^{N}_{\ul{\w}^{N}} \left( \ul\theta^{N}_{0}  \right) 
		-
		\frac{1}{2} 
		\sum_{i \in \TN} \int_{0}^{T}  \partial^{2}_{\left( \theta^{i,N}\right)^{2}} B^{N}_{\ul{\w}^{N}} \left( \ul\theta^{N}_{t}  \right)  \dd t
	\\
	&=
		B^{N}_{\ul{\w}^{N}} \left( \ul\theta^{N}_{T}  \right) -B^{N}_{\ul{\w}^{N}} \left( \ul\theta^{N}_{0}  \right) 
		-
		\frac{1}{2}\sum_{i \in \TN} T  \frac{J \left( 0, \w^{i,N}, \w^{i,N} \right) }{\Nd}
	.
\end{split}
\end{align}
Using \eqref{def::LDPLN::Interaction}, we conclude that \encircle{2} is equal to the second summand  of $F \left( L^{N} \right)$.

\end{proof}

\subsubsection{Proof of the exponential tightness (Lemma~\ref{lem::LDPLN::LMF::expTight})}
\label{sec::LDPLN::LMF::PfExpTight}

\begin{proof}[of Lemma~\ref{lem::LDPLN::LMF::expTight}]
To show that the family $\left\{ L^{N}, P^{N} \right\} $ is exponential tight, we first construct compact sets $K^{\ell} \subset \MOne{ \TWC } $ for which the family $\left\{ L^{N}, W^{N,-\Psi} \right\} $ is exponential tight. 
Then we show that this leads to the exponential tightness of $\left\{ L^{N}, P^{N} \right\} $. 

\emptyline
\begin{steps}
\step[{Exponential tightness of  $\left\{ L^{N}, W^{N,-\Psi} \right\} $}]

To show the exponential tightness of  $\left\{ L^{N}, W^{N,-\Psi} \right\} $ we generalise Lemma~6.2.6 in \cite{DemZeiLarge}. 
In contrast to \cite{DemZeiLarge}, the measures 
$\calA \defeq \left\{ \zeta_{x} \left( \dd \w \right) \otimes W^{-\Psi}_{x,\w} \right\}_{x \in \Td} \subset \MOne{\Wsp \times \Csp{\Ti}}$ are not identically distributed, 
due to the dependency of the initial distribution and of the random environment on $x \in \Td$.

Therefore we show at first that $\calA$ is a tight set of measures.
Take an arbitrary sequence in $\calA$.
Then there is a sequence $\left\{ x_{n} \right\}_{n} \subset \Td$ such that the sequence is given by
 $\left\{ \zeta_{x_{n}} \left( \dd \w \right) \otimes W^{-\Psi}_{x_{n},\w} \right\}_{n}$.
This implies that there is a converging subsequence $x_{n_{k}} \rightarrow x^{*} \in \Td$ (due to the compactness of $\Td$).
By the continuity of $x \mapsto \zeta_{x} \left( \dd \w \right) \otimes W^{-\Psi}_{x,\w}$ (this could be shown as the continuity of \eqref{eq::pf::lem::SanovT::ass::Init::Convergence::Hf} by Assumption~\ref{ass::LMF::Medium} and Lemma~\ref{lem::LDPLN::LMF::WPsiFeller}), we get a converging subsequence.
Therefore $\calA$ is sequentially compact.
Moreover $\Wsp \times \Csp{\Ti}$ is a  separable metric space.
Then the Prokhorov's theorem implies that $\calA$ is tight.
\emptyline
The tightness of the set $\calA$ implies that  there is a compact set $\Gamma_{a} \subset \Wsp \times \Csp{\Ti}$ such that for all $x \in \Td$
\begin{align}
	\int_{\Wsp}
	\int_{\Csp{\Ti}}
				\1_{\left\{  \left( \w,\theta_{\Ti} \right) \not \in \Gamma_{a} \right\} } 
		W^{-\Psi}_{x_{n},\w} \left( \dd \theta_{\Ti} \right)
		\zeta_{x_{n}} \left( \dd \w \right)
	\leq
		e^{-2a^{2}}\left( e^{a}-1 \right)
		.
\end{align}
Now define $K^{a} \defeq \left\{ Q \in \MOne{ \TWC }  : Q \left( \Td \times \Gamma_{a} \right)  \geq 1- \frac{1}{a} \right\}$.
The sets $K^{a}$ are closed by the Portmanteau Lemma.
Moreover for each $A \in \N$ the sets 
\begin{align}
\label{eq::lem::LDPLN::LMF::expTight::KL}
	K_{A}
	\defeq
	 \bigcap_{a=A}^{\infty} K^{a}
\end{align}
 are compact by Prokhorov's theorem and the definition of $K^{a}$.
Then we get  
\begin{align}
\begin{split}
	&W^{N,-\Psi} \left[ L^{N} \not \in K^{a} \right]
	=
		W^{N,-\Psi} \left[ L^{N} \left( \Td \times \Gamma_{a}  \right)  > \frac{1}{a} \right]
	\\
		&\leq
		e^{-2 \Nd a } E_{W^{N,-\Psi}} \left[ e^{2 a^{2} \sum_{i \in \TN}
		 \1_{\left( \w, \theta^{i,N}_{\Ti} \right)  \not \in \Gamma_{a}}} \right]
	\\
	&\leq
		e^{-2 \Nd a }  \prod_{i \in \TN} 
				\left( 1
					+
					e^{2 a^{2}}
					\int_{\Wsp}
					\int_{\Csp{\Ti}}
						\1_{\left\{  \left( \w,\theta_{\Ti} \right) \not \in \Gamma_{a} \right\} } 
					W^{-\Psi}_{x_{n},\w} \left( \dd \theta_{\Ti} \right)
					\zeta_{x_{n}} \left( \dd \w \right) 
				\right)
	\\
	&\leq 
		e^{-2 \Nd a } 
		\left( 1+  e^{a} - 1  \right)^{\Nd}
	\leq
		e^{- \Nd a}
	.
\end{split}
\end{align}
This implies that
\begin{align}
\label{eq::lem::LDPLN::LMF::expTight::WNExpTight}
	W^{N,-\Psi} \left[ L^{N} \not \in K_{A} \right]
	\leq 
	2 e^{-\Nd A}
	,
\end{align}
hence the claimed exponential tightness.

\emptyline
\step[Transferring the exponential tightness]

We show now that $\left\{ L^{N}, P^{N} \right\} $ are also exponential tight with respect to the same sets $\Gamma_{a}$.
We get by the Radon-Nikodym derivative derived in Lemma~\ref{lem::LDPLN::LMF::PNtoWN} and by the \Holder inequality ($ \frac{1}{p}+\frac{1}{p'}=1$)
\begin{align}
\label{eq::pf::lem::LDPLN::LMF::eFoutOfNR::EpFirst}
\begin{split}
	E_{P^{N}}
			\left[ 
			\1_{L^{N} \not \in K_{A} } 
			\right]  
	&=
	E_{W^{N,-\Psi}}
		\left[ 
		e^{N^d F  \left( L^{N}  \right)  } \1_{L^{N} \not \in K_{A} } 
		\right]  
	e^{-\oh T \ol{J} \left( 0 \right) }
	\\
	&\leq
		e^{-\oh T \ol{J} \left( 0 \right) }
		E_{W^{N,-\Psi}}
		\left[ 
		e^{N^d p F  \left( L^{N}  \right)  } 
		\right]^{\frac{1}{p}}
		W^{N,-\Psi} \left[ L^{N} \not \in K_{A} \right]^{\frac{1}{p'}}
	.
\end{split}
\end{align}
Note that  $F \left( L^{N} \right)  $ is bounded from above by
\begin{align}
\label{eq::pf::lem::LDPLN::LMF::eFoutOfNR::BoundF}
	F\left( L^{N} \right)  
	\leq
		\oh \frac{1}{\Nd} 
		\left( \LpN{1}{\ol{J}}  +  \delta \right) 
		\sum_{i \in \TN} \left( \theta^{i,N}_{T} \right)^{2} 
		+
		\left( \theta^{i,N}_{0}  \right)^{2}
		,
\end{align}
for each $\delta>0$ when $N>N_{\delta}$ (see \eqref{eq::LocalMF::ass::BoundOnInteraction}).
When $p>1$ is not too large and $N$ large enough, $p \left( \LpN{1}{\ol{J}} + \delta \right) < c_{\Psi}$ (by Assumption~\ref{ass::LMF::Psi}).
Therefore we get by Lemma~\ref{lem::LDPLN::LMF::eTheta2Bound}~\ref{lem::LDPLN::LMF::eTheta2Bound::T},
\begin{align}
\label{eq::pf::lem::LDPLN::LMF::eFoutOfNR::EpFLNbound}
\begin{split}
	E_{W^{N,-\Psi}}
		\left[ 
		e^{N^d p F  \left( L^{N}  \right)  } 
		\right]
	&\leq
		\prod_{i \in \TN}
		\sup_{\w \in \Wsp}
		E_{W^{-\Psi}_{\frac{i}{N},\w }}
		\left[ 
		e^{p \left( \LpN{1}{\ol{J}} + \delta \right) \left(  \left( \theta_{T}  \right)^{2} + \left( \theta_{0}  \right)^{2} \right)}
		\right]
	\leq 
	C^{\Nd}
	.
\end{split}
\end{align}
Combining \eqref{eq::pf::lem::LDPLN::LMF::eFoutOfNR::EpFirst}, \eqref{eq::lem::LDPLN::LMF::expTight::WNExpTight} and \eqref{eq::pf::lem::LDPLN::LMF::eFoutOfNR::EpFLNbound}, we conclude
\begin{align}
	E_{P^{N}}
	\left[ 
	\1_{L^{N} \not \in K_{A} } 
	\right]  
	\leq
		C^{\Nd}
		e^{-\Nd  \frac{1}{p'} A}
	,
\end{align}
when $N$ and $A$ are large enough.

\end{steps}\vspace{-\baselineskip}
\end{proof}

\section{Comparison of the LDPs of the empirical measure and of the empirical process}
\label{sec::Comp}

In this chapter we state at first (Chapter~\ref{sec::Comp::Mini}) a one to one relation between the minimizer of the rate functions $I$ (of $\left\{L^{N},P^{N}\right\}$ derived in Theorem~\ref{thm::LDPLN}) and $S_{\nu,\zeta}$ (of $\left\{\empP, P^{N} \right\}$ derived in Theorem~\ref{thm::LDPempP}).
Then we explain how one can easily infer from the large deviation principle of the empirical measure $\left\{L^{N}\right\}$, the large deviation principle for the empirical process $\left\{\empP\right\}$ in  $\Cem$.
This follows by a simple application of the contraction principle (see Theorem~\ref{thm::LdpLNtoEmp}).
However the derived rate function does not have the expression $S_{\nu,\zeta}$ defined in \eqref{eq::thm::LDPempP::RF::ST}.
We show in Chapter~\ref{sec::Comp::UpperBoundSnu} that the derived rate function is at least an upper bound on $S_{\nu,\zeta}$.

\subsection{Relation between the minimiser of the rate function}
\label{sec::Comp::Mini}

We know by  Theorem~\ref{thm::RepS}~\ref{thm::RepS::First} and \eqref{eq::thm::LDPLN::RF} the following relation between $S_{\nu,\zeta}$ and $I$
	\begin{align}
	\label{eq::Comp::Mini::Relation}
	S_{\nu,\zeta} \left( \mu_{\Ti}  \right)
	=
	\inf_{ \substack{Q \in \MOne{ \TWC }  \\
			\Pi \left( Q \right)_{\Ti} =\mu_{\Ti} }
	}
	\relE{Q}{\dd x \otimes \zeta_{x} \left( \dd \w \right) \otimes P^{I,\mu_{\Ti}}_{x,\w}   }
	=
	\inf_{ \substack{Q \in \MOne{ \TWC }  \\
			\Pi \left( Q \right)_{\Ti} =\mu_{\Ti} }
	}
	I \left( Q \right)
	.
	\end{align}
We show in the next theorem a one to one relation between the minimizer of $I$ and $S_{\nu,\zeta}$.
Note that in general there can be two $Q,Q' \in \MOne{\TWC}$ with the same projection $\Pi \left( Q \right) = \Pi \left( Q' \right)$ and with $I \left( Q \right)= I \left( Q' \right)$.
However when $S_{\nu,\zeta} \left( \Pi \left( Q \right)  \right) =0$, then this is not the case.

\begin{theorem}
\label{thm::LDPLN::RelMinimaRF}

\begin{enuRom}
\item
\label{thm::LDPLN::RelMinimaRF::ItoS}
If $ I \left( Q \right) =0$, then $S_{\nu,\zeta} \left( \Pi \left(  Q \right)_{\Ti} \right)=0$.

\item
\label{thm::LDPLN::RelMinimaRF::StoI}
If  $S_{\nu,\zeta} \left( \mu_{\Ti} \right) =0$, then there is exactly one $Q \in \MOne{\TWC}$ with $\Pi \left( Q \right)_{\Ti} = \mu_{\Ti}  $ and $I \left( Q \right) =0$.
This $Q$ equals $\dd x \otimes \zeta_{x} \left( \dd w \right) \otimes P^{I,\mu_{\Ti}}_{x,\w}$.

\end{enuRom}
\end{theorem}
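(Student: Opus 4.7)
The plan is to exploit the identity \eqref{eq::Comp::Mini::Relation}, which by definition gives $I(Q)=\relE{Q}{\dd x\otimes \zeta_x(\dd\w)\otimes P^{I,\Pi(Q)_{\Ti}}_{x,\w}}$, together with the characterisation of $S_{\nu,\zeta}=0$ coming from Theorem~\ref{thm::LDPempP} and the weak‐PDE representation of Theorem~\ref{thm::RepS}~\ref{thm::RepS::h}.

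For part~\ref{thm::LDPLN::RelMinimaRF::ItoS} the argument is essentially a one–line consequence of \eqref{eq::Comp::Mini::Relation}: if $I(Q)=0$, then setting $\mu_{\Ti}=\Pi(Q)_{\Ti}$ we have $0\le S_{\nu,\zeta}(\mu_{\Ti})\le I(Q)=0$, so $S_{\nu,\zeta}(\Pi(Q)_{\Ti})=0$.

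For part~\ref{thm::LDPLN::RelMinimaRF::StoI} I would first verify that $Q^{*}\defeq \dd x\otimes\zeta_x(\dd\w)\otimes P^{I,\mu_{\Ti}}_{x,\w}$ is a candidate minimiser. By construction $\Pi(Q^{*})$ equals the flow of one–dimensional marginals of the SDE \eqref{eq::SDEIndep} with frozen drift $b(x,\w,\cdot,\mu_{t})$, and clearly $I(Q^{*})=\relE{Q^{*}}{\dd x\otimes\zeta_x\otimes P^{I,\mu_{\Ti}}_{x,\w}}=0$. The heart of the proof is therefore to show that $\Pi(Q^{*})_{\Ti}=\mu_{\Ti}$. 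For this, from $S_{\nu,\zeta}(\mu_{\Ti})=0$ and the expression \eqref{eq::thm::LDPempP::RF}, I infer that (a) the entropy term vanishes, giving $\mu_{0}=\dd x\otimes\zeta_x\otimes\nu_x$, and (b) the $-1$–norm term vanishes, so by Theorem~\ref{thm::RepS}~\ref{thm::RepS::h} the function $h^{\mu_{\Ti}}$ is zero in $\what L^{2}_{\mu_{\Ti}}$ and $\mu_{\Ti}$ is a weak solution of $\partial_{t}\mu_{t}=(\Gen_{\mu_{t},\cdot,\cdot})^{*}\mu_{t}$. Moreover, since $\mu_{\Ti}\in\Cem^{L}$, Lemma~\ref{lem::MapPi::YoungMedium} (applied once we realise $\mu_{\Ti}$ as the projection of some measure, for instance via Lemma~\ref{lem::LDPempP::indep::1Rep}) yields $\mu_{t,x,\Wsp}=\zeta_{x}$ for all $t\in\Ti$ and a.e.\ $x$. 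Hence $\mu_{t}=\dd x\otimes \zeta_{x}(\dd\w)\otimes\mu_{t,x,\w}$ with $\mu_{0,x,\w}=\nu_{x}$. The same decomposition holds for $\Pi(Q^{*})_{t}$ with $\Pi(Q^{*})_{t,x,\w}$ equal to the time–$t$ marginal of $P^{I,\mu_{\Ti}}_{x,\w}$, and both families of conditional measures solve the same linear, time–inhomogeneous Fokker–Planck equation with the same initial condition $\nu_{x}$.

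The main obstacle is the uniqueness step: I need that the weak Fokker–Planck equation with frozen time–dependent drift $b(x,\w,\cdot,\mu_{t})$ and initial data $\nu_{x}$ has at most one solution in the class of probability measure valued curves with finite second $\varphi$–moment. This will follow from Assumption~\ref{ass::LDPempP::Inde}~\ref{ass::LDPempP::Inde::MPro} (well-posedness of the associated martingale problem) combined with the standard equivalence between uniqueness for the martingale problem and uniqueness of the Fokker–Planck equation among subprobability solutions; the moment bound from $\mu_{\Ti}\in\Cem_{\varphi,\infty}$ supplies the growth control needed to exclude mass loss. Combining these observations fibre‐wise in $(x,\w)$ yields $\mu_{t,x,\w}=\Pi(P^{I,\mu_{\Ti}}_{x,\w})_{t}$, and therefore $\Pi(Q^{*})_{\Ti}=\mu_{\Ti}$, establishing existence.

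Finally, uniqueness of the zero–rate minimiser is essentially free: if $Q\in\MOne{\TWC}$ satisfies $\Pi(Q)_{\Ti}=\mu_{\Ti}$ and $I(Q)=0$, then $\relE{Q}{P^{I,\mu_{\Ti}}}=0$, so $Q=P^{I,\mu_{\Ti}}=Q^{*}$ by the strict convexity of relative entropy (vanishing iff the two measures coincide).
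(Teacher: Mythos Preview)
Your treatment of part~\ref{thm::LDPLN::RelMinimaRF::ItoS} and of the uniqueness in part~\ref{thm::LDPLN::RelMinimaRF::StoI} matches the paper: \eqref{eq::Comp::Mini::Relation} gives~\ref{thm::LDPLN::RelMinimaRF::ItoS} at once, and $I(Q)=0$ together with $\Pi(Q)_{\Ti}=\mu_{\Ti}$ forces $Q=Q^{*}$ since relative entropy vanishes only on the diagonal.

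The divergence is in how you show $\Pi(Q^{*})_{\Ti}=\mu_{\Ti}$. You go the direct PDE route: deduce from $S_{\nu,\zeta}(\mu_{\Ti})=0$ that $\mu_{\Ti}$ is a weak solution of the linear (frozen) Fokker--Planck equation with the same initial data as $\Pi(Q^{*})_{\Ti}$, then appeal to uniqueness of weak solutions. The paper instead argues indirectly and never touches PDE uniqueness: by \eqref{eq::Comp::Mini::Relation} the infimum $\inf_{Q:\Pi(Q)=\mu}I(Q)$ equals zero, and by \ref{pf::lem::LDPempP::indep::UpperB::step::Minimiz} in the proof of Lemma~\ref{lem::LDPempP::indep::UpperB} this infimum is \emph{attained} at some $\ol Q$ (compactness of sublevel sets of relative entropy intersected with the closed set $\{\Pi(\cdot)=\mu\}$). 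The uniqueness just proved then forces $\ol Q=Q^{*}$, whence $\Pi(Q^{*})_{\Ti}=\mu_{\Ti}$.

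Your Fokker--Planck uniqueness step is a genuine gap in this generality. The ``standard equivalence'' between well-posedness of the martingale problem and uniqueness of weak Fokker--Planck solutions is not established anywhere in the paper and is delicate for unbounded drifts: the superposition principles that would furnish it require an integrability hypothesis such as $\int_{0}^{T}\!\int|b^{I}|\,\dd\mu_{t,x,\w}\,\dd t<\infty$, and Assumption~\ref{ass::LDPempP} gives no reason this is controlled by $\sup_{t}\int\varphi\,\dd\mu_{t}\le R$ (in the example of Chapter~\ref{sec::Intro::ResultLMF} one has $|b|\sim|\ol{\Psi}'|$, which may grow faster than $\varphi=1+\theta^{2}$). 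The paper's attainment argument sidesteps this entirely, using only the soft compactness step already carried out in Lemma~\ref{lem::LDPempP::indep::UpperB}.
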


\begin{proof}

By \eqref{eq::Comp::Mini::Relation}, \ref{thm::LDPLN::RelMinimaRF::ItoS} is obviously satisfied.	

Now we show the opposite direction \ref{thm::LDPLN::RelMinimaRF::StoI}.
Fix a $\mu_{\Ti} \in \Cem$ with $S_{\nu,\zeta} \left( \mu_{\Ti} \right) =0$.
Then $Q \in \MOne{\TWC}$, with $\Pi \left( Q \right)_{\Ti} = \mu_{\Ti}  $ and $I \left( Q \right)$ implies that
$Q = Q^{*} \defeq \dd x \otimes \zeta_{x} \left( \dd w \right) \otimes P^{I,\mu_{\Ti}}_{x,\w}$.
This implies that there is at most one minimizer with $\Pi \left( Q \right)_{\Ti} = \mu_{\Ti}  $ and $I \left( Q \right)$.

Now we show that there exist an arbitrary $\ol{Q} \in \MOne{\TWC}$, with $\Pi \left( \ol{Q} \right)_{\Ti} = \mu_{\Ti}  $ such that $I \left( \ol{Q} \right) =0$.
This implies in particular that $\Pi \left( Q^{*} \right)_{\Ti} = \mu_{\Ti}  $.
By Chapter~\ref{sec::LDPempP::Inter::AssInterToInde} the results of Chapter~\ref{sec::LDPempP::Inde} hold for the SDE with fixed interaction $\mu_{\Ti}$.
		Then we get by the beginning of \ref{pf::lem::LDPempP::indep::UpperB::step::Minimiz} of the proof of Lemma~\ref{lem::LDPempP::indep::UpperB}, that there is a $Q^{*} \in \MOne{\TWC}$, with $\Pi \left( Q^{*} \right)_{\Ti} = \mu_{\Ti}  $ and with $I \left( Q^{*} \right) = 0$.

\end{proof}

\subsection{From the LDP of the empirical measure to the LDP of the empirical process}
\label{sec::Comp::LdpLNtoEmp}

In the following theorem, we derive the large deviation principle of the empirical process $\left\{\empP , P^{N} \right\} $ from the large deviation principle of the empirical measure $\left\{L^{N} , P^{N} \right\}$.
This is a simple application of the contraction principle.
This theorem requires only the large deviation principle of $\left\{ L^{N} \right\}$ (in contrast to the relation \eqref{eq::Comp::Mini::Relation} between the rate function).
However the rate function for the empirical processes is only described via a minimizing problem (see Chapter~\ref{sec::Comp::UpperBoundSnu} for a further discussion).

\begin{theorem}
\label{thm::LdpLNtoEmp}
If the assumptions of Theorem~\ref{thm::LDPLN::LMF::LDP} hold, then
the family of the empirical processes $\left\{ \empP, P^{N} \right\}$ satisfies on  $\Cem$  a large deviation principle with rate function
\begin{align}
\label{eq::thm::LdpLNtoEmp::DefjInf}
	j \left( \mu_{\Ti} \right)  
	\defeq
	\inf_{Q \in \MOne{ \TWC } : \Pi \left( Q \right)_{\Ti}  = \mu_{\Ti} } I\left( Q \right) 
	.
\end{align}

\end{theorem}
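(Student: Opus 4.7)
The plan is to view this as a direct consequence of the contraction principle, using three inputs that are already in place: (i) by Theorem~\ref{thm::LDPLN::LMF::LDP} (which subsumes Theorem~\ref{thm::LDPLN}) the family $\{L^{N},P^{N}\}$ satisfies an LDP on $\MOne{\TWC}$ with the good rate function $I$; (ii) by Lemma~\ref{lem::MapPi::Cont} the projection map $\Pi:\MOne{\TWC}\to\Cem$ of Definition~\ref{def::Pi} is continuous between Polish spaces; (iii) by construction $\Pi(L^{N})=\empP$ for every realisation of $\ul{\w}^{N}$ and $\ul{\theta}^{N}_{\Ti}$, as pointed out just before Definition~\ref{def::Pi}. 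Therefore $\empP$ is the image of $L^{N}$ under a continuous map, so the standard contraction principle (e.g.\ \cite{DemZeiLarge} Theorem~4.2.1) immediately yields the LDP for $\{\empP,P^{N}\}$ on $\Cem$ with rate function
\begin{equation*}
j(\mu_{\Ti}) \;=\; \inf\bigl\{\,I(Q)\,:\,Q\in\MOne{\TWC},\ \Pi(Q)_{\Ti}=\mu_{\Ti}\,\bigr\},
\end{equation*}
with the usual convention that the infimum over the empty set is $+\infty$.

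Carrying this out, I would first record that the hypotheses of the contraction principle are met: $\MOne{\TWC}$ and $\Cem$ are Polish, $\Pi$ is continuous (Lemma~\ref{lem::MapPi::Cont}), and $I$ is a good rate function by Theorem~\ref{thm::LDPLN} (whose goodness proof used Lemma~\ref{lem::LDPLN::GoodRF} together with the exponential tightness that, for the local mean field model, is supplied by Lemma~\ref{lem::LDPLN::LMF::expTight}). Second, I would verify the identification $\Pi(L^{N})=\empP$ at the level of individual $\omega$: for each $t\in\Ti$, evaluating $\Pi(L^{N})_{t}$ via Definition~\ref{def::Pi} replaces the path coordinate $\theta_{\Ti}$ in $L^{N}$ by its value at time $t$, and the resulting measure is exactly $\empPt{t}$ as defined in \eqref{def::empM}. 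Third, I would apply the contraction principle to conclude the LDP with rate function $j$ given by the variational formula above, which is \eqref{eq::thm::LdpLNtoEmp::DefjInf}.

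There is essentially no obstacle here; the only point worth a brief remark is that the infimum defining $j$ is attained when it is finite. This follows because for $\mu_{\Ti}$ with $j(\mu_{\Ti})<\infty$ the set $\{Q:\Pi(Q)_{\Ti}=\mu_{\Ti}\}$ is closed (by continuity of $\Pi$) and, intersected with a sub-level set of the good rate function $I$, is compact, so lower semi-continuity of $I$ gives a minimiser; this matches the existence of $\overline{Q}$ constructed in \ref{pf::lem::LDPempP::indep::UpperB::step::Minimiz} in a related context. Note however that this theorem does \emph{not} on its own identify $j$ with the explicit expression $S_{\nu,\zeta}$ from Theorem~\ref{thm::LDPempP}; that comparison, via the representation \eqref{eq::Comp::Mini::Relation}, is exactly the subject of Chapter~\ref{sec::Comp::UpperBoundSnu} and requires the independent-system analysis of Chapter~\ref{sec::LDPempP::Inde}. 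So the scope of the present proof is deliberately limited to the abstract contraction argument.
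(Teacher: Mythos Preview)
Your proof is correct and follows exactly the paper's approach: invoke the LDP for $\{L^{N},P^{N}\}$ from Theorem~\ref{thm::LDPLN::LMF::LDP}, the continuity of $\Pi$ from Lemma~\ref{lem::MapPi::Cont}, and apply the contraction principle. Your additional remarks on attainment of the infimum and on the gap between $j$ and $S_{\nu,\zeta}$ are accurate but go slightly beyond what the paper's (very brief) proof records.
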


\begin{proof}
The family $\left\{ L^{N} ,P^{N} \right\}   $ satisfies by Theorem~\ref{thm::LDPLN::LMF::LDP} a LDP on $\MOne{ \TWC }$ with rate function $I$.
Moreover the map $\Pi  :  \MOne{ \TWC } \rightarrow \Cem $ is continuous (Lemma~\ref{lem::MapPi::Cont}).
Then the contraction principle implies the LDP of $\left\{\empP, P^{N} \right\}$ with the rate function $j$.
\end{proof}

\subsection{An upper bound on the rate function \texorpdfstring{$S_{\nu,\zeta}$}{Snu}}
\label{sec::Comp::UpperBoundSnu}

By Theorem~\ref{thm::LdpLNtoEmp}, $j$ is the rate function of the large deviation principle of $\left\{\empP, P^{N} \right\}$.
Moreover, by Theorem~\ref{thm::LDPempP} and the uniqueness of rate functions, $j$ has to be equal to $S_{\nu,\zeta}$ and $S^{\TW}_{\nu,\zeta}$.
We show now that $j$ is equal to $S^{\TW}_{\nu,\zeta}$ at least when $j$ is finite, without using the Theorem~\ref{thm::LDPempP}
(we need only Lemma~\ref{lem::LDPempP::indep::1Rep} and Lemma~\ref{lem::LDPempP::indep::UpperB}).
However $j$ is not everywhere finite (see also Remark~\ref{rem::LDPempP::indep::admissMu} for the concept of admissible flows).
Therefore this is only an upper bound on $S^{\TW}_{\nu,\zeta}$.
Nevertheless the upper bound on $S^{\TW}_{\nu,\zeta} $, implies at least a large deviation upper bound with $S^{\TW}_{\nu,\zeta} $ as rate function.
For the large deviation lower bound (and another proof of the upper bound) we refer to Chapter~\ref{sec::LDPempP}.

\begin{lemma}
\label{lem::LdpLNtoEmp::SuppBound}
Let the assumptions of Theorem~\ref{thm::LDPLN::LMF::LDP} hold.

	If $j \left( \mu_{\Ti} \right) < \infty$ for a $\mu_{\Ti} \in \Cem$, then $j \left( \mu_{\Ti} \right) = S^{\TW}_{\nu,\zeta} \left( \mu_{\Ti}  \right)$.
	
	In particular this implies $j \left( \mu_{\Ti}  \right) \geq S^{\TW}_{\nu,\zeta} \left( \mu_{\Ti}  \right) \geq S_{\nu,\zeta} \left( \mu_{\Ti}  \right)$.
\end{lemma}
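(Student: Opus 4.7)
The plan is to reduce the statement to results already established for the independent system, namely Lemma~\ref{lem::LDPempP::indep::1Rep} and Lemma~\ref{lem::LDPempP::indep::UpperB}. The crucial observation is that once we condition on $\Pi(Q)_{\Ti} = \mu_{\Ti}$, the reference measure in the definition of $I(Q)$ becomes $\dd x \otimes \zeta_x(\dd \w) \otimes P^{I,\mu_{\Ti}}_{x,\w}$ and no longer depends on $Q$. Thus the infimum in \eqref{eq::thm::LdpLNtoEmp::DefjInf} becomes the infimum of a relative entropy with a fixed reference measure, which is exactly the setup of Lemma~\ref{lem::LDPempP::indep::1Rep} for the frozen independent system with drift $b^{I}(t,x,\w,\theta) \defeq b(x,\w,\theta,\mu_{t})$.

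First I would set up the independent system. Assuming $j(\mu_{\Ti}) < \infty$, there exists $Q \in \MOne{\TWC}$ with $\Pi(Q)_{\Ti} = \mu_{\Ti}$ and $I(Q) < \infty$, which by the definition \eqref{eq::thm::LDPLN::RF} forces $Q \in \MOneL{\TWC} \cap \calM_{\varphi,\infty}$. Hence $\mu_{\Ti} = \Pi(Q)_{\Ti} \in \Cem_{\varphi,\infty} \cap \Cem^{L}$, using Lemma~\ref{lem::MapPi::YoungMedium}. For such $\mu_{\Ti}$, the discussion in Chapter~\ref{sec::LDPempP::Inter::AssInterToInde} shows that $\Gen^{I,\mu_{\Ti}}_{t,x,\w} = \Gen_{\mu_{t},x,\w}$ satisfies Assumption~\ref{ass::LDPempP::Inde}, so $P^{I,\mu_{\Ti}}_{x,\w}$ and the associated large deviation results of Chapter~\ref{sec::LDPempP::Inde} are available.

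Next I would identify $j$ with $S^{I,1}_{\nu,\zeta}$ of this frozen system. For every $Q$ with $\Pi(Q)_{\Ti} = \mu_{\Ti}$ we have
\begin{align*}
I(Q) = \relE{Q}{\dd x \otimes \zeta_{x}(\dd \w) \otimes P^{I,\mu_{\Ti}}_{x,\w}}
= L^{1}_{\nu,\zeta}(Q),
\end{align*}
where $L^{1}_{\nu,\zeta}$ is the functional of Lemma~\ref{lem::LDPempP::indep::1Rep} associated to the independent system with drift $b^{I}$. Taking infima over such $Q$ yields $j(\mu_{\Ti}) = S^{I,1}_{\nu,\zeta}(\mu_{\Ti})$. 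Since $j(\mu_{\Ti}) < \infty$, Lemma~\ref{lem::LDPempP::indep::UpperB} applies and gives $S^{I,1}_{\nu,\zeta}(\mu_{\Ti}) = S^{I,\Td}_{\nu,\zeta}(\mu_{\Ti})$. Finally, comparing the formula \eqref{eq::thm::LDPempP::Inde::RF::SIT} for $S^{I,\Td}_{\nu,\zeta}$ with the formula \eqref{eq::thm::LDPempP::RF::ST} for $S^{\TW}_{\nu,\zeta}$, and using $\Gen^{I,\mu_{\Ti}}_{t,x,\w} = \Gen_{\mu_{t},x,\w}$, the two expressions coincide, so $j(\mu_{\Ti}) = S^{\TW}_{\nu,\zeta}(\mu_{\Ti})$. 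The bound $S^{\TW}_{\nu,\zeta}(\mu_{\Ti}) \geq S_{\nu,\zeta}(\mu_{\Ti})$ then follows from the same inequality for the independent system established in Lemma~\ref{lem::LDPempP::indep::Coincidence} (applied with $\Gen^{I}_{t,x,\w} = \Gen_{\mu_{t},x,\w}$).

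The main (and essentially only) obstacle is the bookkeeping in the very first step: one must ensure that whenever $j(\mu_{\Ti}) < \infty$, the minimizing (or nearly minimizing) $Q$ yields a $\mu_{\Ti}$ for which $P^{I,\mu_{\Ti}}_{x,\w}$ is actually well defined and for which the assumptions of Chapter~\ref{sec::LDPempP::Inde} are met. This is where the restriction $\mu_{\Ti} \in \Cem_{\varphi,\infty} \cap \Cem^{L}$ and the continuity of $b$ on $\M_{\varphi,R} \cap \MOneL{\TWR}$ from Assumption~\ref{ass::LDPempP}~\ref{ass::LDPempP::bCont} enter; once these are in place, the identification with $S^{I,1}_{\nu,\zeta}$ and the appeal to Lemma~\ref{lem::LDPempP::indep::UpperB} are purely formal. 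Note that the argument says nothing when $j(\mu_{\Ti}) = \infty$, which is precisely the gap alluded to in Remark~\ref{rem::LDPempP::indep::admissMu}: the infimum in \eqref{eq::thm::LdpLNtoEmp::DefjInf} is only attained on admissible flows, so we cannot recover the lower bound $j \leq S_{\nu,\zeta}$ in general by this route.
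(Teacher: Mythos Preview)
Your proposal is correct and follows essentially the same approach as the paper: both argue that $j(\mu_{\Ti})<\infty$ forces $\mu_{\Ti}\in\Cem_{\varphi,\infty}\cap\Cem^{L}$, freeze the drift via $b^{I}(t,x,\w,\theta)=b(x,\w,\theta,\mu_t)$ so that Chapter~\ref{sec::LDPempP::Inter::AssInterToInde} makes the independent-system machinery available, identify $j(\mu_{\Ti})$ with $S^{I,1}_{\nu,\zeta}(\mu_{\Ti})$ via Lemma~\ref{lem::LDPempP::indep::1Rep}, and then invoke Lemma~\ref{lem::LDPempP::indep::UpperB} to obtain $S^{I,1}_{\nu,\zeta}(\mu_{\Ti})=S^{I,\Td}_{\nu,\zeta}(\mu_{\Ti})=S^{\TW}_{\nu,\zeta}(\mu_{\Ti})$ together with the inequality $S^{\TW}_{\nu,\zeta}\geq S_{\nu,\zeta}$. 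Your write-up is, if anything, slightly more explicit than the paper's about why fixing $\Pi(Q)_{\Ti}=\mu_{\Ti}$ turns $I(Q)$ into $L^{1}_{\nu,\zeta}(Q)$ with a \emph{fixed} reference measure.
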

 
 \begin{remark}
 In \cite{PraHolMKV} a proof of the equality between the counterparts of $j$ and $S^{\TW}_{\nu,\zeta}$ is  given.
 However in that proof the authors accidentally use a circular reasoning (in the equality (2.24) in \cite{PraHolMKV}). 
 We are also not able to prove the missing lower bound on $S^{\TW}_{\nu,\zeta}$,  without using Theorem~\ref{thm::LDPempP},.
 \end{remark}
 
 \begin{proof}[of Lemma~\ref{lem::LdpLNtoEmp::SuppBound}]
 Fix a $\mu_{\Ti} \in \Cem$ with $j \left( \mu_{\Ti} \right) < \infty$.
 Then there is a $R>0$, such that $\mu_{\Ti} \in \Cem_{\varphi,R}$, because there has to be a $Q \in \calM_{\varphi,\infty}$ with $I \left(Q\right) < \infty$ and $\Pi \left(Q\right)_{\Ti} = \mu_{\Ti}$.
By the same argument $\mu_{\Ti} \in \Cem^{L}$.
 
 Define $b^{I,\mu_{\Ti}} \left( t, x, \w, \theta \right) \defeq b \left( x, \w, \theta, \mu_{t}\right)$ as in Notation~\ref{nota::ForOtherRepS}.
 With this $b^{I,\mu_{\Ti}}$, we can define a system of independent SDEs as in \eqref{eq::SDEIndep}.
 This system satisfies the Assumption~\ref{ass::LDPempP::Inde} as shown in Chapter~\ref{sec::LDPempP::Inter::AssInterToInde}.
 Then the Lemma~\ref{lem::LDPempP::indep::1Rep} is applicable and we denote the rate function \eqref{eq::lem::LDPempP::indep::1Rep::S1Inf} by $S^{I,1,\mu_{\Ti}}_{\nu,\zeta}$, i.e.
 \begin{align}
 j  \left( \mu_{\Ti} \right)  
 = 
 \inf_{Q \in \MOne{ \TWC } : \Pi \left( Q \right)_{\Ti}  = \mu_{\Ti} } I\left( Q \right) 
 =
 S^{I,1,\mu_{\Ti}}_{\nu,\zeta} \left( \mu_{\Ti} \right) 
 .
 \end{align}
From this equality and Lemma~\ref{lem::LDPempP::indep::UpperB} (which is applicable for the same reasons), we conclude the Lemma~\ref{lem::LdpLNtoEmp::SuppBound}.

 \end{proof}


\section[The LDP of the empirical measure for \texorpdfstring{\protect\eqref{SDE::LocalMF}}{(\ref{SDE::LocalMF})}, via a generalisation of Varadhan's lemma]{The LDP of the empirical measure for the concrete example \texorpdfstring{\protect\eqref{SDE::LocalMF}}{(\ref{SDE::LocalMF})} of a local mean field model via a generalisation of Varadhan's lemma}
\label{sec::LMF::LDPLN}

We show in Chapter~\ref{sec::LDPLN::LMF} that the family of empirical measure $\left\{L^{N}\right\}$ of the local mean field model \eqref{SDE::LocalMF}, satisfies a large deviation principle and we derive  two representations of the rate function (Theorem~\ref{thm::LDPLN} and Theorem~\ref{thm::LDPLN::LMF::LDP}).
In the proof of Theorem~\ref{thm::LDPLN} we use the same approach as in the proof of the large deviation principle of the empirical process $\left\{\empP\right\}$ in Chapter~\ref{sec::LDPempP}.
In particular we investigate the SDE with a fixed effective field and derive a LDP for this system. 
Then we infer from this LDP a LDP of the the SDE with interaction.
From Theorem~\ref{thm::LDPLN} we infer Theorem~\ref{thm::LDPLN::LMF::LDP}, by showing an equality of the two formulas of the rate function.

\emptyline

In this chapter we prove  Theorem~\ref{thm::LDPLN::LMF::LDP} by another approach.
We look at first at the SDEs with drift coefficient $-\Psi'$, i.e. at spin values that are distributed according to $W^{N,-\Psi}$ (defined in Notation~\ref{nota::LDPLN::LMF::MeasuresW}).
Then we apply the generalised Varadhan's Lemma (Theorem~\ref{vara::thm::GenVara}).
By the Laplace principle we infer finally the claimed large deviation principle with the rate function \eqref{eq::thm::LDPLN::LMF::LDP::RF}.

The Theorem~\ref{thm::LDPLN} can then be derived from Theorem~\ref{thm::LDPLN::LMF::LDP}. 
Indeed one only has to show the equality of the two representations of the rate function, which follows by the proof of Theorem~\ref{thm::LDPLN::LMF::LDP} given in in Chapter~\ref{sec::LDPLN::LMF::LDP}.

\begin{notation}
We fix $\varphi \left( \theta \right) =1+\theta^{2}$.
To simplify the notation we use $\calM_{R} $ and $\calM_{\infty}$ instead of $\calM_{\varphi,R} $, $\calM_{\varphi,\infty}$ in this chapter, i.e.
\begin{align}
\label{def::calMR}
	\calM_{R} 
	\defeq
		\left\{ Q \in \MOne{\TWC} : \sup_{ t \in \Ti } \int \left( \theta_{t}  \right)^{2} Q( \dd \theta_{\Ti} ) \leq R-1 \right\}
	,
\end{align}
and
\begin{align}
	\calM_{\infty} \defeq \bigcup_{R} \calM_{R} \subset \MOne{ \TWC } 
	,
\end{align}
equipped with the subspace topology induced by $\MOne{ \TWC } $.
\end{notation}

\begin{proof}[Second proof of Theorem~\ref{thm::LDPLN::LMF::LDP}]~

	We know from Lemma~\ref{lem::SanovT::RelEntropy} that  $\left\{ L^{N} , W^{N,-\Psi} \right\} $  satisfies a large deviation principle with rate function 
	$\relE{ Q }{ \dd x \otimes \zeta_{x} \left( \dd \w \right) \otimes W^{-\Psi}_{x,\w} }$ if $Q \in \MOneL{\TWC}$ and infinity otherwise.
	To infer the LDP of $\left\{ L^{N} , P^{N} \right\}$  from the LDP of $\left\{ L^{N} , W^{N,-\Psi} \right\} $,
	we need at first the following result, which states the validity of the Laplace principle.
	\begin{lemma}
		\label{lem::LMF::LDPLN::Varadhan}
		For any $G \in \CspL{b}{\MOne{ \TWC } }$ bounded continuous functional,  
		\begin{align}
		\label{lem::LMF::LDPLN::Varadhan::eq}
		\begin{split}
		\lim_{N \to \infty}
			 \frac{1}{N^d} \log E_{P^{N}} \left[ e^{ N^d  G  \left(  L^{N}  \right) }  \right]
			&=
		 	\sup_{Q \in \calM_{\infty}} \left\{  G  \left( Q  \right)  +  F  \left( Q  \right)  -   \relE{ Q }{ \dd x \otimes \zeta_{x} \left( \dd \w \right) \otimes W^{-\Psi}_{x,\w} }  \right\} 
		 	\\
		 	&=
		 	\sup_{Q \in \MOne{\TWC}} \left\{  G  \left( Q  \right)  -   \ol{I} \left( Q \right)  \right\} 
		 	<
		 	\infty
		 	 ,
		\end{split}
		\end{align}
	with $F$ and $\ol{I}$ defined in Theorem~\ref{thm::LDPLN::LMF::LDP}.
	\end{lemma}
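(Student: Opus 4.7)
The plan is to reduce the claim to the generalized Varadhan lemma (Theorem~\ref{vara::thm::GenVara}) applied to the family $\left\{L^{N},W^{N,-\Psi}\right\}$, whose large deviation principle with rate function
$H(Q)\defeq\relE{Q}{\dd x\otimes\zeta_{x}(\dd\w)\otimes W^{-\Psi}_{x,\w}}$
is provided by Lemma~\ref{lem::SanovT::RelEntropy}. The functional to be exponentiated will be $G+F$.

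First, using the Radon--Nikodym formula of Lemma~\ref{lem::LDPLN::LMF::PNtoWN}, I rewrite
\begin{align*}
\frac{1}{N^{d}}\log E_{P^{N}}\!\lb e^{N^{d}G(L^{N})}\rb
= \frac{1}{N^{d}}\log E_{W^{N,-\Psi}}\!\lb e^{N^{d}[G(L^{N})+F(L^{N})]}\rb + o(1),
\end{align*}
since the deterministic correction $\tfrac{T}{2N^{d}}\sum_{i\in\TN}J(0,\w^{i,N},\w^{i,N})$ is uniformly bounded in $N$ by $\Wsp$ being compact together with Assumption~\ref{ass::LMF::J}. It therefore suffices to compute the Laplace limit of $G+F$ under $\left\{L^{N},W^{N,-\Psi}\right\}$.

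The main technical work consists of verifying three hypotheses needed to run the generalized Varadhan lemma with the unbounded, nowhere-continuous functional $F$. \emph{(i) Continuity on sub-level sets:} I would check that $F$ is continuous on each $\calM_{R}$. On such a set the second line of \eqref{eq::thm::LDPLN::LMF::LDP::defF} is continuous because the maps $Q\mapsto\int\theta_{0}\theta_{0}'J(x-x',\w,\w')\,Q\otimes Q$ and the corresponding $\theta_{T}\theta_{T}'$ term can be treated exactly by the truncation/approximation argument used in \ref{step::LocalMF::bCont::ContMu} of the proof that Assumption~\ref{ass::LDPempP}~\ref{ass::LDPempP::bCont} holds (approximate $J$ by continuous $J_{\ell}$ in $\Ltwo(\Td,\Csp{\Wsp\times\Wsp})$, truncate $\theta$ by $\chi_{M}$, use the uniform second-moment bound from $\calM_{R}$); the first (triple-integral) line is handled analogously. \emph{(ii) Exponential integrability with slack:} I would show that for some $p>1$ close to $1$,
\begin{align*}
\limsup_{N\to\infty}\frac{1}{N^{d}}\log E_{W^{N,-\Psi}}\!\lb e^{pN^{d}F(L^{N})}\rb <\infty.
\end{align*}
This follows from the pointwise bound \eqref{eq::pf::lem::LDPLN::LMF::eFoutOfNR::BoundF}, namely $F(L^{N})\leq\tfrac{1}{2}(\LpN{1}{\ol{J}}+\delta)\tfrac{1}{N^{d}}\sum_{i}((\theta^{i,N}_{T})^{2}+(\theta^{i,N}_{0})^{2})$, combined with the tensor-product structure of $W^{N,-\Psi}$ and the uniform exponential-moment estimate of Lemma~\ref{lem::LDPLN::LMF::eTheta2Bound}~\ref{lem::LDPLN::LMF::eTheta2Bound::T}; Assumption~\ref{ass::LMF::Psi} guarantees $p(\LpN{1}{\ol{J}}+\delta)<c_{\Psi}$ for $p$ slightly above $1$ and $\delta>0$ small enough. \emph{(iii) Exponential tightness:} this is provided by Lemma~\ref{lem::LDPLN::LMF::expTight}, whose compact sets are subsets of some $\calM_{R}$ where (i) applies.

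With (i)--(iii) in hand, the generalized Varadhan lemma of Appendix~\ref{sec::Vara} yields
\begin{align*}
\lim_{N\to\infty}\frac{1}{N^{d}}\log E_{W^{N,-\Psi}}\!\lb e^{N^{d}(G+F)(L^{N})}\rb
=\sup_{Q\in\calM_{\infty}}\bigl\{G(Q)+F(Q)-H(Q)\bigr\},
\end{align*}
which is exactly the first equality in \eqref{lem::LMF::LDPLN::Varadhan::eq}. The second equality is a rewriting via the identity $F(Q)-H(Q)=-\ol{I}(Q)$ proved in Lemma~\ref{lem::LDPLN::LMF::F2ndRep} (noting that $\ol{I}(Q)=\infty$ whenever $Q\notin\calM_{\infty}\cap\MOneL{\TWC}$, so the suprema coincide). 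Finiteness of the supremum follows from $G$ being bounded together with the nonnegativity of $\ol{I}$ on the minimiser provided by the independent reference measure.

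The main obstacle in this plan is the verification of hypothesis (ii): $F$ grows like a quadratic in $\theta$ while the Girsanov-style reference measure only controls exponentials of quadratics with coefficient $<c_{\Psi}$. The inequality \eqref{ass::LMF::Psi::eq:psi-con1} of Assumption~\ref{ass::LMF::Psi} is precisely what allows a strict $p>1$ in (ii), and hence what allows the \Holder-type splitting used inside the generalized Varadhan argument (as well as in the transfer of exponential tightness from $W^{N,-\Psi}$ to $P^{N}$ performed in Lemma~\ref{lem::LDPLN::LMF::expTight}) to close.
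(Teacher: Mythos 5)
Your overall route -- rewrite $E_{P^{N}}[e^{N^{d}G(L^{N})}]$ via the Girsanov formula of Lemma~\ref{lem::LDPLN::LMF::PNtoWN} and then apply the generalised Varadhan lemma to $G+F$ under $\{L^{N},W^{N,-\Psi}\}$ -- is the paper's route, and your items (i) and (ii) correspond to Lemma~\ref{lem::LMF::LDPLN::FcontNR} and to the moment bound \eqref{eq::pf::lem::LMF::LDPLN::Varadhan::Moment}. The genuine gap is that the hypotheses you propose to verify (continuity of $F$ on sub-level sets, a $p>1$ moment bound, exponential tightness) are not the hypotheses of Theorem~\ref{vara::thm::GenVara}. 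The lower-bound half of that lemma (condition~\ref{vara::ass::thm::GenVara::BiggerEpsIrrelevant}, checked in the paper through the abstract class of Chapter~\ref{vara::sec::Example::Class}) requires an exhausting family of closed sets $M_{R}$ on which $F\leq\alpha(R)$, together with a decay estimate $W^{N,-\Psi}\left[L^{N}\notin M_{R}\right]\leq e^{-N^{d}\beta(R)}$ satisfying $\alpha(R)-\beta(R)\rightarrow-\infty$. With your choice $M_{R}=\calM_{R}$ one has $\alpha(R)\approx\LpN{1}{\ol{J}}\,R$, but the only decay available for $W^{N,-\Psi}\left[L^{N}\notin\calM_{R}\right]$ (the supermartingale argument behind Lemma~\ref{lem::LDPempP::Inter::OutCemR+Initial}, cf. \eqref{eq::pf::lem::LMF::LDPLN::RelEoutM::ExpBoundMR}) has an unspecified rate $\lambda$ that is not known to exceed $\LpN{1}{\ol{J}}$. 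This is exactly the obstruction discussed in Chapter~\ref{sec::LMF::LDPLN::LDP::Pf::DiffSets}, and it is why the paper works with the sets $\calN_{R}$ of \eqref{def::calNR} instead: for these, Lemma~\ref{lem::LMF::LDPLN::ExpBoundNR} (via Lemma~\ref{lem::LDPLN::LMF::eTheta2Bound}) gives decay at any rate $\kappa<c_{\Psi}$, and $c_{\Psi}>\LpN{1}{\ol{J}}$ from Assumption~\ref{ass::LMF::Psi} makes $\alpha(R)-\beta(R)\rightarrow-\infty$. Your proposal never states, let alone proves, such a comparison, so the Varadhan lower bound is not justified; condition~\ref{vara::ass::thm::GenVara::DiffPhiPhiR} (Lemma~\ref{lem::LMF::LDPLN::eFoutOfNR}) likewise needs this decay, not just the moment bound.

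Your item (iii) cannot repair this: Lemma~\ref{lem::LDPLN::LMF::expTight} concerns $P^{N}$ rather than $W^{N,-\Psi}$, and its compact sets $K_{A}$ are built from the abstract tightness sets $\Gamma_{a}$; they are not contained in any $\calM_{R}$, and $F$ is not known to be continuous on them -- the paper points this out explicitly in Chapter~\ref{sec::LMF::LDPLN::LDP::Pf::DiffSets}. Exponential tightness is in fact not an input to Theorem~\ref{vara::thm::GenVara} at all. Two smaller inaccuracies: restricting the supremum to $\calM_{\infty}$ in the first equality requires Lemma~\ref{lem::LMF::LDPLN::RelEoutM} (the entropy is infinite off $\calM_{\infty}$), and the second equality is just the definition \eqref{eq::thm::LDPLN::LMF::LDP::RF} of $\ol{I}$, not Lemma~\ref{lem::LDPLN::LMF::F2ndRep}; moreover finiteness of the supremum should be taken from the moment estimate \eqref{eq::pf::lem::LMF::LDPLN::Varadhan::Moment} (upper bound) and the choice $Q=\dd x\otimes\zeta_{x}\otimes W^{-\Psi}_{x,\w}$ (lower bound), since nonnegativity of $\ol{I}=H-F$ is not available before the identification $\ol{I}=I$, which is what this lemma is meant to help prove.
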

	In the proof of this lemma, we apply at first the Girsanov theorem to replace the integral with respect to $P^{N}$, by an integral with respect to $W^{N,-\Psi}$.
	Thus we get in the exponent $G-F$, by Lemma~\ref{lem::LDPLN::LMF::PNtoWN}.
	However the function $F$ is neither bounded nor continuous, due to the unbounded terms in the integrals in $F$.
	Therefore we can not apply the original Varadhan's lemma, but we have to use a generalised version of it (see Appendix~\ref{sec::Vara}).
	\emptyline

Moreover we need that $\ol{I}$ is a good rate function.
		\begin{lemma}
		\label{lem::LMF::LDPLN::RFGood}
			The rate function $\ol{I} $ is good, i.e. the level sets $\calL^{\leq c} \left( \ol{I} \right) \defeq \left\{ Q : \ol{I} \left( Q \right)  \leq c \right\}$ are compact for each $c \geq 0$.
		\end{lemma}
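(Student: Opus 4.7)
The plan is to verify the two defining properties of a good rate function: (i) every sub-level set $\calL^{\leq c}(\ol I)$ is relatively compact in $\MOne{\TWC}$, and (ii) $\ol I$ is lower semi-continuous. Both would follow once one shows that $\ol I$ dominates the relative entropy of $Q$ against the product reference
\[
\pi \defeq \dd x \otimes \zeta_x(\dd \w) \otimes W^{-\Psi}_{x,\w},
\]
which is a well-defined probability measure on $\TWC$ by Lemma~\ref{lem::LDPLN::LMF::WPsiFeller} and the same reasoning as in Lemma~\ref{lem::SanovT::ass::dxQxnu}.

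First I would decompose $F=T_1+T_2$ according to the two lines of \eqref{eq::thm::LDPLN::LMF::LDP::defF}. Since the integrand of the triple integral is independent of $\eta_\Ti$, a Fubini rearrangement rewrites
\[
T_1(Q) = -\oh \int_0^T \int_{\Td\times\Wsp} h_t(x'',\w'')^2\, Q_{\Td\times\Wsp}(\dd x'', \dd \w'')\, \dd t, \qquad h_t(x'',\w'') \defeq \int J(x''-x,\w'',\w)\,\theta_t\, Q(\dd x, \dd \w, \dd \theta_\Ti),
\]
so that $T_1\leq 0$ and hence $\ol I(Q) \geq \relE{Q}{\pi} - T_2$. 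Using $|J|\leq \ol J$, the fact that $Q\in \MOneL{\TWC}$, and Young's convolution inequality yields
\[
|T_2(Q)| \leq \oh \LpN{1}{\ol J} \int (\theta_T^2+\theta_0^2)\, Q(\dd \theta_\Ti).
\]
Combining this with the variational estimate $\kappa \int(\theta_T^2+\theta_0^2) Q \leq \relE{Q}{\pi} + \log C_\kappa$ (valid for every $\kappa<c_\Psi$ by Lemma~\ref{lem::LDPLN::LMF::eTheta2Bound}~\ref{lem::LDPLN::LMF::eTheta2Bound::T}), and choosing $\kappa\in(\oh \LpN{1}{\ol J}, c_\Psi)$---which is admissible by Assumption~\ref{ass::LMF::Psi}---closes an inequality of the form
\[
\Big(1-\tfrac{\LpN{1}{\ol J}}{2\kappa}\Big)\relE{Q}{\pi} \leq \ol I(Q) + \tfrac{\LpN{1}{\ol J}}{2\kappa}\log C_\kappa,
\]
so $\calL^{\leq c}(\ol I)\subset\{\relE{\cdot}{\pi}\leq c'\}$ for some $c'=c'(c)$. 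Since $\relE{\cdot}{\pi}$ has compact sub-level sets on the Polish space $\MOne{\TWC}$ (this is the goodness of the rate function in Lemma~\ref{lem::SanovT::RelEntropy}, taking $r=1$), this proves (i).

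For (ii), consider $Q_n\to Q$ weakly with $\ol I(Q_n)\leq c$. Step~2 provides $\sup_n \relE{Q_n}{\pi}\leq c'$ and, after extending the proof of Lemma~\ref{lem::LDPLN::LMF::eTheta2Bound}~\ref{lem::LDPLN::LMF::eTheta2Bound::T} verbatim to an arbitrary intermediate $t\in\Ti$, $\sup_n \sup_{t\in\Ti}\int \theta_t^2 Q_n <\infty$. Since $\MOneL{\TWC}$ and each $\calM_R$ are closed in $\MOne{\TWC}$ (the latter by Portmanteau applied to $\min(\theta_t^2,M)$ and letting $M\to\infty$), $Q\in \MOneL{\TWC}\cap \calM_\infty$. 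Lower semi-continuity of $\relE{\cdot}{\pi}$ is standard, and continuity $F(Q_n)\to F(Q)$ would be obtained by a double approximation: truncate the spin variables via $\chi_M(\theta_t)=(-M)\vee \theta_t\wedge M$ and replace $J$ by a continuous kernel $J_\ell$ converging to $J$ in $\Ltwo(\Td,\Csp{\Wsp\times\Wsp})$, as in the proof of Assumption~\ref{ass::LDPempP}~\ref{ass::LDPempP::MuIntCont} in Chapter~\ref{sec::LocalMF}. On the truncated-smoothed level the integrands in $F$ are bounded and continuous, so $Q_n^{\otimes 2}\to Q^{\otimes 2}$ weakly gives convergence, while the uniform second-moment bound and the $\Ltwo$ control of $J-J_\ell$ render the tails uniformly small in $n$.

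The main technical obstacle is the continuity step for $F$ in (ii): the kernel $J$ is only $\Ltwo$ in the spatial variable and the polynomial integrands $\theta_t\theta'_t$ and $\theta_T\theta'_T$ are unbounded, so naive weak convergence fails. The truncation of the spin variables and the smoothing of $J$ have to be coordinated so that the residual tails remain uniformly small across the sequence $\{Q_n\}$, using precisely the moment bounds derived for (i). This balancing act mirrors---almost verbatim---the one already performed in Chapter~\ref{sec::LocalMF} when verifying Assumption~\ref{ass::LDPempP}~\ref{ass::LDPempP::MuIntCont}, so the bulk of the argument can be transferred from there with only notational changes.
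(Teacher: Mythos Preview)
Your proposal is correct and takes a genuinely different, more analytic route than the paper.

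\textbf{Relative compactness.} The paper works in three steps: (1) it shows $\calL^{\leq c}(\ol I)\subset\calN_R$ for large $R$ by lower-bounding the relative entropy via the large deviation upper bound and Lemma~\ref{lem::LMF::LDPLN::ExpBoundNR}, (2) it proves closedness via continuity of $F$ on $\calN_R$, and (3) it obtains compactness by invoking the exponential tightness of $\{L^N,W^{N,-\Psi}\}$ established in Chapter~\ref{sec::LDPLN::LMF::PfExpTight} and again the LDP. Your approach bypasses all the probabilistic machinery: the observation $T_1\leq 0$ together with the Donsker--Varadhan bound on $T_2$ yields the single inequality $\ol I\geq (1-\tfrac{\LpN{1}{\ol J}}{2\kappa})\relE{\cdot}{\pi}-C$, which immediately traps the level sets inside those of the relative entropy. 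This is cleaner and self-contained; the paper's route has the advantage of reusing bounds that are needed elsewhere anyway.

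\textbf{Lower semi-continuity.} Here the two arguments are close in spirit. The paper confines the sequence to $\calN_R$ (again via the LDP-based entropy lower bound) and then cites continuity of $F$ on $\calN_R$; you instead confine to $\calM_R$ by applying Donsker--Varadhan at each fixed time $t$ with the exponential moment bound of Lemma~\ref{lem::LDPLN::LMF::eTheta2Bound}~\ref{lem::LDPLN::LMF::eTheta2Bound::T} (whose proof indeed extends to any $t\in\Ti$ with the same constant, since only $e^{tC}\leq e^{TC}$ changes). Both then need continuity of $F$ along sequences in $\MOneL{\TWC}$ with uniform second moments; your truncation-plus-smoothing sketch is the right mechanism, and for sequences already in $\MOneL{\TWC}$ it is in fact simpler than the empirical-measure version treated in Lemma~\ref{lem::LMF::LDPLN::FcontNR} and Chapter~\ref{sec::LocalMF}, because the $\Ltwo$ control of $J-J_\ell$ applies directly without the discrete-to-continuous comparison. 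The reference to Chapter~\ref{sec::LocalMF} is therefore slightly indirect (that section treats $\beta$, not $F$), but the techniques transfer as you say.
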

	
			By \cite{DupEllAWeakCon}  Theorem~1.2.3 the validity of the Laplace principle for all $G \in \CspL{b}{\MOne{ \TWC } }$ (shown in Lemma~\ref{lem::LMF::LDPLN::Varadhan}) and the fact that $\ol{I}$ is a good rate function  (Lemma~\ref{lem::LMF::LDPLN::RFGood}),  implies the claimed large deviation principle of $\left\{L^{N}\right\}$ under $\left\{P^{N}\right\}$.
\end{proof}

\subsection{Proof of Lemma~\ref{lem::LMF::LDPLN::Varadhan}}
\label{sec::LMF::LDPLN::pfLemVara}

In this chapter we prove Lemma~\ref{lem::LMF::LDPLN::Varadhan}.
We explain at first the strategy of this proof.
To prove the first equality in Lemma~\ref{lem::LMF::LDPLN::Varadhan}, we need to show that for any $G \in \CspL{b}{\MOne{ \TWC } }$,  
		\begin{align}
		\label{eq::pf::lem::LMF::LDPLN::Varadhan::WithF}
		\lim_{N \to \infty}
			 \frac{1}{N^d} \log E_{W^{N,-\Psi}} \left[ e^{ N^d \left( G  + F  \right) \left(  L^{N}  \right) } \right]
			=
		 	\sup_{Q \in \calM_{\infty}} \left\{ \left(  G+  F \right)  \left( Q  \right)    -   \relE{ Q }{ \dd x \otimes \zeta_{x} \left( \dd \w \right) \otimes W^{-\Psi}_{x,\w} }  \right\} 
		 	,
		\end{align}
by Lemma~\ref{lem::LDPLN::LMF::PNtoWN}.
The second equality in Lemma~\ref{lem::LMF::LDPLN::Varadhan} follows from the definition of $\ol{I}$.

\emptyline
The equation \eqref{eq::pf::lem::LMF::LDPLN::Varadhan::WithF} would follow directly from Varadhan's Lemma (see Theorem~4.3.1 in \cite{DemZeiLarge}), if $F$ were continuous.
But this is not the case, because the functions in the integrals in $F$ are not bounded.
Therefore we can not use the usual Varadhan's lemma, but we need a generalisation (Theorem~\ref{vara::thm::GenVara}).
We prove at the end of this chapter that the conditions of this generalisation are satisfied.
This requires some results, that we state now.
Also larger sets than $\calM_{R}$ are required in that proof
(we refer to Chapter~\ref{sec::LMF::LDPLN::LDP::Pf::DiffSets} for a discussion why we need larger sets).
For each $R \in \R_{+}$ define
\begin{align}
\label{def::calNR}
	\begin{split}
		\calN_{R} \defeq 
		\bigg\{ &Q \in \MOne{\TWC }  : 
		\int_{\TWR} \int_{0}^{T} \left( \theta_{t} \right)^{2}  \dd t Q \left( \dd x, \dd \w, \dd \theta_{\Ti}   \right)  \leq R
		\textnormal{ and }
		\\
		&\left.
		\int_{\TWR} \left( \theta_{T}  \right)^{2} Q \left( \dd x, \dd \w, \dd \theta_{\Ti}   \right)  \leq R
		\textnormal{ and } 
		\int_{\TWR} \left( \theta_{0}  \right)^{2} Q \left( \dd x, \dd \w, \dd \theta_{\Ti}   \right)  \leq R
		\right\}
		,
	\end{split}
\end{align}
and denote the subspace of $\MOne{\TWC}$, of the union of these sets, by
\begin{align}
	\calN_{\infty} \defeq \bigcup_{R} \calN_{R} \subset \MOne{ \TWC } 
\end{align}
equipped with the topology induced by $\MOne{ \TWC } $.

In the first lemma, we show that the probability of being outside of $\calN_{R}$ under $W^{N,- \Psi}$, decays exponentially fast.
\begin{lemma}
\label{lem::LMF::LDPLN::ExpBoundNR}
For all $\kappa< c_{\Psi}$ (defined in Assumption~\ref{ass::LMF::Psi}), there is a constant $C>0$, such that for all $N$ and $R$ large enough 
\begin{align}
	W^{N,- \Psi} \left[  L^{N} \not \in \calN_{R} \right]
	\leq
	e^{-\Nd \kappa R} C^{\Nd}
	.
\end{align}
\end{lemma}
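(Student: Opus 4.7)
The plan is to bound the probability that $L^{N}$ violates each of the three defining constraints of $\calN_R$ separately, by a union bound, and then to apply an exponential Chebyshev inequality to each piece, exploiting the product structure of $W^{N,-\Psi}$ and the uniform moment bounds of Lemma~\ref{lem::LDPLN::LMF::eTheta2Bound}.

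First I would write
\begin{align*}
W^{N,-\Psi}\!\left[L^{N}\not\in\calN_{R}\right]
&\leq W^{N,-\Psi}\!\left[\tfrac{1}{N^{d}}\sum_{k\in\TN}\!\int_{0}^{T}(\theta^{k,N}_{t})^{2}\dd t>R\right]\\
&\quad+W^{N,-\Psi}\!\left[\tfrac{1}{N^{d}}\sum_{k\in\TN}(\theta^{k,N}_{T})^{2}>R\right]
+W^{N,-\Psi}\!\left[\tfrac{1}{N^{d}}\sum_{k\in\TN}(\theta^{k,N}_{0})^{2}>R\right].
\end{align*}
Fix a $\kappa'\in(\kappa,c_{\Psi})$. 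For each of the three terms I would apply the exponential Chebyshev inequality with parameter $\kappa'$ and use that, under $W^{N,-\Psi}$, the spins are independent with laws $W^{-\Psi}_{k/N,\w^{k,N}}$ averaged over $\w^{k,N}\sim\zeta_{k/N}$. For instance, for the first term,
\[
W^{N,-\Psi}\!\left[\sum_{k}\!\int_{0}^{T}\!(\theta^{k,N}_{t})^{2}\dd t>N^{d}R\right]
\leq e^{-\kappa' N^{d}R}\prod_{k\in\TN}\!\int_{\Wsp}\!E_{W^{-\Psi}_{k/N,\w}}\!\left[e^{\kappa'\int_{0}^{T}(\theta_{t})^{2}\dd t}\right]\zeta_{k/N}(\dd\w).
\]

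Next, Lemma~\ref{lem::LDPLN::LMF::eTheta2Bound}~\ref{lem::LDPLN::LMF::eTheta2Bound::Int0T} (applied with the parameter $\kappa'<c_{\Psi}$) supplies a constant $C_{\kappa'}>0$ such that each factor in the product is bounded by $C_{\kappa'}$, uniformly in $x\in\Td$ and $\w\in\Wsp$. Hence the first term is at most $e^{-\kappa' N^{d}R}\,C_{\kappa'}^{N^{d}}$. For the other two terms the same argument goes through using Lemma~\ref{lem::LDPLN::LMF::eTheta2Bound}~\ref{lem::LDPLN::LMF::eTheta2Bound::T} instead, noting that the exponent $\kappa'[(\theta_{T})^{2}+(\theta_{0})^{2}]$ appearing there controls both summands simultaneously.

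Summing the three bounds yields $W^{N,-\Psi}[L^{N}\not\in\calN_{R}]\leq 3\,e^{-\kappa' N^{d}R}\,\tilde C^{N^{d}}$ for some constant $\tilde C$. Since $\kappa<\kappa'$, the factor $3$ and the discrepancy $(\kappa'-\kappa)R$ can be absorbed for $R$ (and $N$) large enough into a single exponential $e^{-\kappa N^{d}R}C^{N^{d}}$, giving the claim. The proof is essentially a bookkeeping exercise; the only mild subtlety is picking $\kappa'$ strictly between $\kappa$ and $c_{\Psi}$ so that the moment generating function bounds of Lemma~\ref{lem::LDPLN::LMF::eTheta2Bound} are still applicable, while leaving a positive gap to swallow the combinatorial prefactor.
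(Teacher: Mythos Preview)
Your proof is correct and follows essentially the same route as the paper: union bound over the three constraints defining $\calN_R$, exponential Chebyshev on each, product structure of $W^{N,-\Psi}$, and the uniform moment bounds of Lemma~\ref{lem::LDPLN::LMF::eTheta2Bound}. Two cosmetic differences: the paper handles the $\theta_0$ term directly via the initial law $\nu_x$ and Assumption~\ref{ass::LMF::Init::Integral} rather than invoking Lemma~\ref{lem::LDPLN::LMF::eTheta2Bound}~\ref{lem::LDPLN::LMF::eTheta2Bound::T}; and the paper applies Chebyshev at the target exponent $\kappa$ itself, absorbing the factor $3$ into $C^{N^d}$ (since $3\leq 3^{N^d}$), so your intermediate choice of $\kappa'\in(\kappa,c_\Psi)$ is harmless but unnecessary.
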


Then we show that the probability of being outside of $\calN_{R}$ also decays (at least asymptotically) exponential fast under $P^{N}$.
\begin{lemma}
	\label{lem::LMF::LDPLN::eFoutOfNR}
~\vspace{-1.8em}
	\begin{align}
 	\begin{split} 
 		&\limsup_{R\to\infty}\limsup_{N\to\infty} \frac{1}{N^d} 
 	  		\log P^{N}
 	 		\left[ 
 	 		L^{N} \not \in \calN_{R} 
 			\right]  
 		\\
 		&=
 		\limsup_{R\to\infty}\limsup_{N\to\infty} \frac{1}{N^d} 
  		\log E_{W^{N,-\Psi}}
 		\left[ 
 		e^{N^d F  \left( L^{N}  \right)  } \1_{L^{N} \not \in \calN_{R} } 
		\right]  
 	= 
		-\infty
	.
	\end{split}
	\end{align}
\end{lemma}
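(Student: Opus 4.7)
The plan is to exploit the Girsanov representation from Lemma~\ref{lem::LDPLN::LMF::PNtoWN} to convert the $P^N$-probability into an expectation against $W^{N,-\Psi}$, and then decouple the exponential factor $e^{N^d F(L^N)}$ from the indicator via Hölder's inequality. The probability under $W^{N,-\Psi}$ will be controlled by the Lemma~\ref{lem::LMF::LDPLN::ExpBoundNR} just proved, while the exponential moment of $F(L^N)$ under $W^{N,-\Psi}$ will be bounded via the same estimates used for exponential tightness in \ref{sec::LDPLN::LMF::PfExpTight}.

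First I would establish the equality of the two $\limsup$ expressions in the statement. By Lemma~\ref{lem::LDPLN::LMF::PNtoWN} and integration with respect to $\zeta^N$,
\begin{align*}
	P^N[L^N \notin \calN_R]
	=
	E_{W^{N,-\Psi}}\!\left[e^{N^d F(L^N) - \frac{T}{2N}\sum_{i \in \TN} J(0,\w^{i,N},\w^{i,N})} \1_{L^N \notin \calN_R}\right].
\end{align*}
Since $\sup_{\w,\w' \in \Wsp} |J(0,\w,\w')| \leq \bar J(0)$ with the convention $J(0,\cdot,\cdot) = 0$ when $\bar J$ is singular at the origin, the correction in the exponent is bounded by $\tfrac{T}{2} N^{d-1} \bar J(0)$, which is $o(N^d)$; hence after taking $\tfrac{1}{N^d}\log$ it disappears, yielding the claimed equality of the two $\limsup$'s.

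The core estimate then is the bound on $E_{W^{N,-\Psi}}[e^{N^d F(L^N)} \1_{L^N \notin \calN_R}]$. Apply Hölder with conjugate exponents $p, p'>1$:
\begin{align*}
	E_{W^{N,-\Psi}}\!\left[e^{N^d F(L^N)} \1_{L^N \notin \calN_R}\right]
	\leq
	E_{W^{N,-\Psi}}\!\left[e^{N^d p F(L^N)}\right]^{1/p}
	\, W^{N,-\Psi}[L^N \notin \calN_R]^{1/p'}.
\end{align*}
By Assumption~\ref{ass::LMF::Psi}, the inequality $\LpN{1}{\ol J} < c_\Psi$ allows to fix $p>1$ close enough to $1$ and $\delta>0$ small so that $p(\LpN{1}{\ol J}+\delta) < c_\Psi$; then the upper bound on $F$ in \eqref{eq::pf::lem::LDPLN::LMF::eFoutOfNR::BoundF} together with Lemma~\ref{lem::LDPLN::LMF::eTheta2Bound}~\ref{lem::LDPLN::LMF::eTheta2Bound::T} (applied independently to each coordinate, as in \eqref{eq::pf::lem::LDPLN::LMF::eFoutOfNR::EpFLNbound}) gives the uniform bound $E_{W^{N,-\Psi}}[e^{N^d p F(L^N)}] \leq C_p^{N^d}$ for all $N$ large enough and uniformly in $\ul{\w}^N$.

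Choosing in Lemma~\ref{lem::LMF::LDPLN::ExpBoundNR} a $\kappa < c_\Psi$ (which exists by Assumption~\ref{ass::LMF::Psi}), we get $W^{N,-\Psi}[L^N \notin \calN_R] \leq e^{-N^d \kappa R} C^{N^d}$ for $N,R$ large. Combining the two estimates,
\begin{align*}
	\frac{1}{N^d}\log E_{W^{N,-\Psi}}\!\left[e^{N^d F(L^N)} \1_{L^N \notin \calN_R}\right]
	\leq
	\frac{\log C_p}{p} + \frac{\log C - \kappa R}{p'}.
\end{align*}
Taking $\limsup_{N\to\infty}$ and then $\limsup_{R\to\infty}$ yields $-\infty$. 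The main obstacle in this argument is securing the uniform-in-$N$ bound $C_p^{N^d}$ on the $p$-th exponential moment of $F(L^N)$: this is where the strict inequality $c_\Psi > \LpN{1}{\ol J}$ in Assumption~\ref{ass::LMF::Psi} must be used with some slack to absorb both the discretisation error in $J$ (Assumption~\ref{ass::LMF::J}) and the Hölder exponent $p>1$, while still staying strictly below the threshold $c_\Psi$ at which the exponential moment of $\theta_T^2 + \theta_0^2$ under $W^{-\Psi}_{x,\w}$ blows up.
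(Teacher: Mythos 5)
Your proposal is correct and follows essentially the same route as the paper: H\"older's inequality to separate $e^{N^d F(L^N)}$ from the indicator, the uniform bound $E_{W^{N,-\Psi}}[e^{N^d p F(L^N)}]\leq C^{N^d}$ obtained from \eqref{eq::pf::lem::LDPLN::LMF::eFoutOfNR::BoundF} and Lemma~\ref{lem::LDPLN::LMF::eTheta2Bound} using $p(\LpN{1}{\ol{J}}+\delta)<c_{\Psi}$, and Lemma~\ref{lem::LMF::LDPLN::ExpBoundNR} for $W^{N,-\Psi}[L^N\notin\calN_R]$. Your explicit Girsanov argument showing that the diagonal correction term $\tfrac{T}{2N}\sum_i J(0,\w^{i,N},\w^{i,N})$ is $o(N^d)$, and hence that the two $\limsup$ expressions coincide, is a step the paper leaves implicit, but it is a refinement rather than a different approach.
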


Moreover we show that the sets $\calN_{R}$ are closed and that the restriction of $F$ to particular sequences in these sets is continuous.
	\begin{lemma}
	\label{lem::LMF::LDPLN::NRclosed}
		The sets $\calN_{R}$ are closed.
	\end{lemma}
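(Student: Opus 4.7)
The plan is to write $\calN_R$ as an intersection of three sub-level sets, one for each of the functionals
\[
	\Phi_1(Q) := \int_{\TWC} \theta_0^2 \, Q(\dd x, \dd\w, \dd\theta_{\Ti}), \quad
	\Phi_2(Q) := \int_{\TWC} \theta_T^2 \, Q(\dd x, \dd\w, \dd\theta_{\Ti}), \quad
	\Phi_3(Q) := \int_{\TWC} \!\!\int_0^T \theta_t^2 \, \dd t \, Q(\dd x, \dd\w, \dd\theta_{\Ti}),
\]
and to show that each $\Phi_i : \MOne{\TWC} \to [0,\infty]$ is lower semi-continuous with respect to the weak topology.  Once this is established, each set $\{Q : \Phi_i(Q) \leq R\}$ is closed, and $\calN_R$, being the intersection of three such closed sets, is closed.

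First I would check that the three integrand maps $\psi_1,\psi_2,\psi_3 : \TWC \to [0,\infty)$ defined by
\[
	\psi_1(x,\w,\theta_{\Ti}) = \theta_0^2, \qquad \psi_2(x,\w,\theta_{\Ti}) = \theta_T^2, \qquad \psi_3(x,\w,\theta_{\Ti}) = \int_0^T \theta_t^2 \, \dd t
\]
are continuous and non-negative on $\TWC$.  Continuity of $\psi_1$ and $\psi_2$ is immediate since evaluation at a fixed time is continuous on $\Csp{\Ti}$ endowed with the uniform topology, and squaring is continuous on $\R$.  For $\psi_3$, uniform convergence $\theta^{(n)}_{\Ti} \to \theta_{\Ti}$ on $\Ti$ implies $(\theta^{(n)}_t)^2 \to \theta_t^2$ uniformly in $t$, so $\int_0^T (\theta^{(n)}_t)^2 \dd t \to \int_0^T \theta_t^2 \dd t$.

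Now for each $i$, truncate by $\psi_i^M := \psi_i \wedge M \in \CspL{b}{\TWC}$, which is bounded and continuous.  For any weakly converging sequence $Q^{(n)} \to Q$ in $\MOne{\TWC}$,
\[
	\int \psi_i^M \, \dd Q = \lim_{n \to \infty} \int \psi_i^M \, \dd Q^{(n)} \leq \liminf_{n \to \infty} \int \psi_i \, \dd Q^{(n)}.
\]
Letting $M \nearrow \infty$ and applying monotone convergence on the left gives $\Phi_i(Q) \leq \liminf_n \Phi_i(Q^{(n)})$, which is precisely the desired lower semi-continuity (this is the standard Portmanteau argument for non-negative lower semi-continuous integrands, applied here to continuous ones).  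Consequently each level set $\{Q : \Phi_i(Q) \leq R\}$ is closed, and the lemma follows.

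I do not anticipate any genuine obstacle: the key subtlety is simply that the integrands are continuous but unbounded, so one cannot invoke weak convergence directly on $\int \psi_i \, \dd Q^{(n)}$; the truncation together with Portmanteau handles this cleanly.
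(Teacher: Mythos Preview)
Your proposal is correct and follows essentially the same approach as the paper: truncate the non-negative continuous integrands $\theta_0^2$, $\theta_T^2$, $\int_0^T \theta_t^2\,\dd t$ by a cutoff at level $M$, pass to the limit along a weakly convergent sequence using that the truncated integrands are bounded continuous, and then let $M\nearrow\infty$ via monotone convergence. The only cosmetic difference is that the paper phrases this as a direct sequential argument on a converging sequence $Q^{(n)}\to Q$, while you package it as lower semi-continuity of the three functionals $\Phi_i$.
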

	\begin{lemma}
	\label{lem::LMF::LDPLN::FcontNR}
		For each $R>0$, for each $Q \in \calN_{R}$ with  $\relE{ Q }{ \dd x \otimes \zeta_{x} \left( \dd \w \right) \otimes W^{-\Psi}_{x,\w} } < \infty$ and
		for each sequence of empirical measures $\left\{ L_{N_{n}} \right\} \subset \calN_{R}$ with $L_{N_{n}} \rightarrow Q$,
		the sequence $F \left( L_{N_{n}} \right) \rightarrow F \left( Q \right)$.
	\end{lemma}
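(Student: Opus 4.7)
The strategy is to write $F = F_1 + F_2$ with $F_1$ the triple integral term and $F_2$ the double integral term, and to approximate each by a bounded continuous functional for which weak convergence of the empirical measures suffices. There are two sources of non-continuity: the interaction kernel $J$ lies only in $\Ltwo(\Td, \Csp{\Wsp\times\Wsp})$ and need not be continuous in the space variable, and the test functions $\theta_{0},\theta_{T},\int_{0}^{T}\theta_{t}\theta'_{t}\dd t$ are unbounded. Accordingly, for fixed $k, M$ define $F_{i}^{k,M}$ by replacing $J$ with a continuous approximant $J_{k}\in\Csp{\TW\times\Wsp}$ (with $J_k\to J$ in $\Ltwo(\Td, \Csp{\Wsp\times\Wsp})$, which exists by Assumption~\ref{ass::LMF::J}) and each spin value $\theta$ by the truncation $\chi_{M}(\theta)\defeq(\theta\wedge M)\vee(-M)$. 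The plan is to prove
\begin{align*}
\abs{F(L^{N_{n}})-F(Q)}
\leq
\abs{F(L^{N_{n}})-F^{k,M}(L^{N_{n}})}
+\abs{F^{k,M}(L^{N_{n}})-F^{k,M}(Q)}
+\abs{F^{k,M}(Q)-F(Q)},
\end{align*}
first sending $n\to\infty$ for fixed $k,M$, and then $M,k\to\infty$.

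For the middle term, once $J_k$ is continuous and $\chi_M$ bounds the spin value, the integrand in $F_{i}^{k,M}$ is bounded and continuous on $(\TWC)^{\otimes 2}$ respectively $(\TWC)^{\otimes 3}$. Since $L^{N_{n}}\to Q$ weakly in $\MOne{\TWC}$, the product measures converge weakly on the product spaces, so $F^{k,M}(L^{N_{n}})\to F^{k,M}(Q)$ as $n\to\infty$ for each fixed $k,M$. This is the easy step.

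The $J\to J_k$ approximation step, for the $F_{2}$-type term, is handled by Cauchy--Schwarz in the $x$ variable: after bounding the $\w$-dependence by the supremum, the absolute difference is controlled by
\begin{align*}
\Big(\int_{\Td}\!\sup_{\w,\w'\in\Wsp}\abs{(J-J_{k})(y,\w,\w')}^{2}\dd y\Big)^{\!1/2}\!
\cdot R,
\end{align*}
uniformly over $Q\in\calN_{R}$, exactly as in \ref{step::LocalMF::bCont::Contx} and in \eqref{eq::step::LocalMF::bCont::JLtwoJk}. For the $\chi_{M}$-truncation step, the error for the $F_{2}$-term is bounded using Cauchy--Schwarz and the uniform second-moment bound $\int\theta_{T}^{2}Q'\leq R$ valid for every $Q'\in\calN_{R}$: for instance
\begin{align*}
\abs{\int\theta_{T}\theta'_{T}(1-\1_{\abs{\theta_{T}}\leq M}\1_{\abs{\theta'_{T}}\leq M})(Q'\otimes Q')}
\leq
2 R\,\big(Q'[\abs{\theta_{T}}>M]\big)^{1/2}R^{1/2}
\leq
2 R^{2}/M,
\end{align*}
which vanishes as $M\to\infty$ uniformly over $\calN_{R}$. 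The term with $\theta_{0}\theta'_{0}$ is identical.

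The main obstacle is the triple integral term $F_{1}$: here the kernel product $J(x''-x,\w'',\w)J(x''-x',\w'',\w')$ appears, which is only in $\Lone$ in the position variables, and the $\int_{0}^{T}\theta_{t}\theta'_{t}\dd t$ pairing must be truncated and controlled in parallel. I would proceed in three sub-steps: (i) replace $J$ by $J_{k}$ in both factors one at a time, estimating each replacement by Cauchy--Schwarz in the free position variable against the uniform bound $\int\int_{0}^{T}\theta_{t}^{2}\dd t\,Q'\leq R$ coming from the definition of $\calN_{R}$; (ii) truncate $\theta_{t},\theta'_{t}$ to $\chi_{M}(\theta_{t}),\chi_{M}(\theta'_{t})$, with error again of order $R^{3/2}/M$ by Cauchy--Schwarz plus Chebyshev applied to $\int_{0}^{T}(\theta_{t})^{2}\dd t$; (iii) observe that, once $J$ is continuous and the spin variables are bounded, the integrand is bounded continuous on $(\TWC)^{3}$ and weak convergence of $L^{N_n}\otimes L^{N_n}\otimes L^{N_n}\to Q\otimes Q\otimes Q$ closes the argument. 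The finite-entropy hypothesis on $Q$ is not used in the convergence itself, but it guarantees $F(Q)$ is defined and finite by the reasoning in the proof of Theorem~\ref{thm::LDPLN::LMF::LDP}; all moment estimates only use that $Q, L^{N_{n}}\in\calN_{R}$. The delicate point throughout is to choose the order of limits carefully so that the constants from Cauchy--Schwarz remain uniform in $n$: fix $\epsilon>0$, then choose $k$ large (controlling the $J$-approximation uniformly over $\calN_{R}$), then $M$ large (controlling truncation uniformly over $\calN_{R}$), and finally let $n\to\infty$.
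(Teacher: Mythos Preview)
Your overall strategy---approximate $J$ by continuous $J_k$, truncate the spins via $\chi_M$, and pass to the limit using weak convergence of product measures---is exactly the paper's. The difficulty lies in the truncation step for the triple-integral term $F_1$, and here your argument has a genuine gap.

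You claim that replacing $\theta_t,\theta'_t$ by $\chi_M(\theta_t),\chi_M(\theta'_t)$ produces an error of order $R^{3/2}/M$ uniformly over $\calN_R$, via Chebyshev on $\int_0^T\theta_t^2\,\dd t$. But membership in $\calN_R$ only gives $\int\int_0^T\theta_t^2\,\dd t\,Q'\le R$, i.e.\ a bound on the \emph{time-averaged} second moment; it gives no uniform control on the pointwise-in-$t$ tail $Q'[|\theta_t|>M]$ or on $\int_0^T(\int\theta_t^2\,Q')^{3/2}\dd t$, which is what a Cauchy--Schwarz reduction actually produces. One can manufacture measures in $\calN_R$ whose paths spend a tiny fraction of time at very large height, defeating any rate of the form $C(R)/M$. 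What rescues the argument is that you are not working with arbitrary elements of $\calN_R$ but with a \emph{weakly convergent} sequence: Prokhorov gives tightness of $\{L^{N_n}\}$, and the Arzel\`a--Ascoli description of compact sets in $\TWC$ (Lemma~\ref{lem::AA}) then yields, for every $\epsilon>0$, an $M$ with $L^{N_n}[\iNorm{\theta_{\Ti}}>M]<\epsilon$ uniformly in $n$. This is precisely how the paper bounds the truncation error (the terms labelled \encircle{C}, \encircle{D}): one gets $\|J_k\|_\infty\cdot R\cdot\epsilon^{1/2}$ rather than an explicit $1/M$ rate.

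A second, smaller imprecision: your $J\to J_k$ estimate via the $\Ltwo(\Td)$ norm of $J-J_k$, as in \eqref{eq::step::LocalMF::bCont::JLtwoJk}, presupposes that the $x$-marginal of the measure is Lebesgue. This is fine for $Q\in\MOneL{\TWC}$ (guaranteed by the finite-entropy hypothesis), but the empirical measures $L^{N_n}$ have discrete $x$-marginal, so Cauchy--Schwarz produces a Riemann sum $\frac{1}{N_n^d}\sum_j\sup_{\w,\w'}|(J-J_k)(\tfrac{j}{N_n},\w,\w')|^2$ rather than the integral. Bridging this requires exactly the Riemann-sum convergence in Assumption~\ref{ass::LMF::J}; the paper handles it via the three-piece decomposition \encircle{A1}--\encircle{A3} in \ref{step::LocalMF::MuIntCont::EmpP::Pointw::A}, which you should invoke for the $L^{N_n}$ side.
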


For the last equality in \eqref{lem::LMF::LDPLN::Varadhan::eq}, we need that
$\relE{ Q }{ \dd x \otimes \zeta_{x} \left( \dd \w \right) \otimes W^{-\Psi}_{x,\w} } - F \left( Q \right) = \infty$ if $Q \in \calN_{\infty} \backslash \calM_{\infty}$. 
Because $F$ is bounded on $\calN_{\infty}$, it is enough to prove the following lemma.
\begin{lemma}
\label{lem::LMF::LDPLN::RelEoutM}
$ \relE{ Q }{ \dd x \otimes \zeta_{x} \left( \dd \w \right) \otimes W^{-\Psi}_{x,\w} } = \infty$ if $Q \not \in \calM_{\infty}$.
Therefore this also holds for $Q \not \in \calN_{\infty}$.
\end{lemma}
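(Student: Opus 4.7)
The idea is to invoke the variational (Donsker--Varadhan) representation of relative entropy with bounded continuous test functions that isolate a blow-up time of the second moment of $Q$, while controlling the exponential moment on the reference side via the confining nature of $\Psi$. Fix $Q \notin \calM_\infty$. By definition~\eqref{def::calMR} there is a sequence $\{t_n\}_{n \in \N} \subset \Ti$ with
\begin{equation*}
\int_{\TWC} (\theta_{t_n})^2 \, Q(\dd x, \dd \w, \dd \theta_\Ti) \longrightarrow \infty \qquad (n\to\infty),
\end{equation*}
and for each $M > 0$ the function $f_{n,M}(x, \w, \theta_\Ti) \defeq \kappa_0 \bigl(\theta_{t_n}^2 \wedge M\bigr)$ lies in $\CspL{b}{\TWC}$, so it is a legitimate test function in the variational formula.

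The key technical input is a uniform-in-time exponential moment bound: there exist $\kappa_0 \in (0, c_\Psi)$ and $C < \infty$ with
\begin{equation*}
\sup_{t \in \Ti}\; \sup_{(x,\w) \in \TW} E_{W^{-\Psi}_{x,\w}}\!\bigl[e^{\kappa_0 \theta_t^2}\bigr] \le C.
\end{equation*}
I would establish this by repeating the Girsanov computation in the proof of Lemma~\ref{lem::LDPLN::LMF::eTheta2Bound}\,\ref{lem::LDPLN::LMF::eTheta2Bound::T}, but with the Radon--Nikodym derivative $\dd W^{-\Psi}_{x,\w}/\dd W^0_x$ restricted to the $\sigma$-algebra $\mathcal{F}_t$, which produces the exponent
\begin{equation*}
\Psi(\theta_0,\w) - \Psi(\theta_t,\w) + \tfrac{1}{2}\!\int_0^t\!\bigl[\Psi''(\theta_s,\w) - (\Psi'(\theta_s,\w))^2\bigr] \dd s.
\end{equation*}
The time-integral is bounded above uniformly in $(s,\w,\theta_s)$ because $\Psi'' - (\Psi')^2$ is bounded above on $\R \times \Wsp$ by Assumption~\ref{ass::LMF::Psi} and compactness of $\Wsp$; the map $(\theta,\w) \mapsto \kappa_0 \theta^2 - \Psi(\theta,\w)$ is bounded above on $\R \times \Wsp$ because $\kappa_0 < c_\Psi$ and $\w_1$ is bounded; and $\sup_{x \in \Td} \int e^{\Psi(\theta,\w)} \nu_x(\dd \theta)$ is finite by Assumption~\ref{ass::LMF::Init::Integral}. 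Combining these three estimates exactly as in~\eqref{eq::EPsiBoundeT} yields the bound with a constant $C$ depending only on $T$, $\Psi$, $\Wsp$ and the initial data.

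Equipped with this bound, the Donsker--Varadhan inequality applied to $f_{n,M}$ yields
\begin{equation*}
\relE{Q}{\dd x \otimes \zeta_x \otimes W^{-\Psi}_{x,\w}} \;\ge\; \int_{\TWC} f_{n,M}\,\dd Q \;-\; \log C \;=\; \kappa_0 \int_{\TWC} (\theta_{t_n}^2 \wedge M)\, \dd Q \;-\; \log C,
\end{equation*}
for every $n \in \N$ and $M > 0$. Letting $M \to \infty$ (monotone convergence) and then $n \to \infty$ forces the left-hand side to be $+\infty$, which is the claim. The hard part is the uniform-in-time moment bound; everything else is bookkeeping. Since that bound is essentially a verbatim repetition of Lemma~\ref{lem::LDPLN::LMF::eTheta2Bound}\,\ref{lem::LDPLN::LMF::eTheta2Bound::T} with $t$ in place of $T$, the overall obstacle is mild.
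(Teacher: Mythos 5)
Your proof is correct, but it follows a genuinely different route from the paper. The paper deduces the lemma from the large deviation machinery it has already built: it first transfers the exponential estimate of Lemma~\ref{lem::LDPempP::Inter::OutCemR+Initial} to get $W^{N,-\Psi}\left[ L^{N} \not\in \calM_{R} \right] \leq e^{-\Nd \lambda R} C^{\Nd}$ (using that $L^{N} \in \calM_{R}$ iff $\empP \in \Cem_{\varphi,R}$ and that the drift $-\Psi'$ satisfies the relevant parts of Assumption~\ref{ass::LDPempP}), and then combines this with the Sanov-type LDP lower bound for $\left\{ L^{N}, W^{N,-\Psi} \right\}$ on the open set $\MOne{\TWC} \setminus \calM_{R}$ (which requires $\calM_{R}$ to be closed) to conclude $\inf_{Q' \notin \calM_{R}} \relE{Q'}{\dd x \otimes \zeta_{x} \otimes W^{-\Psi}_{x,\w}} \geq \lambda R - \log C$ for every $R$. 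You instead argue directly at the level of the reference measure: the easy direction of the Donsker--Varadhan variational formula with the bounded continuous test functions $\kappa_{0}\left( \theta_{t_{n}}^{2} \wedge M \right)$, together with a uniform-in-time version of the exponential moment bound of Lemma~\ref{lem::LDPLN::LMF::eTheta2Bound}~\ref{lem::LDPLN::LMF::eTheta2Bound::T}, obtained by the same Girsanov/\Ito~computation with the density restricted to $\mathcal{F}_{t}$; your verification that the resulting exponent is controlled (boundedness above of $\Psi'' - (\Psi')^{2}$ and of $\kappa_{0}\theta^{2} - \Psi(\theta,\w)$, plus Assumption~\ref{ass::LMF::Init::Integral}) is sound under Assumption~\ref{ass::LMF::Psi} and the compactness of $\Wsp$. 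What each approach buys: the paper's proof reuses already-established lemmas and needs no new estimate, but it is indirect (it only sees the entropy through the LDP of the non-interacting $N$-particle system and the closedness of $\calM_{R}$); your proof is self-contained and more elementary, bypasses the LDP and the $N$-particle exponential bounds entirely, and in fact yields the quantitative statement $\relE{Q}{\dd x \otimes \zeta_{x} \otimes W^{-\Psi}_{x,\w}} \geq \kappa_{0} \sup_{t \in \Ti} \int_{\TWC} \theta_{t}^{2}\, Q\left( \dd x, \dd \w, \dd \theta_{\Ti} \right) - \log C$, which is slightly stronger than the dichotomy claimed in the lemma, at the modest price of extending Lemma~\ref{lem::LDPLN::LMF::eTheta2Bound} from the endpoint times $0,T$ to all $t \in \Ti$.
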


We state the proofs of these lemmas in Chapter~\ref{sec::LMF::LDPLN::pfLemChapVara}.
In the rest of this chapter, we infer Lemma~\ref{lem::LMF::LDPLN::Varadhan} from these lemmas.
	
\begin{proof}[of Lemma~\ref{lem::LMF::LDPLN::Varadhan}]
To prove Lemma~\ref{lem::LMF::LDPLN::Varadhan}, we show that \eqref{eq::pf::lem::LMF::LDPLN::Varadhan::WithF} holds by applying the generalised Varadhan's Lemma (Theorem~\ref{vara::thm::GenVara}).
In \ref{pf::lem::LMF::LDPLN::Varadhan::Step::ExtVara} we show that the conditions of Theorem~\ref{vara::thm::GenVara} hold. 
Then we derive in  \ref{pf::lem::LMF::LDPLN::Varadhan::Step::SupFinite}, that the supremum on the right hand side of \eqref{eq::pf::lem::LMF::LDPLN::Varadhan::WithF} is finite.

\emptyline
\begin{steps}

\step[{Applying the Theorem~\ref{vara::thm::GenVara}}]
	\label{pf::lem::LMF::LDPLN::Varadhan::Step::ExtVara}
	To apply Theorem~\ref{vara::thm::GenVara} we show that the model we consider here is within the class defined in Chapter~\ref{vara::sec::Example::Class}.
	We take as increasing sets the $\calN_{R}$.

	\begin{steps}
		\step[{\ref{vara::ex:MR}}]
			See Lemma~\ref{lem::LMF::LDPLN::NRclosed}.
		
		\step[{\ref{vara::ex:Iinfty}}]
			See Lemma~\ref{lem::LMF::LDPLN::RelEoutM}, because $\calM_{\infty} \subset \calN_{\infty}$.

		\step[{\ref{vara::ex:PhiMRcont}}]
			See Lemma~\ref{lem::LMF::LDPLN::FcontNR}.
				
		\step[{\ref{vara::ex:PhiBound}}]
		For a empirical process $L^{N} \in \calN_{R}$, we get by \eqref{eq::pf::lem::LDPLN::LMF::eFoutOfNR::BoundF}, that 
\begin{align}
\label{eq::pf::lem::LMF::LDPLN::Varadhan::BoundF}
\begin{split}
	F \left( L^{N} \right)
	\leq 
		\left( \LpN{1}{\ol{J}}  +  \delta \right) 
		\oh \frac{1}{\Nd}
		\sum_{i \in \TN} \left( \theta^{i,N}_{T} \right)^{2} 
		+
		\left( \theta^{i,N}_{0}  \right)^{2}
	\leq
		R \left( \LpN{1}{\ol{J}}+ \delta \right)
	.
\end{split}
\end{align}
for all $\delta>0$, when $N> N_{\delta}>0$ (by Assumption~\ref{ass::LMF::J}).
Set $\alpha\left( R \right)  = R \left( \LpN{1}{\ol{J}}+ \delta \right) $, for $\delta$ small enough.

		\step[{\ref{vara::ex:ProbOutMRBound}}]
			This follows from Lemma~\ref{lem::LMF::LDPLN::ExpBoundNR} with 
			$\beta\left( R \right)  = c_{\Psi}R - C$ for a constant $C>0$.
		
		\step[{\ref{vara::ex:AlphaMBeta}}]
			$\alpha\left( R \right) -\beta\left( R \right)   \rightarrow - \infty$
			by Assumption~\ref{ass::LMF::Psi}.
			
		\step[{\ref{vara::ex:DiffCondAlmost}}]
			See Lemma~\ref{lem::LMF::LDPLN::eFoutOfNR}.
		
		\step[{\ref{vara::ex:Moment}}]		
			The sufficient moment condition is satisfied, because $G$ is bounded and because there is a $C>0$ and a $\gamma>1$ not too large, such that
\begin{align}
\label{eq::pf::lem::LMF::LDPLN::Varadhan::Moment} 
	E_{W^{N,-\Psi}} \left[ e^{\gamma \Nd  F \left( L^{N} \right) } \right]
	\leq
		C^{\Nd }
\end{align}
for all $N \in \N$, by \eqref{eq::pf::lem::LMF::LDPLN::eFoutOfNR::EpFLNbound}  (in the proof of Lemma~\ref{lem::LMF::LDPLN::eFoutOfNR}).

	\end{steps}
\emptyline
Hence the model we consider here is within the class defined in Chapter~\ref{vara::sec::Example::Class}.
Therefore all conditions of  Theorem~\ref{vara::thm::GenVara} are satisfied and we get
	\begin{align}
	\label{eq::pf::lem::LMF::LDPLN::Varadhan::Res} 
	\begin{split}
		&\lim_{N\to\infty} \frac{1}{\Nd}\log E_{W^{N,-\Psi}} \left[  e^{ \Nd \left( G  + F \right) \left( L^{N}  \right) } \right] 
		\\
		&= 
		\sup_{Q \in \MOne{\TWC} : \relE{Q}{\dd x \otimes \zeta_{x} \left( \dd \w \right) \otimes W^{-\Psi}_{x,\w}} < \infty} \left\{  \left( G + F \right) \left( Q \right)  - \relE{Q}{\dd x \otimes \zeta_{x} \left( \dd \w \right) \otimes W^{-\Psi}_{x,\w}} \right\} 
		\\
		&=
		\sup_{Q \in \calM_{\infty}} \left\{  \left( G + F \right) \left( Q  \right)   - \relE{Q}{\dd x \otimes \zeta_{x} \left( \dd \w \right) \otimes W^{-\Psi}_{x,\w}} \right\} 
		.
	\end{split}
	\end{align}
In the last equality we use Lemma~\ref{lem::LMF::LDPLN::RelEoutM} and that $F$ is finite on $\calM_{\infty} \subset \calN_{\infty}$.

\emptyline
\step[{The suprema in \protect\eqref{eq::pf::lem::LMF::LDPLN::Varadhan::Res}  are finite}]
\label{pf::lem::LMF::LDPLN::Varadhan::Step::SupFinite}
For a lower bound on the right hand side of \eqref{eq::pf::lem::LMF::LDPLN::Varadhan::Res}, take $Q = \dd x \otimes \zeta_{x} \left( \dd \w \right) \otimes W^{-\Psi}_{x,\w} \in \calN_{R}$.
Moreover the left hand side of \eqref{eq::pf::lem::LMF::LDPLN::Varadhan::Res} is bounded from above, because
\begin{align}
	E_{W^{N,-\Psi}} \left[ e^{\Nd \left( G + F \right) \left( L^{N} \right) } \right] 		
	\leq
		e^{\Nd \iNorm{G}} 
		\left( E_{W^{N,-\Psi}} \left[ e^{\Nd \gamma F  \left( L^{N} \right) } \right] \right)^{\frac{1}{\gamma}}
	\leq
		e^{\Nd \iNorm{G}} 
		C^{\frac{\Nd}{\gamma}}
		,
\end{align}
for each $N \in \N$,
by \eqref{eq::pf::lem::LMF::LDPLN::Varadhan::Moment}. 
Hence also the suprema in \eqref{eq::pf::lem::LMF::LDPLN::Varadhan::Res} are finite.

\end{steps}\vspace{-\baselineskip}
\end{proof}

\subsection{Proofs of the lemmas of Chapter~\ref{sec::LMF::LDPLN::pfLemVara}}
\label{sec::LMF::LDPLN::pfLemChapVara}

In this chapter we prove the lemmas that we state in Chapter~\ref{sec::LMF::LDPLN::pfLemVara}.

\begin{proof}[of Lemma~\ref{lem::LMF::LDPLN::ExpBoundNR}]
At first we split the set $\calN_{R}$ in its three conditions. Then we show separately for each of the three terms an exponential small upper bound.
\begin{align}
\begin{split}
	W^{N,-\Psi} \left[  L^{N} \not \in \calN_{R} \right]
	&\leq
		W^{N,-\Psi} \left[  \frac{1}{\Nd} \sum_{i \in \TN} \left( \theta^{i,N}_{T}  \right)^{2} >R \right]
	+
		W^{N,-\Psi} \left[  \frac{1}{\Nd} \sum_{i \in \TN} \left( \theta^{i,N}_{0}  \right)^{2} >R \right]
	\\
	&\quad
	+
		W^{N,-\Psi} \left[  \frac{1}{\Nd} \sum_{i \in \TN} \int_{0}^{T} \left( \theta^{i,N}_{t}  \right)^{2} \dd t>R \right]
	\eqdef 
	\kreis{1} + \kreis{2} + \kreis{3}
	.
\end{split}
\end{align}
Fix a $\kappa<c_{\Psi}$.
With the exponential Chebychev inequality, we get
\begin{align}
	\kreis{1}
	\leq
		e^{- \Nd R \kappa}
 		\prod_{i \in \TN}
 		\sup_{\w \in \Wsp}
		E_{W^{-\Psi}_{\frac{i}{N},\w} } \left[ e^{ \kappa \left( \theta_{T}  \right)^{2}} \right]
	\leq
		e^{- \Nd R \kappa}
		C^{\Nd}
	,
\end{align}
where we use Lemma~\ref{lem::LDPLN::LMF::eTheta2Bound}~\ref{lem::LDPLN::LMF::eTheta2Bound::T} to get the last inequality.
By the Chebychev inequality we get also for $\kreis{2}$
\begin{align}
	\kreis{2} 
	=
		 \nu^{N} \left[ \frac{1}{\Nd} \sum_{i \in \TN} \left( \theta^{i,N}_{0}  \right)^{2} >R \right]
	\leq
		e^{- \Nd R \kappa}
			\prod_{i \in \TN}
			\int_{\R} e^{ \kappa  \theta^{2}} \nu_{\frac{i}{N}} \left( \dd \theta \right)
	\leq
			e^{- \Nd R \kappa} C^{\Nd}
		,
\end{align}
where we use Assumption~\ref{ass::LMF::Init::Integral} and Assumption~\ref{ass::LMF::Psi} in the last inequality.
For $\kreis{3}$ we get from the exponential Chebychev inequality and Lemma~\ref{lem::LDPLN::LMF::eTheta2Bound}~\ref{lem::LDPLN::LMF::eTheta2Bound::Int0T}
\begin{align}
	\kreis{3}
	\leq
		e^{- \Nd R \kappa}
		\prod_{i \in \TN}
		\sup_{\w \in \Wsp} E_{W^{-\Psi}_{\frac{i}{N},\w} } \left[ e^{ \kappa \int_{0}^{T} \left( \theta_{t} \right)^{2} \dd t} \right]
	\leq
		e^{- \Nd R \kappa}
		C^{\Nd}
	.
\end{align}
\end{proof}

\begin{proof}[of Lemma~\ref{lem::LMF::LDPLN::eFoutOfNR}]
With $ \frac{1}{p}+\frac{1}{p'}=1$, we get by the \Holder inequality
\begin{align}
	E_{W^{N,-\Psi}}
		\left[ 
		e^{N^d F  \left( L^{N}  \right)  } \1_{L^{N} \not \in \calN_{R} } 
		\right]  
	\leq
		E_{W^{N,-\Psi}}
		\left[ 
		e^{N^d p F  \left( L^{N}  \right)  } 
		\right]^{\frac{1}{p}}
		W^{N,-\Psi} \left[ L^{N} \not \in \calN_{R} \right]^{\frac{1}{p'}}
	.
\end{align}
For $\delta>0$ and $p>1$ small enough, such that $p \left( \LpN{1}{\ol{J}} + \delta \right) < c_{\Psi}$, we can bound $F$ form above as in \eqref{eq::pf::lem::LMF::LDPLN::Varadhan::BoundF} when  $N>N_{\delta}$. Then
\begin{align}
\label{eq::pf::lem::LMF::LDPLN::eFoutOfNR::EpFLNbound}
\begin{split}
	E_{W^{N,-\Psi}}
		\left[ 
		e^{N^d p F  \left( L^{N}  \right)  } 
		\right]
	&\leq
		\prod_{i \in \TN}
		\sup_{\w \in \W}
		E_{W^{-\Psi}_{\frac{i}{N},\w} }
		\left[ 
		e^{p \left( \LpN{1}{\ol{J}} + \delta \right) \left(  \left( \theta_{T}  \right)^{2} + \left( \theta_{0}  \right)^{2} \right)}
		\right]
	\leq 
	C^{\Nd}
	,
\end{split}
\end{align}
where we use Lemma~\ref{lem::LDPLN::LMF::eTheta2Bound}~\ref{lem::LDPLN::LMF::eTheta2Bound::T} in the last inequality.
With Lemma~\ref{lem::LMF::LDPLN::ExpBoundNR}, we conclude
\begin{align}
	E_{W^{N,-\Psi}}
		\left[ 
		e^{N^d F  \left( L^{N}  \right)  } \1_{L^{N} \not \in \calN_{R} } 
		\right]  
	\leq
		C^{\Nd}
		e^{-\Nd  \frac{1}{p'} \kappa R}
	,
\end{align}
for $\kappa<c_{\Psi}$, when $N$ and $R$ are large enough.
This proves Lemma~\ref{lem::LMF::LDPLN::eFoutOfNR}.
\end{proof}

\begin{proof}[of Lemma~\ref{lem::LMF::LDPLN::NRclosed}]
Take an arbitrary sequence $Q^{(n)} \in \calN_{R}$ that converges in $\MOne{\TWC}$ to a $Q \in \MOne{\TWC}$. 
Now we want to show that $Q \in \calN_{R}$.
To this end, we define the cutoff functions for $M \in \R_{+}$
\begin{align}
\label{eq::pf::lem::LMF::LDPLN::NRclosed::Cutoff}
	 \chi_M  \left( \theta  \right)  
	 \defeq
	  \left( \theta \wedge M \right) \vee -M 
	  .
\end{align}
The function $\TWC \ni \left( x,\w, \theta_{\Ti} \right)   \mapsto \chi_{M} \left( \theta_{T} ^{2} \right) \in \R$ is a continuous, bounded function.
By the weak convergence of $Q^{(n)}$, $\int \chi_{M} \left( \theta_{T} ^{2} \right) Q^{(n)} \rightarrow \int \chi_{M} \left( \theta_{T} ^{2} \right) Q$ for each $M \in \R_{+}$. Hence
\begin{align}
\label{eq::pf::lem::LMF::LDPLN::NRclosed::Bounded}
	\int \chi_{M} \left( \theta_{T} ^{2} \right) Q 
	\leq
	R
	.
\end{align}
By the monotone convergence theorem this implies
\begin{align}
	\int  \theta_{T} ^{2} Q 
	\leq
	R
	.
\end{align}
Similar calculations  with $\chi_{M} \left( \theta_{0} ^{2} \right)$ and  $\chi_{M} \left( \int_{0}^{T} \theta_{t} ^{2}  \dd t \right)$ imply also the boundedness by $R$ of the other two conditions in $\calN_{R}$.
Hence $\calN_{R}$ is closed.
\end{proof}

\begin{proof}[of Lemma~\ref{lem::LMF::LDPLN::FcontNR}]
	Fix an $R>0$, a measure $Q \in \calN_{R}$, with $\relE{ Q }{ \dd x \otimes \zeta_{x} \left( \dd \w \right) \otimes W^{-\Psi}_{x,\w} } < \infty$.
	Hence in particular that $Q \in \MOneL{\TWC}$.
	Moreover fix a weakly converging sequence of empirical measures $L^{N_{n}} \rightarrow Q$ in $\calN_{R}$. 
	We show now that $F \left( L^{N_{n}} \right)  \rightarrow F \left( Q \right) $.
	
	Because the integrands in $F$ are neither continuous nor bounded, we approximate $J$ by continuous $J_{\ell}$ and the spin values by cutoff functions $\chi_{M}  \left( \theta, \theta' \right)  = \chi_{M}  \left( \theta \right) \chi_{M} \left( \theta' \right)$  as in \eqref{eq::LocalMF::bCont::ContMu::5sum}.
	The five arising summands (similar to \eqref{eq::LocalMF::bCont::ContMu::5sum}), can all be bounded by $\epsilon$, by the same approach that we use in \ref{step::LocalMF::MuIntCont::EmpP::Pointw} in Chapter~\ref{sec::LocalMF}.
	This implies the convergence of  $F \left( L^{N_{n}} \right)  \rightarrow F \left( Q \right) $.
	
	This approach requires that $L^{N_{n}}, Q \in \calN_{R}$, the Assumption~\ref{ass::LMF::J}, that $L^{N_{n}} \otimes L^{N_{n}}$ is tight and that on compact subsets of $\left( \TWC \right)^{2} $ the spin value paths are equibounded.
	
	We need the last two properties to bound \encircle{C} and \encircle{D} of \eqref{eq::LocalMF::bCont::ContMu::5sum}.
	Indeed when considering the second summand of $F$ we get for we get for \encircle{C}
	\begin{align}
	\begin{split}
			&\abs{ \int J_{\ell} \left( x-x', \w, \w' \right)   
						\left( \theta'_{T}  \theta_{T}  - \chi_{M} \left( \theta'_{T}  \right) \chi_{M} \left( \theta_{T}  \right)  \right) 
						\left( L^{N_{n}} \otimes L^{N_{n}} \right) 
					}
			\\
			&\leq  
			L^{N_{n}} \otimes L^{N_{n}} \Big[  \left( \theta_{\Ti} ,\theta'_{\Ti}  \right)  : \iNorm{\theta_{\Ti} }>M  
														\textnormal{ or } \iNorm{\theta'_{\Ti} } > M
											\Big]
				\iNorm{J_{\ell}}  
				\int_{\TWC}  \left( \theta_{T} \right) ^{2} L^{N_{n}}
			\\
			&\leq
				L^{N_{n}} \otimes L^{N_{n}} \left[ \left( \TWC  \right)^{2} \backslash K_{\epsilon}  \right]
				\iNorm{J_{\ell}} R
			\leq 
			C \epsilon^{\oh}
			,
	\end{split}
	\end{align}
	for a suitably chosen compact set $K_{\epsilon}$ and $M>0$, such that $\theta_{\Ti}  \in K_{\epsilon}$ implies that $\iNorm{\theta_{\Ti} } \leq M$.
	The  first summand of $F$ can be bounded analogue.
	
	Note that we get the tightness of $L^{N_{n}} \otimes L^{N_{n}}$ by Prokhorov's theorem and because the convergence of $L^{N_{n}} \rightarrow Q$ implies the convergence of $L^{N_{n}} \otimes L^{N_{n}} \rightarrow Q \otimes Q$.
	Moreover on compact subsets of $\left( \TWC \right)^{2} $, the spin value paths are equibounded by Lemma~\ref{lem::AA}~\ref{lem::AA::AA}.
\end{proof}

\begin{proof}[of Lemma~\ref{lem::LMF::LDPLN::RelEoutM}]
	To prove this lemma, we use that the probability of $L^{N}$ being outside of $\calM_{R}$ under $P^{N}$ decays exponentially fast, i.e. there is a $\lambda \in \R_{+}$ and a $C>0$ such that for all $N \in \N$ and $R$ large enough 
	\begin{align}
		\label{eq::pf::lem::LMF::LDPLN::RelEoutM::ExpBoundMR}
		W^{N,-\Psi} \left[  L^{N} \not \in \calM_{R} \right]
		\leq
		e^{-\Nd \lambda R} C^{\Nd}
		.
	\end{align}
	We get this exponential bound from Lemma~\ref{lem::LDPempP::Inter::OutCemR+Initial}, because $L^{N} \in \calM_{R}$ if and only if $\empP \in \Cem_{\varphi,R}$ with $\varphi= \theta^{2}$.
	The necessary assumptions for this  lemma (i.e. Assumption~\ref{ass::LDPempP}~\ref{ass::LDPempP::bCont}-\ref{ass::LDPempP::Lyapu}) are satisfied for the drift coefficient $-\Psi'$ (by the Assumption~\ref{ass::LMF::Psi} and the same arguments as in Chapter~\ref{sec::LocalMF}).
	
	\emptyline

If $Q \in \MOne{\TWC} \backslash \calM_{\infty}$, then $Q \not\in \calM_{R}$ for all $R>0$.
Then for $R$ large enough
\begin{align}
	\label{eq::pf::lem::LMF::LDPLN::RelEoutM::RelEBoundMR}
	\begin{split}
		- \relE{ Q }{ \dd x \otimes \zeta_{x} \left( \dd \w \right) \otimes W^{-\Psi}_{x,\w} } 
		&\leq
		- \inf_{ Q' \not \in \calM_{R}} \relE{ Q' }{ \dd x \otimes \zeta_{x} \left( \dd \w \right) \otimes W^{-\Psi}_{x,\w} } 
		\\
		&\leq
		\liminf_{N \rightarrow \infty} \frac{1}{\Nd} \log W^{N,-\Psi}  \left[ L^{N} \not \in \calM_{R} \right]
		\leq
			-\lambda R  + \log C
		,	
	\end{split}
\end{align}
by \eqref{eq::pf::lem::LMF::LDPLN::RelEoutM::ExpBoundMR}, by the large deviation principle of $\left\{ L^{N},W^{N,-\Psi} \right\}$ and by $\calM_{R}$ being closed (by a similar proof as for Lemma~\ref{lem::LMF::LDPLN::NRclosed}).
\end{proof}

\begin{remark}
	\label{rem::LMF::LDPLN::RelEoutM::OtherProof}
	We could also prove \eqref{eq::pf::lem::LMF::LDPLN::RelEoutM::ExpBoundMR} without using the Lemma~\ref{lem::LDPempP::Inter::OutCemR+Initial}.
	One would have to show at first the exponential decay of the probability of being outside of $\calM_{R}$ under $W^{N,-\Psi}$.
	This can be done by the direct approach to transfer the problem to the measure $W^{N,0}$ (by the Girsanov theorem) and then to use the Doob submartingale inequality.
\end{remark}

\subsection{\texorpdfstring{$\ol{I}$}{overline I} is a good rate function (Lemma~\ref{lem::LMF::LDPLN::RFGood})}
\label{sec::LMF::LDPLN::ExpGood}

	\begin{proof}[of Lemma~\ref{lem::LMF::LDPLN::RFGood}]
		We have to prove that $\calL^{\leq c} \left( \ol{I} \right)$ is compact.
		Therefore we show at first that $\calL^{\leq c} \left( \ol{I} \right)$ is a subset of $\calN_{R}$ for $R$ large enough, then that it is closed and finally that it is compact.
		
		\emptyline
		\begin{steps}
		\step[{$\calL^{\leq c} \left( \ol{I} \right)$  is a subset of $\calN_{R}$ for $R$ large enough}]

		Fix a $Q \in \calL^{\leq c} \left( \ol{I} \right)$.
		Then $\ol{I} \left( Q \right) \leq c$ implies that $Q \in \calN_{\infty}$ and $Q \in \MOneL{\TWC}$.
		Choose $R>0$ such that $Q \in \calN_{R+\frac{1}{R}} \backslash \calN_{R}$.
		Then
		\begin{align}
		\label{eq::pf::lem::LMF::LDPLN::RFGood::IboundMR}
			\ol{I} \left( Q \right)
			=
				\relE{Q}{\dd x \otimes \zeta_{x} \left( \dd \w \right) \otimes W^{-\Psi}_{x,\w}}  
				-
				F \left( Q \right)
			\geq
				\kappa R + C_{R}
				-
				\LpN{1}{\ol{J}} \left(  R+\frac{1}{R}  \right)
			,
		\end{align}
		with $\kappa \in \R$ such that $\LpN{1}{\ol{J}} < \kappa < c_{\Psi}$ (possible due to Assumption~\ref{ass::LMF::Psi}).
		In this equality we use a similar calculation as in \eqref{eq::pf::lem::LMF::LDPLN::RelEoutM::RelEBoundMR} to bound the relative entropy (with $\calN_{R}$ instead of $\calM_{R}$ and with Lemma~\ref{lem::LMF::LDPLN::ExpBoundNR}).
		The upper bound on $F$ holds by Assumption~\ref{ass::LMF::J} and by $Q$ having the Lebesgue measure as projection to $\Td$.
		
		The right hand side of \eqref{eq::pf::lem::LMF::LDPLN::RFGood::IboundMR} tends to infinity when $R$ increases (by Assumption~\ref{ass::LMF::Psi}).
		Hence  there is a $R$ large enough such that $Q \in \calN_{R}$ if $Q \in \calL^{\leq c} \left( \ol{I} \right)$.
		
		\emptyline
		\step[{$\calL^{\leq c} \left( \ol{I} \right)$  is a closed}]
				
				Take a sequence $\left\{ Q^{(n)} \right\}_{n} \subset \calL^{\leq c} \left( \ol{I} \right) \subset \calN_{R}$, such that $Q^{(n)} \rightarrow Q$ in $\MOne{\TWC}$.
				Then $Q \in \calN_{R}$ because this set is closed (Lemma~\ref{lem::LMF::LDPLN::NRclosed}).
				By $F$ being continuous on $\calN_{R}$ and the relative entropy being lower semi-continuous,
				\begin{align}
							\ol{I} \left( Q \right)  
							\leq 
							\liminf_{n \rightarrow \infty} \ol{I} \left( Q^{(n)} \right)  
							\leq 
							c
							.
				\end{align}

		\step[{$\calL^{\leq c} \left( \ol{I} \right)$  is compact}]
				
				We use now the exponential tightness of $\left\{ L^{N}, W^{N,-\Psi} \right\}$ derived in Chapter~\ref{sec::LDPLN::LMF::PfExpTight}.
				The corresponding compact sets $K_{A}$ are defined in \eqref{eq::lem::LDPLN::LMF::expTight::KL}.
				We claim that there is a $A>0$ such that $\calL^{\leq c} \left( \ol{I} \right) \subset K_{A}$.
				Take a $Q \in \calL^{\leq c} \left( \ol{I} \right) \subset \calN_{R}$ with $Q \not \in K^{A}$ for a $A>0$.
				Then 
					\begin{align}
							\ol{I} \left( Q \right)
							=
								\relE{Q}{\dd x \otimes \zeta_{x} \left( \dd \w \right) \otimes W^{-\Psi}_{x,\w}}  
								-
								F \left( Q \right)
							\geq
								A
								-
								\LpN{1}{\ol{J}}   R 
							,
						\end{align}
						where we bound $F$ as in \eqref{eq::pf::lem::LMF::LDPLN::RFGood::IboundMR}
						and the lower bound on the relative entropy follows by the same calculation as in \eqref{eq::pf::lem::LMF::LDPLN::RelEoutM::RelEBoundMR}  with $K_{A}$ instead of $\calM_{R}$ and with \eqref{eq::lem::LDPLN::LMF::expTight::WNExpTight}.
				This implies that there has to be a $A>0$, such that $\calL^{\leq c} \left( \ol{I} \right) \subset K^{A}$.
		\end{steps}
		
		Therefore we conclude that the sets $\calL^{\leq c} \left( \ol{I} \right) $ are compact as closed subsets of a compact set.
	\end{proof}

\subsection{Discussion of the different subsets that we use in the proofs}
\label{sec::LMF::LDPLN::LDP::Pf::DiffSets}

In the previous chapters we use the three different subsets of $\MOne{\TWC}$, $\calM_{R}$ (defined in \eqref{def::calMR}), $\calN_{R}$ (defined in \eqref{def::calNR}) and $K^{L}$ (defined in \eqref{eq::lem::LDPLN::LMF::expTight::KL}).
Let us briefly discuss why we do not restrict the attention to one of these sets.
\emptyline
	On the one hand, we need a compact set for the proof that $I$ is a good rate function (Lemma~\ref{lem::LMF::LDPLN::RFGood}).
	On the other hand, we need a closed set on which the function $F$ is continuous and bounded in Lemma~\ref{lem::LMF::LDPLN::Varadhan}. 
	
	Each of the sets $K_{L}$ and $\calN_{R}$ only satisfies one of these properties.
	The set $\calN_{R}$ is not compact, because the definition does not include equicontinuity. 
	Whereas the set $K_{L}$ is only abstractly defined, what seems to make it impossible to show that $F$ is continuous on this set.

\emptyline
	
	Moreover we need for the proof that one representation of the rate function implies the other (see the proof of Theorem~\ref{thm::LDPLN::LMF::LDP} in Chapter~\ref{sec::LDPLN::LMF::LDP}), that the function $\beta$ (defined in \eqref{eq::step::LocalMF::bCont::beta}) is uniformly bounded in time for $t \in \Ti$, such that the martingale problem for the SDE \eqref{eq::LDPLN::LMF::SDE::FixedInterMeasure}  well posed.
	This is the case for all measure in $\calM_{\infty}$, but not for all measures in $\calN_{\infty}$.

	Then the natural question arises, why not using $\calM_{R}$ in the proof in Chapter~\ref{sec::LMF::LDPLN}  of the Theorem~\ref{thm::LDPLN::LMF::LDP}.
	In \ref{pf::lem::LMF::LDPLN::Varadhan::Step::ExtVara} of the proof of Lemma~\ref{lem::LMF::LDPLN::Varadhan}, we apply the extended Varadhan's Lemma (Theorem~\ref{vara::thm::GenVara}).
	To apply this lemma, we need that the probability $	W^{N,-\Psi} \left[  L^{N} \not \in \calM_{R} \right]$ decays like $e^{\kappa R}$ with $\kappa>\LpN{1}{\ol{J}}$.
	However in general we can show an exponential decay but not with a $\kappa > \LpN{1}{\ol{J}}$  (e.g. by the approach in \eqref{eq::pf::lem::LMF::LDPLN::RelEoutM::ExpBoundMR} or the one sketched in Remark~\ref{rem::LMF::LDPLN::RelEoutM::OtherProof}), 
	
	This is different for the sets $\calN_{R}$, because in the proof of the exponential decay for these sets (Lemma~\ref{lem::LMF::LDPLN::ExpBoundNR}), we can benefit from the integrals with respect to time arising by the Girsanov theorem.
	These integrals can however not approximate the supremum in $\calM_{R}$.

 \begin{remark}
 	The rate function is infinite outside of the set $\calM_{\infty}$. 
 	Moreover $P^{N} \left[  L^{N}  \in \calM_{\infty} \right] =1$ (this follows e.g. by similar arguments that show \eqref{eq::pf::lem::LMF::LDPLN::RelEoutM::ExpBoundMR}).
 	Therefore $\left\{ L^{N}, P^{N} \right\}$ satisfies a large deviation principle on $\calM_{\infty}$ (by Lemma~4.1.5~(b)  in \cite{DemZeiLarge}).
 	
 	We could consider the subspace $\calM_{\infty}$ from the very first, i.e. to consider the LDP of $\left\{ L^{N} , W^{N,-\Psi} \right\}$ on the subspace  $\calM_{\infty}$ (this LDP exists due to  Lemma~4.1.5~(b)  in \cite{DemZeiLarge}, by \eqref{eq::pf::lem::LMF::LDPLN::RelEoutM::ExpBoundMR} and Lemma~\ref{lem::LMF::LDPLN::RelEoutM}).
	Then by the same arguments and steps as in the proof in Chapter~\ref{sec::LMF::LDPLN}  of Theorem~\ref{thm::LDPLN::LMF::LDP}, we would get a LDP of $\left\{ L^{N} , P^{N} \right\}$ on $\calM_{\infty}$, with the rate function $I$.
 	
 	However it simplifies the proof of Chapter~\ref{sec::LMF::LDPLN}  only marginally and the result for the whole space $\MOne{\TWR}$ is stronger.
 \end{remark}

\appendix
\section*{Appendices}

\renewcommand{\thesection}{\Alph{section}}

\section{A generalisation of Varadhan's lemma to nowhere continuous functions}
\label{sec::Vara}
 
 \renewcommand{\thesection}{\Alph{section}}
  \renewcommand{\thesubsection}{\thesection.\arabic{subsection}}
 \renewcommand{\thesubsubsection}{\thesection.\arabic{subsubsection}}
 \renewcommand{\theparagraph}{\thesection.\arabic{subsection}.\arabic{paragraph}}

\subsection{Introduction}

Varadhan proved in \cite{VarAsymp} in Chapter~3 a generalisation of the Laplace method, that is referred to as Varadhan's lemma.
The lemma is a consequence of the large deviation principle. It gives a precise description of the logarithmic asymptotic (for $N \rightarrow \infty$) of expectations like
\begin{align}
	\E \left[ e^{N \phi\left( \xi_{N} \right) } \right]
	.
\end{align}
For example Varadhan's lemma can be used to transfer the LDP from $\P$ to $e^{\phi}\P$, by the relation between the Laplace principle and the large deviation principle (see \cite{DupEllAWeakCon} Chapter 1).
It requires usually that the function $\phi$ is continuous and satisfies a tail condition or that it is even bounded (\cite{VarAsymp}~Chapter~3, \cite{DemZeiLarge}~Theorem~4.3.1, \cite{DupEllAWeakCon}~Theorem~1.2.1, \cite{denHolLargD}~Theorem~III.13). 

In the following we generalise this to functions $\phi$ that are not continuous.
We only require that $\phi$ can be approximated (in an appropriate sense) by two sequences of measurable functions.
Moreover we need beside the tail condition, further condition concerning the difference of $\phi$ and the approximating functions.
For continuous $\phi$, the sequences can be chosen to equal $\phi$ everywhere and our conditions shrink to the usual tail condition.

\emptyline

In \cite{JansOnTheUppVar}, the upper bound of Varadhan's lemma is extended to hold for functions $\phi$ that are not upper semi-continuous.  However the authors require that either the rate function is continuous or that the level sets $\phi^{-1} \left( \left[ a, \infty \right] \right)$ are closed for all $a$ large enough.
In the models that we have in mind none of these two conditions are satisfied. 

\emptyline

We state the main result in Chapter~\ref{vara::sec::MainResult}.
Then we discuss in Chapter~\ref{vara::sec::SimplerCond} some less general conditions, that might be simpler to prove, under which the main results still holds.
We state our proof of the extended Varadhan's lemma in Chapter~\ref{vara::sec::Pf}.
Finally we give some examples, that indicate why the extension is useful.

\subsection{The main result}
\label{vara::sec::MainResult}

Let $X$ be a regular topological space and $\left\{ \xi_{N} \right\}$ be a family of random variables with values in $X$.
We denote by $\left\{ \P^{N} \right\}$ the probability measures associated with $\left\{ \xi_{N} \right\}$.

\emptyline

In the following theorem we state a Varadhan type lemma for a non-continuous and unbounded function $\phi$, that satisfies the usual tail condition (see condition~\ref{vara::ass::thm::GenVara::TailCondition}).
Moreover we require the existence of two sequences of functions ($\ul{\phi}_{R}$ and $\ol{\phi}_{R}$) that approximate (in a appropriate lower/upper semicontinous way) $\phi$.
These sequences have to satisfy two conditions (\ref{vara::ass::thm::GenVara::BiggerEpsIrrelevant} and \ref{vara::ass::thm::GenVara::DiffPhiPhiR}).
We show in Chapter~\ref{vara::sec::SimplerCond} how the conditions of this Theorem can be simplified.

\emptyline

Set
\begin{align}
\label{vara::eq::def::SStar}
	S^{*} \defeq \sup_{x \in X : I\left( x \right) < \infty} \left\{ \left( \phi  - I \right) \left( x \right) \right\}
	\in
	\R \cup \left\{ -\infty , \infty \right\}
	.
\end{align}

\begin{theorem}
\label{vara::thm::GenVara}
	Assume that $\left( \xi_{N},\P^{N} \right)$ satisfies a LDP with speed $a_{N}$ on $X$ with good rate function $I : X \rightarrow \left[ 0, \infty \right]$.
	Let $\phi$ be a measurable function $X \rightarrow \R \cup \left\{ -\infty , \infty \right\}$.
	Assume that the following conditions are satisfied.
	\begin{enuAlph}
	\item
	\label{vara::ass::thm::GenVara::ConUlPhi}
			There is a family of measurable functions $\left\{ \ul{\phi}_{R} \right\}_{R \in \R_{+}}$ from $X \rightarrow \R\cup \left\{ -\infty,\infty \right\}$ , 
			such that 
			\begin{align*}
				\forall  \left( x \in X  \textnormal{ with } I \left( x \right) < \infty  \right)
				\quad
				\forall \delta>0
				\quad
				\exists R^{*}_{x,\delta}>0
				\quad
				\textnormal{ such that }
				\quad
				\forall R>R^{*}_{x,\delta}
			\end{align*}
			 exists an open neighbourhood $U_{x,\delta,R} \subset X$ of $x$, such that
			\begin{align}
			\label{vara::eq::thm::GenVara::ConUlPhi}
				\inf_{y \in  U_{x,\delta,R} \cap \supp \left\{ \xi_{N} \right\}}  \ul{\phi}_{R}\left( y \right)  \geq   \phi\left( x \right)  - \delta
				.
			\end{align}
		\item
		\label{vara::ass::thm::GenVara::ConOlPhi}
			There is a family of measurable  functions $\left\{ \ol{\phi}_{R} \right\}_{R \in \R_{+}}$ from  $X \rightarrow \R\cup \left\{ -\infty,\infty \right\}$,
			such that 
						\begin{align*}
							\exists R^{*}>0
							\quad
							\textnormal{ such that }
							\quad
							\forall R>R^{*}
							\quad
							\forall  \left( x \in X  \textnormal{ with } I \left( x \right) < \infty  \right)
							\quad
							\forall \delta>0
						\end{align*}
 exists an open neighbourhood $U_{x,\delta,R} \subset X$ of $x$, such that 
			\begin{align}
			\label{vara::eq::thm::GenVara::ConOlPhi}
				\sup_{y \in  U_{x,\delta,R} \cap \supp \left\{ \xi_{N} \right\}}  \ol{\phi}_{R}\left( y \right)  \leq  \max \left\{ S^{*}, \phi\left( x \right)  \right\} + \delta
				.
			\end{align}
	
		\item
		\label{vara::ass::thm::GenVara::BiggerEpsIrrelevant}
			For all $\epsilon>0$ 
			\begin{align}
			\label{vara::eq::VaradLower::BiggerEpsIrrelevant}
			\begin{split}
				&\lim_{R \rightarrow \infty} \liminf_{N \rightarrow \infty}
				a_{N}^{-1} \log
				\E_{\P^{N}} 
					\left[ 
						e^{a_{N}  \ul\phi_{R}\left( \xi_{N} \right)  }
					 \1_{\left\{  \phi \left( \xi_N  \right)   > \ul{\phi}_{R} \left( \xi_N  \right)  -\epsilon \right\}} 
					\right]
				\\
				&=
				\lim_{R \rightarrow \infty} \liminf_{N \rightarrow \infty}
				a_{N}^{-1} \log 
					\E_{\P^{N}} 
						\left[ 
							e^{a_{N} \ul\phi_{R}\left( \xi_{N} \right) }
						\right]
				.
			\end{split}
			\end{align}
		\item
		\label{vara::ass::thm::GenVara::DiffPhiPhiR}
			For all $\epsilon>0$
			\begin{align}
			\label{vara::eq::VaradUpper::DiffPhiPhiR}
				 \limsup_{R\to\infty}  \limsup_{N\to\infty} a_{N}^{-1} 
					 \log \E_{\P_{N}} 
						 \left[ 
						 	e^{a_{N} \phi \left( \xi_N  \right)  } 
							\1_{\left\{ \phi \left( \xi_N  \right)   >  \ol{\phi}_{R} \left( \xi_N  \right) + \epsilon  \right\}} 
						 \right] 
				 \leq 
				 	S^{*} 
				 .
			 \end{align}
		\item
		\label{vara::ass::thm::GenVara::TailCondition}
			The following tail condition holds
			\begin{align}
			\label{vara::eq::VaradUpper::TailCondition}
				\lim_{M \rightarrow \infty} 
				\limsup_{N \rightarrow \infty} 
					a_{N}^{-1} \log 
			 				\E_{\P_{N}} 
			 					\left[ 
									e^{ a_{N} \phi \left( \xi_{N} \right) } 
									\1_{ \phi \left( \xi_{N} \right) \geq M} 
								\right]
				 \leq
				 	S^{*}
				 .
			\end{align}
	\end{enuAlph}

	Then
	\begin{align}
	\label{vara::eq::thm::GenVara}
		\lim_{N \rightarrow \infty} a_{N}^{-1} \log \E_{\P^{N}} \left[ e^{ a_{N}  \phi  \left( \xi_{N} \right) } \right]
		=
			S^{*}
		.
	\end{align}

\end{theorem}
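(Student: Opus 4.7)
I would split the claim \eqref{vara::eq::thm::GenVara} into matching lower and upper bounds for $\liminf_N$ and $\limsup_N$ of $a_N^{-1}\log \E_{\P^N}[e^{a_N \phi(\xi_N)}]$, relying on the approximants $\ul\phi_R$ through conditions~(a) and~(c) for the lower bound and on $\ol\phi_R$ through (b),~(d),~(e) for the upper bound. The LDP for $(\xi_N,\P^N)$ enters only through the standard open/closed lower and upper bounds for $\P^N[\xi_N\in\cdot]$.

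For the lower bound, I would fix $x$ with $I(x)<\infty$ realising $\phi(x)-I(x)$ within $\delta$ of $S^{*}$ and take arbitrary $\delta,\epsilon>0$. Choosing $R>R^{*}_{x,\delta}$ as in~(a) and the associated open neighbourhood $U_{x,\delta,R}$ on which $\ul\phi_R\geq \phi(x)-\delta$, the LDP lower bound on $U_{x,\delta,R}$ gives
\begin{align*}
\liminf_N a_N^{-1}\log \E_{\P^N}\!\left[e^{a_N\ul\phi_R(\xi_N)}\1_{U_{x,\delta,R}}\right]\geq \phi(x)-\delta-I(x).
\end{align*}
Taking $R\to\infty$ and using~(c), the same lower bound is inherited by the restricted expectation $\E[e^{a_N\ul\phi_R(\xi_N)}\1_{\phi(\xi_N)>\ul\phi_R(\xi_N)-\epsilon}]$; on that event one has $e^{a_N\ul\phi_R}\leq e^{a_N(\phi+\epsilon)}$, so the bound transfers to $\E_{\P^N}[e^{a_N\phi(\xi_N)}]$ with an extra $\epsilon$. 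Letting $\delta,\epsilon\to 0$ and taking the supremum over $x$ yields $\liminf\geq S^{*}$.

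For the upper bound, I would decompose
\begin{align*}
\E_{\P^N}\!\left[e^{a_N\phi(\xi_N)}\right]\leq \E\!\left[e^{a_N\phi}\1_{\phi>\ol\phi_R+\epsilon}\right]+\E\!\left[e^{a_N\phi}\1_{\phi\geq M}\right]+\E\!\left[e^{a_N\phi}\1_{\phi\leq\ol\phi_R+\epsilon,\,\phi<M}\right].
\end{align*}
Conditions~(d) and~(e) handle the first two pieces, each contributing at most $S^{*}$ after $R,M\to\infty$. For the third piece I would exploit the uniformity of $R^{*}$ in~(b) together with compactness of the sublevel set $K_\ell=\{I\leq\ell\}$ (good rate function) to extract a \emph{finite} cover $\{U_{x_i,\delta,R}\}$ on which $\ol\phi_R\leq\max\{S^{*},\phi(x_i)\}+\delta$: on each $U_{x_i}$ the LDP upper bound supplies $\P^N[\xi_N\in\bar U_{x_i}]\leq e^{-a_N(I(x_i)-\delta')}$, so the contribution is $\leq e^{a_N(\max\{S^{*},\phi(x_i)\}+\delta+\epsilon-I(x_i)+\delta')}\leq e^{a_N(S^{*}+\delta+\delta'+\epsilon)}$ (using $\phi(x_i)-I(x_i)\leq S^{*}$), while outside the cover $\phi<M$ and $\P^N[\xi_N\notin\bigcup_i U_{x_i}]\leq e^{-a_N(\ell-\delta')}$ combine to give $e^{a_N(M-\ell+\delta')}\leq e^{a_N S^{*}+o(a_N)}$ whenever $\ell>M-S^{*}$.

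The principal obstacle is orchestrating the nested limits $N,R,M,\ell\to\infty$ and $\delta,\delta',\epsilon\to 0$ without circularity: condition~(c) is an equality of $\lim_R\liminf_N$, which must be applied \emph{before} sending $\delta\to 0$; and in the upper bound $R$ must be chosen large enough for (b) and (d) simultaneously, so the correct order is to fix $\ell$ and $M$, then enlarge $R$, then pass $N\to\infty$, and only at the end collapse the slacks and send $\ell,M\to\infty$. The uniform-in-$x$ quantifier ordering of~(b), absent from~(a), is exactly what produces a single effective $R$ for the finite cover on $K_\ell$ and thus what lets the upper bound close.
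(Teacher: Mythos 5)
Your proposal is correct and follows essentially the same route as the paper: the lower bound via condition~(a), the LDP lower bound and the exchange of restricted/unrestricted expectations permitted by~(c), and the upper bound via the three-way split handled by~(d),~(e) together with a finite cover of the compact level sets of $I$ built from the uniform-in-$x$ condition~(b). The only step left implicit is that each covering neighbourhood must additionally be shrunk, using the lower semi-continuity of $I$, so that $\inf_{y \in \overline{U_{x_i}}} I\left( y \right) \geq I\left( x_i \right) - \delta'$, exactly as in the paper's proof.
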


\begin{remark}
	The condition $I \left( x \right) < \infty$ in the supremum in the definition \eqref{vara::eq::def::SStar} of $S^{*}$ can be dropped, if $\phi\left( x \right) < \infty$ for all $x \in X$.
\end{remark}

\begin{remark}
\label{vara::rem::usualVaradhan}
	Theorem~\ref{vara::thm::GenVara} implies the usual Varadhan's lemma.
	If $\phi : X \rightarrow \R \cup \left\{ -\infty,  \infty \right\}$ is continuous, then set $\ul{\phi}_{R}=\ol{\phi}_{R}=\phi$ and all the conditions except the tail condition~\ref{vara::ass::thm::GenVara::TailCondition} are immediately satisfied.
	
\end{remark}

\begin{remark}
We need the  conditions \ref{vara::ass::thm::GenVara::BiggerEpsIrrelevant}, \ref{vara::ass::thm::GenVara::DiffPhiPhiR}, to reduce the proofs of the Varadhan lower and upper bound for $\phi$ to the analysis of Varadhan lower and upper bounds on $\ul\phi_{R}$ and $\ol{\phi}_{R}$ respectively.
The Varadhan lower and upper bounds for these sequences can finally be shown by a similar proof as for the original Varadhan's lemma (e.g. \cite{DemZeiLarge}~Theorem~4.3.1), due to the conditions~\ref{vara::ass::thm::GenVara::ConUlPhi} and~\ref{vara::ass::thm::GenVara::ConOlPhi}.
\end{remark}

\begin{remark}
\label{vara::rem::CondULOL}
The conditions~\ref{vara::ass::thm::GenVara::ConUlPhi} and~\ref{vara::ass::thm::GenVara::ConOlPhi} on $\ul{\phi}_{R}$ and $\ol{\phi}_{R}$ differ. 
For the latter the lower bound $R^{*}$ on $R$ is uniformly in $\delta$ and $x$.
Whereas for $\ul{\phi}_{R}$ it does not have to be uniformly in these variables.
\end{remark}

\begin{remark}
	As in \cite{VarAsymp} Chapter 3, we could also treat a different function $\phi^{N}$ for each $N \in \N$.
	Then we would get
	\begin{align}
		\lim_{N \rightarrow \infty}
			a_{N}^{-1} \log \E_{\P^{N}} \left[ e^{a_{N} \phi^{N} \left( \xi_{N} \right)} \right]
			=
			S^{*}
	\end{align}
	if the conditions~\ref{vara::ass::thm::GenVara::BiggerEpsIrrelevant},~\ref{vara::ass::thm::GenVara::DiffPhiPhiR} and~\ref{vara::ass::thm::GenVara::TailCondition} hold with $\phi$ replaced by $\phi^{N}$.

\end{remark}

\begin{remark}
	To apply Bryc's Lemma (inverse Varadhan Lemma), one can use $\phi = F + G$ with $G \in \CspL{b}{X}$ and $F:X \rightarrow \R$ such that the $F-I$ is the new rate function.
	By the continuity and boundedness of $G$, it is enough to find pointwise (to $F$) converging sequences $\left\{ \ul{F}_{R} \right\}$ and $\left\{ \ol{F}_{R} \right\}$ and to look at $\ul{\phi}_{R} = \ul{F}_{R} +G$ and $\ol{\phi}_{R} = \ol{F}_{R} +G$.

\end{remark}

\emptyline

We split the proof in showing that the right hand side of \eqref{vara::eq::thm::GenVara} is a lower bound  (Lemma~\ref{vara::lem::VaradLower}) and an upper bound (Lemma~\ref{vara::lem::VaradUpper}) for the left hand side.

\begin{lemma}
\label{vara::lem::VaradLower}
	Let $\phi$ and  $\left\{ \ul{\phi}_{R} \right\}$ be defined as in Theorem~\ref{vara::thm::GenVara}.
	and the large deviation lower bound for $\left( \xi_{N},\P^{N} \right)$  with speed $a_{N}$ holds with rate function $I$.
	When \ref{vara::ass::thm::GenVara::ConUlPhi} and \ref{vara::ass::thm::GenVara::BiggerEpsIrrelevant} hold,
	then 
	\begin{align}
	\label{eq::vara::lem::VaradLower}
		\liminf_{N \rightarrow \infty} a_{N}^{-1} \log \E_{\P^{N}} \left[ e^{a_{N} \phi \left( \xi_{N} \right)  } \right]
		\geq
			S^{*}
		.
	\end{align}
		
\end{lemma}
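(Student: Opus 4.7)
The plan is to reduce the lower bound for $\phi$ to a lower bound for the approximating sequence $\ul\phi_R$, and then combine condition \ref{vara::ass::thm::GenVara::ConUlPhi} with the large deviation lower bound applied at each point $x$ with $I(x)<\infty$ to recover $S^*$.

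For the first reduction, the key elementary observation is that on the event $\{\phi(\xi_N) > \ul\phi_R(\xi_N) - \epsilon\}$ one has $\phi(\xi_N) \geq \ul\phi_R(\xi_N) - \epsilon$, so that $e^{a_N \phi(\xi_N)} \geq e^{-a_N \epsilon}\, e^{a_N \ul\phi_R(\xi_N)}$ pointwise on that set. Hence, for every fixed $R$ and $\epsilon > 0$,
\begin{align*}
\E_{\P^N}\!\left[e^{a_N \phi(\xi_N)}\right]
\;\geq\;
e^{-a_N \epsilon}\, \E_{\P^N}\!\left[e^{a_N \ul\phi_R(\xi_N)} \1_{\{\phi(\xi_N) > \ul\phi_R(\xi_N)-\epsilon\}}\right].
\end{align*}
Taking $a_N^{-1}\log$, passing to the liminf in $N$ and then letting $R \to \infty$, condition \ref{vara::ass::thm::GenVara::BiggerEpsIrrelevant} converts the right-hand side into the unrestricted exponential moment, giving
\begin{align*}
\liminf_{N\to\infty} a_N^{-1} \log \E_{\P^N}\!\left[e^{a_N \phi(\xi_N)}\right]
\;\geq\;
-\epsilon \;+\; \lim_{R\to\infty}\liminf_{N\to\infty} a_N^{-1} \log \E_{\P^N}\!\left[e^{a_N \ul\phi_R(\xi_N)}\right].
\end{align*}

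For the second step, I would fix $x \in X$ with $I(x) < \infty$ and $\delta > 0$, and then choose $R > R^*_{x,\delta}$ from condition \ref{vara::ass::thm::GenVara::ConUlPhi}, together with the corresponding open neighbourhood $U_{x,\delta,R}$ of $x$. Since $\ul\phi_R \geq \phi(x)-\delta$ on $U_{x,\delta,R}\cap \supp\{\xi_N\}$, we get
\begin{align*}
\E_{\P^N}\!\left[e^{a_N \ul\phi_R(\xi_N)}\right]
\;\geq\;
e^{a_N(\phi(x)-\delta)}\, \P^N\!\left[\xi_N \in U_{x,\delta,R}\right],
\end{align*}
and the large deviation lower bound applied to the open set $U_{x,\delta,R}$ yields $\liminf_N a_N^{-1}\log \P^N[\xi_N \in U_{x,\delta,R}] \geq -\inf_{y\in U_{x,\delta,R}} I(y) \geq -I(x)$. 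Hence $\lim_{R\to\infty}\liminf_N a_N^{-1}\log \E_{\P^N}[e^{a_N \ul\phi_R(\xi_N)}] \geq \phi(x)-\delta-I(x)$. Inserting this bound into the previous display, letting $\epsilon,\delta \downarrow 0$, and finally taking the supremum over all $x$ with $I(x)<\infty$ delivers \eqref{eq::vara::lem::VaradLower}.

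The main obstacle—and the reason condition \ref{vara::ass::thm::GenVara::BiggerEpsIrrelevant} is built into the hypotheses—is that we cannot simply replace $\phi$ by $\ul\phi_R$ everywhere: on the ``bad'' set $\{\phi \leq \ul\phi_R-\epsilon\}$ the approximation goes the wrong way, and its contribution to $\E[e^{a_N \ul\phi_R(\xi_N)}]$ could a priori dominate. Condition \ref{vara::ass::thm::GenVara::BiggerEpsIrrelevant} precisely says this set is exponentially negligible, which is what makes the truncation tight. The second delicate point, pointed out in Remark~\ref{vara::rem::CondULOL}, is that the threshold $R^*_{x,\delta}$ in \ref{vara::ass::thm::GenVara::ConUlPhi} is allowed to depend on $x$ and $\delta$; one must therefore fix $x$ and $\delta$ first, choose $R$ large only afterwards, apply the large deviation lower bound for that $R$, and only at the very end send $R\to\infty$ to activate \ref{vara::ass::thm::GenVara::BiggerEpsIrrelevant}—the order of limits is essential.
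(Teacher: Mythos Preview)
Your proof is correct and follows essentially the same approach as the paper's: restrict to the event $\{\phi > \ul\phi_R - \epsilon\}$ to replace $\phi$ by $\ul\phi_R$ in the exponent, invoke condition~\ref{vara::ass::thm::GenVara::BiggerEpsIrrelevant} to remove the indicator, and then apply condition~\ref{vara::ass::thm::GenVara::ConUlPhi} together with the LDP lower bound on the neighbourhood $U_{x,\delta,R}$. The only cosmetic difference is that the paper extracts from \ref{vara::ass::thm::GenVara::BiggerEpsIrrelevant} a ``for $R$ large enough'' statement (losing an extra $\epsilon$, hence $-2\epsilon$), whereas you pass directly to the $R\to\infty$ limit and bound the resulting quantity from below; both are valid readings of the same idea.
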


\begin{lemma}
\label{vara::lem::VaradUpper}
	Let $\phi$ and  $\left\{ \ol{\phi}_{R} \right\}$ be defined as in Theorem~\ref{vara::thm::GenVara}.
	and the large deviation upper bound  for $\left( \xi_{N},\P^{N} \right)$  with speed $a_{N}$ holds with rate function $I$.
	When \ref{vara::ass::thm::GenVara::ConOlPhi}, \ref{vara::ass::thm::GenVara::DiffPhiPhiR} and \ref{vara::ass::thm::GenVara::TailCondition} hold,
	then
	\begin{align}
	\label{eq::vara::lem::VaradUpper}
		\limsup_{N \rightarrow \infty} a_{N}^{-1} \log  \E_{\P^{N}} \left[ e^{a_{N} \phi \left( \xi_{N} \right)  } \right]
		\leq
			S^{*}
		.
	\end{align}
\end{lemma}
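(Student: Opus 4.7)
The plan is to mirror the classical Varadhan upper bound argument but to replace upper semicontinuity of $\phi$ by condition~\ref{vara::ass::thm::GenVara::ConOlPhi}, while isolating the two unbounded regimes through conditions~\ref{vara::ass::thm::GenVara::DiffPhiPhiR} and~\ref{vara::ass::thm::GenVara::TailCondition}. For fixed $\epsilon,\delta>0$, $M>0$ and $R>R^{*}$, I would split
\begin{align*}
\E_{\P^{N}}\!\lb e^{a_{N}\phi(\xi_{N})}\rb
\le\;& \E_{\P^{N}}\!\lb e^{a_{N}\phi(\xi_{N})}\, \1_{\{\phi(\xi_{N})\le M,\;\phi(\xi_{N})\le \ol{\phi}_{R}(\xi_{N})+\epsilon\}}\rb \\
& + \E_{\P^{N}}\!\lb e^{a_{N}\phi(\xi_{N})}\,\1_{\{\phi(\xi_{N})>\ol{\phi}_{R}(\xi_{N})+\epsilon\}}\rb + \E_{\P^{N}}\!\lb e^{a_{N}\phi(\xi_{N})}\,\1_{\{\phi(\xi_{N})>M\}}\rb,
\end{align*}
and call the three terms $A_{1},A_{2},A_{3}$. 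Conditions~\ref{vara::ass::thm::GenVara::DiffPhiPhiR} and~\ref{vara::ass::thm::GenVara::TailCondition} immediately give $\limsup_{R\to\infty}\limsup_{N}a_{N}^{-1}\log A_{2}\le S^{*}$ and $\lim_{M\to\infty}\limsup_{N}a_{N}^{-1}\log A_{3}\le S^{*}$, so the heart of the argument lies in controlling $A_{1}$.

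On the event indicated in $A_{1}$ we have $\phi(\xi_{N})-\epsilon\le\min\{\ol{\phi}_{R}(\xi_{N}),M-\epsilon\}\eqdef \tilde{\phi}_{R}^{M}(\xi_{N})$; this bounded-above function inherits from~\ref{vara::ass::thm::GenVara::ConOlPhi} the property that for every $x$ with $I(x)<\infty$ there is an open neighbourhood $U_{x,\delta,R}\subset X$ on which $\tilde{\phi}_{R}^{M}\le\max\{S^{*},\phi(x)\}+\delta$ (in intersection with $\supp\{\xi_{N}\}$). Therefore $A_{1}\le e^{a_{N}\epsilon}\,\E_{\P^{N}}\!\lb e^{a_{N}\tilde{\phi}_{R}^{M}(\xi_{N})}\rb$, and I would run the standard covering argument on $\tilde{\phi}_{R}^{M}$. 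Fix $L>0$; since $I$ is a good rate function, $\{I\le L\}$ is compact and can be covered by finitely many neighbourhoods $U_{x_{k},\delta,R}$, $k=1,\ldots,m$, which I shrink further so that $\inf_{\ol{U}_{x_{k},\delta,R}}I\ge I(x_{k})-\delta$ (possible because $I$ is lower semicontinuous). Writing $F$ for the closed complement of $\bigcup_{k}U_{x_{k},\delta,R}$ and applying the LDP upper bound on $F$ (where $\inf_{F}I\ge L$) and on each closed $\ol{U}_{x_{k},\delta,R}$ yields
$$\limsup_{N\to\infty}a_{N}^{-1}\log\E_{\P^{N}}\!\lb e^{a_{N}\tilde{\phi}_{R}^{M}}\rb \le \max\Big\{M-\epsilon-L,\; \max_{1\le k\le m}\lbr \max\{S^{*},\phi(x_{k})\}-I(x_{k})+2\delta\rbr\Big\}.$$

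The observation that closes the argument is that the inner maximum is at most $S^{*}+2\delta$: if $\phi(x_{k})\le S^{*}$ this is immediate from $I\ge 0$, and if $\phi(x_{k})>S^{*}$ then $\phi(x_{k})-I(x_{k})\le S^{*}$ by the definition~\eqref{vara::eq::def::SStar} of $S^{*}$ (here one uses $I(x_{k})<\infty$). Choosing $L>M-\epsilon-S^{*}$ absorbs the first entry as well, giving $\limsup_{N}a_{N}^{-1}\log A_{1}\le \epsilon+S^{*}+2\delta$. Assembling the three bounds and sending $R\to\infty$, $M\to\infty$ (with $L$ tuned to $M$), and finally $\epsilon,\delta\to 0$, yields \eqref{eq::vara::lem::VaradUpper}. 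The main technical obstacle is the bookkeeping of the four independent limits $N,R,M,\delta$ together with the auxiliary covering parameter $L$, and the recognition that the unusual ``$\max\{S^{*},\phi(x)\}$'' on the right-hand side of~\ref{vara::ass::thm::GenVara::ConOlPhi}---rather than a naive $\phi(x)$---is exactly what makes the pointwise contributions of the covering sum to $S^{*}$ even for points $x_{k}$ at which $\phi$ happens to be large.
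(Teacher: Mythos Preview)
Your proof is correct and follows essentially the same route as the paper's: the same three-way decomposition (good event, large-$\phi$ tail, and $\phi>\ol{\phi}_R+\epsilon$ event), the same covering of the compact level set $\{I\le L\}$ (the paper writes $\alpha$ for your $L$) combined with lower semicontinuity of $I$, and the same key observation that $\max\{S^*,\phi(x_k)\}-I(x_k)\le S^*$ for each cover point. The only cosmetic differences are that the paper truncates at $\ol{\phi}_R\wedge M$ whereas you use $\min\{\ol{\phi}_R,M-\epsilon\}$, and your closing remark about the role of $\max\{S^*,\phi(x)\}$ in condition~\ref{vara::ass::thm::GenVara::ConOlPhi} makes explicit what the paper leaves implicit in its chain of inequalities.
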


\subsection{Proof of the generalised Varadhan's lemma}
\label{vara::sec::Pf}

\paragraph{Proof of the lower bound (Lemma~\protect\ref{vara::lem::VaradLower})}
\label{vara::sec::PfLowerBound}

\begin{proof}
This proof is organised as follows.
  At first we show that the function $\phi$ in the exponent on the left hand side of \eqref{eq::vara::lem::VaradLower}  can be replaced by $\ul{\phi}_{R}$, with an arbitrary small error $\epsilon$ for $R$ large enough (see \eqref{vara::pf::lem::VaradLower::eq::ReplacePhi} and \eqref{vara::pf::lem::VaradLower::eq::ReplaceLowerEps}).
  This requires the condition~\ref{vara::ass::thm::GenVara::BiggerEpsIrrelevant}.
  Then we use a similar idea as in the proof of the lower bound of the usual Varadhan Lemma in \cite{DemZeiLarge} (proof of Lemma~4.3.4).
  Here we use condition \ref{vara::ass::thm::GenVara::ConUlPhi} (compare this to the application of the lower semi continuity condition in the proof in \cite{DemZeiLarge}).
This leads to the claimed lower bound for each $x \in X$ (see \eqref{vara::pf::lem::VaradLower::eq::VaradPhiR}).

\emptyline

Fix a $\epsilon>0$ and a $R$, then
\begin{align}
\label{vara::pf::lem::VaradLower::eq::ReplacePhi}
\begin{split}
	&\liminf_{N \rightarrow \infty} a_{N}^{-1} \log \E_{\P_{N}} \left[ e^{ a_{N}  \phi\left( \xi_{N} \right)  } \right]
	\\
	&\geq
	\liminf_{N \rightarrow \infty} a_{N}^{-1} \log  \E_{\P_{N}} \left[ e^{a_{N}  \phi\left( \xi_{N} \right)  }
			\1_{\left\{  \phi \left( \xi_N  \right)   > \ul{\phi}_{R} \left( \xi_N  \right)  -\epsilon \right\}} 
			  \right]
	\\
	&\geq
		-\epsilon 
		+
		\liminf_{N \rightarrow \infty} a_{N}^{-1} \log
				 \E_{\P_{N}} \left[ 
				 		e^{a_{N}  \ul{\phi}_{R}\left( \xi_{N} \right)  }
					\1_{\left\{  \phi \left( \xi_N  \right)   > \ul{\phi}_{R} \left( \xi_N  \right)  -\epsilon \right\}}  
				\right]
	.
\end{split}
\end{align}
By condition~\ref{vara::ass::thm::GenVara::BiggerEpsIrrelevant}, we get for $R$ large enough
that the right hand side of \eqref{vara::pf::lem::VaradLower::eq::ReplacePhi} is greater or equal to
\begin{align}
\label{vara::pf::lem::VaradLower::eq::ReplaceLowerEps}
	\geq
		- 2 \epsilon 
		+
		\liminf_{N \rightarrow \infty} a_{N}^{-1} \log 
					 \E_{\P_{N}} \left[ 
					 			e^{a_{N}  \ul{\phi}_{R}\left( \xi_{N} \right)   }  
								\right]
	.
\end{align}

We fix an arbitrary $x \in X$ with $I \left( x \right) < \infty$ and an arbitrary $\delta>0$.
By the condition \ref{vara::ass::thm::GenVara::ConUlPhi} there is a open neighbourhood $U_{x,\delta,R}$ of $x$ such that 
\begin{align}
	\inf_{y \in U_{x,\delta,R} \cap \supp \left\{ \xi_{N} \right\}}  \ul{\phi}_{R}\left( y \right) 
	\geq 
	\phi\left( x \right) -\delta
	.
\end{align}
Using this the large deviation lower bound of $\xi_{N}$, the right hand side of \eqref{vara::pf::lem::VaradLower::eq::ReplacePhi} is greater or equal to
\begin{align}
\label{vara::pf::lem::VaradLower::eq::VaradPhiR}
\begin{split}
	&\geq
		-2\epsilon + \liminf_{N \rightarrow \infty} a_{N}^{-1} \log  \E_{\P_{N}} \left[ e^{a_{N}  \ul{\phi}_{R}\left( \xi_{N} \right)   }  
									\1_{\left\{ \xi_{N} \in U_{x,\delta,R} \right\}} 
								\right]
	\\
	&\geq
		- 2 \epsilon 
		+
		\inf_{y \in U_{x,\delta,R} \cap \supp \left\{ \xi_{N} \right\}}  \ul{\phi}_{R}\left( y \right)  
		+\liminf_{N \rightarrow \infty} a_{N}^{-1} \log \P_{N}  \left[ \xi_{N} \in U_{x,\delta,R} \right]
	\\
	&\geq
		- 2 \epsilon 
		+
		\phi \left( x \right) - \delta - I\left( x \right) 
	.
\end{split}
\end{align}
Now let $\delta$ and $\epsilon$ tend to zero.
Hence we get for all  $x \in X$ (with $I \left( x \right) < \infty$),
\begin{align}
	\liminf_{N \rightarrow \infty} a_{N}^{-1} \log \E_{\P_{N}} \left[ e^{ a_{N}  \phi\left( \xi_{N} \right)  } \right]
	\geq 
	\phi \left( x \right)  - I\left( x \right) 
.
\end{align}
This implies the Varadhan lower bound  \eqref{eq::vara::lem::VaradLower}.
\end{proof}

\paragraph{Proof of the upper bound (Lemma~\protect\ref{vara::lem::VaradUpper})}
\label{vara::sec::PfUpperBound}

\begin{proof}
To prove the Varadhan upper bound we replace the function $\phi$ in the exponent of the left hand side of \eqref{eq::vara::lem::VaradUpper} by the function $\ol{\phi}_{R}$.
Therefore we split the expectation on the left hand side of \eqref{eq::vara::lem::VaradUpper} into the events when $\phi$ is greater or lower than a $M \in \R$.  Moreover we split it again in the events that $\phi$ exceeds $\ol{\phi}_{R}$ more or less than $\epsilon$.
Then we interchange the $\max$ and $\limsup$ to investigate the three situations separately (see \eqref{vara::pf::lem::VaradUpper::eq::ReplacePhi}).
Only the situation when $\phi$ exceeds $\ol{\phi}_{R}$ less than $\epsilon$ needs further investigation due to the conditions~\ref{vara::ass::thm::GenVara::DiffPhiPhiR} and~\ref{vara::ass::thm::GenVara::TailCondition} (see also \eqref{vara::pf::lem::VaradUpper::eq::InsertAll}).
In that situation we replace $\phi$ in the exponent by $\ol{\phi}_{R}$ with error $\epsilon$.
Then for $\ol{\phi}_{R}$ in the exponent we use parts of the usual proof of Varadhan's upper bound of \cite{DemZeiLarge} Lemma~4.3.6.
This leads to \eqref{vara::pf::lem::VaradUpper::eq::VardhanPhiR}.
Here we use condition \ref{vara::ass::thm::GenVara::ConOlPhi} (compare this to the application of the upper semi continuity condition in the proof in \cite{DemZeiLarge}).
Finally in \eqref{vara::pf::lem::VaradUpper::eq::InsertAll} we combine these calculations and conclude the claimed upper bound.

\emptyline

Fix an $M \in \R$, an $\epsilon>0$ and an $R>0$.
\begin{align}
\label{vara::pf::lem::VaradUpper::eq::ReplacePhi}
\begin{split}
	&\limsup_{N \rightarrow \infty} a_{N}^{-1} \log \E_{\P_{N}} \left[ e^{ a_{N}  \phi\left( \xi_{N} \right)  } \right]
	\\
	&\leq
	\left\{ \limsup_{N \rightarrow \infty} a_{N}^{-1} \log \left( \E_{\P_{N}} \left[ 
						e^{a_{N} \left(  \ol{\phi}_{R} \left( \xi_{N} \right)  \wedge M \right)  } \right] e^{a_{N} \epsilon} \right) \right\}
	\\
	&\textnormal{\hspace{1cm}} \vee
		\left\{ \limsup_{N \rightarrow \infty} a_{N}^{-1} \log  \E_{\P_{N}} \left[ 
						e^{a_{N}  \phi \left( \xi_{N} \right)    } 
						\1_{ \phi \left( \xi_{N} \right)  \geq M}
						\right]  \right\}
	\\
	&\textnormal{\hspace{1cm}} \vee
	\left\{ \limsup_{N \rightarrow \infty} a_{N}^{-1} \log \E_{\P_{N}} \left[ e^{a_{N}  \phi\left( \xi_{N} \right)  }
			\1_{\left\{ \phi \left( \xi_N  \right)   >  \ol{\phi}_{R} \left( \xi_N  \right) + \epsilon \right\}} \right] \right\}
	.
\end{split}
\end{align}
We show that the first $\limsup$ of the right hand side leads to the claimed upper bound.
Therefore we use parts of the proof of \cite{DemZeiLarge} Lemma~4.3.6 for the upper bound of Varadhan's Lemma.
Fix  $\alpha,\delta \in \R_{+}$.
The function $I$ is lower semi continuous and $\ol{\phi}_{R}$ satisfies \ref{vara::ass::thm::GenVara::ConOlPhi}. 
Hence (for $R$ large enough) for each $x \in X$ there is a open neighbourhood $A_{x}=A_{x,\delta,R} \subset X$ of $x$ such that 
\begin{align}
	\sup_{y \in A_{x} \cap \supp \left\{ \xi_{N} \right\}} \ol{\phi}_{R}  \left( y \right) 
	\leq
	 	\max \left\{ S^{*}, \phi\left( x \right)  \right\} + \delta
	\qquad \qquad \textnormal{ and } \qquad \qquad
		\inf_{y \in \ol{A_{x}}} I \left( y \right) 
	\geq
	 	I \left( x \right) - \delta
	.
\end{align}
Using these $A_{x}$ we find a finite cover $U_{i=1}^{N\left( \alpha \right) } A_{x_{i}}$ of the level sets $I^{-1} \left( \left[ 0,\alpha \right] \right) $ with $x_{i} \in I^{-1} \left( \left[ 0,\alpha \right] \right) $  by the compactness of $I^{-1} \left( \left[ 0,\alpha \right] \right) $.
Then
\begin{align}
	\E_{\P_{N}} \left[ e^{  a_{N} \left(  \ul{\phi}_{R} \left( \xi_{N} \right)   \wedge M \right) } \right]
	\leq
		\sum_{i =1}^{N\left( \alpha \right) } 
						e^{a_{N}  \left( \max \left\{ S^{*}, \phi\left( x_{i} \right)  \right\}  +\delta \right) } 
						\P^{N} \left[ \xi_{N} \in  \ol{A_{x_{i}}} \, \right]
		+ e^{a_{N} M}  \P^{N} \left[ \left(  \bigcup_{i_=1}^{ N\left( \alpha \right)  } A_{x_{i}}  \right)^{\!\!c}  \; \right] 
	.
\end{align}
By the large deviation upper bound of $\xi_{N}$ with rate function $I$ we get 
\begin{align}
\label{vara::pf::lem::VaradUpper::eq::VardhanPhiR}
\begin{split}
	&\limsup_{N \rightarrow \infty} \frac{1}{a_{N} } \log \E_{\P_{N}} \left[ e^{  a_{N} \left(  \ol{\phi}_{R} \left( \xi_{N} \right)   \wedge M \right) } \right]
	\\
	&\leq
	\max \left\{ \left\{\max_{i=1}^{N\left( \alpha \right) } \left\{  \max \left\{ S^{*}, \phi\left( x_{i} \right)  \right\}   - \inf_{x \in \ol{A_{x_{i}}} } I\left( x \right)  \right\}
			 + \delta \right\}  
		, \left\{ M- \alpha \right\} \right\}
	\\
	&\leq
	\max \left\{ 
		S^{*} + \delta
		,
		\left\{\max_{i=1}^{N\left( \alpha \right) } \left\{  \left( \phi  - I \right)\left( x_{i} \right)  \right\} + 2 \delta \right\}  
		,
		 \left\{ M- \alpha \right\} \right\}
	\\
	&\leq
	\max \left\{
		\left\{\sup_{x \in X : I\left( x \right)  < \infty} \left\{  \left( \phi - I \right)\left( x \right)  \right\}+ 2 \delta \right\} 
		,
		\left\{ M- \alpha \right\} \right\}
	.
\end{split}
\end{align}
Combining  \eqref{vara::pf::lem::VaradUpper::eq::VardhanPhiR} into \eqref{vara::pf::lem::VaradUpper::eq::ReplacePhi}, we get for $R> R\left( \alpha,\gamma,\delta \right) $
\begin{align}
\label{vara::pf::lem::VaradUpper::eq::InsertAll}
\begin{split}
	&\limsup_{N \rightarrow \infty} a_{N}^{-1} \log \E_{\P_{N}} \left[ e^{ a_{N}  \phi\left( \xi_{N} \right)   } \right]
	\\
	&\leq
	\left\{
	\epsilon 
	+
	\sup_{x \in X, I \left( x \right) < \infty}  \left\{  \phi\left( x \right)  - I\left( x \right)  \right\} + 2 \delta 
	\right\}
	\vee 
		\left\{
			\epsilon + M- \alpha
		\right\}
	\\
	&\qquad
		\vee
		\left\{
			 \limsup_{N \rightarrow \infty} a_{N}^{-1} \log 
			 \E_{\P_{N}} \left[ e^{ a_{N}  \phi \left( \xi_{N} \right)   } \1_{ \phi \left( \xi_{N} \right)  \geq M} \right]
		 \right\}
	\\
	&\qquad
	 \vee
			\left\{
			\limsup_{N \rightarrow \infty} a_{N}^{-1} \log 
					\E_{\P_{N}} \left[ e^{a_{N} \phi\left( \xi_{N} \right) } 
					\1_{\left\{ \phi \left( \xi_N  \right)   >  \ol{\phi}_{R} \left( \xi_N  \right) + \epsilon  \right\}} \right]
			\right\}
	.
\end{split}
\end{align}
Take now at first the limit $R \rightarrow \infty$. Then the last $\limsup$ vanishes due to condition~\ref{vara::ass::thm::GenVara::DiffPhiPhiR}.
Because $\alpha, \delta$ are arbitrary take then the limits  $\alpha \rightarrow \infty$, $\delta \rightarrow 0$.
Afterwards  let $M$ tend to infinity and apply condition~\ref{vara::ass::thm::GenVara::TailCondition}.
Finally $\epsilon$ is also arbitrary small.
Hence we have proven Lemma~\ref{vara::lem::VaradUpper}.
\end{proof}

\subsection{Simplifications of the conditions in Theorem~\ref{vara::thm::GenVara}}
\label{vara::sec::SimplerCond}

\paragraph{The condition  \ref{vara::ass::thm::GenVara::ConUlPhi} on \texorpdfstring{$\ul{\phi}_{R}$}{underlined phi R}}

\begin{lemma}
\label{vara::lem::SuffiCond::ulPhiR}
	The condition~\ref{vara::ass::thm::GenVara::ConUlPhi} on $\ul{\phi}_{R}$ is satisfied if 
	\begin{itemNoItemsep}
		\item
			each $\ul{\phi}_{R}: X \rightarrow \R$ is lower semi continuous and  
		\item
			$\left\{ \ul{\phi}_{R} \right\}$ converge pointwise to $\phi$ on  $\left\{ x \in X : I \left( x \right) < \infty \right\} $ for $R \rightarrow \infty$.
	\end{itemNoItemsep}
\end{lemma}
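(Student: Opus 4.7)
The plan is to verify condition~\ref{vara::ass::thm::GenVara::ConUlPhi} directly from the two hypotheses by combining the pointwise convergence at the centre point $x$ with the lower semicontinuity of $\ul{\phi}_R$ around $x$.

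First I would fix an arbitrary $x \in X$ with $I(x) < \infty$ and an arbitrary $\delta > 0$. By the assumed pointwise convergence $\ul{\phi}_R(x) \to \phi(x)$ as $R \to \infty$ (which makes sense on the set $\{I < \infty\}$), there is some threshold $R^{*}_{x,\delta} > 0$ such that for every $R > R^{*}_{x,\delta}$ one has
\begin{align*}
\ul{\phi}_R(x) \;\geq\; \phi(x) - \tfrac{\delta}{2}.
\end{align*}

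Next, for each such $R$ I would use the lower semicontinuity of $\ul{\phi}_R$ at the point $x$: by definition of lsc, the set
\begin{align*}
U_{x,\delta,R} \;\defeq\; \bigl\{ y \in X : \ul{\phi}_R(y) > \ul{\phi}_R(x) - \tfrac{\delta}{2}\bigr\}
\end{align*}
is open and contains $x$. Combining this with the previous bound gives, for every $y \in U_{x,\delta,R}$ (and in particular for every $y$ in $U_{x,\delta,R} \cap \supp\{\xi_N\}$),
\begin{align*}
\ul{\phi}_R(y) \;>\; \ul{\phi}_R(x) - \tfrac{\delta}{2} \;\geq\; \phi(x) - \delta,
\end{align*}
which yields the required infimum bound \eqref{vara::eq::thm::GenVara::ConUlPhi}.

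There is really no obstacle here: the statement is essentially just the observation that ``lsc at a point plus pointwise convergence at that point'' gives the one-sided local control demanded by \ref{vara::ass::thm::GenVara::ConUlPhi}. Note that, in line with Remark~\ref{vara::rem::CondULOL}, the neighbourhood $U_{x,\delta,R}$ and the threshold $R^{*}_{x,\delta}$ are allowed to depend on $x$ and $\delta$, which is exactly what a pointwise (as opposed to uniform) hypothesis delivers; this is why the analogous simplification for $\ol{\phi}_R$ would be more delicate and is not claimed here.
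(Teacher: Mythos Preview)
Your proof is correct and follows essentially the same approach as the paper: split $\delta$ in half, use pointwise convergence at $x$ to find $R^{*}_{x,\delta}$ with $\ul{\phi}_R(x) \geq \phi(x) - \tfrac{\delta}{2}$, and use lower semicontinuity of $\ul{\phi}_R$ to get an open neighbourhood on which $\ul{\phi}_R(y) \geq \ul{\phi}_R(x) - \tfrac{\delta}{2}$. The only cosmetic difference is that you write the open set explicitly as a superlevel set, whereas the paper just asserts its existence.
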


\begin{proof}
	By the lower semi continuity there is for each $x \in X$ with $I\left( x \right) <\infty$, each $\delta>0$ and each $R>0$, a neighbourhood $ U_{x,\delta,R} \subset X$ of $x$ such that
	\begin{align}
		\inf_{y \in  U_{x,\delta,R}} \ul{\phi}_{R}\left( y \right)  \geq \ul{\phi}_{R}\left( x \right)  - \frac{\delta}{2}
		.
	\end{align}
	Moreover by the pointwise convergence there is a $R^{*}_{x,\delta}$ such that for all $R>R^{*}_{x,\delta}$
	\begin{align}
		\ul{\phi}_{R}\left( x \right) 
		>
		\phi\left( x \right)  - \frac{\delta}{2}
		.
	\end{align}
	Therefore the claimed condition~\ref{vara::ass::thm::GenVara::ConUlPhi} is proven.
\end{proof}

\paragraph{The condition \ref{vara::ass::thm::GenVara::ConOlPhi}  on \texorpdfstring{$\ol{\phi}_{R}$}{overlined phi R} }

\begin{lemma}
\label{vara::lem::SuffiCond::olPhiR}
	The condition \ref{vara::ass::thm::GenVara::ConOlPhi} on $\ol{\phi}_{R}$ is satisfied if 
	\begin{itemNoItemsep}
		\item
			each $\ol{\phi}_{R}: X \rightarrow \R$ is upper semi continuous and  
		\item
			for each $\delta>0$ there is a $R^{*}_{\delta} \in \R_{+}$ such that for all $R \geq R^{*}_{\delta}$, 
			$\ol{\phi}_{R}\left( x \right)  \leq \max \left\{ S^{*}, \phi\left( x \right)  \right\}+ \delta$
			for all  $x \in \left\{ x \in X : I \left( x \right) < \infty \right\} $. 
	\end{itemNoItemsep}
\end{lemma}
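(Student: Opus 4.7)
The plan is to mirror the proof of Lemma~\ref{vara::lem::SuffiCond::ulPhiR}, with the roles of ``lower'' and ``upper'' reversed. First, for any fixed $\delta>0$, invoke the second bullet to obtain a threshold $R^{*}_{\delta/2}$ such that, for every $R \geq R^{*}_{\delta/2}$ and every $x \in X$ with $I(x) < \infty$,
\begin{equation*}
\ol{\phi}_{R}(x) \leq \max\{S^{*},\phi(x)\} + \tfrac{\delta}{2}.
\end{equation*}
Second, for such $R$ and $x$, upper semi-continuity of $\ol{\phi}_{R}$ at $x$ supplies an open neighbourhood $U_{x,\delta,R}$ of $x$ with $\sup_{y \in U_{x,\delta,R}} \ol{\phi}_{R}(y) \leq \ol{\phi}_{R}(x) + \tfrac{\delta}{2}$. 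Combining the two inequalities yields $\sup_{y \in U_{x,\delta,R}} \ol{\phi}_{R}(y) \leq \max\{S^{*},\phi(x)\} + \delta$; intersecting with $\supp\{\xi_{N}\}$ can only decrease the supremum, so \eqref{vara::eq::thm::GenVara::ConOlPhi} follows.

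The key steps, in order, are therefore: (i) use the second bullet to get pointwise control of $\ol{\phi}_{R}$ at $x$ with error $\delta/2$; (ii) use upper semi-continuity to transfer that pointwise control to an open neighbourhood, paying another $\delta/2$; (iii) add. No computation beyond elementary arithmetic is needed, and the topological ingredient is only the defining property of upper semi-continuity.

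The one delicate point, which I expect to be the main subtlety rather than a genuine obstacle, is the quantifier order highlighted in Remark~\ref{vara::rem::CondULOL}: condition~\ref{vara::ass::thm::GenVara::ConOlPhi} demands a single $R^{*}$ uniform in $\delta$, whereas the construction above produces a threshold $R^{*}_{\delta/2}$ that a priori depends on $\delta$. Tracing through the only use of~\ref{vara::ass::thm::GenVara::ConOlPhi} inside the proof of the upper bound (see the passage just before \eqref{vara::pf::lem::VaradUpper::eq::VardhanPhiR} and the subsequent iterated limits, where $R \to \infty$ is taken only after $\delta$ has been fixed), one sees that the $\delta$-dependent form is in fact what is consumed downstream. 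I would therefore present the proof in the form sketched above and note explicitly that, although the two bullets do not in general yield a single uniform $R^{*}$, the $\delta$-dependent version they do yield is sufficient for the application in Theorem~\ref{vara::thm::GenVara}.
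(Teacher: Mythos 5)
Your proof is correct and is essentially the paper's own argument: upper semi-continuity gives a neighbourhood on which $\ol{\phi}_{R}$ exceeds $\ol{\phi}_{R}(x)$ by at most $\delta/2$, and the second bullet bounds $\ol{\phi}_{R}(x)$ by $\max\{S^{*},\phi(x)\}+\delta/2$ (the paper does the same with errors $\delta+\delta=2\delta$). Your closing remark on the quantifier order is also apt and, if anything, more careful than the paper: the paper's proof of Lemma~\ref{vara::lem::SuffiCond::olPhiR} likewise only produces a $\delta$-dependent threshold $R^{*}_{\delta}$ rather than the uniform $R^{*}$ literally demanded in condition~\ref{vara::ass::thm::GenVara::ConOlPhi}, and this $\delta$-dependent form is indeed all that the iterated limits (first $R\to\infty$ at fixed $\delta$) in the proof of Lemma~\ref{vara::lem::VaradUpper} consume.
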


\begin{proof}
	By the upper semi continuity we have that for each $x \in  \left\{ x \in X : I \left( x \right) < \infty \right\} $, each $\delta$ and each $R \in \R_{+}$ there is a open set $A_{x,\delta,R} \subset X$ such that 
	\begin{align}
		\sup_{y \in A_{x,\delta,R} } \ol{\phi}_{R}\left( y \right)  
		\leq
			\ol{\phi}_{R}\left( x \right)  + \delta
		.
	\end{align}

	If we have $R>R^{*}_{\delta}$, then we know by the second property that the right hand side is lower or equal to
	\begin{align}
		\leq 
			\max \left\{ S^{*}, \phi\left( x \right)  \right\}+ 2 \delta
		.
	\end{align}
\end{proof}
	
	 The class of functions that satisfies  \ref{vara::ass::thm::GenVara::ConOlPhi} is in general larger than the class defined in Lemma~\ref{vara::lem::SuffiCond::olPhiR}. 
	 For an abstract example, see Chapter~\ref{vara::sec::Example::Class}.

\paragraph{The condition \protect\ref{vara::ass::thm::GenVara::BiggerEpsIrrelevant}}

\begin{lemma}
\label{vara::lem::VaradLower::CondSatis1}
The condition~\ref{vara::ass::thm::GenVara::BiggerEpsIrrelevant} holds if 
\begin{align*}
	\forall \epsilon>0
	\quad
	\exists R_{\epsilon}>0
	\quad \textnormal{ such that } \quad
	\forall R>R_{\epsilon}
	\quad
	\exists N_{\epsilon,R} \in \N
	\quad \textnormal{ such that } \quad
	\forall N>N_{\epsilon,R}
\end{align*}
\begin{align}
\begin{split}
	\E_{\P^{N}} 
		\left[ 
			e^{a_{N}  \ul\phi_{R}\left( \xi_{N} \right)  }
			\1_{\left\{  \phi \left( \xi_N  \right)   > \ul{\phi}_{R} \left( \xi_N  \right)  -\epsilon \right\}} 
		\right]
	\geq
	\E_{\P^{N}} 
		\left[ 
			e^{a_{N}  \ul\phi_{R}\left( \xi_{N} \right)  }
		\1_{\left\{  \phi \left( \xi_N  \right)   < \ul{\phi}_{R} \left( \xi_N  \right)  -\epsilon \right\}} 
		\right]
		.
\end{split}
\end{align}
\end{lemma}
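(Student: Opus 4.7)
The goal is to verify the equality of the two $\lim_{R\to\infty}\liminf_{N\to\infty}$ in condition~\ref{vara::ass::thm::GenVara::BiggerEpsIrrelevant} under the pointwise comparison given in the hypothesis. One direction is trivial: since $\1_{\{\phi(\xi_N) > \ul{\phi}_R(\xi_N) - \epsilon\}} \leq 1$, we always have
\begin{align*}
\E_{\P^N}\!\left[ e^{a_N \ul{\phi}_R(\xi_N)} \1_{\{\phi(\xi_N) > \ul{\phi}_R(\xi_N) - \epsilon\}} \right]
\leq \E_{\P^N}\!\left[ e^{a_N \ul{\phi}_R(\xi_N)} \right],
\end{align*}
so after taking $a_N^{-1}\log$, $\liminf_N$, and $\lim_R$, the ``$\leq$'' inequality between the two limits in \ref{vara::ass::thm::GenVara::BiggerEpsIrrelevant} is automatic.

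The plan for the reverse inequality is to split the full expectation into three pieces according to whether $\phi(\xi_N)$ is strictly greater than, strictly less than, or equal to $\ul{\phi}_R(\xi_N) - \epsilon$, and to show that the two ``non-good'' pieces are dominated by the ``good'' piece up to a harmless factor of $2$ that disappears on the logarithmic scale. Concretely, given $\epsilon>0$, apply the hypothesis with $\epsilon/2$ in place of $\epsilon$. The set inclusions $\{\phi \leq \ul{\phi}_R - \epsilon\} \subset \{\phi < \ul{\phi}_R - \epsilon/2\}$ and $\{\phi > \ul{\phi}_R - \epsilon/2\} \subset \{\phi > \ul{\phi}_R - \epsilon\}$ then give, for all $R>R_{\epsilon/2}$ and $N>N_{\epsilon/2,R}$,
\begin{align*}
\E_{\P^N}\!\left[ e^{a_N \ul{\phi}_R(\xi_N)} \1_{\{\phi(\xi_N)\leq \ul{\phi}_R(\xi_N)-\epsilon\}} \right]
&\leq \E_{\P^N}\!\left[ e^{a_N \ul{\phi}_R(\xi_N)} \1_{\{\phi(\xi_N) < \ul{\phi}_R(\xi_N)-\epsilon/2\}} \right] \\
&\leq \E_{\P^N}\!\left[ e^{a_N \ul{\phi}_R(\xi_N)} \1_{\{\phi(\xi_N) > \ul{\phi}_R(\xi_N)-\epsilon/2\}} \right] \\
&\leq \E_{\P^N}\!\left[ e^{a_N \ul{\phi}_R(\xi_N)} \1_{\{\phi(\xi_N) > \ul{\phi}_R(\xi_N)-\epsilon\}} \right].
\end{align*}
Adding the ``good'' piece to both sides, the total expectation is bounded by twice the good piece.

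Taking $a_N^{-1}\log$ transforms the factor $2$ into the additive error $a_N^{-1}\log 2$, which vanishes as $N\to\infty$ since $a_N\to\infty$. Hence for every $\epsilon>0$ and every $R>R_{\epsilon/2}$,
\begin{align*}
\liminf_{N\to\infty} a_N^{-1}\log \E_{\P^N}\!\left[ e^{a_N \ul{\phi}_R(\xi_N)} \right]
\leq \liminf_{N\to\infty} a_N^{-1}\log \E_{\P^N}\!\left[ e^{a_N \ul{\phi}_R(\xi_N)} \1_{\{\phi(\xi_N)>\ul{\phi}_R(\xi_N)-\epsilon\}} \right],
\end{align*}
and applying $\lim_{R\to\infty}$ on both sides yields the missing inequality in~\ref{vara::ass::thm::GenVara::BiggerEpsIrrelevant}.

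The proof is essentially routine once the correct $\epsilon/2$-perturbation is used; the only subtle point is handling the boundary set $\{\phi = \ul{\phi}_R - \epsilon\}$, which is why the hypothesis (with strict inequalities) has to be applied at a slightly smaller threshold $\epsilon/2$ to absorb it. There is no compactness or regularity obstacle, and the measurability of $\ul{\phi}_R$ and $\phi$ is enough to make all the indicator-decompositions valid.
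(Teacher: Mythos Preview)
Your proof is correct and follows essentially the same route as the paper: split the full expectation into the ``good'' and ``bad'' pieces, use the hypothesis to show the good piece dominates, and observe that the resulting factor~$2$ contributes only $a_N^{-1}\log 2\to 0$ on the logarithmic scale. Your $\epsilon/2$ perturbation is in fact a small improvement over the paper's argument, which applies the inequality $\log(a+b)\le\max\{\log a,\log b\}+\log 2$ directly to the two strict-inequality pieces and thereby tacitly ignores the boundary set $\{\phi=\ul\phi_R-\epsilon\}$; your device absorbs that set cleanly.
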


\begin{proof}
	For $a,b \geq 0$,
	\begin{align}
		\log \left( a+b \right) 
		\leq
			\max \left\{ \log \left( 2 a \right) , \log \left( 2 b \right) \right\}
		=
			\max \left\{ \log \left( a \right) , \log \left( b \right) \right\} + \log \left( 2 \right)
		\leq
			\log\left( a + b \right) + \log \left( 2 \right)
		.
\end{align}
This implies by the assumption of this lemma that
	\begin{align}
	\label{vara::eq::lem::VaradLower::CondSatisf1::max}
	\begin{split}
		&\lim_{R \rightarrow \infty} \liminf_{N \rightarrow \infty}
				a_{N}^{-1} \log 
					\E_{\P^{N}} 
						\left[ 
							e^{a_{N} \ul\phi_{R}\left( \xi_{N} \right) }
						\right]
		\\
		&=
				\lim_{R \rightarrow \infty} \liminf_{N \rightarrow \infty}
				a_{N}^{-1} 
				\log
				\max \left\{ 
					\E_{\P^{N}} 
						\left[ 
							e^{a_{N} \ul\phi_{R}\left( \xi_{N} \right) }
						\1_{\left\{  \phi \left( \xi_N  \right)   > \ul{\phi}_{R} \left( \xi_N  \right)  -\epsilon \right\}} 
						\right]
				\right.
				\\
				&\hspace{4cm}
				\left.
				, 
					\E_{\P^{N}} 
						\left[ 
							e^{a_{N} \ul\phi_{R}\left( \xi_{N} \right) }
						\1_{\left\{  \phi \left( \xi_N  \right)   < \ul{\phi}_{R} \left( \xi_N  \right)  -\epsilon \right\}} 
						\right]
					\right\}
		\\
		&=
		\lim_{R \rightarrow \infty} \liminf_{N \rightarrow \infty}
				a_{N}^{-1} 
					\log 
					\E_{\P^{N}} 
						\left[ 
							e^{a_{N} \ul\phi_{R}\left( \xi_{N} \right) }
						\1_{\left\{  \phi \left( \xi_N  \right)   > \ul{\phi}_{R} \left( \xi_N  \right)  -\epsilon \right\}}  
						\right]
		.
	\end{split}
	\end{align}
\end{proof}

\begin{lemma}
\label{vara::lem::VaradLower::CondSatis}
The condition~\ref{vara::ass::thm::GenVara::BiggerEpsIrrelevant} holds if 
\begin{enuRom}
	\item
	\label{vara::ass::VaradLower::BiggerEpsIrrelevant::GrEps}
		$
		\forall C>0
		\quad
		\forall \epsilon>0
		\quad
		\exists R_{C,\epsilon}>0
		\quad \textnormal{ such that } \quad
		\forall R>R_{C,\epsilon}
		\quad
		\exists N_{C,\epsilon,R} \in \N
		\quad \textnormal{ such that } \quad
		\forall N>N_{C,\epsilon,R}
	$
		\begin{align}
		\label{vara::eq::VaradLower::BiggerEpsIrrelevant::GrEps}
				\E_{\P^{N}} 
					\left[ 
						e^{a_{N}  \ul\phi_{R}\left( \xi_{N} \right)  }
					\1_{\left\{  \phi \left( \xi_N  \right)   < \ul{\phi}_{R} \left( \xi_N  \right)  -\epsilon \right\}} 
					\right]
			\leq
				e^{- a_{N} C }
			\quad \textnormal{ or}
		\end{align}
	\item
		$
			\forall \epsilon>0
			\quad
			\exists \beta_{\epsilon} \in \left( 0,1 \right]
			\exists R_{\epsilon} >0
			\quad \textnormal{ such that } \quad
			 \forall R>R_{\epsilon}
			\quad
			\forall N \in \N
		$
		\begin{align}
		\label{vara::eq::VaradLower::BiggerEpsIrrelevant::beta}
				\E_{\P^{N}} 
					\left[ 
						e^{a_{N}  \ul\phi_{R}\left( \xi_{N} \right)  }
					\1_{\left\{  \phi \left( \xi_N  \right)   > \ul{\phi}_{R} \left( \xi_N  \right)  -\epsilon \right\}} 
					\right]
				\geq
				\beta 
					\E_{\P^{N}} 
						\left[ 
							e^{a_{N} \ul\phi_{R}\left( \xi_{N} \right) }
						\right]
				.
			\end{align}

\end{enuRom}

\end{lemma}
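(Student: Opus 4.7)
The plan is to express both sides of \eqref{vara::eq::VaradLower::BiggerEpsIrrelevant} in terms of the three normalised log-expectations
\[ A_N^{(R)} := a_N^{-1}\log \E_{\P^N}\bigl[e^{a_N\ul\phi_R(\xi_N)}\1_{\{\phi(\xi_N)>\ul\phi_R(\xi_N)-\epsilon\}}\bigr], \]
\(T_N^{(R)} := a_N^{-1}\log \E_{\P^N}[e^{a_N\ul\phi_R(\xi_N)}]\), and \(B_N^{(R)}\) defined as \(A_N^{(R)}\) with the opposite indicator. Since the ``good'' part is dominated by the total, \(A_N^{(R)}\leq T_N^{(R)}\) gives the \(\leq\) direction of \eqref{vara::eq::VaradLower::BiggerEpsIrrelevant} for free; only the reverse direction needs argument. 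The core tool is the elementary two-sided bound
\[ T_N^{(R)} = \max\bigl(A_N^{(R)},B_N^{(R)}\bigr) + O(a_N^{-1}), \]
which follows from \(\log(u+v)\in[\log\max(u,v),\log\max(u,v)+\log 2]\) together with \(a_N\to\infty\).

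Case (ii) is the easier of the two: taking \(a_N^{-1}\log\) of \eqref{vara::eq::VaradLower::BiggerEpsIrrelevant::beta} yields \(A_N^{(R)}\geq T_N^{(R)}+a_N^{-1}\log\beta_\epsilon\) for every \(N\) once \(R>R_\epsilon\). The additive error vanishes, so \(\liminf_N A_N^{(R)}\geq \liminf_N T_N^{(R)}\) pointwise in \(R\), and combined with the trivial direction this is in fact equality for every such \(R\); hence \(\lim_R\) of either side agrees with \(\lim_R\) of the other.

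Case (i) uses the identity \(\liminf_N\max(\alpha_N,c)=\max(\liminf_N\alpha_N,c)\) for a \emph{constant} \(c\) (an elementary consequence of selecting a subsequence realising \(\liminf_N\alpha_N\)). The hypothesis \eqref{vara::eq::VaradLower::BiggerEpsIrrelevant::GrEps} says \(B_N^{(R)}\leq -C\) eventually in \(N\) once \(R>R_{C,\epsilon}\); in that regime \(\max(A_N^{(R)},B_N^{(R)})\leq\max(A_N^{(R)},-C)\) eventually, and inserting the elementary bound yields
\[ \liminf_N T_N^{(R)}\leq \max\bigl(\liminf_N A_N^{(R)},-C\bigr)\quad\text{for every }R>R_{C,\epsilon}. \]
Passing \(R\to\infty\) first (so the restriction \(R>R_{C,\epsilon}\) is eventually satisfied for fixed \(C\)) and using that \(\max(\,\cdot\,,-C)\) is continuous in its first argument, then letting \(C\to\infty\), forces \(\lim_R\liminf_N T_N^{(R)}\leq\lim_R\liminf_N A_N^{(R)}\), which together with the trivial direction closes the argument.

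The main obstacle is the nested-limit bookkeeping in case (i): because \(R_{C,\epsilon}\) grows with \(C\), one must take \(R\to\infty\) \emph{before} letting \(C\to\infty\), and one must bear in mind that \(\liminf\) commutes with \(\max\) only against a constant, not against a second varying sequence, which is precisely why the uniform control \(B_N^{(R)}\leq -C\) is exploited with \(C\) constant before the final passage \(C\to\infty\). Beyond this bookkeeping and the verification of the elementary identities above, both cases reduce to routine computation.
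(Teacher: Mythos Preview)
Your proof is correct and follows essentially the same approach as the paper's: both arguments split the total expectation into the ``good'' and ``bad'' parts via the elementary bound $\log(u+v)=\max(\log u,\log v)+O(1)$, treat case~(ii) by directly absorbing the factor $\beta_\epsilon$ into the vanishing term $a_N^{-1}\log\beta_\epsilon$, and handle case~(i) by replacing the bad part with the constant $-C$, commuting $\max(\,\cdot\,,-C)$ past $\lim_R\liminf_N$, and finally sending $C\to\infty$. Your write-up is more explicit about the elementary identity $\liminf_N\max(\alpha_N,c)=\max(\liminf_N\alpha_N,c)$ and the order-of-limits bookkeeping, but the substance is the same.
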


\begin{proof}
	We only have to show that the left hand side of \eqref{vara::eq::VaradLower::BiggerEpsIrrelevant} is greater or equal to the right hand side.
	\begin{enuRomNoIntendBf}
	\item
	
	By the condition of this lemma we get as in \eqref{vara::eq::lem::VaradLower::CondSatisf1::max} that  for each $C>0$ 
	\begin{align}
	\begin{split}
		&\lim_{R \rightarrow \infty} \liminf_{N \rightarrow \infty}
				a_{N}^{-1} \log 
					\E_{\P^{N}} 
						\left[ 
							e^{a_{N} \ul\phi_{R}\left( \xi_{N} \right) }
						\right]
		\\
		&\leq
				\lim_{R \rightarrow \infty} \liminf_{N \rightarrow \infty}
				\max \left\{ 
					a_{N}^{-1} 
					\log
					\E_{\P^{N}} 
						\left[ 
							e^{a_{N} \ul\phi_{R}\left( \xi_{N} \right) }
						\1_{\left\{  \phi \left( \xi_N  \right)   > \ul{\phi}_{R} \left( \xi_N  \right)  -\epsilon \right\}} 
						\right]
				,
				-C
				\right\}
		\\
		&=
				\max \left\{ 
					\lim_{R \rightarrow \infty} \liminf_{N \rightarrow \infty}
					a_{N}^{-1} 
					\log
					\E_{\P^{N}} 
						\left[ 
							e^{a_{N} \ul\phi_{R}\left( \xi_{N} \right) }
						\1_{\left\{  \phi \left( \xi_N  \right)   > \ul{\phi}_{R} \left( \xi_N  \right)  -\epsilon \right\}} 
						\right]
				,
				-C
				\right\}
			.
	\end{split}
	\end{align}
	Now we let $C$ tend to infinity, what proves the claim.

	\item
		In this case the equality \eqref{vara::eq::VaradLower::BiggerEpsIrrelevant} follows by inserting \eqref{vara::eq::VaradLower::BiggerEpsIrrelevant::beta} and using that $\frac{\log \left( \beta \right) }{a_{N}} \rightarrow 0$.

	\end{enuRomNoIntendBf}\vspace{-\baselineskip}
\end{proof}

The following lemma is a corollary of Lemma~\ref{vara::lem::VaradLower::CondSatis}~\ref{vara::ass::VaradLower::BiggerEpsIrrelevant::GrEps}.

\begin{lemma}
\label{vara::lem::VaradLower::CondSatis::CondSatis}
	If 	$\forall C>0
			\quad
			\forall \epsilon>0
			\quad
			\exists R_{C,\epsilon}>0
			\quad \textnormal{such that} \quad
			\forall R>R_{C,\epsilon}
			\quad
			\exists N_{C,\epsilon,R} \in \N
			\quad \textnormal{such that} \quad
		$
	\begin{align}
	\label{vara::lem::VaradLower::CondSatis::CondSatis::eq}
		e^{a_{N} \sup_{x \in \supp \left\{ \xi_{N} \right\}} \ul\phi_{R} \left( x \right) } \P^{N} \left[   \phi \left( \xi_N  \right)  < \ul{\phi}_{R} \left( \xi_N  \right)  -\epsilon  \right]
		\leq
		e^{-a_{N} C }
		,
	\end{align}
	for all $N>N_{C,\epsilon,R}$,
	then condition Lemma~\ref{vara::lem::VaradLower::CondSatis}~\ref{vara::ass::VaradLower::BiggerEpsIrrelevant::GrEps}
	and hence Condition~\ref{vara::ass::thm::GenVara::BiggerEpsIrrelevant} hold. 
\end{lemma}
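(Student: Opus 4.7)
The plan is to deduce the inequality required in Lemma~\ref{vara::lem::VaradLower::CondSatis}~\ref{vara::ass::VaradLower::BiggerEpsIrrelevant::GrEps} by a trivial pointwise bound on the integrand, followed by a direct application of the hypothesis \eqref{vara::lem::VaradLower::CondSatis::CondSatis::eq}, and then invoking Lemma~\ref{vara::lem::VaradLower::CondSatis} to conclude.

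First, I would fix arbitrary $C>0$ and $\epsilon>0$, pick the $R_{C,\epsilon}$ supplied by the hypothesis of this lemma, fix any $R>R_{C,\epsilon}$, and pick the corresponding $N_{C,\epsilon,R}$. For $N>N_{C,\epsilon,R}$ I would then bound the integrand pointwise on the event $\{\phi(\xi_N)<\ul\phi_R(\xi_N)-\epsilon\}$ by pulling the exponential out of the expectation via the support supremum:
\begin{align*}
	\E_{\P^N}\!\left[e^{a_N\ul\phi_R(\xi_N)}\1_{\{\phi(\xi_N)<\ul\phi_R(\xi_N)-\epsilon\}}\right]
	\leq
	e^{a_N\sup_{x\in\supp\{\xi_N\}}\ul\phi_R(x)}\,\P^N\!\left[\phi(\xi_N)<\ul\phi_R(\xi_N)-\epsilon\right].
\end{align*}
By the assumption \eqref{vara::lem::VaradLower::CondSatis::CondSatis::eq}, the right-hand side is bounded by $e^{-a_N C}$, which is exactly the inequality \eqref{vara::eq::VaradLower::BiggerEpsIrrelevant::GrEps} required in Lemma~\ref{vara::lem::VaradLower::CondSatis}~\ref{vara::ass::VaradLower::BiggerEpsIrrelevant::GrEps}.

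Since the quantifier structure on $C,\epsilon,R,N$ in the hypothesis matches exactly the quantifier structure demanded by Lemma~\ref{vara::lem::VaradLower::CondSatis}~\ref{vara::ass::VaradLower::BiggerEpsIrrelevant::GrEps}, no reshuffling of constants is needed. Applying Lemma~\ref{vara::lem::VaradLower::CondSatis} then yields Condition~\ref{vara::ass::thm::GenVara::BiggerEpsIrrelevant}, which finishes the proof.

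There is essentially no obstacle here: the only content is the trivial pointwise estimate $e^{a_N\ul\phi_R(\xi_N)}\leq e^{a_N\sup\ul\phi_R}$ on $\supp\{\xi_N\}$. The lemma is simply packaging the observation that, when the supremum of $\ul\phi_R$ over the support of $\xi_N$ is controlled, bounding the probability of the bad event suffices to bound the full exponential moment on that event, and thereby reduces verification of Condition~\ref{vara::ass::thm::GenVara::BiggerEpsIrrelevant} to a (typically easier) probability estimate.
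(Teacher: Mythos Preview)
Your proof is correct and matches the paper's treatment: the paper simply calls this lemma a corollary of Lemma~\ref{vara::lem::VaradLower::CondSatis}~\ref{vara::ass::VaradLower::BiggerEpsIrrelevant::GrEps} and gives no further argument, and your pointwise bound $e^{a_N\ul\phi_R(\xi_N)}\leq e^{a_N\sup_{\supp\{\xi_N\}}\ul\phi_R}$ followed by the hypothesis is exactly the intended one-line derivation.
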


\begin{remark}
\label{vara::rem::VaradLower::CondSatis::CondSatis}
The supremum in \eqref{vara::lem::VaradLower::CondSatis::CondSatis::eq} could  be restricted to  $\left\{ x :   \phi \left( x  \right)  < \ul{\phi}_{R} \left( x  \right)  -\epsilon \right\}$.
\end{remark}

In the following lemma we show that a simpler condition (than the condition Lemma~\ref{vara::lem::VaradLower::CondSatis}~\ref{vara::ass::VaradLower::BiggerEpsIrrelevant::GrEps})  implies the condition~\ref{vara::ass::thm::GenVara::BiggerEpsIrrelevant}. 
The proof of this lemma requires parts of the proof of the Varadhan lower bound (of Chapter~\ref{vara::sec::PfLowerBound}).
Of course not those parts that require the condition~\ref{vara::ass::thm::GenVara::BiggerEpsIrrelevant}.

\begin{lemma}
\label{vara::lem::VaradLower::SimplerCondTo::BiggerEpsIrrel}
	Let the condition~\ref{vara::ass::thm::GenVara::ConUlPhi} hold.
	If
	\begin{align*}
	\forall \epsilon>0
		\quad
		\exists \gamma_{\epsilon}>0
		\quad
		\exists R^{*}_{\epsilon}>0
		\quad \textnormal{such that} \quad
		\forall R>R_{\epsilon}
		\quad
		\exists N_{R} \in \N
		\quad \textnormal{such that} \quad
		\forall N>N^{*}_{R}
	\end{align*}\vspace{-1.5em}
	\begin{align}
	\label{vara::eq::lem::VaradLower::SimplerCondTo::BiggerEpsIrrel}
		\E_{\P^{N}} \left[ e^{a_{N} \ul{\phi}_{R}\left( \xi_{N} \right) }  
			\1_{\left\{  \phi \left( \xi_N  \right)  < \ul{\phi}_{R} \left( \xi_N  \right)  -\epsilon \right\}} 
				\right] 
		\leq
			e^{a_{n} \left( S^{*} - \gamma_{\epsilon} \right)}
		,
	\end{align}
	then the condition~\ref{vara::ass::thm::GenVara::BiggerEpsIrrelevant} holds.

\end{lemma}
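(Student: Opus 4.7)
The target is the equality of two double limits in condition~\ref{vara::ass::thm::GenVara::BiggerEpsIrrelevant}. The inequality $\leq$ is immediate, since inserting the indicator $\1_{\{\phi(\xi_N) > \ul\phi_R(\xi_N) - \epsilon\}}$ into the expectation can only decrease it. The real content of the lemma is the reverse inequality, and the strategy is to show that the complementary event $\{\phi(\xi_N) < \ul\phi_R(\xi_N) - \epsilon\}$ contributes strictly subexponentially less than the full expectation.

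First I would establish the a priori bound
\begin{align*}
\lim_{R \to \infty} \liminf_{N \to \infty} a_N^{-1} \log \E_{\P^N}\!\left[e^{a_N \ul\phi_R(\xi_N)}\right] \geq S^{*},
\end{align*}
using only condition~\ref{vara::ass::thm::GenVara::ConUlPhi} together with the LDP lower bound for $\xi_N$. This is obtained by mimicking the chain of inequalities from \eqref{vara::pf::lem::VaradLower::eq::ReplacePhi}--\eqref{vara::pf::lem::VaradLower::eq::VaradPhiR} in the proof of Lemma~\ref{vara::lem::VaradLower}, but applied directly to $\E_{\P^N}[e^{a_N \ul\phi_R(\xi_N)}]$ without the indicator: fix $x$ with $I(x)<\infty$ and $\delta>0$, restrict the expectation to a neighbourhood $U_{x,\delta,R}$ provided by condition~\ref{vara::ass::thm::GenVara::ConUlPhi} for $R>R^{*}_{x,\delta}$, bound $\ul\phi_R$ from below on $U_{x,\delta,R}\cap \supp\{\xi_N\}$ by $\phi(x)-\delta$, and apply the LDP lower bound to $\P^N[\xi_N \in U_{x,\delta,R}]$. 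Taking the supremum over admissible $x$ and letting $\delta\to 0$ gives the bound. Crucially, this step does \emph{not} invoke condition~\ref{vara::ass::thm::GenVara::BiggerEpsIrrelevant}, so there is no circularity.

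Next I would split
\begin{align*}
\E_{\P^N}\!\left[e^{a_N \ul\phi_R(\xi_N)}\right]
= \E_{\P^N}\!\left[e^{a_N \ul\phi_R(\xi_N)} \1_{\{\phi(\xi_N) > \ul\phi_R(\xi_N) - \epsilon\}}\right]
+ \E_{\P^N}\!\left[e^{a_N \ul\phi_R(\xi_N)} \1_{\{\phi(\xi_N) < \ul\phi_R(\xi_N) - \epsilon\}}\right]
\end{align*}
and apply the hypothesis \eqref{vara::eq::lem::VaradLower::SimplerCondTo::BiggerEpsIrrel} to bound the second summand by $e^{a_N(S^{*}-\gamma_\epsilon)}$ for every $R>R^{*}_\epsilon$ and every $N>N^{*}_R$. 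Using the elementary inequality $\log(a+b) \leq \log 2 + \max\{\log a,\log b\}$, dividing by $a_N$, and taking $\liminf_{N\to\infty}$ followed by $\lim_{R\to\infty}$ yields
\begin{align*}
\lim_{R \to \infty}\liminf_{N \to \infty} a_N^{-1} \log \E_{\P^N}\!\left[e^{a_N \ul\phi_R(\xi_N)}\right]
\leq \max\!\left\{ \lim_{R \to \infty}\liminf_{N \to \infty} a_N^{-1} \log \E_{\P^N}\!\left[e^{a_N \ul\phi_R(\xi_N)} \1_{\{\phi > \ul\phi_R - \epsilon\}}\right],\; S^{*}-\gamma_\epsilon \right\}.
\end{align*}

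Finally, I would combine this display with the a priori bound from the first step. Since the left-hand side is at least $S^{*}$ while the constant alternative $S^{*}-\gamma_\epsilon$ inside the maximum is strictly smaller, the maximum must be realised by the indicator term, which delivers the required reverse inequality and hence condition~\ref{vara::ass::thm::GenVara::BiggerEpsIrrelevant}. The main subtlety is really just the bookkeeping to ensure that $\gamma_\epsilon$ is independent of both $R$ and $N$ so that the strict gap survives the double limit — which is exactly the quantification built into the hypothesis \eqref{vara::eq::lem::VaradLower::SimplerCondTo::BiggerEpsIrrel}.
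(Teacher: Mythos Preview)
Your proposal is correct and follows essentially the same approach as the paper: establish the a priori lower bound $\geq S^{*}$ on the full expectation using only condition~\ref{vara::ass::thm::GenVara::ConUlPhi} and the LDP lower bound (exactly the steps \eqref{vara::pf::lem::VaradLower::eq::ReplaceLowerEps}--\eqref{vara::pf::lem::VaradLower::eq::VaradPhiR}), split into the two indicator pieces, bound the ``bad'' piece by $S^{*}-\gamma_\epsilon$ via the hypothesis, and conclude that the maximum is attained by the ``good'' piece. The only cosmetic difference is that the paper phrases the comparison at each fixed $R>R^{*}_\epsilon$ before passing to the limit, whereas you take $\lim_{R\to\infty}$ first; both yield condition~\ref{vara::ass::thm::GenVara::BiggerEpsIrrelevant}.
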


\begin{proof}
	Let us fix a $\epsilon>0$ small enough.
	Then 
	\begin{align}
	\begin{split}
			\liminf_{N \rightarrow \infty} a_{N}^{-1} \log 
					 \E_{\P_{N}} \left[ 
					 			e^{a_{N}  \ul{\phi}_{R}\left( \xi_{N} \right)   }  
								\right]
			&=
			\liminf_{N \rightarrow \infty} 
				\max \left\{
					a_{N}^{-1} \log 
					 \E_{\P_{N}} \left[ 
					 			e^{a_{N}  \ul{\phi}_{R}\left( \xi_{N} \right)   }  
							\1_{\left\{  \phi \left( \xi_N  \right)  > \ul{\phi}_{R} \left( \xi_N  \right)  -\epsilon \right\}} 
								\right]
					\right.
			\\
			&\hspace{2cm}
				\left.
					,
					a_{N}^{-1} \log 
					 \E_{\P_{N}} \left[ 
					 			e^{a_{N}  \ul{\phi}_{R}\left( \xi_{N} \right)   }  
							\1_{\left\{  \phi \left( \xi_N  \right)  < \ul{\phi}_{R} \left( \xi_N  \right)  -\epsilon \right\}} 
								\right]
				\right\}
			\\
			&\hspace{-1cm}
			\leq
			\max \left\{
				\liminf_{N \rightarrow \infty} 
					a_{N}^{-1} \log 
					 \E_{\P_{N}} \left[ 
					 			e^{a_{N}  \ul{\phi}_{R}\left( \xi_{N} \right)   }  
							\1_{\left\{  \phi \left( \xi_N  \right)  > \ul{\phi}_{R} \left( \xi_N  \right)  -\epsilon \right\}} 
								\right]
					,
				S^{*}-\gamma_{\epsilon}
			\right\}
		.
	\end{split}
	\end{align}
	
	The left hand side is larger or equal to $S^{*}$ as shown in the proof of Lemma~\ref{vara::lem::VaradLower} (\eqref{vara::pf::lem::VaradLower::eq::ReplaceLowerEps} to \eqref{vara::pf::lem::VaradLower::eq::VaradPhiR} requires only condition~\ref{vara::ass::thm::GenVara::ConUlPhi}).
	This implies that 
	\begin{align}
		\liminf_{N \rightarrow \infty} a_{N}^{-1} \log 
						 \E_{\P_{N}} \left[ 
						 			e^{a_{N}  \ul{\phi}_{R}\left( \xi_{N} \right)   }  
									\right]
	=
	\liminf_{N \rightarrow \infty} 
						a_{N}^{-1} \log 
						 \E_{\P_{N}} \left[ 
						 			e^{a_{N}  \ul{\phi}_{R}\left( \xi_{N} \right)   }  
								\1_{\left\{  \phi \left( \xi_N  \right)  > \ul{\phi}_{R} \left( \xi_N  \right)  -\epsilon \right\}} 
									\right]
					,
	\end{align}	
	for all $R>R^{*}_{\epsilon}$, i.e. the condition~\ref{vara::ass::thm::GenVara::BiggerEpsIrrelevant} holds. 
\end{proof}

\paragraph{The condition \protect\ref{vara::ass::thm::GenVara::TailCondition}}

\begin{lemma}
\label{vara::lem::VaradUpper::MomentThenTail}
	The tail condition~\ref{vara::ass::thm::GenVara::TailCondition}  is satisfied if for some $\gamma>1$
	\begin{align}
	\label{vara::lem::VaradUpper::MomentThenTail::cond}
		\limsup_{N \rightarrow \infty} a_{N}^{-1} \log
				 \E_{\P_{N}} \left[ e^{\gamma a_{N}  \phi\left( \xi_{N} \right)  } \right]
	 	< 
			\infty
	 	.
	\end{align}
\end{lemma}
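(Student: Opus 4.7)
The proof is a straightforward application of Hölder's inequality together with the exponential Chebyshev inequality, and I expect no real obstacle once the right splitting is chosen.

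The plan is to introduce the Hölder conjugate $\gamma' > 1$ with $\frac{1}{\gamma} + \frac{1}{\gamma'} = 1$, split
\begin{align*}
  \E_{\P_N}\!\left[ e^{a_N \phi(\xi_N)} \1_{\phi(\xi_N) \geq M} \right]
  \leq \E_{\P_N}\!\left[ e^{\gamma a_N \phi(\xi_N)} \right]^{1/\gamma} \P^N\!\left[ \phi(\xi_N) \geq M \right]^{1/\gamma'},
\end{align*}
and then estimate the second factor by the exponential Chebyshev inequality: since $\phi(\xi_N) \geq M$ implies $e^{\gamma a_N \phi(\xi_N)} \geq e^{\gamma a_N M}$,
\begin{align*}
  \P^N\!\left[ \phi(\xi_N) \geq M \right] \leq e^{-\gamma a_N M} \E_{\P_N}\!\left[ e^{\gamma a_N \phi(\xi_N)} \right].
\end{align*}

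Combining these two inequalities and using $\tfrac{1}{\gamma} + \tfrac{1}{\gamma'} = 1$ collapses the two factors of the moment to a single one, giving
\begin{align*}
  \E_{\P_N}\!\left[ e^{a_N \phi(\xi_N)} \1_{\phi(\xi_N) \geq M} \right]
  \leq e^{- a_N M \gamma/\gamma'} \, \E_{\P_N}\!\left[ e^{\gamma a_N \phi(\xi_N)} \right].
\end{align*}
Taking $a_N^{-1}\log$ on both sides and then $\limsup_{N\to\infty}$, the assumption \eqref{vara::lem::VaradUpper::MomentThenTail::cond} yields
\begin{align*}
  \limsup_{N\to\infty} a_N^{-1} \log \E_{\P_N}\!\left[ e^{a_N \phi(\xi_N)} \1_{\phi(\xi_N) \geq M} \right]
  \leq C - \frac{\gamma}{\gamma'}\, M,
\end{align*}
for some finite constant $C$ independent of $M$.

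Finally, letting $M \to \infty$, the right-hand side tends to $-\infty$, and in particular is bounded above by $S^{*}$ (whatever its value in $\R \cup \{-\infty, \infty\}$). This is exactly the tail condition \ref{vara::ass::thm::GenVara::TailCondition}. The only minor point to check is that one can indeed choose $\gamma' \in (1,\infty)$ conjugate to the given $\gamma > 1$, which is immediate. There is no real obstacle.
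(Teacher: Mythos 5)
Your proof is correct, and it is essentially the argument the paper relies on: the paper simply cites Lemma~4.3.8 of \cite{DemZeiLarge}, whose proof is the same elementary moment-to-tail estimate (there obtained in one step from $e^{a_N\phi}\1_{\phi\geq M}\leq e^{-(\gamma-1)a_N M}e^{\gamma a_N\phi}$, which is exactly your H\"older--Chebyshev bound since $\gamma/\gamma'=\gamma-1$). The conclusion that the tail limit is $-\infty\leq S^{*}$ is indeed all that is needed, and no continuity of $\phi$ enters anywhere.
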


\begin{proof}
	As shown in Lemma 4.3.8 in \cite{DemZeiLarge} the moment condition implies the tail condition. The continuity of $\phi$ is not required.
\end{proof}

\begin{lemma}
\label{vara::lem::VaradUpper::AsympTail}
The condition~\ref{vara::ass::thm::GenVara::DiffPhiPhiR} and the following asymptotic tail condition
	\begin{align}
	\label{vara::eq::lem::VaradUpper::AsympTail}
		\lim_{M \rightarrow \infty} 
		\lim_{R \rightarrow \infty}
		\limsup_{N \rightarrow \infty} 
			a_{N}^{-1} \log 
	 				\E_{\P_{N}} 
	 					\left[ 
							e^{ a_{N} \ol\phi_{R} \left( \xi_{N} \right) } 
							\1_{ \ol\phi_{R} \left( \xi_{N} \right) \geq M} 
						\right]
		\leq
			S^{*}
	\end{align}
imply the tail condition~\ref{vara::ass::thm::GenVara::TailCondition}.
\end{lemma}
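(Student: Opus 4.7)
The plan is to split the event $\{\phi(\xi_N) \geq M\}$ into the two pieces where $\phi(\xi_N) > \ol\phi_R(\xi_N) + \epsilon$ and where $\phi(\xi_N) \leq \ol\phi_R(\xi_N) + \epsilon$. On the first piece we can apply the condition~\ref{vara::ass::thm::GenVara::DiffPhiPhiR} directly, while on the second piece we can replace $\phi$ in the exponent by $\ol{\phi}_R + \epsilon$ (losing only a factor $e^{a_N \epsilon}$) and observe that the indicator forces $\ol{\phi}_R(\xi_N) \geq M - \epsilon$, so that the asymptotic tail condition~\eqref{vara::eq::lem::VaradUpper::AsympTail} on $\ol\phi_R$ applies.

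Concretely, for fixed $\epsilon, M, R > 0$ I would write
\begin{align*}
  \E_{\P^{N}} \left[ e^{a_{N} \phi(\xi_{N})} \1_{\phi(\xi_{N}) \geq M} \right]
  &\leq
    \E_{\P^{N}} \left[ e^{a_{N} \phi(\xi_{N})} \1_{\{\phi(\xi_{N}) > \ol{\phi}_R(\xi_{N}) + \epsilon\}} \right]
  \\
  &\quad
    + e^{a_{N}\epsilon}\, \E_{\P^{N}} \left[ e^{a_{N} \ol\phi_R(\xi_{N})} \1_{\ol\phi_R(\xi_{N}) \geq M-\epsilon} \right],
\end{align*}
apply $a_N^{-1}\log(u+v) \leq \max\{a_N^{-1}\log u, a_N^{-1}\log v\} + a_N^{-1}\log 2$, and then push $\limsup_N$ through the max.

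Then I would take $\limsup_R$ on both sides. Since the left-hand side is independent of $R$ and $\limsup_R \max\{a_R, b_R\} = \max\{\limsup_R a_R, \limsup_R b_R\}$, the first summand contributes at most $S^{*}$ by condition~\ref{vara::ass::thm::GenVara::DiffPhiPhiR}. Finally, taking $\lim_M$ and invoking~\eqref{vara::eq::lem::VaradUpper::AsympTail} on the second summand gives
\begin{align*}
  \lim_{M \to \infty} \limsup_{N \to \infty} a_N^{-1} \log \E_{\P_N}\!\left[ e^{a_N \phi(\xi_N)} \1_{\phi(\xi_N) \geq M} \right]
  \leq
  \max\{S^{*},\, S^{*} + \epsilon\}
  =
  S^{*} + \epsilon,
\end{align*}
and letting $\epsilon \downarrow 0$ finishes the proof.

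The main obstacle is purely bookkeeping: the two hypotheses place the $R$-limit in different positions relative to the $M$-limit (condition~\ref{vara::ass::thm::GenVara::DiffPhiPhiR} uses $\limsup_R\limsup_N$ with no $M$, while~\eqref{vara::eq::lem::VaradUpper::AsympTail} uses $\lim_M \lim_R \limsup_N$). One must verify that the order $\limsup_N \to \limsup_R \to \lim_M \to \epsilon \downarrow 0$ is compatible with both, which is why the $\max$-splitting step is essential, as it decouples the two contributions so that each can be estimated by the limit structure proper to its hypothesis. No other subtleties arise, since $S^{*}$ is the same constant appearing as the bound in both assumptions.
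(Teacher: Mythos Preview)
Your proof is correct and follows essentially the same approach as the paper: the same splitting inequality, the same passage to a $\max$, and the same order of limits $N \to R \to M$ (followed by $\epsilon \downarrow 0$). Your write-up is in fact slightly more explicit about the $\epsilon$-step and the compatibility of the limit orders than the paper's version.
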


\begin{proof}
For all $R>0$ we have
	\begin{align}
	\begin{split}
		\E_{\P_{N}} 
	 		\left[ 
				e^{ a_{N} \phi \left( \xi_{N} \right) } 
				\1_{ \phi \left( \xi_{N} \right) \geq M} 
			\right]
		&\leq
			e^{a_{N} \epsilon}
			\E_{\P_{N}} 
 				\left[ 
					e^{ a_{N} \ol\phi_{R} \left( \xi_{N} \right) } 
					\1_{ \ol\phi_{R} \left( \xi_{N} \right) \geq M-\epsilon} 
				\right]
			\\
			&\quad+
			\E_{\P_{N}} 
 				\left[ 
					e^{ a_{N} \phi \left( \xi_{N} \right) } 
					\1_{\left\{ \phi \left( \xi_N  \right)   >  \ol{\phi}_{R} \left( \xi_N  \right) + \epsilon  \right\}} 
				\right]
		.
	\end{split}
	\end{align}
By taking first the maximum of the summands of the right hand side and then the limit $R \rightarrow \infty$, the contribution of the second summand vanishes due to condition~\ref{vara::ass::thm::GenVara::DiffPhiPhiR}. 
Finally we take the limit $M \rightarrow \infty$ and use  \eqref{vara::eq::lem::VaradUpper::AsympTail} to conclude the claimed condition~\ref{vara::ass::thm::GenVara::TailCondition}.
\end{proof}

 \subsection{Example}
 \label{vara::sec::Example}

  If $\phi$ is continuous, then all conditions of Theorem~\ref{vara::thm::GenVara} simplify to the usual conditions of  Varadhan's lemma (see Remark~\ref{vara::rem::usualVaradhan}).
  
  We state now at first a simple example of sums Bernoulli random variables and show that the generalised Varadhan's lemma (Theorem~\ref{vara::thm::GenVara}) hold for functions with one jump point.
  Finally we show that an abstract setting implies the conditions of Theorem~\ref{vara::thm::GenVara}. 
  In this setting the function $\phi$ might be nowhere continuous.

 \paragraph{A simple example}
 
 Let $\theta^{i}$ be i.i.d. random variables with distribution  $Bern\left( p \right) $ for $p>\oh$, i.e. $\theta^{i} =1$ with probability $p$ and $\theta^{i} = -1$ with probability $1-p$.
 Define the random variables $\xi_{N}= \frac{1}{N} \sum_{i=1}^{N} \theta^{i}$.
 These random variables satisfy a large deviation principle with rate function
 \begin{align}
 	\Lambda^{*} \left( x \right)  
 	=
 	\oh \left[ \left( x+1 \right) \log \left( \frac{x+1}{p} \right) 
 	+
 	\left( 1-x \right) \log \left( \frac{1-x}{1-p} \right) \right] 
 	-
 	\log\left( 2 \right) 
 	,
 \end{align}
 for  $x \in [-1,1]$ and infinity otherwise.
 
 For a $\alpha \in \left( 0, \Lambda^{*} \left( 0 \right) \right)$, set
 \begin{align}
 	\phi \left( x \right)  = 
	\begin{cases}
		0 \qquad &\textnormal{if } x > 0
		\\
		\alpha &\textnormal{else}
		.
	\end{cases}
 \end{align}

We show now that the conditions of Theorem~\ref{vara::thm::GenVara} hold.
Set  $\ol{\phi}_{R}=\phi$.
Then condition~\ref{vara::ass::thm::GenVara::DiffPhiPhiR} holds. 
Moreover condition~\ref{vara::ass::thm::GenVara::ConOlPhi} is satisfied, because $\phi$ is upper semi continuous.
By the boundedness of $\phi$ also the tail condition~\ref{vara::ass::thm::GenVara::TailCondition} holds.
Define
\begin{align}
	\ul{\phi}_{R} \left( x \right)
	=
	\begin{cases}
		0 \qquad &\textnormal{if } x \geq \frac{1}{R}
		\\
		\alpha &\textnormal{else}
		.
	\end{cases}
\end{align}
This function is lower semi-continuous and converges pointwise to $\phi$.
This implies by Lemma~\ref{vara::lem::SuffiCond::ulPhiR}  the condition~\ref{vara::ass::thm::GenVara::ConUlPhi}.

We only have to show that condition~\ref{vara::ass::thm::GenVara::BiggerEpsIrrelevant} holds.
We do this now with help of the Lemma~\ref{vara::lem::VaradLower::CondSatis1}.

Then
\begin{align}
\begin{split}
	&\E_{\P^{N}} 
					\left[ 
						e^{N  \ul\phi_{R}\left( \xi_{N} \right)  }
					\1_{\left\{ \phi \left( \xi_N  \right)   <  \ol{\phi}_{R} \left( \xi_N  \right) - \epsilon \right\}} 
					\right]
	=
		e^{N \alpha}
		\P^{N} \left[ \xi_{N} \in \left( 0, \frac{1}{R} \right) \right]
	\\
	&\leq
		e^{N \alpha}
		\P^{N} \left[ \xi_{N} < \frac{1}{R}  \right]
	\leq
		e^{N \alpha}
		e^{-N \inf_{x \leq \frac{1}{R}}\left( \Lambda^{*} \left( x \right) +o_{N}\left( 1 \right)  \right) }
	\leq
		e^{N \alpha}
		e^{-N \left( \Lambda^{*} \left( \frac{1}{R} \right) +o_{N}\left( 1 \right)  \right) }
	,
\end{split}
\end{align}
for $R$ large enough, because $p>\oh$.
Moreover
\begin{align}
	&\E_{\P^{N}} 
			\left[ 
				e^{a_{N}  \ul\phi_{R}\left( \xi_{N} \right)  }
			\1_{\left\{ \phi \left( \xi_N  \right)   >  \ul{\phi}_{R} \left( \xi_N  \right) - \epsilon \right\}} 
			\right]
	\geq
				\P^{N} \left[ \xi_{N} \geq \frac{1}{R} \right]
	\geq
		e^{- N 
			\left(
			\inf_{x \geq \frac{1}{R}-\epsilon } \Lambda^{*} \left( x \right)  
			+ o_{N}\left( 1 \right) 
			\right)
		}
	=
		e^{-N o_{N}\left( 1 \right) }
	,
\end{align}
for $R$ large enough, because $p>\oh$.
 Hence the condition of Lemma~\ref{vara::lem::VaradLower::CondSatis1} is satisfied because $\alpha < \Lambda^{*}\left( 0 \right)$.

\paragraph{Class of examples}
\label{vara::sec::Example::Class}
 
 A class of examples is given by the following abstract setting. We show for example in Chapter~\ref{sec::LMF::LDPLN}, that the concrete example \eqref{SDE::LocalMF} of the local mean field model satisfies these conditions.
 
 \begin{enuRom}[label=\emph{(\ref*{vara::sec::Example::Class}.\roman*)},
 	ref=(\ref{vara::sec::Example::Class}.\roman*) ,
 	leftmargin=5em,
 	labelwidth=5em]
  \item
  \label{vara::ex:Irate}
		  $I$ be the good rate function as stated in Theorem~\ref{vara::thm::GenVara} and 
		  $\phi$ be a measurable function $X \rightarrow \R \cup \left\{ -\infty , \infty \right\}$.
 \item
 \label{vara::ex:MR}
 	For each $R \in \N$, let $M_{R} \subset X$ be a closed subset, such that $M_{R} \subset M_{R+1}$.
	
	We set $\Xi \defeq \bigcup M_{R}$.
\item
\label{vara::ex:Iinfty}
	$I\left( x \right)  = \infty$ if $x \not \in \Xi$.
\item
\label{vara::ex:PhiMRcont}
	For each $R \in \R$, $x \in M_{R}$ with $I \left( x \right) < \infty$ and each sequence $\left\{ x^{(n)} \right\} \subset M_{R} \cap \supp \left\{ \xi_{N} \right\}$ with $x^{(n)} \rightarrow x$, the following convergence holds: $\phi \left( x^{(n)} \right) \rightarrow \phi \left( x \right)$.
	
	This holds in particular when the restriction of $\phi$ to $M_{R}$ is continuous for each $R$.
\item
\label{vara::ex:PhiBound}
	There is an $\alpha : \R_{+} \rightarrow \R_{+}$ and a $N^{*} \in \N$ such that, for all $N>N^{*}$,
	$\phi\left( \xi_N \right)  \leq \alpha\left( R \right) $, when $\xi_N \in M_{R} $.
	
\item
\label{vara::ex:ProbOutMRBound}
	There is a function  $\beta : \R_{+} \rightarrow \R$, such that
	$\P^{N} \left[ \xi_{N} \not \in M_{R} \right] \leq e^{- a_{N} \beta\left( R \right) }$.
\item
\label{vara::ex:AlphaMBeta}
	$\lim_{R \rightarrow \infty} \alpha\left( R \right)  - \beta\left( R \right)  = -\infty$.
\item
\label{vara::ex:DiffCondAlmost}
~\vspace{-1em}
		\begin{align}
		\begin{split}
			\limsup_{R\to\infty}\limsup_{N\to\infty}
			\frac{1}{a_{N}}\log 
			\E_{\P^{N}} \left[  e^{ a_{N} \phi\left( \xi_N \right)  }
			\1{\left\{ \xi_N\not\in M_{R}\right\}} 
			\right]
			=
			-\infty .
		\end{split}
		\end{align}
\item	
\label{vara::ex:Moment}
	$\phi$ satisfies the tail condition~\ref{vara::ass::thm::GenVara::TailCondition} or it satisfies \eqref{vara::lem::VaradUpper::MomentThenTail::cond}.
 \end{enuRom}

\begin{remark}
		Note that the most important condition is \ref{vara::ex:PhiMRcont}.
		In this abstract setting, the function $\phi$ might be nowhere continuous, as long as this condition holds.
		The other conditions are only necessary to reduce the analysis to sequences the fit in the setting of \ref{vara::ex:PhiMRcont}.
\end{remark}

We define
	\begin{align}
		\label{vara::ex::DefULOLPhi}
		\ul{\phi_{R}}\left( x \right)  
		=
			\begin{cases}
				\phi\left( x \right)  \quad &\textnormal{if } x \in M_{R}
				 \\ 
				\alpha\left( R \right) 
				&\textnormal{otherwise}
			\end{cases}
		\hspace{2cm}
		\ol{\phi_{R}}\left( x \right)  
		=
			\begin{cases}
				\phi\left( x \right)  \quad &\textnormal{if } x \in M_{R}
				\\ 
				S^{*}
				&\textnormal{otherwise.}
			 \end{cases} 
	\end{align}

\begin{lemma}
These conditions allow the application of Theorem~\ref{vara::thm::GenVara}.
\end{lemma}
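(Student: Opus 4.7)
The plan is to verify, one at a time, the five conditions \ref{vara::ass::thm::GenVara::ConUlPhi}--\ref{vara::ass::thm::GenVara::TailCondition} of Theorem~\ref{vara::thm::GenVara} for the specific sequences $\ul{\phi}_R$ and $\ol{\phi}_R$ given in \eqref{vara::ex::DefULOLPhi}. Three of them will be essentially direct exponential estimates that unwind to the quantitative hypotheses \ref{vara::ex:ProbOutMRBound}, \ref{vara::ex:AlphaMBeta}, \ref{vara::ex:DiffCondAlmost} and \ref{vara::ex:Moment}; the real content lies in the local approximation conditions \ref{vara::ass::thm::GenVara::ConUlPhi} and \ref{vara::ass::thm::GenVara::ConOlPhi}, where the closedness of $M_R$ from \ref{vara::ex:MR} and the sequential continuity of $\phi|_{M_R \cap \supp\{\xi_N\}}$ from \ref{vara::ex:PhiMRcont} carry the argument.

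For \ref{vara::ass::thm::GenVara::ConOlPhi} I would take the uniform threshold $R^* = 0$. Fix $x$ with $I(x)<\infty$, $\delta>0$ and an arbitrary $R>0$. If $x \in M_R$, hypothesis \ref{vara::ex:PhiMRcont} yields a neighborhood $U$ of $x$ on which $\phi(y) \leq \phi(x)+\delta$ for $y \in U\cap M_R\cap \supp\{\xi_N\}$; for $y \in U\setminus M_R$ the definition gives $\ol{\phi}_R(y)=S^*$, whence the required supremum bound. If instead $x \notin M_R$, the closedness \ref{vara::ex:MR} allows me to choose $U = X\setminus M_R$, on which $\ol{\phi}_R \equiv S^*$.

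The condition \ref{vara::ass::thm::GenVara::ConUlPhi} allows $R^*_{x,\delta}$ to depend on the point and the tolerance. Given $x, \delta$, I would choose $R^*_{x,\delta}$ so large that $x \in M_{R^*_{x,\delta}}$ (possible by \ref{vara::ex:Iinfty}) and that $\beta(R^*_{x,\delta}) > I(x) + 1$. Then, for $R > R^*_{x,\delta}$, the large-deviation lower bound from \ref{vara::ex:Irate} together with \ref{vara::ex:ProbOutMRBound} shows that every neighborhood of $x$ meets $M_R \cap \supp\{\xi_N\}$; passing the pointwise inequality \ref{vara::ex:PhiBound} through the continuity hypothesis \ref{vara::ex:PhiMRcont} then yields $\phi(x) \leq \alpha(R)$. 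With this in hand, \ref{vara::ex:PhiMRcont} provides a neighborhood $U$ of $x$ on which $\phi(y)\geq \phi(x)-\delta$ for $y\in U\cap M_R\cap \supp\{\xi_N\}$, while for $y\in U\cap\supp\{\xi_N\}\setminus M_R$ we have $\ul{\phi}_R(y)=\alpha(R)\geq \phi(x)\geq \phi(x)-\delta$, completing the infimum bound.

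For condition \ref{vara::ass::thm::GenVara::BiggerEpsIrrelevant} I would invoke Lemma~\ref{vara::lem::VaradLower::SimplerCondTo::BiggerEpsIrrel}: since $\ul{\phi}_R=\phi$ on $M_R$, the event $\{\phi(\xi_N) < \ul{\phi}_R(\xi_N) - \epsilon\}$ is contained in $\{\xi_N \notin M_R\}$, on which $\ul{\phi}_R(\xi_N)=\alpha(R)$, so the expectation in \eqref{vara::eq::lem::VaradLower::SimplerCondTo::BiggerEpsIrrel} is dominated by $e^{a_N\alpha(R)}\P^N[\xi_N\notin M_R]\leq e^{a_N(\alpha(R)-\beta(R))}$, which by \ref{vara::ex:AlphaMBeta} is eventually below $e^{a_N(S^*-\gamma_\epsilon)}$ for any fixed $\gamma_\epsilon>0$. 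The same containment $\{\phi(\xi_N) > \ol{\phi}_R(\xi_N)+\epsilon\} \subset \{\xi_N \notin M_R\}$ (since $\ol{\phi}_R=\phi$ on $M_R$) reduces \ref{vara::ass::thm::GenVara::DiffPhiPhiR} directly to \ref{vara::ex:DiffCondAlmost}, and \ref{vara::ass::thm::GenVara::TailCondition} is either given by \ref{vara::ex:Moment} or extracted from it via Lemma~\ref{vara::lem::VaradUpper::MomentThenTail}. The hard step is the one hidden in the third paragraph: establishing $\phi(x) \leq \alpha(R)$ for $x \in M_R$ with $I(x)<\infty$. Hypothesis \ref{vara::ex:PhiBound} only controls $\phi$ along realizations of $\xi_N$, so one must couple the exponential smallness $\P^N[\xi_N \notin M_R] \leq e^{-a_N\beta(R)}$ with the LDP lower bound to guarantee, for $R$ with $\beta(R) > I(x)$, that $x$ is the limit of a sequence in $M_R\cap\supp\{\xi_N\}$, before \ref{vara::ex:PhiMRcont} lets us pass the bound to $x$ itself.
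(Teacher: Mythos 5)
Your proof is correct and follows essentially the same route as the paper: conditions \ref{vara::ass::thm::GenVara::ConUlPhi} and \ref{vara::ass::thm::GenVara::ConOlPhi} are verified exactly as you do (closedness of $M_R$ from \ref{vara::ex:MR} plus the sequential continuity \ref{vara::ex:PhiMRcont}, with the point-dependent threshold for $\ul{\phi}_R$ and a uniform one for $\ol{\phi}_R$), and conditions \ref{vara::ass::thm::GenVara::BiggerEpsIrrelevant}, \ref{vara::ass::thm::GenVara::DiffPhiPhiR}, \ref{vara::ass::thm::GenVara::TailCondition} are reduced to \ref{vara::ex:ProbOutMRBound}--\ref{vara::ex:Moment} through the same containments of the relevant events in $\left\{ \xi_N \notin M_R \right\}$. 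The only deviations are cosmetic: the paper checks \ref{vara::ass::thm::GenVara::BiggerEpsIrrelevant} via Lemma~\ref{vara::lem::VaradLower::CondSatis::CondSatis} rather than Lemma~\ref{vara::lem::VaradLower::SimplerCondTo::BiggerEpsIrrel} (which avoids any reference to $S^{*}$ and so also covers degenerate values of $S^{*}$), and it simply reads $\phi \leq \alpha\left( R \right)$ on $M_R$ off condition \ref{vara::ex:PhiBound}, where you supply the (harmless, and in fact more careful) argument combining the LDP lower bound with \ref{vara::ex:ProbOutMRBound} to pass the bound from $\supp\left\{ \xi_N \right\}$ to general $x \in M_R$ with $I\left( x \right) < \infty$.
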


\begin{proof}
\begin{steps}

\step[{Condition \ref{vara::ass::thm::GenVara::ConUlPhi}}]

The $\ul{\phi_{R}}$ is measurable, because $\ul{\phi_{R}}$ restricted to $M_{R}$ is measurable (by \ref{vara::ex:Irate}),  the $M_{R}$ are closed and $\ul{\phi_{R}}$ is constant outside of $M_{R}$.

To show the other part of this condition, fix an $R>0$, a $\delta>0$ and an $x \in M_{R}$, with $I \left( x \right) < \infty$.
On $M_{R}$,  $\ul{\phi_{R}} = \phi$.
Hence by~\ref{vara::ex:PhiMRcont}, there is a open set $\what U_{x,\delta,R} \subset M_{R}$ such that 
\begin{align}
	\inf_{y \in \what U_{x,\delta,R} \cap \supp \left\{ \xi_{N} \right\}} \ul{\phi_{R}}\left( y \right)  
	\geq  
	\phi \left( x \right)  - \delta
	.
\end{align}
Denote by $U_{x,\delta,R}$ the open subset of $X$, such that $\what U_{x,\delta,R} = U_{x,\delta,R} \cap M_{R}$. 
For each $y \in  U_{x,\delta,R} \backslash \what U_{x,\delta,R}$, with $y \in \supp \left\{ \xi_{N} \right\}$,  $\ul{\phi_{R}}\left( y \right) = \alpha\left( R \right)  \geq \ul{\phi_{R}}\left( x \right) $ by~\ref{vara::ex:PhiBound}. 
Hence $\ul{\phi_{R}}$ satisfies condition~\ref{vara::ass::thm::GenVara::ConUlPhi} for $x \in M_{R}$.

For $x \not \in M_{R}$, there is a open neighbourhood $U_{x,\delta,R}$ of $x$ with $U_{x,\delta,R} \cap  M_{R} = \emptyset$ (because $M_{R}$ is closed).
On $U_{x,\delta,R}$, $\ul{\phi_{R}}$  is constant and equals $S^*$.
This implies condition~\ref{vara::ass::thm::GenVara::ConUlPhi}.

\step[{Condition~\ref{vara::ass::thm::GenVara::ConOlPhi}}]

The function $\ol{\phi_{R}}$ is measurable by the same arguments as $\ul{\phi_{R}}$.

Fix an arbitrary $R>0$, a $\delta>0$ and a  $x \in M_{R}$ with $I \left( x \right) < \infty$.
By \ref{vara::ex:PhiMRcont}, there is an open neighbourhood  $\what{U}_{x,\delta,R} \subset M_{R}$ such that
			\begin{align}
				\sup_{y \in  \what{U}_{x,\delta,R} \cap \supp \left\{ \xi_{N} \right\}} 
								 \ol{\phi}_{R}\left( y \right)  
				\leq
				\phi \left( x \right)+ \delta
				.
			\end{align}
Denote by $U_{x,\delta,R}$ the open subset of $X$, such that $\what{U}_{x,\delta,R} = U_{x,\delta,R}  \cap M_{R}$.
Then 
		\begin{align}
				\sup_{y \in  U_{x,\delta,R} \cap \supp \left\{ \xi_{N} \right\}}  \ol{\phi}_{R}\left( y \right)  
				\leq 
					\max \left\{
							\sup_{y \in  \what{U}_{x,\delta,R} \cap \supp \left\{ \xi_{N} \right\}}  \ol{\phi}_{R}\left( y \right)  
							,
							S^{*}
					\right\}
				\leq
					\max \left\{
							\phi \left( x \right)+ \delta
							,
							S^{*}
					\right\}
				.
			\end{align}
The case when $x \not\in  M_{R}$, 
can be treat as in condition~\ref{vara::ass::thm::GenVara::ConUlPhi}.
This implies condition~\ref{vara::ass::thm::GenVara::ConOlPhi}.

	\step[{Condition~\ref{vara::ass::thm::GenVara::BiggerEpsIrrelevant} holds}]
	
	To show this condition we show the sufficient condition of Lemma~\ref{vara::lem::VaradLower::CondSatis::CondSatis}.
	By the definition of $\ul{\phi_{R}}$ and \eqref{vara::ex:PhiBound}, we know that $ \phi \left( \xi_N  \right)  < \ul{\phi}_{R} \left( \xi_N  \right)  -\epsilon$ implies that $\xi_{N} \not \in M_{R}$.
	Hence
\begin{align}
\begin{split}	
	&e^{a_{N}  \sup_{x \in \supp \left\{ \xi_{N} \right\}} \ul{\phi_{R}} \left( x \right) }
	\P^{N} \left[  \phi \left( \xi_N  \right)  < \ul{\phi}_{R} \left( \xi_N  \right)  -\epsilon \right]
	\\
	&\leq
		e^{a_{N} \alpha\left( R \right)  }
		\P^{N} \left[ \xi_{N} \not \in M_{R} \right]
	\leq
		e^{a_{N} \left( \alpha\left( R,N \right)  - \beta\left( R \right)   \right) }
\end{split}
\end{align}
by \ref{vara::ex:PhiBound}, \ref{vara::ex:ProbOutMRBound}.
Finally \ref{vara::ex:AlphaMBeta} implies the condition of Lemma~\ref{vara::lem::VaradLower::CondSatis::CondSatis}.

	\step[{Condition~\ref{vara::ass::thm::GenVara::DiffPhiPhiR} holds}]
	
	This condition is satisfied by \ref{vara::ex:DiffCondAlmost} and because  $\phi \left( \xi_N  \right)   >  \ol{\phi}_{R} \left( \xi_N  \right) + \epsilon $ implies that $\xi_{N} \not\in M_{R}$.
	
	\step[{Condition~\ref{vara::ass::thm::GenVara::TailCondition} holds}]
	
	We assume this in \ref{vara::ex:Moment}.
\end{steps}\vspace{-\baselineskip}
\end{proof}



\AdresseAtEnd

\end{document}